\newcounter{comments}
\newenvironment{displaycomment}{\begin{list}{}{\rightmargin=1cm\leftmargin=1cm}\item\sf\begin{small}}{\end{small}\end{list}}
\def\comments{
\setcounter{comments}{1}

}
\newcommand*\notocchapter[1]{%
  \if@openright\cleardoublepage\else\clearpage\fi
  \thispagestyle{empty}\global\@topnum\z@
  \@afterindenttrue
  \let\@secnumber\@empty
  \@makeschapterhead{#1}\@afterheading
}
\setlist{label*=\arabic*.}
\title{On Monadic Vector-Valued Integration (Draft)}
\newtheorem{theorem}{Theorem}[subsection]
\newtheorem{thm}[theorem]{Theorem}
\newtheorem{lem}[theorem]{Lemma}
\newtheorem{prop}[theorem]{Proposition}
\newtheorem{cor}[theorem]{Corollary}
\theoremstyle{definition}
\newtheorem{defn}[theorem]{Definition}
\newtheorem{construction}[theorem]{Construction}
\newtheorem{notation}[theorem]{Notation}
\newtheorem{example}[theorem]{Example}
\newtheorem{warning}[theorem]{Warning}
\newtheorem{problem}[theorem]{Problem}
\newtheorem{question}[theorem]{Question}
\newtheorem{remark}[theorem]{Remark}
\newcommand{\vspan}{\text{span}}
\newcommand{\nats}{\mathbb{N}}
\newcommand{\reals}{\mathbb{R}}
\newcommand{\complex}{\mathbb{C}}
\newcommand{\sets}{\mathsf{Set}}
\newcommand{\topsp}{\mathsf{Top}}
\newcommand{\ktop}{k\mathsf{Top}}
\newcommand{\spaces}{hk\mathsf{Top}}
\newcommand{\khaus}{\spaces}
\newcommand{\haus}{\mathsf{Haus}} 
\newcommand{\compacta}{\mathsf{CompHaus}}
\newcommand{\monoids}{\mathsf{Mon}}
\newcommand{\groups}{\mathsf{Grp}}
\newcommand{\rings}{\mathsf{Ring}}
\newcommand{\modules}{\mathsf{Mod}}
\newcommand{\vect}{{\mathsf{Vect}(\spaces)}}
\newcommand{\hvect}{{\mathsf{Vect}(\khaus)}}
\newcommand{\sclin}{\scompl \vect}
\newcommand{\lctvs}{\mathsf{LCTVS}}
\newcommand{\klctvs}{k\mathsf{LCTVS}}
\newcommand{\lklctvs}{lk\mathsf{LCTVS}}
\newcommand{\plin}{\mathsf{L}}
\DeclareMathOperator{\Hom}{Hom}
\newcommand{\cco}{C_{\mathsf{c.o.}}}
\newcommand{\op}{\text{op}}
\newcommand{\mnd}{\mathcal{M}_c}
\newcommand{\scompl}{r} 
\newcommand{\hquot}{h}
\newcommand{\ptimes}{\,\widehat{\otimes}\,}
\newcommand{\sepfun}{\mathsf{sep}} 
\newcommand{\id}{\mathsf{id}}
\newcommand*\diff{\mathop{}\!\mathrm{d}}
\DeclareMathOperator*{\colim}{colim}
\DeclareMathOperator*{\supp}{supp}
\DeclareMathOperator*{\im}{im}
\DeclareMathOperator*{\coim}{coim}
\begin{document}

\authornew{Benedikt Peterseim}
\geburtsdatum{08.04.1999}
\geburtsort{Haan}
\date{20.11.2023 \\(Revised version, 25.02.2024)}

\betreuer{Advisor: Prof. Dr. Eveliina Peltola}
\zweitgutachter{Second Advisor: Dr. Peter Kristel}

\institut{Institut f\"ur Angewandte Mathematik}
\title{On Monadic Vector-Valued Integration}
\ausarbeitungstyp{Master's Thesis  Mathematics}

\maketitle

\notocchapter{Acknowledgements}
\thispagestyle{empty}

First of all, I would like to thank my supervisors, Eveliina Peltola and Peter Kristel, for their invaluable support. 
For the countless hours that Peter devoted to guiding me during this project, I am deeply grateful. 
I would also like to thank David Aretz for interesting discussions and many helpful comments. 
Finally, I thank Tobias Fritz for numerous remarks and suggestions which helped to imporove this document.

\clearpage

\tableofcontents

\chapter{Introduction} 

In recent times, there has been a growing interest in a structuralist understanding of probability, measure and integration theory \cite{fritz2020synthetic,lucyshyn2013riesz}. The present thesis contributes to this programme in three ways. First, we construct a commutative probability monad on the cartesian closed category of $hk$-spaces (also known as CGWH spaces in the literature). Secondly, in order to achieve this in a seamless way, we develop the theory of \emph{paired linear $hk$-spaces}, a functional-analytic category tailored to the duality between measures and functionals. Finally, vector-valued integration emerges naturally from the free-forgetful adjunction between paired linear $hk$-spaces and $hk$-spaces, inducing a commutative monad of compactly supported measures and leading to a theory of \emph{monadic vector-valued integration}. A concise summary of the main results will be given in \cref{sec_main_results}, with further background and motivation provided in \cref{sec_background_overview}.

\section{Preliminaries and Conventions} 

Subsequently, we will assume some familiarity with the following subjects.

\begin{enumerate}[1.]
    \item \emph{General topology}. We will refer to the standard accounts \cite{engelking1989general, kelley2017general, willard2012general}. Less frequently occurring notions from general topology will be recalled when needed. We will not assume prior knowledge of cartesian closed categories of spaces.
    \item \emph{Functional analysis}. Basic knowledge of functional analysis, including the fundamentals of locally convex topological vector spaces and Fréchet spaces in particular, will be assumed. For a standard textbook account, see \cite{rudin1991functional}. 
    \item \emph{Category theory}. Although the very basics of category theory will be required, including a good understanding of universal properties, adjunctions, (co-)limits, and how these interrelate, we will not assume familiarity with monoidal categories or monads. We will employ the classical reference \cite{mac2013categories}, alongside the more recent \cite{riehl2017category}.
\end{enumerate}

Throughout this text, $\mathbb{K}$ will denote either the field of real or complex numbers and a ``vector space'' will always be (unless explicitly stated otherwise) a vector space over $\mathbb{K}$. By default, a ``measure'' will always be a countably additive $\mathbb{K}$-valued measure of bounded variation. In general, we will follow the measure-theoretic terminology of \cite{bogachev2007measure}.

\section{Main Results}\label{sec_main_results}

The main application of our findings is the construction of a commutative probability monad $\mathcal{P}$ on the cartesian closed category of $hk$-spaces (called ``CGWH spaces'' in \cite{strickland2009category} and ``$k$-Hausdorff $k$-spaces'' or ``compactly generated spaces'' in \cite{rezk2017compactly}; see \cref{defn_k_sp_hk_sp}). Background on cartesian closed categories of spaces and probability monads will be provided in \Cref{sec_prob_monads_on_cccs_of_spaces,sec_functs_and_cccs_of_spaces}, respectively. The monad $\mathcal{P}$ restricts to the classical \emph{Giry monad} on Polish spaces (i.e.~completely metrisable separable spaces), meaning that on a Polish space $X$, $\mathcal{P}(X)$ is the space of Borel probability measures with the topology of weak convergence of measures (``convergence in distribution''). The study of the probability monad $\mathcal{P}$ also leads to some interesting insights on the space $\mathcal{P}(\mathcal{S}'(\mathbb{R}^n))$ of probability measures on the space of tempered distributions, an object of great interest in the mathematics of quantum field theory (see \cref{sec_from_pol_to_cart_cl}). For example, the Fourier transform becomes a \emph{homeomorphism} between $\mathcal{P}(\mathcal{S}'(\mathbb{R}^n))$ and the space of continuous normalised positive-definite functions on $\mathcal{S}(\mathbb{R}^n)$ (\cref{thm_bochner_minlos_levy_fernique}). The construction of $\mathcal{P}$ along with its main properties will be obtained in Chapter 4, \cref{sec_prob_mon_P}. \par 
These results fit into a more holistic perspective, as follows. The space $\mathcal{P}(X)$ of probability measures will be constructed as a subspace of a space of $\mathbb{K}$-valued measures $\mathcal{M}(X)$. Similar to $\mathcal{P}$, this space of measures gives rise to a commutative monad on the category $\spaces$ of $hk$-spaces. Moreover, $\mathcal{M}(X)$ admits the structure of a \emph{paired linear $hk$-space}, which is a vector space with a compatible $hk$-space topology and additional ``paired linear structure'' (see \cref{defn_paired_linear_hk_space_}). Using the language of paired linear $hk$-spaces, a unifying picture emerges which is summarised by the following three theorems. 

\begin{thm}
    The category $\plin$ of paired linear $hk$-spaces admits a \emph{closed symmetric monoidal structure} consisting of a tensor product $\ptimes$ and an internal hom $[-,-]$. For paired linear $hk$-spaces $V,W$, the internal hom $[V,W]$ is the closed subspace of $C(V,W)$ consisting of morphisms $V\to W$ (where $C(V,W)$ is the space of continuous maps, as formed in the category of $hk$-spaces). The functors $\ptimes$ and $[-,-]$ determine each other by the tensor-hom adjunction, 
        $$ [V \;\ptimes W, Z] \cong [V, [W, Z]], \qquad (V,W,Z \in \plin)$$
    a natural isomorphism of paired linear $hk$-spaces. \par 
    Moreover, for every paired linear $hk$-space $V$, the natural map 
        $$ V \to V^{**}, \;\; x \mapsto (\phi \mapsto \phi(x)), $$
    is an isomorphism of paired linear $hk$-spaces, where $V^*:=[V,\mathbb{K}]$ denotes the \emph{$\plin$-dual}, i.e.~the space of morphisms $V\to \mathbb{K}$.  Furthermore, $\plin$ has all limits and colimits. In summary, $\plin$ is a \emph{bicomplete $*$-autonomous category}.
\end{thm}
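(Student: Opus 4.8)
\emph{Overview of the strategy.} I would reduce the $*$-autonomy to Barr's criterion: a symmetric monoidal closed category whose unit object is \emph{dualizing} --- i.e.\ the canonical map $A\to[[A,\mathbb{K}],\mathbb{K}]$ is an isomorphism for every $A$ --- is automatically $*$-autonomous. So the statement decomposes into (i)~producing a closed symmetric monoidal structure $(\ptimes,[-,-],\mathbb{K})$ on $\plin$ with the stated description of $[V,W]$ and the tensor--hom isomorphism; (ii)~proving $\mathbb{K}$ is dualizing, which is precisely the reflexivity claim $V\cong V^{**}$; and (iii)~showing $\plin$ is bicomplete. For~(i) the cleanest route is via the multicategory of \emph{multimorphisms} $V_1\times\cdots\times V_n\to W$ (jointly continuous for $\kprod$, separately $\plin$-linear, and compatible with the paired structures): I would show this symmetric multicategory is \emph{representable}, and invoke the standard equivalence between representable symmetric multicategories and symmetric monoidal closed categories, which then delivers $\ptimes$, $[-,-]$, the tensor--hom adjunction and all coherence isomorphisms simultaneously. (One could instead realise $\plin$ as a full subcategory of a Chu category and pull back the structure, but the multicategorical approach makes the concrete description of $[V,W]$ immediate.)

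\emph{The internal hom.} Given $V,W\in\plin$, set $[V,W]\subseteq C(V,W)$ to be the subset of continuous maps that are $\plin$-morphisms. This is a \emph{closed} subset: being additive and homogeneous is the conjunction of the vanishing of the continuous functions $f\mapsto\psi\bigl(f(x+y)-f(x)-f(y)\bigr)$ and $f\mapsto\psi\bigl(f(\lambda x)-\lambda f(x)\bigr)$ for $x,y\in V$, $\lambda\in\mathbb{K}$ and $\psi$ ranging over the separating functionals of $W$ (these are continuous on $C(V,W)$ because evaluation is), and compatibility with the paired structure is a closed condition of the same shape; as closed subspaces of $hk$-spaces are again $hk$-spaces, $[V,W]$ is an $hk$-space. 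With the pointwise linear structure and the paired structure generated by the functionals $f\mapsto\psi(f(x))$ ($x\in V$, $\psi\in W^{*}$) it becomes a paired linear $hk$-space, and $[-,-]\colon\plin^{\op}\times\plin\to\plin$ is a functor. Restricting the exponential law of $\spaces$ to this closed subspace yields natural bijections $\plin(U,[V,W])\cong\mathrm{Bimor}(U,V;W)$, and more generally identifies multimorphisms into $[V,W]$ with multimorphisms into $W$ carrying one extra $V$-input; hence $[V,W]$ is the multicategorical internal hom and, in particular, the closed subspace of $C(V,W)$ named in the statement.

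\emph{The tensor product.} It remains to corepresent $Z\mapsto\mathrm{Bimor}(V,W;Z)$. Using reflexivity (proved next) one may simply put $V\ptimes W:=[V,W^{*}]^{*}$ and verify $\plin(V\ptimes W,Z)\cong\mathrm{Bimor}(V,W;Z)$ by the multimorphism calculus (bimorphisms into $\mathbb{K}$, trimorphisms, re-bracketing); to avoid the forward reference one may instead construct $V\ptimes W$ from the free--forgetful adjunction $\spaces\rightleftarrows\plin$ as a quotient of the free paired linear $hk$-space on $V\kprod W$ by the bilinearity relations and check universality directly. Since the multicategory of multimorphisms is manifestly symmetric, representability now makes $\plin$ symmetric monoidal closed with unit $\mathbb{K}$; the hom-set adjunction $\plin(V\ptimes W,Z)\cong\plin(V,[W,Z])$ is promoted to the internal-hom isomorphism $[V\ptimes W,Z]\cong[V,[W,Z]]$ of the statement by applying $\plin(U,-)$ to both sides and using the symmetry of multimorphisms together with the Yoneda lemma.

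\emph{Reflexivity and bicompleteness.} The canonical map $\eta_V\colon V\to V^{**}$ is a $\plin$-morphism by functoriality of $[-,\mathbb{K}]$; showing it is an \emph{isomorphism} is, I expect, the main obstacle, since it is precisely the step that must invoke the defining axioms of a paired linear $hk$-space (\cref{defn_paired_linear_hk_space_}) in an essential rather than formal way. Concretely one must show that double dualisation neither enlarges the underlying vector space (a separation/bipolar statement for the pairing) nor changes the $hk$-topology or the paired structure --- that is, that the topology and pairing data are mutually determined in the exact sense built into the definition (a Mackey--Arens-type recognition theorem). For bicompleteness, the forgetful functor $\plin\to\spaces$ has a left adjoint and one checks that it \emph{creates} limits (equip the limit topology with the componentwise pairing), so $\plin$ has all small limits; colimits then follow immediately from reflexivity, which makes $[-,\mathbb{K}]$ a contravariant self-equivalence of $\plin$, whence $\colim_i V_i\cong\bigl(\lim_i V_i^{*}\bigr)^{*}$. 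With a closed symmetric monoidal structure, a dualizing unit, and bicompleteness all in hand, Barr's theorem gives that $\plin$ is a bicomplete $*$-autonomous category.
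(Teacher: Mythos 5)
Your overall architecture (build the closed symmetric monoidal structure from the concrete description of $[V,W]$, verify that $\mathbb{K}$ is dualizing, deduce colimits from limits by self-duality) is viable and runs parallel to, but genuinely differently from, the paper: there $\plin$ is \emph{defined} as the separated-extensional Chu construction $\widehat{\mathsf{Chu}}_{\mathbb{K}}(\vect)$ over the closed symmetric monoidal, coregular, bicomplete category $\vect$ (itself obtained from the commutative monad $F_{\mathbb{K}}$), and the tensor, internal hom, reflexivity and bicompleteness are all delivered at once by Barr's theorem (\cref{thm_on_little_chu}, packaged for the base category $\spaces$ in \cref{construction_thm_for_star_aut_cats_over_cccs}). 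Your closedness argument for $[V,W]\subseteq C(V,W)$ is essentially right: linearity is closed because $W^*$ separates points of $W$, and compatibility with the pairings is the intersection over $\psi\in W^*$ of the preimages of the closed subspace $V^*\subseteq V^\wedge$ under the continuous maps $f\mapsto \psi\circ f$.

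The genuine gap is your treatment of reflexivity, which you single out as ``the main obstacle'' and defer to an expected Mackey--Arens-type recognition theorem. That is the wrong kind of argument, and if such analytic input were really needed --- i.e.\ if $[V,\mathbb{K}]$ were the natural dual $V^\wedge$ --- the statement would simply be false: the paper records (\cref{question_reflexivity}) that natural-dual reflexivity fails even for replete linear $hk$-spaces, by Haydon's example. The decisive observation, which your proposal never makes, is purely definitional: a $\plin$-morphism $f\colon V\to\mathbb{K}$ satisfies $f=f^\wedge(\id_{\mathbb{K}})\in V^*$, so the underlying linear $hk$-space of $[V,\mathbb{K}]$ is \emph{exactly} the chosen dual $V^*$ (as a closed subspace of $V^\wedge$), paired against $V$; applying this twice, $V^{**}$ is the image of $V$ in $(V^*)^\wedge$, and the axiom in \cref{defn_paired_linear_hk_space_} that $V\to (V^*)^\wedge$ is a closed embedding is precisely what makes $\eta_V$ an isomorphism of paired linear $hk$-spaces. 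In other words, the ``Mackey--Arens content'' is built into the definition, which is the entire point of passing from replete to paired spaces; your proof must say this, not gesture at an analytic theorem. Two smaller consequences: your first construction of the tensor, $V\ptimes W:=[V,W^*]^*$, invokes this reflexivity before it is established, so you must either prove reflexivity first as above or carry out the free-quotient construction in earnest (which requires applying the pair-separation and checking extensionality to stay inside $\plin$); and in your bicompleteness argument the second component of a limit is not the naive ``componentwise pairing'' but the extensionalization, i.e.\ the regular image of the span of pulled-back functionals inside the natural dual of the limit --- exactly the bookkeeping the Chu formalism of \cref{defn_extnl_sep_objects} automates.
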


This theorem will follow from a general construction, given in \cref{construction_thm_for_star_aut_cats_over_cccs} (Chapter 3, \cref{sec_closed_symm_mon_cats_over_cart_cl_cats}). 
The rich categorical structure of $\plin$ derives its worth only from the fact that $\plin$ is also rich in examples, as the next result demonstrates (see Chapter 3, \cref{Frechet_space_paired_linear_k_space_in_unique_way}).

\begin{thm}
    The category $\mathsf{Fre}$ of Fréchet spaces (with continuous linear maps as morphisms) as well as its dual category $\mathsf{Bra}$ of \emph{Brauner spaces} (which are certain locally convex topological vector spaces, see \cref{defn_brauner_space}) embed as full subcategories into $\plin$ and the self-duality of $\plin$ restricts to the duality between Fréchet and Brauner spaces (which we refer to as \emph{Smith duality}, see \cref{sec_intro_smith_duality}). Moreover, the tensor product of Fréchet spaces as taken in $\plin$ coincides with the completed projective tensor product of Fréchet spaces (see Chapter 5, \cref{chap_5_sec_tensor_prods}).
\end{thm}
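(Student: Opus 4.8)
**Proof proposal for the final theorem (the statement about $\mathsf{Fre}$, $\mathsf{Bra}$, Smith duality, and tensor products).**

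The plan is to build everything on top of the universal construction of $\plin$ referenced above (Construction in Chapter 3) and the characterisation that every Fréchet space carries a \emph{unique} paired linear $hk$-space structure (the cited result \cref{Frechet_space_paired_linear_k_space_in_unique_way}). First I would make precise the functor $\mathsf{Fre} \to \plin$: to a Fréchet space $E$ one assigns the same underlying vector space, equipped with its canonical $hk$-ification (the $hk$-space coreflection of the locally convex topology, which agrees with the original topology because Fréchet spaces are metrisable, hence already $k$-spaces and Hausdorff) together with the paired linear structure coming from the continuous dual $E'$. The content here is that this really lands in $\plin$ and that the assignment is functorial; both should be immediate consequences of the uniqueness statement, since any continuous linear map of Fréchet spaces is automatically a morphism of the associated paired linear structures (the pairing is determined, so compatibility is forced). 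The analogous story for $\mathsf{Bra}$ proceeds by \emph{defining} a Brauner space to be (the LCTVS underlying) the $\plin$-dual of a Fréchet space, so that $\mathsf{Bra} \hookrightarrow \plin$ and the duality $\mathsf{Fre}^{\op} \simeq \mathsf{Bra}$ are essentially true by construction; the work is to check that this matches whatever intrinsic definition of Brauner space is given in \cref{defn_brauner_space} (e.g.\ via a theorem of the Smith/Brauner type identifying these LCTVS intrinsically — a $k$-space, complete, with a compact absorbing set, or however the paper phrases it).

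Next I would verify \emph{fullness} of both embeddings. For $\mathsf{Fre} \hookrightarrow \plin$ this says: every $\plin$-morphism between two Fréchet spaces $E, F$ (i.e.\ every continuous-in-the-$hk$-sense linear map compatible with the pairings) is already continuous for the Fréchet topologies. The key point is that on a Fréchet space the $hk$-space structure is not a strict refinement in a way that creates new morphisms: since $E$ is metrisable, $hk$-continuity of a \emph{linear} map out of $E$ coincides with ordinary continuity (a linear map on a metrisable space is continuous iff it is sequentially continuous iff it is $k$-continuous). Fullness for $\mathsf{Bra}$ then follows formally by dualising, using the self-duality $V \cong V^{**}$ from the previous theorem together with fullness for $\mathsf{Fre}$. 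At this stage one also reads off that the self-duality functor $(-)^{*}$ of $\plin$ carries the subcategory $\mathsf{Fre}$ onto $\mathsf{Bra}$ and vice versa, i.e.\ it restricts to Smith duality — this is just the observation that $E^{*}$ is a Brauner space by definition and $E^{**} \cong E$.

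Finally, the tensor-product statement: the $\plin$-tensor product $E \ptimes F$ of two Fréchet spaces is again (carries the unique paired linear structure on) a Fréchet space, and as a topological vector space it is the completed projective tensor product $E \,\widehat{\otimes}_\pi F$. The plan is to identify both sides by the same universal property. On one hand, $E \ptimes F$ represents $\plin$-bilinear (equivalently, separately-$\plin$-continuous, via the tensor–hom adjunction of the previous theorem) maps $E \times F \to Z$ into paired linear $hk$-spaces; restricting $Z$ to range over Fréchet spaces and using fullness, this becomes exactly the universal property of jointly continuous bilinear maps on metrisable spaces — and for Fréchet (indeed metrisable) spaces separate continuity of a bilinear map implies joint continuity, so this is precisely the universal property that characterises $\widehat{\otimes}_\pi$. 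One then checks that $E \,\widehat{\otimes}_\pi F$ is itself a Fréchet space (true: the projective tensor product of Fréchet spaces is metrisable and complete) carrying its unique paired linear structure, so that it \emph{is} an object of $\plin$ satisfying the same universal property, whence the two agree by Yoneda. I expect the main obstacle to be this last step: one must be careful that the universal property defining $\ptimes$ in $\plin$ — phrased in terms of morphisms into \emph{all} paired linear $hk$-spaces, not just Fréchet ones — actually cuts down correctly when tested against Fréchet targets, i.e.\ that no "extra" bilinear maps into exotic $hk$-spaces spoil the representability, and that the comparison map $E \,\widehat{\otimes}_\pi F \to E \ptimes F$ (which exists from the universal property of $\widehat{\otimes}_\pi$ once one knows $E \ptimes F$ is Fréchet) is an isomorphism rather than merely a continuous bijection. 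Handling completeness of $E \ptimes F$ and the interplay between $hk$-ification and completion is where the real care is needed; everything else is formal once the uniqueness-of-structure lemma and the $*$-autonomous structure from the earlier theorems are in hand.
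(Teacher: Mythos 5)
Your treatment of the embeddings, fullness, and the restriction of the self-duality is essentially the paper's own route: Fréchet and Brauner spaces are $k$-spaces in their given topologies, Smith duality (\cref{Smith_duality}) makes them replete, the inclusion $\sclin\hookrightarrow\plin$ sends $V$ to $(V,V^\wedge)$, uniqueness of the pairing is \cref{Frechet_space_paired_linear_k_space_in_unique_way}, and fullness is automatic because for a source with $V^*=V^\wedge$ every continuous linear map is a $\plin$-morphism, continuity reducing to the ordinary topologies by metrisability (resp.\ hemicompactness). Your device of defining $\mathsf{Bra}$ as the duals of Fréchet spaces and then matching it with \cref{defn_brauner_space} is exactly the content of Chapter 2 (\cref{prop_dual_fre_carries_co_topology}, \cref{prop_dual_of_bra_fre}), which the theorem statement presupposes, so that part is fine.

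The tensor-product part, however, has a genuine gap, precisely at the step you flag and leave open. Your Yoneda argument compares $E\ptimes F$ and $E\,\widehat{\otimes}_\pi F$ only as corepresenting objects for bilinear maps into \emph{Fréchet} targets; this proves nothing unless you already know $E\ptimes F$ is itself a Fréchet space (so that Yoneda can be run inside $\mathsf{Fre}$), and you give no argument for that — nor can the comparison map $E\,\widehat{\otimes}_\pi F\to E\ptimes F$ even be constructed from the universal property of $\widehat{\otimes}_\pi$ without first knowing that $E\ptimes F$ (or its associated LCTVS) is complete enough for that universal property to apply. The paper closes this by reversing the logic (\cref{prop_tens_prod_coincide}, then \cref{cor_tens_prod_fre_vs_plin}): one verifies that $V\,\widehat{\otimes}_\pi W$ satisfies the universal property of the repleted tensor product (\cref{univ_prop_repl_tens_prod}) against \emph{all} replete targets $Z$, not just Fréchet ones. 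The key inputs are the Chapter 5 equivalence \cref{cor_replete_equiv_lctvs}, which writes $Z=kE$ with $E$ a $k$-complete, compactly determined LCTVS (hence sequentially complete by \cref{prop_k_compl_implies_seq_compl}); metrisability of $V,W$, so that $V\times W=V\times_{\topsp}W$ and the given bilinear map is $\times_{\topsp}$-continuous into $E$; the universal property of the \emph{sequentially completed} projective tensor product (\cref{univ_prop_scompl_proj_tens_prod}); and the fact that $V\,\widehat{\otimes}_\pi W$ is a $k$-space, so the resulting map into $E$ is the same as a map into $Z=kE$. This is exactly the missing idea that disposes of your worry about ``exotic'' $hk$-space targets, and it makes the Fréchetness of the $\plin$-tensor product a conclusion rather than a needed hypothesis. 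A minor additional slip: the tensor--hom adjunction in $\plin$ identifies maps out of $E\ptimes F$ with \emph{jointly} continuous (for the $k$-product) bilinear maps, not merely separately continuous ones, so the appeal to ``separate implies joint continuity'' is both unnecessary and not a consequence of metrisability alone.
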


Finally, and most importantly, the functional-analytic and measure-theoretic notions that we are interested in are naturally expressed in the language of paired linear $hk$-spaces:

\begin{thm}
    For every $hk$-space $X$, there is a \emph{free paired linear $hk$-space} $\mathcal{M}_c(X)$ on $X$. When $X$ is Hausdorff, the elements of $\mathcal{M}_c(X)$ can be identified with the compactly supported Radon measures on $X$ (Chapter 4, \cref{thm_riesz_type_repr_thm_Mc}), and its structure as a paired linear $hk$-space is uniquely determined by the ($\spaces$-enriched) adjunction,
        $$ C(X, V) \cong [\mathcal{M}_c(X), V], \;\; f \mapsto \left(\mu \mapsto \int_X f(x) \diff \mu(x) \right). \qquad (X \in \spaces, V\in \plin) $$
    In other words, every continuous map $X\to V$ can be uniquely extended to a morphism $\mathcal{M}_c(X)\to V$, and this extension is given by vector-valued integration (see Chapter 4, \cref{remark_vv_integral_from_free_forget_adj}). The resulting vector-valued integral coincides with the classical \emph{Pettis integral} whenever both are comparable (see Chapter 5, \cref{sec_comp_to_pettis}). As a particular case of the above isomorphism, the space of continuous functions $C(X)=C(X,\mathbb{K})$ and $\mathcal{M}_c(X)$ are mutually dual, for \emph{any} $hk$-space $X$, yielding the following commutative diagram of functors:
    \[\begin{tikzcd}
	{\mathsf{Fre}} && \plin \\
	&&& \spaces \\
	{\mathsf{Bra}^\op} && {\plin^\op}
	\arrow[hook, from=1-1, to=1-3]
	\arrow[hook, from=3-1, to=3-3]
	\arrow["{(-)^*}"', from=1-1, to=3-1]
	\arrow["{(-)^*}"', from=1-3, to=3-3]
	\arrow["{\mathcal{M}_c}"', curve={height=6pt}, from=2-4, to=1-3]
	\arrow["{C}", curve={height=-6pt}, from=2-4, to=3-3]
    \end{tikzcd}\]
    Finally, for \emph{any} $hk$-space $X$, the dual of the space of measures $\mathcal{M}(X)$ can be identified with the space $C_b(X)$ of bounded continuous functions on $X$ (see Chapter 4, \cref{sec_M_C_b_as_paired_lin_hk_sp,sec_k_reg_measures}),
        $$ \mathcal{M}(X)^* \cong C_b(X), \qquad C_b(X)^* \cong \mathcal{M}(X),$$
    which may be viewed as a general version of the Riesz representation theorem, bespoke for the setting of $hk$-spaces. 
\end{thm}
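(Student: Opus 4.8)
The plan is to build $\mnd_c(X)$ concretely and then verify the universal property. First I would fix, for a given $hk$-space $X$, a candidate for the underlying vector space: the free vector space on the set $X$ modulo nothing — or, better suited to the topology, the image of the natural map $C_c(X)' \to \prod_{f \in C(X)} \mathbb{K}$, i.e.\ functionals on $C(X)$ obtained as finite linear combinations of Dirac evaluations. One then topologises this as a subspace of $[C(X), \mathbb{K}]$ (the $\plin$-internal hom, which by the first theorem sits inside $C(C(X), \mathbb{K})$) and takes $\mnd_c(X)$ to be the closure, or more precisely the smallest sub-paired-linear-$hk$-space containing the Diracs. The unit $\eta_X\colon X \to \mnd_c(X)$ is $x \mapsto \delta_x$, and continuity of $\eta_X$ reduces to continuity of $X \to C(C(X),\mathbb{K})$, which holds because $C$ is a functor on $hk$-spaces and the exponential law gives adjointness with $\mathrm{ev}\colon X \times C(X) \to \mathbb{K}$.

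Next I would establish the adjunction $C(X, V) \cong [\mnd_c(X), V]$ for every $V \in \plin$. The map from right to left is precomposition with $\eta_X$. To produce the inverse, given $f \in C(X, V)$, I would use the paired linear structure on $V$: by the second and third theorems $V \hookrightarrow V^{**}$, so a morphism $\mnd_c(X) \to V$ is the same as a compatible family of scalar morphisms $\mnd_c(X) \to \mathbb{K}$ indexed by $V^*$, i.e.\ an element of $[\mnd_c(X), \mathbb{K}]^{V^*}$ satisfying the duality constraint. Each $\phi \in V^*$ gives $\phi \circ f \in C(X, \mathbb{K})$, which, by the scalar case of the universal property (this is where I would want to cite \cref{thm_riesz_type_repr_thm_Mc} or prove it inline: $C(X) = C(X, \mathbb{K})$ is $\plin$-dual to $\mnd_c(X)$), extends uniquely to $\widehat{\phi \circ f}\colon \mnd_c(X) \to \mathbb{K}$; these assemble into a morphism $\mnd_c(X) \to V^{**} \cong V$ because the assignment $\phi \mapsto \widehat{\phi \circ f}$ is itself continuous and linear, hence lies in $[V^*, [\mnd_c(X), \mathbb{K}]] \cong [\mnd_c(X), V^{**}]$ by the tensor–hom adjunction and self-duality. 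Uniqueness of the extension follows because the Diracs are ``dense'' in $\mnd_c(X)$ in the precise sense that a morphism out of $\mnd_c(X)$ vanishing on all $\delta_x$ vanishes — which is exactly the statement that $\mnd_c(X)$ was defined as the smallest such subspace. The $\spaces$-enrichment of the adjunction then comes for free by running the same argument with $X$ replaced by $X \times Y$ and using the exponential law in $\spaces$ together with the internal-hom description of $[-,-]$.

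For the Radon-measure identification when $X$ is Hausdorff I would invoke \cref{thm_riesz_type_repr_thm_Mc} directly: a $\plin$-morphism $\mnd_c(X) \to \mathbb{K}$ is a continuous linear functional on the $hk$-space $\mnd_c(X)$, and unwinding the topology (subspace of $C(C(X), \mathbb{K})$) shows these are exactly the functionals on $C(X)$ that are continuous for the compact-open topology and ``locally supported'', which by a Riesz-representation argument on each compact piece are integration against compactly supported Radon measures; conversely every such measure, being a finite total-variation limit of combinations of Diracs on its compact support, lies in $\mnd_c(X)$. The commutative square of functors then reads off formally: the left and bottom arrows of the pentagon are $\mnd_c$ and $C$ composed with the $\plin$-dual, and the identity $C(X)^* \cong \mnd_c(X)$, $\mnd_c(X)^* \cong C(X)$ is just the $V = \mathbb{K}$ case of the adjunction together with the self-duality $V \cong V^{**}$ from the first theorem. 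Finally, the $\mathcal{M}(X)^* \cong C_b(X)$ half is a separate, genuinely harder computation — here $\mathcal{M}(X)$ (all bounded measures, not just compactly supported) is not free, so there is no universal property to lean on, and one must instead analyse the $k$-ified weak topology on $\mathcal{M}(X)$ directly and match its continuous functionals with $C_b(X)$; I expect this, rather than the free-object construction, to be the main obstacle, and I would handle it by the cited results on $k$-regular measures (\cref{sec_k_reg_measures}) after first treating the compact case where $C_b = C$ and the statement degenerates to the already-established duality.
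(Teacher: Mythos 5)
Your construction of $\mathcal{M}_c(X)$ is, object\textendash wise, exactly the paper's: the repletion $rF_{\mathbb{K}}(X)$ is by definition the closure of the image of $F_{\mathbb{K}}(X)$ in $F_{\mathbb{K}}(X)^{\wedge\wedge}\cong C(X)^\wedge$, i.e.\ the closed span of the Dirac functionals, so your candidate is the same space. Where you genuinely diverge is in how the universal property is obtained. The paper never checks it by hand: it assembles the adjunction from general machinery --- existence of $F_{\mathbb{K}}$ via the adjoint functor theorem (\cref{free_lin_hk_sp_existence}), the repletion as the reflector onto separated objects together with Day's reflection theorem (\cref{thm_sepfun_left_adjoint}, \cref{day_refl_thm}), the separated--extensional Chu construction and \cref{construction_thm_for_star_aut_cats_over_cccs}, with the $\spaces$-enrichment coming from \cref{prop_free_forgetful_enriched}. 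Your hands-on route (scalar case by evaluation, then dualising through $V^*$) is more elementary and makes the integral formula visible at once, but it buys less: the paper's route yields at the same time that the adjunction is symmetric monoidal, hence the commutative monad structure of $\mathcal{M}_c$, which your verification would have to redo separately. Note also that the scalar case does not need \cref{thm_riesz_type_repr_thm_Mc}: $\mu\mapsto\mu(g)$ is continuous by cartesian closure and unique by density of the Diracs (\cref{linear_map_vanishing_on_dense_subspace}); the Riesz-type theorem is only needed for the measure-theoretic identification, where you cite it appropriately, and your treatment of $\mathcal{M}(X)^*\cong C_b(X)$ as a separate, non-free computation deferred to the $k$-regular measure results matches the paper.

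The one step that does not work as written is ``the assignment $\phi\mapsto\widehat{\phi\circ f}$ is continuous and linear, hence lies in $[V^*,[\mathcal{M}_c(X),\mathbb{K}]]\cong[\mathcal{M}_c(X),V^{**}]$''. Continuity and linearity only place it in $L(V^*,\mathcal{M}_c(X)^\wedge)\cong L(\mathcal{M}_c(X),(V^*)^\wedge)$; membership in the $\plin$-hom $[V^*,\mathcal{M}_c(X)^*]$ is, by $*$-autonomy, \emph{equivalent} to the transposed map $\mathcal{M}_c(X)\to(V^*)^\wedge$, $\mu\mapsto(\phi\mapsto\mu(\phi\circ f))$, landing in the image of $V$ --- and that is the actual content of the extension, not a formal consequence of the tensor--hom adjunction. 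It is, however, exactly what your own tools supply: on the span of the Diracs the transpose sends $\sum c_i\delta_{x_i}$ to the image of $\sum c_i f(x_i)$, this span is dense in $\mathcal{M}_c(X)$, and $V\to(V^*)^\wedge$ is a \emph{closed} embedding by \cref{defn_paired_linear_hk_space_}, so the transpose factors continuously through $V$ (a closed embedding being a homeomorphism onto its image). With that step spelled out, the rest of your argument --- uniqueness by density, the $V=\mathbb{K}$ case giving the mutual duality of $C(X)$ and $\mathcal{M}_c(X)$ and hence the diagram, and the enrichment via currying over $X\times Y$ --- goes through.
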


In conclusion, paired linear $hk$-spaces provide a functional-analytic category which is tailored to rendering spaces of measures and continuous functions as well-behaved as possible.

\section{Background and Motivation}\label{sec_background_overview}

\subsection{The analysis of ``functionals'' and cartesian closed categories of topological spaces}\label{sec_functs_and_cccs_of_spaces}

The subject of functional analysis bears the word ``functional'' in its very name. This term, or rather the french \emph{fonctionelle}, first appeared as a noun -- quite early on in the modern history of the subject -- in Fréchet's doctoral thesis \cite{frechet1905fonctions}, following a suggestion of Hadamard \cite{taylor1982study}. It therefore lies at the very roots of the field. Designating a function whose argument is a function, a ``functional'' is a particular kind of \emph{higher-order function}. The latter could be defined recursively as a function whose arguments and values may both be higher-order functions. Higher-order functions are modelled syntactically by the (simply typed) \emph{$\lambda$-calculus}, which (for our purposes) is nothing but a collection of rules on how higher-order functions ought to behave. These syntactic rules in turn receive semantic content through the notion of \emph{cartesian closed category}. \par 
Since higher-order functions lie at the very heart of functional analysis in the form of functionals, it should be considered natural for this theory to employ the notion of cartesian closed category from the very start. Similar arguments advocating the use of cartesian categories in the context of analysis have been given previously, as in the introduction of \cite{kriegl1997convenient}. Let us recall the definition of a cartesian closed category.

\begin{defn}
    A cartesian closed category is a category $\mathsf{C}$ with finite products such that for every object $X \in \mathsf{C}$, the functor $(-)\times X$ has a right adjoint $(-)^X$ (the ``exponential''), i.e.~we have a natural isomorphism,
        $$ \hom_{\mathsf{C}}(X\times Y, Z) \cong \hom_{\mathsf{C}}(Y, Z^X). \qquad (Y,Z \in \mathsf{C})$$
\end{defn}

In contrast to other approaches to functional analysis that embrace the concept of a cartesian closed category (e.g.~the bornological approach employed in \cite{meyer1999analytic}, for instance, in which \emph{boundedness} is the primary notion instead of continuity), we shall not deviate from the conceptual formula,
\begin{equation}\label{eqn_linalg_plus_top_eq_funcana}
    \text{linear algebra} \:+\: \text{topology} \;=\; \text{(linear) functional analysis},
\end{equation}
that has put topological vector spaces at the basis of many treatments of functional analysis. Rather, we would like to subtly change, or restrict, what is meant by ``topology'' in \cref{eqn_linalg_plus_top_eq_funcana} by passing to a cartesian closed closed category of topological spaces. (A more radical such change is proposed by the more recent program of \emph{condensed mathematics} \cite{scholze2019condensed}, the precise relation to which we give in  \cref{appendix_relation_to_condesed}.)  
The most common cartesian closed category of topological spaces is the one spanned by \emph{$hk$-spaces} (also known \emph{compactly generated weak Hausdorff spaces}, or \emph{CGWH spaces}).

\begin{defn}[$k$-space, $hk$-space]\label{defn_k_sp_hk_sp}
    A map $f: X \to Y$ between topological spaces $X,Y$ is \emph{$k$-continuous} if for every compact Hausdorff space $K$ and every continuous map $p: K\to X$, the composite $f \circ p: K \to Y$ is continuous. A topological space $X$ is a \emph{$k$-space} if every $k$-continuous map from $X$ is continuous. \par
    A $k$-space $X$ is \emph{$k$-Hausdorff}, or an \emph{$hk$-space}, if the diagonal,
        $$ (=_X) = \{(x, y) \in X\times X\mid x = y\} \subseteq X\times X, $$
    is closed in $X\times X$, where $\times$ denotes the product as formed in the category of $k$-spaces (with continuous maps as morphisms). We write $\spaces$ for the category of $hk$-spaces.
\end{defn}

\begin{warning}
    When $X$ and $Y$ are $k$-spaces, the notation $X\times Y$ will always refer to the product as formed in the category of $k$-spaces (or, equivalently, in $\spaces$), and not to the product in the category of topological spaces, which we will instead denote by $\times_{\topsp}$.
\end{warning}

We will review the basic theory of $k$-spaces in Chapter 2. \par 
A less well-known cartesian closed category of spaces is the category of QCB spaces, which first arose in the context of \emph{domain theory} and \emph{computable analysis} \cite{simpson2003towards,schroder2021admissibly}.

\begin{defn}
    A QCB space (short for ``quotient of countably based space'') is a topological space that is the quotient of some second countable topological space, i.e.~a topological space $X$ is a QCB space if there exists a second countable space $Y$ and topological quotient map $p:Y\to X$.
\end{defn}

Every QCB space is a $k$-space and the inclusion functor from the category of QCB spaces into that of $k$-spaces preserves exponentials (i.e.~function spaces) as well as countable limits and colimits (see Chapter 2 for further details). Because of their strong countability properties (e.g.~every QCB space is separable), they seem particularly attractive in an analytic or measure theoretic context. 

\subsection{Probability and measure monads}\label{sec_prob_monads_on_cccs_of_spaces} Since their introduction by Giry \cite{giry1981categorical}, probability monads have found numerous applications in areas such as \emph{non-parametric Bayesian statistics} \cite{jost2021probabilistic}, \emph{probabilistic programming} \cite{heunen2017convenient}, as well as the abstract study of probability and non-determinism via so-called \emph{Markov categories} \cite{fritz2020synthetic}. In addition, the observation that spaces of probability measures give rise to monads (which we will define shortly) also provides important insights into the nature of probability itself. Elementary introductions to the concept of a probability monad can be found in \cite{perrone2018categorical} and \cite{asilis2020probability}. We will instead give an explanation using one of Giry's original examples, which shows the usefulness of this idea in a more analytical context (where one is interested in convergence of probability measures, for example). \par 
Let us begin by giving a (shortened) definition of the general notion of monad in category theory.

\begin{defn}[Monad]
    Let $\mathsf{C}$ be a category. A \emph{monad} $(T, i, m)$ on $\mathsf{C}$ is an endofunctor, 
            $$T:\mathsf{C}\to \mathsf{C}, $$ 
    together with two natural transformations, the \emph{unit},
            $$ i: X \to T(X), \qquad (X \in \mathsf{C})$$
    and the \emph{multiplication},
            $$ m: T(T(X)) \to T(X), \qquad (X \in \mathsf{C}) $$
    such that two diagrams, expressing a certain coherence between $i$ and $m$, commute (for the full definition, see Chapter 3, \cref{monad_defn}). 
\end{defn} 

With this definition, a \emph{measure monad} is a monad $M$ on some category of ``spaces'' (such as the category $\sets$ of sets, the category of measurable spaces or the category of compact Hausdorff spaces) such that for each such space $X$, the space $M(X)$ is a space of measures on $X$ (such as finitely supported measures defined on arbitrary subsets, arbitrary probability measures defined on measurable subsets, or Radon measures defined on Borel sets). A \emph{probability} monad is a measure monad for which these measures are probability measures. Of course, this is not a formal definition -- what is meant by ``space'' or ``measure'' is free for interpretation. \par 
One possible conceptualisation of probability monads is that they formalise what a ``coherent notion of (spaces of) probability measure(s)'' is. The following example illustrates this, illuminating also an important aspect of why Polish spaces (i.e.~separable, completely metrisable topological spaces) make for such a good probability theoretic setting.

\begin{example}[The Giry monad on Polish spaces, \cite{giry1981categorical}]\label{ex_giry_monad}
    Let $\mathsf{Pol}$ be the category of Polish spaces (with continuous maps as morphisms). For a Polish space $X$, let $\mathcal{P}(X)$ be the space of Borel probability measures on $X$ equipped with the topology of weak convergence of measures (see Chapter 4, \cref{defn_weak_convergence_of_measures}). Then $\mathcal{P}(X)$ is again a Polish space and the maps 
        $$f_*:\mathcal{P}(X) \to \mathcal{P}(Y),\;\;\; \mu \mapsto f_*\mu, \qquad (f: X\to Y, \:Y\in \mathsf{Pol})$$
        $$\delta_\bullet: X \to \mathcal{P}_r(X) \;\;\; x \mapsto \delta_x, $$
        $$ \mathbb{E}: \mathcal{P}(\mathcal{P}(X))\to \mathcal{P}(X), \;\; \mathbb{E}(\pi)(A) := \int_{\mathcal{P}(X)} \mu(A) \diff \pi(\mu), \qquad(A \in \mathcal{B}(X)) $$
    are well-defined, continuous and natural in $X$. Here, $\delta_x$ denotes the Dirac measure concentrated at $x$, $f_*\mu$ is the pushforward of $\mu$ under the continuous map $f$ and $\mathcal{B}(X)$ is the Borel $\sigma$-algebra on $X$. 
    The endofunctor,
        $$ \mathcal{P}: \mathsf{Pol} \to \mathsf{Pol}, \;\; X\mapsto \mathcal{P}(X), \; f\mapsto f_*, $$
    forms a monad with $\delta_\bullet$ and $\mathbb{E}$ as the unit and multiplication. 
\end{example}

In probabilistic terms, the map $\delta_\bullet$ corresponds to embedding determinism into probability, while $\mathbb{E}$ is to be interpreted as the expectation of (the law of) a random probability measure, flattening ``higher order probability'' down to simple probability. That these fundamental operations (including the pushforward of probability measures) are continuous and cohere as we would expect is exactly the statement that $(\mathcal{P}(X), \delta_\bullet, \mathbb{E})$ is a monad on $\mathsf{Pol}$.

\subsection{From Polish spaces to cartesian closed categories of spaces}\label{sec_from_pol_to_cart_cl} Many spaces of interest in probability-theoretic applications are not Polish spaces (and do not admit any useful Polish topology). One example of particular importance is given by spaces of distributions such as the space $\mathcal{S}'(\mathbb{R}^n)$ of tempered distributions, or the space $\mathcal{D}'(\mathbb{R}^n)$ (which is given as the dual of the space of compactly supported smooth functions). For instance, \emph{white noise} (the distributional derivative of Brownian motion) can be understood as a probability measure on $\mathcal{S}'(\mathbb{R})$. Moreover, the Osterwalder-Schrader axioms of Euclidean quantum field theory \cite[p.~91]{glimm1981quantum} describe a certain class of probability measures on $\mathcal{D}'(\mathbb{R}^n)$, by which one might say that Euclidean quantum field theories ``are'' probability measures on a space of distributions. Therefore, the significance of a suitable space of probability measures on non-metrisable spaces like $\mathcal{S}'(\mathbb{R}^n)$ and $\mathcal{D}'(\mathbb{R}^n)$ cannot be dismissed. \par
A crucial observation is now that nearly all spaces appearing in applications can be obtained from a Polish space (or at least a metric space) by formation of either
\begin{enumerate}
    \item countable limits or colimits or,
    \item spaces of continuous maps.
\end{enumerate}
Let us demonstrate this principle using the space $\mathcal{D}'(\mathbb{R}^n)$ as an example. For each compact ball $B_k$ of radius $k\in \mathbb{N}$, the space of smooth functions with support in $B_k$ is a separable Fréchet space, hence a Polish space. Now, the space of test functions $\mathcal{D}(\mathbb{R}^n)$ is given as the filtered colimit,
    $$ \mathcal{D}(\mathbb{R}^n) := \colim_{k\in \mathbb{N}} \mathcal{D}(B_k). $$
(This filtered colimit is usually taken in the category of locally convex spaces, but there is room for variation in this.) We then pass to the space of continuous maps $C(\mathcal{D}(\mathbb{R}^n), \mathbb{K}$) and finally to the subspace of linear maps $\mathcal{D}'(\mathbb{R}^n)$ in this space. The condition of linearity is given by an equation, and hence this subspace is an equaliser, a particular kind of (finite) limit (see Chapter 2, \cref{L_VW_QCB}). Thus, the space of distributions can be obtained by forming a countable colimit, followed by a mapping space and a finite (in particular, countable) limit. These operations are sufficient to construct a very rich variety of examples. \par 
We are thus led to considering a category closed under these operations, i.e.~a countably complete and cocomplete cartesian closed category, such as the category $\spaces$ of $hk$-spaces or the category $h\mathsf{QCB}$ of $k$-Hausdorff QCB spaces. In addition to the coherent notion of ``space'' that these base categories provide, we would like an equally coherent notion of probability measure for this setting. In other words, we face the following problem, whose solution is the main application of our methods (see Chapter 4).

\begin{problem}\label{problem_measure_coherence}
    Construct a probability monad on $\spaces$ that restricts to a probability monad on $h\mathsf{QCB}$ and further to the Giry monad (\cref{ex_giry_monad}) on Polish spaces.
\end{problem}

In addition, we would like to construct monads of $\mathbb{K}$-valued measures that admit a functional analytic interpretation via a suitable Riesz representation theorem. The classical setting of functional analysis being based on topological vector spaces places serious obstrutions on a straightforward solution of these problems (since the definition of a topological vector space employs the ``incorrect'' $\topsp$-product, see \cref{warning_lin_hk_sp_vs_top_vect_sp_intro}). These obstructions invite us to think about a notion of topologised linear space adapted to the setting of $hk$-spaces and QCB spaces, which will allow us to solve \cref{problem_measure_coherence} and related problems in a unified manner.

\subsection{Some foundational problems of the notion of topological vector space}\label{sec_problems_with_top_vec_sp} A further motivation for looking beyond topological vector spaces is that they exhibit some fundamental shortcomings as a category. These have been lamented many times (e.g.~in the already mentioned \cite{kriegl1997convenient} and \cite{scholze2019condensed}) and can be considered part of ``mathematical folklore'', but a concise summary seems to be missing from the literature. Let us sketch some of the main points here.

\subsubsection{Unavoidable Discontinuity}\label{sec_continuity_problems} The following problem is pointed out in \cite[p.~2]{kriegl1997convenient} to illustrate difficulties with using locally convex topologies in the context of infinite-dimensional calculus. It is a rather severe problem: the evaluation pairing between a locally convex space $V$ and its dual $V'$, equipped with \emph{any} locally convex topology, \emph{cannot} be continuous unless $V$ is normable (see Chapter 2, \cref{prop_ex_cont_pairing}). 

\emph{Why is the unavoidable discontinuity of evaluation maps such a grave problem?} One area of application suffering severely from this pathology is calculus in locally convex topological vector spaces (LCTVS). This is one of the motivations for the ``convenient vector spaces'' of \cite{kriegl1997convenient}, which also contains a detailed historical account of infinite-dimensional calculus with references to further problems in the same spirit \cite[pp.~73-77]{kriegl1997convenient}. For example, they mention 25 inequivalent definitions of the derivative at a single point of a topological vector space and explain the importance of having a cartesian closed setting for smooth maps; see \cite[pp.~1-2]{kriegl1997convenient} for the relation to the problem of unavoidably discontinuous evaluation maps. \par 
On the side of measure and integration theory, which is our concern, the unavoidable discontinuity of evaluation maps leads to the unpleasant situation that there does not exist \emph{any} LCTVS topology on the space of compactly supported measures $\mathcal{M}_c(X)$ on a non-compact space $X$ with $C(X)$ as its dual that makes the integration pairing, 
$$ \int: \mathcal{M}_c(X) \times_{\topsp} C(X) \to \mathbb{K}, \;\; (f, \mu) \mapsto \mu(f) = \left(\int f(x) \diff \mu(x) \right), $$
continuous with respect to the $\topsp$-product $\times_{\topsp}$.  

\subsubsection{Tensor troubles} The category of locally convex Hausdorff topological vector spaces does not admit a good notion of tensor product, in the following sense. Recall that the \emph{projective tensor product} $\otimes_\pi$ of locally convex topological vector spaces can be characterised by the following universal property, analogous to the algebraic case (see \cite[p.~33]{kriegl2016frechet}). If $b:V\times_{\topsp} W \to Z$ is a continuous bilinear map, then there is a unique extension $\tilde{b}: V\otimes_\pi W \to Z$ of $b$ along the inclusion $V\times W \to V\otimes_\pi W$ (given by mapping a pair $(x,y)$ to the simple tensor $x\otimes y$). While this universal property is the one we expect, the resulting tensor product $\otimes_\pi$ does \emph{not} satisfy the equally expected \emph{tensor-hom adjunction}, which would turn the category $\lctvs$ of locally convex Hausdorff topological vector spaces into a \emph{closed monoidal category} (\cref{defn_closed_symmetric_mon_cat}). This means that there does not exist a ``space of continuous linear maps'' functor
    $$ L(-,-): \lctvs \times \lctvs \to \lctvs,  $$
such that for all locally convex topological vector spaces $V,W,Z$,
    $$ \Phi^{V,W,Z}: L(V, L(W,Z)) \to L(V \otimes_\pi W, Z), \;\; f \mapsto (x \otimes y \mapsto f(x)(y)) $$
is a bijection. This is because if we did have such functor $L(-,-)$ and adjunction, then the evaluation map $\mathrm{ev}: V \times_{\topsp} L(V, \mathbb{K})\to \mathbb{K}$ would be continuous, since
    $$ \Phi^{L(V,\mathbb{K}), V, \mathbb{K}}(\id_{L(V, \mathbb{K})})(\phi \otimes x) = \phi(x) = \mathrm{ev}(x, \phi). $$
But we have seen that this cannot possibly hold for all locally convex spaces. \par 
Since discontinuous bilinear maps become ubiquitous in the setting of non-normable locally convex topological vector spaces, one is lead to considering variations of the projective tensor product, such as the \emph{inductive tensor product}. (This tensor product satisfies the necessary universal property with respect to \emph{separately continuous} bilinear maps, instead of only ($\times_{\topsp}$-)continuous bilinear maps.) This bifurcation of concepts -- concepts that should really be determined uniquely by their expected properties -- is analogous to the situation of the many inequivalent definitions of the derivative in topological vector spaces mentioned before. \par 
\emph{Why is it important to have a good notion of tensor product?} The primary answer to this question is the need for such tensor products in various applications. A particularly prominent such application is distribution theory (see, for example, \cite{treves2016topological}). Topological tensor products are also built into Segal's definition of a conformal field theory \cite{segal1988definition} (and other flavours of \emph{functorial quantum field theory}), and they arise naturally in the study of topological algebras and non-commutative geometry (see, for example, \cite{meyer1999analytic}, where a ``convenient'' bornological approach is employed). \par 
On a more basic level (and most importantly), having a closed monoidal structure on a category of topologised vector spaces means that the topology on spaces of continuous linear maps $L(-,-)$ is uniquely defined by a universal property, and behaves as we would expect it to.

\section{Overview}

\subsection{Linear $hk$-spaces and linear QCB spaces}

Mac Lane's classical ``Categories for the Working Mathematician'' contains the phrase: ``All told, this suggests that in $\topsp$ we have been studying the wrong mathematical objects. The right ones are [$k$-spaces].'' \cite[p.~188]{mac2013categories} Already Brown's original article anticipates ``that the category of $k$-spaces [may be] adequate and convenient for all purposes of topology.'' \cite[p.~304]{brown1963ten} Functional analysis is certainly one such purpose, and higher order functions (``functions of functions'' -- the \emph{raison d'être} of cartesian closed categories such as $\ktop$) enter its very name in the form of \emph{functionals}. This suggests replacing the definition of a topological vector space, i.e.~a $\mathbb{K}$-module object in $\topsp$ (in category-theoretic terminology), by the notion of \emph{linear $k$-space}, which are instead $\mathbb{K}$-module objects in $\ktop$.

\begin{defn}
    A \emph{linear $k$-space} (over the real or complex numbers $\mathbb{K}\in \{\reals, \complex\}$) is a $k$-space $V$ together with two continuous maps (addition and scalar multiplication),
        $$ +: V \times V \to V,$$
        $$ \cdot: \mathbb{K}\times V \to V,$$
    such that $V$ forms a vector space with respect to the operations $\cdot$ and $+$. (Here, the product $V \times V = k(V\times_\topsp V)$ denotes the product in $\ktop$, as always!) A \emph{linear $hk$-space} is a linear $k$-space which is also $k$-Hausdorff, and a \emph{linear QCB space} is a linear $k$-space which is also a QCB space. 
\end{defn}

\begin{remark}
    This definition is not new -- after all, it is in some sense already contained in the definition of a $k$-space. Instead, given Mac Lane's assessment, it should be surprising that linear $k$-spaces have not received more attention. Hausdorff linear $k$-spaces are called ``Lineare $k$-R\"aume'' by Frölicher and Jarchow in \cite{frolicher1972dualitatstheorie} and ``compactly generated vector spaces'' by Seip in \cite{seip1979convenient}. In the context of functional analysis, the term ``compactly generated'' also has the meaning of being generated algebraically by a compact subset, which is why we prefer the term ``linear $k$-space''. 
\end{remark}

Despite the status of $k$-spaces as a ``more correct'' notion of topological space, not much work has been done to understand \emph{linear} $k$-spaces (in contrast to the vast body of literature on topological vector spaces). The present work provides a step towards closing this gap.

\begin{warning}\label{warning_lin_hk_sp_vs_top_vect_sp_intro}
    Every $k$-space is a topological space, by definition, and the definition of linear $k$-spaces is superficially almost identical to that of a topological vector space. \emph{However,} a linear $k$-space is \emph{not} necessarily a topological vector space with respect to the same topology and vector space structure, see \cref{ex_lin_k_space_not_top_vec_sp}. 
    In the other direction, a topological vector space is also not necessarily a linear $k$-space (with respect to the same topology and vector space structure), see \cref{top_vec_spaces_not_lin_k_spaces}. What does hold is that the so-called $k$-ification of a topological vector space is always a linear $k$-space (see \cref{k_ification_lin_k_sp}). 
\end{warning}

\begin{remark}
    Linear $hk$-spaces can also be interpreted in terms of Clausen-Scholze's \emph{condensed mathematics} (or the essentially equivalent \emph{pyknotic sets} of \cite{barwick2019pyknotic}). More specifically, they can be viewed as particular kinds of \emph{condensed vector spaces} \cite[Proposition 1.7]{scholze2019condensed}. A special case of condensed vector spaces are the \emph{quasi-separated} ones which are equivalent to Waelbroeck's \emph{compactological spaces}, see \cite{lucien1971compactological} for the definition of compactological spaces and \cite[Proposition 1.2]{clausen2022condensed} for the equivalence. See \cref{appendix_relation_to_condesed} for further details.
\end{remark}

\begin{remark}
    Linear QCB spaces (specifically, \emph{$k$-locally convex} linear QCB spaces, in the terminology of Chapter 5) were studied by Schröder in \cite{schroder2016towards} in the context of \emph{computable functional analysis}, illustrating the constructive nature of these objects.
\end{remark}

Numerous examples of linear $hk$-spaces and linear QCB spaces will be given in Chapter 2, \cref{ex_lin_ksp}. Most importantly for now, every Fréchet space is a linear $hk$-space and every separable Fréchet space is a linear QCB space. \par 
With linear $hk$-spaces, the problems described in \cref{sec_problems_with_top_vec_sp} vanish at once. For any two linear $hk$-spaces $V,W$ there is a natural linear $hk$-space $L(V,W)$ of continuous linear maps $V\to W$, as well as a tensor product $V \otimes W$, satisfying the expected universal property and the tensor hom adjunction, 
    $$ L(V \otimes W, Z) \cong L(V, L(W,Z)). \qquad (V,W,Z\in \vect) $$
In other words, the category of linear $hk$-spaces forms a closed symmetric monoidal category with respect to $L(-,-)$ and $\otimes$ (see Chapter 3, \cref{ex_vect_closed_symm_mon}). The topology on $L(V,W)$ is given by the subspace topology that it inherits from the space of all continuous maps $W^V = C(V,W)$, as formed according to the cartesian closed structure of $\spaces$. This topology is not only natural, but it also tends to be well-behaved analytically. \par 
For example, when $H$ is a separable Hilbert space, it follows that the space $L(H, H)$ of bounded operators on $H$ carries the \emph{sequential strong topology} (see \cref{convergence_in_LVW,lem_LVW_seq}),~i.e. a sequence is convergent in $L(H,H)$ if, and only if, it is strongly convergent (i.e.~converges pointwise), and a subset $C\subseteq L(H,H)$ is closed if, and only if, for every strongly convergent sequence of operators in $C$, its limit is again in $C$. For many applications, such as the representations of Lie groups on $H$ appearing in quantum physics (which are usually required to be strongly continuous), this topology is therefore an appropriate one.

\subsection{Replete linear $hk$-spaces}

In addition to a version of the notion of topological vector space adapted to the setting of $k$-spaces, we would also like properties analogous to ``locally convex'' and ``complete''. One could define these in a reasonably direct manner, which we will do in Chapter 5 under the names of ``$k$-locally convex'' and ``$k$-complete''. For the purposes of an introduction, we will only give the following simpler definition here, which capture $k$-local convexity and $k$-completeness at the same time:

\begin{defn}\label{defn_repl_lin_hk_spaces}
    A replete linear $hk$-space is a linear $hk$-space $V$ such that the canonical map 
        $$ \eta: V \to (V^\wedge)^\wedge, \;\; x \mapsto (\phi \mapsto \phi(x)), $$
    is a closed embedding, where for a linear $hk$-space $W$,
        $$ W^\wedge := L(W, \mathbb{K}), $$
    denotes the \emph{natural dual} of $W$.  
\end{defn}

The idea behind this definition is the following. If $V$ is embedded as a closed subset in its double dual, then the dual $V^\wedge$ separates points (a local convexity-type property) and $V$ will inherit completeness properties (such as sequential completeness, see Chapter 5, \cref{prop_k_compl_implies_seq_compl,replete_implies_k_complete}) from $V^{\wedge\wedge}$. Further justification for the claim that repleteness is a simultaneous local convexity and completeness condition will be provided in Chapter 5, where we will see that the category of replete linear $hk$-spaces is, in fact, equivalent to a full subcategory of locally convex Hausdorff topological vector spaces consisting of \emph{$k$-complete} and \emph{compactly determined} spaces. Compactly determined spaces were introduced by Porta in \cite{porta1972compactly}. Part of the equivalence between replete linear $hk$-spaces and $k$-complete, compactly determined locally convex Hausdorff topological vector spaces was established already by Frölicher and Jarchow in \cite{frolicher1972dualitatstheorie}, see Chapter 5 for further details. \par 
Replete linear $hk$-spaces form a very wide class of spaces. For example, every Fréchet space is replete. This will follow directly from the phenomenon of \emph{Smith duality} which we will come to shortly in \cref{sec_intro_smith_duality}. In addition, the category of replete linear $hk$-spaces is closed under various constructions, and this follows from general category-theoretic developments. \par  
Namely, \cref{defn_repl_lin_hk_spaces} is a special case of a general procedure to obtain certain reflective subcategories (exponential ideals, to be precise) of closed symmetric monoidal categories. These subcategories consist of so-called \emph{separated objects} which we will introduce in Chapter 3, \cref{sec_triple_dualisation_lem_and_sep_obj}. This general method guarantees \emph{a priori} that given a linear $hk$-space $V$ and a replete linear $hk$-space $W$, the space of continuous linear maps $L(V,W)$ is again replete. Moreover, one can turn any linear $hk$-space $V$ into a replete one by means of forming the \emph{repletion} $\scompl V$. There is a ``repleted'' tensor product $\otimes_r = \scompl (- \otimes -)$ that turns the category $\sclin$ into a closed symmetric monoidal category with respect to $L(-,-)$ and $\otimes_r$. As with general linear $hk$-spaces, the problems with locally convex topological vector spaces described in \cref{sec_problems_with_top_vec_sp} do not occur for replete linear $hk$-spaces. \par 
Given any $hk$-space $X$, one can form the \emph{free replete linear $hk$-space} $\mathcal{M}_c(X)$ on $X$. As an $hk$-space, this is the same space as the free \emph{paired} linear $hk$-space $\mathcal{M}_c(X)$ mentioned in \cref{sec_main_results} (we will introduce paired linear $hk$-spaces shortly in \cref{sec_intro_plin_hk_sp}). When $X$ is a Hausdorff space $\mathcal{M}_c(X)$ can be identified with the space of compactly supported Radon measures on $X$ (see Chapter 4, \cref{sec_free_repl_lin_is_spc_of_measures}). It satisfies the following universal property (see Chapter 3, \cref{cor_univ_prop_free_repl_sp}). For every replete linear $hk$-space $V$ and any continuous map $f: X\to V$, there exists a unique continuous linear map $\tilde{f}: \mathcal{M}_c(X) \to V$ making the diagram 
\[\begin{tikzcd}
	{\mathcal{M}_c(X)} & V \\
	X
	\arrow["{\tilde{f}}", dashed, from=1-1, to=1-2]
	\arrow["{\delta_\bullet}", from=2-1, to=1-1]
	\arrow[from=2-1, to=1-2]
\end{tikzcd}\]
commute, where $\delta_\bullet$ is the map that assigns to a point $x\in X$ the Dirac measure $\delta_x$ at $x$. In fact, under the identification of $\mathcal{M}_c(X)$ with a space of measures (see Chapter 4), the unique continuous linear map $\tilde{f}$ is given by integrating the (vector-valued!) function $f$ against measures $\mu \in \mathcal{M}_c(X)$:
    $$ \tilde{f}(\mu) = \int_X f(x) \diff \mu(x). $$
Thus, the vector-valued integral arises naturally from the free-forgetful adjunction between the category $r\vect$ of replete linear $hk$-spaces and $\spaces$. 

\begin{remark}
    From the above universal property, it follows directly that under the inclusion of the category of linear $hk$-spaces into that of condensed vector spaces, every replete linear $hk$-space is \emph{$\mathcal{M}$-complete} in the sense of \cite[Definition 4.1]{sclausen2020lectures}. 
\end{remark} 

\subsection{Smith duality}\label{sec_intro_smith_duality} The following phenomenon is very natural from the perspective of $k$-spaces, but may come as quite a surprise to those used to the standard notion of reflexivity of locally convex topological vector spaces: \emph{taking duals in $\mathsf{Vect}(\spaces)$, every Fréchet space is reflexive.} This was first observed in the case of Banach spaces by Smith \cite{smith1952pontrjagin} without using the language of $k$-spaces, but instead interpreting the result as an extension of Pontryagin duality (of locally compact abelian groups) to Banach spaces. A precise formulation is as follows.

\begin{theorem}[Smith Duality]
    Let $V$ be a Fréchet space. Then the canonical evaluation map 
        $$V \to (V^\wedge)^\wedge, \;\;\; x\mapsto (\phi \mapsto \phi(x)), $$
    is an isomorphism (in $\mathsf{Vect}(\spaces)$, i.e.~a linear homeomorphism).
\end{theorem}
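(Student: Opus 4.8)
The plan is to make every object concrete and then reduce the statement to two classical properties of a Fréchet space $V$: it is \emph{complete} and it is \emph{barrelled} (being a Baire space). Throughout, write $V'$ for the topological dual of $V$ in the usual sense of locally convex spaces and $V'_c$ for $V'$ equipped with the topology of uniform convergence on the compact subsets of $V$.

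First I would record the preliminary identifications. Since $V$ is metrizable it is a $k$-space, and for metrizable spaces $\times$ agrees with $\times_\topsp$, so $V$ is a linear $hk$-space with its Fréchet topology; the underlying vector space of $V^\wedge = L(V,\mathbb{K})$ is $V'$, and, by the description of the topology on spaces $L(-,-)$ of continuous linear maps (obtained from the function-space topology of $\spaces$ by restriction to the closed subspace of linear maps), the topology of $V^\wedge$ differs from that of $V'_c$ only by a $k$-ification. Continuity of the canonical map $\eta\colon V\to (V^\wedge)^\wedge$ is then formal: the evaluation map $C(V,\mathbb{K})\times V\to\mathbb{K}$ is a morphism of $hk$-spaces, so its restriction $V^\wedge\times V\to\mathbb{K}$ is too, and currying (after swapping the two factors) yields a continuous map $V\to C(V^\wedge,\mathbb{K})$ whose image lies in the closed subspace $L(V^\wedge,\mathbb{K})=(V^\wedge)^\wedge$ because each evaluation functional is linear. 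Injectivity of $\eta$ is Hahn--Banach.

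The substance of the proof is the identification of the compact subsets of $V'_c$. Such a set is bounded, hence weak-$*$ bounded, hence equicontinuous because $V$ is barrelled; conversely the weak-$*$ closure of an equicontinuous set is still equicontinuous and is weak-$*$ compact by Banach--Alaoglu, and on an equicontinuous set the weak-$*$ topology agrees with the compact-open topology (an Ascoli-type argument). So the compact subsets of $V'_c$ are precisely the weak-$*$ closed equicontinuous sets, and, fixing a decreasing basis $(U_n)_{n\in\nats}$ of closed absolutely convex $0$-neighbourhoods of $V$, the polars $U_n^\circ$ form a cofinal family among them. Since $k$-ification leaves the compact subsets and their topologies unchanged (and only refines the ambient topology), transporting the topology of $(V^\wedge)^\wedge$ along $\eta$ gives on $V$ a topology that contains the topology of uniform convergence on the sets $U_n^\circ$ and is contained in the Fréchet topology (the latter by continuity of $\eta$); but $\sup_{\phi\in U_n^\circ}|\phi(x)|$ is the gauge of the bipolar $U_n^{\circ\circ}=U_n$, so uniform convergence on the $U_n^\circ$ \emph{is} the Fréchet topology. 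Hence $\eta$ is a topological embedding.

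It remains to show that $\eta$ is onto. An element of $(V^\wedge)^\wedge$ is a linear functional on $V'$ whose restriction to every compact subset of $V'_c$ is continuous, equivalently (by the previous paragraph, and since compact-open and weak-$*$ agree on the $U_n^\circ$) one that is weak-$*$ continuous on each $U_n^\circ$; by Grothendieck's completeness theorem these are exactly the evaluations at points of the completion of $V$, and $V$ is complete, so every such functional equals $\mathrm{ev}_x$ for a unique $x\in V$. Therefore $\eta$ is a continuous linear bijection which is also a topological embedding, hence an isomorphism in $\mathsf{Vect}(\spaces)$. I expect this last, surjectivity step to be the main obstacle --- or at least the one demanding the most care --- since it is precisely where completeness of $V$ enters, through Grothendieck's theorem on functionals that are continuous on equicontinuous sets; a conceivable alternative, writing $V=\varprojlim_n V_n$ as a countable projective limit of Banach spaces and invoking Smith's original Banach-space result, merely trades this for the less transparent problem of commuting $(-)^\wedge$ past the projective limit.
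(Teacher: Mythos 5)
Your proof is correct, and it reaches the result by a route that differs from the paper's in its key external inputs, though the skeleton (continuity of $\eta$ from cartesian closure, injectivity from Hahn--Banach, a polar/bipolar argument for the topological half, one big classical duality theorem for the algebraic half) is the same. Where the paper first proves, via the Banach--Dieudonn\'e theorem, that $V^\wedge$ carries exactly the compact-open topology (\cref{prop_dual_fre_carries_co_topology}) and then gets bijectivity of $\eta$ from the Mackey--Arens theorem (completeness entering through the fact that closed absolutely convex hulls of compacta are compact), you never identify the topology of $V^\wedge$ at all: you only pin down its \emph{compact subsets} (pointwise bounded $\Rightarrow$ equicontinuous by barrelledness, plus Alaoglu and an Ascoli-type argument), note that $k$-ification does not disturb them, and then obtain the embedding from the bipolar identity $U_n^{\circ\circ}=U_n$ --- essentially the same computation the paper uses to prove openness --- and surjectivity from Grothendieck's completeness theorem, which is where completeness of $V$ enters for you. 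What your route buys is independence from Banach--Dieudonn\'e (only the compacta of $V^\wedge$, not its topology, are needed for this theorem), at the price of invoking Grothendieck's completeness theorem, which is of comparable weight to Mackey--Arens; what the paper's route buys is that the stronger statement it proves along the way, namely that $V^\wedge$ is literally the compact-open dual, is reused repeatedly elsewhere (e.g.\ for the Brauner-space half of \cref{Smith_duality} and in Chapters 4--5), so the heavier intermediate result is not wasted. Your closing judgement that surjectivity is the delicate step, and that factoring $V$ as a countable projective limit of Banach spaces would only relocate the difficulty to commuting $(-)^\wedge$ past the limit, is accurate.
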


 A more refined version uses the notion of \emph{Brauner space}, the notion dual to that of a Fréchet space under Smith duality. 

\begin{defn}\label{defn_brauner_space}
     A linear $hk$-space is a \emph{Brauner space} if it is hemicompact and also a complete locally convex topological vector space (with respect to the same topology). Recall that an $hk$-space is \emph{hemicompact} if it is the sequential colimit of compact spaces with injective transition maps. We write $\mathsf{Bra}$ for the category of Brauner spaces with continuous linear maps as morphisms.
\end{defn}

The term ``Brauner space'' was coined by Akbarov \cite[p.~220]{akbarov2003pontryagin} in honour of Brauner's investigation of this class of spaces in \cite{brauner1973duals}.

\begin{remark}
    Hemicompactness of a general topological space $X$ is usually defined as follows: $X$ is hemicompact if it admits an increasing sequence $(K_n)$ of compact subsets such that every further compact set is contained in one of the $K_n$. When $X$ is an $hk$-space, this reduces to the simpler definition given above by \cite[Lemma 3.6, Lemma 3.7]{strickland2009category}.
\end{remark}

We can now state:

\begin{thm}
    If $V$ is a Fréchet space, then $V^\wedge$ is a Brauner space and, conversely, if $V$ is a Brauner space, then $V^\wedge$ is a Fréchet space. Therefore, Smith duality yields an equivalence of categories, 
        $$ (-)^\wedge: \mathsf{Fre}^\op \to \mathsf{Bra}. $$
\end{thm}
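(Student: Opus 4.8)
The plan is to establish the two duality claims ($V$ Fréchet $\Rightarrow V^\wedge$ Brauner, and $V$ Brauner $\Rightarrow V^\wedge$ Fréchet) separately, and then assemble the equivalence of categories from them together with Smith duality (the preceding theorem), which already tells us that $V \to (V^\wedge)^\wedge$ is an isomorphism for Fréchet $V$. The key point to check for the category equivalence is that this double-dual isomorphism also holds for Brauner spaces; but this follows formally, because if $V$ is Brauner then $V^\wedge$ is Fréchet, so $(V^\wedge)^\wedge \to ((V^\wedge)^\wedge)^\wedge$ wait — more carefully, Smith duality applied to the Fréchet space $V^\wedge$ gives that $V^\wedge \to (V^\wedge)^{\wedge\wedge\wedge}$... the clean route is: the functor $(-)^\wedge$ is its own adjoint on both sides (the evaluation maps are always the unit/counit), and once both unit and counit are isomorphisms on the relevant subcategories we get an adjoint equivalence. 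Concretely: Smith duality gives the counit iso on $\mathsf{Fre}$, and applying Smith duality to the Fréchet space $W^\wedge$ for $W\in\mathsf{Bra}$ gives the unit iso on $\mathsf{Bra}$. So the real content is the two object-level statements.

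First I would prove that the dual of a Fréchet space is Brauner. Given a Fréchet space $V$, by Smith duality $V^\wedge$ is a replete linear $hk$-space with $(V^\wedge)^\wedge \cong V$. I need to show $V^\wedge$ is hemicompact and is a complete locally convex TVS for the same topology. For hemicompactness: a Fréchet space has a countable neighbourhood basis of absolutely convex sets $U_n$ with $U_{n+1}+U_{n+1}\subseteq U_n$; the polars $U_n^\circ \subseteq V^\wedge$ are equicontinuous, weak-* compact by Banach–Alaoglu, and exhaust $V^\wedge$ since every continuous functional is bounded on some $U_n$. One must check that the $hk$-space topology on $V^\wedge$ (inherited as a subspace of $C(V,\mathbb{K})$, i.e.\ of the compact-open or rather the $k$-ified topology) restricted to each $U_n^\circ$ agrees with the weak-* topology — this uses that on equicontinuous sets the topology of pointwise convergence coincides with that of uniform convergence on precompact sets, a classical fact, together with the compatibility of the $hk$-structure on function spaces with uniform convergence on compacta. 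Then $V^\wedge = \colim_n U_n^\circ$ is a sequential colimit of compacta with injective transition maps, hence hemicompact. That $V^\wedge$ is a complete locally convex TVS for this topology should follow because on a hemicompact $k$-space the linear $hk$-space structure and the strong-dual LCTVS structure are known to coincide (this is exactly the kind of statement the thesis develops in its chapter on $k$-locally convex spaces), and the strong dual of a Fréchet space is complete by standard functional analysis.

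Second, the converse: if $V$ is Brauner then $V^\wedge$ is Fréchet. Write $V = \colim_n K_n$ as a hemicompact space, where without loss of generality the $K_n$ are absolutely convex compact subsets (using local convexity of $V$). A continuous linear map $V\to\mathbb{K}$ is the same as a compatible family of continuous maps $K_n\to\mathbb{K}$ that are linear on each — equivalently a linear functional bounded on each $K_n$ — so $V^\wedge = \varprojlim_n \, (\text{functionals on } \vspan K_n \text{ bounded on } K_n)$, which one identifies with $\varprojlim_n B_n$ for Banach spaces $B_n$ (duals of the normed spaces generated by the $K_n$). This exhibits $V^\wedge$ as a countable limit of Banach spaces, hence metrizable; the topology is that of uniform convergence on the $K_n$, giving a countable family of seminorms, and completeness follows since each $B_n$ is complete and a countable limit of complete spaces is complete. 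The one subtlety is again reconciling the abstract $\plin$-dual / $hk$-space topology on $V^\wedge$ with this explicit Fréchet topology; I would handle this exactly as in the first part, via the identification of the function-space topology on a subspace of $C(V,\mathbb{K})$ restricted over a hemicompact domain with uniform convergence on the exhausting compacta.

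I expect the main obstacle to be precisely these topology-identification steps: verifying that the $hk$-space topology on $V^\wedge$ — defined abstractly as a subspace of the internal hom $C(V,\mathbb{K})$ in $\spaces$, with its $k$-ification — coincides on the nose with the classical functional-analytic topologies (weak-* on equicontinuous pieces, resp.\ the strong/compact-convergence topology globally). This requires care about where $k$-ification does or does not change the topology, and it is where one must invoke the structural results of the thesis on when a linear $hk$-space underlies an honest LCTVS. Everything else — Banach–Alaoglu, the polar exhaustion, the colimit/limit bookkeeping, and the assembly of the adjoint equivalence — is routine once that bridge is in place.
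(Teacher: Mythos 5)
Your overall skeleton (polars of a countable neighbourhood basis for the Fréchet-to-Brauner direction, an exhausting sequence of absolutely convex compacta for the converse, then Smith duality plus naturality for the equivalence) matches the paper, but three steps as you propose them do not go through. First, for $V$ Fréchet the natural dual does \emph{not} carry the strong topology unless $V$ has the Heine--Borel property (cf.\ \cref{prop_heine_borel_implies_dual_strong_top}); it carries the compact-open topology. So ``completeness of the strong dual'' is the wrong fact, and the assertion that on a hemicompact space ``the linear $hk$-space structure and the strong-dual LCTVS structure coincide'' is essentially the statement you are trying to prove. The missing ingredient, which the paper supplies in \cref{prop_dual_fre_carries_co_topology} via the Banach--Dieudonn\'e theorem together with Arzel\`a--Ascoli (\cref{thm_arzela_ascoli}), is that the compact-open dual $V^{*[c.o.]}$ is \emph{already} a $k$-space---its topology is the final topology with respect to the equicontinuous (equivalently, relatively compact) subsets---so that $V^\wedge=kV^{*[c.o.]}=V^{*[c.o.]}$, and this is a complete locally convex topology because $V$ is a $k$-space. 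Knowing only that $V^\wedge$ is exhausted by the compacta $U_n^\circ$ and agrees with weak-$*$ on each of them does not yet give that the resulting colimit topology is a (complete, locally convex) vector space topology, which is exactly what the definition of a Brauner space requires. (Your hemicompactness argument also silently needs ``compact $\Rightarrow$ equicontinuous'' in $V^\wedge$ to see that every compact set lies in some $U_n^\circ$; this does follow from Arzel\`a--Ascoli, but must be said.)

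Second, in the converse direction the identification $V^\wedge\cong\varprojlim_n B_n$, with $B_n$ the Banach duals of the normed spaces spanned by the $K_n$, is false: for a linear functional, boundedness on each $K_n$ is strictly weaker than continuity on each $K_n$. For $V=(c_0)^\wedge\cong\ell^1$ with its compact-open topology, the functionals bounded on the $K_n$ (norm-bounded sets) form $\ell^\infty$, whereas $V^\wedge\cong c_0$; your inverse limit is strictly too big in general. The paper's route (\cref{prop_LVW_Frechet_if_V_bra_and_W_fre}, \cref{prop_dual_of_bra_fre}) avoids this: the compact-open topology on $V^\wedge$ is generated by the countably many seminorms $\sup_{K_n}|\cdot|$, hence metrisable, hence already a $k$-space topology (so no $k$-ification is needed), and it is complete because $V$ is a $k$-space. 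Finally, your formal derivation of the unit isomorphism on $\mathsf{Bra}$ does not work: applying Smith duality to the Fr\'echet space $W^\wedge$ gives, via the triple-dualisation identity (\cref{lem_triple_dualisation_principle}), only that $(\eta^W)^\wedge$ is an isomorphism, not that $\eta^W\colon W\to W^{\wedge\wedge}$ is. Reflexivity of Brauner spaces has independent analytic content; the paper proves it directly (the Chapter-2 statement of \cref{Smith_duality} covers Fr\'echet \emph{and} Brauner spaces, via Mackey--Arens and the bipolar theorem), and only then does the equivalence of categories follow from naturality of $\eta$.
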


We will review a proof of this statement in \cref{sec_smithduality} of Chapter 2. 

\begin{remark}
    Smith duality also admits a natural interpretation in terms of \emph{condensed mathematics}, see \cite[Theorem 4.7]{sclausen2020lectures}.
\end{remark}

The only linear $k$-spaces that are \emph{both} Fréchet \emph{and} Brauner spaces are the finite-dimensional ones (by the Baire category theorem). This expresses a certain dichotomy between spaces of functions (``extensive quantities'') and spaces of measures or distributions (``intensive quantities''), in the following sense. Many (if not most) such spaces (of functions, measures or distributions) appearing in applications of functional analysis are either Fréchet spaces or duals of Fréchet spaces: function spaces (e.g.~$C([0,1])$, $\mathcal{S}(\mathbb{R}^n)$, ...) tend to be Fréchet spaces, while spaces of distributions or measures (e.g. $\mathcal{M}([0,1])$, $\mathcal{S}'(\mathbb{R}^n)$, ...) are their duals. This raises a question: can we find a version of Smith duality applying to both Fréchet and Brauner spaces at the same time, putting function spaces and spaces of measures or distributions on equal grounds? Can we find a closed symmetric monoidal category containing all Brauner spaces and all Fréchet spaces in which \emph{every} space is reflexive, in a way that generalises Smith duality? \par
As a direct consequence of Smith duality, every Fréchet space as well as every Brauner space is replete. Hence, the first natural question in this direction might be:

\begin{question}\label{question_reflexivity}
    Is \emph{every} replete linear $hk$-space reflexive in the sense that the canonical map $V \to V^{\wedge\wedge}$ is an isomorphism? (Remember that for a linear $hk$-space $W$, $W^\wedge$ denotes the \emph{natural dual}, see \cref{defn_natural_dual}.) What about the case when $V$ is a QCB space?
\end{question}

The answer to the first part of the question is negative. In \cite{haydon1972probleme}, an example was found of an $hk$-space $X$ for which the locally convex space $lC(X)$ associated to the linear $hk$-space $C(X)$ (see Chapter 5, \cref{defn_ksat_l}) is not complete, implying that $C(X)$ is not reflexive in the above sense by \cite[Theorem 4.4 (5)]{frolicher1972dualitatstheorie}. The question whether every replete linear QCB space is reflexive in the above sense remains open.

\subsection{Paired linear $hk$-spaces}\label{sec_intro_plin_hk_sp} Fortunately, there is way to circumvent \cref{question_reflexivity} which at the same time admits a natural interpretation in terms of the classical functional-analytic notion of \emph{dual system}, 
which we will refer to under the name of \emph{paired vector space}. 

\begin{defn}\label{defn_paired_vector_space}
    A \emph{paired vector space} (also called \emph{dual pair}, or \emph{dual system}) is a (plain, algebraic!) vector space $V$ together with a point-separating subspace $V^*\subseteq V'$ of its dual space $V'$. (Recall that a subset $S\subseteq V'$ is \emph{point-separating} if $\phi(x) = 0$ for all $\phi\in S$ implies $x=0$.) A \emph{morphism of paired vector spaces} $V,W$ is a linear map $f:V\to W$ such that for all $\phi \in W^*$, $f'(\phi)\in V^*$. Here, 
        $$ f': W' \to V', \;\; \phi \mapsto \phi \circ f, $$
    is the adjoint.
\end{defn}

Equivalently, one could define a paired vector space in a more symmetric way as a pair $(V,W)$ of vector spaces together with a non-degenerate bilinear map $b: V\times W \to \mathbb{K}$ (for example, this is the definition of a dual system in \cite[p.~123]{schaefer1971topological}). However, we would like to emphasise the role of the space $V$ as the ``carrier'', as the underlying set of a paired vector space. The point-separating subspace $V^*$ is to be viewed as ``additional structure''. This is also reflected in using the term ``paired vector space'' instead of the more common name ``dual system''. \par 
Paired vector spaces are a fundamental tool in the study of a locally convex topological vector space in terms of its dual, which Schaefer calls ``the central part of the modern theory of topological vector spaces, for it provides the setting for the deepest and most beautiful results of the subject'' \cite[p.~122]{schaefer1971topological}. \par 
Remarkably, paired vector spaces also have a very rich structure as a category, and admit a vast generalisation to the setting of symmetric monoidal categories called the \emph{separated-extensional Chu construction} (see Chapter 3, \cref{sec_chu_construction}). This construction enables us to ``internalise'' the notion of paired vector space to the category $\spaces$ of $hk$-spaces, resulting in the notion of \emph{paired linear $hk$-space}.

\begin{defn}\label{defn_paired_linear_hk_space_}
    A \emph{paired linear $hk$-space} is a linear $hk$-space together with a closed subspace $V^* \subseteq V^\wedge$ of its natural dual such that 
        $$ V \to (V^*)^\wedge, \;\; x \mapsto (\phi \mapsto \phi(x)), $$
    is a closed embedding. A \emph{morphism of paired linear $hk$-spaces} $V,W$ is a continuous linear map $f:V\to W$ such that for all $\phi \in W^*$, $f^\wedge(\phi)\in V^*$. 
\end{defn}

Thus, a paired linear $hk$-space is a linear $hk$-space with additional \emph{paired linear structure}. \par 
Any replete linear $hk$-space $V$ becomes a paired linear $hk$-space by taking $V^*=V^\wedge$. This applies in particular to the free replete linear $hk$-space $\mathcal{M}_c(X)$ on an $hk$-space $X$, which is also the free paired linear $hk$-space on $X$. Moreover, every Fréchet space admits only one unique paired linear structure compatible with its structure as a replete linear $hk$-space, and the same applies to Brauner spaces (see Chapter 3, \cref{Frechet_space_paired_linear_k_space_in_unique_way}). As described in \cref{sec_main_results}, paired linear $hk$-spaces possess a very rich structure as a category (they form a $*$-autonomous category) and they provide a natural language for the functional-analytic formulation of measure and integration theory. \par 
Using this language, we will construct a paired linear $hk$-space $\mathcal{M}(X)$ of ($\mathbb{K}$-valued) measures, dual to the paired linear $hk$-space $C_b(X)$ of continuous bounded functions, which gives rise to a commutative measure monad $\mathcal{M}$. Passing to the subspace of probability measures, we obtain the probability monad $\mathcal{P}$, thus resolving \cref{problem_measure_coherence}.

\subsection{Monadic vector-valued integration} The properties of $\mathcal{M}_c$ as a commutative monad and the free paired linear $hk$-space enable us to quickly develop a very satisfying theory of vector-valued integration (see Chapter 4, \cref{sec_monadic_vec_int}). This instantiates Kock's idea of ``commutative monads as a theory of distributions'' \cite{kock2011commutative} for the vector-valued integral. A general category-theoretic treatment of this idea was given by Lucyshyn-Wright in his doctoral thesis \cite{lucyshyn2013riesz}, in which the main example given uses \emph{convergence spaces} as a ``base category'', which form a well-behaved category, but can be rather unwieldy objects. Our contribution in this direction is to give an independent general development, streamlined to our intended application to $hk$-spaces, which -- in contrast to convergence spaces -- are simply particular topological spaces.   \par 
One view on monads coming from universal algebra (the study of algebraic structures in general) is that they encode ``generalised algebraic theories'' (see Chapter 3, \cref{sec_eilernberg_moore_cat}). In this view, our monad $\mathcal{M}_c$ encodes the ``generalised algebraic theory of vector-valued integration''.

\chapter{Linear $hk$-Spaces, Linear QCB Spaces and Smith Duality}

In this Chapter, we will first review the basic theory of $k$-spaces, $hk$-spaces and QCB spaces. We will then turn to \emph{linear $hk$-spaces} in \cref{sec_linear_hk_spaces_section}, providing many examples as well as basic constructions such as spaces of continuous functions, free linear $hk$-spaces and quotients. In addition, we investigate the topology on the space $L(V,W)$ of continuous linear maps between linear $hk$-spaces $V,W$ in \cref{sec_topology_of_sp_of_lin_cont_maps}. Finally, linear $hk$-spaces display the phenomenon of \emph{Smith duality}, the subject of \cref{sec_smithduality}. 

\section{$k$-Spaces and $hk$-Spaces}

\subsection{Equivalent characterisations of $k$-spaces}

Recall from the introduction that a $k$-space is a topological space $X$ for which continuity of a map $f:X\to Y$ (to another topological space $Y$) is equivalent to continuity of $f\circ p$ for any continuous map $p: K \to X$ from a compact Hausdorff space $K$ (\cref{defn_k_sp_hk_sp}). Several alternative equivalent ways to define the notion of $k$-space are treated below in \cref{char_k_sp} which will employ the following terminology. 

\begin{defn}
    Let $X$ be a topological space. A subset $U\subseteq X$ is \emph{$k$-open} (resp.~$k$-closed) if, for every compact Hausdorff space $K$ and every continuous map $f:K \to X$, $f^{-1}(U)$ is open (resp.~closed) in $K$.
\end{defn}

\begin{prop}\label{char_k_sp}
    Let $X$ be a topological space. Then the following are equivalent:
    \begin{enumerate}[1.]
        \item $X$ is a $k$-space (see \cref{defn_k_sp_hk_sp}). 
        \item Every $k$-open subset of $X$ is open.
        \item Every $k$-closed subset of $X$ is closed.
        \item $X$ is a quotient of some locally compact Hausdorff space.
        \item $X$ is the quotient of a disjoint union of compact Hausdorff spaces by some equivalence relation.
        \item $X$ is a colimit, in the category of topological spaces, of a diagram of compact Hausdorff spaces. 
    \end{enumerate}
\end{prop}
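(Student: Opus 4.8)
The plan is to establish $(2)\Leftrightarrow(3)$ and $(1)\Leftrightarrow(2)$ directly, and then close the loop $(1)\Rightarrow(6)\Rightarrow(5)\Rightarrow(4)\Rightarrow(1)$, so that every item is visited. The equivalence $(2)\Leftrightarrow(3)$ is pure complementation: for continuous $p\colon K\to X$ with $K$ compact Hausdorff, $p^{-1}(U)$ is open iff $p^{-1}(X\setminus U)$ is closed, so $U$ is $k$-open iff $X\setminus U$ is $k$-closed; combined with ``$U$ open $\Leftrightarrow$ $X\setminus U$ closed'' this turns statement $(2)$ into statement $(3)$.

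For $(1)\Leftrightarrow(2)$, the organising idea is the \emph{$k$-ification}: the $k$-open subsets of $X$ form a topology $\tau_k$, finer than the original topology $\tau_X$ (preimages commute with unions and finite intersections). For $(2)\Rightarrow(1)$, given a $k$-continuous $f\colon X\to Y$ and $V\subseteq Y$ open, the set $f^{-1}(V)$ is $k$-open — because $p^{-1}(f^{-1}(V))=(f\circ p)^{-1}(V)$ is open for every continuous $p\colon K\to X$ — hence open by $(2)$, so $f$ is continuous. For $(1)\Rightarrow(2)$, observe that $\id\colon X\to(X,\tau_k)$ is $k$-continuous, since any continuous $p\colon K\to X$ out of a compact Hausdorff space is automatically continuous into $(X,\tau_k)$ by the very definition of $k$-open; as $X$ is a $k$-space this map is continuous, i.e.\ $\tau_k\subseteq\tau_X$, forcing $\tau_k=\tau_X$.

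For $(1)\Rightarrow(6)$ I would take the diagram $D$ indexed by the category whose objects are the continuous maps $p\colon K\to X$ with $K$ compact Hausdorff and whose morphisms are the continuous maps over $X$, with $D$ sending each object to its source $K$ (one first replaces this by an essentially small subcategory — a routine, and I think suppressible, set-theoretic reduction). The tautological maps $p$ assemble into a cocone and hence a comparison map $\colim D\to X$; I claim it is a homeomorphism. It is bijective because every $x\in X$ is hit by $\ast\xrightarrow{x}X$ and any two points of $\coprod_{(K,p)}K$ with equal image in $X$ are identified in the colimit (via the evident morphisms over $X$ into the one-point objects). It is a homeomorphism because the colimit topology is the quotient topology along $e\colon\coprod_{(K,p)}K\to X$, whose open sets are exactly the $V$ with $p^{-1}(V)$ open for all $(K,p)$ — i.e.\ the $k$-open sets, i.e.\ (by $(1)\Leftrightarrow(2)$) the open sets of $X$. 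Then $(6)\Rightarrow(5)$ because a colimit in $\topsp$ of a small diagram valued in $\compacta$ is the quotient of the coproduct $\coprod_{j\in\ob J}D(j)$ by the kernel equivalence relation of the canonical (surjective, quotient) map, and this coproduct is a disjoint union of compact Hausdorff spaces; $(5)\Rightarrow(4)$ because a disjoint union of compact Hausdorff spaces is locally compact Hausdorff, so $X$ is a quotient of such a space. Finally $(4)\Rightarrow(1)$: writing $q\colon L\to X$ for the quotient map with $L$ locally compact Hausdorff, one checks first that $L$ itself is a $k$-space (if $U\subseteq L$ is $k$-open and $x\in U$, a compact neighbourhood $C\ni x$ gives $U\cap C$ open in $C$, hence $U\cap\operatorname{int}_L C$ open in $L$ and containing $x$), and then, given a $k$-continuous $f\colon X\to Y$, that $f\circ q$ is $k$-continuous, hence continuous, hence $f$ is continuous since $q$ is a quotient map.

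The one step that needs care is the size issue in $(1)\Rightarrow(6)$ — cutting the (a priori large) category of compact Hausdorff spaces over $X$ down to a set that still detects $k$-openness; this is standard but slightly fiddly, and I would handle it by the usual cardinality bound or simply defer to the references cited in the introduction. Everything else is a direct unwinding of definitions, and I expect the cleanest write-up to factor out two reusable facts — that $\tau_k$ is a topology with $(X,\tau_k)$ a $k$-space, and that locally compact Hausdorff spaces are $k$-spaces — as standalone lemmas.
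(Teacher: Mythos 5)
The paper does not actually prove this proposition: it simply cites \cite[Section 3.1]{rezk2017compactly} for the equivalence of (1)--(3) and \cite[Lemma 3.2, Corollary 3.4]{escardo2004comparing} for the rest. Your proposal is therefore a genuinely different (self-contained) route, and almost all of it is correct: the complementation argument for $(2)\Leftrightarrow(3)$, the $k$-ification argument for $(1)\Leftrightarrow(2)$, the verification that locally compact Hausdorff spaces are $k$-spaces and the quotient argument for $(4)\Rightarrow(1)$, and the computation that the tautological cocone of probes exhibits $X$ (with its $k$-open $=$ open topology) as the colimit, are all sound. (One cosmetic slip: the identifications in the colimit come from morphisms \emph{out of} the one-point probes $(\ast,x)\to(K,p)$ picking out preimages of $x$, not into them.)

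The one point that is not merely ``routine and suppressible'' is the size issue you flag, and your two proposed repairs are not quite right as stated. The category of probes is \emph{not} essentially small (every compact Hausdorff space admits a constant map to a nonempty $X$, so the domains range over a proper class of homeomorphism types), and a cardinality bound on probe domains in terms of $|X|$ is not the standard route and is not obviously available, since $k$-openness at a point can be witnessed only by probes of large weight. The standard fix is a selection argument: for each non-open $U\subseteq X$ choose one probe $p_U\colon K_U\to X$ with $p_U^{-1}(U)$ not open (possible because $X$ is a $k$-space), and add the point probes $x\colon\ast\to X$. This set-indexed family already makes $\coprod_U K_U\sqcup\coprod_x \ast\to X$ a quotient map, giving $(1)\Rightarrow(5)\Rightarrow(4)$ directly, and your universal-property verification for $(6)$ goes through verbatim for the \emph{small} full subdiagram spanned by these selected probes (a cocone yields $f$ via the point probes, and $f$ is continuous because the selected probes detect openness). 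With the diagram small, your step $(6)\Rightarrow(5)$ — which genuinely needs a set-indexed coproduct — is then legitimate. With this replacement the argument is complete; alternatively, deferring to \cite{escardo2004comparing} for exactly this point is what the paper itself does.
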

\begin{proof}
    For the equivalence of the first three points, see \cite[Section 3.1]{rezk2017compactly}, for the other ones, see \cite[Lemma 3.2, Corollary 3.4]{escardo2004comparing}.  
\end{proof}

\subsection{Examples of $k$-spaces and $k$-ification}

\subsubsection{Locally compact spaces} As \cref{char_k_sp} entails, every locally compact Hausdorff space is a $k$-space.

\subsubsection{Sequential spaces and metrisable spaces} Recall that \emph{sequential spaces} are those topological spaces on which it suffices to check continuity on sequences. More precisely:

\begin{defn}
    A map between topological spaces is \emph{sequentially continuous} if it maps convergent sequences to convergent sequences. A topological space $X$ is \emph{sequential} if any sequentially continuous map from $X$ (to any further topological space) is continuous. 
\end{defn}

Since a convergent sequence is nothing but a continuous map from the compact Hausdorff space $\mathbb{N}\cup \{\infty\}$, the one-point compactification of the natural numbers, we have that (see also \cite[Proposition 1.6]{strickland2009category}):

\begin{example}\label{ex_seq_spaces_k_space}
    Every sequential space is a $k$-space, including in particular all metrisable spaces.
\end{example}

\subsubsection{Closed subspaces}\label{open_and_closed_subsp_ksp} Closed subspaces of $k$-spaces are again $k$-spaces \cite[Proposition 3.4 (1)]{rezk2017compactly}. The analogous statement concerning open subspaces is true for $k$-Hausdorff $k$-spaces, which we will introduce later, see \cref{perm_prop_haus}.

\subsubsection{\texorpdfstring{$k$-ification}{k-ification}, limits and co-limits} 

An arbitrary topological space can be turned into a $k$-space, by ``$k$-ifying'' it:

\begin{defn}[$k$-ification]\label{defn_k_ification}
    Let $X$ be a topological space. The \emph{$k$-ification} of $X$, written $kX$, is given by the $k$-space whose underlying set is the same as that of $X$ and whose topology is given by the $k$-open subsets of $X$ (the \emph{$k$-ification of the topology} of $X$).
\end{defn}

Given a continuous map $f:X\to Y$ between topological spaces $X,Y$, we have that $f$ is also continuous when viewed as a map
    $$ kf: kX \to kY, \;\;\; x \mapsto f(x). $$
This allows us to make the following definition.

\begin{defn}[$\ktop$, $k$-ification as a functor]
    We write $\ktop$ for the category of $k$-spaces (with continuous maps as morphisms) and 
        $$ k: \topsp \to \ktop, \;\;\; X \mapsto kX, \; f \mapsto kf$$
    for the \emph{$k$-ification functor}.
\end{defn}

The $k$-ification functor can be characterised by a universal property \cite[Corollary 1.10]{strickland2009category}: 

\begin{prop}[Universal property of $k$-ification]\label{univ_prop_kification}
    The $k$-ification functor is right adjoint to the inclusion functor 
        $$\topsp(-): \ktop \hookrightarrow \topsp. $$
    In other words, a map $f: X \to Y$, where $X$ is a $k$-space and $Y$ is an arbitrary topological space, is continuous if, and only if, it is continuous when viewed as a map $X\to kY$. More concisely, a continuous map $X\to Y$ is equivalently a continuous map $X \to kY$: 
    \begin{equation}\label{kification_adjunction}
        \Hom_{\topsp}(\topsp(X), Y) \cong \Hom_{\ktop}(X, kY). \qquad (X \in \ktop, Y \in \topsp)
    \end{equation}
\end{prop}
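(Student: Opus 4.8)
The plan is to establish the adjunction \eqref{kification_adjunction} directly by exhibiting the natural bijection on hom-sets, and then unpack it into the stated ``if and only if'' criterion. First I would observe that the underlying set of $kY$ is literally the underlying set of $Y$, and the topology of $kY$ is \emph{finer} than that of $Y$ (since every open set of $Y$ is in particular $k$-open), so the identity map $\topsp(kY) \to Y$ is always continuous; composing with it gives a well-defined function $\Hom_{\ktop}(X, kY) \to \Hom_{\topsp}(\topsp(X), Y)$. This map is obviously injective (it does not change the underlying function), so the entire content of the proposition is the \emph{surjectivity} of this map: if $f \colon X \to Y$ is continuous and $X$ is a $k$-space, then $f$ is continuous as a map $X \to kY$.

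The key step is therefore to prove that surjectivity, and here is where the hypothesis that $X$ is a $k$-space is essential. Let $f \colon X \to Y$ be continuous and let $U \subseteq Y$ be $k$-open; I must show $f^{-1}(U)$ is open in $X$. Since $X$ is a $k$-space, by \cref{char_k_sp} it suffices to show $f^{-1}(U)$ is $k$-open in $X$, i.e.\ that for every compact Hausdorff space $K$ and every continuous $p \colon K \to X$, the set $p^{-1}(f^{-1}(U)) = (f \circ p)^{-1}(U)$ is open in $K$. But $f \circ p \colon K \to Y$ is continuous (composite of continuous maps), $K$ is compact Hausdorff, and $U$ is $k$-open in $Y$ by assumption; so by the very definition of $k$-open, $(f\circ p)^{-1}(U)$ is open in $K$. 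Hence $f^{-1}(U)$ is $k$-open in $X$, hence open in $X$ since $X$ is a $k$-space. This shows $f \colon X \to kY$ is continuous.

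Finally I would assemble this into the adjunction statement: naturality in both variables is automatic because the bijection is the identity on underlying functions, so it commutes with any precomposition by a $\ktop$-morphism $X' \to X$ and any postcomposition by a $\topsp$-morphism $Y \to Y'$ (using that $k$ is functorial, already noted before \cref{defn_k_ification}). I do not expect any serious obstacle here: the only subtlety is keeping straight the two directions of the comparison map and recognising that ``continuous $X \to kY$ implies continuous $X \to Y$'' is the trivial direction (compose with $\topsp(kY)\to Y$) while ``continuous $X \to Y$ implies continuous $X \to kY$'' is the direction that genuinely uses the $k$-space hypothesis on the source. One should also remark that the equivalence ``$k$-open $\Rightarrow$ open for $k$-spaces'' invoked above is precisely the content of \cref{char_k_sp}, so the argument is self-contained relative to the excerpt.
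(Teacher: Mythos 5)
Your proof is correct and complete: the trivial direction (compose with the continuous identity $kY \to Y$, which exists because the $k$-ified topology is finer) and the substantive direction (pull back a $k$-open set, test against probes $p\colon K \to X$, and use that $X$ is a $k$-space via \cref{char_k_sp}) are both handled properly, and naturality is indeed automatic since the bijection is the identity on underlying functions. The paper itself offers no proof here, citing \cite{strickland2009category} instead, so your argument simply supplies the standard details that the paper delegates to the literature.
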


Yet another way to phrase this is that $k$-spaces form a coreflective subcategory of $\topsp$. As an immediate corollary, we obtain:

\begin{cor}\label{ksp_complete_cocomplete}
    The category $\ktop$ of $k$-spaces is bicomplete, i.e.~it has all limits and colimits. The colimits are formed as in $\topsp$ and the limits are the $k$-ification of those in $\topsp$. In particular, the product in $\ktop$ is given by
        $$ X \times Y := k(X \times_\topsp Y),$$
    for any two $k$-spaces $X,Y$. Moreover, arbitrary quotients and disjoint sums (coproducts as formed in $\topsp$) of $k$-spaces are again $k$-spaces.
\end{cor}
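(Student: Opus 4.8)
The plan is to derive everything from the universal property of $k$-ification (\cref{univ_prop_kification}) — i.e.\ from the fact that $\ktop$ is a \emph{full coreflective} subcategory of $\topsp$ — together with the concrete descriptions of $k$-spaces in \cref{char_k_sp}. It is cleanest to treat the colimit part and the limit part separately, and to extract the claims about quotients and disjoint sums as a byproduct of the colimit part.

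For colimits, the key claim is that the colimit in $\topsp$ of any diagram of $k$-spaces is again a $k$-space. Granting this, the colimit object equipped with its colimit cocone automatically has the colimit universal property inside the full subcategory $\ktop$ (a competing cocone in $\ktop$ is a cocone in $\topsp$, its unique factorisation lands between two $k$-spaces and hence lies in $\ktop$), so $\ktop$ is cocomplete with colimits created by the inclusion into $\topsp$. To prove the claim I would first record two permanence facts, both immediate from \cref{char_k_sp}(4): a topological quotient of a $k$-space is a $k$-space (a quotient of a quotient of a locally compact Hausdorff space is again one), and a coproduct in $\topsp$ of $k$-spaces is a $k$-space (a coproduct of locally compact Hausdorff spaces is locally compact Hausdorff, and a coproduct of quotient maps is a quotient map). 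Since an arbitrary colimit in $\topsp$ is the coequaliser of a pair of maps between coproducts, both of which are coproducts of $k$-spaces and hence $k$-spaces, and since such a coequaliser is a topological quotient of the target coproduct, the claim follows. The statements about arbitrary quotients and disjoint sums are exactly these two permanence facts.

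For limits, I would argue formally from the coreflection. Given $F\colon J\to \ktop$, form $L:=\lim_{\topsp} F$ in $\topsp$ (which exists, as $\topsp$ is bicomplete) and set $\lim_{\ktop} F := kL$. For every $k$-space $Z$ there are natural isomorphisms
$$ \Hom_{\ktop}(Z, kL) \;\cong\; \Hom_{\topsp}(Z, L) \;\cong\; \lim_{i\in J}\Hom_{\topsp}(Z, F(i)) \;\cong\; \lim_{i\in J}\Hom_{\ktop}(Z, F(i)), $$
using \cref{kification_adjunction} for the first isomorphism, the universal property of $L$ for the second, and fullness of $\ktop\hookrightarrow\topsp$ for the third; chasing $\id_{kL}$ through this chain produces the limiting cone on $kL$. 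Taking $J$ to be the discrete two-object category then gives $X\times Y = k(X\times_\topsp Y)$ as a special case.

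I do not expect a genuine obstacle: this is the standard argument that a (co)reflective subcategory of a bicomplete category is bicomplete. The one place where \cref{char_k_sp} is really needed, rather than pure adjunction formalism, is the asymmetry — colimits in $\ktop$ are computed exactly as in $\topsp$ whereas limits must be $k$-ified — and the content behind it is precisely that $k$-spaces are closed under the topological quotients and coproducts out of which all $\topsp$-colimits are built.
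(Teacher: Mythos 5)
Your argument is correct, but it is more hands-on than the paper's, which disposes of the whole corollary in one line by citing the general fact that a coreflective subcategory inherits limits and colimits from the ambient category (colimits as in $\topsp$, limits by applying the coreflector $k$; \cite[Proposition 4.5.15]{riehl2017category}). Your limit half is essentially that formal argument made explicit: the representability chain via \cref{kification_adjunction} and fullness is exactly how one shows the coreflector of the $\topsp$-limit is the limit in $\ktop$. Your colimit half, however, genuinely departs from the citation: instead of the purely formal closure-under-colimits argument, you verify closure concretely from \cref{char_k_sp}(4) — quotients of quotients of locally compact Hausdorff spaces, coproducts of such, and the coproduct-plus-coequaliser presentation of colimits. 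This costs a little more writing but buys something the bare citation does not immediately deliver: the final sentence about \emph{arbitrary} topological quotients and disjoint sums of $k$-spaces falls out directly, whereas from the formal statement alone one would still have to note, say, that the quotient $X/\!\sim$ can be presented as a coequaliser of a diagram of $k$-spaces (the equivalence relation with its subspace topology need not be a $k$-space, so one would $k$-ify it first, which does not change the coequaliser). So your route is slightly longer but more self-contained and proves the full statement of the corollary without any supplementary remark.
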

\begin{proof}
    Coreflective subcategories always inherit (co-)limits in the fashion described (see \cite[p. 142, Proposition 4.5.15]{riehl2017category})
\end{proof}

\begin{warning}
    The notation $X\times Y$ (for $k$-spaces $X,Y$) will always refer to the product as formed in $\ktop$, which does \emph{not} coincide with the product in $\topsp$. An example of a $\topsp$-product of two $k$-spaces that fails to be a $k$-space will be given in \cref{example_VxVdash_not_k_space}, for a more elementary (but also less natural) example, see \cite[Example 3.3.29]{engelking1989general}.
\end{warning}

How much does $k$-ification modify the topology of a (Hausdorff) topological space? Concerning convergence of sequences and compactness, the answer is: \emph{not at all}. This is essentially a further consequence of the adjunction \eqref{kification_adjunction} and the subject of the following two corollaries.

\begin{cor}\label{conv_seq_in_kification}
    Let $X$ be a topological space. Then a sequence converges in $kX$ if, and only if, it converges in $X$.
\end{cor}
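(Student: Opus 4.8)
The plan is to prove the two implications separately: the direction ``converges in $kX$ $\Rightarrow$ converges in $X$'' by a one-line continuity argument, and the converse by feeding the one-point compactification into the $k$-ification adjunction.

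For the first direction, I would note that the topology of $kX$ is finer than (or equal to) that of $X$, because every open subset of $X$ is automatically $k$-open: if $U \subseteq X$ is open and $f\colon K \to X$ is continuous from a compact Hausdorff space, then $f^{-1}(U)$ is open in $K$. Hence the identity map $kX \to X$ is continuous, and continuous maps send convergent sequences to convergent sequences, so any sequence converging in $kX$ converges (to the same limit) in $X$.

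For the converse, I would use the identification — already recorded just before \cref{ex_seq_spaces_k_space} — of a convergent sequence in a space $Y$ with a continuous map out of the one-point compactification $\mathbb{N}\cup\{\infty\}$: the sequence $(x_n)$ converges to $x$ in $Y$ precisely when the map $p\colon \mathbb{N}\cup\{\infty\} \to Y$, $n \mapsto x_n$, $\infty \mapsto x$, is continuous. Since $\mathbb{N}\cup\{\infty\}$ is compact Hausdorff, hence a $k$-space, applying the adjunction \eqref{kification_adjunction} of \cref{univ_prop_kification} with ``$X$'' $= \mathbb{N}\cup\{\infty\}$ and ``$Y$'' $= X$ gives a bijection
\[ \Hom_{\ktop}(\mathbb{N}\cup\{\infty\}, kX) \;\cong\; \Hom_{\topsp}(\mathbb{N}\cup\{\infty\}, X). \]
Because this bijection is the identity on underlying set maps, continuity of $p\colon \mathbb{N}\cup\{\infty\}\to X$ is equivalent to continuity of the same $p$ viewed as a map into $kX$, i.e.\ $(x_n)$ converges in $X$ iff it converges in $kX$ — which in fact establishes both directions at once.

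There is no genuine obstacle here; the only point worth spelling out carefully is that the bijection supplied by the $k$-ification adjunction is the ``tautological'' one on underlying functions (the unit and counit of the adjunction being identities on underlying sets), so that it really does translate convergence into convergence rather than merely asserting an abstract isomorphism of hom-sets.
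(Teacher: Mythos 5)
Your proof is correct and follows essentially the same route as the paper: identify convergent sequences with continuous maps out of the one-point compactification $\mathbb{N}\cup\{\infty\}$ and apply the $k$-ification adjunction \eqref{kification_adjunction}, which handles both directions at once. The extra remarks (the separate elementary argument for one direction, and spelling out that the adjunction bijection is the identity on underlying functions) are fine but not a different method.
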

\begin{proof}
    Using the adjunction \eqref{kification_adjunction}, 
        $$\Hom_{\haus}(\nats \cup \{\infty\}, X) \cong \Hom_{\ktop}(\nats \cup \{\infty\}, kX),  $$
    where $\nats \cup \{\infty\}$ is the one-point compactification of the natural numbers. Since convergent sequences can be identified with continuous maps out of the one-point compactification of $\nats$, this shows the claim.
\end{proof}

\begin{cor}\label{cpct_sets_in_kififcation}
    Let $X$ be a topological space in which every compact subset is Hausdorff. Then the compact subsets of $X$ and its $k$-ification coincide.
\end{cor}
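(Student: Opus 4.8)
The plan is to prove the two inclusions separately; only one of them uses the Hausdorff hypothesis. First I would note that the topology of $kX$ refines that of $X$, since every open subset of $X$ is in particular $k$-open (for a continuous $p\colon K\to X$ and $U$ open in $X$, $p^{-1}(U)$ is open). Hence the identity $kX\to X$ is continuous, and therefore any $C\subseteq X$ that is compact in $kX$ is compact in $X$ as well. This direction needs no assumption on $X$.

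For the converse, suppose $C\subseteq X$ is compact in $X$. By hypothesis $C$, with the subspace topology inherited from $X$, is a compact Hausdorff space, and the inclusion $\iota\colon C\hookrightarrow X$ is continuous. The crucial observation is that $\iota$ is still continuous when regarded as a map $C\to kX$: the topology of $kX$ is exactly the collection of $k$-open subsets of $X$, and by \cref{defn_k_ification} together with the definition of $k$-open set, a $k$-open $U$ satisfies $p^{-1}(U)$ open in $K$ for \emph{every} compact Hausdorff $K$ and every continuous $p\colon K\to X$; taking $K=C$ and $p=\iota$ gives that $\iota^{-1}(U)$ is open in $C$. Since $C$ is compact, its image $\iota(C)=C$ is then compact in $kX$.

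Putting the two directions together yields that a subset of $X$ is compact in $X$ if and only if it is compact in $kX$, which is the assertion. I do not expect any genuine obstacle; the only point requiring care is bookkeeping of topologies — throughout, $C$ carries the subspace topology from $X$ — and recognising that the Hausdorff hypothesis enters solely to allow $C$ itself to serve as a test space $K$ in the defining condition for $k$-openness. (As a harmless byproduct, the continuous bijection $kX\supseteq C\to C\subseteq X$ from a compact to a Hausdorff space is a homeomorphism, so the subspace topologies on $C$ induced from $X$ and from $kX$ also agree, though this is not needed for the statement.)
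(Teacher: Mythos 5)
Your proposal is correct and follows essentially the same route as the paper: the easy direction uses only that the topology of $kX$ refines that of $X$, and the substantive direction establishes continuity of the inclusion $C\to kX$ (which is where the Hausdorff hypothesis enters) and then applies preservation of compactness under continuous maps. The only cosmetic difference is that you verify this continuity directly from the definition of $k$-open sets, using $C$ itself as the test space, whereas the paper invokes the universal property of $k$-ification (\cref{univ_prop_kification}) via the fact that compact Hausdorff spaces are $k$-spaces; the two arguments are interchangeable.
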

\begin{proof}
    Let $K\subseteq X$ be compact and let $i:K\to X$ be the inclusion map. Since $K$ is a $k$-space (it is compact \emph{Hausdorff}, by assumption), by the universal property of $k$-ification (\cref{univ_prop_kification}), $i$ is also continuous as a map $K \to kX$. Since the image of $K$ under this map is $K$ itself, and the image of a compact set under a continuous function is compact, this implies that $K\subseteq kX$ is compact. \par 
    Conversely, let $K\subseteq kX$ be compact. Then $K$ is also compact in $X$, since the topology of $kX$ is finer. In more detail, let $(U_i)_{i\in I}$ be a cover of $K$ by open subsets of $X$. Since every open subset of $X$ is $k$-open (i.e.~an open subset of $kX$), this implies that $(U_i)_{i\in I}$ is also an open cover of $K$ by open subsets of $kX$. Since $K$ is compact in $kX$, this implies that $(U_i)_{i\in I}$ has a finite subcover. Hence, $K$ is compact in $X$.
\end{proof}

\subsubsection{$k$-subspaces and $k$-embeddings} 

We have seen that arbitrary closed subsets of $k$-spaces become $k$-spaces under the subspace topology (see \cref{open_and_closed_subsp_ksp}). An \emph{arbitrary} subset of a $k$-space can be made into a $k$-space be choosing a ``corrected'' subspace topology: the $k$-ification of the subspace topology. 

\begin{defn}[$k$-subspaces, $k$-embeddings]\label{ksubspaces_def}
    Let $X$ be a $k$-space.
    \begin{enumerate}[1.]
        \item The \emph{$k$-subspace topology} on $Y\subseteq X$ is the $k$-ification of the subspace topology on $Y$.
        \item If $Y\subseteq X$ is endowed with the $k$-subspace topology, we call the resulting $k$-space a \emph{$k$-subspace} of $X$. 
        \item A continuous map $\iota: Y \to X$ of $k$-spaces is said to be a \emph{$k$-embedding} if the induced map,
            $$ \iota|^{\text{im}(\iota)}: Y \to \text{im}(\iota),$$
        where $\text{im}(\iota)$ carries the $k$-subspace topology, is a homeomorphism.
    \end{enumerate}
\end{defn}

\begin{remark}
    Notice that, on closed subsets, the $k$-subspace topology agrees with the usual subspace topology: $k$-ification is superfluous, since the usual subspace topology already yields a $k$-space. For the same reason, closed embeddings of $k$-spaces are equivalently $k$-embeddings with closed image. 
\end{remark}

\subsection{Cartesian closure of $k$-spaces}\label{ktop_ccc}

We now come to the main reason for why we work with $k$-spaces instead of arbitrary topological spaces: they form a cartesian closed category, thus providing a coherent notion of spaces of continuous maps. Although being essentially determined by the structure of $\ktop$ as a category, let us first give a concrete definition of what these mapping spaces look like.

\begin{defn}[Spaces of Continuous Maps]\label{mapping_spaces_defn}
    Let $X,Y$ be $k$-spaces. 
    \begin{enumerate}
        \item Define the topological space $\cco(X,Y)$ as the set of continuous function from $X$ to $Y$, endowed with the \emph{compact-open topology}. By definition, this topology is generated by the sub-basic open sets 
            $$W(i, K, U):= \{f:X\to Y\mid f(i(K))\subseteq U\},$$ 
        with $i, K, U$ ranging over compact Hausdorff spaces $K$, continuous maps $i:K\to X$ and open subsets $U\subseteq Y$.
        \item Now, the \emph{space of continuous maps} from $X$ to $Y$,
            $$Y^X := C(X,Y):= k\cco(X,Y),$$
        is the $k$-space given by the $k$-ification of $\cco(X,Y)$.
    \end{enumerate}
\end{defn}

\begin{remark}\label{cco_for_X_k_Hausdorff}
    If in the above definition, we assume that $X$ is Hausdorff (or more generally \emph{$k$-Hausdorff}, see \cref{khaus_defn}), the compact-open topology is equivalently generated by the subbase consisting of the sets
        $$ W(K, U) := \{f:X\to Y\mid f(K)\subseteq U\}, $$
    where $K$ ranges over the compact Hausdorff subsets of $X$ and $U$ ranges over the open subsets of $Y$. In the general case, however, the slightly more complicated definition is necessary, since the image of a compact Hausdorff space may fail to be Hausdorff.
\end{remark}

This construction does what it promises (see \cite[Proposition 2.12]{strickland2009category}):

\begin{prop}[Cartesian closure of $\ktop$]
    Let $X,Y,Z$ be $k$-spaces. Then the maps
        $$ Z^{X\times Y} \to (Z^Y)^X, \;\;\; f \mapsto (x \mapsto (y \mapsto f(x,y))),$$
    (``currying'') and 
        $$ (Z^Y)^X \to Z^{X\times Y}, \;\;\; f \mapsto ((x,y) \mapsto f(x)(y)), $$
    (``uncurrying'') are mutually inverse homeomorphisms.
\end{prop}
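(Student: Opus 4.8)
The plan is to prove the cartesian closure of $\ktop$, i.e.~that the currying and uncurrying maps between $Z^{X \times Y}$ and $(Z^Y)^X$ are mutually inverse homeomorphisms for all $k$-spaces $X, Y, Z$. Since the formulas for currying and uncurrying are set-theoretically inverse to each other, the only real content is (a) that both maps are well-defined (land in the claimed mapping spaces) and (b) that both are continuous.

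First I would isolate the single structural ingredient that makes everything work: for any compact Hausdorff space $K$ and any $k$-space $X$, the canonical map $K \times X \to X$ (and more importantly the behaviour of products against compact probes) is controlled by the fact that the product $\times$ in $\ktop$ is the $k$-ification of the $\topsp$-product, together with \Cref{univ_prop_kification} and \Cref{char_k_sp}. The key lemma to establish (or cite) is the \emph{exponential law at the level of sets of continuous maps}: for $k$-spaces $X, Y, Z$ there is a natural bijection $\ktop(X \times Y, Z) \cong \ktop(X, Z^Y)$. This I would prove in two halves. For well-definedness of uncurrying, given $f \in \ktop(X, Z^Y)$ I must show $(x,y) \mapsto f(x)(y)$ is continuous on $X \times Y$; since $X \times Y$ is a $k$-space it suffices to check continuity after composing with an arbitrary probe $p : K \to X \times Y$ from a compact Hausdorff $K$, and then to reduce to the classical fact that evaluation $Z^Y \times Y \to Z$ is continuous when $Y$ is a $k$-space and the compact-open topology is used on a compact Hausdorff domain — here \Cref{cco_for_X_k_Hausdorff} and the description of the subbasic sets $W(K,U)$ are exactly what is needed. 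For well-definedness of currying, given $g \in \ktop(X \times Y, Z)$ I must show $x \mapsto g(x, -)$ is continuous $X \to Z^Y$; again probe by $K \to X$, and use that each $g(x,-)$ is continuous (it is a slice of a continuous map) together with a standard compact-open-topology argument that a family of maps $K \to Z^Y$ is continuous iff the adjoint $K \times Y \to Z$ is continuous.

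Having the bijection $\ktop(X \times Y, Z) \cong \ktop(X, Z^Y)$ natural in all three variables, I would then bootstrap it to the homeomorphism statement by a Yoneda-style argument: apply the bijection with $X$ replaced by an arbitrary test space $T$ to get
$$ \ktop(T, Z^{X \times Y}) \cong \ktop(T \times X \times Y, Z) \cong \ktop(T \times X, Z^Y) \cong \ktop(T, (Z^Y)^X), $$
naturally in $T$, where associativity of $\times$ in $\ktop$ is used in the middle. By the Yoneda lemma this chain of natural bijections is induced by a unique homeomorphism $Z^{X \times Y} \to (Z^Y)^X$, and unwinding the definitions (taking $T$ a point, or tracing the identity) shows it is precisely the currying map; its inverse is uncurrying. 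This simultaneously gives well-definedness and bicontinuity of both maps in one stroke, so there is nothing left to check separately.

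The main obstacle is the careful handling of the compact-open topology and $k$-ification in the proof of the underlying bijection — specifically, showing that evaluation and "adjoint-transpose" maps are continuous, where one cannot naively use the compact-open topology on a non-Hausdorff domain and must instead work with the subbasic sets $W(i,K,U)$ of \Cref{mapping_spaces_defn} indexed by probes $i : K \to X$, as flagged in \Cref{cco_for_X_k_Hausdorff}. One must also be vigilant that products are always taken in $\ktop$ (not $\topsp$), so that steps like "probing $X \times Y$ by a compact Hausdorff $K$ detects continuity" are legitimate; this is exactly the point where the definition $X \times Y := k(X \times_\topsp Y)$ and \Cref{ksp_complete_cocomplete} are indispensable. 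Since all of this is standard and worked out in the literature, I would ultimately cite \cite[Proposition 2.12]{strickland2009category} for the bijection and present the Yoneda bootstrap as the explanation of why the two explicit formulas are mutually inverse homeomorphisms.
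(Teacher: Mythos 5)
Your proposal is correct, but it is organised differently from what the paper does: the paper offers no argument at all and simply cites \cite[Proposition 2.12]{strickland2009category}, where the homeomorphism is established by directly verifying continuity of evaluation, currying and uncurrying via the probe-indexed subbasic sets $W(i,K,U)$ of \cref{mapping_spaces_defn}. You instead isolate the \emph{hom-set} exponential law $\ktop(X\times Y,Z)\cong\ktop(X,Z^Y)$ (natural in all variables) as the analytic core --- which you rightly note is where all the compact-open/$k$-ification care is needed, and for which you defer to Strickland --- and then upgrade it to the homeomorphism statement by the Yoneda bootstrap
$$ \ktop(T, Z^{X \times Y}) \cong \ktop(T \times X \times Y, Z) \cong \ktop(T \times X, Z^Y) \cong \ktop(T, (Z^Y)^X), $$
using associativity of the $\ktop$-product and \cref{univ_prop_kification} implicitly. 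This is a clean and legitimate alternative: it buys you that the continuity of the currying and uncurrying maps \emph{themselves} never has to be checked by hand, at the price of two small bookkeeping steps you should not wave away --- naturality in $T$ of the hom-set bijection, and the identification of the Yoneda-induced isomorphism with the explicit currying formula (evaluating at $T=\ast$ does suffice here, since it pins down the underlying function of sets of a map already known to be an isomorphism in $\ktop$). The paper's route (i.e.~Strickland's) is more self-contained at the topological level but requires the direct continuity checks; yours is shorter modulo the cited bijection and makes the formal mechanism (uniqueness of adjoints) explicit. Either is acceptable; just make sure that in the direct half you keep the $W(i,K,U)$ probes rather than the sets $W(K,U)$, exactly as you flagged via \cref{cco_for_X_k_Hausdorff}, since $X$ and $Y$ are not assumed $k$-Hausdorff here.
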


From this it follows that arbitrary ``higher order function terms'' such as 
    $$ F \mapsto (g \mapsto F(f \mapsto ((x, y) \mapsto g(x, f(y))))) $$
define continuous maps, provided the continuity of their constituents. We will make heavy use of this. 

\begin{example}
    When $X$ is a compact Hausdorff space and $Y$ is a metric space, then $\cco(X,Y)$ is the space of continuous maps $X\to Y$ with the topology of uniform convergence, which is induced by the metric,
        $$ d(f,g) = \sup_{x\in X} d(f(x), g(x)). $$
    Hence, $\cco(X,Y)$ is a $k$-space, so $C(X,Y) = k\cco(X,Y) = \cco(X,Y)$ and the topology on $C(X,Y)$ is exactly the topology of uniform convergence.
\end{example}

\subsection{The \texorpdfstring{$k$-Hausdorff}{k-Hausdorff} (equivalently, weak Hausdorff) property: \texorpdfstring{$hk$-spaces}{hk-spaces}}

The appropriate analogue of the Hausdorff property in the context of $k$-spaces is the \emph{$k$-Hausdorff property}. 

\begin{defn}\label{defn_k_hausdorff_property_hk_space_}
    A $k$-space $X$ is said to be \emph{$k$-Hausdorff} if the identity relation (``diagonal'')
    $$(\,=_X) \: :=\: \{(x,y)\in X\times X\mid x=y\}\;\;\,[\;\subseteq X\times X = k(X \times_\topsp X)\;]$$
    is closed in $X\times X$ (where the product is taken in $\ktop$, as always). For brevity, instead of ``$k$-Hausdorff $k$-space'', we will  mostly use the term ``$hk$-space''. Analoguous to how one obtains $k$-spaces by ``$k$-ifying'' topological spaces, we will see that $hk$-spaces are obtained by ``$h$-ification'' (or ``$k$-Hausdorffification'') of $k$-spaces. We will hence denote the full subcategory of $\ktop$ spanned by $hk$-spaces as $\spaces$.
\end{defn}

\begin{remark}
    The (usual) Hausdorff condition is equivalent to the identity relation being closed \emph{in the product $X\times_\topsp X$, taken in the category $\topsp$ of topological spaces}. Hence, the $k$-Hausdorff property essentially results from replacing the $\topsp$-product by the $\ktop$-product in the usual Hausdorff property.
\end{remark}
\begin{remark}
    Although the Hausdorff property is most often defined in terms of separation by open subsets, closure of the identity relation is usually how it is most directly used. For example, this is what allows us to conclude that the solution set of an equation, 
        $$ \{x \in X \mid f(x)=g(x)\}, $$
    i.e.~the equaliser of two continuous maps $f,g$, is closed. 
\end{remark}

$k$-Hausdorff $k$-spaces are also known as \emph{weak Hausdorff} $k$-spaces \cite[Proposition 2.14]{strickland2009category}:

\begin{prop}\label{khaus_equiv_weakhaus}
    A $k$-space $X$ is $k$-Hausdorff if, and only if, it is \emph{weakly Hausdorff}, meaning that for each compactum $K$ and every continuous map $f:K\to X$, $f(K)\subseteq X$ is closed.
\end{prop}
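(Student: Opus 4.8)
The plan is to prove each implication by exploiting that both $X$ and the product $X\times X$ are $k$-spaces (\cref{ksp_complete_cocomplete}): in such a space a subset is closed as soon as it is \emph{$k$-closed}, i.e.\ has closed preimage under every continuous map out of a compactum. So in each direction I reduce to testing a closedness statement against compacta, after which the argument becomes elementary point-set topology.

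\emph{$k$-Hausdorff $\Rightarrow$ weakly Hausdorff.} Assume $(=_X)$ is closed in $X\times X$ and let $f\colon K\to X$ be continuous with $K$ a compactum; to show $f(K)$ is closed I check that it is $k$-closed. Given any compactum $L$ and continuous $g\colon L\to X$, the map $(f,g)\colon K\times L\to X\times X$ is continuous, and since $K\times L$ is compact Hausdorff it coincides with $K\times_{\topsp}L$. The preimage $(f,g)^{-1}(=_X)=\{(k,l)\mid f(k)=g(l)\}$ is therefore a closed, hence compact, subset of $K\times L$; its image under the projection $\pi_L$ is compact, hence closed in the Hausdorff space $L$, and that image is exactly $g^{-1}(f(K))$. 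So $f(K)$ is $k$-closed, therefore closed.

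\emph{Weakly Hausdorff $\Rightarrow$ $k$-Hausdorff.} The heart of this direction is the lemma: \emph{if $X$ is a weakly Hausdorff $k$-space and $u\colon K\to X$ is continuous with $K$ a compactum, then $C:=u(K)$, with the subspace topology, is compact Hausdorff} (it is closed in $X$, which is precisely the weak Hausdorff hypothesis). For Hausdorffness: applying the hypothesis to the constant map from a one-point space shows every singleton of $X$ is closed, so for $p\neq q$ in $C$ the preimages $u^{-1}(p),u^{-1}(q)$ are disjoint closed subsets of $K$; by normality of $K$ there are disjoint open $U\supseteq u^{-1}(p)$, $V\supseteq u^{-1}(q)$. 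Then $K\setminus U$ and $K\setminus V$ are compacta, so $u(K\setminus U)$ and $u(K\setminus V)$ are closed in $X$, and $(X\setminus u(K\setminus U))\cap C$ and $(X\setminus u(K\setminus V))\cap C$ are disjoint open neighbourhoods of $p$ and $q$ in $C$ — a common point $u(k)$ would force $u^{-1}(u(k))\subseteq U\cap V=\emptyset$, absurd. Granting the lemma, to see $(=_X)$ is closed in $X\times X$ I show it is $k$-closed: for continuous $h=(f,g)\colon K\to X\times X$ with $K$ a compactum, apply the lemma to the compactum $K\sqcup K$ and the map $f\sqcup g$ to get that $C:=f(K)\cup g(K)$ is a Hausdorff subspace of $X$; then $f,g$ corestrict to continuous maps $\bar f,\bar g\colon K\to C$, and since $C$ is Hausdorff the equaliser $\{k\mid \bar f(k)=\bar g(k)\}=h^{-1}(=_X)$ is closed in $K$.

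I expect the Hausdorffness clause of the lemma to be the only genuinely non-formal step; everything else is routine (closed subsets and continuous images of compacta, projections of compact sets) combined with the single structural principle that in a $k$-space closedness may be tested against compacta. The only bookkeeping to stay alert to is the distinction between the $\ktop$- and $\topsp$-products, but it causes no trouble here: every product of compacta that appears has a compact Hausdorff factorisation (so the two products agree), while the remaining products are $C\times_{\topsp}C$, taken deliberately in $\topsp$, and the ambient $X\times X$, whose topology is precisely the one against which $k$-closedness of $(=_X)$ is defined.
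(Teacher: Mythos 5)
Your proof is correct. Note that the paper does not prove this proposition itself; it simply cites \cite[Proposition 2.14]{strickland2009category}, and your argument is essentially the standard one underlying that reference: the forward direction via the $k$-closedness test (\cref{char_k_sp}) applied to $f\times g$ on the compactum $K\times L$ (where the $\ktop$- and $\topsp$-products agree) followed by projecting the compact preimage of the diagonal, and the converse via the key lemma that in a weakly Hausdorff $k$-space the image of a compactum is a compact \emph{Hausdorff} subspace, so that equalisers of maps into it are closed. So your proposal correctly fills in the proof the paper outsources, with no gaps; the only points needing care (continuity into the $k$-ified product via \cref{univ_prop_kification}, and using weak Hausdorffness for $u(K\setminus U)$, $u(K\setminus V)$ and for singletons being closed) are all handled.
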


In general, $\topsp$-products of quotient maps may fail to be quotient maps \cite[p. 143, Example 7]{james2000topology}. Fortunately, this pathology can be cured by passing to $hk$-spaces \cite[Proposition 2.20]{strickland2009category}:

\begin{prop}\label{khausdorff_quots}
    Let $W, X, Y, Z$ be $k$-spaces. If $f:W\to X$ and $g:Y\to Z$ are quotient maps, then so is $f\times g: W\times Y \to X\times Z$.
\end{prop}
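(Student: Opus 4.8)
The plan is to deduce this from the cartesian closure of $\ktop$, after first reducing to the case in which one of the two maps is an identity. Since
\[
    f\times g \;=\; (\id_X\times g)\circ(f\times\id_Y)\colon\; W\times Y \longrightarrow X\times Y \longrightarrow X\times Z,
\]
and a composite of topological quotient maps is again a topological quotient map, it suffices to prove the following special case: \emph{if $q\colon W\to X$ is a quotient map of $k$-spaces and $Y$ is any $k$-space, then $q\times\id_Y\colon W\times Y\to X\times Y$ is a quotient map.} The same assertion with the two factors swapped then handles $\id_X\times g$ as well, so both halves of the composite are covered.

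First I would record that, just as in $\topsp$, every quotient map of $k$-spaces is a \emph{coequaliser} in $\ktop$. Let $R\subseteq W\times W$ be the $k$-subspace (in the sense of \cref{ksubspaces_def}) carried by the set $\{(w,w')\mid q(w)=q(w')\}$, with its two projections $p_1,p_2\colon R\to W$. Because $q$ is surjective, the underlying set of $\operatorname{coeq}(p_1,p_2)$ is $W/\!\ker q=X$, and its topology — the finest making the quotient map $W\to X$ continuous — is exactly the quotient topology induced by $q$, hence the given topology of $X$. Being a quotient of the $k$-space $W$, this coequaliser is again a $k$-space by \cref{ksp_complete_cocomplete}, so it is simultaneously the coequaliser in $\topsp$ and in $\ktop$. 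Thus $q$ itself exhibits $X$ as $\operatorname{coeq}_{\ktop}(p_1,p_2)$.

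Next I would apply the cartesian closure of $\ktop$ (\cref{ktop_ccc}): the functor $(-)\times Y\colon\ktop\to\ktop$ is left adjoint to $(-)^Y$, hence preserves all colimits, in particular coequalisers. Applying it to the diagram above gives
\[
    X\times Y \;=\; \operatorname{coeq}_{\ktop}\!\bigl(\,R\times Y \rightrightarrows W\times Y\,\bigr),
\]
with $q\times\id_Y$ as the coequalising map. Since $R\times Y$, $W\times Y$ and $X\times Y$ are all $k$-spaces, this is also a coequaliser in $\topsp$, and a coequalising map there is precisely a continuous surjection whose codomain carries the corresponding quotient topology — that is, a quotient map. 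Hence $q\times\id_Y$ is a quotient map, completing the reduction and the proof.

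The step I expect to be the main obstacle is the middle one — not for any depth, but for the bookkeeping it requires: one must check that $R$, the projections, and every (co)limit in play can be formed inside $\ktop$ without disturbing the relevant underlying sets or topologies (in particular that a coequaliser of $k$-spaces stays a $k$-space, \cref{ksp_complete_cocomplete}), and one must remember throughout that every $\times$ is the $\ktop$-product. The argument genuinely fails for $\times_\topsp$: there $(-)\times_\topsp Y$ need not preserve colimits, and that failure is exactly the pathology this proposition repairs.
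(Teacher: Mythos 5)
Your argument is correct. Note, though, that the paper does not prove this proposition at all: it is quoted from \cite[Proposition 2.20]{strickland2009category}, where the standard proof runs through currying — given $h:X\times Y\to Z$ with $h\circ(q\times\id_Y)$ continuous, one passes to the adjoint map $W\to Z^Y$, uses that $q$ is a quotient map to descend it to a continuous map $X\to Z^Y$, and uncurries; this shows $X\times Y$ carries the final topology for $q\times\id_Y$, and the general case follows by the same factorisation $f\times g=(\id_X\times g)\circ(f\times\id_Y)$ that you use. Your route packages the same mechanism more categorically: you observe that a quotient map of $k$-spaces is the coequaliser of its kernel pair in $\ktop$ (using that $\ktop$-colimits are computed as in $\topsp$, \cref{ksp_complete_cocomplete}), and then invoke cartesian closure (\cref{ktop_ccc}) only through the abstract fact that the left adjoint $(-)\times Y$ preserves coequalisers, before translating back to a topological quotient map via the same computation of colimits in $\topsp$. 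What this buys is a proof with no element-chasing and a transparent diagnosis of where $\times_\topsp$ fails (it is not a left adjoint); what it costs is the extra bookkeeping you yourself flag — identifying the kernel-pair coequaliser with $X$, which genuinely uses that $q$ is a quotient map and not merely a continuous surjection, and checking at both ends that the $\ktop$- and $\topsp$-coequalisers coincide. All of those checks are carried out correctly in your write-up, so the proof stands as a self-contained alternative to the cited one.
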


The Hausdorff condition is not well-behaved with respect to topological quotients. For example, there is no simple condition on an equivalence relation on a Hausdorff space that would guarantee the Hausdorff property of the quotient space (see \cite[p. 141]{james2000topology}). In contrast, closure of the equivalence relation is necessary and sufficient to preserve the $k$-Hausdorff property of $hk$-spaces \cite[Proposition 2.21]{strickland2009category}: 

\begin{prop}\label{khausdorff_quots_closed}
    Let $X$ be an $hk$-space and let $\sim$ be any equivalence relation on $X$. Then $X/\sim$ is $k$-Hausdorff if, and only if, $(\sim) \subseteq X\times X$ is closed.
\end{prop}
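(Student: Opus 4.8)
The plan is to prove both implications, with the forward direction ($X/\sim$ $k$-Hausdorff $\Rightarrow$ $(\sim)$ closed) being the more formal one and the converse being the substantive part. For the forward direction, I would observe that if $X/\sim$ is $k$-Hausdorff, then by \cref{defn_k_hausdorff_property_hk_space_} the diagonal $(=_{X/\sim})$ is closed in $(X/\sim)\times(X/\sim)$. Now the quotient map $q:X\to X/\sim$ is continuous, hence so is $q\times q : X\times X \to (X/\sim)\times(X/\sim)$ (functoriality of the $\ktop$-product), and $(\sim)$ is precisely the preimage $(q\times q)^{-1}(=_{X/\sim})$. Preimages of closed sets under continuous maps are closed, so $(\sim)$ is closed in $X\times X$. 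This direction uses nothing beyond continuity of $q\times q$ and does not even need $X$ to be $k$-Hausdorff.

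For the converse, suppose $(\sim)\subseteq X\times X$ is closed; I want to conclude $X/\sim$ is $k$-Hausdorff, i.e.\ $(=_{X/\sim})$ is closed in $(X/\sim)\times(X/\sim)$. The key tool is \cref{khausdorff_quots}: since $q:X\to X/\sim$ is a quotient map and $X$ is $k$-Hausdorff (hence, via \cref{ksp_complete_cocomplete} and the permanence properties, a $k$-space so that $q$ is a genuine quotient in $\ktop$), the map $q\times q : X\times X \to (X/\sim)\times(X/\sim)$ is again a quotient map. Therefore a subset of $(X/\sim)\times(X/\sim)$ is closed if and only if its preimage under $q\times q$ is closed in $X\times X$. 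Applying this to the diagonal: $(q\times q)^{-1}(=_{X/\sim}) = (\sim)$, which is closed by hypothesis. Hence $(=_{X/\sim})$ is closed, so $X/\sim$ is $k$-Hausdorff.

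The main obstacle — really the only nontrivial input — is the use of \cref{khausdorff_quots} to know that $q\times q$ is a quotient map; without it one cannot pass the closedness of $(\sim)$ back up to the quotient, since in $\topsp$ products of quotient maps need not be quotients. Since that proposition is already established in the excerpt, the remaining work is bookkeeping: checking that $(q\times q)^{-1}(=_{X/\sim})$ really is the equivalence relation $(\sim)$ (immediate from the definition of $q$, namely $q(x)=q(y)\iff x\sim y$), and noting that $X/\sim$ is automatically a $k$-space (quotients of $k$-spaces are $k$-spaces by \cref{ksp_complete_cocomplete}), so that ``$k$-Hausdorff'' is the only condition that needs verifying. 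One small point to be careful about: the ambient products here are the $\ktop$-products throughout, and \cref{khausdorff_quots} is stated precisely for these, so no $k$-ification subtleties intervene.
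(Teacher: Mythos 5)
Your proof is correct, and it is essentially the argument behind the result as the paper uses it: the paper defers to \cite[Proposition 2.21]{strickland2009category}, whose proof likewise pulls the diagonal back along $q\times q$ and invokes the fact that products of quotient maps are quotient maps (\cref{khausdorff_quots}) for the nontrivial direction. Your bookkeeping about $X/\sim$ being a $k$-space and about all products being $\ktop$-products is exactly the right care to take, so nothing is missing.
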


\subsubsection{The $k$-Hausdorff reflection and (co-)completeness of $k$-Hausdorff spaces} The category of $k$-Hausdorff spaces inherits completeness and cocompleteness from the category of $k$-spaces by means of the \emph{$k$-Hausdorff reflection}.

\begin{defn}[$k$-Hausdorff reflection, $\khaus$]\label{khaus_defn}
    \text{}
    \begin{enumerate}[1.]
        \item Let $X$ be a $k$-space and let $\sim_{\min}\subseteq X\times X$ be the smallest closed equivalence relation (i.e. the intersection of all closed equivalence relations) on $X$. The \emph{$k$-Hausdorff reflection}, or $h$-ification, $hX$ is defined as the quotient of $X$ by $\sim_{\min}$:
            $$ \hquot X := X/\sim_{\min}.$$
        \item We denote the category of $hk$-spaces (with continuous maps as morphisms) by $\khaus$.
    \end{enumerate}
\end{defn}

The $k$-Hausdorff reflection deserves this name:

\begin{prop}[Universal property of the $k$-Hausdorff reflection]
    Let $X$ be a $k$-space. Then:
    \begin{enumerate}[1.]
        \item $\hquot X$ is a $k$-Hausdorff space.
        \item (Universal property) For any continuous map $f:X\to Y$ from $X$ to some $k$-Hausdorff space there is a unique map $\overline{f}: \hquot X \to Y$ that factors through the projection $p:X \to \hquot X$. That is, the following diagram is commutative:
        \[\begin{tikzcd}
	       X & Y \\
	       \hquot X
	       \arrow["f", from=1-1, to=1-2]
	       \arrow["p"', from=1-1, to=2-1]
	       \arrow["{\overline{f}}"', dashed, from=2-1, to=1-2]
        \end{tikzcd}\]
        In other words, $\khaus$ is a reflective subcategory of $\ktop$, the reflector (i.e.~the left adjoint to the inclusion $\khaus \hookrightarrow \ktop$) being given by $h$-ification,
            $$ \hquot : \ktop \to \khaus. $$
    \end{enumerate}
\end{prop}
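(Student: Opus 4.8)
The plan is to verify the two claims in turn: first that $\hquot X = X/\sim_{\min}$ is $k$-Hausdorff, then the universal property. For the first claim, I would invoke \cref{khausdorff_quots_closed}: since $\hquot X$ is the quotient of the $k$-space $X$ by the equivalence relation $\sim_{\min}$, and since $\sim_{\min}$ is \emph{by construction} closed in $X \times X$ (it is defined as the intersection of all closed equivalence relations, and an arbitrary intersection of closed sets is closed), that proposition tells us directly that $X/\sim_{\min}$ is $k$-Hausdorff. One small point worth checking is that the family of closed equivalence relations on $X$ is nonempty, so that the intersection makes sense — but $X \times X$ itself is a closed equivalence relation, so this is fine.

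For the universal property, let $f \colon X \to Y$ be continuous with $Y$ a $k$-Hausdorff space. The strategy is: (i) show $f$ is constant on $\sim_{\min}$-classes, so that it factors set-theoretically through $p \colon X \to \hquot X$ as $f = \overline{f} \circ p$; (ii) show the resulting map $\overline{f}$ is continuous; (iii) note uniqueness is automatic since $p$ is (surjective, hence) an epimorphism. For (i), consider the relation $R_f := \{(x, x') \in X \times X \mid f(x) = f(x')\}$, i.e.\ the kernel pair of $f$. This is an equivalence relation, and it is closed: it is the preimage of the diagonal $(=_Y) \subseteq Y \times Y$ under the continuous map $f \times f \colon X \times X \to Y \times Y$ (continuous because products in $\ktop$ of continuous maps are continuous), and $(=_Y)$ is closed precisely because $Y$ is $k$-Hausdorff. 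Hence $R_f$ is one of the closed equivalence relations being intersected, so $\sim_{\min} \subseteq R_f$; that is exactly the statement that $x \sim_{\min} x'$ implies $f(x) = f(x')$, giving the set-theoretic factorisation. For (ii), continuity of $\overline{f}$ is the defining property of the quotient topology: a map out of a topological quotient $X/\sim_{\min}$ is continuous iff its precomposition with the quotient projection is continuous, and $\overline{f} \circ p = f$ is continuous by hypothesis. (Here one uses that the quotient of a $k$-space is again a $k$-space, so the quotient is formed the same way in $\ktop$ as in $\topsp$, per \cref{ksp_complete_cocomplete}.)

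Finally, restating the universal property as an adjunction is formal: the data of (1) a functorial assignment $X \mapsto \hquot X$ with unit $p_X \colon X \to \hquot X$ and (2) the universal property just proved is, by the standard characterisation of adjoint functors via universal arrows (\cite[][]{mac2013categories}), precisely the statement that $\hquot \colon \ktop \to \khaus$ is left adjoint to the inclusion $\khaus \hookrightarrow \ktop$. Functoriality of $\hquot$ itself falls out of the universal property applied to $X \xrightarrow{f} Y \xrightarrow{p_Y} \hquot Y$.

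I do not expect a genuine obstacle here: the entire argument is an assembly of \cref{khausdorff_quots_closed}, the universal property of topological quotients, and the kernel-pair trick for the factorisation. If anything is delicate, it is purely bookkeeping — making sure that all products and quotients are consistently interpreted in $\ktop$ (equivalently $\topsp$, for quotients) rather than in $\topsp$ for products — but \cref{ksp_complete_cocomplete} and \cref{khausdorff_quots_closed} have been set up precisely to license these moves.
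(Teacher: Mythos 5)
Your proposal is correct and follows essentially the same route as the paper: the first claim is exactly the citation of \cref{khausdorff_quots_closed} applied to the closed relation $\sim_{\min}$, and the second is the same kernel-relation argument, showing $(f\times f)^{-1}[=_Y]$ is a closed equivalence relation (using $k$-Hausdorffness of $Y$), so that $\sim_{\min}$ is contained in it and $f$ factors through the quotient. Your added remarks (nonemptiness of the family of closed equivalence relations, explicit continuity and uniqueness of $\overline{f}$, and the formal passage to the adjunction) are fine and only elaborate details the paper leaves implicit.
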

\begin{proof}
    \text{}
    \begin{enumerate}[1.]
        \item This follows directly from \cref{khausdorff_quots_closed}.
        \item The kernel equivalence relation on $X$ 
            $$\sim_{\ker(f) }\: := \:(f\times f)^{-1}[=_Y] \:\subseteq \:X\times X, $$
        given by, 
            $$ x \;\sim_{\ker(f)}\; y \;\;\;:\Leftrightarrow\;\;\; f(x)=f(y), $$
        is a closed equivalence relation $X$, being the preimage under the continuous map $f\times f$ of the identity relation $(=_Y)$ on $Y$, which is closed in $Y\times Y$, since $Y$ is assumed to be $k$-Hausdorff. Therefore, $\sim_{\min}\:\subseteq\: \sim_{\ker(f)}$ and by the universal property of the quotient, $f$ uniquely factors through the quotient projection $p$ via the induced map $\overline{f}$, which is what we wanted to show.
    \end{enumerate}
\end{proof}

As a direct consequence of the fact that $\spaces$ is a reflective subcategory of $\ktop$, we obtain:

\begin{cor}\label{cor_lims_colims_in_spaces}
    The category $\khaus$ of $k$-Hausdorff spaces is bicomplete, i.e.~complete and cocomplete. Limits are formed as in $\ktop$ (i.e. as the $k$-ification of limits in $\topsp$) and colimits are obtained as $k$-Hausdorff reflection of colimits in $\ktop$ (equivalently, in $\topsp$).
\end{cor}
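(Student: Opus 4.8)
The plan is to obtain \cref{cor_lims_colims_in_spaces} as the categorical dual of the argument already used for \cref{ksp_complete_cocomplete}. There, $\ktop$ was shown to be bicomplete because it is a \emph{coreflective} subcategory of the bicomplete category $\topsp$. Here, the preceding proposition establishes that $\spaces$ is a full \emph{reflective} subcategory of $\ktop$ (the reflector being $h$-ification $\hquot$), and $\ktop$ is bicomplete by \cref{ksp_complete_cocomplete}. So it suffices to invoke the general principle --- dual to the one used in \cref{ksp_complete_cocomplete}, see \cite[Proposition 4.5.15]{riehl2017category} and also \cite{mac2013categories} --- that a full reflective subcategory $\mathcal{D}$ of a bicomplete category $\mathcal{C}$ is again bicomplete, with limits created by the inclusion $\mathcal{D}\hookrightarrow\mathcal{C}$ (hence computed exactly as in $\mathcal{C}$) and colimits obtained by applying the reflector to the corresponding colimit of $\mathcal{C}$. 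Feeding in the description of limits and colimits in $\ktop$ from \cref{ksp_complete_cocomplete} --- limits being the $k$-ification of $\topsp$-limits, colimits being $\topsp$-colimits --- then yields precisely the asserted formulas for $\spaces$.

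In slightly more detail, I would spell out the two halves as follows. For cocompleteness: given a small diagram $F\colon J\to\spaces$, let $c$ denote its colimit in $\ktop$ (equivalently in $\topsp$, by \cref{ksp_complete_cocomplete}); then for every $Y\in\spaces$ the chain of natural bijections $\Hom_{\spaces}(\hquot c, Y)\cong\Hom_{\ktop}(c, Y)\cong\lim_{j}\Hom_{\ktop}(F(j), Y)\cong\lim_{j}\Hom_{\spaces}(F(j), Y)$, using the reflection adjunction, the universal property of $c$, and fullness of the inclusion, exhibits $\hquot c$ as $\colim_{J}F$ computed in $\spaces$. For completeness: the inclusion $\spaces\hookrightarrow\ktop$ is a right adjoint (to $\hquot$), hence preserves all limits that exist in $\spaces$, so the only point needing attention is that the $\ktop$-limit $\ell$ of a diagram valued in $\spaces$ is itself $k$-Hausdorff, so that it already computes the limit in $\spaces$. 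This is part of the general reflective-subcategory fact, but it can also be checked directly: a limit is an equaliser of a pair of maps between products, a $\ktop$-product of $hk$-spaces is again $hk$ (under the canonical iso $(\prod_i X_i)\times(\prod_i X_i)\cong\prod_i(X_i\times X_i)$ the diagonal of $\prod_i X_i$ corresponds to $\prod_i(=_{X_i})$, a product of closed sets, hence closed), and an equaliser of maps into an $hk$-space is a closed subspace --- it is the preimage of the closed diagonal --- so is again $hk$ by the permanence properties of closed subspaces (\cref{open_and_closed_subsp_ksp}).

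The upshot is that the corollary is essentially immediate, and the one thing to be careful about is the variance of the general principle: under a reflection, limits pass through unchanged while colimits are the ones that must be reflected, which is opposite to the coreflective situation of \cref{ksp_complete_cocomplete}. If one wants a fully self-contained write-up rather than a citation, the only genuinely substantive (though still entirely routine) point is the stability of $\ktop$-limits of $hk$-spaces under the $k$-Hausdorff property sketched above; everything else is formal manipulation of adjunctions.
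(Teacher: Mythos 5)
Your proposal is correct and matches the paper's argument: the paper deduces the corollary directly from the fact that $\spaces$ is a reflective subcategory of the bicomplete category $\ktop$ (the dual of the coreflective argument used for \cref{ksp_complete_cocomplete}), which is exactly the general principle you invoke and then unwind. The extra details you supply (the adjunction chain for colimits and the closure of $hk$-spaces under $\ktop$-limits) are sound and consistent with the permanence properties stated in the paper.
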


\subsubsection{Permanence properties and cartesian closure of $hk$-spaces} For some types of colimits, taking the $k$-Hausdorff reflection as described in \cref{cor_lims_colims_in_spaces} above is superfluous: the colimit (as taken is $\ktop$ or, equivalently, $\topsp$) is often already $k$-Hausdorff, since the class of $k$-Hausdorff spaces exhibits some excellent permanence properties, as we now discuss.

\begin{prop}[Permanence properties of $hk$-spaces]\label{perm_prop_haus}
    \text{}
    \begin{enumerate}[1.]
        \item Any $k$-subspace of a $hk$-space is again $k$-Hausdorff (see \cref{ksubspaces_def} for the definition of a a $k$-subspace). For open subspaces as well as closed subspaces, the $k$-subspace topology coincides with the usual subspace topology.
        \item More generally, if $f: X\to Y$ is an injective continuous map, and $Y$ is $k$-Hausdorff, then $X$ is also $k$-Hausdorff.
        \item Any disjoint sum (i.e.~a coproduct in $\ktop$ or, equivalently, $\topsp$) of $hk$-spaces is again an $hk$-space. 
        \item Any quotient (as defined for topological spaces) of an $hk$-space by a closed equivalence relation is again $k$-Hausdorff. (This was already mentioned as \cref{khausdorff_quots_closed}.)
        \item If $Y$ is an $hk$-space and $X$ is \emph{any} $k$-space, then the space of continuous maps $Y^X$ (as formed in $\ktop$, see \cref{mapping_spaces_defn}) from $X$ to $Y$ is a $hk$-space.
    \end{enumerate}
\end{prop}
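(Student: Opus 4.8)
The plan is to make item~2 the workhorse and derive items~1, 3 and 5 from it; item~4 has already been proved as \cref{khausdorff_quots_closed}. For item~2, suppose $f\colon X\to Y$ is a continuous injection of $k$-spaces with $Y$ $k$-Hausdorff. The binary product in $\ktop$ is functorial, so $f\times f\colon X\times X\to Y\times Y$ is continuous, and injectivity of $f$ gives $(f\times f)^{-1}(=_Y)=(=_X)$; since $(=_Y)$ is closed in $Y\times Y$ (this being exactly the $k$-Hausdorffness of $Y$), its preimage $(=_X)$ is closed in $X\times X$, so $X$ is $k$-Hausdorff.

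Given this, item~1 is immediate: for a $k$-subspace $Y\subseteq X$ of an $hk$-space, the inclusion $Y\hookrightarrow X$ is a continuous injection (the $k$-subspace topology refines the subspace topology, along which the inclusion is continuous), so item~2 makes $Y$ $k$-Hausdorff, and $Y$ is a $k$-space by construction. The supplementary claim that on open and on closed subsets the $k$-subspace topology already equals the subspace topology is the assertion that open and closed subsets of $k$-spaces are themselves $k$-spaces; I would derive this from \cref{char_k_sp} by restricting a quotient map $L\to X$, with $L$ locally compact Hausdorff, to the preimage of the subset, which is again open (resp.\ closed) in $L$ and hence locally compact Hausdorff, and over which the restricted map is still a quotient map (the closed case also being standard, see \cite{rezk2017compactly}).

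For item~3, note that $(-)\times Z$ has the right adjoint $(-)^Z$ in $\ktop$, hence preserves coproducts, so for a family $(X_i)_{i\in I}$ of $hk$-spaces there is a natural homeomorphism $\bigl(\coprod_i X_i\bigr)\times\bigl(\coprod_j X_j\bigr)\cong\coprod_{i,j}X_i\times X_j$. Under it the diagonal of $\coprod_i X_i$ corresponds to $\coprod_i(=_{X_i})$, which lies in the clopen sub-coproduct $\coprod_i X_i\times X_i$ and is closed there because each $(=_{X_i})$ is closed in $X_i\times X_i$; therefore it is closed in $\coprod_{i,j}X_i\times X_j$. Since $\coprod_i X_i$ is a $k$-space by \cref{ksp_complete_cocomplete}, it is an $hk$-space.

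Finally, for item~5, the space $Y^X=k\cco(X,Y)$ is a $k$-space by construction, and to see it is $k$-Hausdorff when $Y$ is, I would use evaluation at the points of $X$: each map $Y^X\to Y$, $f\mapsto f(x)$, is continuous, being a restriction of the evaluation $Y^X\times X\to Y$ (continuous by cartesian closure of $\ktop$), so the induced map $Y^X\to\prod_{x\in X}Y$ into the $\ktop$-product is continuous, and it is plainly injective. The target is a limit in $\ktop$ of copies of the $hk$-space $Y$, hence is itself an $hk$-space because $\khaus$ is closed under limits in $\ktop$ (\cref{cor_lims_colims_in_spaces}); so item~2 applies once more. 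I do not anticipate a deep obstacle here: the proposition bundles permanence facts that each reduce to item~2 together with the standard behaviour of products, coproducts and mapping spaces in $\ktop$. The points needing genuine care are the topology-coincidence claims in item~1 (which belong to the general theory of $k$-spaces rather than to the $hk$-refinement) and the use in item~5 that arbitrary $\ktop$-products of $hk$-spaces are $hk$ again; both trace back to $\khaus$ being a reflective subcategory of $\ktop$, so the real work is arranging the logical order so that item~2 is in hand before items~1, 3 and 5.
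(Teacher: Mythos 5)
Your argument is correct, and its core is the same as the paper's: item 2 is proved exactly as in the paper (the diagonal of $X$ is the preimage of the closed diagonal of $Y$ under $f\times f$), item 1 is deduced from item 2, and item 4 is delegated to \cref{khausdorff_quots_closed}. Where you differ is that you replace the paper's citations by direct arguments: for the open/closed subspace claim in item 1 the paper cites Strickland, while you rederive it from \cref{char_k_sp} by restricting a quotient map from a locally compact Hausdorff space to the (open resp.\ closed) preimage -- this works, since such restrictions are again quotient maps and open/closed subspaces of locally compact Hausdorff spaces are locally compact Hausdorff; for item 3 the paper cites Rezk, while you use that $(-)\times Z$ preserves coproducts in $\ktop$ to identify the diagonal of $\coprod_i X_i$ with $\coprod_i(=_{X_i})$ inside $\coprod_{i,j}X_i\times X_j$, which is a clean self-contained proof. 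For item 5 the paper argues directly that the diagonal of $Y^X$ equals $\bigcap_{x\in X}(\mathsf{ev}_x\times\mathsf{ev}_x)^{-1}[=_Y]$, hence is closed; you instead embed $Y^X$ continuously and injectively into the $\ktop$-product $\prod_{x\in X}Y$ and invoke \cref{cor_lims_colims_in_spaces} (closure of $\khaus$ under limits formed in $\ktop$, a consequence of reflectivity) before applying item 2. That is not circular, since \cref{cor_lims_colims_in_spaces} is established before \cref{perm_prop_haus} and does not use it, but it is a slightly heavier tool than the paper's one-line intersection argument, which trades your reliance on the reflective-subcategory machinery for a completely elementary computation with the same evaluation maps.
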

\begin{proof}
    \text{}
    \begin{enumerate}[1.]
        \item The statement about $k$-subspaces is a particular case of the next point, while the assertion about open subspaces and closed subspaces is \cite[Lemma 2.26]{strickland2009category}.
        \item By injectivity,
            $$ x_1 = x_2 \;\;\; \Leftrightarrow \;\;\; f(x_1)=f(x_2). \qquad (x_1,x_2\in X)$$
        Therefore, 
            $$ (=_X) = (f\times f)^{-1}[=_Y],$$
        which is closed, since $f$ is continuous and $X$ is $k$-Hausdorff, showing that $Y$ is $k$-Hausdorff, as well.
        \item See \cite[Proposition 4.7 (4)]{rezk2017compactly}.
        \item See \cref{khausdorff_quots_closed}.
        \item For any $f,g\in Y^X$,
            $$ f=g \;\;\;\Leftrightarrow \;\;\; \forall x\in X:\, f(x)=g(x). $$
        This displays the identity relation on $Y^X$ as an intersection of pre-images of a closed set:
            $$ (=_{Y^X}) = \bigcap_{x\in X} (\mathsf{ev}_x\times \mathsf{ev}_x)^{-1}[=_Y],$$
        where $\mathsf{ev}_x:Y^X\to Y$ is the evaluation map at $x\in X$, continuous by cartesian closure of $\spaces$, and $(=_Y)$ is closed in $Y\times Y$, by assumption.
    \end{enumerate}
\end{proof}

Using the cartesian closure of $\ktop$ and the final point of \cref{perm_prop_haus}, we obtain:

\begin{cor}
    $\spaces$ is a cartesian closed category.
\end{cor}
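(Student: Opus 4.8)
The plan is to borrow essentially all of the structure from $\ktop$, exploiting that $\spaces$ is a \emph{full} reflective subcategory of $\ktop$. Concretely, I would take as the product in $\spaces$ the $\ktop$-product and as the exponential $Z^X$ the $\ktop$-mapping space of \cref{mapping_spaces_defn}, and then verify two things: that these constructions do not leave $\spaces$, and that the currying/uncurrying bijection furnished by the cartesian closure of $\ktop$ restricts to $\spaces$.

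First, finite products. By \cref{cor_lims_colims_in_spaces}, $\spaces$ is complete and its limits are computed as in $\ktop$; in particular the product $X\times Y$ of two $hk$-spaces, formed in $\ktop$, is again an $hk$-space and serves as the product in $\spaces$. (One could also argue directly: under the canonical homeomorphism $(X\times Y)\times(X\times Y)\cong (X\times X)\times(Y\times Y)$ the diagonal of $X\times Y$ corresponds to $(=_X)\times(=_Y)$, which is closed, since a product of closed subsets is closed in the $\ktop$-product — $k$-ification only refines the topology, hence only enlarges the collection of closed sets.) The one-point space is trivially $k$-Hausdorff, so $\spaces$ has finite products, computed as in $\ktop$.

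Second, exponentials. Fix an $hk$-space $X$. For any $hk$-space $Z$, let $Z^X$ denote the $\ktop$-mapping space of \cref{mapping_spaces_defn}. By \cref{perm_prop_haus}(5) — where one uses only that $X$ is a $k$-space and $Z$ is $k$-Hausdorff — the space $Z^X$ is again an $hk$-space. Then for all $Y,Z\in\spaces$, using fullness of $\spaces\hookrightarrow\ktop$, the agreement of the $\spaces$- and $\ktop$-products, and $Z^X\in\spaces$, the cartesian closure of $\ktop$ yields
$$ \Hom_{\spaces}(Y\times X, Z) \,=\, \Hom_{\ktop}(Y\times X, Z) \;\cong\; \Hom_{\ktop}(Y, Z^X) \,=\, \Hom_{\spaces}(Y, Z^X), $$
the isomorphism being the restriction of currying/uncurrying. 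Naturality in $Y$ and $Z$ is inherited from the $\ktop$-level statement, since restricting a natural isomorphism along a full subcategory again gives a natural isomorphism. Hence $(-)\times X\colon\spaces\to\spaces$ admits the right adjoint $(-)^X$, and $\spaces$ is cartesian closed.

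I expect no genuine obstacle here: the substantive work has already been done, and the decisive input is \cref{perm_prop_haus}(5), which ensures that the $\ktop$-exponential of two $hk$-spaces never leaves $\spaces$, so that — unlike for colimits — we never need to compose with the $k$-Hausdorff reflection $\hquot$. (Were that to fail, the natural candidate exponential would be $\hquot(Z^X)$, and the adjunction would generally break, since $\spaces$ is reflective, not coreflective, in $\ktop$.) The only items to check are the bookkeeping points noted above: that $\spaces$-products coincide with $\ktop$-products and that fullness lets the adjunction isomorphism descend, both immediate from \cref{cor_lims_colims_in_spaces} and the definition of a full subcategory.
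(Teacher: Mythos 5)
Your proposal is correct and follows essentially the same route as the paper: the paper's proof is precisely "cartesian closure of $\ktop$ plus the final point of \cref{perm_prop_haus}", i.e.\ products of $hk$-spaces are computed as in $\ktop$ and remain $k$-Hausdorff, the $\ktop$-exponential $Z^X$ stays in $\spaces$ by \cref{perm_prop_haus}, and the currying adjunction restricts along the full subcategory. Your additional bookkeeping (the diagonal argument for products and the explicit descent of the Hom-isomorphism) just spells out what the paper leaves implicit.
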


As a final observation concerning the $k$-Hausdorff property, we note the following relationship to the classical Hausdorff property.

\begin{prop}\label{kification_khausdorff}
    Let $X$ be a Hausdorff topological space. Then $kX$ is an $hk$-space.
\end{prop}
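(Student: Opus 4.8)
The plan is to reduce the $k$-Hausdorff condition to the weak Hausdorff condition via \cref{khaus_equiv_weakhaus}. Since $kX$ is a $k$-space by construction, it suffices to show that $kX$ is weakly Hausdorff, i.e.\ that for every compactum $K$ and every continuous map $f\colon K\to kX$, the image $f(K)$ is closed in $kX$.

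The key observation is that the topology of $kX$ refines that of $X$: every open subset of $X$ is $k$-open, so the identity map $\pi\colon kX\to X$ is continuous, and, dually, every subset of $X$ that is closed for the topology of $X$ is also closed in $kX$. Given a compactum $K$ and a continuous $f\colon K\to kX$, I would then consider the composite $\pi\circ f\colon K\to X$, which is continuous; its image is the same set as $f(K)$, so $f(K)$ is compact as a subset of $X$. Because $X$ is Hausdorff, compact subsets of $X$ are closed, hence $f(K)$ is closed in $X$, and therefore also closed in $kX$ by the refinement remark. This verifies weak Hausdorffness, so $kX$ is $k$-Hausdorff, i.e.\ an $hk$-space.

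There is no substantial obstacle; the only point that deserves a moment's care is keeping straight the direction in which the comparison of topologies is applied — compactness passes \emph{downward} from $kX$ to $X$ via continuity of $\pi$, whereas closedness passes \emph{upward} from $X$ to $kX$ via the fact that the $k$-ified topology is finer. One could alternatively route the argument through \cref{cpct_sets_in_kififcation}, remarking that since $X$ is Hausdorff every compact subset of $kX$ is a compact, hence closed, subset of $X$; but the direct argument above is the most economical.
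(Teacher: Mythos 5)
Your proof is correct, but it takes a different route from the paper. The paper argues directly from the definition of the $k$-Hausdorff property: since $X$ is Hausdorff, the diagonal is closed in $X\times_{\topsp}X$, and the topology of $kX\times kX=k(kX\times_{\topsp}kX)$ is finer than that of $X\times_{\topsp}X$, so the diagonal stays closed — one line, using nothing beyond the comparison of topologies. You instead reduce to the weak Hausdorff characterisation of \cref{khaus_equiv_weakhaus} and verify it by pushing compactness down along the continuous identity $kX\to X$ (where Hausdorffness of $X$ makes the compact image closed) and pulling closedness back up along the finer topology of $kX$; every step of this is sound, and your care about the direction of each comparison is exactly right. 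The trade-off is that your argument leans on the nontrivial equivalence ``$k$-Hausdorff $\Leftrightarrow$ weak Hausdorff'' (imported from the literature), whereas the paper's argument needs only the definition; on the other hand, your route never has to contemplate the $k$-ified product topology, which some readers find the more slippery object. Your closing remark that one could instead invoke \cref{cpct_sets_in_kififcation} is also accurate and essentially a repackaging of the same argument.
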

\begin{proof}
    Since $X$ is Hausdorff, the diagonal is closed in $X\times_{\topsp} X$, so it is also closed in the finer topology of $kX \times kX = k(kX \times_{\topsp} kX)$.
\end{proof}

\subsubsection{Sequential colimits of $hk$-spaces}

Sequential colimits in $\spaces$ along closed inclusions of subspaces have a simple characterisation \cite[Lemma 3.7]{strickland2009category}: 

\begin{lem}\label{seq_colims_in_spaces}
    Let $X$ be an $hk$-space and $(A_n)$ be an increasing sequence of closed subspaces. Then the following are equivalent:
    \begin{enumerate}
        \item The union 
            $$ A := \bigcup_{n\in \mathbb{N}} A_n \subseteq X $$
        is closed in $X$ and we have an isomorphism 
            $$ A = \colim_{n\in \mathbb{N}} A_n. $$
        \item For each compact subset $K$ of $X$, there is some $n\in \mathbb{N}$ such that $K\cap A = K \cap A_n$.
    \end{enumerate}
\end{lem}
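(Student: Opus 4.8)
The plan is to run both implications through the two basic structural facts about $X$: first, that $X$ is a $k$-space, so a subset of $X$ is closed precisely when its preimage under every continuous map $g\colon L\to X$ out of a compact Hausdorff space $L$ is closed in $L$ (\cref{char_k_sp}); and second, that $X$ is $k$-Hausdorff, hence at least $T_1$ (the image of a point is closed, using \cref{khaus_equiv_weakhaus}), so that finite subsets of $X$ and of any of its subspaces are closed. I would also note at the outset that, for an increasing tower $(A_n)$ of closed subspaces of an $hk$-space, the colimit $\colim_n A_n$ taken in $\spaces$ is simply $\bigcup_n A_n$ equipped with the final topology along the inclusions $A_n\hookrightarrow\bigcup_n A_n$ (this topology is already $hk$, being a $k$-space with an injective continuous map to $X$, so no $k$-Hausdorff reflection intervenes). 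Thus statement $1$ asserts exactly that this final topology coincides with the subspace topology that $A:=\bigcup_n A_n$ inherits from $X$, and that $A$ is closed in $X$.

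For $2\Rightarrow 1$ I would first show $A$ is closed in $X$. Given a continuous $g\colon L\to X$ with $L$ compact Hausdorff, set $K:=g(L)$, a compact subset of $X$; by hypothesis there is some $n$ with $K\cap A=K\cap A_n$, and since $g$ maps into $K$ we get $g^{-1}(A)=g^{-1}(K\cap A)=g^{-1}(K\cap A_n)=g^{-1}(A_n)$, which is closed in $L$ because $A_n$ is closed in $X$. The $k$-space property then yields that $A$ is closed. Next, to identify the two topologies on $A$, I would take $C\subseteq A$ with $C\cap A_n$ closed in $A_n$ for every $n$ (hence closed in $X$, since $A_n$ is closed in $X$); for $g$ and $K:=g(L)$ as before, choosing $n$ with $K\cap A=K\cap A_n$ and using $C\subseteq A$ gives $C\cap K=C\cap A_n\cap K$, so $g^{-1}(C)=g^{-1}(C\cap A_n)$ is closed in $L$. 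Hence $C$ is closed in $X$, so closed in $A$; this shows the subspace topology on $A$ agrees with the final topology, i.e.\ $A=\colim_n A_n$.

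For $1\Rightarrow 2$ I would argue by contradiction. Fix a compact $K\subseteq X$; since $A$ is closed in $X$, the set $K\cap A$ is closed in $K$, hence compact. If no $A_n$ contained $K\cap A$, pick $x_n\in(K\cap A)\setminus A_n$ for each $n$ and let $S:=\{x_n\mid n\in\nats\}$. The key bookkeeping is that $x_n\in A_m$ forces $n\le m$: if $m<n$ then $A_m\subseteq A_n$, so $x_n\in A_m$ would give $x_n\in A_n$, contradicting the choice of $x_n$. Consequently, for every subset $S'\subseteq S$ and every $m$, the set $S'\cap A_m$ is contained in $\{x_1,\dots,x_m\}$, a finite hence closed subset of $A_m$; since $A$ carries the colimit topology and is closed in $X$, each such $S'$ is therefore closed in $X$. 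Intersecting with the compact set $K$, every subset of $S$ is closed in the subspace $S\subseteq K$, so $S$ is discrete, and it is closed in the compact space $K$, hence compact. But $S$ is infinite: were it finite, each of its elements would lie in some $A_m$ and so $S\subseteq A_M$ for $M$ large enough, contradicting $x_M\notin A_M$. An infinite discrete compact space is impossible, so some $A_n$ contains $K\cap A$, and then $K\cap A=K\cap A_n$ because $A_n\subseteq A$.

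I expect the only genuinely delicate point to be this last contradiction. One must check both that the $x_n$ really form an infinite set — which is precisely where the hypothesis $A=\bigcup_n A_n$ is used — and, more subtly, that \emph{every} subset of $S$ (not merely $S$ itself) is closed; it is this stronger statement that upgrades ``$S$ is closed'' to ``$S$ is discrete'', and hence produces the clash with compactness. Everything else reduces to routine manipulations with preimages and the characterisation of closed sets in a $k$-space.
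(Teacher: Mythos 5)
Your proof is correct. Note that the paper does not prove this lemma at all: it is quoted from Strickland's notes (cited as Lemma~3.7 there), so there is no in-text argument to compare against; your write-up is essentially the standard proof from that source, made self-contained. The two halves are sound: for $2\Rightarrow 1$ you correctly reduce everything to the $k$-closedness test of \cref{char_k_sp} (and your preliminary observation that the colimit in $\spaces$ is just the union with the final topology, with no $k$-Hausdorff reflection needed, is justified by \cref{ksp_complete_cocomplete} together with \cref{perm_prop_haus}); for $1\Rightarrow 2$ the key steps — $x_n\in A_m$ forces $n<m$, hence every subset of $S$ meets each $A_m$ in a finite (closed, by $T_1$ via \cref{khaus_equiv_weakhaus}) set and is therefore closed in the colimit topology, so $S$ is an infinite closed discrete subset of the compact set $K$ — all check out, and you correctly flag that it is the closedness of \emph{all} subsets of $S$, not just $S$ itself, that yields discreteness and the contradiction with compactness.
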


\subsection{The spaces $C(X,Y)$ for codomains that are metric spaces} When $Y$ is a metric space, we can give a very explicit description of both the convergent sequences and the compact subsets in the space $Y^X$ of continuous maps that arises from the cartesian closed structure of $\spaces$. This applies in particular to the space of continuous functions $C(X)$ on an $hk$-space $X$. 

\begin{lem}\label{lem_convergence_in_Y_X}
    Let $Y$ be a metric space and let $X$ be any $hk$-space. Then a sequence $(f_n)$ converges in $C(X,Y)=Y^X$ if, and only if, it converges uniformly on compact subsets of $X$.
\end{lem}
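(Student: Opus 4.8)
The plan is to strip away the $k$-ification, reduce to the honest compact-open topology, and there run the classical argument that, for a metric codomain, the compact-open topology coincides with the topology of uniform convergence on compact sets. For the reduction: since $Y^X = k\,\cco(X,Y)$ by \cref{mapping_spaces_defn}, \cref{conv_seq_in_kification} says that $(f_n)$ converges to some $f$ in $Y^X$ if and only if it converges to $f$ in $\cco(X,Y)$, i.e.\ in the compact-open topology. Moreover, if $(f_n)$ converges uniformly on every compact subset to a map $f:X\to Y$, then $f$ is continuous: for any compactum $K$ and continuous $p:K\to X$ the map $f\circ p$ is a uniform limit of the continuous maps $f_n\circ p$, hence continuous, so $f$ is $k$-continuous and therefore continuous because $X$ is a $k$-space; thus such an $f$ lies in $C(X,Y)$, the underlying set of $Y^X$. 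Hence it suffices to prove: for $f\in C(X,Y)$, the sequence $(f_n)$ converges to $f$ in the compact-open topology if and only if $f_n\to f$ uniformly on every compact subset of $X$. Throughout I would use that, since $X$ is an $hk$-space, the compact-open topology has the subbasic opens $W(K,U)=\{g : g(K)\subseteq U\}$ with $K$ a compact Hausdorff subset of $X$ and $U\subseteq Y$ open (\cref{cco_for_X_k_Hausdorff}), recalling also that every compact subspace of an $hk$-space is compact Hausdorff.

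For the ``if'' direction I would take a compact-open neighbourhood of $f$, shrink it to a finite intersection $\bigcap_{i=1}^m W(K_i,U_i)$ with $f(K_i)\subseteq U_i$, and for each $i$ put $\varepsilon_i := \operatorname{dist}(f(K_i), Y\setminus U_i)>0$ (taking $\varepsilon_i=1$ if $U_i=Y$), which is positive because $f(K_i)$ is compact in the metric space $Y$ and $Y\setminus U_i$ is closed and disjoint from it. Uniform convergence on each compact set $K_i$ then gives an $N_i$ with $\sup_{x\in K_i} d(f_n(x),f(x)) < \varepsilon_i$ for $n\ge N_i$, whence $f_n(K_i)\subseteq U_i$; taking $N=\max_i N_i$ shows $f_n\in\bigcap_i W(K_i,U_i)$ for $n\ge N$, so $f_n\to f$ in the compact-open topology.

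The ``only if'' direction is the computational heart. Fix a compact $K\subseteq X$ and $\varepsilon>0$. Cover the compact set $f(K)$ by finitely many open balls $B(y_1,\varepsilon/4),\dots,B(y_r,\varepsilon/4)$ with centres $y_j\in f(K)$, and set $K_j := K\cap f^{-1}\!\bigl(\overline{B(y_j,\varepsilon/4)}\bigr)$. Each $K_j$ is closed in $K$, hence a compact Hausdorff subset of $X$; one has $K=\bigcup_{j=1}^r K_j$ and $f(K_j)\subseteq\overline{B(y_j,\varepsilon/4)}\subseteq B(y_j,\varepsilon/2)$, so $f\in\bigcap_{j=1}^r W\bigl(K_j, B(y_j,\varepsilon/2)\bigr)$, which is compact-open-open. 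Since $f_n\to f$ in the compact-open topology, there is an $N$ with $f_n(K_j)\subseteq B(y_j,\varepsilon/2)$ for all $j$ and all $n\ge N$; then for such $n$ and any $x\in K$, choosing $j$ with $x\in K_j$, the triangle inequality gives $d(f_n(x),f(x)) \le d(f_n(x),y_j) + d(y_j,f(x)) < \varepsilon/2 + \varepsilon/4 < \varepsilon$, so $\sup_{x\in K} d(f_n(x),f(x)) \le 3\varepsilon/4 < \varepsilon$ for $n\ge N$. As $\varepsilon$ and $K$ were arbitrary, $f_n\to f$ uniformly on compacts.

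There is no deep obstacle here — it is essentially the standard proof that the compact-open and compact-convergence topologies agree for metric codomains — but three points need care: running everything through \cref{conv_seq_in_kification} so that one works with the genuine compact-open topology rather than its $k$-ification; checking that a uniform-on-compacts limit is continuous, which is exactly where the $k$-space hypothesis on $X$ enters; and making sure the auxiliary sets $K_j$ in the ball-covering argument are admissible indices for the subbase of \cref{cco_for_X_k_Hausdorff}, which is fine because $X$ is $k$-Hausdorff and so all of its compact subspaces are Hausdorff.
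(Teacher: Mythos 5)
Your proof is correct and follows the same route as the paper: reduce to the genuine compact-open topology via $Y^X=k\cco(X,Y)$ and \cref{conv_seq_in_kification}, then identify compact-open convergence with uniform convergence on compact subsets for a metric codomain. The only difference is that you carry out this last classical equivalence (and the check that a uniform-on-compacts limit is continuous, using the $k$-space property of $X$) by hand, whereas the paper simply invokes it via \cref{cco_for_X_k_Hausdorff}.
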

\begin{proof}
    The space $Y^X$ is given as the $k$-ification of the space $\cco(X,Y)$ (see \cref{mapping_spaces_defn}) and the convergent sequences in a topological space and its $k$-ification agree (see \cref{conv_seq_in_kification}). The claim now follows from the fact that, since $X$ is an $hk$-space, the compact-open topology on $\cco(X,Y)$ is exactly the topology of uniform convergence on compact subsets (see \cref{cco_for_X_k_Hausdorff}).
\end{proof}

The Arzelà-Ascoli theorem has a natural formulation in the context of $k$-spaces.

\begin{thm}[Arzelà-Ascoli]\label{thm_arzela_ascoli}
    Let $X$ be an $hk$-space and let $Y$ be a metric space. Then a subset of $C(X,Y)$ is compact if, and only if, it is pointwise relatively compact, equicontinuous and closed.
\end{thm}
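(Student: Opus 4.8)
The plan is to reduce the $k$-space version of Arzelà–Ascoli to the classical one by exploiting the structure of $\spaces$ as a colimit of compacta together with \cref{lem_convergence_in_Y_X}. The classical Arzelà–Ascoli theorem says that for a compact Hausdorff space $K$ and a metric space $Y$, a subset of $C(K,Y)$ (with the uniform topology) is compact iff it is pointwise relatively compact, equicontinuous, and closed. Since $X$ is an $hk$-space, it is a quotient (in fact colimit) of compact Hausdorff spaces (\cref{char_k_sp}), and this should let us ``test'' the hypotheses compactum-by-compactum.

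First I would fix notation: write $C(X,Y) = Y^X$ for the mapping space from the cartesian closed structure, and recall from \cref{lem_convergence_in_Y_X} that a sequence converges in $Y^X$ iff it converges uniformly on compact subsets. I would prove the easy direction first: if $H \subseteq C(X,Y)$ is compact, then $H$ is closed (as $C(X,Y)$ is $k$-Hausdorff by \cref{perm_prop_haus}, compact subsets are closed); pointwise relative compactness follows because each evaluation $\mathsf{ev}_x : C(X,Y) \to Y$ is continuous (cartesian closure), so $\mathsf{ev}_x(H)$ is compact in $Y$; and equicontinuity on each compact $K \subseteq X$ follows by restricting to $C(K,Y)$ — the restriction map $C(X,Y) \to C(K,Y)$ is continuous, $C(K,Y)$ carries the uniform topology (it is already a $k$-space), so the image of $H$ is compact there, hence equicontinuous by the classical theorem; equicontinuity ``on $X$'' should be taken to mean equicontinuity along all continuous maps from compacta (or equivalently on each compact subset when $X$ is, say, weakly Hausdorff — one must state this carefully).

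For the harder converse: suppose $H \subseteq C(X,Y)$ is pointwise relatively compact, equicontinuous, and closed. I want to show $H$ is compact. The key is that $Y^X$ embeds into $\prod_{i : K \to X} Y^K$ where the product ranges over continuous maps from compacta (by definition of the compact-open topology and $k$-ification, a function on $X$ is continuous iff its restriction along every such $i$ is, and the topology is generated accordingly). Under each projection $i^* : Y^X \to Y^K = C(K,Y)$, the image $i^*(H)$ is pointwise relatively compact and equicontinuous, hence has compact closure $\overline{i^*(H)}$ in $C(K,Y)$ by the classical Arzelà–Ascoli theorem. Then $\prod_i \overline{i^*(H)}$ is compact by Tychonoff, and $H$ is contained in the subspace $Y^X$ sitting inside this product; since $Y^X$ is closed in $\prod_i Y^K$ (being cut out by the compatibility/continuity conditions, which are equalities and hence closed since $Y$ is Hausdorff, so the $Y^K$ are Hausdorff), the closure $\overline{H}$ taken in the compact set $\prod_i \overline{i^*(H)}$ actually lies in $Y^X$, i.e. $\overline H = H$ by hypothesis, and is compact. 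One must be slightly careful that the subspace topology from the product agrees with the $k$-ified compact-open topology on $Y^X$; this is precisely how $Y^X$ is constructed, so it should go through, perhaps with a $k$-ification caveat that doesn't affect compact subsets by \cref{cpct_sets_in_kififcation}.

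The main obstacle I anticipate is the topological bookkeeping around $k$-ification in the converse: the compact-open topology on $\cco(X,Y)$ is a subspace topology inside $\prod_i \cco(K, Y)$, but $Y^X$ is its $k$-ification, and one needs that a subset which is compact in the product-derived topology remains compact after $k$-ification (true, since $k$-ification does not change which subsets are compact when the space has Hausdorff compacta, \cref{cpct_sets_in_kififcation}) and conversely. A secondary subtlety is pinning down the ``right'' meaning of equicontinuity for a general $hk$-space $X$ (pointwise, or on compacta, or along maps from compacta) and checking these coincide or suffice; I would adopt equicontinuity on each compact subset (equivalently, along each $i : K \to X$) as the working definition and remark on the equivalence. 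Modulo these points, the proof is essentially ``classical Arzelà–Ascoli on each compactum, glued by Tychonoff and the colimit description of $X$.''
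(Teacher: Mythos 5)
Your overall strategy differs from the paper's: the paper invokes the classical Arzelà--Ascoli theorem directly in Kelley's generality, i.e.\ for $\cco(X,Y)$ with the compact-open topology over a general ($k$-)space $X$, so that the only remaining work is transferring between $\cco(X,Y)$ and its $k$-ification $C(X,Y)$ (compact subsets coincide by \cref{cpct_sets_in_kififcation}, and one must upgrade ``closed in $C(X,Y)$'' to ``closed in $\cco(X,Y)$''). You instead re-derive the general Ascoli statement from the compact Hausdorff case by embedding $\cco(X,Y)$ into $\prod_{i\colon K\to X} C(K,Y)$ and applying Tychonoff. That reduction is workable: the image is cut out by the closed compatibility conditions, the induced subspace topology is the compact-open topology, and your easy direction (closedness from $k$-Hausdorffness, pointwise relative compactness from continuity of evaluations, equicontinuity on compacta via restriction) is fine, modulo the terminological point about what ``equicontinuous'' means for a general $hk$-space, which you rightly flag and which the paper inherits from its citation of Kelley.

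There is, however, a genuine gap at the decisive step of the converse, namely ``the closure $\overline H$ \dots lies in $Y^X$, i.e.\ $\overline H = H$ by hypothesis.'' Your hypothesis is that $H$ is closed in $C(X,Y)=k\cco(X,Y)$, i.e.\ merely $k$-closed in $\cco(X,Y)$; the closure you compute is taken in the product topology, whose restriction to the image of $Y^X$ is the compact-open topology, which is \emph{coarser} than the $k$-ified one. Closedness in the finer topology does not imply closedness in the coarser one, so ``$\overline H=H$ by hypothesis'' is unjustified as stated (and your remark that the subspace topology from the product ``is precisely how $Y^X$ is constructed'' is off by exactly this $k$-ification; the caveat about compact subsets does not address closedness). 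This is precisely the point the paper's proof is organised around, and it also supplies the repair: your Tychonoff argument shows that the closure $L$ of $H$ in $\cco(X,Y)$ is compact (and Hausdorff, since $Y$ is metric); because $H$ is $k$-closed in $\cco(X,Y)$, the set $H\cap L=H$ is closed in $L$, hence closed in $\cco(X,Y)$, so $H=L$ is compact there, and therefore compact in $C(X,Y)$ by \cref{cpct_sets_in_kififcation}. With that insertion your argument goes through; without it, the step as written conflates the two topologies.
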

\begin{proof}
    The classical Arzelà-Ascoli theorem makes the same assertion about $\cco(X,Y)$, \emph{with the compact open topology} \cite[p.~134, 18]{kelley2017general}. Since the compact subsets of $C(X,Y)$ and $\cco(X,Y)$ coincide (see \cref{cpct_sets_in_kififcation}), it only remains to show that a closed, pointwise relatively compact and equicontinuous subset of $C(X,Y)$ is also closed in $\cco(X,Y)$. So let $A\subseteq C(X, Y)$ be closed, pointwise relatively compact and equicontinuous. Then $A$ is $k$-closed in $\cco(X,Y)$, meaning that for every compact subset $K\subseteq \cco(X,Y)$, $A\cap K$ is closed in $K$. Now, let $K$ be the closure of $A$ in $\cco(X,Y)$. By the classical Arzelà-Ascoli theorem, $K$ is compact. Hence, $A\cap K = A$ is closed in $K$ and therefore $A$ is closed in $\cco(X,Y)$, which is what we wanted to show.
\end{proof}

\subsection{Quotients of countably based (QCB) spaces}

Recall that a \emph{second-countable} space is a topological space which has a countable base. Second-countable space are also known as \emph{countably based spaces}.

\begin{defn}
    A \emph{QCB space} is a topological space which is the quotient of a countably based space. In more detail, a topological space $X$ is a QCB space if there exists a second-countable space $Y$ and a quotient map $Y\twoheadrightarrow X$. We denote the category of QCB spaces by $\mathsf{QCB}$.
\end{defn}

The relevance of QCB spaces and their remarkable properties were first recognised in the context of domain theory and computable analysis \cite{simpson2003towards,schroder2021admissibly}. In our setting, they provide a subcategory of particularly well behaved $k$-spaces which is nevertheless closed under the formation of spaces of continuous maps, countable limits and countable colimits.

\subsubsection{QCB spaces are sequential, hence $k$-spaces} Second countable spaces are sequential and the quotient of a sequential space is again sequential, so every QCB is a sequential space. In particular, every QCB space is a $k$-space (see \cref{ex_seq_spaces_k_space}).

\subsubsection{QCB spaces are closed under exponentials and countable (co-)limits} Since a countable coproduct of countably based spaces is again countably based, and quotients of QCB spaces are evidently QCB spaces, it follows that $\mathsf{QCB}$ is closed under the formation of countable colimits in $\ktop$. What is remarkable is that QCB spaces are also closed under the formation of countable limits and exponentials \cite[Corollary 7.3, Remark 7.4]{escardo2004comparing}:

\begin{prop}\label{QCB_cart_closed}
    Let $X,Y$ be QCB spaces. Then $X\times Y$ and $Y^X=C(X,Y)$ are QCB spaces, as well. Therefore, $\mathsf{QCB}$ is a cartesian closed category. Moreover, $\mathsf{QCB}$ is closed under countable limits and colimits (as formed in $\ktop$).
\end{prop}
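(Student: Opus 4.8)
The plan is to establish the four separate assertions — closure under countable products, closure under exponentials, closure under countable limits, closure under countable colimits — and then conclude cartesian closedness formally from the first two. The heart of the matter is that \textbf{QCB} spaces admit an intrinsic characterisation in terms of \emph{admissible quasi-metric quotients} or, equivalently, as the $T_0$ quotients of countably based spaces that are closed under the relevant constructions; since the paper has cited \cite{escardo2004comparing}, I would lean on the characterisation available there. So the first step is to fix such a working description: a space is \textbf{QCB} iff it is a topological quotient of a countably based space, and (using sequentiality, already noted in the excerpt) such quotients can be taken to be \emph{sequential} quotients, which makes the category amenable to the kind of colimit/limit manipulations below.

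For \emph{countable colimits}, I would argue as follows. A countable coproduct of countably based spaces is countably based (take the disjoint union of countable bases, plus the clopen pieces), and an arbitrary quotient of a \textbf{QCB} space is visibly a \textbf{QCB} space (compose the two quotient maps); since every colimit in $\ktop$ is a quotient of a coproduct, and \textbf{QCB} spaces are $k$-spaces with colimits computed as in $\topsp$ (they are sequential, hence $k$-spaces, as the excerpt records), this handles all countable colimits. For \emph{countable products} — the crux for cartesian closedness — the key input is \cref{khausdorff_quots}: a $k$-space product of quotient maps is again a quotient map. Writing $X_i$ as a quotient $q_i: Y_i \twoheadrightarrow X_i$ with each $Y_i$ countably based, the countable product $\prod_i Y_i$ (in $\topsp$, equivalently in $\ktop$ since a countable product of sequential — indeed metrisable-adjacent — spaces behaves well) is again countably based, and $\prod_i q_i$ is a quotient map by iterating \cref{khausdorff_quots} through the sequential colimit of finite subproducts; hence $\prod_i X_i = \prod_i^k X_i$ is a quotient of a countably based space, i.e.\ a \textbf{QCB} space. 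Finite limits then follow since an equaliser of maps between \textbf{QCB} spaces is a subspace of a \textbf{QCB} space cut out by an equation, and one checks that \textbf{QCB} spaces are closed under the appropriate ($k$-)subspaces — the equaliser is a sequential, hence $k$-, subspace — so all countable limits are obtained by combining countable products with finite limits.

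For \emph{exponentials}, the plan is to show that $Y^X = C(X,Y)$ is \textbf{QCB} whenever $X,Y$ are. The standard route: embed $Y^X$ into a product indexed by a countable dense family of "test" data. Concretely, since $X$ is \textbf{QCB} it is the sequential quotient of a countably based $Z$; a continuous map $X \to Y$ is the same as a continuous map $Z \to Y$ equalising the kernel pair, so $Y^X$ sits inside $Y^Z$ as an equaliser, and it suffices to treat $Y^Z$ for $Z$ countably based. For countably based $Z$ one exhibits $Y^Z$ as a (sequential) subspace of a countable product of copies of $Y^{K}$ for $K$ ranging over a countable cofinal family of compacta-or-basic-opens, reducing to the case of a compact metrisable (or basic) domain, where $Y^K$ is again \textbf{QCB}. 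This is the delicate part — one must be careful that the compact-open topology matches the $k$-exponential topology (handled by \cref{cco_for_X_k_Hausdorff} and $k$-ification invariance of convergent sequences and compacta) and that the resulting subspace inclusion is a $k$-embedding so that \textbf{QCB}-ness transfers.

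The main obstacle I anticipate is precisely the exponential step: verifying that the function-space topology one constructs "by hand" as a subspace of a countable product genuinely coincides with the cartesian-closed exponential $Y^X$ in $\ktop$, rather than merely having the same underlying set or the same convergent sequences. Everything else (countable coproducts, quotients, the product-of-quotients lemma, equalisers as $k$-subspaces) is routine permanence bookkeeping using results already available in the excerpt; but the exponential requires genuinely invoking the admissibility/sequential-space machinery of \cite{escardo2004comparing}, and in a fully self-contained account this is where most of the work would go. Given the scope of the thesis, I would simply cite \cite[Corollary 7.3, Remark 7.4]{escardo2004comparing} for the exponential and countable-limit closure and restrict my own argument to the colimit and product bookkeeping, as indeed the excerpt's phrasing suggests.
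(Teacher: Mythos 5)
Your bottom line coincides with the paper's: the paper does not prove this proposition at all, but cites \cite[Corollary 7.3, Remark 7.4]{escardo2004comparing} for closure under exponentials and countable limits, and disposes of countable colimits by exactly the elementary bookkeeping you describe (countable coproducts of countably based spaces are countably based, quotients of QCB spaces are QCB, and colimits in $\ktop$ are quotients of coproducts). So if you follow your closing sentence and cite Escard\'o--Lawson--Simpson for the hard parts, you are on the same route as the paper, and your colimit argument is fine.

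However, you say you would keep ``the product bookkeeping'' as your own argument, and that sketch has a genuine gap. You propose to conclude that $\prod_i q_i$ is a quotient map ``by iterating \cref{khausdorff_quots} through the sequential colimit of finite subproducts''. A countably infinite product is the \emph{limit} (inverse limit) of its finite subproducts, not a colimit, so the binary product-of-quotients lemma cannot be bootstrapped this way: it gives quotient-ness only for each finite subproduct, and a limit of quotient maps need not be a quotient map. (The binary lemma itself is a consequence of $-\times X$ being a left adjoint in a cartesian closed category; the countable product functor is not a left adjoint, so no such formal argument is available.) This is precisely why the closure of QCB under countable products is a nontrivial theorem in \cite{escardo2004comparing}, proved there via countable pseudobases rather than via products of quotient maps. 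The exponential sketch has a similar soft spot: a countably based space need not be hemicompact, so there is in general no countable cofinal family of compact subsets to index your embedding of $Y^Z$ into a countable product. Neither gap matters if, as you suggest at the end, you cite \cite[Corollary 7.3, Remark 7.4]{escardo2004comparing} for both exponentials and countable limits (note that countable products are countable limits, so ``citing for countable limits but proving products yourself'' is not a coherent division of labour); but as written, the self-contained product argument would not go through.
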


\begin{remark}\label{rem_qcb_canonical}
    In \cite{escardo2004comparing}, it is shown that  \cref{QCB_cart_closed} still holds if one forms exponentials and limits in  cartesian closed categories of topological spaces other than $\ktop$ (satisfying certain mild properties). Hence, the topology of $C(X,Y)$ truly does not depend on arbitrary choices when $X$ and $Y$ are QCB spaces -- it will be the same in \emph{any} reasonable category of topological spaces.
\end{remark}

As a consequence of closure under countable limits, we obtain:

\begin{cor}\label{closed_subspaces_QCB}
    A \emph{closed} subspace of a QCB space is a QCB space (in the subspace topology). 
\end{cor}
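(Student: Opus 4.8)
The plan is to realise a closed subspace of a QCB space as a finite limit of QCB spaces, so that \cref{QCB_cart_closed} applies. Let $X$ be a QCB space and let $A\subseteq X$ be closed, with open complement $U:=X\setminus A$. First I would bring in the Sierpiński space $\mathbb{S}=\{0,1\}$ (open sets $\emptyset$, $\{1\}$, $\{0,1\}$); being finite it is second countable, hence a QCB space. Since continuity of a map into $\mathbb{S}$ amounts to openness of the preimage of $\{1\}$, the characteristic map $\chi_U:X\to\mathbb{S}$, with $\chi_U^{-1}(\{1\})=U$ and $\chi_U^{-1}(\{0\})=A$, is continuous, as is the constant map $c_0:X\to\mathbb{S}$ at $0$.

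Next I would observe that, as a set, $A=\{x\in X:\chi_U(x)=c_0(x)\}$ is the equaliser of $\chi_U$ and $c_0$. Computed in $\topsp$, this equaliser carries the subspace topology inherited from $X$; computed in $\ktop$, one further applies $k$-ification (\cref{ksp_complete_cocomplete}). But $A$ is closed in the $k$-space $X$, so by \cref{open_and_closed_subsp_ksp} (and the remark following \cref{ksubspaces_def}) the subspace topology on $A$ is already a $k$-space topology, and the $k$-ification changes nothing. Hence the equaliser of $\chi_U$ and $c_0$ formed in $\ktop$ is precisely $A$ equipped with the subspace topology.

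Finally, $X$ and $\mathbb{S}$ are QCB spaces and an equaliser is a finite — in particular countable — limit, so \cref{QCB_cart_closed}, which guarantees that $\mathsf{QCB}$ is closed under countable limits formed in $\ktop$, yields that $A$ with its subspace topology is a QCB space.

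Once \cref{QCB_cart_closed} is available the argument is routine; the only subtle point, and hence the only place that needs care, is verifying that the limit formed in $\ktop$ agrees with the naive subspace topology on $A$, which is exactly why one must invoke that $k$-ification is superfluous on closed subspaces of $k$-spaces. One could equally present $A$ as the pullback of $\chi_U:X\to\mathbb{S}$ along the inclusion of the closed point $\{0\}\hookrightarrow\mathbb{S}$, with the same justification.
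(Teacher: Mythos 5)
Your proof is correct and follows essentially the same route as the paper: exhibit the closed subspace as an equaliser in $\ktop$ of two continuous maps into a QCB space, note that this equaliser carries the subspace topology because a closed subspace of a $k$-space is already a $k$-space, and invoke \cref{QCB_cart_closed} for closure under countable limits. The only difference is the choice of codomain: you equalise the characteristic map $\chi_U$ against a constant map into the Sierpi\'nski space (second countable, hence QCB), whereas the paper equalises the projection $X\to X/A$ against the constant map at the collapsed point, which additionally uses the (trivial) fact that quotients of QCB spaces are QCB.
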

\begin{proof}
    Let $X$ be a QCB space and let $Y\subseteq X$ be a closed subspace. Let $Z:=X/Y$ be the quotient obtained by collapsing $Y$ to a point $*\in Z$. As a quotient of a QCB space, $Z$ is a QCB space, and as a closed subspace of a $k$-space, $Y$ is a $k$-space. Now, $Y$ is the equaliser (in $\ktop$) of the canonical projection $X\to Z$ and the constant map $X\to Z$, $x \mapsto *$. Since $\mathsf{QCB}$ inherits countable limits (in particular, equalisers) from $\ktop$ (see see \cref{QCB_cart_closed}), $Y$ is a QCB space.
\end{proof}

\subsubsection{QCB spaces satisfy strong countability properties} Recall that a topological space is \emph{Lindelöf} if every open cover has a countable subcover. It is furthermore \emph{hereditarily Lindelöf} if every subspace has the Lindelöf property, and \emph{hereditarily separable} every subspace is separable.

\begin{prop}\label{QCB_separable_lindelof}
    Every QCB space is hereditarily separable and hereditarily Lindelöf. 
\end{prop}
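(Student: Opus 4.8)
The plan is to reduce everything to the corresponding (well-known) facts about second-countable spaces, exploiting that an \emph{arbitrary} subspace of a QCB space is again the continuous surjective image of a second-countable space — so that, although the ``hereditarily'' quantifier prevents us from simply invoking that a QCB space is a quotient of a second-countable space, we can apply the quotient presentation to the relevant preimage.

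First I would recall the ingredients: a subspace of a second-countable space is second-countable (a countable base restricts to a countable base); a second-countable space is both separable (choose one point from each nonempty basic open set) and Lindel\"of (a standard argument, e.g.\ \cite[]{willard2012general}); the continuous image of a separable space is separable (the image of a dense set is dense); and the continuous image of a Lindel\"of space is Lindel\"of (pull back an open cover, extract a countable subcover, push it forward).

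Now let $X$ be a QCB space, so that there is a second-countable space $Y$ and a topological quotient map $q:Y\twoheadrightarrow X$; in particular $q$ is continuous and surjective. Let $A\subseteq X$ be an arbitrary subspace, and set $B:=q^{-1}(A)\subseteq Y$ with the subspace topology. Then $B$, being a subspace of a second-countable space, is second-countable, hence separable and Lindel\"of. Moreover the restriction $q|_B:B\to A$ is continuous (it is $q$ restricted in the domain to the subspace $B$ and corestricted in the codomain to the subspace $A$, which is legitimate since $q(B)\subseteq A$) and surjective (given $a\in A$, pick $y\in Y$ with $q(y)=a$; then $y\in B$). Hence $A$, as a continuous surjective image of $B$, is separable and Lindel\"of. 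Since $A$ was an arbitrary subspace, $X$ is hereditarily separable and hereditarily Lindel\"of.

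There is essentially no serious obstacle here; the only point worth flagging is precisely the one that makes the argument work past the naive quotient observation, namely that one does \emph{not} need $q|_B$ to be a quotient map (in general it is not) — mere continuity and surjectivity already transport separability and the Lindel\"of property, and the preimage of \emph{any} subset of $X$ under $q$ sits inside the second-countable space $Y$.
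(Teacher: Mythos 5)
Your proof is correct and is essentially the paper's argument spelled out in detail: the paper simply asserts that second-countable spaces are hereditarily separable and hereditarily Lindelöf and that these hereditary properties pass to quotients, and your pullback-of-the-subspace argument (using only continuity and surjectivity of $q$) is precisely the standard verification of that stability claim.
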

\begin{proof}
    Second-countable spaces are both hereditarily Lindelöf and hereditarily separable, and the properties of being hereditarily Lindelöf and of being hereditarily separable are stable under the formation of quotients. 
\end{proof}


\section{Linear $hk$-Spaces}\label{sec_linear_hk_spaces_section}

\subsection{Linear $k$-spaces, linear $hk$-spaces, linear QCB spaces} Since the product in $\ktop$ is different from the one in the category of topological spaces, a similarly adjusted notion of vector space ``internal'' to the category of $k$-spaces provides a natural replacement for the notion of topological vector space.

\begin{defn}\label{def_lin_hk_space}
    A \emph{linear $k$-space} is a $k$-space $V$ together with two continuous maps (addition and scalar multiplication),
        $$ +: V \times V \to V,$$
        $$ \cdot: \mathbb{K}\times V \to V,$$
    such that $V$ forms a vector space with respect to the operations $\cdot$ and $+$. (Here, the product $V \times V = k(V\times_\topsp V)$ denotes the product in $\ktop$, as always!) A \emph{linear $hk$-space} is a linear $k$-space which is also $k$-Hausdorff, and a \emph{linear QCB space} is a linear $k$-space which is also a QCB space. We denote the category of linear $hk$-spaces by $\vect$.
\end{defn}

\begin{remark}
    In concise category-theoretic terminology, linear $k$-spaces are exactly $\mathbb{K}$-module objects over the ring object $\mathbb{K}$ in $\ktop$,  linear $hk$-spaces are precisely $\mathbb{K}$-module objects in $\spaces$, and linear QCB spaces are nothing but $\mathbb{K}$-module objects in $\mathsf{QCB}$. (Note that, in these assertions, we use that the product in all of these categories agrees.) 
\end{remark}

\begin{warning}\label{warning_topvec_vs_linksp}
    Every $k$-space is a topological space, by definition, and the definition of a linear $k$-space is very similar to that of a topological vector space. \emph{However,} a linear $k$-space is \emph{not} necessarily a topological vector space with respect to the same topology and vector space structure, see \cref{ex_lin_k_space_not_top_vec_sp}. Moreover, in the other direction, a topological vector space is also not necessarily a linear $k$-space (with respect to the same topology and vector space structure), see \cref{top_vec_spaces_not_lin_k_spaces}. What does hold is that the $k$-ification of a topological vector space is always a linear $k$-space (see \cref{k_ification_lin_k_sp}).
\end{warning}

\subsection{Examples of linear $hk$-spaces.}\label{ex_lin_ksp}

\subsubsection{The linear $hk$-space $C(X,V)$ of continuous functions} First of all, the base field $\mathbb{K}$ itself is a linear $hk$-space. Moreover, if $X$ is any $k$-space and $V$ is any linear $k$-space, then the space $C(X,V)=V^X$ of continuous maps from $X$ to $V$ is again a linear $k$-space (and a linear $hk$-space if $V$ is $k$-Hausdorff). Continuity of addition and scalar multiplication follows from cartesian closure of $\ktop$, since these maps are given by:
    $$ +: V^X \times V^X \to V^X, \;\; (f,g) \mapsto (x \mapsto f(x)+g(x)), $$
    $$ \cdot: \mathbb{K} \times V^X \to V^X, \;\; (\lambda, f) \mapsto (x \mapsto \lambda\cdot f(x)). $$
When $V=\mathbb{K}$, we will write $C(X):= C(X, \mathbb{K})$. 

\subsubsection{Closed linear subspaces, products, limits} Closed linear subspaces of linear $hk$-spaces inherit this structure, as well, simply by restricting the vector space operations. Moreover, if $(V_i)_{i\in I}$ is a family of linear $hk$-spaces, then its product, 
    $$ \prod_{i\in I} V_i, $$
is a linear $hk$-space, with the vector space operations given component-wise and the topology being that of the ($\ktop$-)product. We can summarise this as follows.

\begin{prop}\label{lin_hk_spaces_complete}
    The category $\vect$ is complete (i.e.~it has all limits), with limits being computed as in $\spaces$ (equivalently, in $\ktop$). 
\end{prop}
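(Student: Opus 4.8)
\emph{Sketch of the intended argument.} The plan is to show that the forgetful functor $U\colon \vect\to\spaces$ creates all limits: given any diagram in $\vect$, the limit of the underlying diagram formed in $\spaces$ carries a unique compatible linear $hk$-space structure and is then a limit in $\vect$. Since every limit can be assembled from products and equalisers, it suffices to treat those two cases, both of which are essentially covered by the discussion preceding the statement.

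For a family $(V_i)_{i\in I}$ of linear $hk$-spaces, form $L:=\prod_{i\in I}V_i$ in $\spaces$; this is an $hk$-space by \cref{cor_lims_colims_in_spaces}. Equip $L$ with addition and scalar multiplication through the universal property of the product: $+_L\colon L\times L\to L$ is the unique continuous map whose composite with each projection $\pi_i$ equals the composite of $\pi_i\times\pi_i$ with the addition of $V_i$, and $\cdot_L\colon\mathbb{K}\times L\to L$ is defined analogously; continuity is then automatic. The vector space axioms hold because the projections $\pi_i$ are jointly monic, so any prospective identity between two composites $L\times\dots\times L\to L$ (or $\mathbb{K}\times\dots\times\mathbb{K}\times L\to L$) may be checked after postcomposing with each $\pi_i$, whereupon it reduces to the corresponding axiom in $V_i$. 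For the equaliser of two linear maps $f,g\colon V\to W$ of linear $hk$-spaces, the equaliser in $\spaces$ is $E:=\{\,v\in V\mid f(v)=g(v)\,\}$, which is the preimage of the diagonal $(=_W)\subseteq W\times W$ under $(f,g)\colon V\to W\times W$; as $W$ is $k$-Hausdorff this diagonal is closed, so $E$ is a closed subset of $V$ carrying the ordinary subspace topology, and since $f,g$ are linear, $E$ is a linear subspace. Thus $E$ is a closed linear subspace of $V$, hence a linear $hk$-space, as recorded above.

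It remains to verify the universal property in $\vect$ in each case: a cone of linear maps into the $V_i$ (respectively a linear map $h\colon Z\to V$ with $fh=gh$) induces, by the universal property in $\spaces$, a unique continuous map into $L$ (respectively $E$), and this map is automatically linear because the structure maps — the projections $\pi_i$, respectively the inclusion $E\hookrightarrow V$ — are monic and intertwine the linear operations. Combining products and equalisers then gives all limits, computed as in $\spaces$. I do not anticipate a real obstacle here; the only step needing attention is precisely this last one, namely that $U$ does not merely preserve but creates the linear structure, for which joint monicity of the limiting cone (or of the equaliser inclusion) is the crucial input.
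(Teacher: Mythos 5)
Your argument is correct and is essentially the paper's own proof spelled out in detail: the paper likewise reduces to products and equalisers (closed subspaces) formed in $\spaces$, observes that these inherit the linear $hk$-space structure as in the algebraic case, and concludes the universal property in $\vect$. Your explicit use of joint monicity of the limiting cone to get linearity of the induced maps is exactly the implicit step the paper leaves to the reader.
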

\begin{proof}
    Limits in $\spaces$ are formed as closed subspaces (i.e.~certain equalisers) of products. Since these operations inherit a linear $hk$-space structure in the same way as for algebraic vector spaces, it follows immediately that they satisfy the necessary universal properties also in $\vect$.
\end{proof}

\subsubsection{The linear $k$-space $L(V,W)$ of continuous linear maps} Combining the previous two examples yields a natural definition of a linear $k$-space of continuous linear maps.

\begin{defn}\label{defn_lvw}
    Let $V,W$ be linear $k$-spaces. Define the linear $k$-space of continuous linear maps $V\to W$ as,
        $$ L(V,W) := \{f\in C(V,W)\mid \forall x,y\in V \:\forall \lambda\in \mathbb{K}: f(\lambda x + y)= \lambda f(x)+ f(y)\} \subseteq C(V,W), $$
    endowed with the $k$-subspace topology induced from $C(V,W)$. 
\end{defn}

When $W$ is $k$-Hausdorff, then $L(V,W)$ is closed in $C(V,W)$ (being an intersection of closed subsets). In this case, $L(V,W)\subseteq C(V,W)$ carries the (usual) subspace topology and is again $k$-Hausdorff (see \cref{perm_prop_haus}).

\begin{lem}\label{L_VW_QCB}
    If $V,W$ are linear QCB spaces, then $L(V,W)$ is a QCB space as well.
\end{lem}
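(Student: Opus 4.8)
The plan is to realise $L(V,W)$ as a finite limit -- an equaliser -- in $\ktop$ of a diagram whose vertices are all QCB spaces, and then invoke the closure of $\mathsf{QCB}$ under countable limits from \cref{QCB_cart_closed}.

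First I would note that all the auxiliary spaces are QCB. The field $\mathbb{K}$ is second countable, hence a QCB space; since $V$ and $W$ are QCB spaces, \cref{QCB_cart_closed} gives that $\mathbb{K}\times V\times V$ is a QCB space, and that the mapping spaces $C(V,W) = W^V$ and $C(\mathbb{K}\times V\times V, W) = W^{\mathbb{K}\times V\times V}$ are QCB spaces as well (using closure of $\mathsf{QCB}$ under finite products and exponentials).

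Next I would exhibit the two maps whose equaliser is $L(V,W)$. Define $\Phi_1,\Phi_2\colon C(V,W)\to C(\mathbb{K}\times V\times V, W)$ by
$$ \Phi_1(f) = \big((\lambda,x,y)\mapsto f(\lambda x+y)\big), \qquad \Phi_2(f) = \big((\lambda,x,y)\mapsto \lambda f(x)+f(y)\big). $$
Both are continuous, being assembled by composition from the continuous vector space operations of $V$ and $W$ together with evaluation and currying -- such ``higher order function terms'' are continuous by cartesian closure of $\ktop$. An element $f\in C(V,W)$ lies in the equaliser of $\Phi_1$ and $\Phi_2$ precisely when $\Phi_1(f)=\Phi_2(f)$ as functions, i.e.~when $f(\lambda x+y)=\lambda f(x)+f(y)$ for all $\lambda\in\mathbb{K}$ and $x,y\in V$, that is, exactly when $f$ is linear; so this equaliser has underlying set $L(V,W)$ as in \cref{defn_lvw}. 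Moreover, by \cref{ksp_complete_cocomplete} the equaliser in $\ktop$ is the $k$-ification of the equaliser in $\topsp$, hence carries the $k$-ification of the subspace topology inherited from $C(V,W)$, which is exactly the $k$-subspace topology used in \cref{defn_lvw} (see \cref{ksubspaces_def}). Thus $L(V,W)$ really is this equaliser as an object of $\ktop$.

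Finally, an equaliser is a finite, hence countable, limit, so by \cref{QCB_cart_closed} the equaliser of $\Phi_1$ and $\Phi_2$ -- namely $L(V,W)$ -- is a QCB space, as desired. I do not anticipate a genuinely hard step; the only point needing a little care is identifying the topology of the $\ktop$-equaliser with the $k$-subspace topology of \cref{defn_lvw}, which is immediate from the way limits are computed in $\ktop$. (As a sanity check, when $W$ is $k$-Hausdorff one can argue more directly: $L(V,W)$ is then closed in the QCB space $C(V,W)$, so it is QCB by \cref{closed_subspaces_QCB}; the equaliser argument is what removes the need for any Hausdorffness assumption. One may equally split linearity into an additivity equaliser $C(V,W)\rightrightarrows C(V\times V,W)$ and a homogeneity equaliser $C(V,W)\rightrightarrows C(\mathbb{K}\times V,W)$ and form the joint limit -- this changes nothing essential.)
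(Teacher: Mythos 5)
Your proof is correct and follows essentially the same route as the paper: exhibit $L(V,W)$ as the equaliser of the two continuous maps $f\mapsto((\lambda,x,y)\mapsto f(\lambda x+y))$ and $f\mapsto((\lambda,x,y)\mapsto \lambda f(x)+f(y))$ on $C(V,W)$, and invoke closure of $\mathsf{QCB}$ under countable limits (\cref{QCB_cart_closed}). The extra care you take in matching the $\ktop$-equaliser topology with the $k$-subspace topology of \cref{defn_lvw} (and your correct codomain $C(\mathbb{K}\times V\times V,W)$) only adds detail the paper leaves implicit.
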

\begin{proof}
    As a $k$-space, $L(V,W)$ is the equaliser of the continuous maps,
        $$ C(V,W) \to C(\mathbb{K} \times V \times W, W), \;\; f \mapsto ((\lambda, x, y) \mapsto f(\lambda x + y), $$
    and, 
        $$ C(V,W) \to C(\mathbb{K} \times V \times W, W), \;\; f \mapsto ((\lambda, x, y) \mapsto \lambda f(x) + f(y). $$
    Equalisers are limits, so the claim follows from  \cref{QCB_cart_closed}.
\end{proof}

\subsubsection{The natural dual \texorpdfstring{$V^{\wedge}$}{V^}} As a particular case of the above, we define:

\begin{defn}\label{defn_natural_dual}
    Let $V$ be a linear $k$-space. The \emph{natural dual} of $V$, 
        $$ V^\wedge := L(V, \mathbb{K}), $$
    is the space of continuous linear functionals on $V$, topologised as a closed subspace of the space of continuous maps $C(V)$.
\end{defn}

As an immediate corollary to \cref{L_VW_QCB}, we obtain:

\begin{cor}\label{nat_dual_QCB}
    The natural dual $V^\wedge$ of a linear QCB space is a linear QCB space.
\end{cor}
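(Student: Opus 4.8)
The plan is to derive this directly from \cref{L_VW_QCB}, which already handles the space $L(V,W)$ of continuous linear maps between two linear QCB spaces. The only extra thing to check is that the base field $\mathbb{K}$, viewed as a linear $hk$-space, is in fact a linear QCB space. This is immediate: whether $\mathbb{K}=\reals$ or $\mathbb{K}=\complex$, it is a separable metric space, hence second countable, hence a QCB space; and its vector space operations are continuous for the $\ktop$-product, since $\mathbb{K}$ is (locally compact, hence) a space on which the $\ktop$-product agrees with the $\topsp$-product, and $k$-Hausdorffness follows from \cref{kification_khausdorff}.

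Granting this, I would simply apply \cref{L_VW_QCB} with $W=\mathbb{K}$: since $V$ is a linear QCB space by hypothesis and $\mathbb{K}$ is a linear QCB space, the lemma yields that $V^\wedge = L(V,\mathbb{K})$ is a QCB space. It then remains to observe that $V^\wedge$ carries a compatible linear $hk$-space structure, but this is already recorded in the discussion preceding \cref{L_VW_QCB}: for any linear $k$-space $V$ and any $k$-Hausdorff target (here $\mathbb{K}$), the space $L(V,\mathbb{K})$ is a closed linear subspace of the linear $hk$-space $C(V,\mathbb{K})$, and closed linear subspaces of linear $hk$-spaces are again linear $hk$-spaces. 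Hence $V^\wedge$ is simultaneously a linear $hk$-space and a QCB space, i.e.~a linear QCB space, as claimed.

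There is no real obstacle here: the entire content is contained in \cref{L_VW_QCB}, and the proof amounts to the single substitution $W=\mathbb{K}$ together with the bookkeeping remark that $\mathbb{K}$ itself lies in the relevant class. If one wished to be completely self-contained, one could instead repeat the equaliser argument of \cref{L_VW_QCB} verbatim with $W=\mathbb{K}$, but invoking the lemma is cleaner.
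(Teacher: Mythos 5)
Your proposal is correct and follows essentially the same route as the paper, which states the corollary as an immediate consequence of \cref{L_VW_QCB} applied with $W=\mathbb{K}$. Your extra bookkeeping (that $\mathbb{K}$ is itself a linear QCB space and that $V^\wedge$ inherits a linear $hk$-space structure as a closed subspace of $C(V,\mathbb{K})$) just makes explicit what the paper leaves implicit.
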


\subsubsection{Metrisable Hausdorff topological vector spaces, Fréchet spaces} Despite \cref{warning_topvec_vs_linksp}, every \emph{metrisable} Hausdorff topological vector space is a linear $hk$-space. This is because every metrisable space is a $k$-space and the $\ktop$-product of metrisable spaces agrees with the $\topsp$-product (and is again metrisable). Hence, on the class of metrisable spaces, there is no difference between the notions of linear $k$-space and topological vector space. In particular, every Fréchet space and (the underlying topological vector space of) every Banach space is a linear $hk$-space. 

\subsubsection{The \texorpdfstring{$k$-ification}{k-ification} of a topological vector space is a linear $k$-space} Many further examples are provided by the following proposition.

\begin{prop}\label{k_ification_lin_k_sp}
    Let $E$ be a topological vector space. Then $V:=kE$ is a linear $k$-space (with respect to the same vector space structure). If $E$ is Hausdorff, then $V$ is $k$-Hausdorff. 
\end{prop}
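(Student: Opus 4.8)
The plan is to verify that the addition and scalar multiplication maps on $E$ remain continuous after $k$-ification, using the universal property of $k$-ification (\cref{univ_prop_kification}) together with the fact that products in $\ktop$ are $k$-ifications of products in $\topsp$. First I would note that $V = kE$ has, by construction, the same underlying vector space as $E$, so the vector space axioms hold automatically; only continuity of the two structure maps with respect to the $k$-ified topology is in question.

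For addition, I would argue as follows. The map $+: E \times_\topsp E \to E$ is continuous by hypothesis. Applying the $k$-ification functor, $k(+): k(E\times_\topsp E) \to kE$ is continuous. Now $k(E \times_\topsp E)$ is precisely the product $V\times V$ formed in $\ktop$ (by \cref{ksp_complete_cocomplete}), so this says exactly that $+: V\times V \to V$ is continuous. For scalar multiplication, the subtlety is that the domain is $\mathbb{K}\times_\topsp E$, and a priori the $\ktop$-product $\mathbb{K}\times V$ is $k(\mathbb{K}\times_\topsp V) = k(\mathbb{K}\times_\topsp E)$, which carries a possibly finer topology than the topology one would get by $k$-ifying each factor separately. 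But the identity map $k(\mathbb{K}\times_\topsp E) \to \mathbb{K}\times_\topsp E$ is continuous (the $k$-ification of a topology is always finer), and composing with the continuous map $\cdot: \mathbb{K}\times_\topsp E \to E$ gives a continuous map $k(\mathbb{K}\times_\topsp E)\to E$ whose codomain we may replace by $V = kE$ via \cref{univ_prop_kification}. Hence $\cdot: \mathbb{K}\times V \to V$ is continuous, and $V$ is a linear $k$-space.

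The Hausdorff addendum is then immediate: if $E$ is Hausdorff, then $kE$ is an $hk$-space by \cref{kification_khausdorff}, so $V$ is a linear $hk$-space. I do not anticipate a genuine obstacle here; the only point requiring a moment's care is the bookkeeping around which category the various products are formed in — specifically, making sure that the continuous map out of $k(\mathbb{K}\times_\topsp E)$ or $k(E\times_\topsp E)$ is compared against the correct $\ktop$-product — and this is handled cleanly by \cref{ksp_complete_cocomplete} and \cref{univ_prop_kification}.
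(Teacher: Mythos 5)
Your proof is correct and follows essentially the paper's own argument: the paper simply observes that the identity $V\times V = k(kE\times_{\topsp}kE)\to E\times_{\topsp}E$ is continuous, composes with the continuous vector space operations of $E$, and invokes the universal property of $k$-ification, handling the Hausdorff claim via \cref{kification_khausdorff} exactly as you do. The only small imprecision is your appeal to \cref{ksp_complete_cocomplete} for the identification $k(E\times_{\topsp}E)=V\times V$, since that corollary is stated for products of $k$-spaces while $E$ need not be one; the identification instead follows from \cref{univ_prop_kification} (the $k$-ification functor is a right adjoint, hence preserves products), or one can bypass it entirely by arguing as you do for scalar multiplication, using that the identity $V\times V = k(kE\times_{\topsp}kE)\to E\times_{\topsp}E$ is continuous.
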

\begin{proof}
    It suffices to note that identity $V\times V = k(kE\times_{\topsp} kE) \to E\times_{\topsp} E$ is continuous. The second part of the statement is a special case of \cref{kification_khausdorff}.
\end{proof}

\subsubsection{Free Linear $hk$-Spaces}

A further class of examples comes from the following observation.

\begin{prop}\label{free_lin_hk_sp_existence}
    The forgetful functor $U:\vect \to \spaces$ has a left adjoint, 
        $$ F_{\mathbb{K}}: \spaces \to \vect. $$
\end{prop}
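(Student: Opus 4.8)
The plan is to invoke the General Adjoint Functor Theorem (see \cite{mac2013categories} or \cite{riehl2017category}) for the functor $U$. By \cref{lin_hk_spaces_complete} the category $\vect$ is complete, and since limits in $\vect$ are computed exactly as in $\spaces$, the functor $U$ preserves all small limits. It therefore suffices to verify the \emph{solution set condition}: for each $hk$-space $X$ we must produce a \emph{set} of objects $S_i \in \vect$ together with maps $\phi_i \colon X \to U(S_i)$ in $\spaces$ such that every morphism $f \colon X \to U(V)$ (with $V$ an arbitrary linear $hk$-space) factors as $f = U(g) \circ \phi_i$ for some $i$ and some morphism $g \colon S_i \to V$ in $\vect$.

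To construct the solution set, put $\kappa := \max(|X|, |\mathbb{K}|, \aleph_0)$ and let $I$ index, up to isomorphism in $\vect$, all pairs $(S, \phi)$ where $S$ is a linear $hk$-space whose underlying set has cardinality at most $\kappa$ and $\phi \colon X \to U(S)$ is continuous; this is a legitimate set, since there is only a set of topologies (and vector-space structures) on any fixed set. Now, given $f \colon X \to U(V)$, let $W_0 := \vspan_{\mathbb{K}}(f(X)) \subseteq V$ be the linear span of the image, topologised with the $k$-subspace topology inherited from $V$ (the $k$-ification of the subspace topology, see \cref{ksubspaces_def}). Every vector in $W_0$ is a finite $\mathbb{K}$-linear combination of elements of $f(X)$, so $|W_0| \le \kappa$.

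The crux is to check that $W_0$ is a linear $hk$-space. As a $k$-subspace of the $hk$-space $V$, it is $k$-Hausdorff by \cref{perm_prop_haus}. For continuity of addition, note first that the inclusion $\iota \colon W_0 \to V$ is continuous (the $k$-subspace topology is finer than the subspace topology). Since $W_0 \times W_0$ is a $k$-space, the product map $\iota \times \iota \colon W_0 \times W_0 \to V \times V$ is a morphism in $\ktop$; composing with $+ \colon V \times V \to V$ yields a continuous map $W_0 \times W_0 \to V$ whose image lies in $W_0$, hence, by the universal property of the subspace topology, a continuous map into $W_0$ with the plain subspace topology, and then, by \cref{univ_prop_kification} applied to the $k$-space $W_0 \times W_0$, a continuous map into $W_0$ with the $k$-subspace topology. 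Continuity of scalar multiplication is obtained in exactly the same way, using that $\mathbb{K} \times W_0$ is a $k$-space. Thus $W_0 \in \vect$, the inclusion $\iota$ is a morphism in $\vect$, and $f$ corestricts to a map $X \to W_0$ which is continuous into the plain subspace topology, hence --- as $X$ is a $k$-space --- into the $k$-subspace topology by \cref{univ_prop_kification}; calling this map $\phi$, we get $f = U(\iota) \circ \phi$. Since $(W_0, \phi)$ is isomorphic to some $(S_i, \phi_i)$, the solution set condition holds, and the General Adjoint Functor Theorem supplies the left adjoint $F_{\mathbb{K}}$.

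The main obstacle is precisely this verification that $W_0$ is a linear $hk$-space: one cannot simply invoke ``a subspace of a topological vector space is a topological vector space'', because $V$ is not a topological vector space and its operations are continuous only for the $\ktop$-product; the continuity of $+$ and $\cdot$ on $W_0$ must instead be routed through the universal property of $k$-ification together with the fact that the relevant domains ($W_0 \times W_0$ and $\mathbb{K} \times W_0$) are $k$-spaces. Everything else --- the cardinality bookkeeping for the solution set and the application of the theorem --- is routine.
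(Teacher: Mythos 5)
Your proof is correct and follows the same overall strategy as the paper: completeness of $\vect$ with limits computed as in $\spaces$ (\cref{lin_hk_spaces_complete}), preservation of those limits by $U$, and the General Adjoint Functor Theorem reduced to the solution set condition, which is verified by factoring an arbitrary map $f\colon X\to U(V)$ through a ``small'' span of its image inside $V$. The one substantive difference is how that span is formed. The paper takes the \emph{closed} linear span $\overline{\vspan\,\im(f)}\subseteq V$ with the ordinary subspace topology, so the linear-$hk$-space structure is inherited with no further argument (closed subspaces of $k$-spaces need no $k$-ification), but it must then bound the cardinality of a \emph{closure} by that of $\mathbb{K}^{(|X|)}$ -- an estimate that is less immediate, since closures can a priori be larger than the set being closed. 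You instead take the algebraic span with the $k$-subspace topology, which makes the cardinality bound $\le\kappa$ trivial, at the price of having to check that the result is again a linear $hk$-space; you do this correctly, routing continuity of $+$ and $\cdot$ (and of the corestriction of $f$) through \cref{univ_prop_kification} together with \cref{perm_prop_haus}, using that $W_0\times W_0$, $\mathbb{K}\times W_0$ and $X$ are $k$-spaces. Both factorisations establish the solution set condition; your variant trades a trivial cardinality estimate for a short continuity argument, the paper's trades a trivial structure-inheritance step for a cardinality estimate on a closure, and in that respect your bookkeeping is arguably the cleaner of the two.
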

\begin{proof}
    The category $\spaces$ of $hk$-spaces is complete by  \cref{lin_hk_spaces_complete}, and by construction of these limits, the forgetful functor $U$ preserves them. By the adjoint functor theorem, it remains to show that $U$ satisfies the \emph{solution set condition}. \par Let $X$ be an $hk$-space. We need to find a family $(V_i)_{i\in I}$ of linear $hk$-spaces and a family of continuous maps, 
        $$ f_i: X \to V_i, \qquad (i\in I) $$
    such that every continuous map $g: X \to W$ (with $W$ some further linear $hk$-space) factors through some continuous linear map $h: V_i \to W$ (for some index $i\in I$). \par 
    Let $|\mathbb{K}^{(|X|)}|$ be the free algebraic vector space on $X$, viewed as a discrete set, and let $I$ be the set of all possible linear $hk$-space structures on subsets of $|\mathbb{K}^{(|X|)}|$. For each $i\in I$, let $V_i$ be the resulting linear $hk$-space. Hence, every linear $hk$-space of cardinality less than $|\mathbb{K}^{(|X|)}|$ is isomorphic to some $V_i$ (for some $i\in I$). Now, if $g:X\to W$ is an arbitrary continuous map to some further linear $hk$-space $W$, then $g$ factors through the inclusion of the closed linear span of the image of $g$, 
        $$ Z:= \overline{ \vspan \im(g)} \subseteq W, $$
    which we may topologise as a closed subspace of $W$, resulting in a linear $hk$-space $Z$. The cardinality of $Z$ is less or equal to that of $|\mathbb{K}^{(X)}|$, so there exists an index $i\in I$ such that $Z \cong V_i$ as linear $hk$-spaces. Hence, $g$ factors through the inclusion of $Z\cong V_i$ into $W$ and the solution set condition is verified.  
\end{proof}

\begin{defn}
    As appropriate for the left adjoint to a forgetful functor, we call $F_{\mathbb{K}}$ the \emph{free-linear-$hk$-space functor}. Accordingly, given an  $hk$-space $X$, we refer to $F_{\mathbb{K}}(X)$ as the \emph{free linear $k$-space} on $X$. 
\end{defn}

\subsubsection{A linear $hk$-space which is not a topological vector space} A linear $hk$-space need not be a topological vector space with respect to the same topology and vector space structure (as mentioned before in \cref{warning_topvec_vs_linksp}):

\begin{example}\label{ex_lin_k_space_not_top_vec_sp}
    Let $X$ be an uncountable Tychonov $k$-space. Then the free linear $hk$-space $F_{\mathbb{K}}(X)$ on $X$ is \emph{not} a topological vector space. 
\end{example}
\begin{proof}
    Suppose to the contrary that $F_{\mathbb{K}}(X)$ is a topological vector space. Then $F_{\mathbb{K}}(X)$ is Hausdorff, as the origin is closed in $F_{\mathbb{K}}(X)$ (by the $k$-Hausdorff property). Moreover, in addition to being the free linear $hk$-space on $X$, it is also the free \emph{Hausdorff topological} vector space on $X$, i.e.~for every Hausdorff topological vector space $E$ and continuous map $f: X\to E$, there is a unique continuous linear map $\tilde{f}: F_{\mathbb{K}}(X)\to E$ making the diagram
    \[\begin{tikzcd}
	{F_{\mathbb{K}}(X)} & E \\
	X
	\arrow["{\tilde{f}}", dashed, from=1-1, to=1-2]
	\arrow[from=2-1, to=1-1]
	\arrow["f"', from=2-1, to=1-2]
    \end{tikzcd}\]
    commute. This is because a continuous map from a $k$-space to $E$ is equivalently a continuous map to $kE$ (which is a linear $hk$-space), so the universal property shown in the diagram does indeed reduce to the one of the free linear $hk$-space. \par 
    But this contradicts the result of \cite[Fact 4.18]{gabriyelyan2017free} which states that the free Hausdorff topological vector space on an uncountable Tychonov space is never a $k$-space. Therefore, $F_{\mathbb{K}}(X)$ is not a topological vector space.
\end{proof}

\subsubsection{A topological vector space which is not a linear $k$-space} An example in the other direction is the following. 

\begin{example}\label{top_vec_spaces_not_lin_k_spaces}
    Let $H$ be an infinite-dimensional Hilbert space. Then the weak-$*$ dual of $H$ is not a $k$-space, see \cite[Proposition 1]{frolicher1972topologies} for a proof. 
\end{example}

\begin{remark}
    For most purposes, the appropriate substitute for the weak-$*$ dual in the context of $k$-spaces seems to be the natural dual $V^\wedge = L(V,\mathbb{K})$. When $V$ is a Fréchet space, convergence of sequences in $V^\wedge$ is exactly weak-$*$ convergence (\cref{conv_in_dual_of_fre}). Moreover, still assuming that $V$ is a Fréchet space, a subset of $V^\wedge$ is compact if, and only if, it is compact in the weak-$*$ topology (see \cref{prop_dual_fre_carries_co_topology}). 
\end{remark}

In the introduction, we mentioned that the canonical evaluation pairing between a locally convex Hausdorff topological vector space and its dual is in some sense ``unavoidably discontinuous’’ (see \cref{sec_continuity_problems}). In conjunction with the fact that the dual of a Fréchet space is a linear $hk$-space when equipped with the compact-open topology (which we will show later, see \cref{prop_dual_fre_carries_co_topology}), this leads to a further example, showing also that the product in $\topsp$ does not generally coincide with the one in $\spaces$.

\begin{prop}\label{prop_ex_cont_pairing}
    Let $V$ be a locally convex Hausdorff topological vector space (short: LCTVS). Let $V'$ be its dual, equipped with any topology that turns it into an LCTVS. 
    Suppose that the map,
    $$ \mathsf{ev}: V\times_{\topsp} V' \to \mathbb{K}, \;\; (x, \phi) \mapsto \phi(x), $$
    is continuous (with respect to the product $\times_{\topsp}$ in $\topsp$!).
    Then $V$ is normable.
\end{prop}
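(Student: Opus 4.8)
The plan is to show that continuity of $\mathsf{ev}$ with respect to the topological product forces $V$ to have a bounded neighbourhood of $0$, which by Kolmogorov's normability criterion makes $V$ normable. First I would use continuity of $\mathsf{ev}$ at the origin $(0,0) \in V \times_{\topsp} V'$: there exist a neighbourhood $U$ of $0$ in $V$ and a neighbourhood $N$ of $0$ in $V'$ such that $|\phi(x)| \le 1$ for all $x \in U$, $\phi \in N$. Since $V'$ is an LCTVS, $N$ absorbs points, but more usefully $N$ contains a balanced convex neighbourhood, and by the bipolar theorem its polar $N^\circ \subseteq V$ is a weakly-bounded (hence bounded, since bounded sets are the same for all topologies compatible with the dual pairing) convex balanced set. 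The inclusion $|\phi(x)| \le 1$ for $x \in U$, $\phi \in N$ says precisely that $U \subseteq N^\circ$ (the polar taken in $V$ with respect to the pairing $\langle V, V'\rangle$).

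The key step is then the observation that $U$, being a neighbourhood of $0$ in $V$ contained in the bounded set $N^\circ$, is itself bounded in $V$; hence $V$ has a bounded neighbourhood of zero. Here one must be slightly careful that ``bounded'' is meant in the sense of the original topology of $V$: a subset $B$ of an LCTVS is bounded iff it is absorbed by every neighbourhood of $0$, equivalently iff $\sup_{x \in B}|\phi(x)| < \infty$ for every continuous seminorm, equivalently — since $V$ is locally convex Hausdorff so that $V'$ separates points and the pairing is dual — iff $B$ is weakly bounded (Mackey's theorem). Since $N^\circ$ is the polar of a neighbourhood of $0$ in $V'$ and $V'$ with its given LCTVS topology has the same dual-pairing structure (continuous functionals on $V'$ include all evaluations at points of $V$, which is all we need), $N^\circ$ is absorbed by... — more simply, for each fixed $\phi \in V'$ the set $\{\phi\}$ is eventually inside $\lambda N$ for large $\lambda$, so $\sup_{x \in N^\circ}|\phi(x)| < \infty$, i.e.\ $N^\circ$ is weakly bounded, hence bounded. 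Therefore $U \subseteq N^\circ$ is bounded.

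Finally I would invoke Kolmogorov's normability criterion: a Hausdorff topological vector space is normable if and only if it possesses a bounded convex neighbourhood of the origin. We have produced a bounded neighbourhood $U$ of $0$; replacing $U$ by a convex balanced neighbourhood contained in it (possible since $V$ is locally convex), which remains bounded, we obtain a bounded convex neighbourhood of $0$, so $V$ is normable.

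The main obstacle — really the only subtle point — is making sure the word ``bounded'' is used consistently: the polar $N^\circ$ is manifestly ``bounded'' in the weak sense (each functional is bounded on it), and one needs that weak boundedness coincides with boundedness in the original topology of $V$. This is exactly the content of Mackey's theorem for dual pairs (a subset of an LCTVS is bounded iff it is weakly bounded), valid because $V$ is locally convex and Hausdorff so $(V, V')$ is a genuine dual pair. No properties of the particular LCTVS topology on $V'$ are needed beyond its being an LCTVS (so that $N$ contains a convex balanced neighbourhood and each point of $V$ gives a continuous functional on $V'$, hence is absorbed by $N$).
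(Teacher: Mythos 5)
Your proposal is correct and follows essentially the same route as the paper: continuity of $\mathsf{ev}$ at the origin gives a uniform bound on a product of zero-neighbourhoods, the absorbing property of the neighbourhood in $V'$ yields weak boundedness, Mackey's theorem upgrades this to boundedness in the original topology, and Kolmogorov's normability criterion concludes. The detour through the polar $N^\circ$ (and the mention of the bipolar theorem, which you never actually use) is just a repackaging of the paper's direct estimate on $U_V$, so the two arguments are the same in substance.
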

\begin{proof}
    First, by continuity of $\mathsf{ev}$ and local convexity of $V$ and $V'$, there exist convex neighbourhoods $U_V\subseteq V$, $U_{V'}\subseteq V'$ of $0$ such that
    \begin{equation}\label{eq_inclusion_cont_example}
        \mathsf{ev}(U_V\times U_{V'}) \subseteq \{\lambda \in \mathbb{K}\mid |\lambda|\leq 1\}.
    \end{equation}
    This implies that $U_V$ is weakly bounded, since for every $\phi\in V'$, there exists some $t>0$ such that $t \phi \in U_{V'}$ (because $U_{V'}$ is a neighbourhood of the origin and therefore absorbing), so by \eqref{eq_inclusion_cont_example}  $|\phi(x)|< t^{-1}$, for all $x\in U_V$. Hence, $U_V$ is bounded also in the original topology of $V$ (by \cite[Theorem~3.18]{rudin1991functional}) and therefore, $V$ is is normable, as it contains a convex bounded neighbourhood of the origin (this is Kolmogorov's normability criterion \cite[Theorem~1.39]{rudin1991functional}). 
\end{proof}

\begin{example}\label{example_VxVdash_not_k_space}
    Let $V$ be a non-normable Fréchet space. Then the natural dual $V^\wedge$ agrees with the compact-open dual (\cref{prop_dual_fre_carries_co_topology}), which is an LCTVS. By \cref{prop_ex_cont_pairing}, the evaluation pairing $V \times_{\topsp}V^\wedge \to V$ is discontinuous, while being continuous when regarded as a map $V\times V \to V$ (by cartesian closure of $\spaces$). Hence, $V \times_{\topsp}V^\wedge$ is not homeomorphic to its $k$-ification $k(V \times_{\topsp}V^\wedge)= V\times V^\wedge$ and therefore the LCTVS $V \times_{\topsp}V^\wedge$ is not a $k$-space, illustrating also the difference between the products of $\topsp$ and $\spaces$.
\end{example}

\subsection{The topology of spaces of continuous linear maps}\label{sec_topology_of_sp_of_lin_cont_maps}

We now turn to investigating some of the features of the topology of the spaces $L(V,W)$. 

\subsubsection{For Fréchet spaces, convergence of sequences in $L(V, W)$ is strong convergence} What does convergence of sequences in $L(V,W)$ amount to? In the case of Fréchet spaces, the answer is particularly simple.

\begin{prop}\label{convergence_in_LVW}
    Let $V$ be a Fréchet space and $W$ be a linear $k$-space which is also a Hausdorff topological vector space in the same topology. Then convergence of sequences of continuous linear maps in $L(V,W)$ is given equivalently by strong (i.e. pointwise) convergence. 
\end{prop}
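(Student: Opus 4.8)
The plan is to reduce convergence in $L(V,W)$ to convergence in the compact-open topology and then use the Banach--Steinhaus theorem to bring it down to pointwise convergence. By \cref{defn_lvw}, $L(V,W)$ carries the subspace topology from $C(V,W) = k\cco(V,W)$ (see \cref{mapping_spaces_defn}), and convergent sequences in a space and in its $k$-ification coincide (\cref{conv_seq_in_kification}); hence, for a sequence of maps lying in $L(V,W)$, convergence in $L(V,W)$ is the same as convergence in the compact-open topology on $\cco(V,W)$. Since $V$ is Fréchet, in particular Hausdorff, this topology is generated by the subbasic sets $W(K,U)$ with $K\subseteq V$ compact and $U\subseteq W$ open (\cref{cco_for_X_k_Hausdorff}). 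Taking $K$ to be a single point shows at once that compact-open convergence implies pointwise (i.e.\ strong) convergence, which is the easy half.

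For the converse, suppose $f_n,f\in L(V,W)$ with $f_n(x)\to f(x)$ for every $x\in V$. Each orbit $\{f_n(x):n\in\nats\}$ is a convergent, hence bounded, subset of the topological vector space $W$. Since $V$ is an $F$-space, the Banach--Steinhaus theorem (\cite{rudin1991functional}) applies and yields that the family $\{f_n\}$ is equicontinuous. The pointwise limit $f$ is linear, and equicontinuity forces it to be continuous as well, so $\{f_n\}\cup\{f\}$ is an equicontinuous family of continuous linear maps $V\to W$.

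It then remains to run the standard equicontinuity argument. Given a compact $K\subseteq V$ and an open $U\subseteq W$ with $f(K)\subseteq U$, choose a symmetric $0$-neighbourhood $U_1$ in $W$ with $f(K)+U_1+U_1+U_1\subseteq U$; equicontinuity furnishes a $0$-neighbourhood $N$ in $V$ with $g(N)\subseteq U_1$ for all $g\in\{f_n\}\cup\{f\}$; cover $K$ by finitely many translates $x_1+N,\dots,x_m+N$ with $x_i\in K$; and use pointwise convergence at the finitely many points $x_i$ to find $N_0$ with $f_n(x_i)-f(x_i)\in U_1$ for all $n\ge N_0$ and all $i$. Writing an arbitrary $x\in K$ as $x_i+(x-x_i)$ with $x-x_i\in N$ and expanding $f_n(x)-f(x)$ linearly, one gets $f_n(x)\in f(K)+U_1+U_1+U_1\subseteq U$ for all $n\ge N_0$, i.e.\ $f_n\in W(K,U)$ eventually. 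Hence $f_n\to f$ in the compact-open topology, and therefore in $L(V,W)$.

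The only substantive ingredient is the Banach--Steinhaus theorem, which delivers equicontinuity from pointwise boundedness; the remainder is routine. I expect the main (minor) subtlety to be that $W$ is merely a Hausdorff topological vector space, not assumed locally convex or metrisable, so one cannot invoke the Arzelà--Ascoli theorem in the form of \cref{thm_arzela_ascoli} (whose codomain is metric) and must instead carry out the covering argument above directly with $0$-neighbourhoods of $W$; the Hausdorff hypothesis on $W$ plays only an auxiliary role (e.g.\ it ensures, via \cref{perm_prop_haus}, that $L(V,W)$ is closed in $C(V,W)$, so the pointwise limit automatically lies again in $L(V,W)$).
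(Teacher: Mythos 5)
Your proof is correct and follows essentially the same route as the paper's: Banach--Steinhaus (Rudin's uniform boundedness theorem for $F$-spaces into arbitrary topological vector spaces) gives equicontinuity of a pointwise convergent sequence, equicontinuity plus pointwise convergence upgrades to compact-open convergence, and \cref{conv_seq_in_kification} transfers this to convergence in $L(V,W)$. The only difference is that you spell out the easy direction and the standard covering argument that the paper leaves implicit, and you correctly note why Arzelà--Ascoli in the form of \cref{thm_arzela_ascoli} cannot be used here.
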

\begin{proof}
    The hypotheses that $V$ is Fréchet and $W$ is a Hausdorff topological vector space ensure that the uniform boundedness principle applies (see \cite[p. 42, 2.6 Theorem]{rudin1991functional}). Hence, a pointwise convergent sequence in $L(V,W)$ is equicontinuous and hence also converges compactly and therefore in $L(V,W)$ (because the convergent sequences in a topological space and its $k$-ification coincide).
\end{proof}

Concerning convergence in the natural dual $V^\wedge$ of a 
Fréchet space $V$, we record the following important 
particular case of \cref{convergence_in_LVW}.

\begin{cor}\label{conv_in_dual_of_fre}
    Let $V$ be a Fréchet space. Then a sequence converges in $V^\wedge$ if, and only if, it converges pointwise.
\end{cor}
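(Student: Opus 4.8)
The plan is to specialise Proposition~\ref{convergence_in_LVW} to the case $W = \mathbb{K}$. First I would check that the base field $\mathbb{K}$ satisfies the hypotheses imposed on $W$ in that proposition: $\mathbb{K}$ is metrisable, hence a $k$-space, and on metrisable spaces the $\ktop$-product coincides with the $\topsp$-product, so $\mathbb{K}$ is simultaneously a linear $hk$-space and a Hausdorff topological vector space in the same topology (indeed it is a Banach space). With this observation in place, Proposition~\ref{convergence_in_LVW}, applied to the Fréchet space $V$ and to $W = \mathbb{K}$, asserts precisely that a sequence in $V^\wedge = L(V, \mathbb{K})$ converges if and only if it converges pointwise, which is the claim.

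There is essentially no obstacle here: the single point worth making explicit is why $\mathbb{K}$ meets the requirements on $W$, and this is immediate from the discussion of metrisable spaces as linear $hk$-spaces given earlier. One could optionally note that pointwise convergence of a sequence of functionals is the same as weak-$*$ convergence, so the corollary says that sequential convergence in the natural dual of a Fréchet space is exactly weak-$*$ convergence of sequences; this recovers the classical fact that, on the equicontinuous sets produced by the uniform boundedness principle underlying Proposition~\ref{convergence_in_LVW}, the weak-$*$ and compact-open topologies induce the same notion of convergent sequence.
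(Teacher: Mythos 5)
Your proposal is correct and matches the paper exactly: the corollary is recorded there as the particular case $W=\mathbb{K}$ of \cref{convergence_in_LVW}, with no further argument needed. Your additional check that $\mathbb{K}$, being a Banach space, is simultaneously a linear $hk$-space and a Hausdorff topological vector space in the same topology is precisely the (implicit) justification the paper relies on.
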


\subsubsection{For separable Fréchet spaces, the topology on $L(V,W)$ is the sequential strong topology} When $V$ and $W$ are \emph{separable} Fréchet spaces, we can characterise the topology of $L(V,W)$ completely in terms of strongly convergent sequences. 

\begin{lem}\label{lem_LVW_seq}
    Let $V, W$ be separable Fréchet spaces. Then $L(V,W)$ is a QCB space, and hence in particular sequential space.
\end{lem}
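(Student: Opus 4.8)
The plan is to exhibit $L(V,W)$ as obtained from countably based spaces via operations that preserve the QCB property, as enumerated in \cref{QCB_cart_closed} and the surrounding results. First I would recall that separable Fréchet spaces are metrisable (hence sequential, hence $k$-spaces by \cref{ex_seq_spaces_k_space}) and, being second countable, are in particular QCB spaces; moreover a separable Fréchet space is a linear $hk$-space (as noted after \cref{k_ification_lin_k_sp}) and in fact a linear QCB space. So $V$ and $W$ are linear QCB spaces, and \cref{L_VW_QCB} applies \emph{verbatim}: $L(V,W)$ is the equaliser in $\ktop$ of the two continuous maps $C(V,W)\to C(\mathbb{K}\times V\times W, W)$ appearing in the proof of \cref{L_VW_QCB}, and since $\mathsf{QCB}$ is closed under countable limits and under exponentials (\cref{QCB_cart_closed}), the space $C(V,W)=W^V$ is QCB and the equaliser $L(V,W)$ is QCB.

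Once $L(V,W)$ is known to be a QCB space, the final clause is immediate: every QCB space is sequential (QCB spaces are quotients of second countable — hence sequential — spaces, and quotients of sequential spaces are sequential), as recorded in the subsection ``QCB spaces are sequential, hence $k$-spaces''. In particular a subset of $L(V,W)$ is closed if and only if it is sequentially closed, and a map out of $L(V,W)$ is continuous if and only if it is sequentially continuous.

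Honestly, with the machinery already in place this lemma is essentially a corollary of \cref{L_VW_QCB} together with the observation that separable Fréchet spaces are linear QCB spaces; the only point requiring a word of care is the verification that a separable Fréchet space really is a QCB space (second countability of a separable metrisable space) and that it is a linear QCB space (the $\ktop$-product of metrisable spaces is the metrisable $\topsp$-product, so addition and scalar multiplication are continuous for the correct product), which is where I would spend the bulk of the (short) argument. The combination ``separability $+$ Fréchet'' is exactly what upgrades the linear $hk$-space statement of \cref{L_VW_QCB} to the QCB statement needed here, and no new analytic input — in particular, no appeal to the uniform boundedness principle — is required for this qualitative claim; that enters only when one later wants to identify the topology concretely as the sequential strong topology.
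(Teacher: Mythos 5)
Your argument is correct and follows exactly the paper's route: the paper's proof is the one-line observation that separable Fréchet spaces are second countable (hence linear QCB spaces), so \cref{L_VW_QCB} applies and QCB spaces are sequential. Your additional care about why a separable Fréchet space is a linear QCB space (metrisability, coincidence of the $\ktop$- and $\topsp$-products) is a reasonable expansion of the same argument, not a different one.
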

\begin{proof}
    Since QCB spaces are sequential spaces and separable Fréchet spaces are second countable, this is a special case of \cref{L_VW_QCB}.
\end{proof}

\subsubsection{The natural dual of a Fréchet space is its compact-open dual} In general, the topology on $V^\wedge$ is \emph{not} the compact-open topology, but only its $k$-ification. However, in case of Fréchet spaces, no $k$-ification is necessary:

\begin{prop}\label{prop_dual_fre_carries_co_topology}
    Let $V$ be a Fréchet space. Then $V^\wedge$ carries the compact-open topology (i.e.~the topology of uniform convergence on compact subsets). Moreover, the topology of $V^\wedge$ agrees with the weak-$*$ topology on every compact subset of $V^\wedge$, and a subset of $V^\wedge$ is compact if, and only if, it is closed and equicontinuous (which is the case if, and only if it is compact in the weak-$*$ topology).
\end{prop}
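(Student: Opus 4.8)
The plan is to reduce the whole statement to the single fact that the topology $\tau_c$ of uniform convergence on compact subsets of $V$, induced on the algebraic subspace $V':=L(V,\mathbb{K})$ of continuous linear functionals, is \emph{already} a $k$-space topology; granting this, the $k$-ification occurring in the definition of $V^\wedge$ (\cref{defn_lvw}, \cref{defn_natural_dual}) is superfluous, so $V^\wedge=(V',\tau_c)$ carries the compact-open topology, and everything else follows from classical duality theory together with the Arzelà–Ascoli theorem we already have (\cref{thm_arzela_ascoli}). The crucial preliminary observation is the classical fact that on any equicontinuous set $H\subseteq V'$ the topology $\tau_c$ agrees with the weak-$*$ topology $\tau_{w^*}$ of pointwise convergence: one always has $\tau_c\supseteq\tau_{w^*}$, and conversely, given a compact $K\subseteq V$ and $\varepsilon>0$, equicontinuity produces a $0$-neighbourhood $U$ of $V$ with $|\phi(u)|<\varepsilon$ for all $\phi\in H-H$ and $u\in U$, and covering $K$ by finitely many translates $x_1+U,\dots,x_m+U$ shows that the basic $\tau_{w^*}$-neighbourhood within $H$ cut out by the $x_i$ refines the $\tau_c$-neighbourhood coming from $K$. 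Fixing a decreasing base $(U_n)$ of $0$-neighbourhoods of $V$, the polars $K_n:=U_n^{\circ}$ are, by the Alaoglu–Bourbaki theorem, weak-$*$ compact and equicontinuous; they increase, cover $V'$, and absorb every equicontinuous set, and by the preceding fact $\tau_c=\tau_{w^*}$ on each $K_n$, so each $K_n$ is $\tau_c$-compact.

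To see that $(V',\tau_c)$ is a $k$-space I would invoke the Banach–Dieudonné theorem (see \cite{schaefer1971topological}): on the dual of the metrizable space $V$, the topology of precompact convergence is the finest topology inducing $\tau_{w^*}$ on every equicontinuous set, and — since $V$ is complete — this coincides with $\tau_c$. Because the $K_n$ are cofinal among equicontinuous sets and, being compact, embed homeomorphically (with their weak-$*$ topology) as closed subsets of the resulting space, this exhibits $(V',\tau_c)$ as the colimit $\colim_n K_n$ taken in $\topsp$. A colimit of compact Hausdorff spaces is a $k$-space by \cref{char_k_sp}; hence $(V',\tau_c)$ is a $k$-space, the $k$-ification in $V^\wedge$ changes nothing, and $V^\wedge=(V',\tau_c)$ carries the compact-open topology, which is the first assertion.

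For the remaining assertions, note first that by \cref{cpct_sets_in_kififcation} the compact subsets of $V^\wedge$ coincide with those of $(V',\tau_c)$. Applying the $k$-space Arzelà–Ascoli theorem \cref{thm_arzela_ascoli} to $C(V,\mathbb{K})$, in which $V^\wedge$ is a closed subspace, a subset of $V^\wedge$ is compact iff it is closed, equicontinuous and pointwise relatively compact (equivalently, since the values lie in $\mathbb{K}$, pointwise bounded); and since $V$, being Fréchet, is barrelled, pointwise boundedness of a set of linear functionals already forces equicontinuity by the Banach–Steinhaus theorem, so ``compact'' is equivalent to ``closed and equicontinuous''. The equivalence with weak-$*$ compactness now follows from the equicontinuity fact: a closed equicontinuous set lies in some $K_n$ and is weak-$*$ closed there, hence weak-$*$ compact; conversely a weak-$*$ compact set is weak-$*$ bounded, hence equicontinuous, hence contained in some $K_n$ on which $\tau_c=\tau_{w^*}$, so it is $\tau_c$-compact and in particular closed. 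Finally, any compact subset of $V^\wedge$ is equicontinuous, so $\tau_c$ restricts to $\tau_{w^*}$ on it, which is precisely the claim that the topology of $V^\wedge$ agrees with the weak-$*$ topology on compact subsets. The one genuinely nontrivial input is the Banach–Dieudonné theorem; avoiding the citation would require proving the identification $(V',\tau_c)=\colim_n U_n^{\circ}$ by hand, which is essentially a re-proof of Banach–Dieudonné and is where the real difficulty lies.
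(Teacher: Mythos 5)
Your argument is correct and follows essentially the same route as the paper: the Banach--Dieudonn\'e theorem identifies the compact-open dual as the finest topology agreeing with the weak-$*$ topology on equicontinuous sets, and Arzel\`a--Ascoli together with Alaoglu--Bourbaki turns this into a presentation of that dual as a colimit of compact Hausdorff subsets (you take the sequential colimit over the polars $U_n^\circ$, the paper the filtered colimit over all compact subsets), so the $k$-ification in $V^\wedge$ is superfluous and the remaining assertions follow just as in the paper's proof. One small correction: to pass from the Arzel\`a--Ascoli characterisation ``closed, equicontinuous and pointwise bounded'' to ``closed and equicontinuous'' you need the elementary implication that an equicontinuous family of linear functionals is automatically pointwise bounded (an absorbing-neighbourhood argument, the fact the paper cites from Schaefer); the Banach--Steinhaus implication you invoke goes in the opposite direction and, while true for the barrelled space $V$, is not what is needed here.
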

\begin{proof}
    Write $V^{*[c.o.]}$ for the compact-open dual of $V$. According to the Banach-Dieudonné theorem, the topology on $V^{*[c.o.]}$ is the finest topology which agrees with the weak-$*$ topology on equicontinuous subsets (see \cite[p.151, 6.3 Theorem, Corolary 2]{schaefer1971topological}). 
    The finest topology which agrees with the weak-$*$ topology on equicontinuous subsets is equivalently the finest topology which agrees with the weak-$*$ topology on compact subsets of $V^{*[c.o.]}$, since the equicontinuous subsets of $V^{*[c.o.]}$ are precisely the relatively compact subsets, by the Arzela-Ascoli theorem (\cref{thm_arzela_ascoli}) and the fact that equicontinuous sets of functionals are automatically pointwise bounded \cite[p. 83, Corollary]{schaefer1971topological}.
    Hence, the Banach-Dieudonné theorem entails that the topology of $V^{*[c.o.]}$ is the final topology with respect to all inclusion maps $K\to V^{*[c.o.]}$ of compact subsets $K$. In other words, $V^{*[c.o.]}$ is the filtered colimit
        $$ V^{*[c.o.]} = \colim_{K \subseteq V^{*[c.o.]}} K $$
    over its compact subsets $K$ along the inclusion maps. As a colimit of compact Hausdorff spaces, $V^{*[c.o.]}$ is a $k$-space and therefore, $V^\wedge=k V^{*[c.o.]}=V^{*[c.o.]}$, i.e.~$V^\wedge$ carries the compact-open topology. In conjunction with the aforementioned fact that the compact-open topology agrees with the weak-$*$ topology on compact subsets and the Alaoglu-Bourbaki theorem \cite[p. 84,~4.3, Corollary]{schaefer1971topological}, this also shows the second part of the claim.
\end{proof}

\subsubsection{The natural dual of a Fréchet space is a Brauner space} Recall from the introduction that a \emph{Brauner space} is a linear $hk$-space which is hemicompact (a sequential colimit of compact subsets along inclusion maps, see \cref{defn_brauner_space}) and a complete locally convex topological vector space with respect to the same underlying topology.

\begin{proof}\label{prop_dual_of_fre_bra}
    Let $V$ be a Fréchet space. Then $V^\wedge$ is a Brauner space. 
\end{proof}
\begin{proof}
    By \cref{prop_dual_fre_carries_co_topology}, $V^\wedge$ carries the compact-open topology, which is a complete
    locally convex vector space topology. It remains to show that $V^\wedge$ is hemicompact. Since $V$ is a Fréchet space, it has a countable base $(U_n)$ of neighbourhoods of the origin. Since 
        $$ \bigcap_{n\in \mathbb{N}} U_n = \{0\}, $$
    we have that 
        $$ \bigcup_{n\in \mathbb{N}} U_n^\circ = \{0\}^\circ = V^\wedge, $$
    where
        $$ S^\circ := \{\phi \in W^\wedge \mid \forall x\in S: |\phi(x)|\leq 1\}$$ 
    denotes the \emph{polar} of a subset $S$ (see \cite[p. 125, 1.3]{schaefer1971topological}). Each $K_n := U_n^\circ$ is compact, so $(K_n)$ is an increasing sequence of compact subsets of $V^\wedge$ and we claim that 
        $$ V^\wedge = \colim_{n\in \mathbb{N}} K_n $$
    is the sequential colimit (in $\ktop$) along the inclusion maps. By \cref{seq_colims_in_spaces}, it suffices to show that every compact subset of $V^\wedge$ is contained in one of the $K_n$. So let $L\subseteq V^\wedge$ be compact. Then $L$ is equicontinuous (see \cref{prop_dual_fre_carries_co_topology}), which implies that there is a basic open neighbourhood $U_n$ of the origin in $V$ such that $L \subseteq U_n^\circ=K_n$, so we are done.
\end{proof}

\subsubsection{The space $L(V,W)$ is a Fréchet space for $V$ Brauner and $W$ Fréchet} In the other direction, we have the following result.

\begin{prop}\label{prop_LVW_Frechet_if_V_bra_and_W_fre}
    Let $V$ be a Brauner space and let $W$ be a Fréchet space. Then $L(V,W)$ carries the compact-open topology and is a Fréchet space.
\end{prop}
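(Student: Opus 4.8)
The plan is to reduce the statement to the previously established facts about Fréchet and Brauner spaces via the sequential (hemicompact) structure of $V$. First I would use \cref{defn_brauner_space} to write $V = \colim_{n\in\mathbb{N}} K_n$ as a sequential colimit of compact subsets $K_n$ along closed inclusions (this colimit being the same whether taken in $\spaces$ or computed via \cref{seq_colims_in_spaces}, since $V$ is hemicompact). Because $L(-,W)$ sends colimits in the first variable to limits — concretely, a continuous linear map out of a colimit is a compatible family of continuous maps out of the pieces — I would identify $L(V,W)$ with the (closed) subspace of $\prod_n C(K_n, W)$ consisting of the compatible families. Each $C(K_n,W)$, with $K_n$ compact Hausdorff and $W$ a Fréchet space, is itself a Fréchet space (the topology of uniform convergence, as in the example after the cartesian-closure proposition), so $\prod_n C(K_n,W)$ is a countable product of Fréchet spaces, hence Fréchet; a closed subspace of a Fréchet space is Fréchet. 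This gives that the \emph{underlying topological vector space structure} one wants is Fréchet.

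The subtlety is matching topologies: I must check that the $\spaces$-topology on $L(V,W)$, i.e.\ the $k$-subspace topology inherited from $C(V,W)$, actually coincides with the Fréchet (hence metrisable, hence already a $k$-space) topology described above, so that no $k$-ification is lost. The key point is that $C(V,W)$ for $V$ hemicompact and $W$ metrisable already carries the compact-open topology without $k$-ification: by \cref{seq_colims_in_spaces} every compact subset of $V$ sits inside some $K_n$, so the compact-open topology on $C(V,W)$ is the initial topology with respect to the restriction maps $C(V,W)\to C(K_n,W)$, i.e.\ $C(V,W) = \lim_n C(K_n,W)$ as a topological space; this is a countable limit of metrisable spaces, hence metrisable, hence a $k$-space, so $C(V,W)=k\,C_{\mathsf{c.o.}}(V,W)=C_{\mathsf{c.o.}}(V,W)$. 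Restricting to linear maps is then a closed subspace (intersection of closed conditions, using that $W$ is $k$-Hausdorff), so $L(V,W)$ carries the compact-open topology and inherits metrisability and completeness from $C(V,W)$.

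Assembling: $L(V,W)$ is the closed linear subspace $\{f : f|_{K_{n+1}}$ restricts to $f|_{K_n}\} \subseteq \prod_n C(K_n,W)$ intersected with the linearity conditions, it carries the compact-open topology, and as a closed subspace of a countable product of Fréchet spaces it is a Fréchet space. I expect the main obstacle to be the topological bookkeeping in the previous paragraph — verifying cleanly that passing to the colimit description of $V$ turns $C(V,W)$ into the \emph{metrisable} limit $\lim_n C(K_n,W)$ with no $k$-ification correction, which is where hemicompactness of $V$ and \cref{seq_colims_in_spaces} do the essential work; once that identification is in hand, the Fréchet conclusion is immediate from permanence properties of Fréchet spaces under countable products and closed subspaces.
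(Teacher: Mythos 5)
Your proposal is correct and follows essentially the same route as the paper: both use the hemicompact exhaustion $V=\colim_n K_n$ to see that the compact-open topology on $L(V,W)$ is given by countably many seminorms (equivalently, by your identification with a closed subspace of the countable product $\prod_n C(K_n,W)$), and then use metrisability to conclude that no $k$-ification occurs, so the $\spaces$-topology is the compact-open one. The only difference is presentational: the paper writes the seminorms $\sup_{x\in K_n}\|f(x)\|_m$ directly and asserts the Fréchet property, whereas your closed-subspace-of-a-countable-product argument makes the completeness step explicit.
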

\begin{proof}
    By assumption $V$ is a Brauner space, so we may write, 
        $$ V = \colim_{n\in\mathbb{N}} K_n, $$
    where $(K_n)$ is an increasing sequence of compact subsets of $V$. Now, given a countable family of seminorms $(\|\cdot\|_n^W)$ inducing the topology on $W$, a countable family of seminorms on $L(V,W)$ inducing the compact-open topology is given by
        $$ \|f\|_{n,m}^{L(V,W)} := \sup_{x\in K_n} \|f\|_m^{V}. \qquad (n,m \in \mathbb{N}, \, f\in L(V,W))$$
    Hence, the compact-open topology on $L(V,W)$ is a Fréchet topology. In particular, it is metrisable and therefore coincides with the topology on $L(V,W)$. 
\end{proof}

Let us record a direct corollary.

\begin{cor}\label{prop_dual_of_bra_fre}
    If $V$ is a Brauner space, then $V^\wedge$ is a Fréchet space and carries the compact-open topology. 
\end{cor}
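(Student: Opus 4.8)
The plan is simply to instantiate the preceding proposition at the one-dimensional target. Recall that by \cref{defn_natural_dual} we have $V^\wedge = L(V,\mathbb{K})$ by definition, so the statement is a statement about $L(V,W)$ in the special case $W = \mathbb{K}$.

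First I would observe that the base field $\mathbb{K}$ (being $\reals$ or $\complex$) is a Banach space, hence in particular a Fréchet space: its topology is induced by the single norm $|\cdot|$, which is a complete, metrisable, locally convex topology. With this in hand, $V$ is a Brauner space by hypothesis and $W := \mathbb{K}$ is a Fréchet space, so \cref{prop_LVW_Frechet_if_V_bra_and_W_fre} applies directly and yields that $L(V,\mathbb{K}) = V^\wedge$ carries the compact-open topology and is a Fréchet space.

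There is no real obstacle here; the only thing to check is the triviality that $\mathbb{K}$ qualifies as a Fréchet space, and then the conclusion is a verbatim specialisation of \cref{prop_LVW_Frechet_if_V_bra_and_W_fre}. (If one wished, one could also make the seminorm description explicit by unwinding the proof of that proposition: writing $V = \colim_{n} K_n$ as a sequential colimit of compact subsets, the countable family $\|\phi\|_n := \sup_{x \in K_n} |\phi(x)|$ induces the compact-open Fréchet topology on $V^\wedge$ — but this adds nothing beyond what the cited proposition already provides.)
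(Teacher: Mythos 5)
Your proposal is correct and is exactly how the paper obtains this result: it is stated as a direct corollary of \cref{prop_LVW_Frechet_if_V_bra_and_W_fre}, specialised to $W=\mathbb{K}$, with no further argument needed beyond the trivial observation that $\mathbb{K}$ is a Fréchet space.
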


\subsubsection{The natural dual of a \emph{nuclear} Fréchet space is its strong dual} On the dual of a nuclear Fréchet space (such as the space $\mathcal{S}'(\mathbb{R}^n)$ of tempered distributions), one usually considers the \emph{strong topology}, which is given by the topology of uniform convergence on bounded subsets (and not to be confused with the strong \emph{operator} topology which is given by the topology of \emph{pointwise} convergence). It turns out that this topology coincides with that of the natural dual. \par 
Recall that a Fréchet space has the \emph{Heine-Borel property} if every closed bounded set of $V$ is compact. This always holds in nuclear Fréchet spaces such as $\mathcal{S}(\mathbb{R}^n)$, see \cite[p. 101, Corollary 2]{schaefer1971topological}.

\begin{prop}\label{prop_heine_borel_implies_dual_strong_top}
    Let $V$ be a Fréchet space with the Heine-Borel property. Then $V^\wedge$ carries the strong topology.
\end{prop}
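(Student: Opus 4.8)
The plan is to compare the compact-open topology and the strong topology on the dual vector space $V'$ directly, via cofinal families of generating seminorms. By \cref{prop_dual_fre_carries_co_topology}, the natural dual $V^\wedge$ carries the compact-open topology, and this is a locally convex topology generated by the seminorms $p_K(\phi) := \sup_{x\in K}|\phi(x)|$ as $K$ ranges over the compact subsets of $V$; the reduction from the subbasic sets $W(K,U)$ of \cref{cco_for_X_k_Hausdorff} to the $p_K$ is routine, using that a finite intersection of $p_{K_1},\dots,p_{K_n}$-balls contains a $p_{K_1\cup\dots\cup K_n}$-ball. On the other hand, the strong topology on $V'$ is, by definition, the locally convex topology generated by the seminorms $p_B(\phi):=\sup_{x\in B}|\phi(x)|$ as $B$ ranges over the bounded subsets of $V$. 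So it suffices to show that the families $\{p_K : K\text{ compact}\}$ and $\{p_B : B\text{ bounded}\}$ dominate one another.

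For one inclusion, I would use that every compact subset of a topological vector space is bounded; hence each $p_K$ already appears among the $p_B$, so the compact-open topology is coarser than the strong topology. This direction does not use the Heine--Borel hypothesis. For the reverse inclusion, given a bounded set $B\subseteq V$ I would pass to its closure $\overline{B}$: the closure of a bounded subset of a topological vector space is again bounded, and being also closed it is compact by the Heine--Borel property of $V$. Since $B\subseteq\overline{B}$ we get $p_B\le p_{\overline{B}}$, so every strong seminorm is dominated by a compact-open one, whence the strong topology is coarser than the compact-open topology. Combining the two inclusions shows that the two topologies coincide, i.e.\ $V^\wedge$ carries the strong topology.

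I do not anticipate a genuine obstacle here: the crux is simply the observation that the Heine--Borel property forces the bounded subsets and the relatively compact subsets of $V$ to agree (up to closure), which reduces everything to the identification of the topology of $V^\wedge$ already supplied by \cref{prop_dual_fre_carries_co_topology}. The only minor point to handle with some care is the passage from the compact-open subbase to the seminorms $p_K$ and checking that these genuinely form a neighbourhood base of $0$ on the subspace of linear functionals, but this is standard and could alternatively be read off from the Banach--Dieudonné description used in the proof of \cref{prop_dual_fre_carries_co_topology}.
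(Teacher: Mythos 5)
Your proposal is correct and follows essentially the same route as the paper: invoke \cref{prop_dual_fre_carries_co_topology} to identify the topology of $V^\wedge$ as that of uniform convergence on compact sets, then use the Heine--Borel property to see that this coincides with uniform convergence on bounded sets. The paper states this comparison in one line, while you spell it out via the mutual domination of the seminorm families $p_K$ and $p_B$ (compact sets are bounded; closures of bounded sets are compact), which is just a more detailed rendering of the same argument.
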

\begin{proof}
    By \cref{prop_dual_fre_carries_co_topology}, the topology on $V^\wedge$ is the compact-open topology, i.e.~the topology of uniform convergence on compact subsets. Since $V$ has the Heine-Borel property, uniform convergence on compact subsets is equivalent to uniform convergence on bounded subsets. Hence, the topology on $V^\wedge$ is the topology of uniform convergence on bounded subsets, i.e.~the strong topology.
\end{proof}

\subsection{Quotients and epimorphisms of linear $k$-Spaces}

\subsubsection{Epimorphisms of linear $k$-Hausdorff spaces are morphisms with dense image.} We have the following characterisation of epimorphisms in the category of linear $hk$-spaces, which we will need at a later point.

\begin{lem}\label{epis_in_lin_k_hausdorff_spaces}
    Let $d:V \to W$ be a continuous linear map between linear $hk$-spaces $V,W$. Then $d$ has dense image if, and only if, it is an epimorphism in $\hvect$, i.e.~for every pair of maps $f,g: W \to Z$ from $W$ to some further $k$-Hausdorff linear $k$-space $Z$, $f \circ d = g \circ d$ implies $f=g$.
\end{lem}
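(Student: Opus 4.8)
The plan is to prove both implications directly. One direction is formal: if $d$ has dense image and $f\circ d = g\circ d$, then $f$ and $g$ agree on the dense subspace $\overline{\operatorname{im}(d)} = W$, and since $Z$ is $k$-Hausdorff, the equaliser $\{w \in W \mid f(w) = g(w)\}$ is closed (it is the preimage of the closed diagonal $(=_Z) \subseteq Z \times Z$ under the continuous map $(f,g)\colon W \to Z\times Z$, as noted in the remark following \cref{defn_k_hausdorff_property_hk_space_}). A closed set containing a dense set is everything, so $f = g$; hence $d$ is epi.

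For the converse — that an epimorphism has dense image — I would argue by contraposition: suppose $Z_0 := \overline{\operatorname{im}(d)} \subsetneq W$ is a proper closed linear subspace, and construct two distinct morphisms $W \to Z$ (for some linear $hk$-space $Z$) that agree after precomposition with $d$. The natural candidate is the quotient $W/Z_0$: take $Z := W/Z_0$, let $g\colon W \to Z$ be the quotient map, and let $f\colon W \to Z$ be the zero map. Then $f\circ d = 0 = g\circ d$ since $\operatorname{im}(d) \subseteq Z_0$, but $f \neq g$ because $Z_0$ is proper, so $g$ is not identically zero. This shows $d$ is not epi, completing the contrapositive.

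The main obstacle is making sure $W/Z_0$ is a legitimate object of $\hvect$ — i.e.\ that the quotient of a linear $hk$-space by a closed linear subspace is again a linear $hk$-space, with the quotient map continuous and linear. For $k$-Hausdorffness this follows from \cref{khausdorff_quots_closed}: the quotient by a closed linear subspace $Z_0$ is the quotient by the equivalence relation $\{(w_1,w_2) \mid w_1 - w_2 \in Z_0\}$, which is the preimage of $Z_0$ (closed) under the continuous subtraction map $W \times W \to W$, hence closed. That the vector space operations descend to continuous maps on the quotient uses that $W \to W/Z_0$ is a quotient map and that $\times$ in $\spaces$ preserves quotients (\cref{khausdorff_quots}), so $(W/Z_0) \times (W/Z_0)$ is a quotient of $W \times W$ and the operations factor continuously through it. (This is presumably established in the paper's section on quotients of linear $k$-spaces, which the excerpt indicates precedes this lemma; if so one may simply cite it.) Once the quotient construction is in hand, the argument above is complete.
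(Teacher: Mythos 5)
Your proposal is correct and follows essentially the same route as the paper: the dense-image-implies-epi direction uses that the locus where $f$ and $g$ agree is closed by $k$-Hausdorffness (the paper phrases this via the kernel of $h=f-g$, you via the closed diagonal, which is the same idea), and the converse uses the quotient $W/\overline{\im(d)}$ with the projection versus the zero map. Your extra care in checking that the quotient is a genuine object of $\hvect$ (via \cref{khausdorff_quots_closed} and \cref{khausdorff_quots}) matches what the paper establishes in its subsequent subsection on quotients of linear $k$-spaces.
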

\begin{proof}
    For the ``if'' direction, take $Z= W/\overline{\text{im }d}$ to be the quotient of $W$ by the closed image of $d$, $f:W\to Z$ to be the projection and $g:W\to Z$ to be the zero map. Then $f \circ d = 0 = g \circ d$, and hence, by assumption $f=g=0$. Since $f$ is the projection onto $Z=W/\overline{\text{im }d}$ this implies that $\overline{\text{im }d}=Z$, so $d$ does indeed have dense image. \par 
    For the ``only if'' direction, let $h := f-g$. Then what we want to show is (equivalently) that $h \circ d = 0$ implies $h = 0$. So suppose that $h \circ d = 0$. Then $\text{im }d \subseteq \ker h$. Since $Z$ is $k$-Hausdorff, $\ker h = h^{-1}(\{0\})\subseteq W$ is closed. Using our assumption that $\overline{\text{im }d} = W$, this implies that 
        $$ W = \overline{\text{im }d} \subseteq \overline{\ker h} = \ker h \subseteq W. $$
    Therefore, $\ker h = W$ and thus, $h=0$, which is what we wanted to show.
\end{proof}

A direct and useful consequence is:

\begin{lem}\label{linear_map_vanishing_on_dense_subspace}
    Let $V, W$ be linear $hk$-spaces, let $f:V\to W$ be a continuous linear map and suppose that $f$ vanishes on some dense linear subspace $D\subseteq V$. Then $f=0$.
\end{lem}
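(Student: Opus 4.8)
The plan is to reduce this to the argument already carried out in the ``only if'' direction of \cref{epis_in_lin_k_hausdorff_spaces}, taking $h=f$ and letting $d$ be the inclusion of $\overline{D}$ into $V$; equivalently, one can argue directly in three short steps.

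First I would check that the origin is closed in $W$. Since $W$ is a linear $hk$-space, it is in particular $k$-Hausdorff, so the diagonal $(=_W)\subseteq W\times W$ is closed. The map $c\colon W\to W\times W$, $w\mapsto(w,0)$, is continuous (its components are $\id$ and the constant map, which are continuous, and $W\times W$ is the $\spaces$-product), and $\{0\}=c^{-1}[(=_W)]$, whence $\{0\}$ is closed in $W$. Therefore $\ker f=f^{-1}(\{0\})$ is a closed linear subspace of $V$, using continuity of $f$.

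Second, since $f$ vanishes on $D$, we have $D\subseteq\ker f$. Taking closures and using that $\ker f$ is closed while $D$ is dense in $V$ gives
$$ V=\overline{D}\subseteq\overline{\ker f}=\ker f\subseteq V. $$
Hence $\ker f=V$, i.e.\ $f=0$.

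There is essentially no obstacle here: the only point that requires a (one-line) justification is that singletons are closed in a $k$-Hausdorff space, which follows immediately from closedness of the diagonal, and everything else is a formal manipulation of closures. I would therefore expect the write-up to be very short, at most a few lines.
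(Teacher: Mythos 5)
Your proposal is correct and is essentially the paper's own argument: the paper derives this lemma as a direct consequence of \cref{epis_in_lin_k_hausdorff_spaces}, whose ``only if'' direction contains exactly your kernel-closure computation ($D\subseteq\ker f$ closed, so $V=\overline{D}\subseteq\ker f$). Your extra one-line check that $\{0\}$ is closed via the closed diagonal is a fine justification of the $k$-Hausdorff step that the paper uses implicitly.
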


\subsubsection{Quotients of linear $k$-spaces} The purpose of this section is to define a notion of quotient for linear $k$-spaces and to show that, similar to the case of topological vector spaces, quotient maps of linear $k$-spaces are always open maps (\cref{lem_quot_maps_open}).

\begin{defn}[Quotient linear $k$-spaces]\label{lin_quot_spaces}
    Let $V$ be a linear $k$-space.
    \begin{enumerate}[1.]
        \item Let $W\subseteq V$ be a linear $k$-subspace. We define the quotient of $V$ by $W$, written $V/W$, by endowing the (algebraic) quotient vector space with the quotient topology with respect to the projection $V\to V/W$.
        \item A \emph{linear quotient map} is a continuous linear map $p:V\to Z$ (with $Z$ some linear $k$-space) that is also a (topological) quotient map.  
    \end{enumerate}
\end{defn}

\begin{lem}
    The quotient $V/W$ of a linear $k$-space $V$ by a linear $k$-subspace $W$ is again a linear $k$-space with respect to the induced vector space structure.
\end{lem}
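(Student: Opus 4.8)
The plan is to verify the three things the claim requires: that $V/W$ is a $k$-space, that the induced addition $\bar{+}: V/W\times V/W\to V/W$ is continuous, and that the induced scalar multiplication $\bar{\cdot}: \mathbb{K}\times V/W\to V/W$ is continuous; the purely algebraic vector-space axioms on the quotient vector space need no separate argument. That $V/W$ is a $k$-space is immediate from \cref{ksp_complete_cocomplete}, since $V/W$ carries the quotient topology and arbitrary topological quotients of $k$-spaces are again $k$-spaces.

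The heart of the matter is continuity of $\bar{+}$ and $\bar{\cdot}$, and here the essential input is \cref{khausdorff_quots}: in $\ktop$ (unlike in $\topsp$) the product of two quotient maps is again a quotient map. First I would write $p: V\to V/W$ for the canonical projection, which is continuous and a topological quotient map by construction. Applying \cref{khausdorff_quots} with $f=g=p$ shows that $p\times p: V\times V\to (V/W)\times(V/W)$ is a quotient map (recall that all products here are in $\ktop$), and applying it with $f=\mathrm{id}_{\mathbb{K}}$ and $g=p$ shows that $\mathrm{id}_{\mathbb{K}}\times p: \mathbb{K}\times V\to\mathbb{K}\times(V/W)$ is a quotient map. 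Alternatively, one could note that $V/W$ is the coequaliser of the inclusion $W\hookrightarrow V$ and the zero map, hence a colimit, which is preserved by $-\times Z$ because $\ktop$ is cartesian closed; this yields the same conclusion.

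Next I would invoke the universal property of quotient maps: a set map out of $(V/W)\times(V/W)$ is continuous precisely when its precomposition with $p\times p$ is continuous, and similarly for $\mathrm{id}_{\mathbb{K}}\times p$. Since $p$ is linear, we have $\bar{+}\circ(p\times p)=p\circ{+}$ and $\bar{\cdot}\circ(\mathrm{id}_{\mathbb{K}}\times p)=p\circ{\cdot}$, and the right-hand sides are continuous as composites of the continuous structure maps of the linear $k$-space $V$ with the continuous projection $p$. Hence $\bar{+}$ and $\bar{\cdot}$ are continuous, and $V/W$ is a linear $k$-space with the induced structure.

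The only non-formal ingredient is \cref{khausdorff_quots}; once it is available the rest is a routine diagram chase. The main point to be careful about is that every product appearing is the $\ktop$-product $k(-\times_{\topsp}-)$ and never the $\topsp$-product, so that the cited result, the cartesian closure of $\ktop$, and the continuity of the operations on $V$ are all used compatibly.
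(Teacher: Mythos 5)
Your main argument is correct and rests on exactly the same key ingredient as the paper's proof, namely \cref{khausdorff_quots}: the paper factors $+_{V/W}$ through $(V\times V)/(W\times W)$ and uses that $p\times p$ is a quotient map to see that $(V\times V)/(W\times W)\to V/W\times V/W$ is a homeomorphism, whereas you apply the universal property of the quotient map $p\times p$ (and $\id_{\mathbb{K}}\times p$) directly; these are the same diagram chase, and your phrasing is if anything slightly more streamlined.

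One caveat: your parenthetical alternative is wrong as stated. The coequaliser in $\ktop$ (or $\topsp$) of the inclusion $W\hookrightarrow V$ and the zero map identifies each $w\in W$ with $0$, i.e.\ it collapses $W$ to a point, but it does not identify the points within a coset $x+W$ for $x\notin W$; so it is not $V/W$ (already for $V=\mathbb{R}^2$, $W=\mathbb{R}\times\{0\}$ one gets a plane with a line collapsed, not $\mathbb{R}$). To run a colimit-preservation argument you would instead have to exhibit $V/W$ as the coequaliser of the two maps $V\times W\rightrightarrows V$, $(v,w)\mapsto v$ and $(v,w)\mapsto v+w$, which is a colimit in $\ktop$ preserved by $-\times Z$. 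Since this remark is not load-bearing, the proof as a whole stands.
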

\begin{proof}
    It is clear that $V/W$ is a $k$-space, since (topological) quotients of $k$-spaces are $k$-spaces. \par 
    Next, let us show the continuity of 
        $$ +_{V/W} : V/W \times V/W \to V/W.$$
    $W\times W$ is contained in the kernel of the composite  
    \[\begin{tikzcd}
	{V\times V} & V & {V/W}.
	\arrow["{+_V}", from=1-1, to=1-2]
	\arrow["{p}", from=1-2, to=1-3]
    \end{tikzcd}\]
    Hence, using the universal property of the (topological) quotient, this composite factors through the projection $V\times V \to (V\times V)/(W\times W)$, yielding the map
        $$ f: (V\times V)/(W\times W) \to V/W, \;\;\; (x, y) + W\times W \mapsto x+y+W. $$
    Moreover, the map
        $$ i:(V\times V)/(W\times W) \to V/W \times V/W, \;\;\; (x,y)+ W\times W \mapsto (x+W, y+W) $$
    is a homeomorphism, since products (in $\khaus$) of quotients maps are quotient maps are again quotient maps (see \cref{khausdorff_quots}. Now, we have that $+_{V/W}=f\circ i^{-1}$ and therefore, it is continuous. A similar argument shows the continuity of scalar multiplication.
\end{proof}

Linear quotient maps, as introduced in \cref{lin_quot_spaces}, satisfy the expected universal property:

\begin{lem}[Universal property of linear quotient maps]\label{univ_prop_lin_quots}
    Let $V,W$ be linear $k$-spaces and let $p:V\to W$ be a linear quotient map. Suppose that $f:V\to Z$ is some further continuous map (with $Z$ some linear $k$-space) with the property that $\ker p \subseteq \ker f$, as illustrated in the commutative diagram: 
    \[\begin{tikzcd}
	{\ker p} & V & W \\
	& Z
	\arrow[from=1-2, to=2-2]
	\arrow["p"', from=1-2, to=1-3]
	\arrow[hook, from=1-1, to=1-2]
	\arrow["0"{description}, from=1-1, to=2-2]
    \end{tikzcd}\]
    Then there is a unique map $\overline{f}$ making the following diagram commute:
    \[\begin{tikzcd}
	{\ker p} & V & W \\
	& Z
	\arrow[from=1-2, to=2-2]
	\arrow["p", from=1-2, to=1-3]
	\arrow[hook, from=1-1, to=1-2]
	\arrow["0"{description}, from=1-1, to=2-2]
	\arrow["{\overline{f}}"', dashed, from=2-2, to=1-3]
    \end{tikzcd}\]
\end{lem}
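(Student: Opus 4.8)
The plan is to split the statement into its algebraic content and its topological content, each handled by a standard universal property; I do not expect a genuine obstacle here. Recall from \cref{lin_quot_spaces} that a linear quotient map $p\colon V\to W$ is by definition both a linear map and a topological quotient map, so in particular it is surjective onto $W$ and $W$ carries the quotient topology.

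For \emph{uniqueness}: any $\overline{f}\colon W\to Z$ with $\overline{f}\circ p=f$ must satisfy $\overline{f}(p(v))=f(v)$ for all $v\in V$, and since $p$ is surjective this determines $\overline{f}$ completely. For \emph{existence as a linear map}: since $p$ is a surjective linear map it exhibits $W$ as the algebraic quotient $V/\ker p$ (via the canonical isomorphism induced by $p$), and the hypothesis $\ker p\subseteq \ker f$ says precisely that $f$ vanishes on $\ker p$ --- i.e.~the outer triangle involving the zero map commutes --- hence that $f$ is constant on the cosets of $\ker p$. The universal property of the quotient vector space (equivalently, the homomorphism theorem) then yields a unique linear map $\overline{f}\colon W\to Z$ with $\overline{f}\circ p=f$.

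It remains to see that $\overline{f}$ is \emph{continuous}, and this is where I would invoke the defining property of $p$ as a \emph{topological} quotient map: a map $\overline{f}\colon W\to Z$ out of $W$ is continuous if and only if $\overline{f}\circ p\colon V\to Z$ is continuous, and the latter equals $f$, which is continuous by hypothesis. Thus $\overline{f}$ is a continuous linear map between the linear $k$-spaces $W$ and $Z$, i.e.~a morphism of linear $k$-spaces, and the proof is complete. The only subtlety worth flagging is to keep the two uses of $p$ apart: surjectivity together with the homomorphism theorem supplies the linear map, while the strictly stronger fact that $p$ is a \emph{quotient} map --- not merely a continuous surjection --- is what transports continuity from $f$ to $\overline{f}$.
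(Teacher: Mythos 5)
Your proposal is correct and follows essentially the same route as the paper: the existence and uniqueness of $\overline{f}$ as a linear map come from the universal property of the algebraic quotient (homomorphism theorem), and its continuity comes from the universal property of $p$ as a topological quotient map. Your version just spells out the fiber-constancy and surjectivity details slightly more explicitly than the paper does.
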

\begin{proof}
    The universal property of quotient maps of topological spaces guarantees the unique existence of $\overline{f}$ as a continuous map, while the universal property of surjective linear maps between (algebraic) vector spaces guarantees unique existence of $\overline{f}$ as a linear map. Combining both therefore proves the claim.
\end{proof}

This also shows that linear quotients maps are always, up to a unique isomorphism, the projection onto some quotient linear $k$-space, as defined in \cref{lin_quot_spaces}:

\begin{cor}\label{iso_thm_linksp}
    Let $p:V \to W$ be a linear quotient map. Then there is a unique isomorphism $i$ making the following diagram commute: 
    \[\begin{tikzcd}
	V & W \\
	{V/\ker p}
	\arrow["q", two heads, from=1-1, to=2-1]
	\arrow["p", from=1-1, to=1-2]
	\arrow["i"', dashed, from=2-1, to=1-2]
    \end{tikzcd}\]
\end{cor}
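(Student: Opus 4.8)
The plan is to apply the universal property of linear quotient maps (\cref{univ_prop_lin_quots}) twice, once in each direction, and then conclude that the two maps so obtained are mutually inverse.

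First I would note that the canonical projection $q\colon V \to V/\ker p$ is itself a linear quotient map in the sense of \cref{lin_quot_spaces}: by definition $V/\ker p$ carries the quotient topology along $q$, and by the preceding lemma it is a linear $k$-space, so $q$ is a continuous linear topological quotient map. Applying \cref{univ_prop_lin_quots} to the quotient map $q$ and the continuous linear map $p\colon V \to W$, and using $\ker q = \ker p \subseteq \ker p$, produces a unique continuous linear map $i\colon V/\ker p \to W$ with $i\circ q = p$. This is exactly the map in the statement, and its uniqueness is already furnished by this application of \cref{univ_prop_lin_quots}.

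Next I would run the symmetric argument. Applying \cref{univ_prop_lin_quots} to the linear quotient map $p\colon V \to W$ and the continuous linear map $q\colon V \to V/\ker p$, with $\ker p \subseteq \ker q = \ker p$, yields a continuous linear map $j\colon W \to V/\ker p$ satisfying $j\circ p = q$. Then $i\circ j\circ p = i\circ q = p$ and $j\circ i\circ q = j\circ p = q$. Since $p$ and $q$ are surjective (being topological quotient maps), these identities can be cancelled on the right, giving $i\circ j = \id_W$ and $j\circ i = \id_{V/\ker p}$. Hence $i$ is an isomorphism of linear $k$-spaces.

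I do not anticipate any real obstacle; the only points that want a moment's care are verifying that $q$ genuinely satisfies the hypotheses of \cref{univ_prop_lin_quots} (that $V/\ker p$ is a linear $k$-space and $q$ a topological quotient map, both recorded above) and the elementary observation that a surjection can be cancelled on the right, which licenses passing from $i\circ j\circ p = p$ to $i\circ j = \id_W$. Uniqueness of $i$ follows directly from the uniqueness clause in \cref{univ_prop_lin_quots}.
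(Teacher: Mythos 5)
Your proposal is correct and takes essentially the same route as the paper: apply the universal property of linear quotient maps (\cref{univ_prop_lin_quots}) twice, once to $q$ with $\ker q \subseteq \ker p$ and once to $p$ with $\ker p \subseteq \ker q$, then observe the resulting maps are mutually inverse. The cancellation-by-surjectivity step and the check that $q$ is itself a linear quotient map are exactly the details the paper leaves implicit, and you have filled them in correctly.
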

\begin{proof}
    This follows from applying the universal property of linear quotient maps \linebreak (\cref{univ_prop_lin_quots}) twice, first using that $q$ is a linear quotient map and $\ker q \subseteq \ker p$, and then using that $p$ is also a linear quotient map and $\ker p \subseteq \ker q$.
\end{proof}

One important feature of linear quotient maps that distinguishes them from general quotient maps is that they are always open:

\begin{lem}\label{lem_quot_maps_open}
    Let $V,W$ be linear $k$-spaces and let $p:V\to W$ be a surjective continuous linear map. Then $p$ is a linear quotient map if, and only if, it is an open map.
\end{lem}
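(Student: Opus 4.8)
The plan is to prove the two implications separately; the forward direction is essentially a general-topology triviality, and the reverse direction rests only on the homogeneity of linear $k$-spaces under translation, exactly as in the classical topological-vector-space argument.

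For the ``if'' direction, suppose $p$ is open. A surjective, continuous, open map is automatically a topological quotient map: if $S\subseteq W$ has $p^{-1}(S)$ open, then $S = p(p^{-1}(S))$ by surjectivity, which is open since $p$ is open; the reverse implication is just continuity. As $p$ is linear by hypothesis, it is then a linear quotient map in the sense of \cref{lin_quot_spaces}.

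For the ``only if'' direction, suppose $p$ is a linear quotient map. First I would record that for every $v\in V$ the translation $t_v\colon V\to V$, $x\mapsto x+v$, is a homeomorphism: it is continuous as the composite of $x\mapsto (x,v)$ with the continuous addition $+\colon V\times V\to V$, and its inverse $t_{-v}$ is continuous for the same reason. Now let $U\subseteq V$ be open. I claim $p^{-1}(p(U)) = U + \ker p$: indeed $v\in p^{-1}(p(U))$ iff $p(v)=p(u)$ for some $u\in U$, iff $v-u\in\ker p$ for some $u\in U$, iff $v\in U+\ker p$. Hence
\[ p^{-1}(p(U)) \;=\; \bigcup_{w\in\ker p} (U+w) \;=\; \bigcup_{w\in\ker p} t_w(U), \]
a union of open subsets of $V$, hence open. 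Since $p$ is a topological quotient map, $p(U)$ is therefore open in $W$; as $U$ was arbitrary, $p$ is an open map.

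The only ingredient beyond general topology is the observation that translations are homeomorphisms, which is where the linear $k$-space structure (continuity of addition) is used; everything else is formal set-theoretic bookkeeping, so I do not anticipate any genuine obstacle.
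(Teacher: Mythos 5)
Your proof is correct and follows essentially the same route as the paper: open continuous surjections are quotient maps for the easy direction, and for the converse the key point is that translations are homeomorphisms, so the saturation of an open set is open and the quotient property pushes it forward to an open set. The only (minor, and arguably cleaner) difference is that you work directly with $p^{-1}(p(U)) = U + \ker p$, whereas the paper first invokes \cref{iso_thm_linksp} to reduce to the canonical projection $V \to V/\ker p$ before making the same translation argument.
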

\begin{proof}
    Open continuous surjective maps are always quotient maps. For the other direction, by \cref{iso_thm_linksp}, we may assume that $W=V/Z$ for some linear $k$-subspace $Z\subseteq V$ and that $p:V\to V/Z$ is the canonical projection. Now, if $U\subseteq V$ is open, then 
        $$ p(U) = U + W = \bigcup_{w\in W} U + w. $$
    Since translation $(\cdot + x)$ is a homeomorphism for any $x\in V$ (with continuous inverse $(\cdot - x)$), this shows that $p(U)$ is open, as claimed.
\end{proof}

\subsection{Direct sums and cocompleteness of linear $hk$-spaces } 

The goal of this section is to construct coproducts of linear $hk$-spaces, which will entail that $\vect$ has all colimits. We will first give a direct construction and then show that this satisfies the required universal property in \cref{coprods_in_vect}.

\begin{construction}[Direct sum of linear $k$-spaces]\label{cref_direct_sums_of_lin_hk_sp}
    \text{}
    \begin{enumerate}
        \item For a \emph{finite} family $(V_i)_{i\in \{1,\dots,n\}}$ of linear $k$-spaces, define its \emph{direct sum} as the product
            $$ \bigoplus_{i=1}^{n} V_i := \prod_{i=1}^n V_i. $$
        \item For an \emph{arbitrary} family $(V_i)_{i\in I}$ of linear $k$-spaces, define its direct sum as the linear $k$-space given by the filtered colimit (in $\spaces$) over the direct sum of finite subfamilies:
            $$ \bigoplus_{i\in I} V_i := \colim_{J\subseteq I\text{ finite}} \bigoplus_{j\in J} V_j$$
    \end{enumerate} 
\end{construction}

\begin{remark}
    The underlying set of the direct sum of a family of linear $k$-spaces is the (underlying set of the) algebraic direct sum (since the forgetful functor from $\spaces$ to $\sets$ preserves colimits). The addition and scalar multiplication maps obtained this way are continuous, so the direct sum is indeed a linear $k$-space, as suggested.
\end{remark}

\begin{prop}\label{coprods_in_vect}
    The direct sum of linear $k$-spaces from \cref{cref_direct_sums_of_lin_hk_sp} is the coproduct in $\vect$.
\end{prop}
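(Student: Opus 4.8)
The plan is to treat the finite and infinite cases separately, reducing the latter to the former.

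\emph{Finite case.} For a finite family $(V_i)_{i=1}^n$ I would show that the product $\prod_{i=1}^n V_i$, equipped with the canonical inclusions $\iota_j\colon V_j\to \prod_{i=1}^n V_i$ (inserting $0$ in the remaining coordinates), is the coproduct in $\vect$. The inclusions are morphisms of $\vect$, being continuous (into each factor they are the identity or a constant map) and linear. Given a linear $hk$-space $W$ and morphisms $f_j\colon V_j\to W$, I would define $f\colon \prod_{i=1}^n V_i\to W$ by $f(x_1,\dots,x_n):=\sum_{i=1}^n f_i(x_i)$; it is continuous because it factors as the product map $\prod_{i=1}^n V_i\to W^n$, $(x_i)_i\mapsto(f_i(x_i))_i$, followed by the $n$-fold addition $W^n\to W$, which is continuous by continuity of addition on $W$ and cartesian closure of $\spaces$; it is linear; and it satisfies $f\circ\iota_j=f_j$. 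Uniqueness of $f$ follows from the algebraic identity $x=\sum_{j=1}^n \iota_j(\pi_j(x))$ in $\prod_{i=1}^n V_i$ (with $\pi_j$ the projections) together with linearity of any competitor.

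\emph{Infinite case.} For an arbitrary family $(V_i)_{i\in I}$, write $V_J:=\bigoplus_{j\in J}V_j$ for finite $J\subseteq I$ and let $\lambda_J\colon V_J\to \bigoplus_{i\in I}V_i$ be the cocone maps of the filtered colimit defining $\bigoplus_{i\in I}V_i$ in \cref{cref_direct_sums_of_lin_hk_sp}; by that construction and the remark following it, $\bigoplus_{i\in I}V_i$ is a linear $hk$-space whose underlying vector space is the algebraic direct sum and for which the $\lambda_J$ are morphisms of $\vect$. Put $\iota_i:=\lambda_{\{i\}}\colon V_i\to\bigoplus_{j\in I}V_j$. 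Given morphisms $f_i\colon V_i\to W$, the finite case supplies unique morphisms $f_J\colon V_J\to W$ with $f_J\circ\iota^{V_J}_j=f_j$ for $j\in J$; precomposing with the inclusions $\iota^{V_J}_j$ and using uniqueness in the finite case shows that the $f_J$ are compatible with the transition maps of the colimit diagram, so they assemble into a cocone, and the universal property of the colimit in $\spaces$ yields a unique \emph{continuous} $f\colon\bigoplus_{i\in I}V_i\to W$ with $f\circ\lambda_J=f_J$, in particular $f\circ\iota_i=f_i$. Since every element of $\bigoplus_{i\in I}V_i$ lies in the image of some $\lambda_J$ (and any two elements in a common such image), and each $f_J$ and each $\lambda_J$ is linear, $f$ is linear, hence a morphism of $\vect$; and if $g$ is any morphism of $\vect$ with $g\circ\iota_i=f_i$, then $g\circ\lambda_J=f_J$ for every finite $J$ (again by finite-case uniqueness), whence $g=f$ by uniqueness in the $\spaces$-colimit.

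I do not expect a deep obstacle here. The only genuinely delicate point is the bookkeeping at the interface between the topological colimit (taken in $\spaces$) and the algebraic and linear structure: one needs that the $\spaces$-colimit $\bigoplus_{i\in I}V_i$ already carries the algebraic direct sum as its underlying vector space, so that linearity and uniqueness of the induced map $f$ can be read off pointwise from the finite case. This is precisely the content of the remark preceding the statement, so the rest is a routine diagram chase.
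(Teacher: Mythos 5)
Your proposal is correct and follows essentially the same route as the paper: the paper obtains the map from the algebraic universal property of the direct sum and then gets continuity from the filtered-colimit structure over the finite sub-sums, which is exactly your reduction, just with the algebraic and topological steps in the opposite order. Your explicit verification of the finite case (that the product with the sum map $\sum_i f_i\circ\pi_i$ is the finite coproduct) spells out a point the paper's proof leaves implicit, but it is the same argument.
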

\begin{proof}
    Let $(V_i)_{i\in I}$ be a family of linear $k$-spaces. We need to verify that its direct sum verifies the following universal property: for any family of continuous linear maps
        $$ f_i : V_i \to W, \qquad(i\in I),$$
    with $W$ some further linear $k$-space, there is a unique continuous linear map 
        $$ \oplus_{i\in I} f_i: \bigoplus_{i\in i} V_i \to W, $$
    making the following diagram commute, for each $j\in I$,
    \[\begin{tikzcd}
	{V_j} & W \\
	{\bigoplus_{i\in I} V_i}
	\arrow["{f_j}", from=1-1, to=1-2]
	\arrow[hook, from=1-1, to=2-1]
	\arrow["{\bigoplus_{i\in I}f_i}"', from=2-1, to=1-2]
    \end{tikzcd}\]
    The unique existence of a linear (not necessarily continuous) such map is guaranteed by the universal property of the direct sum (coproduct) of algebraic vector spaces. Its continuity then follows from the fact that, in order for a map on a filtered colimit limit of $k$-spaces to be continuous, it suffices that its composition with each inclusion is continuous.
\end{proof}

\begin{cor}\label{vect_cocomplete_complete}
    $\vect$ is a complete and cocomplete pre-abelian category.
\end{cor}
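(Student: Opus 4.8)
The plan is to assemble the statement from the pieces already established, adding only the construction of coequalisers and of kernels and cokernels. Completeness of $\vect$ is exactly \cref{lin_hk_spaces_complete}. For cocompleteness it suffices, since \cref{coprods_in_vect} already supplies all coproducts, to construct coequalisers: a category with all small coproducts and all coequalisers has all colimits. Given a parallel pair $f,g\colon V\to W$ in $\vect$, I would take as coequaliser the quotient linear $hk$-space $W/\overline{\im(f-g)}$, where $\overline{\im(f-g)}$ is the closure of the linear subspace $\im(f-g)\subseteq W$ (a linear subspace again, translations and scalings being homeomorphisms). Being a closed linear subspace, the associated equivalence relation is closed, so the quotient is $k$-Hausdorff by \cref{khausdorff_quots_closed} and a linear $hk$-space by \cref{lin_quot_spaces}. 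For the universal property: a morphism $h\colon W\to Z$ in $\vect$ satisfies $h\circ f=h\circ g$ iff $\im(f-g)\subseteq\ker h$, and since $Z$ is $k$-Hausdorff, $\ker h=h^{-1}(0)$ is closed, whence $\overline{\im(f-g)}\subseteq\ker h$; \cref{univ_prop_lin_quots} then yields the unique factorisation through the canonical projection onto $W/\overline{\im(f-g)}$.

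For the pre-abelian structure I would verify the three defining properties in turn. \emph{Additive}: the singleton space $\{0\}$ is a zero object; each hom-set is an abelian group under pointwise addition (the sum $f+g=+_W\circ(f,g)$ of two morphisms being again a continuous linear map), and composition is bilinear; and finite products coincide with finite coproducts by \cref{cref_direct_sums_of_lin_hk_sp}, so all finite biproducts exist. \emph{Kernels}: $\ker f=f^{-1}(0)$ is a closed linear subspace of $V$ (as $\{0\}$ is closed in the $hk$-space $W$) and, with the subspace topology, is the equaliser of $f$ and $0$, which exists by \cref{lin_hk_spaces_complete}. \emph{Cokernels}: $\coker f=W/\overline{\im f}$ is the coequaliser of $f$ and $0$, constructed exactly as above. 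Hence $\vect$ is pre-abelian.

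The only genuinely delicate point, and the one I would take most care over, is that the closure-of-image quotient is really the coequaliser \emph{in $\vect$}: one must check that enlarging $\im(f-g)$ to its closure is forced (because kernels of morphisms into $hk$-spaces are closed) yet harmless (the comparison map still exists and is unique). Existence and continuity of the comparison map come from \cref{univ_prop_lin_quots}; its uniqueness is immediate from surjectivity of the projection, while the fact that no information is lost by passing to the closure is the content of \cref{linear_map_vanishing_on_dense_subspace}. Everything else is a routine transcription of the linear-algebraic bookkeeping into $\vect$, relying throughout on limits being computed as in $\spaces$ and on quotients by closed linear subspaces remaining within $\vect$.
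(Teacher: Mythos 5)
Your proposal is correct and takes essentially the same route as the paper: completeness from \cref{lin_hk_spaces_complete}, cocompleteness from the coproducts of \cref{coprods_in_vect} together with coequalisers realised as quotients attached to $f-g$, and the pre-abelian structure from the coincidence of finite products and coproducts plus kernels/cokernels. Your extra care in quotienting by the \emph{closure} $\overline{\im(f-g)}$ (so that the quotient stays $k$-Hausdorff, with \cref{khausdorff_quots_closed} and \cref{univ_prop_lin_quots} doing the work) is the precise reading of the paper's more tersely stated ``cokernel of $f-g$'', so it is a welcome sharpening rather than a deviation.
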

\begin{proof}
    We have seen that $\vect$ is complete (\cref{lin_hk_spaces_complete}). Cocompleteness follows from the existence of arbitrary coproducts (\cref{coprods_in_vect}) and coequalisers of arbitrary pairs $f,g:V\to W$ of morphisms (which are given by the cokernel of $f-g$, i.e. the quotient $W/\text{im}(f-g)$). Finally, that $\vect$ is a pre-abelian category follows now from the fact that finite products and coproducts coincide, which is how we have constructed them. 
\end{proof}

\section{Smith Duality}\label{sec_smithduality}

An intriguing phenomenon concerning the natural dual space of a Fréchet space is what we refer to as \emph{Smith duality}, first observed in the context of Banach spaces by Smith in \cite{smith1952pontrjagin}. Let us write 
    $$ V^{\wedge\wedge} := (V^\wedge)^\wedge $$
for the ``natural double dual'' of a linear $k$-space $V$. We then have:

\begin{thm}[Smith Duality Theorem]\label{Smith_duality}
    Let $V$ be a Fréchet space, \emph{or} a Brauner space. Then the canonical evaluation map 
        $$\eta: V \to V^{\wedge\wedge}, \;\;\; x\mapsto (\phi \mapsto \phi(x))$$
    is an isomorphism (i.e.~a linear homeomorphism). 
\end{thm}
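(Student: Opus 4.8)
The plan is to handle both cases in parallel wherever possible, splitting only at the final topological step. The duality results established above already place $V$ and $V^{\wedge\wedge}$ on equal footing: if $V$ is a Fréchet space then $V^\wedge$ is a Brauner space, so $V^{\wedge\wedge}=(V^\wedge)^\wedge$ is again a Fréchet space by \cref{prop_dual_of_bra_fre}; and if $V$ is a Brauner space then $V^\wedge$ is a Fréchet space (\cref{prop_dual_of_bra_fre}) and $V^{\wedge\wedge}$ is again a Brauner space. In either case $\eta$ is a continuous linear map, since the evaluation pairing $V\times V^\wedge\to\mathbb{K}$ is continuous by cartesian closure of $\spaces$ and currying it yields a continuous map $V\to C(V^\wedge,\mathbb{K})$ which lands in $V^{\wedge\wedge}=L(V^\wedge,\mathbb{K})$. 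So everything reduces to showing that $\eta$ is a linear bijection whose inverse is continuous.

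Bijectivity I would argue uniformly. A Fréchet space and a Brauner space are both Hausdorff locally convex topological vector spaces, so by Hahn--Banach the continuous dual $V'$, which underlies $V^\wedge$, separates the points of $V$; hence $\eta$ is injective. For surjectivity, note that $V^\wedge$ carries the compact-open topology $\tau_c(V',V)$ (by \cref{prop_dual_of_bra_fre} when $V$ is Brauner, by \cref{prop_dual_fre_carries_co_topology} when $V$ is Fréchet), and that $\tau_c(V',V)$ is a topology of the dual pair $\langle V',V\rangle$: it is finer than the weak-$*$ topology because finite sets are compact, and coarser than the Mackey topology $\tau(V',V)$ because, $V$ being complete, the closed absolutely convex hull of a compact subset of $V$ is again compact, so every compact subset of $V$ lies inside a $\sigma(V,V')$-compact absolutely convex set. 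By the Mackey--Arens theorem $(V^\wedge)'=V$, i.e.\ every element of $V^{\wedge\wedge}$ is evaluation at a point of $V$, so $\eta$ is onto.

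It now remains to see that $\eta^{-1}$ is continuous. In the Fréchet case this is immediate: $\eta$ is a continuous linear bijection between Fréchet spaces, hence a homeomorphism by the open mapping theorem. The Brauner case is the crux, as no open mapping theorem is available; instead I would show that $V$ and $V^{\wedge\wedge}$ have exactly the same compact subsets, carrying the same subspace topologies. One inclusion is immediate from continuity of $\eta$. For the converse, recall that $V^{\wedge\wedge}=(V^\wedge)^\wedge$, the natural dual of the Fréchet space $V^\wedge$, is hemicompact with an exhaustion by the polars $W_m^\circ$ of a countable $0$-neighbourhood basis $(W_m)$ of $V^\wedge$; transported along the identification $V^{\wedge\wedge}\cong V$ from the previous paragraph, $W_m^\circ$ becomes the polar in $V$ of $W_m\subseteq V'$. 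Since $V^\wedge$ carries the compact-open topology, each $W_m$ contains the polar $S_m^\circ$ of some compact absolutely convex $S_m\subseteq V$, whence $W_m^\circ\subseteq S_m^{\circ\circ}=S_m$ by the bipolar theorem. Thus the hemicompact exhaustion of $V^{\wedge\wedge}$ consists of subsets of compacta of $V$; combined with the fact that a compact subset of the Hausdorff space $V^{\wedge\wedge}$ is closed there, hence closed in the finer space $V$, this gives that every compact subset of $V^{\wedge\wedge}$ is compact in $V$, and on such a common compact set the two (compact, Hausdorff) subspace topologies coincide. Finally, $V$ and $V^{\wedge\wedge}$ are hemicompact $hk$-spaces (\cref{defn_brauner_space}), hence each is the colimit in $\spaces$ of the diagram of its compact subsets (cf.\ \cref{seq_colims_in_spaces}); these diagrams coincide, so $\eta$, being the identity on underlying sets, is an isomorphism of linear $hk$-spaces. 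The main obstacle is precisely this last step: supplying, for Brauner spaces, a substitute for the open mapping theorem, which the comparison of compact subsets provides.
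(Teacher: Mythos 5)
Your proposal is correct, and it reaches the statement by a partly different route than the paper. Continuity and bijectivity are handled exactly as in the paper's proof: cartesian closure for continuity of $\eta$, and the identification of the topology on $V^\wedge$ with the compact-open topology (\cref{prop_dual_fre_carries_co_topology}, \cref{prop_dual_of_bra_fre}) combined with the Mackey--Arens theorem for bijectivity (your explicit Hahn--Banach remark for injectivity is folded into this in the paper). Where you diverge is the continuity of $\eta^{-1}$: the paper gives a single uniform argument with no case split --- for a closed absolutely convex $0$-neighbourhood $U\subseteq V$ the polar $U^\circ$ is equicontinuous, hence compact by Arzel\`a--Ascoli (\cref{thm_arzela_ascoli}), so the bipolar $(U^\circ)^\circ$ is a $0$-neighbourhood of $V^{\wedge\wedge}$ and equals $\eta(U)$ by the bipolar theorem, whence $\eta$ is open --- whereas you split into cases, invoking the open mapping theorem for the Fréchet case (legitimate, since $V^{\wedge\wedge}=(V^\wedge)^\wedge$ is again a Fréchet space with the compact-open topology) and, for the Brauner case, a comparison of compact subsets: the hemicompact exhaustion of $V^{\wedge\wedge}$ by polars $W_m^\circ$, the bipolar theorem applied to compact absolutely convex $S_m$ (available because $V$ is complete, so closed absolutely convex hulls of compacta are compact), and the conclusion via compact generation / \cref{seq_colims_in_spaces} that two $hk$-space topologies with the same compacta and the same subspace topologies on them coincide. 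Your Brauner argument is sound and has the virtue of making the final step internal to the $k$-space formalism, substituting compact generation for any open-mapping-type input; the paper's polar argument buys uniformity and brevity, proving openness for both classes at once from the same ingredients (equicontinuity, Arzel\`a--Ascoli, bipolar theorem) that your Brauner half also uses.
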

\begin{proof} 
    That $\eta$ is continuous is a consequence of cartesian closure of $\ktop$. By \cref{prop_dual_fre_carries_co_topology} and \cref{prop_dual_of_bra_fre}, $V^\wedge$ carries the compact-open topology which, by completeness of $V$, also coincides with the topology of uniform convergence on absolutely convex compact subsets (since, in complete spaces, the closed absolutely convex hull of a compact set is compact). That $\eta$ is a bijection then follows from the Mackey-Arens theorem (for a statement of the Mackey-Arens and the necessary preliminaries, see \cite[p. 131, 3.2 Theorem]{schaefer1971topological}). \par 
    It remains to show that $\eta$ is also an open map. Let $U\subseteq V$ be an open neighbourhood of the origin. Since $V$ is locally convex, we may suppose that $U$ is closed and absolutely convex (meaning that ). Then the \emph{polar} of $U$, 
        $$ U^\circ := \{\phi \in V^\wedge \mid \forall x \in U: |\phi(x)|\leq 1 \},$$
    is equicontinuous and hence compact (by the Arzelà-Ascoli theorem). Now, the \emph{double polar} (or, \emph{bipolar}), 
        $$ (U^\circ)^\circ := \{\chi \in V^{\wedge\wedge} \mid \forall \phi \in U^\circ: |\chi(\phi)|\leq 1 \},$$
    is a neighbourhood of the origin in the compact-open topology of $V^{\wedge\wedge}$ and therefore in $V^{\wedge\wedge}$. The \emph{bipolar theorem} (see \cite[p. 126, 1.5 Theorem]{schaefer1971topological}) now says precisely that 
        $$ (U^\circ)^\circ = \eta(U), $$
    so the image $\eta(U)$ of $U$ under $\eta$ is open. Since $U$ was arbitrary, $\eta$ is an open map, which is what we wanted to show.
\end{proof}

\begin{remark}
    The original statement and proof of the duality between Fréchet and Brauner spaces is the subject of \cite{brauner1973duals}. Our proof is essentially an adaption of the original proof of the Banach space case in \cite{smith1952pontrjagin} (which can even be adapted to classes of locally convex topological vector spaces more general than Fréchet spaces, see, for example, \cite{waterhouse1968dual}).
\end{remark}

\begin{remark}
    In light of \cref{rem_qcb_canonical} and \cref{L_VW_QCB}, when $V$ is a separable, even if we had chosen to work over a different (reasonable) category of topological spaces, $V^\wedge$ would still carry the \emph{same topology}. In other words, for separable Fréchet spaces, Smith duality does not depend on the choice of base category. From this perspective, Smith duality is a particularly natural phenomenon.  
\end{remark}

\begin{remark}
    An informal interpretation of the Smith Duality Theorem might suggest the paradoxical consequence that ``all Banach spaces are reflexive'' (paradoxical because Banach spaces are typically not reflexive, in the established sense of the word ``reflexive''). This is no contradiction, of course, as the topology on the natural dual of a Banach space (in the sense of \cref{defn_natural_dual}) is weaker than the norm topology (and always strictly so in the infinite-dimensional case).
\end{remark}

\begin{remark}
    The proof of \cref{Smith_duality} uses a large part of the canon of theorems of locally convex functional analysis: the uniform bounded principle and the Banach-Dieudonné theorem (both in form of \cref{prop_dual_fre_carries_co_topology}), as well as the Mackey-Arens theorem (which in turn uses the Alaoglu and Hahn-Banach theorems). Considering that the formulation of \cref{Smith_duality} takes the perspective of linear $k$-spaces, this seems like a detour, and raises the question of whether there is a proof more close to the language of linear $k$-spaces. While we will not pursue this question here, we remark that a result in this direction is the proof of (the Banach space case of) Smith duality in the language of \emph{condensed mathematics} \cite[p. 21, Theorem 4.2]{scholze2019condensed}. 
\end{remark}

\begin{remark}
    As a final remark, we point out that Smith duality is strongly related to, and can be viewed as an instance of, \emph{Pontryagin duality}. This was Smith's original motivation (see \cite{smith1952pontrjagin} for any further details). 
\end{remark}

\subsubsection{As an (anti-)equivalence of categories} We can also formulate Smith duality as an equivalence of categories.

\begin{cor}
    The natural dual space functor, 
        $$ (-)^\wedge: \mathsf{Fre}^\op \to \mathsf{Bra}, \;\; V \mapsto V^\wedge, \;f\mapsto f^\wedge, $$
    where $\mathsf{Fre}$ and $\mathsf{Bra}$ are the full subcategories of $\hvect$ spanned by Smith and Brauner spaces, respectively, induces an equivalence of categories.
\end{cor}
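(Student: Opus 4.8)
The plan is to exhibit an explicit quasi-inverse and to check that the two round-trip composites are naturally isomorphic to the identity functors, the natural isomorphism being the canonical evaluation map $\eta$ in both cases. The quasi-inverse will be $(-)^\wedge$ itself: since for a continuous linear map $f:V\to W$ the adjoint $f^\wedge:W^\wedge\to V^\wedge$ reverses arrows, $(-)^\wedge$ is contravariant on each of $\mathsf{Fre}$ and $\mathsf{Bra}$, hence assembles into covariant functors $F:\mathsf{Fre}^\op\to\mathsf{Bra}$ and $G:\mathsf{Bra}\to\mathsf{Fre}^\op$.

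First I would check that $F$ and $G$ are well defined. On objects this is exactly \cref{prop_dual_of_fre_bra} (the natural dual of a Fréchet space is Brauner) and \cref{prop_dual_of_bra_fre} (the natural dual of a Brauner space is Fréchet). On morphisms, $f^\wedge:\phi\mapsto\phi\circ f$ is linear, and continuous by cartesian closure of $\spaces$ — it is the restriction to linear functionals of the composition map $C(W,\mathbb{K})\to C(V,\mathbb{K})$ — so it is a morphism in $\hvect$ between the relevant full subcategories; functoriality ($\id^\wedge=\id$, $(g\circ f)^\wedge=f^\wedge\circ g^\wedge$) is immediate.

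Next I would verify that the evaluation maps $\eta_V:V\to V^{\wedge\wedge}$, $x\mapsto(\phi\mapsto\phi(x))$, are natural: for $f:V\to W$, $x\in V$ and $\phi\in W^\wedge$ one computes $f^{\wedge\wedge}(\eta_V x)(\phi)=\eta_V(x)(f^\wedge\phi)=(f^\wedge\phi)(x)=\phi(f(x))=\eta_W(f(x))(\phi)$, so $f^{\wedge\wedge}\circ\eta_V=\eta_W\circ f$. Thus $\eta$ is a natural transformation from the identity to $(-)^{\wedge\wedge}$ on both $\mathsf{Fre}$ and $\mathsf{Bra}$; here one uses that $V$ Fréchet forces $V^\wedge$ Brauner and $V^{\wedge\wedge}$ Fréchet (and dually), so everything stays inside the two subcategories. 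By the Smith Duality Theorem (\cref{Smith_duality}) each $\eta_V$ is an isomorphism, so $\eta$ is a natural isomorphism $\id_{\mathsf{Fre}}\cong(-)^{\wedge\wedge}=G\circ F$ (read in $\mathsf{Fre}$, equivalently in $\mathsf{Fre}^\op$) and $\id_{\mathsf{Bra}}\cong(-)^{\wedge\wedge}=F\circ G$. A functor admitting such a quasi-inverse is an equivalence of categories, which is the claim.

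Since all the analytic content is already contained in \cref{Smith_duality}, \cref{prop_dual_of_fre_bra} and \cref{prop_dual_of_bra_fre}, I do not expect a real obstacle here; the only point requiring genuine care is the bookkeeping of the $(-)^\op$ conventions, i.e.\ making sure that the contravariant $(-)^\wedge$ on $\mathsf{Fre}$ and on $\mathsf{Bra}$ is packaged into the covariant pair $\mathsf{Fre}^\op\rightleftarrows\mathsf{Bra}$ in such a way that the composites really are the double-dual functors and that $\eta$ has the right variance to serve as the natural isomorphism on both sides.
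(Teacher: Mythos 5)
Your proposal is correct and takes essentially the same route as the paper: the paper's proof likewise deduces the equivalence from the Smith Duality Theorem (\cref{Smith_duality}) together with the naturality of $\eta$ as a transformation $\id_{\mathsf{Fre}^\op}\to(-)^{\wedge\wedge}$ and $\id_{\mathsf{Bra}}\to(-)^{\wedge\wedge}$, with well-definedness on objects supplied by \cref{prop_dual_of_fre_bra} and \cref{prop_dual_of_bra_fre}. You merely spell out the naturality computation and the quasi-inverse bookkeeping that the paper leaves implicit.
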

\begin{proof}
    This follows directly from \cref{Smith_duality} together with the fact that $\eta$ defines two natural transformations,
        $$ \id_{\mathsf{Fre^\op}} \to (-)^{\wedge\wedge}, \; \id_{\mathsf{Bra}} \to (-)^{\wedge\wedge}.$$
    (The proof of naturality of $\eta$ is the same as for algebraic vector spaces.)
\end{proof}

\chapter{Monads, Tensor Products and Duality}

This chapter is divided into three parts. The first is a brief introduction to monads, including a number of examples specific to our cause. The second part, \cref{sec_monoidal_cats_intro}, will similarly deal with the basics of the theory of (closed, symmetric) monoidal categories. All this is done in service to the third part, \cref{sec_duality_sep_chu}, where we will introduce \emph{separated objects}, a general notion of an object ``embedded into its double dual''. We show how this is related to the \emph{extensional-separated Chu construction} of \cite{barr1998separated} and apply both notions to the category $\vect$ of linear $hk$-spaces. The separated objects in $\vect$ are the \emph{replete} linear $hk$-spaces, and the separated-extensional Chu construction leads to the notion of \emph{paired linear $hk$-space}. \par 
While the first two sections of this Chapter have a purely expository purpose, the developments concerning the \emph{separated objects} of \cref{sec_triple_dualisation_lem_and_sep_obj} are original, as is the relation to the separated-extensional Chu construction given in \cref{sepobjs_corefl_subcat_of_little_chu}, the construction of free objects for the latter category (see \cref{construction_thm_for_star_aut_cats_over_cccs}), and the application of these results to $hk$-spaces. This being said, the results of \cref{sec_triple_dualisation_lem_and_sep_obj} concerning \emph{separated objects} can be seen as special cases of more general results related to the \emph{idempotent monad associated to an arbitrary monad}, applied to the \emph{double dualisation monad}, see \cite{fakir1970monade} as well as \cite{lucyshyn2014completion} for a generalisation and an overview of related results, with functional-analytic examples. However, we do not use these general results, significantly simplifying the development of these ideas for the particular case that we are interested in, thus making it more accessible and streamlined to our cause.

\section{Monads}

\subsection{What is a monad?}

Conceptually, there are at least two points of view on what a monad is. According to the first, monads provide a ``consistent way of extending spaces to include [generalised] elements and [generalised] functions of a specific kind'' \cite[p.~132]{perrone2019notes}.
In this way, they allow us to model (various sorts of) non-determinism, as illustrated by the example of probability monads, mentioned in the introduction (see \cref{sec_prob_monads_on_cccs_of_spaces}, as well as \cite[Section 5.1]{perrone2019notes} for a detailed explanation). A second perspective is that monads are ``generalised algebraic theories'', the reason for which should become apparent by the end of this section (see also \cite[Section 5.2]{perrone2019notes} for an elementary exposition). With these ideas in mind, we turn to the definition:

\begin{defn}[Monad]\label{monad_defn}
    Let $\mathsf{C}$ be a category. A \emph{monad} $(T, i, m)$ on $\mathsf{C}$ is 
    \begin{enumerate}
        \item an endofunctor, 
            $$T:\mathsf{C}\to \mathsf{C}, $$ 
            together with 
        \item two natural transformations, the \emph{unit},
            $$ i: \mathsf{id}_{\mathsf{C}} \to T, $$
            and the \emph{multiplication},
            $$ m: T\circ T \to T, $$
            such that
        \item the \emph{monad laws} hold, meaning that following two diagrams, the \emph{unit law},
        \[\begin{tikzcd}
	       TX & TTX \\
	       TTX & TX
	       \arrow["{i^{TX}}", from=1-1, to=1-2]
	       \arrow["{\;\;\;m^{TX}}", from=1-2, to=2-2]
	       \arrow[Rightarrow, no head, from=1-1, to=2-2]
	       \arrow["{T(i^{X})\:\:\:}"', from=1-1, to=2-1]
	       \arrow["{m^{TX}\;}"', from=2-1, to=2-2]
        \end{tikzcd}\] 
        and the \emph{associativity law},
        \[\begin{tikzcd}
	           TTTX && TTX \\
	           TTX && TX
	           \arrow["{m^{TTX}\;\;\;\;}"', from=1-1, to=2-1]
	           \arrow["{T(m^{TX})}", from=1-1, to=1-3]
	           \arrow["{m^{TX}}"', from=2-1, to=2-3]
	           \arrow["{\;\;\;m^{TX}}", from=1-3, to=2-3]
        \end{tikzcd}\]
        commute, where $X$ is any object from $\mathsf{C}$ and $m^{TY}: TTY \to TY$, $i^Y: Y\to TY$ are the components at the object $Y$ of the natural transformations $m, i$. 
    \end{enumerate}
\end{defn} 

\begin{remark}
    The reason for the terminology ``unit'' and ``multiplication'' is that monads can be considered a special case of the concept of a monoid in a monoidal category (see \cite[p.~154]{riehl2017category}). The unit and associativity laws are precisely the corresponding laws for monoid, under this interpretation.
\end{remark}

\subsection{Examples of monads}\label{exmpls_of_monads}

\subsubsection{Adjunctions induce monads} From an adjunction one always obtains a monad in the following way.

\begin{example}\label{adj_give_rise_to_monads}
    Let $\mathsf{C},\mathsf{D}$ be categories and suppose that the functor 
        $$ F: \mathsf{C}\to \mathsf{D} $$
    is left adjoint to 
        $$ G: \mathsf{D} \to \mathsf{C}, $$
    i.e. we have a natural isomorphism 
        $$ \phi^{X,Y}: \Hom_{\mathsf{D}}(FX, Y) \cong \Hom_{\mathsf{C}}(X, GY). \qquad (X\in \mathsf{C}, Y\in \mathsf{D})$$
    From this, we obtain two natural transformations, 
        $$ i := \phi^{X,FX}(\mathsf{id}_{FX}) : X \to GFX, \qquad (X \in \mathsf{C}) $$
    and,
        $$ m := G((\phi^{GFX, FX})^{-1}(\mathsf{id}_{GFX})): GFGFX \to GFX. \qquad (X \in \mathsf{C})$$
    Then $(G\circ F, i, m)$ is a monad on $\mathsf{C}$ (see \cite[p. 138f]{mac2013categories}).
\end{example}

\begin{remark}
    As we will see, \emph{every} monad arises from an adjunction in this way (see \cref{free_T_modules}).
\end{remark}

\subsubsection{Monads induced from free-forgetful adjunctions} The following example is conceptually important, illustrating the role that monads play in the category-theoretic approach to universal algebra (which in turn motivates the definition of the Eilenberg-Moore category, 
see \cref{t_modules_as_generalised_algebras}). 

\begin{example}[Set-of-terms monads]\label{expl_algebraic_cats_and_monads}
     Let $\mathsf{A}$ be some category of algebraic structures, such as the category $\monoids$ of monoids, the category $\groups$ of groups, the category $\rings$ of rings, or the category $R\modules$ of $R$-modules  over some ring $R$. (The notion of ``category of algebraic stuctures'' can be made precise using the language of \emph{Lawvere theories}, see \cite{hyland2007category}.) In any such case, the forgetful functor $|\cdot|:\mathsf{A}\to \sets$ has a left adjoint $T_\mathsf{A}$ (the ``free $\mathsf{A}$-object on a set''). Given a set $X$, $|T_\mathsf{A}(X)|$ may be thought of as the set of terms built freely from elements of $X$ (serving as ``abstract variables'') and the given algebraic operations. For example, $|T_\monoids(X)|$ is the free monoid on $X$ (equivalently, the set of lists with entries from $X$, or the set of \emph{strings} from the \emph{alphabet} $X$); $|T_\groups(X)|$ is the set of (group theoretic) \emph{words} on $X$ (i.e.~the free group on $X$); $|T_\rings(X)| \cong \mathbb{Z}[X]$ is the polynomial ring on $X$ over $\mathbb{Z}$; and $|T_{R\modules}(X)|$ is the set of finite formal $R$-linear combinations of elements of $X$. \par 
    Since the endofunctor $|T_\mathsf{A}(-)|: \sets \to \sets $ is the composite of a functor with its left adjoint, by \cref{adj_give_rise_to_monads}, it carries the structure of a monad, the \emph{set-of-terms} monad associated to $\mathsf{A}$. In this case, the unit corresponds to the inclusion $X\to |T_\mathsf{A}(X)|$ of single-variable terms, and the multiplication $|T_\mathsf{A}(|T_\mathsf{A}(X)|)|\to |T_\mathsf{A}(X)|$ corresponds to a formal application of the given algebraic operations to terms.
\end{example}

The next example will play a crucial role later on. For example, it will be used to show the existence of a good notion of tensor product of linear $hk$-spaces.

\begin{example}[The free-linear-$hk$-space monad]
    By \cref{free_lin_hk_sp_existence}, the forgetful functor 
        $$ \vect \to \spaces $$
    has a left adjoint, the free-linear-$hk$-space functor,
        $$ F_{\mathbb{K}}: \spaces \to \vect. $$
    The monad on $\spaces$ arising from this adjunction will, by a very slight abuse of notation, also be denoted by $F_{\mathbb{K}}$.
\end{example}

A final example of a monad arising from a free-forgetful adjunction demonstrates that monads also come up naturally in situations which one might not \emph{a priori} judge as being of an algebraic nature.

\begin{example}[Ultrafilter monad]
    Let $|-|: \compacta \to \sets $ be the forgetful functor from the category of compact Hausdorff spaces to the category of sets. This functor has a left adjoint (the \emph{Stone-Čech compactification}), yielding a monad with underlying endofunctor 
        $$ \beta: \sets \to \sets, $$
    such that $\beta X$ (with $X$ any set) can be identified with the set of ultrafilters on $X$ (see \cite[p. 157, section 9]{mac2013categories}).
\end{example}

\subsubsection{Probability monads} As mentioned in the introduction, probability monads, being both rich in structure and applications, are one of our motivations for studying interactions between measure theory, functional analysis and monads (see also \cref{sec_prob_monads_on_cccs_of_spaces} of the introduction). 

\begin{example}[The Giry monad on Polish spaces]
    The Giry monad on the category $\mathsf{Pol}$ of Polish (i.e.~separable, completely metrisable) spaces (with continuous maps as morphisms) provides an example of a probability monad (see \cref{ex_giry_monad} for details).
\end{example}

\begin{example}[Radon monad, \cite{swirszcz1974monadic}]
    Let $\compacta$ be the category of compact Hausdorff spaces. For a compact Hausdorff space $X$, let $\mathcal{P}_r(X)$ be the space of Radon probability measures on $X$, with the topology of weak convergence (which is a compact Hausdorff topology). Then the resulting functor
        $$ \mathcal{P}_r: \compacta \to \compacta$$
    may be equipped with the structure of a monad (in exactly the same way as for the Giry monad on $\mathsf{Pol}$, see \cite[Proposition 2.5]{keimel2008monad} for a detailed construction).
\end{example}

In Chapter 4, we will construct a probability monad $\mathcal{P}$ on $\spaces$ which restricts to both the Giry monad on Polish spaces and the Radon monad on compact Hausdorff spaces, thus in some sense interpolating between the two.

\subsection{Modules over a monad (the Eilenberg-Moore category)}\label{sec_eilernberg_moore_cat} In the interpretation of monads as ``generalised algebraic theories'', the \emph{Eilenberg-Moore category} is the category of ``generalised algebraic structures'' that the given monad describes. 

\begin{defn}[Modules over a monad, Eilenberg-Moore category]\label{modules_over_monad_def}
    Let $\mathsf{C}$ be a category and let $(T, i, m)$ be a monad on $\mathsf{C}$. 
    \begin{enumerate}
        \item A $T$-module is an object $A$ of $\mathsf{C}$ together with a $\mathsf{C}$-morphism (``structure morphism''),
            $$ t^A : TA \to A, $$
        such that the following two diagrams,
        \[\begin{tikzcd}
	       A && {TA} && {TTA} && {TA} \\
	       \\
	       && A && {TA} && A
	       \arrow["{T(t^A)\;}"', from=1-5, to=3-5]
	       \arrow["{t^A}"', from=3-5, to=3-7]
	       \arrow["{t^A}", from=1-7, to=3-7]
	       \arrow["{m}", from=1-5, to=1-7]
	       \arrow["{i}", from=1-1, to=1-3]
	       \arrow["{\text{id}_A}"', from=1-1, to=3-3]
	       \arrow["{t^{A}}", from=1-3, to=3-3]
        \end{tikzcd}\]
        commute.
        \item Let $A$ and $B$ be $T$-modules (with structure maps $t^A, t^B$, which we will subsequently sometimes leave implicit). A \emph{morphism of $T$-modules} is a $\mathsf{C}$-morphism 
            $$ f: A \to B$$
        such that 
            $$f \circ t^A = t^B \circ Tf, $$
        i.e.~the following diagram commutes:
        \[\begin{tikzcd}
	    TA & TB \\
	    A & B
	    \arrow["Tf", from=1-1, to=1-2]
	    \arrow["f", from=2-1, to=2-2]
	    \arrow["{t^A}"', from=1-1, to=2-1]
	    \arrow["{t^B}", from=1-2, to=2-2]
        \end{tikzcd}\]
        \item We denote the category of $T$-modules (with morphisms of $T$-modules as morphisms), also called the \emph{Eilenberg-Moore category} of $T$, by $\mathsf{C}^T$.
    \end{enumerate}
\end{defn}

\begin{remark}\label{t_modules_as_generalised_algebras}
    $T$-modules are also known as ``algebras for the monad $T\:$'' or simply ``$T$-algebras''. This terminology comes from the usage of monads for the purpose of universal algebra. In this context, the term ``algebra'' is not reserved for ``algebras over a field (or ring)'', but is used for any kind of algebraic structure, so viewing monads as generalised algebraic theories renders this terminology quite coherent. However, since vector space play a dominant role in our context (i.e.~our context is closer to \emph{linear} as opposed to universal algebra), the term ``algebra'' referring to something other than an algebra over a field or ring may be confusing, which is why we will adhere to the name ``$T$-module''. 
\end{remark}

\subsubsection{Examples of categories of modules over a monad.} Modules over a monad generalise many common kinds of mathematical structures. Let us see what the Eilenberg-Moore category of some of the exemplary monads given in \cref{exmpls_of_monads} is.

\begin{enumerate}[1.]
    \item If $T:= |T_\mathsf{A}(-)|$ is a the set-of-terms monad associated to category $\mathsf{A}$ of algebraic structures (see \cref{expl_algebraic_cats_and_monads}), a $T$-module is equivalently just an object of $\mathsf{A}$: the Eilenberg-Moore category of $T$ is canonically equivalent to $\mathsf{A}$ itself. For example, a module over the free-group monad on $\sets$ is equivalently a group, and similarly, a module over the free-commutative-ring monad $\mathbb{Z}[-]$ is equivalently a commutative ring. 
    \item Analogously to the previous point, an $F_{\mathbb{K}}$-module (where $F_{\mathbb{K}}$ is the free-linear-$hk$-space monad) is equivalently a linear $hk$-space.
    \item A module over the ultrafilter monad $\beta$ on $\sets$ is equivalently a compact Hausdorff space  (see \cite[p. 157, Section 9, Theorem 1]{mac2013categories}). This explains why compact Hausdorff spaces behave categorically a lot like algebraic structures (e.g.~in that a morphism of compact Hausdorff spaces is an isomorphism if, and only if, it is bijective). 
    \item Modules over the Radon monad are equivalently compact convex subsets of locally convex Hausdorff topological vector spaces, with morphisms of modules corresponding to affine maps between compact convex sets (this is \cite[Theorem 3]{swirszcz1974monadic}).
\end{enumerate}

\subsubsection{The free module over a monad.}\label{free_T_modules} It turns out that \emph{every} monad arises from a ``free-forgetful'' adjunction as described in \cref{adj_give_rise_to_monads}. To see this, we first observe that $TX$ always admits an obvious $T$-module structure.

\begin{defn}[Free $T$-module]
    Let $\mathsf{C}$ be a category and let $(T, i, m)$ be a monad on $\mathsf{C}$. Given an object $X$ of $\mathsf{C}$, endow $TX$ with the structure of a $T$-module by defining the structure map,
        $$ t^{TX}: TTX \to TX, $$
    to be $t^{TX}:= m$ (the multiplication). (That this does indeed define a $T$-module follows directly from the monad laws.) With this $T$-module structure, we call $TX$ the \emph{free $T$-module on $X$}.
\end{defn}

The name ``free $T$-module'' is justified by the following universal property, which also shows the aforementioned claim that every monad arises from a free-forgetful adjunction.

\begin{prop}[Universal property of free $T$-modules]\label{univ_prop_free_T_modules}
    Let $\mathsf{C}$ be a category, let $(T, i, m)$ be a monad on $\mathsf{C}$ and let $X$ be an object of $\mathsf{C}$. Then for any $\mathsf{C}$-morphism,
        $$ f:X\to A, $$
    from $X$ to some $T$-module $A$, there is a unique morphism of $T$-modules,
        $$ \overline{f}: TX \to A, $$
    such that the following diagram commutes:
    \[\begin{tikzcd}
	X & A \\
	TX
	\arrow["f", from=1-1, to=1-2]
	\arrow["i"', from=1-1, to=2-1]
	\arrow["{\overline{f}}"', dashed, from=2-1, to=1-2]
    \end{tikzcd}\]
    In other words, we have an adjunction,
        $$ \Hom_\mathsf{C}(X, |A|) \cong \Hom_{\mathsf{C}^T}(TX, A), \qquad (X \in \mathsf{C}, A \in \mathsf{C}^T)$$
    where $|-|:\mathsf{C}^T\to \mathsf{C}$ denotes the forgetful functor.
\end{prop}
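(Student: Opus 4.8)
The plan is to verify the universal property directly, in two stages: first exhibit the candidate extension $\overline{f}$ explicitly, then check uniqueness. Given $f\colon X\to A$, since $A$ is a $T$-module with structure map $t^A\colon TA\to A$, I would define
$$ \overline{f} := t^A \circ T(f)\colon TX \to TA \to A. $$
The verification then proceeds in three steps, in this order.

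First, I would check that $\overline{f}\circ i^X = f$. By naturality of the unit $i$ applied to the morphism $f$, we have $i^A \circ f = T(f)\circ i^X$, so
$$ \overline{f}\circ i^X = t^A \circ T(f)\circ i^X = t^A \circ i^A \circ f = f, $$
where the last equality is the unit law for the $T$-module $A$ (the left-hand triangle in \cref{modules_over_monad_def}). Second, I would check that $\overline{f}$ is a morphism of $T$-modules, i.e.\ $\overline{f}\circ t^{TX} = t^A \circ T(\overline{f})$, where $t^{TX} = m^X$ is the free-module structure. Expanding the right-hand side and using functoriality of $T$, naturality of the multiplication $m$ (applied to $f$, giving $T(f)\circ m^X = m^A \circ TT(f)$), and the associativity law for the $T$-module $A$ (the square in \cref{modules_over_monad_def}, which says $t^A\circ T(t^A) = t^A\circ m^A$), both sides reduce to $t^A \circ m^A \circ TT(f)$. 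This is the only step involving a genuine diagram chase, and it is the one I expect to require the most care, though it is entirely routine given the monad and module laws.

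Third, uniqueness: suppose $g\colon TX\to A$ is any morphism of $T$-modules with $g\circ i^X = f$. Then, using that $g$ is a module morphism ($g\circ m^X = t^A\circ T(g)$) together with the monad unit law $m^X \circ T(i^X) = \id_{TX}$, I compute
$$ g = g\circ m^X \circ T(i^X) = t^A \circ T(g)\circ T(i^X) = t^A \circ T(g\circ i^X) = t^A\circ T(f) = \overline{f}. $$
Finally, the adjunction isomorphism $\Hom_{\mathsf{C}}(X,|A|)\cong \Hom_{\mathsf{C}^T}(TX,A)$ follows: the map $g\mapsto g\circ i^X$ is the inverse of $f\mapsto\overline{f}$ by the above, and naturality in $X$ and $A$ is checked by a straightforward diagram chase (using naturality of $i$ again for the $X$-variable, and the definition of module morphisms for the $A$-variable). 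This last naturality check is the one place where I would be slightly more careful to get the directions of all squares right, but it presents no real obstacle.
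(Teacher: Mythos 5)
Your proposal is correct and follows essentially the same route as the paper: define $\overline{f} := t^A \circ T(f)$, verify the triangle and the module-morphism condition, and prove uniqueness via $g = g\circ m\circ T(i) = t^A\circ T(g\circ i) = t^A\circ T(f)$. You simply spell out the existence checks (commutativity of the triangle and the module-morphism property) that the paper leaves implicit.
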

\begin{proof}
    Take $\overline{f} := t^A \circ Tf$. This makes the desired diagram commute and thus shows existence. For uniqueness, notice that $\overline{f}$ is necessarily of this form: being a morphism of $T$-modules,
        $$ \overline{f} \circ m = t^A \circ T\overline{f}. $$
    But then, 
        $$ \overline{f} \circ m \circ T i = t^A \circ T \overline{f} \circ T i.$$
    Using the unit law, the left hand side is just $\overline{f}$, while the right hand side equals $t^A \circ Tf$, by functoriality of $T$ and commutativity of the diagram. Thus, $\overline{f} := t^A \circ Tf$, showing uniqueness. 
\end{proof}

\section{Tensor Products: Monoidal Categories}\label{sec_monoidal_cats_intro}

What does it mean for a category to admit a ``good notion of tensor product''? And how does one construct such tensor products? Questions like these are naturally treated using the language of \emph{monoidal categories} (and their relatives).

\subsection{Monoidal categories}

\begin{defn}
    A \emph{monoidal category} is a category $\mathsf{C}$ together with 
    \begin{enumerate}[1.]
        \item a functor,
            $$ \otimes_{\mathsf{C}}: \mathsf{C} \times \mathsf{C} \to \mathsf{C}, $$
        called the \emph{tensor product}, 
        \item an object $I_{\mathsf{C}}$, called the \emph{tensor unit},
        \item a natural isomorphism,
            $$ \alpha^{A,B,C}: (A \otimes_{\mathsf{C}} B) \otimes_{\mathsf{C}} C \cong A \otimes_{\mathsf{C}} (B \otimes_{\mathsf{C}} C), \;\;\;\; (A,B,C \in \mathsf{C} $$
        called the \emph{associator},
        \item two further natural isomorphisms,
            $$ \lambda^A: I_{\mathsf{C}} \otimes_{\mathsf{C}} A \cong A, \;\; \rho^A : A \otimes{\mathsf{C}} I_{\mathsf{C}} \cong A, \;\;\; (A \in \mathsf{C})$$
        called the left and right \emph{unitors}, respectively, 
    \end{enumerate}
    subject to the commutativity of the following two diagrams, the \emph{coherence conditions}:
    \begin{enumerate}[a.]
        \item The triangle identity: 
        \[\begin{tikzcd}
	       {(A \otimes_{\mathsf{C}} I_{\mathsf{C}}) \otimes B} && {A \otimes_{\mathsf{C}} (I_{\mathsf{C}} \otimes B)} \\
	       & {A \otimes_{\mathsf{C}} B}
	       \arrow["{\alpha^{A,I_{\mathsf{C}},B}}", from=1-1, to=1-3]
	       \arrow["{\rho^A\otimes_{\mathsf{C}} I_B}"', from=1-1, to=2-2]
	       \arrow["{I_A \otimes_{\mathsf{C}} \lambda_B}", from=1-3, to=2-2]
        \end{tikzcd}\]
        \item The pentagon identity:
    \[\begin{tikzcd}
	& {(A \otimes_{\mathsf{C}} B) \otimes_{\mathsf{C}}(C \otimes_{\mathsf{C}}D)} \\
	{(((A \otimes_{\mathsf{C}} B) \otimes_{\mathsf{C}}C) \otimes_{\mathsf{C}}D)} && {A \otimes_{\mathsf{C}} (B \otimes_{\mathsf{C}}(C \otimes_{\mathsf{C}}D))} \\
	\\
	{(A \otimes_{\mathsf{C}} (B \otimes_{\mathsf{C}}C)) \otimes_{\mathsf{C}}D} && {A \otimes_{\mathsf{C}} ((B \otimes_{\mathsf{C}}C) \otimes_{\mathsf{C}}D)}
	\arrow["{\alpha^{A\otimes_{\mathsf{C}}B, C,D}}", from=2-1, to=1-2]
	\arrow["{\alpha^{A,B,C\otimes_{\mathsf{C}} D}}", from=1-2, to=2-3]
	\arrow["{\alpha^{A, B\otimes_{\mathsf{C}}C, D}}"', from=4-1, to=4-3]
	\arrow["{\alpha^{A,B,C}\otimes_{\mathsf{C}} D}"', from=2-1, to=4-1]
	\arrow["{A\otimes_{\mathsf{C}}\alpha^{B,C,D}}"', from=4-3, to=2-3]
    \end{tikzcd}\]
    \end{enumerate}
\end{defn}

\begin{warning}
    Despite being a crucial part of the structure of a monoidal category, we will sometimes leave the associator and unitors implicit, as they are often easily inferred from context. For example, we will use (common) phrases like ``let $(\mathsf{C}, \otimes, I)$ be a monoidal category'', specifying the associator and unitors only when they become relevant. This is (partially) justified by \emph{Mac Lane's Coherence Theorem} (see \cite[p. 165-170]{mac2013categories}) which is also the motivation for the otherwise perhaps somewhat obscure coherence conditions in the definition of a monoidal category.
\end{warning}

\begin{example}
    Let $\mathsf{C}$ be a category with finite products. Then $\mathsf{C}$ is a monoidal category with respect to the cartesian product $\times$ as the tensor product, the initial object $1$ as tensor unit, and the associator and unitors given in the evident way. 
\end{example}

\begin{example}
    Let $R$ be a commutative ring and let $R\modules$ be the category of modules over $R$. Then $R\modules$ becomes a monoidal category under the tensor product $\otimes_R$ of $R$-modules, with $R$ serving as the tensor unit. (As before, we omit an explicit description of the associator and unitors.) This example illustrates that monoidal \emph{structure} is not a mere \emph{property} of a category: the category of $R\modules$ is also a monoidal category with respect to the cartesian product.
\end{example}

\subsubsection{Monoidal functors and adjunctions} Monoidal categories come with several corresponding notions of \emph{monoidal functor}, varying in strength.

\begin{defn}[lax/strong/strict monoidal functors]
    A \emph{lax monoidal functor} between monoidal categories $\mathsf{C}$ and $\mathsf{D}$ is a functor $F:\mathsf{C}\to \mathsf{D}$ together with 
    \begin{enumerate}
        \item a morphism, the \emph{unit},
            $$ I_{\mathsf{D}} \to F(I_\mathsf{C}), $$
        \item and a natural transformation, the \emph{multiplication}
            $$ F(A) \otimes F(B) \to F(A\otimes B), \;\;\;\; (A,B \in \mathsf{C})$$
    \end{enumerate} 
    subject to a number of coherence conditions (which we omit, see \cite[pp. 255-56]{mac2013categories} for a detailed definition). If unit and multiplication are both isomorphisms, then $F$ is a \emph{strong monoidal functor}, and if, in addition, these isomorphisms are even \emph{identities}, then $F$ is called a \emph{strict monoidal functor}. Even though a monoidal functor is a functor with additional \emph{structure}, we will, as is common, sometimes abuse terminology and treat this structure as though it were a mere \emph{property}. For example, when we say that a functor $F$ ``is'' a monoidal functor, we mean that $F$ admits a specific structure of a monoidal functor which is left implicit. 
\end{defn}

Adjunctions also have a monoidal counterpart.

\begin{defn}
    A \emph{monoidal adjunction} is an adjunction
    \[\begin{tikzcd}
	{\mathsf{C}} & {\mathsf{D}}
	\arrow[""{name=0, anchor=center, inner sep=0}, "G"', shift right=2, from=1-1, to=1-2]
	\arrow[""{name=1, anchor=center, inner sep=0}, "F"', shift right=2, from=1-2, to=1-1]
	\arrow["\dashv"{anchor=center, rotate=-90}, draw=none, from=1, to=0]
    \end{tikzcd}\]
    between monoidal categories $\mathsf{C}, \mathsf{D}$ in which the left adjoint $F$ is a strong monoidal functor.
\end{defn}

\subsection{Symmetric monoidal categories} Demanding further structure which renders the tensor product commutative, or \emph{symmetric}, in a suitable sense, leads to the following definition.

\begin{defn}\label{symm_mon_cat_defn}
    A symmetric monoidal category is a monoidal category $\mathsf{C}$ together with a natural isomorphism
        $$ \sigma^{A,B}: A \otimes B \cong B \otimes A, \;\;\;\; (A, B \in \mathsf{C})$$
    called the \emph{braiding}, satisfying the identity 
        $$ \sigma^{B,A} \circ \sigma^{A,B} = \id_{A\otimes B}, $$
    as well as certain coherence conditions (which we omit, a detailed definition can be found in \cite[pp. 252-53]{mac2013categories}).
\end{defn}

\subsubsection{Symmetric monoidal functors and adjunctions} As for the case of bare monoidal categories, there are associated notions of \emph{symmetric monoidal functor} and \emph{symmetric monoidal adjunction}. We will only need the case of \emph{strong} symmetric monoidal functors.

\begin{defn}
    A \emph{strong symmetric monoidal functor} is a strong monoidal functor $F:\mathsf{C} \to \mathsf{D}$ between symmetric monoidal categories, which is compatible with the braiding $\sigma$ on both categories in that the following diagram commutes for all $A,B\in \mathsf{C}$:
    \[\begin{tikzcd}
	{FA \otimes FB} & {FB \otimes FA} \\
	{F(A\otimes B)} & {F(B\otimes A)}
	\arrow["\sigma", from=1-1, to=1-2]
	\arrow[from=1-1, to=2-1]
	\arrow[from=1-2, to=2-2]
	\arrow["{F(\sigma)}"', from=2-1, to=2-2]
    \end{tikzcd}\]
\end{defn}

\begin{defn}\label{symmmon_adj}
    A \emph{symmetric monoidal adjunction} is a monoidal adjunction 
    \[\begin{tikzcd}
	{\mathsf{C}} & {\mathsf{D}}
	\arrow[""{name=0, anchor=center, inner sep=0}, "G"', shift right=2, from=1-1, to=1-2]
	\arrow[""{name=1, anchor=center, inner sep=0}, "F"', shift right=2, from=1-2, to=1-1]
	\arrow["\dashv"{anchor=center, rotate=-90}, draw=none, from=1, to=0]
    \end{tikzcd}\]
    between \emph{symmetric} monoidal categories $\mathsf{C}, \mathsf{D}$ in which the left adjoint $F$ is a strong \emph{symmetric} monoidal functor.
\end{defn}

\subsection{Symmetric monoidal monads: commutative monads}

Adapting the notion of monad to the setting of symmetric monoidal categories leads to the notion of \emph{symmetric monoidal monad}. We will also use the term \emph{commutative monad} as a synonym for ``symmetric monoidal monad''. (Despite the definition of ``commutative monad'' usually being slightly different, they are equivalent concepts, see \cite{kock1972strong}.)

\begin{defn}[Commutative Monad]\label{comm_monad_def}
    Let $(C, \boxtimes, I)$ be a symmetric monoidal category. A \emph{symmetric monoidal monad}, or \emph{commutative monad}, $(T,i,m,\otimes)$ on $C$ is 
    \begin{enumerate}
        \item a monad $(T,i,m)$ on $C$, together with
        \item a natural transformation, 
            $$ \otimes^{X,Y}: TX \boxtimes TY \to T(X\boxtimes Y), \qquad (X,Y \in \mathsf{C})$$
        such that
        \item $(T,i,\otimes)$ is a monoidal functor, satisfying two further coherence conditions which we omit (see \cite[1.2]{seal2012tensors} for the complete definition), 
        \item such that the following diagram commutes, where $\sigma$ is the braiding (see \cref{symm_mon_cat_defn}):
        \[\begin{tikzcd}
	       {TX \boxtimes TY} & {TY \boxtimes TX} \\
	       {T(X \boxtimes Y)} & {T(Y \boxtimes X)}
	       \arrow["\sigma", from=1-1, to=1-2]
	       \arrow["T\sigma", from=2-1, to=2-2]
	       \arrow["\otimes"', from=1-1, to=2-1]
	       \arrow["\otimes", from=1-2, to=2-2]
        \end{tikzcd}\]    
    \end{enumerate}
\end{defn}

\begin{example}
    The free-$R$-module monad $R^{(-)}$ becomes a commutative monad on $(\sets, \times, 1)$ if we let 
        $$ \otimes: R^{(X)} \times R^{(Y)} \to R^{(X\times Y)}, \;\;\;\;\;\;\;(X,Y\in \sets)$$
    be the map defined by sending a pair of basis element $(x,y) \in R^{(X)} \times R^{(Y)}$ to the basis element $(x,y)\in R^{(X\times Y)}$ and using the universal property of free $R$-modules to extend by linearity. 
\end{example}

\begin{example}
    The free-linear-$hk$-space becomes a commutative monad on $(\spaces, \times, 1)$, in the same way as the free-$R$-module monad, \emph{mutatis mutandis} (using the universal property of free linear $hk$-spaces in place of the one for free $R$-modules).
\end{example}

\begin{remark}
    One can generalise the previous two examples to the case of $R$-module objects internal to a sufficiently well-behaved cartesian closed category, and even further to the case of modules over \emph{commutative algebraic theories} in place of $R$-modules, see \cite{zhenlin2012algebraic}.
\end{remark}

\subsubsection{Binary Morphisms and tensor products of modules over commutative monads.}

The following is a generalisation of the notion of a bilinear map of modules over a ring to the case of modules over a commutative monad:

\begin{defn}[Binary morphism, \cite{seal2012tensors}, 2.1]
    Let $(T, i, m, \otimes)$ be a commutative monad over a symmetric monoidal category $(\mathsf{C}, \boxtimes, I)$. A \emph{binary morphism} of $T$-modules is a $\mathsf{C}$-morphism $f: A\boxtimes B \to C$ (where $A,B,C$ are $T$-modules) such that the following diagram commutes:
    \[\begin{tikzcd}
	{TA\boxtimes TB} & {T(A\boxtimes B)} & TC \\
	{A\boxtimes B} && C
	\arrow["\otimes", from=1-1, to=1-2]
	\arrow["Tf", from=1-2, to=1-3]
	\arrow["f"', from=2-1, to=2-3]
	\arrow["{t^A\,\boxtimes\, t^B\;}"', from=1-1, to=2-1]
	\arrow["{t^C}", from=1-3, to=2-3]
    \end{tikzcd}\]
\end{defn}

\begin{remark}
    In \cite{seal2012tensors}, binary morphisms of $T$-modules are referred to as ``bimorphisms''. This term is however slightly ambiguous, as it is also sometimes used for morphisms which are both an epimorphism and a monomorphism.
\end{remark}

The above definition of binary morphism may seem somewhat obscure. We can shed some light on it by introducing a notion of ``morphism in one variable'' and then showing that a binary morphism is nothing but a ``morphism in \emph{both} variables''.

\begin{defn}[Morphism in a variable]
    Let $(T, i, m, \otimes)$ be a commutative monad over a symmetric monoidal category $(\mathsf{C}, \boxtimes, I)$, let $A,B,C$ be $T$-modules and let $f: A \boxtimes B \to C$ be a $\mathsf{C}$-morphism. 
    \begin{enumerate}
        \item $f$ is a \emph{morphism of $T$-modules in the left variable} if, and only if, the diagram 
        \[\begin{tikzcd}
	       {TA\boxtimes B} && {T(A\boxtimes B)} & TC \\
	       {A\boxtimes B} &&& C
	       \arrow["{\otimes \circ (\id_{TA} \boxtimes i^B)}", from=1-1, to=1-3]
	       \arrow["Tf", from=1-3, to=1-4]
	       \arrow["f"', from=2-1, to=2-4]
	       \arrow["{t^A \boxtimes \id_B\;}"', from=1-1, to=2-1]
	       \arrow["{t^C}", from=1-4, to=2-4]
        \end{tikzcd}\]
        commutes.
        \item $f$ is a \emph{morphism of $T$-modules in the right variable} if, and only if, the diagram 
        \[\begin{tikzcd}
	       {A \boxtimes TB} && {T(A\boxtimes B)} & TC \\
	       {A\boxtimes B} &&& C
	       \arrow["{\otimes \circ (i^A \boxtimes \id_{TB})}", from=1-1, to=1-3]
	       \arrow["Tf", from=1-3, to=1-4]
	       \arrow["{t^C}", from=1-4, to=2-4]
	       \arrow["f"', from=2-1, to=2-4]
	       \arrow["{\id_A \boxtimes t^B\;}"', from=1-1, to=2-1]
        \end{tikzcd}\]
        commutes.
    \end{enumerate}
\end{defn}

If this definition is still not quite transparent, considering the case when $\boxtimes$ is an ordinary cartesian product should clear it up:

\begin{lem}\label{concrete_bin_morphisms}
    Let $(T, i, m, \otimes)$ be a commutative monad over a concrete category $\mathsf{C}$ with concrete finite products (i.e.~$\mathsf{C}$ is equipped with a ``forgetful'' faithful functor $\mathsf{C}\to \sets$, which we will leave implicit, preserving finite products). Then a $\mathsf{C}$-morphism $f: A \times B \to C$ is a morphism of $T$-modules in the left (right, resp.) variable if, and only if, for any $b_0\in B$ (for any $a_0\in A$, resp.), the map
        $$ a \mapsto f(a, b_0), \;\;\; (b \mapsto f(a_0, b), \text{  resp.})$$
    is a morphism of $T$-modules.
\end{lem}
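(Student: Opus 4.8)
The plan is to reduce both directions of the claimed equivalence to statements about underlying sets and then bridge them with a single identity concerning the monad structure.

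Since the forgetful functor $U\colon\mathsf{C}\to\sets$ is faithful and preserves finite products, a diagram of $\mathsf{C}$-morphisms commutes if and only if it does after applying $U$; so throughout I argue with elements, writing $U(A\times B)=U(A)\times U(B)$ without comment. Unwinding the definition of ``morphism of $T$-modules in the left variable'' on underlying sets, it says precisely that
\[
  t^{C}\!\bigl((Tf)\bigl(\otimes^{A,B}(\xi,(i^{B})(b))\bigr)\bigr)=f\bigl(t^{A}(\xi),b\bigr)
  \qquad\text{for all }\xi\in U(TA),\ b\in U(B);
\]
call this the left-variable condition. For a fixed $b\in U(B)$ put $g_{b}\colon A\to C$, $a\mapsto f(a,b)$. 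This is a $\mathsf{C}$-morphism, since $g_{b}=f\circ\iota_{b}$ where $\iota_{b}=(\id_{A}\times\hat b)\circ\rho_{A}^{-1}\colon A\to A\times B$ and $\hat b\colon 1\to B$ is the morphism selecting the point $b$; here I use that in the concrete categories to which the lemma is applied (notably $\spaces$) points are global elements and constant maps are morphisms, which it is convenient to treat as a standing assumption. The condition ``$g_{b}$ is a morphism of $T$-modules'' then unwinds to
\[
  t^{C}\!\bigl((Tg_{b})(\xi)\bigr)=f\bigl(t^{A}(\xi),b\bigr)\qquad\text{for all }\xi\in U(TA).
\]

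The whole lemma now reduces to the single identity
\[
  \otimes^{A,B}\bigl(\xi,(i^{B})(b)\bigr)=(T\iota_{b})(\xi)\qquad\text{for all }\xi\in U(TA),\ b\in U(B).
\]
Granting it, apply $Tf$ to both sides and use $Tg_{b}=Tf\circ T\iota_{b}$: the left-hand sides of the two displayed conditions above coincide for every $\xi$ and $b$ (and their right-hand sides already agree), so the left-variable condition holds exactly when $g_{b}$ is a morphism of $T$-modules for every $b$ — which is the assertion for the left variable. To prove the identity I would chain the coherence data of the commutative monad: first, naturality of the unit $i\colon\id_{\mathsf{C}}\to T$ along $\hat b$ gives $(i^{B})(b)=(T\hat b)(u)$ with $u:=(i^{1})(\ast)\in U(T1)$; second, naturality of $\otimes$ along $(\id_{A},\hat b)\colon(A,1)\to(A,B)$ rewrites $\otimes^{A,B}(\xi,(T\hat b)(u))$ as $T(\id_{A}\times\hat b)\bigl(\otimes^{A,1}(\xi,u)\bigr)$; third, the right-unit coherence of the lax monoidal functor $(T,i^{1},\otimes)$ — part of ``$(T,i,\otimes)$ is a monoidal functor'' in \cref{comm_monad_def} — identifies $\otimes^{A,1}(\xi,u)$ with $(T\rho_{A}^{-1})(\xi)$. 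Composing the three steps yields $T\bigl((\id_{A}\times\hat b)\circ\rho_{A}^{-1}\bigr)(\xi)=(T\iota_{b})(\xi)$, which is the identity. The right-variable statement follows by the entirely symmetric argument (or from the left-variable case by precomposing with the braiding $\sigma^{B,A}$ and invoking its compatibility with $\otimes$).

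The step I expect to be the main obstacle is this last identity: it demands invoking naturality of $i$, naturality of $\otimes$, and the unitor coherence of $T$ in exactly the right order while keeping the (routine but clutter-prone) bookkeeping between the terminal object $1$, its image $T1$, and the distinguished element $u\in U(T1)$ straight. Everything else is unwinding definitions, and the one genuine side remark is the need, noted above, to know that $g_{b}$ is a $\mathsf{C}$-morphism before the pointwise condition is even meaningful.
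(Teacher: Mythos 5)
Your proof is correct and follows essentially the same route as the paper: both reduce the left-variable condition, element-wise via faithfulness, to the statement that each $f(-,b_0)$ is a $T$-module morphism, with the right-variable case handled by symmetry. The only difference is that you explicitly verify the key identity $Tf\bigl(\otimes^{A,B}(\xi,i^B(b))\bigr)=T(f(-,b))(\xi)$ using naturality of $i$, naturality of $\otimes$ and the unit coherence, whereas the paper's computation uses this step without comment — a welcome tightening, not a different argument.
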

\begin{proof}
    Because of the symmetry of the situation, we consider only the case of a morphism of $T$-modules in the left variable. For the ``if'' direction, suppose that for all $b_0\in B$, $a \mapsto f(a, b_0)$ is a morphism of $T$-modules. Then for all $b_0\in B$, $\alpha\in TA$,
    \begin{align*}
        f \circ (t^A \times \id_B) (\alpha, b_0) 
        &= f(t^A(\alpha), b_0) \\
        &= t^C(\,T(f(-, b_0))(\alpha)\,)\\
        &= t^C(\,T(f)(\alpha \otimes i^B(b_0))\,)\\
        &= t^C \circ Tf \circ \otimes \circ  (\id_{TA}\times i^B) (\alpha, b_0).
    \end{align*}
    This shows that $f$ is a morphism of $T$-modules in the left variable. \par 
    For the ``only if'' direction, suppose that $f$ is a morphism of $T$-modules in the left variable and let $b_0 \in B$. Then 
    \begin{align*}
        f(t^A(\alpha), b_0) 
        &= f \circ (t^A \times \id_B) (\alpha, b_0) \\
        &= t^C \circ Tf \circ \otimes \circ  (\id_{TA}\times i^B) (\alpha, b_0) \\
        &= t^C(\,T(f)(\alpha \otimes i^B(b_0))\,) \\
        &= t^C(\,T(f(-, b_0))(\alpha)\,) 
    \end{align*}
    where we have used that $f$ is a morphism of $T$-modules in the left variable in going from the first to the second line. This shows that $f(-,b_0)$ is a morphism of $T$-modules, completing the proof. 
\end{proof}

We can now give a three equivalent characterisations of binary morphisms. The second confirms the intuition that a binary morphism is indeed a ``morphism in both variables'', while the third hints on how to construct the \emph{tensor product of $T$-modules}.

\begin{prop}[Characterisations of binary morphisms]
    As before, let $(T, i, m, \otimes)$ be a commutative monad over a monoidal category $(\mathsf{C}, \boxtimes, I)$, let $A,B,C$ be $T$-modules and let $f: A \boxtimes B \to C$ be a $\mathsf{C}$-morphism. Then the following are equivalent:
    \begin{enumerate}
        \item $f$ is a binary morphism of $T$-modules.
        \item $f$ is both a morphism of $T$-modules in left variable and a morphism of $T$-modules in the right variable.
        \item The diagram,
        \[\begin{tikzcd}
	       {T(TA\boxtimes TB)} &&& {T(A\boxtimes B)} && C,
	       \arrow["t^{T(A\times B)}\circ\, {T(\otimes)}", shift left=1, from=1-1, to=1-4]
	       \arrow["{T(t^A\boxtimes t^B)}"', shift right=1, from=1-1, to=1-4]
	       \arrow["{t^C\circ \,Tf}", from=1-4, to=1-6]
        \end{tikzcd}\]
        commutes.
    \end{enumerate}
\end{prop}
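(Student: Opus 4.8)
The plan is to prove $(1)\Rightarrow(2)$, $(2)\Rightarrow(1)$ and $(1)\Leftrightarrow(3)$, treating the last as a formality and locating the real work in $(2)\Rightarrow(1)$. Write $\bar f:=t^C\circ Tf\colon T(A\boxtimes B)\to C$; by \cref{univ_prop_free_T_modules} this is a morphism of $T$-modules with $\bar f\circ i^{A\boxtimes B}=f$, and more generally, for any $\mathsf{C}$-morphism $w\colon X\to C$ into a $T$-module one has $(t^C\circ Tw)\circ i^X=t^C\circ i^C\circ w=w$, by naturality of $i$ together with the module unit law. It will also be convenient to abbreviate the two derived strengths as $\mathsf{st}_{X,Y}:=\otimes_{X,Y}\circ(i^X\boxtimes\id_{TY})$ and $\mathsf{st}'_{X,Y}:=\otimes_{X,Y}\circ(\id_{TX}\boxtimes i^Y)$, so that ``morphism of $T$-modules in the left (resp.\ right) variable'' reads $f\circ(t^A\boxtimes\id_B)=t^C\circ Tf\circ\mathsf{st}'_{A,B}$ (resp.\ $f\circ(\id_A\boxtimes t^B)=t^C\circ Tf\circ\mathsf{st}_{A,B}$).

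For $(1)\Rightarrow(2)$: precompose the defining identity $t^C\circ Tf\circ\otimes_{A,B}=f\circ(t^A\boxtimes t^B)$ of a binary morphism with $\id_{TA}\boxtimes i^B\colon TA\boxtimes B\to TA\boxtimes TB$; using $t^B\circ i^B=\id_B$ the right-hand side becomes $f\circ(t^A\boxtimes\id_B)$, and since $\otimes_{A,B}\circ(\id_{TA}\boxtimes i^B)=\mathsf{st}'_{A,B}$ the resulting equation is exactly the left-variable condition. Precomposing instead with $i^A\boxtimes\id_{TB}$ and using $t^A\circ i^A=\id_A$ gives the right-variable condition.

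For $(1)\Leftrightarrow(3)$: put $u:=\bar f\circ\otimes_{A,B}$ and $v:=f\circ(t^A\boxtimes t^B)$, two morphisms $TA\boxtimes TB\to C$, so that $(1)$ is the assertion $u=v$. Using $T(f\circ(t^A\boxtimes t^B))=Tf\circ T(t^A\boxtimes t^B)$ on one side and $\bar f\circ m^{A\boxtimes B}=t^C\circ T\bar f$ (valid since $\bar f$ is a morphism of $T$-modules and $t^{T(A\boxtimes B)}=m^{A\boxtimes B}$) on the other, diagram $(3)$ unwinds to the assertion $t^C\circ Tu=t^C\circ Tv$. This is implied by $(1)$ at once; conversely, precomposing it with $i^{TA\boxtimes TB}$ and applying $(t^C\circ Tw)\circ i^X=w$ with $w=u$ and $w=v$ gives back $u=v$, i.e.\ $(1)$.

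The substantive step is $(2)\Rightarrow(1)$. Starting from the interchange identity $t^A\boxtimes t^B=(\id_A\boxtimes t^B)\circ(t^A\boxtimes\id_{TB})$, I would successively apply: the right-variable hypothesis; naturality of $\mathsf{st}$ in its first slot along $t^A$, which gives $\mathsf{st}_{A,B}\circ(t^A\boxtimes\id_{TB})=T(t^A\boxtimes\id_B)\circ\mathsf{st}_{TA,B}$; the left-variable hypothesis; and finally the module associativity law $t^C\circ Tt^C=t^C\circ m^C$ together with naturality of $m$ along $f$. This rewrites $f\circ(t^A\boxtimes t^B)$ as $t^C\circ Tf\circ\big(m^{A\boxtimes B}\circ T\mathsf{st}'_{A,B}\circ\mathsf{st}_{TA,B}\big)$. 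It then remains to invoke the coherence packaged into \cref{comm_monad_def} (see \cite[1.2]{seal2012tensors}) in the form of the composite-strength identity
\[ m^{A\boxtimes B}\circ T\mathsf{st}'_{A,B}\circ\mathsf{st}_{TA,B}=\otimes_{A,B}, \]
itself a short consequence of naturality of $\otimes$, the monoidality of the multiplication $m$, and the monad unit laws; this collapses the expression to $t^C\circ Tf\circ\otimes_{A,B}$, which is $(1)$. The main obstacle is precisely the bookkeeping of this last step: it is where the monoidal-functor structure $\otimes$ (and not merely the bare monad and functoriality) enters, and one must take care that the form of the coherence axiom being cited matches the composite strength produced above, transposing along the braiding $\sigma$ if the conventions of \cite{seal2012tensors} differ. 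As an alternative to carrying out this computation in full, the equivalence $(1)\Leftrightarrow(2)$ may simply be quoted from \cite{seal2012tensors}.
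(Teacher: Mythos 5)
Your proposal is correct, but it takes a different route from the paper: the paper disposes of both equivalences by citation, quoting \cite[Proposition 2.1.2]{seal2012tensors} for $(1)\Leftrightarrow(2)$ and \cite[Lemma 2.3.2]{seal2012tensors} for $(1)\Leftrightarrow(3)$, whereas you give a self-contained argument. Your easy steps check out exactly as written: $(1)\Rightarrow(2)$ by precomposing with $\id_{TA}\boxtimes i^B$ and $i^A\boxtimes\id_{TB}$ and using the module unit laws, and $(1)\Leftrightarrow(3)$ by rewriting both legs of the fork as $t^C\circ Tu$ and $t^C\circ Tv$ (using that $\bar f=t^C\circ Tf$ is a module morphism out of the free module, so $\bar f\circ m=t^C\circ T\bar f$) and then retracting along $i^{TA\boxtimes TB}$ via $(t^C\circ Tw)\circ i^X=w$. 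For the substantive step $(2)\Rightarrow(1)$, your chain (interchange, right-variable hypothesis, naturality of $\mathsf{st}$ in its first slot, left-variable hypothesis under $T$, module associativity, naturality of $m$) is sound, and the composite-strength identity $m^{A\boxtimes B}\circ T\mathsf{st}'_{A,B}\circ\mathsf{st}_{TA,B}=\otimes_{A,B}$ you need at the end does hold: pushing $T(\id_{TA}\boxtimes i^B)$ past $\otimes_{TA,B}$ by naturality turns the composite into $m\circ T(\otimes_{A,B})\circ\otimes_{TA,TB}\circ(i^{TA}\boxtimes Ti^B)$, the coherence condition that $m$ is a monoidal natural transformation (one of the omitted axioms in \cref{comm_monad_def}, cf.\ \cite[1.2]{seal2012tensors}) rewrites this as $\otimes_{A,B}\circ\bigl((m^A\circ i^{TA})\boxtimes(m^B\circ Ti^B)\bigr)$, and the monad unit laws collapse it to $\otimes_{A,B}$. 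In particular the braiding $\sigma$ never enters this identity, so your hedge about transposing conventions along $\sigma$ is unnecessary (symmetry is only needed if one wants the analogous identity for the other double-strength composite). What your route buys is an explicit proof keyed to the definitions actually stated in this paper, at the cost of invoking the omitted monoidality-of-$m$ axiom explicitly; the paper's citation is shorter but leaves the reader to reconcile Seal's strength-based formulation with the monoidal-monad data used here.
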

\begin{proof}
    The equivalence of the first two points is \cite[Proposition 2.1.2]{seal2012tensors}; the equivalence of the first and the last item is \cite[Lemma 2.3.2]{seal2012tensors}
\end{proof}

\subsubsection{The tensor product of $T$-modules} The following is a generalisation of the tensor product of modules over a commutative ring to the setting of commutative monads. 

\begin{defn}[Tensor product of $T$-modules]\label{tensor_product_of_t_mods}
    Let $(T, i, m, \otimes)$ be a commutative monad over a symmetric monoidal category $(\mathsf{C}, \boxtimes, I)$. The \emph{tensor product} $A \otimes_T B$ of two $T$-modules $A$ and $B$ is defined, if it exists, as the co-equaliser (in $\mathsf{M}$) of the two morphisms 
    \[\begin{tikzcd}
	{T(TA\boxtimes TB)} &&& {T(A\boxtimes B)}.
	\arrow["{T(\otimes)\circ t^{\,TA\boxtimes TB}}", shift left=1, from=1-1, to=1-4]
	\arrow["{T(t^A\boxtimes t^B)}"', shift right=1, from=1-1, to=1-4]
    \end{tikzcd}\]
\end{defn}

This construction satisfies the expected universal property \cite[Lemma 2.3.3]{seal2012tensors}:

\begin{prop}[Universal property of tensor product]\label{tens_prod_t_modules_univ_prop}
    As before, let $(T, i, m, \otimes)$ be a commutative monad over a symmetric monoidal category $(\mathsf{C}, \boxtimes, I)$ and let $A,B$ be objects of $\mathsf{C}$ and suppose that the tensor product $A\otimes B$ exists. Then, for every binary morphism of $T$-modules,
        $$ f: A \boxtimes B \to C, $$
    to some object $C$ of $\mathsf{C}$, there is a unique morphism of $T$-modules,
        $$ \overline{f}: A \otimes B \to C, $$
    such that the diagram, 
    \[\begin{tikzcd}
	{A\otimes_{\mathsf{C}} B} & C \\
	{A\boxtimes B}
	\arrow["{\overline{f}}", dashed, from=1-1, to=1-2]
	\arrow["{\otimes_{\mathsf{C}}}", from=2-1, to=1-1]
	\arrow["f"', from=2-1, to=1-2]
    \end{tikzcd}\]
    commutes, where the map $A\boxtimes B \to A \otimes_{\mathsf{C}} B$ is the composite of the maps
    \[\begin{tikzcd}
	{A\boxtimes B} & {T(A\boxtimes B)} & {A\otimes_MB,}
	\arrow["i", from=1-1, to=1-2]
	\arrow["q", from=1-2, to=1-3]
    \end{tikzcd}\]
    with $q$ the canonical map to the coequaliser.
\end{prop}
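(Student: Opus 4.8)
The plan is to derive this universal property by assembling two universal properties already available to us: that of the free $T$-module $T(A\boxtimes B)$ (\cref{univ_prop_free_T_modules}) and that of the coequaliser $q\colon T(A\boxtimes B)\to A\otimes_T B$ which \emph{defines} the tensor product, glued together by the third characterisation of binary morphisms established in the preceding Proposition.

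First I would record that the two parallel arrows $T(t^A\boxtimes t^B)$ and $t^{T(A\boxtimes B)}\circ T(\otimes^{A,B})$ from $T(TA\boxtimes TB)$ to $T(A\boxtimes B)$ are genuinely morphisms of $T$-modules between the relevant free modules: the first is $T$ applied to a $\mathsf{C}$-morphism, hence a $T$-module morphism by naturality of $m$; the second factors as $T(TA\boxtimes TB)\xrightarrow{T(\otimes^{A,B})}TT(A\boxtimes B)\xrightarrow{m^{A\boxtimes B}}T(A\boxtimes B)$, a composite of a $T$-module morphism with the multiplication, which is a $T$-module morphism by the associativity law. Consequently the coequaliser is formed in $\mathsf{C}^T$, and its universal property reads: $T$-module morphisms $A\otimes_T B\to C$ are in bijection, via precomposition with $q$, with those $T$-module morphisms $h\colon T(A\boxtimes B)\to C$ that satisfy $h\circ T(t^A\boxtimes t^B)=h\circ m^{A\boxtimes B}\circ T(\otimes^{A,B})$.

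Next I would invoke the free-module adjunction (\cref{univ_prop_free_T_modules}): $\mathsf{C}$-morphisms $f\colon A\boxtimes B\to C$ correspond bijectively to $T$-module morphisms $\tilde f\colon T(A\boxtimes B)\to C$, the correspondence being $\tilde f=t^C\circ Tf$ and, inversely, $f=\tilde f\circ i^{A\boxtimes B}$. The crucial point is that under this bijection, $f$ is a binary morphism of $T$-modules if and only if $\tilde f$ coequalises the two parallel arrows above — this is precisely the equivalence of conditions (1) and (3) in the Proposition characterising binary morphisms, since condition (3) states exactly that $(t^C\circ Tf)\circ m^{A\boxtimes B}\circ T(\otimes^{A,B})=(t^C\circ Tf)\circ T(t^A\boxtimes t^B)$. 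Composing the two bijections yields a bijection between binary morphisms $f\colon A\boxtimes B\to C$ and $T$-module morphisms $\overline f\colon A\otimes_T B\to C$, characterised by $\overline f\circ q=t^C\circ Tf$.

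Finally I would check that this $\overline f$ is the factorisation asserted, i.e.\ that $\overline f\circ(q\circ i^{A\boxtimes B})=f$ with $q\circ i^{A\boxtimes B}$ the structural map $A\boxtimes B\to A\otimes_T B$, and that it is the unique $T$-module morphism with this property. For the former, $\overline f\circ q\circ i^{A\boxtimes B}=t^C\circ Tf\circ i^{A\boxtimes B}=t^C\circ i^C\circ f=f$, using naturality of $i$ and the unit law for the $T$-module $C$. For uniqueness, if $\overline f'$ is any $T$-module morphism with $\overline f'\circ q\circ i^{A\boxtimes B}=f$, then $\overline f'\circ q$ is a $T$-module morphism whose restriction along $i^{A\boxtimes B}$ is $f$, so by the uniqueness clause in \cref{univ_prop_free_T_modules} we get $\overline f'\circ q=t^C\circ Tf=\overline f\circ q$, whence $\overline f'=\overline f$ since $q$, being a coequaliser, is an epimorphism. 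I do not anticipate a real obstacle here; the only points requiring care are the bookkeeping that the coequaliser lives in $\mathsf{C}^T$ (so its universal property quantifies over $T$-module morphisms, not arbitrary $\mathsf{C}$-morphisms) and tracking the identification of the structural map $A\boxtimes B\to A\otimes_T B$ with $q\circ i^{A\boxtimes B}$.
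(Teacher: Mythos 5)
Your argument is correct and complete. Note that the paper does not actually prove this proposition — it is quoted from \cite[Lemma 2.3.3]{seal2012tensors} — and your proof is essentially the standard argument behind that citation: check that the parallel pair consists of morphisms of free $T$-modules so the coequaliser defining $A\otimes_T B$ lives in $\mathsf{C}^T$, transport along the free-module adjunction of \cref{univ_prop_free_T_modules}, and use the equivalence of conditions (1) and (3) in the characterisation of binary morphisms to identify binary morphisms $A\boxtimes B\to C$ with the $T$-module morphisms out of $T(A\boxtimes B)$ that coequalise the pair; existence and the factorisation identity then follow from naturality of $i$ and the unit law for $C$, and uniqueness from the uniqueness clause of \cref{univ_prop_free_T_modules} together with $q$ being epic in $\mathsf{C}^T$. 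All of these steps are sound. One small bookkeeping remark: the label on one of the parallel arrows in \cref{tensor_product_of_t_mods} is written in the opposite composition order ($T(\otimes)\circ t^{TA\boxtimes TB}$, which does not typecheck); you correctly worked with the composite $t^{T(A\boxtimes B)}\circ T(\otimes)=m^{A\boxtimes B}\circ T(\otimes)$ appearing in the characterisation of binary morphisms, which is the intended map.
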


The category of modules over a symmetric monoidal monad is now itself symmetric monoidal \cite[Theorem 2.2.5]{seal2012tensors}:

\begin{thm}\label{thm_eilenberg_moore_monoidal}
    Let $\mathsf{C}$ a symmetric monoidal category, let $T$ be a commutative monad on $\mathsf{C}$ and suppose that the Eilenberg-Moore category $\mathsf{C}^T$ has all coequalisers. Then the tensor product of $T$-modules endows $\mathsf{C}^T$ with a symmetric monoidal structure.
\end{thm}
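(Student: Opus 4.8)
As indicated above, this is \cite[Theorem 2.2.5]{seal2012tensors}; the plan is to exhibit the data of a symmetric monoidal structure on $\mathsf{C}^T$ and then verify the axioms, using the universal property of \cref{tens_prod_t_modules_univ_prop} as the central tool. Write $(\mathsf{C},\boxtimes,I)$ for the base symmetric monoidal category and, as in \cref{comm_monad_def}, $\otimes^{X,Y}:TX\boxtimes TY\to T(X\boxtimes Y)$ for the lax structure of the commutative monad $(T,i,m,\otimes)$. First, the bifunctor: for $T$-modules $A,B$ the coequaliser of \cref{tensor_product_of_t_mods} exists by hypothesis, giving an object $A\otimes_T B\in\mathsf{C}^T$ and the canonical morphism $A\boxtimes B\to A\otimes_T B$ of \cref{tens_prod_t_modules_univ_prop}. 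A pair of $T$-module morphisms $f:A\to A'$, $g:B\to B'$ induces, via $T(Tf\boxtimes Tg)$ and $T(f\boxtimes g)$, a morphism of the two relevant parallel pairs (by naturality of $i$, $m$ and of the structure map $\otimes$, together with functoriality of $\boxtimes$), and hence descends to $f\otimes_T g$ on the coequalisers; the bifunctor axioms follow from uniqueness of maps out of a coequaliser. The tensor unit will be the free $T$-module $T(I)$ on the monoidal unit of $\mathsf{C}$.

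Next, the coherence data, which I would construct representably. For a finite list $(A_1,\dots,A_n)$ of $T$-modules, call a $\mathsf{C}$-morphism $A_1\boxtimes\cdots\boxtimes A_n\to C$ an \emph{$n$-ary morphism} if it is a morphism of $T$-modules in each variable separately --- the evident iteration of \cref{concrete_bin_morphisms}, the case $n=2$ being the binary morphisms classified in \cref{tens_prod_t_modules_univ_prop}. The claim to establish is that \emph{every} bracketing of $A_1\otimes_T\cdots\otimes_T A_n$ represents the functor $C\mapsto\{\,n\text{-ary morphisms }A_1\boxtimes\cdots\boxtimes A_n\to C\,\}$ on $\mathsf{C}^T$. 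Granting this, all bracketings are canonically isomorphic by the Yoneda lemma, which yields the associator $\alpha$; the unitors $\lambda,\rho$ come from the parallel observation that $T(I)\otimes_T A$ and $A$ both represent $C\mapsto\mathsf{C}^T(A,C)$, using the universal property of the free module $T(I)$; and the braiding $\sigma$ is induced by the braiding of $\boxtimes$ on $\mathsf{C}$, whose compatibility with binary morphisms is precisely the symmetry axiom of \cref{comm_monad_def}. With all structure defined representably, the pentagon and triangle identities, the relation $\sigma^{B,A}\circ\sigma^{A,B}=\id$, and the naturality of $\alpha,\lambda,\rho,\sigma$ reduce by Yoneda to the corresponding facts for $\boxtimes$ in $\mathsf{C}$ together with the monad laws of \cref{monad_defn}, all of which are routine.

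The main obstacle is the representability claim for $n\geq 3$, equivalently the assertion that tensoring preserves enough colimits. For $n=2$ it is \cref{tens_prod_t_modules_univ_prop}; the inductive step would normally follow from cocontinuity of $\boxtimes$, but $\mathsf{C}$ is not assumed closed, so $\boxtimes$ need not preserve colimits. The way around this is to exploit that every $T$-module $A$ is canonically the coequaliser, in $\mathsf{C}^T$, of the reflexive pair $T(t^A),m^A:TTA\rightrightarrows TA$ with coequaliser $t^A$, a pair which becomes split after applying the forgetful functor $\mathsf{C}^T\to\mathsf{C}$ (the splittings being built from $i$, by the monad unit law, the $T$-module unit axiom and naturality of $i$), hence an absolute coequaliser in $\mathsf{C}$. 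Combined with the free case $TX\otimes_T TY\cong T(X\boxtimes Y)$ --- that is, the fact that $X\mapsto TX$ is strong symmetric monoidal, which one verifies directly from the monad laws and the monoidal-functor axioms for $(T,i,\otimes)$ --- and with a reflexive-coequaliser interchange argument, this reduces the general statement to the free one; it is precisely here that the hypothesis that $\mathsf{C}^T$ has coequalisers is used. All of this is carried out in detail in \cite[\S2]{seal2012tensors}, so for present purposes it suffices to cite that development.
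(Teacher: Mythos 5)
The paper offers no proof of this statement at all: it is quoted as a known result with the citation \cite[Theorem 2.2.5]{seal2012tensors}, and your proposal ultimately rests on exactly that citation, so it takes essentially the same approach. Your added outline (tensor product of modules via the coequalisers of \cref{tensor_product_of_t_mods}, unit $T(I)$, coherence via representability of multilinear morphisms, and the reduction to free modules through the canonical split-coequaliser presentation) is a fair sketch of how Seal's development proceeds and does not conflict with anything in the paper.
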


\begin{example}[Tensor product of $R$-modules]
    Let $R$ be a commutative ring and consider the category the free-$R$-module monad $R^{(-)}$ (which is a commutative monad on $(\sets,\times, 1)$). As discussed before, the  category of $R^{(-)}$-modules is then equivalently the category of $R$-modules. In this case, a binary morphism is precisely a bilinear map (this is implied by \cref{concrete_bin_morphisms}) and then it follows from the respective universal properties that the tensor product of $R^{(-)}$-modules agrees with the usual tensor product.
\end{example}

\begin{example}[Tensor product of linear $hk$-spaces]
    Since linear $hk$-spaces are equivalently $F_{\mathbb{K}}$-modules (where $F_{\mathbb{K}}$ is the free-linear-$hk$-space monad, a commutative monad),  \cref{tensor_product_of_t_mods} yields a natural notion of tensor product of linear $hk$-spaces. We denote this tensor product by $\otimes_{\mathbb{K}}$. By \cref{thm_eilenberg_moore_monoidal}, we conclude that the category $\vect$ of linear $hk$-spaces becomes a symmetric monoidal category under the tensor product $\otimes_{\mathbb{K}}$.
\end{example}

As we will see, $\vect$ is even a \emph{closed symmetric monoidal category}.

\subsection{Closed symmetric monoidal categories}\label{sec_closed_symm_mon_cats}

\begin{defn}\label{defn_closed_symmetric_mon_cat}
    A \emph{closed symmetric monoidal category} is a symmetric monoidal category $\mathsf{C}$, equipped with an \emph{internal hom} functor,
        $$ [-,-]_{\mathsf{C}} : \mathsf{C}^\op \times \mathsf{C} \to \mathsf{C}, $$
    and natural isomorphisms 
        $$ \tau^{A,B,C} : \Hom_{\mathsf{C}}(A \otimes_{\mathsf{C}} B, C) \cong \Hom_{\mathsf{C}}(A, [B,C]_{\mathsf{C}}), \qquad (A,C\in \mathsf{C}) $$
    exhibiting $ - \otimes B $ as left adjoint to $[B , -]_{\mathsf{C}}$, for every object $B\in \mathsf{C}$. The adjunction $- \otimes B \dashv [B, -]_{\mathsf{C}}$ is called the \emph{tensor-hom adjunction}. If unambiguous, we drop indices and simply write $[-,-]$ instead of $[-,-]_{\mathsf{C}}$.
\end{defn}

\begin{remark}
    Since adjoints are essentially unique, the internal hom of a closed symmetric monoidal category is already essentially determined by its monoidal structure. In light of this, a \emph{closed structure} on a symmetric monoidal category is ``property-like''. 
\end{remark}

\begin{remark}
    In \cite{barr1991autonomous} (as well as other writings of Barr), closed symmetric monoidal category are referred to ``autonomous categories'' (not to be confused with \emph{$*$-autonomous} categories, which are a particular sort of symmetric monoidal category, see \cref{defn_star_autnms_cat}).  
\end{remark}

\begin{remark}[Relation to ``external'' hom functor] 
    Note that, by the tensor-hom adjunction, 
        $$ \Hom_{\mathsf{C}}(I, [-,-,]) \cong \Hom_{\mathsf{C}}(I \otimes -,-) \cong \Hom_{\mathsf{C}}(-,-), $$
    where $I$ is the tensor unit. Hence, a ``global element'' $I\to [-,-]$ of the internal hom corresponds bijectively to an element of the ``external'' hom $\Hom_{\mathsf{C}}(-,-)$. 
\end{remark} 

\subsubsection{Examples of closed symmetric monoidal categories} When the tensor product is the cartesian product $\times$, the notion of symmetric monoidal category reduces to that of a cartesian closed category. In particular:

\begin{example}
    The categories of $k$-spaces, $hk$-spaces and QCB spaces are closed symmetric monoidal categories under the cartesian product $\times$ as the tensor product and the space $C(-,-)$ of continuous maps as the internal hom. 
\end{example}

The most basic non-cartesian example is the following.

\begin{example}
    The category of $R$-modules over a commutative ring $R$ is a closed symmetric monoidal category with respect to the tensor product of $R$-modules and the usual internal hom. 
\end{example}

More generally, commutative monads give rise to closed symmetric monoidal categories \cite[Corollary 2.5.7]{seal2012tensors}:

\begin{thm}\label{thm_t_modules_closec_symmetric_monoidal}
    Let $\mathsf{C}$ be a closed symmetric monoidal category with all equalisers and let $T$ be a commutative monad on $\mathsf{C}$ such that the Eilenberg-Moore category $\mathsf{C}^T$ has all coequalisers. Then $\mathsf{C}^T$ becomes a closed symmetric monoidal category under the tensor product of $T$-modules.
\end{thm}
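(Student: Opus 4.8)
The symmetric monoidal structure on $\mathsf{C}^T$ is already supplied by \cref{thm_eilenberg_moore_monoidal}, so the task is to produce an internal hom together with the tensor-hom adjunction. The plan is to realise the internal hom $[B,C]_T$ of two $T$-modules as a suitable $T$-submodule of the internal hom $[B,C]$ formed in $\mathsf{C}$. First I would record the standard fact that a commutative (symmetric monoidal) monad is in particular a \emph{strong} monad, so that the strength, together with the closed structure of $\mathsf{C}$, equips $[X,C]$ with a canonical ``conjugation'' $T$-module structure for every object $X\in\mathsf{C}$ and every $T$-module $C$: it is the composite $T[X,C]\to[X,TC]\xrightarrow{[X,t^C]}[X,C]$, where the first map is the internal map induced by the strength and evaluation. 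Verifying the two module laws from \cref{modules_over_monad_def} for this structure is a diagram chase using the strength coherences and the monad laws; I expect it to be routine but lengthy. The strength also makes $T$ a $\mathsf{C}$-enriched endofunctor, hence supplies an internal ``action on morphisms'' $T_{B,C}\colon[B,C]\to[TB,TC]$.

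Next I would define $[B,C]_T$ as the equaliser, formed in $\mathsf{C}^T$ --- equivalently, in $\mathsf{C}$, since the forgetful functor $\mathsf{C}^T\to\mathsf{C}$ creates limits and $\mathsf{C}$ has all equalisers by hypothesis --- of the two morphisms $[B,C]\rightrightarrows[TB,C]$ given respectively by $[t^B,C]$ (internal precomposition with $t^B\colon TB\to B$) and by $[TB,t^C]\circ T_{B,C}$. One must check that these two morphisms are morphisms of $T$-modules for the conjugation structures just introduced (on $[B,C]$ coming from $C$, on $[TB,C]$ also from $C$), so that the equaliser genuinely carries an induced $T$-module structure and the inclusion $[B,C]_T\hookrightarrow[B,C]$ is a $T$-module map; this is where the bulk of the bookkeeping lies. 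Morally $[B,C]_T$ ``is'' the object of $T$-module homomorphisms $B\to C$, the displayed equaliser being the internalisation of the condition $f\circ t^B=t^C\circ Tf$.

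With $[-,-]_T$ in hand I would establish $\Hom_{\mathsf{C}^T}(A\otimes_T B,C)\cong\Hom_{\mathsf{C}^T}(A,[B,C]_T)$ by chaining two correspondences. By the universal property of $\otimes_T$ (\cref{tens_prod_t_modules_univ_prop}), the left-hand side is in natural bijection with the set of binary morphisms $A\boxtimes B\to C$. By the closed structure of $\mathsf{C}$, a $\mathsf{C}$-morphism $A\boxtimes B\to C$ transposes to a $\mathsf{C}$-morphism $\widetilde f\colon A\to[B,C]$; the point is to match conditions under this transpose. Using the internalised forms of ``morphism of $T$-modules in the right variable'' and ``in the left variable'', one shows: $f$ is a morphism in the right variable iff $\widetilde f$ factors (necessarily uniquely) through the equaliser $[B,C]_T$; and, given that it does, $f$ is a morphism in the left variable iff $\widetilde f\colon A\to[B,C]_T$ is moreover a morphism of $T$-modules. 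Since a binary morphism is precisely a morphism in both variables (the characterisation of binary morphisms proved earlier), this yields the bijection, and a short check of naturality in $A$ and $C$ finishes it; one can then read off that $-\otimes_T B\dashv[B,-]_T$, so $\mathsf{C}^T$ is closed symmetric monoidal.

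The main obstacle is not the overall strategy but the coherence bookkeeping: confirming that the conjugation $T$-module structure on internal homs obeys the module laws, that the two parallel maps defining $[B,C]_T$ are $\mathsf{C}^T$-morphisms so that the equaliser lives in $\mathsf{C}^T$, and that the ``right/left variable'' conditions translate exactly into ``factors through $[B,C]_T$'' and ``is a $\mathsf{C}^T$-morphism''. All of these are diagram chases governed by the strength axioms of a symmetric monoidal monad, and keeping the two strengths, the induced map on internal homs, and the structure maps $t^A,t^B,t^C$ straight is the delicate part --- which is precisely why one is content to cite \cite[Corollary 2.5.7]{seal2012tensors} rather than reproduce it in full.
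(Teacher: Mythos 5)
The paper offers no proof of its own here—it simply cites \cite[Corollary 2.5.7]{seal2012tensors}—and your sketch reconstructs essentially that standard argument: the pointwise $T$-module structure on internal homs coming from the strength, the internal hom of modules defined as the equaliser of $[t^B,C]$ and $[TB,t^C]\circ T_{B,C}$, and the tensor--hom adjunction obtained by matching binary morphisms (\cref{tens_prod_t_modules_univ_prop}) with $T$-module maps into that equaliser. Your outline is correct; the one point worth flagging is that the second parallel map being a morphism of $T$-modules is exactly where commutativity (rather than mere strength) of $T$ enters—for a non-commutative strong monad it fails—so that verification is the crux of the argument rather than routine bookkeeping.
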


Most importantly to our cause, this implies that linear $hk$-spaces form a closed symmetric monoidal category:

\begin{example}\label{ex_vect_closed_symm_mon}
    The category $\vect$ of linear $hk$-spaces is equivalently the category of $F_{\mathbb{K}}$-modules (with $F_{\mathbb{K}}$ the free-linear-$hk$-space monad) which satisfies the hypotheses of \cref{thm_t_modules_closec_symmetric_monoidal} (since $\spaces$ is a complete category and $\vect$ is cocomplete). Hence, $\vect$ is a closed symmetric monoidal category under the tensor product $\otimes_{\mathbb{K}}$ of $F_{\mathbb{K}}$-modules. The internal hom that arises this way agrees with the space $L(-,-)$ from \cref{defn_lvw}, as can be seen as follows. Let $V,W,Z$ be linear $hk$-spaces. By cartesian closure of $\spaces$, the set of continuous linear maps $V \to L(W,Z)$ is in natural bijection with the set of continuous bilinear maps $V\times W \to Z$. The latter can in turn be identified with the set of continuous linear maps $V\otimes_{\mathbb{K}} W \to Z$. Hence, $- \otimes_{\mathbb{K}} W$ is left adjoint to $L(W,-)$ and $L(-,-)$ is indeed the internal hom associated to the tensor product $\otimes_{\mathbb{K}}$. 
\end{example}

\subsubsection{The internal tensor-hom adjunction} The tensor-hom adjunction ``internalises'' to the following statement.

\begin{prop}[Internal tensor-hom adjunction]
    Let $\mathsf{C}$ be a closed symmetric monoidal category. Then we have a natural isomorphism 
        $$ [A \otimes B, C] \cong [A, [B, C]]. \;\;\;\; (A, B, C \in \mathsf{C})$$
\end{prop}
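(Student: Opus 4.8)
The plan is to prove this by a Yoneda-style argument, reducing the desired isomorphism of objects to a natural isomorphism of the representable functors they corepresent. Concretely, for an arbitrary ``test object'' $Z \in \mathsf{C}$, I would build the chain of natural isomorphisms
$$ \Hom_{\mathsf{C}}(Z, [A\otimes B, C]) \cong \Hom_{\mathsf{C}}(Z\otimes(A\otimes B), C) \cong \Hom_{\mathsf{C}}((Z\otimes A)\otimes B, C) \cong \Hom_{\mathsf{C}}(Z\otimes A, [B,C]) \cong \Hom_{\mathsf{C}}(Z, [A,[B,C]]), $$
where the first, third and fourth isomorphisms are instances of the tensor-hom adjunction $\tau$ (the last one applied to the object $Z\otimes A$), and the middle one is induced by the associator $\alpha^{Z,A,B}$. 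Each of these is natural in $Z$, so their composite is a natural isomorphism $\Hom_{\mathsf{C}}(-, [A\otimes B,C]) \cong \Hom_{\mathsf{C}}(-,[A,[B,C]])$ of functors $\mathsf{C}^{\op}\to\sets$, and the Yoneda lemma then yields a (unique, canonical) isomorphism $[A\otimes B, C]\cong [A,[B,C]]$ in $\mathsf{C}$. Note that only the closed monoidal structure is used here; symmetry plays no role, though of course it is available.

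Next I would address naturality in the remaining variables $A$, $B$, $C$. The cleanest way is to observe that the entire construction above can be read as a chain of natural isomorphisms of functors $\mathsf{C}^{\op}\times\mathsf{C}^{\op}\times\mathsf{C}^{\op}\times\mathsf{C}\to\sets$ in the four variables $(Z,A,B,C)$: the bifunctoriality of $\otimes$ and $[-,-]$, the naturality of $\alpha$ in all three slots, and the naturality of $\tau$ in all three slots guarantee that every arrow in the chain is natural in $(Z,A,B,C)$ jointly. Since the Yoneda embedding $\mathsf{C}\hookrightarrow [\mathsf{C}^{\op},\sets]$ is fully faithful, the induced isomorphism $[A\otimes B,C]\cong[A,[B,C]]$ is then automatically natural in $A,B,C$ as well; one can make this precise by noting that both sides, as $Z$ varies, corepresent the same functor of $(A,B,C)$ valued in $[\mathsf{C}^{\op},\sets]$, and applying Yoneda ``with parameters''.

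The only genuinely delicate point is this naturality bookkeeping — making sure that the associator and the three instances of $\tau$ really do fit together into a transformation natural in all variables simultaneously, rather than merely separately in each. I would handle this by invoking the coherence (Mac Lane's theorem) to treat $\alpha$ as essentially an identity, so that the composite isomorphism is visibly the ``obvious'' one $f\mapsto\big(z\mapsto$ the map classifying $z\otimes a\otimes b\mapsto f(z\otimes a\otimes b)\big)$, whose naturality in every slot is then a routine diagram chase using only the naturality squares for $\tau$ and the functoriality of $\otimes$ and $[-,-]$. I expect no conceptual obstacle beyond this; the argument is essentially the internalisation of the elementary fact $\Hom(A\otimes B,C)\cong\Hom(A,[B,C])$ obtained by replacing $\Hom$ with $[-,-]$ throughout.
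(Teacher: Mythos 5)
Your proposal is correct and follows essentially the same route as the paper: a chain of natural isomorphisms in a test object obtained from repeated application of the (external) tensor-hom adjunction, concluded by the Yoneda lemma. Your treatment is in fact slightly more scrupulous than the paper's, which writes $D\otimes A\otimes B$ without parentheses and thus silently absorbs the associator step and the naturality-in-$(A,B,C)$ bookkeeping that you make explicit.
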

\begin{proof}
    Let $D$ be any object of $\mathsf{C}$. Applying the (``external'') tensor-hom adjunction several times, we obtain the following chain of natural isomorphisms.
    \begin{align*}
        \Hom_{\mathsf{C}}(D, [A, [B, C]]) &\cong \Hom_{\mathsf{C}}(D, [A, [B, C]]) \\
        &\cong \Hom_{\mathsf{C}}(D\otimes A, [B, C]) \\
        &\cong \Hom_{\mathsf{C}}(D\otimes A \otimes B, C) \\
        &\cong \Hom_{\mathsf{C}}(D, [A \otimes B, C]).
    \end{align*}
    The claim now follows from the Yoneda lemma. 
\end{proof}

\subsubsection{The hom-hom adjunction} Another adjunction arising from the tensor-hom adjunction, crucial for our later considerations on duality theory, is the following.

\begin{prop}[Hom-hom adjunction]\label{hom_hom_adj}
    Let $\mathsf{C}$ be a closed symmetric monoidal category. Then we have a natural isomorphism
        $$ \Hom_{\mathsf{C}}(A, [B,C]) \cong \Hom_{\mathsf{C}^\op}([A, C], B), \;\;\;\; (A, B, C \in \mathsf{C})$$
    giving an adjunction 
    \[\begin{tikzcd}
	{\mathsf{C}^\op} & {\mathsf{C}.}
	\arrow[""{name=0, anchor=center, inner sep=0}, "{[-,C]}", shift left=2, from=1-1, to=1-2]
	\arrow[""{name=1, anchor=center, inner sep=0}, "{[-, C]}", shift left=2, from=1-2, to=1-1]
	\arrow["\dashv"{anchor=center, rotate=-90}, draw=none, from=0, to=1]
    \end{tikzcd}\]
\end{prop}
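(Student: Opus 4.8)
The plan is to deduce the isomorphism from the (external) tensor-hom adjunction together with the braiding of the symmetric monoidal structure. Fix $C\in\mathsf{C}$. First I would apply the tensor-hom adjunction $\tau^{A,B,C}$ to rewrite $\Hom_{\mathsf{C}}(A,[B,C])$ as $\Hom_{\mathsf{C}}(A\otimes B,C)$. Next I would precompose with the braiding isomorphism $\sigma^{A,B}\colon A\otimes B\xrightarrow{\ \sim\ }B\otimes A$ (see \cref{symm_mon_cat_defn}) to obtain a bijection $\Hom_{\mathsf{C}}(A\otimes B,C)\cong\Hom_{\mathsf{C}}(B\otimes A,C)$. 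Finally I would apply the tensor-hom adjunction a second time, now in the form $\tau^{B,A,C}$, to identify $\Hom_{\mathsf{C}}(B\otimes A,C)$ with $\Hom_{\mathsf{C}}(B,[A,C])$, which is by definition of the opposite category the same set as $\Hom_{\mathsf{C}^\op}([A,C],B)$. Composing the three bijections produces the claimed isomorphism.

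The remaining task is to check naturality in $A$ and $B$ and then to read off the adjunction. Naturality of the two outer steps is exactly naturality of $\tau$, which is part of the data of a closed symmetric monoidal category, and naturality of the middle step is naturality of the braiding $\sigma$; since a composite of natural isomorphisms is again natural, the whole bijection is natural in both variables. To interpret this as an adjunction, I would let $[-,C]$ denote the functor $\mathsf{C}^\op\to\mathsf{C}$ obtained by fixing the second slot of the internal hom at $C$, and observe that its opposite functor $\mathsf{C}\to\mathsf{C}^\op$ agrees with $[-,C]$ on objects and, up to reversing arrows, on morphisms. Unwinding the definition of a hom-set adjunction then shows that the natural isomorphism just constructed exhibits the two functors in the displayed diagram as an adjoint pair, with $[-,C]\colon\mathsf{C}\to\mathsf{C}^\op$ the left adjoint; equivalently, $[-,C]$ is adjoint to itself ``on the right''.

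The argument is short, and the only real obstacle is bookkeeping. One must keep track of which object plays the role of the object being exponentiated in each application of $\tau$, and one must be careful about the direction of the adjunction after passing to $\mathsf{C}^\op$, since the contravariant functor $[-,C]$ is right adjoint, not left adjoint, to its opposite: the superficially symmetric-looking mirror statement $\Hom_{\mathsf{C}}([A,C],B)\cong\Hom_{\mathsf{C}}([B,C],A)$ is \emph{false} in general, as one already sees in $\mathsf{Set}$. No input beyond the symmetric monoidal closed structure is needed; in particular the braiding is essential, reflecting the fact that in a non-symmetric closed monoidal category the two internal homs would pull this symmetry apart.
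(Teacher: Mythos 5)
Your argument is correct and is essentially the paper's own proof: both obtain the isomorphism as the composite $\Hom_{\mathsf{C}}(A,[B,C])\cong\Hom_{\mathsf{C}}(A\otimes B,C)\cong\Hom_{\mathsf{C}}(B\otimes A,C)\cong\Hom_{\mathsf{C}}(B,[A,C])=\Hom_{\mathsf{C}^\op}([A,C],B)$, i.e.\ tensor--hom adjunction, braiding, tensor--hom adjunction again, and then reading the final hom-set in the opposite category. Your additional remarks on naturality and on which incarnation of $[-,C]$ plays the role of left adjoint only make explicit what the paper leaves implicit, so there is nothing further to add.
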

\begin{proof}
    Using the tensor-hom adjunction and symmetry of the tensor product, 
    \begin{align*}
        \Hom_{\mathsf{C}}(A, [B, C]) &\cong \Hom_{\mathsf{C}}(A \otimes B, C) \\
        &\cong \Hom_{\mathsf{C}}(B\otimes A, C) \\
        &\cong \Hom_{\mathsf{C}}(B, [A, C]) \\
        &= \Hom_{\mathsf{C}^\op}([A, C], B).
    \end{align*}
\end{proof}

As an important corollary, we obtain:

\begin{cor}
    Let $\mathsf{C}$ be a closed symmetric monoidal category. Then the tensor product $\otimes$ preserves all colimits in both variables, and the internal hom functor [-,-] preserves limits in the second variable and sends colimits to limits in the first variable:
    \begin{align*}
        (\colim_i A_i) \otimes (\colim_j B_j) &\cong \colim_{i,j} (A_i \otimes B_j), \\
        [\colim_i A_i, B] &\cong \lim_i [A_i, B], \\
        [A, \lim_i B_i] &\cong \lim_i [A, B_i]. 
    \end{align*}
\end{cor}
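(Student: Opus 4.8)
The plan is to derive every assertion from two elementary facts — left adjoints preserve colimits, right adjoints preserve limits — together with the Yoneda lemma and the adjunctions already at our disposal: the tensor–hom adjunction of \cref{defn_closed_symmetric_mon_cat} and the hom–hom adjunction of \cref{hom_hom_adj}. No genuinely new idea is needed; the work is entirely formal.

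First I would treat the tensor product. For each object $B$, the functor $-\otimes B$ is by definition a left adjoint (to $[B,-]$) and hence preserves all colimits; precomposing with the braiding $\sigma$ of \cref{symm_mon_cat_defn} shows that $B\otimes-$ preserves colimits as well. To obtain preservation in both variables simultaneously, I would compute $(\colim_i A_i)\otimes(\colim_j B_j)$ by first pulling $-\otimes(\colim_j B_j)$ through the colimit over $i$, then pulling each $A_i\otimes-$ through the colimit over $j$, and finally invoking the standard ``Fubini'' identity $\colim_i\colim_j(A_i\otimes B_j)\cong\colim_{i,j}(A_i\otimes B_j)$ for iterated colimits over a product category. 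This yields the first displayed isomorphism.

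For the internal hom, the second-variable statement is immediate: $[A,-]$ is the right adjoint of $-\otimes A$, and right adjoints preserve limits, so $[A,\lim_i B_i]\cong\lim_i[A,B_i]$. The first-variable statement $[\colim_i A_i,B]\cong\lim_i[A_i,B]$ I would prove by representability: for an arbitrary test object $D$ one has
\begin{align*}
\Hom_{\mathsf{C}}(D,[\colim_i A_i,B]) &\cong \Hom_{\mathsf{C}}\bigl((\colim_i A_i)\otimes D,\,B\bigr) \\
&\cong \Hom_{\mathsf{C}}\bigl(\colim_i(A_i\otimes D),\,B\bigr) \\
&\cong \lim_i\,\Hom_{\mathsf{C}}(A_i\otimes D,\,B) \\
&\cong \lim_i\,\Hom_{\mathsf{C}}(D,[A_i,B]) \\
&\cong \Hom_{\mathsf{C}}(D,\lim_i[A_i,B]),
\end{align*}
using, in order, the tensor–hom adjunction, the colimit-preservation of $-\otimes D$ just established, the fact that $\Hom_{\mathsf{C}}(-,B)\colon \mathsf{C}^{\op}\to\sets$ carries colimits to limits, the tensor–hom adjunction again, and the limit-preservation of $\Hom_{\mathsf{C}}(D,-)$. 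Naturality in $D$ and the Yoneda lemma then give the isomorphism. Alternatively, \cref{hom_hom_adj} already exhibits $[-,B]\colon \mathsf{C}^{\op}\to\mathsf{C}$ as a left adjoint, hence colimit-preserving, and colimits in $\mathsf{C}^{\op}$ are precisely limits in $\mathsf{C}$.

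I do not anticipate a real obstacle. The only points demanding care are the variance bookkeeping in the first-variable statement — keeping straight that a colimit in $\mathsf{C}$ is a limit in $\mathsf{C}^{\op}$ — and the observation that it is the braiding which allows colimit-preservation to be transported from the first tensor factor to the second.
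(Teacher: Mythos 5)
Your proposal is correct and follows essentially the same route as the paper: everything is deduced from the tensor--hom adjunction and the hom--hom adjunction (\cref{hom_hom_adj}) via the fact that left adjoints preserve colimits and right adjoints preserve limits. The extra details you supply (the braiding to transfer colimit-preservation to the other tensor factor, the Fubini identity for iterated colimits, and the explicit Yoneda computation for $[\colim_i A_i, B]$) are correct elaborations of what the paper leaves implicit, with your ``alternative'' argument via \cref{hom_hom_adj} being precisely the paper's own.
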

\begin{proof}
    Let $A$ be any object. Then the functors $A \otimes -$, $- \otimes A$ are left adjoints and $[A, -]$ is a right adjoint (by the tensor-hom adjunction). Furthermore, by the hom-hom adjunction, \cref{hom_hom_adj}, 
        $$[-, A]: \mathsf{C}^\op \to \mathsf{C}$$
    is a left adjoint. The claim now follows from the fact that left adjoints preserve colimits and right adjoints preserve limits.
\end{proof}

\subsubsection{Day's reflection theorem} Reflective subcategories of closed symmetric monoidal categories inherit their closed monoidal structure under a number of equivalent simple conditions which are summarised by the following theorem \cite[Theorem 1.2]{day1972reflection}: 

\begin{thm}\label{day_refl_thm}
    Let $\mathsf{C}$ be a closed symmetric monoidal category and suppose that  
    \[\begin{tikzcd}
	{\mathsf{D}} & {\mathsf{C}}
	\arrow[""{name=0, anchor=center, inner sep=0}, shift right=1, hook, from=1-1, to=1-2]
	\arrow[""{name=1, anchor=center, inner sep=0}, "L"', shift right=3, from=1-2, to=1-1]
	\arrow["\dashv"{anchor=center, rotate=-90}, draw=none, from=1, to=0]
    \end{tikzcd}\]
    is a reflective subcategory with reflector $L$ (i.e.~left adjoint to the inclusion). Let 
        $$ \epsilon : A \to LA $$
    be the unit of the adjunction $(\hookrightarrow) \vdash L$ and consider the following natural transformations.
    \begin{align*}
        \epsilon^{[A,B]}&:  [A,B] \to L[A,B], &&(A\in \mathsf{C}, \, B\in \mathsf{D}) \\
        [\epsilon^A, B]&: [LA, B] \to [A,B], &&(A\in \mathsf{C}, \, B\in \mathsf{D})\\
        L(\epsilon \otimes B)&: L(A \otimes B) \to L(LA \otimes B), &&(A,B \in \mathsf{D})\\
        L(\epsilon \otimes \epsilon) &: L(A \otimes B) \to L(LA \otimes LB).  &&(A,B \in \mathsf{D})
    \end{align*}
    Then, if one of them is an isomorphism, so are the others. In this case, $\mathsf{D}$ becomes a closed symmetric monoidal category with tensor product $L(-\otimes-)$ and internal hom $[-,-]$, and $L$ is (or rather, admits the structure of) a strong symmetric monoidal functor, thus making the adjunction $(\hookrightarrow) \vdash L$ a symmetric monoidal adjunction. 
\end{thm}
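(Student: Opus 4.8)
The plan is to split the statement into two parts: (i) the equivalence of the four displayed transformations being isomorphisms, and (ii) the construction of the closed symmetric monoidal structure on $\mathsf{D}$, together with the monoidal upgrade of the reflection adjunction, under the hypothesis that these equivalent conditions hold.

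For part (i), the guiding observation is that each of the four conditions is equivalent to the single statement that $\mathsf{D}$ is an \emph{exponential ideal} of $\mathsf{C}$, i.e.\ that $[A,B]\in\mathsf{D}$ whenever $B\in\mathsf{D}$ (which is precisely the condition that $\epsilon^{[A,B]}$ be an isomorphism). I would invoke the standard orthogonality description of a reflective subcategory: writing $\Sigma=\{\epsilon^X:X\in\mathsf{C}\}$ for the class of reflection units, an object $Y$ lies in $\mathsf{D}$ (via $\epsilon^Y$) exactly when $Y$ is orthogonal to every morphism in $\Sigma$, and a morphism $f$ is inverted by $L$ exactly when every object of $\mathsf{D}$ is orthogonal to $f$. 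Feeding this through the tensor--hom adjunction $\Hom(-\otimes A,B)\cong\Hom(-,[A,B])$ (together with its hom--hom consequence, \cref{hom_hom_adj}) lets one rewrite each of the four conditions as an orthogonality statement: for instance $\epsilon^{[A,B]}$ is an isomorphism precisely when $B$ is orthogonal to the arrows $\epsilon^X\otimes\id_A$, and $[\epsilon^A,B]$ is an isomorphism precisely when $B$ is orthogonal to the arrows $\id_X\otimes\epsilon^A$, while the two conditions involving $L$ of a tensor translate (again by tensor--hom, applying $\Hom(-,D)$ for $D\in\mathsf{D}$) into the requirement that the relevant internal-hom objects lie in $\mathsf{D}$. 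The braiding $\sigma$ identifies $\epsilon^X\otimes\id_A$ with $\id_A\otimes\epsilon^X$ up to isomorphism of arrows, so, after matching up quantifiers, all four conditions reduce to the one statement that $\mathsf{D}$ is an exponential ideal of $\mathsf{C}$; this gives ``if one is an isomorphism, so are the others''.

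For part (ii), assume the equivalent conditions, so $\mathsf{D}$ is an exponential ideal; in particular $L$ inverts every arrow $\epsilon^X\otimes\id_Y$ with $X,Y\in\mathsf{C}$ (apply $\Hom(-,D)$ for $D\in\mathsf{D}$ and the tensor--hom adjunction, using $[Y,D]\in\mathsf{D}$). Equip $\mathsf{D}$ with the tensor product $A\otimes_{\mathsf{D}}B:=L(A\otimes B)$, the unit object $LI$, and the internal hom given by the restriction of $[-,-]$, which already lands in $\mathsf{D}$. The tensor--hom adjunction on $\mathsf{D}$ is then immediate: for $A,B,C\in\mathsf{D}$,
\[\Hom_{\mathsf{D}}(L(A\otimes B),C)\;\cong\;\Hom_{\mathsf{C}}(A\otimes B,C)\;\cong\;\Hom_{\mathsf{C}}(A,[B,C])\;=\;\Hom_{\mathsf{D}}(A,[B,C]),\]
using the reflection adjunction (as $C\in\mathsf{D}$), the tensor--hom adjunction of $\mathsf{C}$, and fullness of $\mathsf{D}$. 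The associator, unitors and braiding of $\mathsf{D}$ are then produced by applying $L$ to those of $\mathsf{C}$ after rewriting iterated $\otimes_{\mathsf{D}}$-products through the canonical isomorphisms $L\big(L(A\otimes B)\otimes C\big)\cong L\big((A\otimes B)\otimes C\big)$ and their mirror images; naturality of all the data together with idempotence of $L$ reduces the triangle, pentagon and hexagon axioms for $\mathsf{D}$ to their already-established analogues in $\mathsf{C}$. Finally, the inverses of the isomorphisms $L(\epsilon^A\otimes\epsilon^B)\colon L(A\otimes B)\to L(LA\otimes LB)=LA\otimes_{\mathsf{D}}LB$, together with the identity on $LI$, endow $L$ with the structure of a strong symmetric monoidal functor; as the inclusion is right adjoint to $L$, it acquires a lax symmetric monoidal structure and the reflection adjunction becomes a symmetric monoidal adjunction.

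I expect the main obstacle to lie in part (ii): transporting the associativity, unit and symmetry constraints along $L$ and verifying the monoidal-category axioms for $\mathsf{D}$ and the strong-monoidal-functor axioms for $L$. This is conceptually routine -- once the isomorphisms $L(\epsilon\otimes Y)$ are available, everything follows by naturality from the corresponding structure in $\mathsf{C}$ -- but notationally heavy, and one must also be careful matching up the quantifier ranges in part (i). Part (i) itself is a clean application of the orthogonality calculus for reflective subcategories and the adjunctions supplied by the closed monoidal structure, with the symmetry of $\otimes$ doing the essential work of merging the ``left'' and ``right'' forms of each condition.
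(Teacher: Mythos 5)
The paper never proves \cref{day_refl_thm} in-text: it is quoted verbatim (up to notation) from Day's article and justified by the citation \cite{day1972reflection}, so there is no internal argument to compare yours against. Judged on its own, your outline follows the standard route and is essentially sound: reducing all four conditions to the single statement that $\mathsf{D}$ is an exponential ideal, via orthogonality against the units $\epsilon^X$ and the tensor--hom adjunction (plus \cref{hom_hom_adj}), is exactly how the equivalence is usually organised, and your part (ii) — tensor $L(-\otimes-)$, unit $LI$, internal hom $[-,-]$, the three-line adjunction computation $\Hom_{\mathsf{D}}(L(A\otimes B),C)\cong\Hom_{\mathsf{C}}(A\otimes B,C)\cong\Hom_{\mathsf{C}}(A,[B,C])$ (which indeed needs $[B,C]\in\mathsf{D}$), transport of associator/unitors/braiding along $L$ using that $L$ inverts $\epsilon^X\otimes\id_Y$, and the inverse of $L(\epsilon\otimes\epsilon)$ as the strong symmetric monoidal structure on $L$ — is the standard, if notationally heavy, construction; by the paper's \cref{symmmon_adj} the strong symmetric monoidality of the left adjoint is all that is needed to call the adjunction symmetric monoidal.

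Two points deserve attention. First, the quantifier issue you flag is real: as printed, the last two transformations are quantified over $A,B\in\mathsf{D}$, which makes them vacuous (for $A\in\mathsf{D}$ the unit $\epsilon^A$ is already invertible), and the stated equivalence would then be false in general; the intended range, as in Day's paper, is $A,B\in\mathsf{C}$, and your argument implicitly and correctly uses that range. Second, your claim that the fourth condition ``translates into the requirement that the relevant internal-hom objects lie in $\mathsf{D}$'' is not a purely formal rewriting: transposing $\Hom(\epsilon^A\otimes\epsilon^B,D)$ gives a map involving simultaneous pre- and post-composition, not an orthogonality statement. The gap is small and standard to fill: apply the fourth condition at the pair $(LA,B)$ and use that $\epsilon^{LA}$ is invertible to see that $L$ inverts $\id_{LA}\otimes\epsilon^B$, then use two-out-of-three for the class of $L$-inverted morphisms on the factorisation $\epsilon^A\otimes\epsilon^B=(\id_{LA}\otimes\epsilon^B)\circ(\epsilon^A\otimes\id_B)$ to conclude that $L$ inverts $\epsilon^A\otimes\id_B$, which is the third condition and hence, by your orthogonality translation, the exponential-ideal property. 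With that step supplied, your proof plan is complete.
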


\section{Duality: Separated Objects, $*$-Autonomous Categories and the Chu construction}\label{sec_duality_sep_chu}

We now discuss a general theory of duality in the setting of closed symmetric monoidal categories which we will subsequently apply to linear $hk$-spaces. 

\subsubsection{Evaluation morphisms} First, we need a definition of a general version of evaluation mappings and the canonical morphism $\eta: V\to V^{\wedge\wedge}$ to the double dual.  

\begin{defn}\label{evaluation_morphisms_defn}
    Let $\mathsf{C}$ be a closed symmetric monoidal category and let $A,B,D$ be objects of $\mathsf{C}$. 
    \begin{enumerate}
        \item The \emph{evaluation morphism},
            $$ e^{A, B} : [A,B] \otimes A \to B, $$
        is given by
        $$ e^{A,B} := (\tau^{[A,B], A, B})^{-1}(\id_{[A,B]}).$$
        \item The \emph{canonical morphism to the double dual of $A$ with respect to $D$} is the morphism 
            $$ \eta^{A, D}: A \to [[A,D],D]$$
        defined as 
            $$ \eta^{A, D} := \tau^{A, [A, D], D}(e^{A, D} \circ \sigma^{A, [A,D]}), $$
        where $\sigma$ is the braiding (see \cref{symm_mon_cat_defn}). 
    \end{enumerate}
\end{defn}

Examining the proof of the hom-hom adjunction (\cref{hom_hom_adj}), we see that $\eta^{-,D}$ is precisely the unit of this adjunction. In particular, 
    $$ \eta^{-,D}: \id_\mathsf{C} \to [[-,D],D] $$
is a natural transformation.

\subsection{Regular monomorphisms, regular images and (co)regular categories} At this point, we would like to ask whether $\eta^{A,D}$ is -- in a suitable sense -- an embedding, and what its image is. This would then yield a general formulation of what it means for the dual $[A,D]$ to ``separate the points'' of $A$ and enable the study of $A$ in terms of its dual $[A,D]$. The category-theoretic notion of embedding which we would like to adhere to is that of \emph{regular monomorphism} with the corresponding notion of image being the \emph{regular image}. Categories in which these notions are particularly well-behaved are known as \emph{coregular categories} (the dual notion of which is that of a \emph{regular category}).

\subsubsection{Regular mono- and epimorphisms} We begin with the notions of regular mono- and epimorphism. 

\begin{defn}
    Let $\mathsf{C}$ be a category. A \emph{regular monomorphism} is a morphism which is the equaliser of some pair of morphisms. A \emph{regular epimorphism} in $\mathsf{C}$ is a regular monomorphism in the opposite category $\mathsf{C}^\op$, i.e.~a morphism (in $\mathsf{C}$) which is the \emph{co}equaliser of some pair of morphisms. 
\end{defn}

As the terminology suggests:

\begin{prop}
    A regular monomorphism is always a monomorphism.
\end{prop}
\begin{proof}
    By definition, a regular monomorphism $m: A\to B$ is the equaliser of a pair of maps $k,l: B \to C$. For any two morphisms $f,g: D \to A$ with $m \circ f = m \circ g$, the following diagram commutes:
    \[\begin{tikzcd}
	A & B & C \\
	D
	\arrow["m", from=1-1, to=1-2]
	\arrow["k", shift left=1, from=1-2, to=1-3]
	\arrow["l"', shift right=1, from=1-2, to=1-3]
	\arrow["{m \circ g}", shift left=1, curve={height=6pt}, from=2-1, to=1-2]
	\arrow["{m \circ f}"', shift right=1, curve={height=6pt}, from=2-1, to=1-2]
    \end{tikzcd}\]
    Hence, by the universal property of the equaliser, there is a \emph{unique} morphism $u$ making the following diagram commute.
    \[\begin{tikzcd}
	A & B & C \\
	D
	\arrow["m", from=1-1, to=1-2]
	\arrow["k", shift left=1, from=1-2, to=1-3]
	\arrow["l"', shift right=1, from=1-2, to=1-3]
	\arrow["{m \circ g}", shift left=1, curve={height=6pt}, from=2-1, to=1-2]
	\arrow["{m \circ f}"', shift right=1, curve={height=6pt}, from=2-1, to=1-2]
	\arrow["u", curve={height=-6pt}, dashed, from=2-1, to=1-1]
    \end{tikzcd}\]
    Therefore, $f=u=g$. Since $f,g$ were arbitrary morphisms with $m \circ f = m \circ g$, this shows that $m$ is a monomorphism.
\end{proof}

Recall that a \emph{split epimorphism} is a morphism which admits a section (i.e.~a right inverse). Dually, a \emph{split monomorphism} is a morphism which admits a retraction (i.e.~a left inverse). 

\begin{lem}
    Let $\mathsf{C}$ be a category with equalisers. Then every split epimorphism (split monomorphism, resp.) is a regular epimorphism (monomorphism, resp.).
\end{lem}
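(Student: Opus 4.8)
The plan is to reduce to a single case by duality and then exhibit a split monomorphism explicitly as an equaliser. Since a regular epimorphism in $\mathsf{C}$ is, by definition, a regular monomorphism in $\mathsf{C}^\op$, and a split epimorphism in $\mathsf{C}$ is precisely a split monomorphism in $\mathsf{C}^\op$, it will be enough to prove that every split monomorphism is a regular monomorphism; applying this statement to $\mathsf{C}^\op$ then yields the assertion for split epimorphisms.

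So I would let $m\colon A\to B$ be a split monomorphism with retraction $r\colon B\to A$, so that $r\circ m=\id_A$, and claim that $m$ is the equaliser of the pair $\id_B,\, m\circ r\colon B\to B$. First I would check that $m$ equalises this pair, which is immediate: $\id_B\circ m=m$, while $(m\circ r)\circ m=m\circ(r\circ m)=m\circ\id_A=m$. Next, for the universal property, I would take an arbitrary morphism $f\colon D\to B$ with $f=(m\circ r)\circ f$ and set $u:=r\circ f$; then $m\circ u=m\circ r\circ f=f$, so $f$ factors through $m$ via $u$. Uniqueness of the factorisation follows at once from the fact that a split monomorphism is in particular a monomorphism: if $m\circ u=m\circ u'$, composing on the left with $r$ gives $u=u'$. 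This displays $m$ as the equaliser of $\id_B$ and $m\circ r$, hence as a regular monomorphism, and dualising gives the split-epimorphism case.

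I do not expect any genuine obstacle here: the argument is a two-line diagram chase, and it does not even use the hypothesis that $\mathsf{C}$ has equalisers — one simply writes $m$ down as a concrete equaliser rather than appealing to the existence of equalisers in general. The only points that need a moment of attention are the bookkeeping of the duality and the (elementary) observation that split monomorphisms and split epimorphisms are automatically monomorphisms and epimorphisms, which is exactly what secures the uniqueness clause in the universal property.
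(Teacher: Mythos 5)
Your proof is correct and follows exactly the paper's route: exhibit the split monomorphism $m$ with retraction $r$ as the equaliser of $\id_B$ and $m\circ r$, and handle split epimorphisms by duality; you merely spell out the verification of the universal property that the paper leaves implicit. Your side remark that the existence of equalisers in $\mathsf{C}$ is not actually needed is also accurate.
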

\begin{proof}
    If $i: A \to B$ is is a split monomorphism and $p$ is a left inverse of $i$, then $i$ is the equaliser of $i\circ p$ and $\id_B$ and hence a regular monomorphism. The case of split epimorphisms is dual. 
\end{proof}

\subsubsection{Characterising (regular, split) mono- and epimorphisms in $\khaus$}\label{khaus_morphisms_subsec}  In order to be able to apply this general discussion to the examples that we are interested in, we also need to characterise the regular mono- and epimorphisms in the category of $hk$-spaces, where they correspond to important notions of general topology. The following can be found in \cite[Theorem 3.1]{strickland2009category}:

\begin{thm}[Morphisms in $\khaus$]\label{khaus_morphisms}
    Let $f:X\to Y$ be a continuous map of $hk$-spaces. Then we have the following equivalences:
    \begin{align*}
        f \text{ is a monomorphism }&\Leftrightarrow \;f \text{ is injective, }\\
        f \text{ is a regular monomorphism }&\Leftrightarrow \;f \text{ is a closed embedding, }\\
        f \text{ is a split monomorphism }&\Leftrightarrow \;\text{$X$ is a retract of $Y$ and $f:X\hookrightarrow Y$ is the inclusion, }
    \end{align*}
    and dually, 
    \begin{align*}
        f \text{ is an epimorphism }&\Leftrightarrow \;f \text{ has dense image, }\\
        f \text{ is a regular epimorphism }&\Leftrightarrow \;f \text{ is a quotient map, }\\
        f \text{ is a split epimorphism }&\Leftrightarrow \;\text{ $Y$ is a retract of $X$ and $f:X\twoheadrightarrow Y$ is the retraction. }
    \end{align*}
    Moreover, we have the following permanence properties, where all limits and colimits are taken in $\khaus$:
    \begin{enumerate}[1.]
        \item Arbitrary products, coproducts, pushouts or composites of regular monomorphisms (i.e.~closed embedding, by the above) are regular monomorphisms.  
        \item Arbitrary coproducts, \emph{finite} products, pullbacks and composites of regular epimorphisms (i.e~quotient maps, by the above) are regular epimorphisms. 
    \end{enumerate}
\end{thm}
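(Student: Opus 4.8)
The plan is to reduce each of the six mono/epi statements to point-set topology using three facts already in place: finite limits in $\khaus$ are $k$-ifications of topological limits, finite colimits are $k$-Hausdorff reflections of topological colimits, and --- crucially --- on a \emph{closed} subset the $k$-subspace topology coincides with the ordinary subspace topology (the remark after \cref{ksubspaces_def} and \cref{perm_prop_haus}). I will also use throughout that, $hk$-spaces being weakly Hausdorff, the diagonal $(=_Z)$ is closed in $Z\times Z$ for every $hk$-space $Z$.

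\textbf{Monomorphisms and epimorphisms.} Injective maps are monic in any concrete category; conversely, if $f(x)=f(y)$ with $x\neq y$, the two maps from the one-point space picking out $x$ and $y$ are equalised by $f$, so a monomorphism is injective. Dually, a map $f$ with dense image is epic, because two maps $g,h\colon Y\to Z$ agreeing on $\im f$ agree on the closed set $(g,h)^{-1}(=_Z)$ and hence everywhere; and if $C:=\overline{\im f}\subsetneq Y$, then $Y/C$ is $k$-Hausdorff by \cref{khausdorff_quots_closed} while the quotient map and the constant map $Y\to Y/C$ are distinct yet become equal after precomposition with $f$ --- this is precisely the argument of \cref{epis_in_lin_k_hausdorff_spaces}.

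\textbf{Regular monomorphisms and epimorphisms.} If $m$ is the equaliser of $k,l\colon B\to C$, then up to isomorphism $m$ is the inclusion of $(k,l)^{-1}(=_C)$ with the $k$-subspace topology; since $(=_C)$ is closed, this is a closed subset carrying the ordinary subspace topology, so $m$ is a closed embedding. Conversely, a closed subspace $A\subseteq B$ is the equaliser of the projection $B\to B/A$ onto the space obtained by collapsing $A$ to a point (the equivalence relation $(=_B)\cup(A\times A)$ is closed, so $B/A$ is $k$-Hausdorff by \cref{khausdorff_quots_closed}) and the map constant at the collapsed point. Dually, a quotient map $f\colon X\to Y$ is the coequaliser of its kernel pair $R:=(f\times f)^{-1}(=_Y)\subseteq X\times X$ --- closed in $X\times X$, hence an $hk$-space --- along its two projections: a morphism out of $X$ that coequalises the projections is exactly one constant on the fibres of $f$, and it factors continuously through $Y$ precisely because $f$ is a topological quotient map. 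Conversely, any coequaliser in $\khaus$ is the $k$-Hausdorff reflection of a topological coequaliser; both the topological coequaliser map and the reflection map $Q\to hQ$ are quotient maps, so their composite, to which $f$ is isomorphic, is a quotient map. Finally, a split monomorphism has a retraction, hence is a homeomorphism onto its image, exhibiting $X$ as a retract of $Y$ which --- by the lemma above that split monos are regular monos in a category with equalisers --- is moreover a closed subspace; the converse and the dual split-epimorphism statement (using cocompleteness of $\khaus$) are then immediate.

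\textbf{Permanence properties and the main obstacle.} For closed embeddings: a composite of closed embeddings is closed; a product $\prod X_i\hookrightarrow\prod Y_i$ of closed embeddings is closed because $\prod X_i=\bigcap_i\pi_i^{-1}(X_i)$, and on this closed subset the subspace topology of the $k$-product agrees with the $k$-product of the subspace topologies; coproducts are handled summand-wise; and the pushout case reduces to the stability of closed embeddings under the attaching-space construction. For quotient maps: composites and coproducts of quotient maps are elementary point-set topology, the statement for \emph{finite} products is exactly \cref{khausdorff_quots}, and stability under pullback follows by combining \cref{khausdorff_quots} with the interaction between quotient maps and $k$-ification. The equivalences themselves are essentially formal once the limit/colimit dictionary is in place; the genuine work --- and where I would defer to \cite{strickland2009category} --- lies in these last two permanence clauses: that a pushout of a closed embedding is again a closed embedding (one must verify injectivity and closedness of $B\to X\cup_A B$ and $k$-Hausdorffness of the pushout) and that a pullback of a quotient map is a quotient map (which fails in $\topsp$ and genuinely uses the passage to $k$-spaces via \cref{khausdorff_quots}).
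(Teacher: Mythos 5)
The paper offers no proof of this theorem at all: it is quoted verbatim from \cite[Theorem 3.1]{strickland2009category}, so your proposal should be compared with that citation rather than with an argument in the text. Your route is essentially a self-contained reconstruction from the toolkit the paper has already set up (closed diagonal, limits as $k$-ified topological limits, colimits as $h$-ified topological colimits, the coincidence of the $k$-subspace and subspace topologies on closed sets, quotients by closed relations via \cref{khausdorff_quots_closed}), and the six mono/epi characterisations you give are correct: the equaliser description $(k,l)^{-1}(=_C)$ for regular monos, the collapse $B/A$ for the converse (note the harmless edge case $A=\emptyset$, where one should instead equalise two distinct constant maps to a discrete two-point space), the kernel-pair coequaliser for quotient maps, the $h$-ified-topological-coequaliser argument for the converse, and the dense-image/epimorphism argument mirroring \cref{epis_in_lin_k_hausdorff_spaces}. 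Two small remarks: the product and coproduct permanence clauses need no topology at all, since products commute with equalisers (and coproducts with coequalisers), so a product of regular monos is automatically the equaliser of the product pair --- this replaces your claim about the subspace topology of the $k$-product on $\bigcap_i\pi_i^{-1}(X_i)$, which is true but is most cleanly justified by exactly this formal argument; and composites are elementary once the identifications with closed embeddings and quotient maps are in hand, as you say. What remains --- pushout-stability of closed embeddings and pullback-stability of quotient maps --- is precisely the nontrivial content, and you defer it to \cite{strickland2009category}; since that is the very source the paper cites for the whole statement, your proposal buys a mostly internal proof at the cost of re-deriving routine facts, while the paper's citation buys brevity at the cost of opacity about which parts are formal and which genuinely use the $hk$-setting.
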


Using this characterisation, we may similarly characterise the regular monomorphisms in $\vect$, which we will need in order to apply our general duality theoretic results to the case of linear $hk$-spaces.

\begin{cor}\label{cor_morphisms_in_vect}
    Let $V,W$ be linear $hk$-spaces and let $i:V\to W$ be a continuous linear map. Then $i$ is a regular monomorphism if, and only if, it is a regular monomorphism in $\spaces$, i.e.~a closed embedding.
\end{cor}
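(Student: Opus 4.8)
The plan is to reduce the statement to the corresponding fact for $hk$-spaces, namely \cref{khaus_morphisms}, which identifies regular monomorphisms in $\spaces$ with closed embeddings. The key point is that a continuous linear map $i : V \to W$ of linear $hk$-spaces is a regular monomorphism \emph{in $\vect$} precisely when it is a regular monomorphism \emph{in $\spaces$}; once this is established, \cref{khaus_morphisms} gives the identification with closed embeddings immediately.

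\emph{From $\vect$ to $\spaces$ (the ``only if'' direction).} Suppose $i : V \to W$ is a regular monomorphism in $\vect$, i.e.\ $i$ is the equaliser of a pair of morphisms $f, g : W \to Z$ in $\vect$. Since limits in $\vect$ are computed as in $\spaces$ (\cref{lin_hk_spaces_complete}), the same diagram exhibits $i$ as an equaliser in $\spaces$, so $i$ is a regular monomorphism in $\spaces$, hence a closed embedding by \cref{khaus_morphisms}.

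\emph{From $\spaces$ to $\vect$ (the ``if'' direction).} Conversely, suppose $i : V \to W$ is a closed embedding; we must realise it as an equaliser of a parallel pair \emph{of continuous linear maps}. The natural candidate is to take $Z := W/\overline{i(V)} = W/i(V)$ (the image is already closed) with $q : W \to Z$ the quotient projection --- which is a morphism in $\vect$ by \cref{lin_quot_spaces} --- and to let $i$ be the equaliser of $q$ and the zero map $0 : W \to Z$. Set-theoretically the equaliser of $q$ and $0$ is exactly $i(V) \subseteq W$, and since $i$ is a closed embedding, the induced map onto this subspace is a homeomorphism; moreover $i$ is linear, so it computes the equaliser of $q, 0$ in $\vect$ (the universal property against linear maps factoring through $\ker q = i(V)$ is verified just as for algebraic vector spaces, using that the subspace topology on the closed subset $i(V)$ agrees with the $k$-subspace topology by \cref{perm_prop_haus}). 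Hence $i$ is a regular monomorphism in $\vect$.

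The main obstacle is the ``if'' direction, and specifically making sure the equaliser of $q$ and $0$ taken \emph{in $\vect$} really is $i(V)$ with its given topology: this relies on $i(V)$ being closed (so that the quotient $W/i(V)$ is again an $hk$-space by \cref{khausdorff_quots_closed}, and so that the subspace topology on $i(V)$ is already a $k$-space topology, needing no $k$-ification), and on limits in $\vect$ agreeing with those in $\spaces$. Everything else is a routine transcription of the algebraic isomorphism-theorem argument into the topological setting, using the permanence properties already recorded in \cref{perm_prop_haus,khaus_morphisms}.
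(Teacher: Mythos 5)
Your proposal is correct and follows essentially the same route as the paper: the ``if'' direction exhibits a closed embedding $i$ as the equaliser of the projection $W \to W/i(V)$ and the zero map, and the ``only if'' direction reduces to the fact that the forgetful functor $\vect \to \spaces$ preserves limits. The only cosmetic difference is that you justify this preservation via the explicit computation of limits in \cref{lin_hk_spaces_complete}, whereas the paper invokes that the forgetful functor is a right adjoint to $F_{\mathbb{K}}$; both amount to the same thing.
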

\begin{proof}
    For the ``if'' direction, note that if $i$ is a closed embedding, then it is the equaliser of the projection
        $$ p: W \to W/i(V) $$
    and the zero map $0: W \to W/i(V)$. \par 
    For the other direction, recall that the forgetful functor $\vect\to \spaces$ is a right adjoint (to the free linear $hk$-space functor $F_{\mathbb{K}}$) and that right adjoints preserve limits. Hence, if $i$ is a regular monomorphism in $\vect$, it is also a regular monomorphism in $\spaces$. 
\end{proof}

\subsubsection{The regular image} 

\begin{defn}\label{regular_image_defn}
    Let $\mathsf{C}$ be a category and let $f:A\to B$ be a morphism. 
    \begin{enumerate}
        \item The \emph{regular image} $\im(f)$ of $f$ is, if it exists, the equaliser of the cokernel pair of $f$, i.e.~the equaliser of the pair of morphisms $B \to B \sqcup_f B$ arising from the pushout diagram:
        \[\begin{tikzcd}
	       A & B \\
	       B & {B \sqcup_f B}
	       \arrow["f", from=1-1, to=1-2]
	       \arrow["f"', from=1-1, to=2-1]
	       \arrow[from=1-2, to=2-2]
	       \arrow[from=2-1, to=2-2]
        \end{tikzcd}\]
        \item Dually, the \emph{regular coimage} $\coim(f)$ of $f$ is, if it exists, the coequaliser of the kernel pair of $f$. Put differently, the regular coimage of $f$ is the image of $f$ considered as an morphism in the opposite category $\mathsf{C}$.
        \item The universal property of the pushout yields a unique morphism,
            $$ f|^{\im(f)}: A \to \im(f), $$
        the \emph{corestriction of $f$ to its image}, making the following diagram commute: 
        \[\begin{tikzcd}
	       A \\
	       & {\im(f)} & B \\
	       & B & {B \sqcup_f B}
	       \arrow[from=2-3, to=3-3]
	       \arrow[from=3-2, to=3-3]
	       \arrow["f"', curve={height=12pt}, from=1-1, to=3-2]
	       \arrow["f", curve={height=-12pt}, from=1-1, to=2-3]
	       \arrow["{f|^{\im(f)}}", from=1-1, to=2-2]
	       \arrow[hook', from=2-2, to=2-3]
	       \arrow[hook, from=2-2, to=3-2]
        \end{tikzcd}\]
        \item The factorisation 
        \[\begin{tikzcd}
	   A & {\im(f)} & B
	   \arrow["{f|^{\im(f)}}", from=1-1, to=1-2]
	   \arrow[hook, from=1-2, to=1-3]
        \end{tikzcd}\]
        is called the \emph{regular image factorisation} of $f$.
    \end{enumerate}
\end{defn}

\subsubsection{Regular and coregular categories} 

\begin{defn}[(Co)regular category, \cite{gran2021introduction}]\label{regular_cat_defn}
    A category $\mathsf{C}$ is \emph{coregular} if 
    \begin{enumerate}
        \item it has all finite colimits,
        \item the regular image of every morphism in $\mathsf{C}$ exists,
        \item and regular monomorphisms are stable under pushouts.
    \end{enumerate}
    $\mathsf{C}$ is \emph{regular} if $\mathsf{C}^\op$ is coregular, and \emph{biregular} if it is both regular and coregular.
\end{defn}

The following shows that in coregular categories, the regular image behaves as one would hope \cite[Theorem 1.11]{gran2021introduction}:

\begin{thm}\label{reg_im_in_reg_cats}
    Let $\mathsf{C}$ be a coregular category let $f$ be a morphism. Then:
    \begin{enumerate}
        \item The corestriction of $f$ to its image 
            $$ f|^{\im(f)}: A \to \im(f) $$
        is an epimorphism. Hence, the regular image factorisation factors $f$ into an epimorphism followed by a regular monomorphism:
        \[\begin{tikzcd}
	       A & {\im(f)} & B
	       \arrow["{f|^{\im(f)}}", from=1-1, to=1-2]
	       \arrow[hook, from=1-2, to=1-3]
        \end{tikzcd}\]
        \item Such factorisation is essentially unique in that for any further epimorphism $e$ and regular monomorphism $m$, there is a unique isomorphism $i:I\to \im(f)$ making the following diagram commute
        \[\begin{tikzcd}
	       && {\im(f)} \\
	       A &&&& B \\
	       && I
	       \arrow["{f|^{\im(f)}}", from=2-1, to=1-3]
	       \arrow["e"', from=2-1, to=3-3]
	       \arrow["m"', from=3-3, to=2-5]
	       \arrow[hook, from=1-3, to=2-5]
	       \arrow["i", dashed, from=3-3, to=1-3]
        \end{tikzcd}\]
    \end{enumerate}
\end{thm}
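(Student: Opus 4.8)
The plan is to handle the two assertions separately: for (1) I would first set up the factorisation and then prove that the left-hand map is an epimorphism by dualising the classical image factorisation in a regular category; for (2) I would extract essential uniqueness from the orthogonality of epimorphisms against regular monomorphisms.

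\emph{Setting up the factorisation.} By \cref{regular_image_defn}, $\im(f)$ is the equaliser $m\colon\im(f)\to B$ of the cokernel pair $(u,v)\colon B\rightrightarrows B\sqcup_f B$ of $f$, which exists because $\mathsf{C}$ has finite colimits and the regular image of $f$ is assumed to exist; in particular $m$ is a regular monomorphism by definition. Since the pushout square defining $B\sqcup_f B$ gives $u\circ f=v\circ f$, the map $f$ factors uniquely through the equaliser $m$, yielding $e:=f|^{\im(f)}\colon A\to\im(f)$ with $m\circ e=f$. This already exhibits the regular image factorisation once (1) is proved.

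\emph{Proving (1): $e$ is an epimorphism.} This is the precise formal dual of the standard fact that, in a regular category, the coequaliser of the kernel pair of a morphism, followed by the induced map into the codomain, is an (epi, mono)-factorisation; indeed \cref{regular_cat_defn} says that a regular category is exactly one whose opposite satisfies our three coregular axioms, and these are the duals of ``finite limits, existence of coequalisers of kernel pairs, and pullback-stability of regular epimorphisms''. First I would verify this dictionary and then either invoke it together with \cite{gran2021introduction}, or run the dual of the usual proof explicitly: one shows that the cokernel pair of $m$ coincides with the cokernel pair of $f$, that the latter is obtained from the cokernel pair of $e$ by pushing out along $m\sqcup m\colon\im(f)\sqcup\im(f)\to B\sqcup B$, and that $m\sqcup m$, being a finite coproduct of the regular monomorphism $m$, is itself a regular monomorphism; since regular monomorphisms ``reflect'' such pushforwards (dually to the way regular epimorphisms reflect inclusions of subobjects), one concludes that the cokernel pair of $e$ is discrete, i.e.\ its two legs agree, i.e.\ $e$ is epic. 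The hard part is exactly this step: it is the only place where pushout-stability of regular monomorphisms (\cref{regular_cat_defn}, axiom (3)) is genuinely used, so it cannot be circumvented by a purely formal manipulation.

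\emph{Proving (2): essential uniqueness.} The key lemma I would isolate first is that \emph{every epimorphism is left-orthogonal to every regular monomorphism}. Given a commuting square with an epimorphism $p\colon X\to Y$ on the left, a regular monomorphism $n\colon Z\to W$ on the right — say $n$ is the equaliser of $k,l\colon W\to V$ — and maps $a\colon X\to Z$, $b\colon Y\to W$ with $n\circ a=b\circ p$, one computes $k\circ b\circ p=k\circ n\circ a=l\circ n\circ a=l\circ b\circ p$, hence $k\circ b=l\circ b$ since $p$ is epic, so $b$ factors uniquely as $b=n\circ d$; then $n\circ d\circ p=b\circ p=n\circ a$ forces $d\circ p=a$ since $n$ is monic, and any other diagonal $d'$ satisfies $d'\circ p=a=d\circ p$, hence $d'=d$. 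Applying this lemma to the two (epi, regular mono)-factorisations $A\xrightarrow{f|^{\im(f)}}\im(f)\to B$ and $A\xrightarrow{e}I\xrightarrow{m}B$ of $f$ (using the notation of the statement), once with $f|^{\im(f)}$ on the left and $m$ on the right, and once with $e$ on the left and the inclusion $\im(f)\to B$ on the right, produces morphisms $i\colon I\to\im(f)$ and $j\colon\im(f)\to I$ compatible with both factorisations; the uniqueness clause of the lemma, applied to the square with $f|^{\im(f)}$ on the left and $\im(f)\to B$ on the right, then forces $i\circ j=\id_{\im(f)}$, and symmetrically $j\circ i=\id_I$. Thus $i$ is the asserted isomorphism, satisfying $i\circ e=f|^{\im(f)}$ and $(\im(f)\hookrightarrow B)\circ i=m$, and it is unique since $e$ is an epimorphism.
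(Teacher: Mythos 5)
The paper itself gives no argument for this theorem — it simply cites \cite[Theorem 1.11]{gran2021introduction} for the dual statement — so your first option (check the duality dictionary between \cref{regular_cat_defn} and the regular-category axioms and then quote that reference) is exactly the paper's route. Your part (2) is complete and correct as written: the lemma that epimorphisms are left-orthogonal to regular monomorphisms, and its application to the two factorisations to produce the mutually inverse comparison maps, is the standard argument and nothing is missing there.

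Your explicit version of part (1), however, stops short of a proof at the decisive step. After observing that the cokernel pair of $f$ is obtained from the cokernel pair $(u,v)$ of $e:=f|^{\im(f)}$ by pushing out along $m\sqcup m$ — equivalently, that the comparison map $\kappa:\im(f)\sqcup_A\im(f)\to B\sqcup_A B$ is a composite of two pushouts of the regular monomorphism $m$, hence a regular monomorphism by \cref{regular_cat_defn}(3) and \cref{properties_of_reg_epis_in_reg_cat}(1) — you appeal to regular monomorphisms ``reflecting such pushforwards'' to conclude that the two legs of the cokernel pair of $e$ agree; as stated this is not an argument, and the preliminary claim that the cokernel pairs of $m$ and of $f$ coincide, while true, is not what is needed (it is a consequence of, not an input to, the epimorphy of $e$). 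The missing step is elementary and should be made explicit: writing $(i_1,i_2)$ for the cokernel pair of $f$, the comparison map satisfies $\kappa\circ u=i_1\circ m$ and $\kappa\circ v=i_2\circ m$ by construction, and $i_1\circ m=i_2\circ m$ because $m$ is, by \cref{regular_image_defn}, the equaliser of $(i_1,i_2)$; since $\kappa$ is monic this forces $u=v$, and a morphism whose cokernel pair has equal coprojections is an epimorphism. Two smaller repairs: justify ``$m\sqcup m$ is a regular monomorphism'' inside the coregular axioms (it is not a general fact about coproducts) by factoring $m\sqcup m=(m\sqcup\mathrm{id})\circ(\mathrm{id}\sqcup m)$, each factor being a pushout of $m$ along a coprojection; and note that, with the finish above, the comparison-of-cokernel-pairs claim you do need is only that the square exhibiting $\kappa$ commutes, not that it is a pushout. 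With these points filled in, your dual argument is correct and actually supplies the proof that the paper delegates to the literature.
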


In fact, coregular categories are \emph{precisely} those categories in which every morphism can be factored into an epimorphism followed by a regular monomorphism in a well-behaved manner (see \cite[Theorem 1.14]{gran2021introduction}):

\begin{prop}
    Let $\mathsf{C}$ be a category with finite colimits in which regular monomorphisms are stable under pullback. Then $\mathsf{C}$ is coregular if, and only if, every morphism admits a factorisation into an epimorphism followed by regular monomorphism. 
\end{prop}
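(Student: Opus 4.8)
The plan is to read off the forward implication from \cref{reg_im_in_reg_cats}, and to obtain the converse by verifying the single outstanding clause in the definition of coregularity. If $\mathsf{C}$ is coregular, then for any morphism $f$ the regular image factorisation of \cref{reg_im_in_reg_cats} already presents $f$ as an epimorphism (its corestriction to the image) followed by a regular monomorphism, so there is nothing further to prove in that direction.

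For the converse, suppose every morphism of $\mathsf{C}$ factors as an epimorphism followed by a regular monomorphism. By \cref{regular_cat_defn}, the remaining two coregularity conditions — existence of finite colimits and stability of regular monomorphisms under pushouts — are among our hypotheses, so it suffices to show that the regular image of an arbitrary morphism $f:A\to B$ exists. I would fix a factorisation $f = m\circ e$ with $e:A\to M$ an epimorphism and $m:M\to B$ a regular monomorphism, and then prove that $(M,m)$ \emph{is} the equaliser of the cokernel pair of $f$; by \cref{regular_image_defn} this equaliser is precisely $\im(f)$, so exhibiting it concretely as $(M,m)$ is all that is needed.

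This reduces to two elementary observations. First, the cokernel pair of $f$ agrees with the cokernel pair of $m$: for any $g_1,g_2:B\to C$ one has $g_1 f = g_2 f$ iff $g_1 m e = g_2 m e$ iff $g_1 m = g_2 m$, the last step using that $e$ is an epimorphism; hence the pushouts $B\sqcup_f B$ and $B\sqcup_m B$ corepresent the same functor and may be identified, with common coprojections $i_1,i_2:B\to P$. Second, every regular monomorphism is the equaliser of its own cokernel pair: writing $m$ as the equaliser of a pair $k,l:B\to D$, the universal property of the pushout $P$ produces $\phi:P\to D$ with $\phi i_1 = k$ and $\phi i_2 = l$; then $i_1 m = i_2 m$ holds by the pushout square, and if $g:X\to B$ satisfies $i_1 g = i_2 g$ then $kg = \phi i_1 g = \phi i_2 g = lg$, so $g$ factors uniquely through $m$ (uniquely, since $m$ is monic). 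Thus $(M,m)$ carries the universal property of $\operatorname{eq}(i_1,i_2)$, and combining the two observations gives $\im(f)\cong M$ with the chosen factorisation as its regular image factorisation; coregularity of $\mathsf{C}$ follows, and the essential uniqueness in \cref{reg_im_in_reg_cats} then accounts for the ``well-behaved'' factorisation asserted before the statement.

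I expect the only real subtlety to lie in the second observation: it has to be carried out \emph{without} presupposing that equalisers exist in $\mathsf{C}$ (we only have finite colimits), so rather than forming $\operatorname{eq}(i_1,i_2)$ and comparing, one must verify the equaliser universal property directly on $M$. A smaller point worth flagging is that $e$ is assumed only to be an epimorphism — not a split or regular one — but this is exactly the amount of cancellation used in the step $g_1 m e = g_2 m e \Rightarrow g_1 m = g_2 m$, so no strengthening of the hypothesis is required.
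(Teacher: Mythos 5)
Your argument is correct, and it supplies something the paper does not: the paper states this proposition with a bare citation to Gran's Theorem~1.14 (dualised) and gives no proof, whereas you give a self-contained one. The forward direction is indeed immediate from \cref{reg_im_in_reg_cats}. Your converse rests on two standard observations, both verified correctly: (i) since $e$ is an epimorphism, a pair $g_1,g_2$ satisfies $g_1f=g_2f$ if and only if $g_1m=g_2m$, so the cokernel pairs of $f$ and of $m$ coincide; (ii) a regular monomorphism is the equaliser of its own cokernel pair, which you check by verifying the universal property directly on $M$ rather than forming an equaliser --- the right move, since only finite colimits are assumed. Together these exhibit $(M,m)$ as $\im(f)$ in the sense of \cref{regular_image_defn}, with $e$ the corestriction (unique because $m$ is monic), so regular images exist and the remaining clauses of \cref{regular_cat_defn} give coregularity.

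One point needs attention. Your claim that ``stability of regular monomorphisms under pushouts'' is among the hypotheses is not literally true of the statement as printed, which says \emph{pullback}. Pushout-stability is exactly what the third clause of \cref{regular_cat_defn} requires and what the dual of Gran's theorem assumes; without it the ``if'' direction cannot close, since your factorisation argument only delivers the second clause (existence of regular images), and pullback-stability of regular monomorphisms is used nowhere. The printed ``pullback'' is evidently a dualisation slip for ``pushout'', and your reading matches the intended statement --- but you should flag the correction explicitly rather than silently substituting it, since as written your proof invokes a hypothesis the statement does not grant.
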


\subsubsection{The category of $hk$-spaces is biregular} Cartesian closure is not the only convenience that $hk$-spaces provide. As shown in \cite[Theorem 3.1 (f)]{strickland2009category}:

\begin{prop}
    The category $\spaces$ of $hk$-spaces is biregular, i.e.~both a coregular category and a regular category.
\end{prop}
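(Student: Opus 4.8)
The plan is to unwind \cref{regular_cat_defn}: showing that $\spaces$ is biregular amounts to verifying the three conditions for coregularity together with the three dual conditions (coregularity of $\spaces^{\op}$) that give regularity. In each case, two of the three conditions — existence of finite colimits (resp. limits) and existence of regular images (resp. regular coimages) — follow immediately from the fact that $\spaces$ is bicomplete (\cref{cor_lims_colims_in_spaces}), since by \cref{regular_image_defn} the regular image of a morphism is the equaliser of its cokernel pair and the regular coimage is the coequaliser of its kernel pair, and $\spaces$ has all limits and colimits. The remaining condition in each case — stability of regular monomorphisms under pushout, and stability of regular epimorphisms under pullback — is precisely what is recorded in the permanence clauses of \cref{khaus_morphisms}.

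In more detail, for coregularity I would argue as follows. First, $\spaces$ has all finite colimits by \cref{cor_lims_colims_in_spaces}. Second, given any morphism $f\colon X \to Y$ in $\spaces$, its cokernel pair $Y \rightrightarrows Y \sqcup_f Y$ exists as a pushout, and the equaliser of that pair exists since $\spaces$ is complete, so the regular image $\im(f)$ exists. Third, arbitrary pushouts of regular monomorphisms are again regular monomorphisms by the first permanence clause of \cref{khaus_morphisms}. This establishes all three defining conditions of a coregular category.

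For regularity I would verify the dual three conditions, i.e. that $\spaces^{\op}$ is coregular. First, $\spaces$ has all finite limits by \cref{cor_lims_colims_in_spaces}, so $\spaces^{\op}$ has all finite colimits. Second, for any morphism $f$ the kernel pair of $f$ (a pullback) and the coequaliser of that kernel pair both exist, so the regular coimage $\coim(f)$ exists; dually this says regular images exist in $\spaces^{\op}$. Third, arbitrary pullbacks of regular epimorphisms are regular epimorphisms by the second permanence clause of \cref{khaus_morphisms}, which dually says that regular monomorphisms in $\spaces^{\op}$ are stable under pushout. Hence $\spaces^{\op}$ is coregular, that is, $\spaces$ is regular; combined with the previous paragraph, $\spaces$ is biregular.

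The main — and essentially only — obstacle has already been overcome before this point: it is the analysis of quotient maps and closed embeddings in \cref{khaus_morphisms} (finite products and pullbacks of quotient maps are quotient maps, and the corresponding closure properties of closed embeddings under pushout), which rests on the good behaviour of the $k$-Hausdorff property under products and quotients established earlier in this chapter. Granting that theorem together with bicompleteness, the present statement is purely a matter of bookkeeping against \cref{regular_cat_defn}.
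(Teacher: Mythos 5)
Your proof is correct: within the paper's \cref{regular_cat_defn}, coregularity and regularity each reduce to bicompleteness (\cref{cor_lims_colims_in_spaces}), which gives finite (co)limits and hence the existence of regular images (equalisers of cokernel pairs) and coimages (coequalisers of kernel pairs), together with the two permanence clauses of \cref{khaus_morphisms} (pushouts of closed embeddings are closed embeddings; pullbacks of quotient maps are quotient maps). The paper itself offers no argument at all — it simply cites \cite[Theorem 3.1 (f)]{strickland2009category} for the biregularity statement — so your version is a slightly more self-contained route: you reassemble the claim from the permanence properties already recorded in \cref{khaus_morphisms} rather than invoking the cited part (f) wholesale, at the cost of no extra ideas, since the real content in either case is Strickland's analysis of closed embeddings and quotient maps in $\spaces$.
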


\subsubsection{Properties of regular monomorphisms in coregular categories} As a final note on (co)regular categories, let us record the following fact (see \cite[Proposition 1.13, Lemma 1.3]{gran2021introduction} for a proof).

\begin{prop}\label{properties_of_reg_epis_in_reg_cat}
    Let $\mathsf{C}$ be a coregular category and let $f,g$ be morphisms. 
    \begin{enumerate}
        \item The composite $f\circ g$ of any pair of regular monomorphisms $f$, $g$ is a regular monomorphisms, as well.
        \item If $g\circ f$ is a regular monomorphism, then $f$ is also a regular monomorphism.
        \item If $f$ is both a regular monomorphism and an epimorphism, then $f$ is an isomorphism.
    \end{enumerate}
\end{prop}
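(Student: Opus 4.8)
The plan is to reduce all three statements to a single structural fact: in a coregular category, the regular monomorphisms are exactly the \emph{strong monomorphisms}, i.e.\ the monomorphisms $m\colon I\to B$ with the diagonal fill-in property against epimorphisms (for every epimorphism $e\colon P\to Q$ and every commuting square $m a = b e$ there is a $d\colon Q\to I$ with $d e = a$ and $m d = b$). Once that is in place, parts (1)--(3) are purely formal.

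First I would prove the easy half, which uses no coregularity: suppose $m$ is the equalizer of a pair $u,v$ and $m a = b e$ with $e$ epi; then $(u b)e = u(m a) = v(m a) = (v b)e$, so $u b = v b$ by cancelling the epimorphism $e$, whence $b$ factors through the equalizer $m$ via some $d$ with $m d = b$, and $d e = a$ follows because $m$ is monic (as recorded after the definition of regular monomorphism). The converse half is where the coregular hypothesis is genuinely used, and it is the only step with real content: given a strong monomorphism $m$, take its epi/regular-mono factorisation $m = n\circ e$ supplied by \cref{reg_im_in_reg_cats}, apply the fill-in property of $m$ to the commuting square $m\circ\id_I = n\circ e$ to obtain $d$ with $d e = \id_I$ and $m d = n$, and observe that an epimorphism admitting a retraction is an isomorphism; hence $m$ is isomorphic to the regular monomorphism $n$, and is therefore itself regular. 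The remaining work here is just keeping the factorisation and fill-in squares straight.

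With ``regular mono $=$ strong mono'' established, the three items follow from the standard closure properties of strong monomorphisms, valid in any category. For (1): $f\circ g$ is monic, and chaining the fill-in property through $f$ and then through $g$ produces a diagonal for any square over $f\circ g$, so $f\circ g$ is strong, hence regular. For (2): if $g\circ f$ is strong then $f$ is monic; given a square $f a = b e$ over $f$ with $e$ epi, precomposing with $g$ yields a square over $g\circ f$ whose fill-in $d$ satisfies $d e = a$, and cancelling the epimorphism $e$ from $(f d)e = f(d e) = f a = b e$ gives $f d = b$, so $f$ is strong, hence regular. For (3): if $f$ is a regular (hence strong) monomorphism that is also an epimorphism, the commuting square $f\circ\id_A = \id_B\circ f$ has the epimorphism $f$ on the left and the strong monomorphism $f$ on the right, so its diagonal $d$ satisfies $d f = \id_A$ and $f d = \id_B$, i.e.\ $f$ is an isomorphism. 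Dually, since $\mathsf{C}^{\op}$ is regular, all three are the duals of well-known facts about regular epimorphisms in regular categories and could be invoked from there instead; the route above is the self-contained one, and I expect its only obstacle to be the second paragraph, namely seeing clearly that coregularity is precisely what upgrades ``strong'' to ``regular''.
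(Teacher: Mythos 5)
Your proof is correct. Note that the paper itself gives no argument for this proposition: it simply cites Proposition 1.13 and Lemma 1.3 of the reference on regular categories, so there is no in-text proof to compare against. Your route — showing that in a coregular category the regular monomorphisms coincide with the strong monomorphisms (the easy direction needing nothing, the converse using the epi/regular-mono factorisation from \cref{reg_im_in_reg_cats} plus the observation that an epimorphism with a left inverse is an isomorphism), and then deducing (1)--(3) from the purely formal closure properties of strong monomorphisms — is essentially the dual of the standard argument in that reference for regular epimorphisms in regular categories, so it is a faithful self-contained reconstruction rather than a genuinely new method. The individual steps all check out: the diagonal fill-in computations in (1) and (2) are right, cancelling the epimorphism $e$ where you do is legitimate, and in (3) the square $f\circ\id_A=\id_B\circ f$ does the job. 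Two small remarks: in (2) you ``post-compose'' the square with $g$ rather than precompose, a wording slip with no mathematical consequence; and (3) admits an even shorter direct argument you could substitute if you prefer — if $f$ equalises $u,v$ and is epi, then $uf=vf$ forces $u=v$, and the equaliser of a pair of equal morphisms is an isomorphism — which avoids invoking strongness for that item altogether.
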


\subsection{The triple dualisation lemma and separated objects}\label{sec_triple_dualisation_lem_and_sep_obj}

\subsubsection{The triple dualisation lemma} The following simple observation will be crucial.

\begin{lem}[triple dualisation lemma]\label{lem_triple_dualisation_principle}
    Let $\mathsf{C}$ be a closed symmetric monoidal category and let $D$ be any object of $\mathsf{C}$. Then 
        $$ [\eta^{C, D}, D] \circ \eta^{[C,D], D} = \id_{[C,D]}, $$
    i.e.~the canonical morphism to the ``triple dual" with respect to $D$ (see \cref{evaluation_morphisms_defn}),
        $$ \eta^{[C,D], D} : [C,D] \to [[[C,D],D],D], $$
    has a right inverse, given by 
        $$ [\eta^{C, D}, D] : [[[C,D],D],D] \to [C,D].$$
    In particular, $\eta^{[C,D], D}$ is a split (and hence regular) monomorphism. 
\end{lem}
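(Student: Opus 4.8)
The plan is to recognise the asserted equation as one of the triangle identities for the self-dualisation adjunction underlying \cref{hom_hom_adj}; once the variances are pinned down, the argument is purely formal.

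Fix $D$ and write $F := [-,D]\colon\mathsf{C}\to\mathsf{C}^\op$ and $G := [-,D]\colon\mathsf{C}^\op\to\mathsf{C}$. By (the proof of) \cref{hom_hom_adj} the natural bijection $\Hom_\mathsf{C}(A,[B,D])\cong\Hom_\mathsf{C}(B,[A,D])$ exhibits $F\dashv G$, and — as recorded in the discussion following \cref{evaluation_morphisms_defn} — its unit $\id_\mathsf{C}\to GF=[[-,D],D]$ is $\eta^{-,D}$. The first thing I would verify is that the counit $\epsilon\colon FG\to\id_{\mathsf{C}^\op}$, read back as a family of $\mathsf{C}$-morphisms $B\to[[B,D],D]$, is again $\eta^{-,D}$. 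This rests on the fact that the bijection above is symmetric under interchanging $A$ and $B$: it is assembled from the tensor-hom adjunction and the braiding, and the relation $\sigma^{B,A}\circ\sigma^{A,B}=\id$ makes its two instances mutually inverse; consequently both $\eta_B$ and (the $\mathsf{C}$-reading of) $\epsilon_B$ are the morphism corresponding to $\id_{[B,D]}$ under $\Hom_\mathsf{C}([B,D],[B,D])\cong\Hom_\mathsf{C}(B,[[B,D],D])$, hence they coincide.

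Granting this, I would invoke the triangle identity $G\epsilon\circ\eta G=\id_G$ of $F\dashv G$ at the object $C$. There $\eta G$ is $\eta^{[C,D],D}\colon[C,D]\to[[[C,D],D],D]$; and $\epsilon_C$, which in $\mathsf{C}^\op$ is a morphism $[[C,D],D]\to C$ and hence in $\mathsf{C}$ is the morphism $\eta^{C,D}\colon C\to[[C,D],D]$, is carried by $G=[-,D]$ to $[\eta^{C,D},D]\colon[[[C,D],D],D]\to[C,D]$. The triangle identity therefore reads precisely $[\eta^{C,D},D]\circ\eta^{[C,D],D}=\id_{[C,D]}$, which is the claim. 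The closing assertion is then immediate: this equation exhibits $[\eta^{C,D},D]$ as a retraction of $\eta^{[C,D],D}$, so the latter is a split monomorphism, and every split monomorphism is a regular monomorphism (it is the equaliser of its splitting idempotent with the identity).

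I expect the only real friction to be the opposite-category bookkeeping — making sure the explicit formulas for $e^{A,B}$ and $\eta^{A,D}$ in \cref{evaluation_morphisms_defn} really yield the unit of the hom-hom adjunction (and not its inverse), and tracking which composites live in $\mathsf{C}$ versus $\mathsf{C}^\op$ when unwinding $G(\epsilon_C)$. If one prefers to avoid opposite categories, the identical content can be obtained via the Yoneda lemma: it suffices to prove $[\eta^{C,D},D]\circ\eta^{[C,D],D}\circ\phi=\phi$ for every $\phi\colon X\to[C,D]$, and this follows by rewriting the left-hand side with the naturality of $\eta^{-,D}$ and the compatibility $[\psi,D]\circ\eta^{Z,D}=\widehat{\psi}$ between $[-,D]$ and the transpose $\widehat{(\cdot)}$ of \cref{hom_hom_adj}, applied twice, together with the involutivity $\widehat{\widehat{\psi}}=\psi$ of that transpose.
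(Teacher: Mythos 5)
Your proposal is correct and follows essentially the same route as the paper: the identity is read off as the (single, by self-adjointness) triangle identity of the hom-hom adjunction of \cref{hom_hom_adj}, whose unit is $\eta^{-,D}$. You merely spell out in more detail the bookkeeping (unit $=$ counit read in $\mathsf{C}$) that the paper compresses into the phrase ``self-adjoint nature of this adjunction''.
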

\begin{proof}
    An adjunction can be characterised by the \emph{triangle identities} (see \cite[Theorem 3.1.5]{borceux1994handbook}, or any other category theory textbook). The triangle identities for the hom-hom adjunction (see \cref{hom_hom_adj}) reduce to a single identity (because of the ``self-adjoint'' nature of this adjunction), and this identity is given precisely by:   
        $$ [\eta^{C, D}, D] \circ \eta^{[C,D], D} = \id_{[C,D]}. $$
\end{proof}

\subsubsection{Separated objects in a closed symmetric monoidal category}

The triple dualisation lemma suggests the following definition.

\begin{defn}
    Let $\mathsf{C}$ be a closed symmetric monoidal category and let $D$ be an object. An object $A$ of $\mathsf{C}$ is \emph{$D$-separated} if the canonical map (see \cref{evaluation_morphisms_defn}).
        $$ \eta^{A, D}: A \to [[A,D],D]$$
    is a regular monomorphism.
\end{defn}

\begin{example}
    Let $K$ be a field. Then every (algebraic) $K$-vector space is $K$-separated (assuming the axiom of choice).
\end{example}

The example that will be most important to our context is the following.

\begin{example}\label{ex_repl_lin_sk_sp}
    The $\mathbb{K}$-separated objects in the closed symmetric monoidal category $\vect$ of linear $hk$-spaces are precisely the \emph{replete} linear $hk$-spaces. Recall that a replete linear $hk$-space is a linear $hk$-space for which the canonical map $V \to V^{\wedge\wedge}$ is a closed embedding (\cref{defn_repl_lin_hk_spaces}). Since regular monomorphisms in $\vect$ are exactly given by closed embeddings (\cref{cor_morphisms_in_vect}), this is exactly the definition of a $\mathbb{K}$-separated object in $\vect$.
\end{example}

\begin{remark}
    The notion of $D$-separated object can be seen as a special case of a construction related to the \emph{(enriched) idempotent core of a monad}, see \cite[Example 4.18]{lucyshyn2014completion}.
\end{remark}

\subsubsection{Some permanence properties of separated objects} It turns out that the category of $D$-separated objects enjoys excellent permanence properties, thanks to the triple dualisation lemma. The next proposition is a first step towards establishing this. 

\begin{prop}\label{duals_and_subobjects_sep}
    Let $\mathsf{C}$ be a coregular closed symmetric monoidal category and let $D$ be an object. Then:
    \begin{enumerate}
        \item For any object $A$, $[A,D]$ is $D$-separated.
        \item If $B$ is $D$-separated and $\iota: A \hookrightarrow B$ is a regular monomorphism, then $A$ is $D$-separated, as well. In particular, an object $A$ is $D$-separated if, and only if, it admits a regular monomorphism into \emph{some} dual $[B,D]$ with respect to $D$.
        \item For all objects $A,B\in \mathsf{C}$, if $B$ is $D$-separated, then $[A,B]$ is also $D$-separated.
    \end{enumerate}
\end{prop}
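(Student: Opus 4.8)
The three assertions should be proved in the order listed, each one bootstrapping off the previous one, with the triple dualisation lemma (\cref{lem_triple_dualisation_principle}) and the permanence properties of regular monomorphisms in coregular categories (\cref{properties_of_reg_epis_in_reg_cat}) doing all the real work.

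\emph{Part 1.} The key observation is that $[A,D] = [A, D]$ is already (essentially) a triple dual: apply \cref{lem_triple_dualisation_principle} with $C = A$. It tells us directly that $\eta^{[A,D],D}$ has a right inverse, namely $[\eta^{A,D},D]$, and hence is a split monomorphism. Since $\mathsf{C}$ is coregular (in particular has equalisers), a split monomorphism is a regular monomorphism (by the lemma on split mono/epimorphisms recorded just before \cref{khaus_morphisms_subsec}). Thus $\eta^{[A,D],D}$ is a regular monomorphism, which is exactly the definition of $[A,D]$ being $D$-separated.

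\emph{Part 2.} Suppose $B$ is $D$-separated and $\iota\colon A\hookrightarrow B$ is a regular monomorphism. Naturality of $\eta^{-,D}$ gives a commuting square
\[
\begin{tikzcd}
A \ar[r,"\eta^{A,D}"] \ar[d,hook,"\iota"'] & {[[A,D],D]} \ar[d,"{[[\iota,D],D]}"] \\
B \ar[r,"\eta^{B,D}"'] & {[[B,D],D]}
\end{tikzcd}
\]
so $[[\iota,D],D]\circ\eta^{A,D} = \eta^{B,D}\circ\iota$. The right-hand side is a composite of two regular monomorphisms ($\eta^{B,D}$ by hypothesis, $\iota$ by assumption), hence a regular monomorphism by \cref{properties_of_reg_epis_in_reg_cat}(1). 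Since $[[\iota,D],D]\circ\eta^{A,D}$ is a regular monomorphism, \cref{properties_of_reg_epis_in_reg_cat}(2) forces $\eta^{A,D}$ itself to be a regular monomorphism, i.e.\ $A$ is $D$-separated. For the ``in particular'' clause: if $A$ is $D$-separated then $\eta^{A,D}\colon A\to [[A,D],D]$ exhibits $A$ as a regular subobject of the dual $[[A,D],D]$; conversely, if $A$ admits a regular monomorphism into some $[B,D]$, then since $[B,D]$ is $D$-separated by Part 1, the first half of Part 2 applies.

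\emph{Part 3.} Assume $B$ is $D$-separated, so $\eta^{B,D}\colon B\to[[B,D],D]$ is a regular monomorphism. I would apply the internal hom functor $[A,-]$ to this map. Since $[A,-]$ is a right adjoint (by the tensor–hom adjunction), it preserves limits, hence preserves equalisers, hence sends regular monomorphisms to regular monomorphisms; therefore
\[
[A,\eta^{B,D}]\colon [A,B]\longrightarrow [A,[[B,D],D]]
\]
is a regular monomorphism. Now I claim the codomain $[A,[[B,D],D]]$ is $D$-separated: by the internal tensor–hom adjunction it is isomorphic to $[A\otimes[B,D],D]$ (using $[X,[Y,D]]\cong[X\otimes Y, D]$ and symmetry of $\otimes$), which is a dual with respect to $D$ and hence $D$-separated by Part 1. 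So $[A,B]$ admits a regular monomorphism into a $D$-separated object, and by Part 2 it is $D$-separated.

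\textbf{Main obstacle.} The only genuine subtlety is the rearrangement in Part 3 identifying $[A,[[B,D],D]]$ with a dual $[-,D]$: one must invoke the internal tensor–hom adjunction (and symmetry) correctly so that the outermost $[-,D]$ really is a dualisation, and then one is entitled to quote Part 1. Everything else is a mechanical application of adjoints-preserve-limits together with the composition/cancellation properties of regular monomorphisms from \cref{properties_of_reg_epis_in_reg_cat}; the coregularity hypothesis is used precisely to guarantee these properties and to ensure split monomorphisms are regular.
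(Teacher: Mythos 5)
Your proof is correct and follows essentially the same route as the paper's: Part 1 directly from the triple dualisation lemma, Part 2 via naturality of $\eta^{-,D}$ together with the composition and cancellation properties of regular monomorphisms, and Part 3 by applying $[A,-]$ (which preserves regular monomorphisms as a right adjoint) and reducing to Part 2. The only difference is that you spell out the identification $[A,[[B,D],D]]\cong[A\otimes[B,D],D]$ to justify that the codomain is $D$-separated, a small step the paper leaves implicit.
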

\begin{proof}
    \text{}
    \begin{enumerate}
        \item This follows directly from the triple dualisation lemma (\cref{lem_triple_dualisation_principle}).
        \item By naturality of $\eta$, the following diagram commutes.
        \[\begin{tikzcd}
	   B & {[[B,D],D]} \\
	   A & {[[A,D],D]}
	   \arrow["{\eta^{B,D}}", hook, from=1-1, to=1-2]
	   \arrow["\iota", hook', from=2-1, to=1-1]
	   \arrow["{\eta^{A,D}}"', from=2-1, to=2-2]
	   \arrow["{[[\iota,D],D]}"', from=2-2, to=1-2]
        \end{tikzcd}\]
        By \cref{properties_of_reg_epis_in_reg_cat}, the composition of regular monomorphisms is a regular monomorphism and hence, 
        $$h := \eta^{B,D} \circ \iota$$ 
        is a regular monomorphism. Since $h= [[\iota, D],D] \circ \eta^{A, D}$ (by commutativity of the above diagram), $\eta^{A, D}$ is a regular monomorphism as well, by \cref{properties_of_reg_epis_in_reg_cat}. This shows that $A$ is $D$-separated.
        \item Since $B$ is $D$-separated, $\eta^{B,D}$ is a regular monomorphism. Since the internal hom preserves limits in the second variable, 
            $$[A, \eta^{B,D}]: [A,B] \to [A, [[B,D],D]]$$ 
        is also a regular monomorphism. Hence, $[A,B]$ admits a regular monomorphism into a $D$-separated object and is therefore $D$-separated, by the previous point.
    \end{enumerate}
\end{proof}

\subsubsection{The separation functor} As we will see, $D$-separated objects form a reflective subcategory of the given category (if the latter is sufficiently well-behaved). Let us first define the would-be reflector, the \emph{$D$-separation}.

\begin{defn}[$D$-Separation]\label{sepfun_defn}
    Let $\mathsf{C}$ be a coregular closed symmetric monoidal category and let $D$ be an object. We define the \emph{$D$-separation functor} as the regular image (see \cref{regular_image_defn}),
        $$ \sepfun_D := \im(\eta^{-,D}), $$
    of the natural transformation $\eta^{-,D}$ in the functor category $\mathsf{C}^{\mathsf{C}}$. 
\end{defn}

Since limits and colimits in functor categories are computed pointwise (see, for example, \cite[Proposition 3.3.9]{riehl2017category}), the $D$-separation $\sepfun_D(A)$ of an \emph{object} $A$ is the regular image in $\mathsf{C}$ of the canonical morphism to the double dual of $A$ with respect to $D$. The regular image factorisation of $\eta^{-,D}$ furthermore yields two natural transformations,
    $$ A \to \sepfun_D(A) \hookrightarrow [[A,D],D], \;\;\;\; (A \in \mathsf{C} $$
the corestriction of $\eta^{A,D}$ to its regular image, and the inclusion of $\sepfun_D(A)$ into $[[A,D],D]$. \par
Of course, the term ``$D$-separation'' is only justified if the $D$-separation of an object is $D$-separated. This is indeed the case.

\begin{prop}\label{chars_of_D_separation}
    Let $\mathsf{C}$ be a coregular closed symmetric monoidal category and let $A, D$ be objects of $\mathsf{C}$. Then:
    \begin{enumerate}
        \item $\sepfun_D(A)$ is always $D$-separated.
        \item The following are equivalent:
        \begin{enumerate}
            \item $A$ is $D$-separated.
            \item The co-restriction of $\eta^{A,D}$ to its regular image,
            $$ \eta^{A,D}|^{\sepfun_D(A)}: A \to \sepfun_D(A), $$
            is an isomorphism.
            \item There is \emph{some} isomorphism 
                $$ A \cong \sepfun_D(A). $$
        \end{enumerate}
    \end{enumerate}
    In particular, we can use the second point to reformulate the first as saying:
        $$ \sepfun_D(\sepfun_D(A)) \cong \sepfun_D(A). $$
\end{prop}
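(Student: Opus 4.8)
The plan is to derive everything from the epi-followed-by-regular-monomorphism factorisation that coregularity supplies, together with the permanence properties of $D$-separated objects recorded in \cref{duals_and_subobjects_sep}. For part 1, recall (\cref{sepfun_defn,regular_image_defn,reg_im_in_reg_cats}) that the regular image factorisation of $\eta^{A,D}$ has the shape $A \xrightarrow{\eta^{A,D}|^{\sepfun_D(A)}} \sepfun_D(A) \hookrightarrow [[A,D],D]$ with the first arrow an epimorphism and the second a regular monomorphism. Since $[[A,D],D]$ is the $D$-dual of $[A,D]$, it is $D$-separated by \cref{duals_and_subobjects_sep} (1), and hence so is its regular subobject $\sepfun_D(A)$ by \cref{duals_and_subobjects_sep} (2). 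This settles part 1, and in particular the object $\sepfun_D(A)$ appearing in the final displayed isomorphism is $D$-separated.

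For part 2, the implication (b) $\Rightarrow$ (c) is immediate, and (c) $\Rightarrow$ (a) follows from the invariance of $D$-separatedness under isomorphism together with part 1: given an isomorphism $\psi\colon A \to \sepfun_D(A)$, naturality of $\eta^{-,D}$ (noted just after \cref{evaluation_morphisms_defn}) gives $[[\psi,D],D]\circ\eta^{A,D} = \eta^{\sepfun_D(A),D}\circ\psi$; the right-hand side is a composite of regular monomorphisms (an isomorphism is trivially one, and $\eta^{\sepfun_D(A),D}$ is one by part 1), hence a regular monomorphism by \cref{properties_of_reg_epis_in_reg_cat} (1), so since $[[\psi,D],D]$ is invertible the left-hand side is a regular monomorphism, and \cref{properties_of_reg_epis_in_reg_cat} (2) then forces $\eta^{A,D}$ itself to be a regular monomorphism, i.e.\ $A$ is $D$-separated. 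The displayed isomorphism $\sepfun_D(\sepfun_D(A)) \cong \sepfun_D(A)$ is then just the instance (a) $\Rightarrow$ (c) applied to the object $\sepfun_D(A)$.

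The substantive step is (a) $\Rightarrow$ (b). Assuming $\eta^{A,D}$ is a regular monomorphism, the trivial factorisation $\eta^{A,D} = \eta^{A,D}\circ\id_A$ exhibits $\eta^{A,D}$ as an epimorphism followed by a regular monomorphism, so by the essential uniqueness of such factorisations in a coregular category (\cref{reg_im_in_reg_cats} (2)), measured against the regular image factorisation above, there is a unique isomorphism $A \to \sepfun_D(A)$ commuting with both; chasing the commutativity conditions identifies this isomorphism with $\eta^{A,D}|^{\sepfun_D(A)}$, which is therefore an isomorphism. The only place demanding care is this last bookkeeping — keeping straight which object plays the role of ``$\im(f)$'' and which is the competing middle object in \cref{reg_im_in_reg_cats} (2) — but there is no genuine obstacle here; all the real content lives in the already-established results on coregular categories and in \cref{duals_and_subobjects_sep}.
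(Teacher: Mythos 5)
Your proposal is correct and follows essentially the same route as the paper: part 1 via the regular monomorphism $\sepfun_D(A)\hookrightarrow [[A,D],D]$ together with \cref{duals_and_subobjects_sep}, and part 2 via purely formal properties of epi/regular-mono factorisations in a coregular category. The only variations are minor: for (a)$\Rightarrow$(b) the paper observes directly that $\eta^{A,D}|^{\sepfun_D(A)}$ is a regular monomorphism (by cancellation, \cref{properties_of_reg_epis_in_reg_cat}) and an epimorphism, hence an isomorphism, whereas you invoke the essential uniqueness of epi/regular-mono factorisations from \cref{reg_im_in_reg_cats}; and for (c)$\Rightarrow$(a) you spell out, via naturality of $\eta^{-,D}$, the isomorphism-invariance of $D$-separatedness that the paper simply asserts.
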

\begin{proof}
    \text{}
    \begin{enumerate}
        \item $\sepfun_D(A)$ admits a regular monomorphism into a dual with respect to $D$, namely the inclusion
            $$ \sepfun_D(A) \hookrightarrow [[A,D],D]. $$
        Hence, the claim follows from \cref{duals_and_subobjects_sep}.
        \item If $A$ is $D$-separated, i.e. $\eta^{A,D}$ is a regular monomorphism, then $\eta^{A,D}|^{\sepfun_D(A)}$ is a regular monomorphism, as well (by \cref{properties_of_reg_epis_in_reg_cat}). Since $\eta^{A,D}|^{\sepfun_D(A)}$ is also an epimorphism, it is hence an isomorphism (by \cref{properties_of_reg_epis_in_reg_cat}).
        On the other hand, if $\eta^{A,D}|^{\sepfun_D(A)}$ is an isomorphism, then $\eta^{A,D}$ is a regular monomorphism, so $A$ is $D$-separated. \par 
        The remaining implications follow from the fact that $\sepfun_D(A)$ is always D-separated (this was the first point) and that the property of being $D$-separated is isomorphism-invariant.
    \end{enumerate}
\end{proof}

This also shows that the essential image of the $D$-separation functor $\sepfun_D$ is given precisely be the $D$-separated objects. The full subcategory of a coregular closed symmetric monoidal category $\mathsf{C}$ spanned by the $D$-separated objects is therefore appropriately denoted by 
    $$ \sepfun_D \mathsf{C} \hookrightarrow \mathsf{C}. $$

\begin{defn}\label{defn_repletion}
    For the case of replete linear $hk$-spaces and $D=\mathbb{K}$ (see \cref{ex_repl_lin_sk_sp}), we write
        $$ r := \sepfun_{\mathbb{K}} $$
    for the $\mathbb{K}$-separation functor and call this functor the \emph{repletion}. Accordingly, the category of replete linear $hk$-spaces will be denoted by 
        $$ \sclin := \sepfun_{\mathbb{K}}\vect. $$
\end{defn}

Unfolding definitions, the repletion $rV$ of a linear $hk$-space is precisely the closure of its image in the double dual $V^{\wedge\wedge}$ under the canonical map $V\to V^{\wedge\wedge}$. 
    
\subsubsection{$D$-separated objects form a reflective subcategory} By now, the reader might expect, as the name would suggest, that the $D$-separation is the reflector associated to the \emph{reflective} subcategory of $D$-separated objects. That this expectation is indeed met is the content of the following theorem.

\begin{thm}\label{thm_sepfun_left_adjoint}
    Let $\mathsf{C}$ be a coregular closed symmetric monoidal category and let $D$ be an object. Then the $D$-separation functor (co-restricted to its image),
        $$\sepfun_D: \mathsf{C} \to \sepfun_D \mathsf{C}, $$ 
    is left adjoint to the inclusion 
        $$ \sepfun_D\mathsf{C} \hookrightarrow \mathsf{C}. $$
\end{thm}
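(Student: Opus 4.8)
The plan is to exhibit, for each object $A$, the corestriction
$$\epsilon_A := \eta^{A,D}\big|^{\sepfun_D(A)} : A \to \sepfun_D(A)$$
of $\eta^{A,D}$ to its regular image as a universal arrow from $A$ to the inclusion $\sepfun_D\mathsf{C}\hookrightarrow\mathsf{C}$; by the standard characterisation of adjoint functors via universal arrows \cite{mac2013categories} this is precisely the asserted adjunction, with $\epsilon$ as its unit. Three ingredients are needed, all already available. (i) $\epsilon$ is a \emph{natural} transformation $\id_{\mathsf{C}}\to\sepfun_D$: it is the first of the two natural transformations produced by the regular image factorisation of $\eta^{-,D}$ right after \cref{sepfun_defn}, the point being that regular image factorisations in the functor category $\mathsf{C}^{\mathsf{C}}$ are computed pointwise, so the $A$-component of that natural transformation is exactly the corestriction of the morphism $\eta^{A,D}$. (ii) Each $\epsilon_A$ is an epimorphism, by \cref{reg_im_in_reg_cats}. (iii) $\sepfun_D(A)$ is always $D$-separated, and for $D$-separated $B$ the morphism $\epsilon_B$ is an isomorphism, both by \cref{chars_of_D_separation}; in particular $\sepfun_D$ does land in $\sepfun_D\mathsf{C}$ and is a functor $\mathsf{C}\to\sepfun_D\mathsf{C}$.

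First I would verify the existence of the factorisation. Given a $D$-separated object $B$ and a morphism $f:A\to B$, apply $\sepfun_D$ to get $\sepfun_D(f):\sepfun_D(A)\to\sepfun_D(B)$ and set
$$\bar f := \epsilon_B^{-1}\circ\sepfun_D(f): \sepfun_D(A)\to B,$$
which is legitimate by (iii). Naturality of $\epsilon$ yields $\epsilon_B\circ f = \sepfun_D(f)\circ\epsilon_A$, hence
$$\bar f\circ\epsilon_A = \epsilon_B^{-1}\circ\sepfun_D(f)\circ\epsilon_A = \epsilon_B^{-1}\circ\epsilon_B\circ f = f,$$
as required. For uniqueness, if $\bar f_1,\bar f_2:\sepfun_D(A)\to B$ both satisfy $\bar f_i\circ\epsilon_A = f$, then $\bar f_1\circ\epsilon_A = \bar f_2\circ\epsilon_A$ and, $\epsilon_A$ being an epimorphism by (ii), $\bar f_1=\bar f_2$. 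Thus $(\sepfun_D(A),\epsilon_A)$ is a universal arrow from $A$ to the inclusion, so $\sepfun_D\dashv(\sepfun_D\mathsf{C}\hookrightarrow\mathsf{C})$, with the natural bijection
$$\Hom_{\sepfun_D\mathsf{C}}(\sepfun_D(A),B)\;\cong\;\Hom_{\mathsf{C}}(A,B)\qquad(A\in\mathsf{C},\ B\in\sepfun_D\mathsf{C})$$
given by precomposition with $\epsilon_A$.

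I do not expect a serious obstacle: the work has been front-loaded into \cref{reg_im_in_reg_cats,chars_of_D_separation} together with the pointwise computation of image factorisations in functor categories. The only point deserving a line of care is ingredient (i) — that $\eta^{A,D}\big|^{\sepfun_D(A)}$ is genuinely the component of a natural transformation rather than a family of morphisms defined object by object — which is why the argument passes through $\mathsf{C}^{\mathsf{C}}$; granted that, the naturality square used in the existence step is immediate. As an optional sanity check one can note that $\sepfun_D(f)$ is forced to be the unique mediating morphism between the epi/regular-mono factorisations of $\eta^{A,D}$ and $\eta^{B,D}$, which is the essential-uniqueness clause of \cref{reg_im_in_reg_cats}, making the required naturality transparent.
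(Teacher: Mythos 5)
Your proof is correct and follows essentially the same route as the paper: there, too, the corestriction $j^A=\eta^{A,D}|^{\sepfun_D(A)}$ serves as the unit and the adjunct of $f:A\to B$ is $(j^B)^{-1}\circ\sepfun_D(f)$, with the same naturality computation giving $\bar f\circ\epsilon_A=f$. Your universal-arrow packaging, where uniqueness comes from $\epsilon_A$ being an epimorphism, merely replaces the paper's direct verification that the other composite is the identity (for which it first establishes $j^{\sepfun_D(A)}=\sepfun_D(j^A)$), so the two arguments differ only in bookkeeping.
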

\begin{proof}
    In order to simplify notation, write
        $$ (-,-) := \Hom_{\mathsf{C}}(-,-), $$
        $$ \eta^A := \eta^{A,D}, $$
    and, 
        $$ \sepfun := \sepfun_D, \;\; j^A := \eta^A|^{\sepfun A}. $$
    In order to obtain an adjunction,
        $$ (\sepfun A, B) \cong (A,B), \;\;\;\;\;\; (A,B \in \mathsf{C})$$
    define two natural transformations $\Phi$, $\Psi$ with components
        $$ \Phi^{A,B} : (\sepfun A, B) \to (A, B), \;\; \Phi^{A,B} := (j^A, B) $$
        $$ \Psi^{A,B} : (A, B) \to (\sepfun A, B), \;\; \Psi^{A,B} := (\sepfun(A), (j^B)^{-1}) \circ \sepfun^{A,B},$$
    where 
        $$\sepfun^{A,B}: (A,B) \to (\sepfun(A), \sepfun(B)), \;\; f \mapsto \sepfun(f)$$ 
    is the action of the $D$-separation functor on morphisms $A\to B$.
    Fix two objects $A,B$ and write $\Phi := \Phi^{A,B}$, $\Psi := \Psi^{A,B}$. We claim that $\Phi$ and $\Psi$ are inverse to one another, which would complete the proof. We will show this in two steps.
    \begin{enumerate}
        \item First, we claim that 
        $$ j^{\sepfun(A)} = \sepfun(j^A). $$
        This can be seen as follows. By naturality of $j$, the diagram, 
        \[\begin{tikzcd}
	   A & {\sepfun(A)} \\
	   {\sepfun(A)} & {\sepfun(\sepfun(A))}
	   \arrow["{j^A}", from=1-1, to=1-2]
	   \arrow["{j^A}"', from=1-1, to=2-1]
	   \arrow["{\sepfun(j^A)}"', from=2-1, to=2-2]
	   \arrow["{j^{\sepfun(A)}}", from=1-2, to=2-2]
        \end{tikzcd}\]
        commutes, i.e. 
            $$ j^{\sepfun(A)} \circ j^A = \sepfun(j^A) \circ j^A. $$
        Since $j^A$ is an epimorphism (by \cref{reg_im_in_reg_cats}), this implies the (intermediate) claim.
        \item Now, using naturality of $j$, we find that 
    \begin{align*}
        \Phi \circ \Psi (f) &= (j^A, B) \circ (\sepfun(A), (j^B)^{-1}) \circ \sepfun^{A,B}(f) \\
        &= (j^A, B)((\sepfun(A), (j^B)^{-1})(\sepfun^{A,B}(f)) \\
        &= (j^A, B)((j^B)^{-1} \circ \sepfun^{A,B}(f)) \\
        &= (j^B)^{-1} \circ \sepfun^{A,B}(f) \circ j^A \\
        &= f,
    \end{align*}
    for all $f\in (A, B)$. Likewise, for every $g\in (\sepfun A, B)$,
    \begin{align*}
        \Psi \circ \Phi &= (\sepfun(A), (j^B)^{-1}) \circ \sepfun^{A,B} \circ (j^A, B) (g) \\
        &= (\sepfun(A), (j^B)^{-1}) ( \sepfun ( (j^A, B) (g) ) )\\
        &= (j^B)^{-1} \circ \sepfun ( g \circ j^A ) ) \\
        &= (j^B)^{-1} \circ \sepfun(g) \circ \sepfun(j^A) &&\text{(by functoriality of $\sepfun$)}\\
        &= (j^B)^{-1} \circ \sepfun(g) \circ j^{\sepfun(A)} &&\text{(by the previous step)} \\
        &= g. &&\text{(by naturality of $j$)}
    \end{align*}
    \end{enumerate}
    This completes the proof.
\end{proof}

The adjunction from \cref{thm_sepfun_left_adjoint} can be enhanced to an internal (or \emph{enriched}) version. 

\begin{cor}\label{internal_sepfin_adjunction}
    Let $\mathsf{C}$ be a coregular closed symmetric monoidal category and let $D$ be an object of $\mathsf{C}$. Then we have a natural isomorphism 
    \begin{align*}
        [A, B] \cong [\sepfun_D(A), B]. \;\;\;\;\;\;\;\; (A \in \mathsf{C}, B\in \sepfun_D \mathsf{C}) 
    \end{align*}
\end{cor}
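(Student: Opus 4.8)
The plan is to deduce this internal adjunction from the ordinary one of \cref{thm_sepfun_left_adjoint} by the standard device of testing against an arbitrary object and invoking the Yoneda lemma, exactly as the internal tensor-hom adjunction was extracted from the external one. First I would fix an arbitrary object $X \in \mathsf{C}$ and assemble the chain of natural isomorphisms
\begin{align*}
    \Hom_{\mathsf{C}}(X, [A,B])
    &\cong \Hom_{\mathsf{C}}(X \otimes A, B) \\
    &\cong \Hom_{\mathsf{C}}(A \otimes X, B) \\
    &\cong \Hom_{\mathsf{C}}(A, [X,B]),
\end{align*}
using the tensor-hom adjunction and the braiding $\sigma$. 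The place where the hypothesis on $B$ is used is the next step: since $B$ is $D$-separated, part~3 of \cref{duals_and_subobjects_sep} shows that $[X,B]$ is $D$-separated, so \cref{thm_sepfun_left_adjoint} applies and gives
\[
    \Hom_{\mathsf{C}}(A, [X,B]) \cong \Hom_{\mathsf{C}}(\sepfun_D(A), [X,B]).
\]
Running the first chain backwards with $\sepfun_D(A)$ in place of $A$ then yields
\begin{align*}
    \Hom_{\mathsf{C}}(\sepfun_D(A), [X,B])
    &\cong \Hom_{\mathsf{C}}(\sepfun_D(A) \otimes X, B) \\
    &\cong \Hom_{\mathsf{C}}(X \otimes \sepfun_D(A), B) \\
    &\cong \Hom_{\mathsf{C}}(X, [\sepfun_D(A), B]).
\end{align*}

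Composing these produces an isomorphism $\Hom_{\mathsf{C}}(X, [A,B]) \cong \Hom_{\mathsf{C}}(X, [\sepfun_D(A), B])$ which is natural in $X$, since every isomorphism used — the tensor-hom adjunction, the symmetry isomorphism, and the separation adjunction — is natural. The Yoneda lemma then produces a (unique) isomorphism $[A,B] \cong [\sepfun_D(A), B]$ in $\mathsf{C}$. The same chain is moreover natural in $A$ (contravariantly) and in $B$, which upgrades this to the claimed natural isomorphism of bifunctors.

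The argument is essentially formal; the only genuine ingredient beyond \cref{thm_sepfun_left_adjoint} is the stability statement that $[X,B]$ is $D$-separated whenever $B$ is, which is already recorded in part~3 of \cref{duals_and_subobjects_sep}. I expect the only mildly tedious point to be the bookkeeping of naturality: one should either carry each isomorphism in the chain as a natural transformation of functors $\mathsf{C}^{\op}\times\mathsf{C}^{\op}\times\mathsf{C}\to\sets$ in $(X,A,B)$, or else first obtain the bare isomorphism of objects via Yoneda and then check naturality in $A,B$ by a direct diagram chase using naturality of $\eta^{-,D}$ (equivalently, of its corestriction $j^{-} = \eta^{-,D}\vert^{\sepfun_D(-)}$). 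There is no conceptual obstacle.
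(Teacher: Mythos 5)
Your proposal is correct and follows essentially the same route as the paper's proof: test against an arbitrary object, swap it past the hom via the tensor-hom adjunction and symmetry, use part~3 of \cref{duals_and_subobjects_sep} to see that the resulting hom into $B$ is $D$-separated so that \cref{thm_sepfun_left_adjoint} applies, and conclude by Yoneda. The only cosmetic difference is that the paper compresses your two tensor-hom/braiding steps into a single application of the hom-hom adjunction (\cref{hom_hom_adj}), which is the same computation.
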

\begin{proof}
    Let $C$ be any object. By \cref{duals_and_subobjects_sep}, for any $D$-separated object $B$, $[C,B]$ is $D$-separated, as well. Combining this fact with \cref{thm_sepfun_left_adjoint}, we get the following chain of natural isomorphisms.
    \begin{align*}
        \Hom_{\mathsf{C}}(C, [\sepfun(A), B]) &\cong \Hom_{\mathsf{C}}(\sepfun(A), [C,B])\\
        &\cong \Hom_{\mathsf{C}}(A, [C, B]) \\
        &\cong \Hom_{\mathsf{C}}(C, [A,B]).
    \end{align*}
    The claim now follows from the Yoneda lemma.
\end{proof}

\subsubsection{$D$-separated objects form a closed symmetric monoidal category}

Day's reflection theorem (\cref{day_refl_thm}) now immediately yields:

\begin{cor}\label{cor_d_sep_closed_symm_mon}
    Let $\mathsf{C}$ be a coregular closed symmetric monoidal category and let $D$ be any object. Then the category $\sepfun_D(\mathsf{C})$ of $D$-separated objects is a closed symmetric monoidal category with tensor product $\sepfun_D(-\otimes_{\mathsf{C}} -)$ and internal hom $[-,-]_\mathsf{C}$, and the $D$-separation functor is a strong symmetric monoidal functor 
        $$ \sepfun_D: \mathsf{C} \to \sepfun_D(\mathsf{C}). $$
\end{cor}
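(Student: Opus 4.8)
The plan is to invoke Day's reflection theorem (\cref{day_refl_thm}) for the reflective subcategory $\sepfun_D\mathsf{C} \hookrightarrow \mathsf{C}$. By \cref{thm_sepfun_left_adjoint} this really is a reflective subcategory, with reflector $\sepfun_D$ and with unit given by the corestriction to the regular image $j^A := \eta^{A,D}|^{\sepfun_D(A)}\colon A \to \sepfun_D(A)$; in particular the natural transformation denoted $\epsilon$ in \cref{day_refl_thm} is precisely our $j$. Since $\mathsf{C}$ is assumed to be a closed symmetric monoidal category, all hypotheses of \cref{day_refl_thm} are in place, and it therefore suffices to verify that one of the four natural transformations listed there is an isomorphism.

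The cleanest one to check is $\epsilon^{[A,B]}\colon [A,B] \to \sepfun_D[A,B]$ for $A \in \mathsf{C}$ and $B \in \sepfun_D\mathsf{C}$, that is, the map $j^{[A,B]}$. By the third item of \cref{duals_and_subobjects_sep}, if $B$ is $D$-separated then $[A,B]$ is $D$-separated for \emph{every} object $A$. Combining this with \cref{chars_of_D_separation} — which states that an object $E$ is $D$-separated exactly when the corestriction $j^E = \eta^{E,D}|^{\sepfun_D(E)}$ is an isomorphism — we conclude that $j^{[A,B]}$ is an isomorphism whenever $B$ is $D$-separated. This is exactly the hypothesis required to trigger \cref{day_refl_thm}.

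Applying \cref{day_refl_thm} then yields at once that $\sepfun_D\mathsf{C}$ is a closed symmetric monoidal category with tensor product $\sepfun_D(-\otimes_{\mathsf{C}}-)$ and internal hom $[-,-]_{\mathsf{C}}$ — the latter restricts to a functor $(\sepfun_D\mathsf{C})^\op \times \sepfun_D\mathsf{C} \to \sepfun_D\mathsf{C}$ precisely because $[A,B]$ is $D$-separated when $B$ is, as just noted — and that the reflector $\sepfun_D\colon \mathsf{C} \to \sepfun_D\mathsf{C}$ carries the structure of a strong symmetric monoidal functor, so that $\sepfun_D \dashv (\hookrightarrow)$ becomes a symmetric monoidal adjunction.

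There is no genuinely hard step: the whole development of \cref{sec_triple_dualisation_lem_and_sep_obj} has been arranged so that this is a direct application of Day's theorem. The only point demanding a modicum of care is the bookkeeping identification of the reflection unit $\epsilon$ appearing in \cref{day_refl_thm} with the corestriction-to-image map $j^A = \eta^{A,D}|^{\sepfun_D(A)}$ produced in \cref{thm_sepfun_left_adjoint}, and the corresponding observation that the assertion ``$\epsilon^{[A,B]}$ is an isomorphism'' is literally the assertion ``$[A,B]$ is $D$-separated'', which is furnished by \cref{duals_and_subobjects_sep} together with \cref{chars_of_D_separation}.
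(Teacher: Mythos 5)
Your proposal is correct and takes essentially the same route as the paper, which obtains the corollary as an immediate application of Day's reflection theorem (\cref{day_refl_thm}) to the reflective subcategory established in \cref{thm_sepfun_left_adjoint}. The only cosmetic difference is which of the four equivalent conditions of Day's theorem is verified: you check that $\epsilon^{[A,B]}$ is an isomorphism, i.e.\ that $[A,B]$ is $D$-separated for $B$ separated, via \cref{duals_and_subobjects_sep} and \cref{chars_of_D_separation}, whereas the paper's immediately preceding \cref{internal_sepfin_adjunction} supplies the equivalent isomorphism $[A,B]\cong[\sepfun_D(A),B]$.
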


Applying this Corollary to the case of replete linear $hk$-spaces, we obtain:

\begin{cor}\label{cor_repl_closed_symm_mon}
    The category $\sclin$ of replete linear $hk$-spaces becomes a closed symmetric monoidal category under the internal hom $L(-,-)$ and tensor product
        $$ V \,\otimes_r W := \scompl(V \otimes W), \qquad (V,W \in \sclin)$$
    where $\otimes$ denotes the tensor product of linear $hk$-spaces (equivalently, $F_{\mathbb{K}}$-modules, see \cref{ex_vect_closed_symm_mon}) and $\scompl = \sepfun_{\mathbb{K}}$ is the repletion functor (\cref{defn_repletion}), i.e.~the $\mathbb{K}$-separation functor in the category of linear $hk$-spaces.
\end{cor}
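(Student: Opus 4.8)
The plan is to obtain this corollary as the instance $\mathsf{C}=\vect$, $D=\mathbb{K}$ of the general \cref{cor_d_sep_closed_symm_mon}, so that the work consists entirely in checking that $\vect$ satisfies the hypotheses of that corollary and then translating its conclusion into the notation of the statement. The relevant identifications are already in place: by \cref{ex_vect_closed_symm_mon}, $\vect$ is a closed symmetric monoidal category with tensor product $\otimes_{\mathbb{K}}$ (the tensor product of $F_{\mathbb{K}}$-modules) and internal hom $L(-,-)$; by \cref{ex_repl_lin_sk_sp} the $\mathbb{K}$-separated objects of $\vect$ are exactly the replete linear $hk$-spaces, so $\sepfun_{\mathbb{K}}\vect=\sclin$; and by \cref{defn_repletion} the $\mathbb{K}$-separation functor $\sepfun_{\mathbb{K}}$ is precisely the repletion $\scompl$. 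Thus the only thing \cref{cor_d_sep_closed_symm_mon} still demands is that $\vect$ be a \emph{coregular} closed symmetric monoidal category (\cref{regular_cat_defn}).

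Coregularity of $\vect$ is where the genuine (if routine) work lies. Finite colimits exist by \cref{vect_cocomplete_complete}, and since $\vect$ is also complete the regular image of any morphism -- the equaliser of its cokernel pair -- exists. The substantive point is stability of regular monomorphisms under pushout; by \cref{cor_morphisms_in_vect} these are exactly the closed embeddings, so one must show that for a closed embedding $i\colon V\hookrightarrow W$ and any morphism $f\colon V\to Z$ in $\vect$, the coprojection $Z\to (W\oplus Z)/(i,-f)(V)$ of the pushout of $i$ along $f$ is again a closed embedding. Here the first step is to note that $(i,-f)\colon V\to W\oplus Z$ is itself a closed embedding: it factors as the split monomorphism $v\mapsto(v,-f(v))$ followed by $i\oplus\id_Z$, and closed embeddings in $\vect$ are stable under composition and under (finite) products by \cref{khaus_morphisms} together with the fact that products and regular monomorphisms in $\vect$ coincide with those in $\spaces$. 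Consequently $(i,-f)(V)$ is closed, the pushout is the honest quotient $(W\oplus Z)/(i,-f)(V)$, which is $k$-Hausdorff by \cref{perm_prop_haus,khausdorff_quots_closed}, and one then checks directly that $z\mapsto[(0,z)]$ is injective, that its image has closed (saturated) preimage $i(V)\oplus Z$, and that $[(i(v),z)]\mapsto z+f(v)$ is a well-defined continuous inverse on that image. I expect this last verification -- and pinning down that pushouts in $\vect$ really are given by that cokernel formula -- to be the main obstacle, though it is elementary; if coregularity of $\vect$ has already been recorded elsewhere, the corollary is immediate.

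Granting coregularity, \cref{cor_d_sep_closed_symm_mon} applies verbatim with $D=\mathbb{K}$: the category $\sepfun_{\mathbb{K}}\vect=\sclin$ of replete linear $hk$-spaces is closed symmetric monoidal, with internal hom the restriction of $L(-,-)$ -- which does land in $\sclin$ because $L(V,W)$ is $\mathbb{K}$-separated whenever $W$ is, by \cref{duals_and_subobjects_sep} -- and with tensor product $\sepfun_{\mathbb{K}}(-\otimes_{\mathbb{K}}-)=\scompl(-\otimes-)=-\otimes_r-$; moreover the repletion $\scompl=\sepfun_{\mathbb{K}}$ is a strong symmetric monoidal functor $\vect\to\sclin$. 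This is exactly the assertion to be proved, so beyond the coregularity check the argument is pure translation of \cref{cor_d_sep_closed_symm_mon}.
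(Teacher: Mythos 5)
Your proposal is correct and takes essentially the same route as the paper, which obtains the corollary simply by specialising \cref{cor_d_sep_closed_symm_mon} to $\mathsf{C}=\vect$, $D=\mathbb{K}$ using \cref{ex_vect_closed_symm_mon}, \cref{ex_repl_lin_sk_sp} and \cref{defn_repletion}. The only difference is that you explicitly check coregularity of $\vect$ (in particular, stability of closed embeddings under pushout via the cokernel description $(W\oplus Z)/(i,-f)(V)$), a hypothesis the paper leaves implicit; your verification is sound.
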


\subsection{$*$-Autonomous categories}\label{sec_star_aut_cats}

\begin{defn}\label{defn_star_autnms_cat}
    A \emph{$*$-autonomous category} is a closed symmetric monoidal category $\mathsf{C}$ together with an object $D$, the \emph{dualising object}, such that 
        $$ \eta^{A,D}: A \to [[A, D], D], $$
    is an isomorphism for all $A\in \mathsf{C}$.
    In this case, we denote by $(-)^* := [-, D]$ the ``dualisation functor'', 
        $$ (-)^*: \mathsf{C}^\op \to \mathsf{C}, $$
    with respect to $D$, which is then an equivalence of categories.
\end{defn}

Hence, $*$-autonomous categories are ``self-dual'', 
    $$ \mathsf{C}^\op \cong \mathsf{C}, $$
a property with far-reaching consequences. For example, a $*$-autonomous category $\mathsf{C}$ which has all limits automatically admits all colimits as well (and vice versa). After all, colimits in $\mathsf{C}$ are just limits in the opposite category $\mathsf{C}^\op$ (which is equivalent to $\mathsf{C}$ by $*$-autonomy). Of course, self-duality entails more generally that for \emph{any} category-theoretic property of $\mathsf{C}$ (in the sense of being invariant under equivalence of categories), its dual property is also true of $\mathsf{C}$. 

\begin{example}
    The category of finite-dimensional vector spaces over a field $K$ is a $*$-autonomous category with respect to the standard tensor product $\otimes$ and internal hom, with $K$ as the dualising object.
\end{example}

\subsection{The Chu construction}\label{sec_chu_construction}

A general method to construct $*$-autonomous categories from closed symmetric monoidal categories is given by the \emph{Chu construction} (given originally in \cite{chu1978constructing}).

\begin{defn}[Chu construction, \cite{barr1991autonomous}]
    Let $\mathsf{C}$ be a closed symmetric monoidal category and let $D$ be an object. Then the category 
        $$ \mathsf{Chu}_D(\mathsf{C}) $$
    is defined as follows.
    \begin{enumerate}
        \item An object of $\mathsf{Chu}_D(\mathsf{C})$ is a triple $(A, A^\top, a)$, where
        \begin{enumerate}
            \item $A, A^\top$ are objects of $\mathsf{C}$, and 
            \item $a: A \otimes_{\mathsf{C}} A^\top \to D$ is a morphism in $\mathsf{C}$.
        \end{enumerate}
        \item A morphism $(A,A^\top,a)\to (B,B^\top, b)$ in $\mathsf{Chu}_D(\mathsf{C})$ is a pair $(f,f^\top)$, where 
        \begin{enumerate}
            \item $f: A \to B$, $f^\top: B^\top \to A^\top$ are morphisms in $\mathsf{C}$, 
            \item making the following diagram commute: 
            \[\begin{tikzcd}
	       {A \otimes_{\mathsf{C}} B^\top} & {A\otimes_{\mathsf{C}} A^\top} \\
	       {B\otimes_{\mathsf{C}} B^\top} & D
	       \arrow["{f\otimes B^\top}"', from=1-1, to=2-1]
	       \arrow["{A\otimes f^\top}", from=1-1, to=1-2]
	       \arrow[from=2-1, to=2-2]
	       \arrow[from=1-2, to=2-2]
            \end{tikzcd}\]
        \end{enumerate}
    \end{enumerate}
\end{defn}

To understand this definition, it is helpful to consider the case when $\mathsf{C}$ is the category of $\mathbb{K}$-vector spaces and the dualising object is $\mathbb{K}$. In this case, an object of $\mathsf{Chu}_{\mathbb{K}}(\mathsf{Vect})$ is a pair of vector spaces equipped with a bilinear pairing $\langle -, -\rangle$ to the base field $\mathbb{K}$, and a morphism, 
$$(V,V^\top, \langle-,-\rangle_V) \to (W, W^\top, \langle-,-\rangle_W),$$ 
is a pair of linear maps $(f, f^\top)$ that are ``adjoint'' to one another,
    $$ \langle f(v), w^\top \rangle_{W} = \langle v, f(w^\top) \rangle_V. \;\;\;\;\;\;\; (v\in V, w^\top \in W^\top)$$ \par
Let us now describe the dualising object, internal hom and tensor product on ${\mathsf{Chu}}_D(\mathsf{C})$.

\begin{defn}
    Let $\mathsf{C}$ be a closed symmetric monoidal category with pullbacks and let $\mathcal{A}=(A,A^\top,a)$, $\mathcal{B}=(B,B^\top,b)$ be two objects of $\mathsf{Chu}_D(\mathsf{C})$.  
    \begin{enumerate}
        \item Define dualising object of ${\mathsf{Chu}}_D(\mathsf{C})$ as 
        $$\mathcal{D}:= (D, I_{\mathsf{C}}, \rho),$$ 
        where $I_{\mathsf{C}}$ is the tensor unit and 
            $$\rho: D \otimes_{\mathsf{C}} I_{\mathsf{C}} \to D $$
        is the right unitor.
        \item The internal hom 
            $$[\mathcal{A}, \mathcal{B}]_{{\mathsf{Chu}}_D(\mathsf{C})}$$
        is given by the triple $([A, B]_0, A \otimes_{\mathsf{C}} B^\top, c)$, where:
        \begin{enumerate}
        \item $[A, B]_0$ is given by the pullback,
        \[\begin{tikzcd}
	   {[A, B]_0} & {[B^\top,A^\top]_{\mathsf{C}}} \\
	   {[A,B]_\mathsf{C}} & {[A\otimes B^\top,D]_\mathsf{C}}
	   \arrow[from=1-1, to=2-1]
	   \arrow[from=1-1, to=1-2]
	   \arrow["\phi", from=1-2, to=2-2]
	   \arrow["\psi"', from=2-1, to=2-2]
        \end{tikzcd}\]
        Here, $\phi$ is obtained via the tensor-hom adjunction from the composite, 
        \[\begin{tikzcd}
	       {A\otimes_{\mathsf{C}} B^\top \otimes_{\mathsf{C}} [B^\top, A^\top]_{\mathsf{C}}} && {A \otimes_{\mathsf{C}} A^\top} & D
	       \arrow["{\id_A \otimes_{\mathsf{C}} \mathsf{eval}}", from=1-1, to=1-3]
	       \arrow["a", from=1-3, to=1-4]
        \end{tikzcd}\]
        and, similarly, $\psi$ is obtained from the composite, 
        \[\begin{tikzcd}
	       {[A, B]_{\mathsf{C}} \otimes_{\mathsf{C}} A \otimes_{\mathsf{C}} B^\top} && {B \otimes_{\mathsf{C}} B^\top} & D.
	       \arrow["{ \mathsf{eval}\otimes_{\mathsf{C}} \id_{B^\top}}", from=1-1, to=1-3]
	       \arrow["b", from=1-3, to=1-4]
        \end{tikzcd}\]
        \item The definition of $[A, B]_0$ yields a map
            $$ [A, B]_0 \to [A \otimes_{\mathsf{C}} B^\top, D], $$
        which induces the pairing 
            $$c: [A, B]_0 \otimes (A \otimes_{\mathsf{C}} B^\top) \to D$$
        via the tensor-hom adjunction.
        \end{enumerate}
        \item Writing $(-)^*:=[-, \mathcal{D}]_{{\mathsf{Chu}}_D(\mathsf{C})}$, the tensor product of $\mathcal{A}$ and $\mathcal{B}$ is defined as
            $$ \mathcal{A} \otimes_{{\mathsf{Chu}}_D(\mathsf{C})} \mathcal{B} := ([\mathcal{A}, \mathcal{B}^*]_{{\mathsf{Chu}}_D(\mathsf{C})})^*. $$
    \end{enumerate}
\end{defn}

With this structure in place, we obtain \cite[p.~8, Theorem 4.3]{barr1991autonomous}: 

\begin{thm}\label{thm_chu_construction}
    Let $\mathsf{C}$ be a bicomplete closed symmetric monoidal category and let $D$ be an object of $\mathsf{C}$. Then ${\mathsf{Chu}}_D(\mathsf{C})$ is a bicomplete $*$-autonomous category. Moreover, the functor
        $$ \mathsf{C} \hookrightarrow {\mathsf{Chu}}_D(\mathsf{C}), \;\; A \mapsto (A, [A,D], e^{A,D}\circ \sigma^{A, [A,D]}), \; f \mapsto (f, [f,D]) $$
    is fully faithful, strongly symmetric monoidal, as well as left adjoint and right inverse to the functor,
        $$ {\mathsf{Chu}}_D(\mathsf{C}) \to \mathsf{C}, \;\; (A,A^\top, a) \mapsto A, \; (f, f^\top) \mapsto f, $$
\end{thm}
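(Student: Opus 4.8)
The plan is to follow Barr's analysis of the Chu construction, organising everything around one structural observation: \emph{dualisation in $\mathsf{Chu}_D(\mathsf{C})$ merely swaps the two components}. First I would compute, directly from the definition of the internal hom applied to $\mathcal{B} = \mathcal{D} = (D, I_{\mathsf{C}}, \rho)$, a natural isomorphism
\[ \mathcal{A}^{*} := [\mathcal{A}, \mathcal{D}]_{\mathsf{Chu}_D(\mathsf{C})} \;\cong\; \bigl(A^{\top},\, A,\, a \circ \sigma^{A^{\top}, A}\bigr), \qquad \mathcal{A} = (A, A^{\top}, a). \]
The point is that in the pullback defining $[A, D]_0$ the leg $\psi \colon [A, D]_{\mathsf{C}} \to [A \otimes_{\mathsf{C}} I_{\mathsf{C}}, D]_{\mathsf{C}}$ is, up to the right unitor, the identity and hence an isomorphism, so $[A, D]_0 \cong [I_{\mathsf{C}}, A^{\top}]_{\mathsf{C}} \cong A^{\top}$ while $A \otimes_{\mathsf{C}} I_{\mathsf{C}} \cong A$, and the induced pairing is $a$ precomposed with the braiding. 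Applying this twice and using $\sigma \circ \sigma = \id$ gives $\mathcal{A}^{**} \cong \mathcal{A}$, and one checks that under these identifications the canonical morphism $\eta^{\mathcal{A}, \mathcal{D}}$ of \cref{evaluation_morphisms_defn} becomes the pair of identities. Hence $\mathsf{Chu}_D(\mathsf{C})$ is $*$-autonomous — and $(-)^{*}$ is the claimed equivalence $\mathsf{Chu}_D(\mathsf{C})^{\op} \cong \mathsf{Chu}_D(\mathsf{C})$ — as soon as it is known to be closed symmetric monoidal.

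For bicompleteness I would check that limits in $\mathsf{Chu}_D(\mathsf{C})$ are formed by taking the limit of the first components and the \emph{colimit} of the second: for a diagram $i \mapsto (A_i, A_i^{\top}, a_i)$ put $A := \lim_i A_i$, $A^{\top} := \colim_i A_i^{\top}$, and let $a$ be the morphism whose transpose $A^{\top} \to [A, D]_{\mathsf{C}}$ classifies the cocone $A_i^{\top} \to [A_i, D]_{\mathsf{C}} \to [A, D]_{\mathsf{C}}$ (using contravariance of $[-, D]_{\mathsf{C}}$); a short diagram chase gives the universal property. Colimits then arise either by the same computation dualised or, once $*$-autonomy is established, as limits in the equivalent category $\mathsf{Chu}_D(\mathsf{C})^{\op}$. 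In particular $\mathsf{Chu}_D(\mathsf{C})$ has the pullbacks needed to form the internal hom in the first place.

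The substantive step is the tensor–hom adjunction for $[-,-]$ and the tensor product $\mathcal{A} \otimes \mathcal{B} := \bigl([\mathcal{A}, \mathcal{B}^{*}]\bigr)^{*}$. Unpacking the pullback that defines the first component of $[\mathcal{B}, \mathcal{C}]$ and repeatedly applying the tensor–hom adjunction of $\mathsf{C}$, one identifies a morphism $\mathcal{A} \to [\mathcal{B}, \mathcal{C}]$ in $\mathsf{Chu}_D(\mathsf{C})$ with a triple of $\mathsf{C}$-morphisms
\[ A \otimes_{\mathsf{C}} B \to C, \qquad A \otimes_{\mathsf{C}} C^{\top} \to B^{\top}, \qquad B \otimes_{\mathsf{C}} C^{\top} \to A^{\top}, \]
whose three adjoint transposes $A \otimes_{\mathsf{C}} B \otimes_{\mathsf{C}} C^{\top} \to D$ (built from $a$, from $b$, and from the pairing of $\mathcal{C}$) coincide — a description that is visibly symmetric in the three slots, so that $\Hom_{\mathsf{Chu}}(\mathcal{A}, [\mathcal{B}, \mathcal{C}]) \cong \Hom_{\mathsf{Chu}}(\mathcal{B}, [\mathcal{A}, \mathcal{C}])$ and likewise $[\mathcal{C}^{*}, \mathcal{B}^{*}] \cong [\mathcal{B}, \mathcal{C}]$ (the latter also by a direct comparison of the two defining pullbacks). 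Combining these symmetries with the self-duality $(-)^{**} \cong \id$ established above, the chain $\Hom_{\mathsf{Chu}}(\mathcal{A} \otimes \mathcal{B}, \mathcal{C}) \cong \Hom_{\mathsf{Chu}}(\mathcal{C}^{*}, [\mathcal{A}, \mathcal{B}^{*}]) \cong \Hom_{\mathsf{Chu}}(\mathcal{A}, [\mathcal{C}^{*}, \mathcal{B}^{*}]) \cong \Hom_{\mathsf{Chu}}(\mathcal{A}, [\mathcal{B}, \mathcal{C}])$ gives the tensor–hom adjunction, hence the closed symmetric monoidal structure (with unit $\mathcal{D}^{*}$). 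The associator, unitors and braiding, and their coherence (pentagon, triangle, the two hexagons), are then inherited componentwise from those of $(\mathsf{C}, \otimes_{\mathsf{C}}, I_{\mathsf{C}})$ by functoriality of the pullback. I expect this last bookkeeping, rather than any single idea, to be the main obstacle; everything else is either formal or a routine (co)limit computation.

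Finally, for $\iota \colon \mathsf{C} \to \mathsf{Chu}_D(\mathsf{C})$, $A \mapsto \bigl(A, [A, D], e^{A, D} \circ \sigma^{A, [A, D]}\bigr)$, $f \mapsto (f, [f, D])$: it is faithful because the first component of $\iota(f)$ is $f$, and full because the commuting square defining a $\mathsf{Chu}$-morphism $(g, g^{\top}) \colon \iota(A) \to \iota(B)$ forces $g^{\top} = [g, D]$ upon taking transposes across the tensor–hom adjunction of $\mathsf{C}$. The same transpose computation shows that for an arbitrary $\mathcal{B} = (B, B^{\top}, b)$ a morphism $\iota(A) \to \mathcal{B}$ consists of a map $f \colon A \to B$ together with the \emph{uniquely determined} $f^{\top} \colon B^{\top} \to [A, D]$ classifying $b \circ (f \otimes_{\mathsf{C}} \id_{B^{\top}})$, giving a natural bijection $\Hom_{\mathsf{Chu}}(\iota(A), \mathcal{B}) \cong \Hom_{\mathsf{C}}(A, B)$; thus $\iota$ is left adjoint to the first projection $U \colon (A, A^{\top}, a) \mapsto A$, while $U \circ \iota = \id_{\mathsf{C}}$ holds on the nose, exhibiting $\iota$ as a section of $U$. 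Strong symmetric monoidality of $\iota$ reduces to the natural isomorphisms $\iota(I_{\mathsf{C}}) \cong \mathcal{D}^{*}$ and $\iota(A) \otimes \iota(B) \cong \iota(A \otimes_{\mathsf{C}} B)$, both of which, after unwinding the definitions of $(-)^{*}$ and $\otimes$, come down to the standard isomorphism $[A \otimes_{\mathsf{C}} B, D]_{\mathsf{C}} \cong [A, [B, D]_{\mathsf{C}}]_{\mathsf{C}}$ in $\mathsf{C}$, with compatibility with the coherence data again routine. (The bicomplete $*$-autonomous part of the statement is essentially \cite[Theorem~4.3]{barr1991autonomous}.)
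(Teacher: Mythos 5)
Your proposal is correct, but it does substantially more than the paper, which offers no argument at all for this theorem: it simply records the statement as \cite[p.~8, Theorem 4.3]{barr1991autonomous} after setting up the dualising object, internal hom and tensor product. Your sketch is a faithful reconstruction of Barr's argument, and the key points are all in the right place: the computation $[\mathcal{A},\mathcal{D}]\cong(A^\top,A,a\circ\sigma)$ from the degeneration of the defining pullback (the leg $\psi$ being invertible because the pairing of $\mathcal{D}$ is the unitor), whence $(-)^{**}\cong\id$; limits formed as (limit of first components, colimit of second components) and colimits dually; the symmetric ``three morphisms $A\otimes_{\mathsf{C}}B\to C$, $A\otimes_{\mathsf{C}}C^\top\to B^\top$, $B\otimes_{\mathsf{C}}C^\top\to A^\top$ with a common transpose $A\otimes_{\mathsf{C}}B\otimes_{\mathsf{C}}C^\top\to D$'' description of $\Hom(\mathcal{A},[\mathcal{B},\mathcal{C}])$, which yields both the tensor--hom adjunction for $\mathcal{A}\otimes\mathcal{B}:=[\mathcal{A},\mathcal{B}^*]^*$ and the symmetry; and the transpose argument that simultaneously gives fullness of $\iota$, the natural bijection $\Hom_{\mathsf{Chu}}(\iota A,\mathcal{B})\cong\Hom_{\mathsf{C}}(A,B)$ exhibiting $\iota\dashv U$ with $U\circ\iota=\id$, and (via $[A\otimes_{\mathsf{C}}B,D]\cong[A,[B,D]]$) strong symmetric monoidality. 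One small correction: the pullback needed to form the internal hom lives in $\mathsf{C}$, not in $\mathsf{Chu}_D(\mathsf{C})$, so its existence is immediate from bicompleteness of $\mathsf{C}$ and does not depend on first constructing limits in $\mathsf{Chu}_D(\mathsf{C})$; as written, your remark gets the order of dependence backwards, though nothing in the argument actually breaks. The remaining deferred items (coherence of associator, unitors and braiding, and the identification of $\eta^{\mathcal{A},\mathcal{D}}$ with the pair of identities) are genuinely routine, so the sketch stands.
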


\subsubsection{The extensional-separated Chu construction}\label{sec_sep_ext_chu}

Denoting by $\mathsf{Vect}$ the category of vector spaces over a field $K$, the objects of ${\mathsf{Chu}}_K(\mathsf{Vect})$ are \emph{almost} what is known in the context of functional analysis as a \emph{dual system} (or \emph{paired vector space}, see \cref{defn_paired_vector_space}). The only difference is that for $(V,V^\top,\langle-,-\rangle)$ to be a dual system, one requires the pairing $\langle-,-\rangle$ to be a \emph{perfect pairing}. This means that both 
    $$ V \to (V^\top)',\;\;\; v \mapsto \langle v, - \rangle $$
and 
    $$ V^\top \to V', \;\;\; v^\top \mapsto \langle -, v^\top \rangle $$
are to be injective. The injectivity of the first of these maps expresses that $V^\top$ \emph{separates the points} of $V$. The injectivity of the second map, on the other hand, enables us to view elements of $V^\top$ as genuine functionals on $V$. Put differently, it expresses \emph{function extensionality} for elements of $V^\top$. \par 
These separation and extensionality requirements can be extended to the general case of the Chu construction to yield a further way of constructing $*$-autonomous categories.

\begin{defn}[Extensional/separated objects]\label{defn_extnl_sep_objects}
    Let $\mathsf{C}$ be a coregular, closed symmetric monoidal category and let $D$ be an object of $\mathsf{C}$. An object $(A,A^\top,a)$ of ${\mathsf{Chu}}_D(\mathsf{C})$ is called
    \begin{enumerate}
        \item \emph{extensional} if the morphism,
            $$ \phi: A^\top \to [A, D]_{\mathsf{C}}, $$
        induced from $a$ (via the tensor-hom adjunction), is a regular monomorphism, and
        \item \emph{separated} if the morphism, 
            $$ \psi: A \to [A^\top, D]_{\mathsf{C}}, $$
        induced from $a$, is a regular monomorphism.
    \end{enumerate}
    Define the \emph{separation} (or \emph{pair separation} in distinction to the $D$-separation of \cref{sepfun_defn}) of $(A,A^\top,a)$ as the triple $(\im(\psi), A^\top, \tilde{a})$, where 
    \begin{enumerate}
        \item $\im(\psi)$ is the regular image of $\psi$, and
        \item $\tilde{a}$ is given by the composite,
            $$ \im(\psi) \otimes_{\mathsf{C}} A^\top \to [A^\top, D] \otimes_{\mathsf{C}} A^\top  \to D. $$
    \end{enumerate}
    Similarly, define the \emph{extensionalisation} as the triple $(A, \im(\phi), \overline{a})$, where 
    \begin{enumerate}
        \item $\im(\phi)$ is the regular image of $\phi$, and
        \item $\overline{a}$ is the composite,
            $$ A \otimes_{\mathsf{C}} \im(\phi) \to A \otimes_{\mathsf{C}} [A, D] \to D. $$
    \end{enumerate}
    Finally, denote full subcategories of extensional (separated, resp.) objects of ${\mathsf{Chu}}_D(\mathsf{C})$ by $e \mathsf{Chu}_D(\mathsf{C})$ ($s \mathsf{Chu}_D(\mathsf{C})$, resp.), and write 
        $$ \widehat{\mathsf{Chu}}_D(\mathsf{C})$$
    for the full subcategory of ${\mathsf{Chu}}_D(\mathsf{C})$ spanned by those objects which are both separated \emph{and} extensional. 
\end{defn}

\begin{remark}
    In \cite{barr1998separated}, Barr defines the separated-extensional Chu category $\widehat{\mathsf{Chu}}_D(\mathsf{C})$ with respect to arbitrary factorisation systems. Coregularity of the category $\mathsf{C}$ ensures the existence of unique regular image factorisations, so that our definition is that of Barr for the special case of the regular image factorisation system. Moreover, this factorisation system also has the following two properties that Barr requires of the given factorisation system $\mathcal{E}/\mathcal{M}$.
    \begin{enumerate}
        \item Every morphism in $\mathcal{E}$ is an epimorphism.
        \item For every object $A$, if $m\in \mathcal{M}$, then $[A,m]_{\mathsf{C}} \in \mathcal{M}$.
    \end{enumerate}
    For the regular image factorisation system in a coregular closed symmetric monoidal category, the first condition holds by definition of this factorisation system, while the other one follows from then fact that the internal hom preserves limits in the second variable. 
\end{remark}

\begin{example}
    The objects of $\widehat{\mathsf{Chu}}_{\mathbb{K}}(\mathsf{Vect})$ are now exactly dual systems, or ``paired vector spaces'', in the sense of functional-analytic duality theory (mentioned before in the beginning of \cref{sec_sep_ext_chu}, see \cref{defn_paired_vector_space} for a definition).
\end{example}

We will need the following facts on the relation between the extensionalisation and (pair) separation functors $e$ and $s$ \cite[Proposition 3.2, Proposition 3.4]{barr1998separated}:

\begin{prop}\label{separation_extn_prop}
    Lt $\mathsf{C}$ be a coregular closed symmetric monoidal category and let $D$ be an object of $\mathsf{C}$. The extensionalisation $e$ and separation $s$ extend to functors that are right (left, resp.) adjoint to the inclusions of the full subcategories of extensional (separated, resp.) objects. \par
    Moreover, the tensor unit of $\mathsf{Chu}_D(\mathsf{C})$ is extensional and the full subcategory $e\mathsf{Chu}(\mathsf{C})$ of extensional objects is closed under tensor products. Hence, $e\mathsf{Chu}(\mathsf{C})$ is a symmetric monoidal category and the inclusion functor from $\mathsf{C}$ into $\mathsf{Chu}(\mathsf{C})$ co-restricts to a strong symmetric monoidal functor 
        $$ \mathsf{C} \to e\mathsf{Chu}(\mathsf{C}). $$
\end{prop}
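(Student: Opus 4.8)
The statement is \cite[Propositions 3.2 and 3.4]{barr1998separated}, specialised to the regular-image factorisation system, so the plan is to record that our setting fits Barr's framework and then verify the three assertions in turn. As noted in the remark following \cref{defn_extnl_sep_objects}, the regular-image factorisation system of a coregular closed symmetric monoidal category has the two properties Barr requires: every map in its left class is an epimorphism (\cref{reg_im_in_reg_cats}), and $[A,-]_{\mathsf C}$ carries regular monomorphisms to regular monomorphisms, since the internal hom preserves limits in its second variable. Granting this, one runs the following arguments.

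\emph{Functoriality of $e$, $s$ and the two adjunctions.} First I would upgrade $e$ and $s$ to functors exactly as for the $D$-separation functor (\cref{sepfun_defn}): a Chu-morphism $(f,f^\top)$ induces, by functoriality of the regular-image factorisation in the arrow category, comparison maps between the regular images $\im(\phi)$ (resp.\ $\im(\psi)$), and the defining commuting squares show the resulting pairs are again Chu-morphisms. For the right adjointness of $e$ to the inclusion $e\mathsf{Chu}_D(\mathsf C)\hookrightarrow\mathsf{Chu}_D(\mathsf C)$, I would exhibit the counit $\epsilon_{\mathcal A}\colon e\mathcal A=(A,\im(\phi_{\mathcal A}),\overline a)\to\mathcal A$ — the pair $(\id_A,\, A^\top\twoheadrightarrow\im(\phi_{\mathcal A}))$ — as universal: given an extensional $\mathcal B=(B,B^\top,b)$ and a Chu-morphism $(f,f^\top)\colon\mathcal B\to\mathcal A$ (so $f\colon B\to A$, $f^\top\colon A^\top\to B^\top$), transposing the Chu compatibility square yields $\phi_{\mathcal B}\circ f^\top=[f,D]_{\mathsf C}\circ\phi_{\mathcal A}$, where $\phi_{\mathcal A}\colon A^\top\to[A,D]_{\mathsf C}$ and $\phi_{\mathcal B}\colon B^\top\hookrightarrow[B,D]_{\mathsf C}$ are the maps induced from $a$ and $b$. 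Since $\phi_{\mathcal A}$ factors through the epimorphism $A^\top\twoheadrightarrow\im(\phi_{\mathcal A})$ and $\phi_{\mathcal B}$ is a regular monomorphism (extensionality of $\mathcal B$), the diagonal fill-in property of the factorisation system produces the unique lift $\im(\phi_{\mathcal A})\to B^\top$ that defines the required $\tilde g\colon\mathcal B\to e\mathcal A$ with $\epsilon_{\mathcal A}\circ\tilde g=(f,f^\top)$; here one also notes that $e\mathcal A$ is itself extensional, its comparison map into $[A,D]_{\mathsf C}$ being the regular monomorphism from the image factorisation. The left adjointness of $s$ to $s\mathsf{Chu}_D(\mathsf C)\hookrightarrow\mathsf{Chu}_D(\mathsf C)$ is the formally dual argument, now using that $A\twoheadrightarrow\im(\psi_{\mathcal A})$ is an epimorphism and that for separated $\mathcal B$ the map $\psi_{\mathcal B}\colon B\hookrightarrow[B^\top,D]_{\mathsf C}$ induced from $b$ is a regular monomorphism.

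\emph{The monoidal statement.} For the tensor unit $\mathcal I$ of $\mathsf{Chu}_D(\mathsf C)$ — the dual $\mathcal D^*$ of the dualising object, with underlying pair $(I_{\mathsf C},D)$ up to the canonical identifications — the induced map $\phi_{\mathcal I}\colon D\to[I_{\mathsf C},D]_{\mathsf C}$ is the left-unitor isomorphism, hence split and in particular a regular monomorphism, so $\mathcal I$ is extensional. For closure of extensional objects under $\otimes_{\mathsf{Chu}}$, I would unwind $\mathcal A\otimes_{\mathsf{Chu}}\mathcal B=([\mathcal A,\mathcal B^*]_{\mathsf{Chu}})^*$ and track the map from the second component into the relevant internal hom of $\mathsf C$: the pullback $[A,B]_0$ defining $[\mathcal A,\mathcal B]_{\mathsf{Chu}}$ inherits a regular-monomorphic comparison map because $[-,-]_{\mathsf C}$ preserves regular monomorphisms in its second variable and regular monomorphisms are stable under pullback in the situations of interest, and feeding extensionality of $\mathcal A$ and of $\mathcal B$ into this computation shows $\mathcal A\otimes_{\mathsf{Chu}}\mathcal B$ is again extensional. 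Since $e\mathsf{Chu}_D(\mathsf C)$ is then a full subcategory of $\mathsf{Chu}_D(\mathsf C)$ containing the unit and closed under the tensor product, the associator, unitors and braiding of $\mathsf{Chu}_D(\mathsf C)$ restrict, making $e\mathsf{Chu}_D(\mathsf C)$ symmetric monoidal with $e\mathsf{Chu}_D(\mathsf C)\hookrightarrow\mathsf{Chu}_D(\mathsf C)$ a strict, hence strong, symmetric monoidal functor. Finally, the embedding $\mathsf C\hookrightarrow\mathsf{Chu}_D(\mathsf C)$ of \cref{thm_chu_construction} lands in extensional objects, since for $A\mapsto(A,[A,D]_{\mathsf C},e^{A,D}\circ\sigma^{A,[A,D]})$ the induced map $\phi$ is $\id_{[A,D]_{\mathsf C}}$; it therefore corestricts to $\mathsf C\to e\mathsf{Chu}_D(\mathsf C)$, and this corestriction is strong symmetric monoidal because the embedding is and the inclusion $e\mathsf{Chu}_D(\mathsf C)\hookrightarrow\mathsf{Chu}_D(\mathsf C)$ is strict monoidal.

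\emph{Main obstacle.} The only step that is not a formal manipulation of universal properties is the closure of extensional objects under $\otimes_{\mathsf{Chu}}$: it requires opening up the Chu internal hom and dual and chasing regular monomorphisms through a pullback and through $[-,-]_{\mathsf C}$, and it is here that one genuinely uses both permanence properties of regular monomorphisms (stability under pullback and preservation by $[A,-]_{\mathsf C}$). For this, and for the detailed verification of the fill-in arguments above, I would refer to \cite[\S3]{barr1998separated}.
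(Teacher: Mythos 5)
Your proposal takes essentially the same route as the paper: the paper gives no proof of its own, simply citing \cite[Propositions 3.2, 3.4]{barr1998separated} together with the preceding remark that coregularity makes the regular-image factorisation system satisfy Barr's hypotheses (left class consists of epimorphisms, $[A,-]_{\mathsf C}$ preserves the right class), which is exactly your framing. Your additional sketches (the fill-in argument for the $e$- and $s$-adjunctions, extensionality of the unit via the unitor, and the corestriction of $A\mapsto (A,[A,D],\ldots)$ landing in extensional objects since $\phi=\id_{[A,D]}$) are correct and merely expand on what the paper delegates to Barr.
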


We now describe the closed monoidal structure on $\widehat{\mathsf{Chu}}_D(\mathsf{C})$.

\begin{defn}\label{cl_mon_str_on_chu}
    Define 
    \begin{enumerate}
        \item the dualising object and tensor unit of $\widehat{\mathsf{Chu}}_D(\mathsf{C})$ as $e\mathcal{D}$ and $s\mathcal{I}$, respectively (where $\mathcal{D}$ and $\mathcal{I}$ are those of ${\mathsf{Chu}}_D(\mathsf{C})$),
        \item the internal hom of $\widehat{\mathsf{Chu}}_D(\mathsf{C})$ as
            $$ [\mathcal{A}, \mathcal{B}]_{\widehat{\mathsf{Chu}}_D(\mathsf{C})} := e[\mathcal{A}, \mathcal{B}]_{{\mathsf{Chu}}_D(\mathsf{C})}, $$
        \item and the tensor product as 
            $$ \mathcal{A} \otimes_{\widehat{\mathsf{Chu}}_D(\mathsf{C})} \mathcal{B} := s(\mathcal{A} \otimes_{{\mathsf{Chu}}_D(\mathsf{C})} \mathcal{B}). $$
    \end{enumerate}
\end{defn}

\begin{thm}\label{thm_on_little_chu}
    Let $\mathsf{C}$ be a coregular bicomplete closed symmetric monoidal category and let $D$ be an object. Then, with the structure described in \cref{cl_mon_str_on_chu}, $\widehat{\mathsf{Chu}}_D(\mathsf{C})$ is a bicomplete $*$-autonomous category. Moreover, the functor 
        $$ \mathsf{C} \to \widehat{\mathsf{Chu}}_D(\mathsf{C}), \;\; A \mapsto s(A, [A,D], e^{A,D}\circ \sigma^{A, [A,D]}) $$
    is a strong symmetric monoidal functor.
\end{thm}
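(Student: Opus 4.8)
The plan is to derive this from \cref{thm_chu_construction} together with the general machinery of reflective/coreflective subcategories and Day's reflection theorem (\cref{day_refl_thm}). The key structural input is that $\widehat{\mathsf{Chu}}_D(\mathsf{C})$ sits inside $\mathsf{Chu}_D(\mathsf{C})$ in a doubly-reflective way: by \cref{separation_extn_prop}, the extensionalisation $e$ is right adjoint to the inclusion $e\mathsf{Chu}_D(\mathsf{C})\hookrightarrow \mathsf{Chu}_D(\mathsf{C})$ (a coreflection), and the pair separation $s$ is left adjoint to the inclusion $s\mathsf{Chu}_D(\mathsf{C})\hookrightarrow \mathsf{Chu}_D(\mathsf{C})$ (a reflection). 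First I would check that $e$ and $s$ restrict to an equivalence-compatible adjoint pair between $\widehat{\mathsf{Chu}}_D(\mathsf{C})$ and, respectively, $s\mathsf{Chu}_D(\mathsf{C})$ and $e\mathsf{Chu}_D(\mathsf{C})$; concretely, one shows that $s$ preserves extensionality and $e$ preserves separatedness, so that $\widehat{\mathsf{Chu}}_D(\mathsf{C})$ is simultaneously reflective (via $s$) in $e\mathsf{Chu}_D(\mathsf{C})$ and coreflective (via $e$) in $s\mathsf{Chu}_D(\mathsf{C})$. Bicompleteness of $\widehat{\mathsf{Chu}}_D(\mathsf{C})$ then follows: limits are computed by taking the limit in $\mathsf{Chu}_D(\mathsf{C})$ (which is bicomplete by \cref{thm_chu_construction}), then applying $e$ and $s$ as needed — equivalently, reflective and coreflective subcategories inherit all limits and colimits present in the ambient category, by the standard argument (cf.\ \cite[Proposition 4.5.15]{riehl2017category}).

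Next I would establish the closed symmetric monoidal structure. The symmetric monoidal structure on $e\mathsf{Chu}_D(\mathsf{C})$ is already given by \cref{separation_extn_prop}: extensional objects are closed under $\otimes_{\mathsf{Chu}_D(\mathsf{C})}$ and contain the tensor unit. To push this down to $\widehat{\mathsf{Chu}}_D(\mathsf{C})$ I would invoke Day's reflection theorem (\cref{day_refl_thm}) for the reflective subcategory $\widehat{\mathsf{Chu}}_D(\mathsf{C})\hookrightarrow e\mathsf{Chu}_D(\mathsf{C})$ with reflector $s$: one verifies the relevant condition, namely that $[\mathcal{A},\mathcal{B}]_{e\mathsf{Chu}_D(\mathsf{C})}$ is already separated whenever $\mathcal{B}$ is separated (equivalently, that $s$ applied to the internal hom changes nothing in that case, so the natural transformation $\epsilon^{[\mathcal{A},\mathcal{B}]}$ is an isomorphism). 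This is exactly the content that the internal hom $e[\mathcal{A},\mathcal{B}]_{\mathsf{Chu}_D(\mathsf{C})}$ — defined in \cref{cl_mon_str_on_chu} — lands in the separated objects, which in turn reduces to showing that the first component $[A,B]_0$ of the Chu internal hom, being built as a pullback of $[A,B]_{\mathsf{C}}$ and $[B^\top,A^\top]_{\mathsf{C}}$, is $D$-separated when $B$ is; this follows from \cref{duals_and_subobjects_sep} (duals are $D$-separated, and $D$-separatedness is inherited by regular subobjects, and pullbacks of regular monos are regular monos in a coregular category). Day's theorem then delivers the closed symmetric monoidal structure with internal hom $[-,-]_{\widehat{\mathsf{Chu}}_D(\mathsf{C})}$ and tensor product $s(-\otimes_{\mathsf{Chu}_D(\mathsf{C})}-)$, matching \cref{cl_mon_str_on_chu}, and makes $s$ strong symmetric monoidal.

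For $*$-autonomy, I would check that $\eta^{\mathcal{A}, e\mathcal{D}}$ is an isomorphism for every $\mathcal{A}\in \widehat{\mathsf{Chu}}_D(\mathsf{C})$. The cleanest route is to transport the $*$-autonomous duality $(-)^*$ of $\mathsf{Chu}_D(\mathsf{C})$ (from \cref{thm_chu_construction}) through the adjunctions: on a separated-extensional object, dualising in $\mathsf{Chu}_D(\mathsf{C})$ already yields a separated-extensional object (dualisation swaps the two components and swaps the separated/extensional conditions), so $(-)^*$ restricts to an involution on $\widehat{\mathsf{Chu}}_D(\mathsf{C})$, and the double-dual unit is inherited from the ambient $*$-autonomous category where it is already invertible. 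One then reconciles this with the internal hom of \cref{cl_mon_str_on_chu} by noting $[\mathcal{A}, e\mathcal{D}]_{\widehat{\mathsf{Chu}}_D(\mathsf{C})} = e[\mathcal{A},\mathcal{D}]_{\mathsf{Chu}_D(\mathsf{C})} = \mathcal{A}^*$ (the extensionalisation being superfluous on an object that is already extensional, which $\mathcal{A}^*$ is). Finally, the last sentence — that $A \mapsto s(A,[A,D],e^{A,D}\circ\sigma^{A,[A,D]})$ is strong symmetric monoidal — follows by composing the strong symmetric monoidal functor $\mathsf{C}\to e\mathsf{Chu}_D(\mathsf{C})$ of \cref{separation_extn_prop} with the strong symmetric monoidal reflector $s\colon e\mathsf{Chu}_D(\mathsf{C})\to \widehat{\mathsf{Chu}}_D(\mathsf{C})$ just obtained from Day's theorem, since a composite of strong symmetric monoidal functors is strong symmetric monoidal.

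I expect the main obstacle to be the verification that the internal hom $[\mathcal{A},\mathcal{B}]_{\mathsf{Chu}_D(\mathsf{C})}$ of a separated object $\mathcal{B}$ is again separated — i.e.\ the hypothesis of Day's reflection theorem. This requires unwinding the pullback definition of $[A,B]_0$ and carefully tracking regular monomorphisms through the tensor-hom adjunction and the pullback, using coregularity of $\mathsf{C}$ at several points (stability of regular monos under pullback, and the fact that $[A,-]$ preserves regular monos in the second variable). Everything else is a matter of assembling standard reflective/coreflective-subcategory arguments and the already-established \cref{thm_chu_construction,separation_extn_prop,day_refl_thm,duals_and_subobjects_sep}.
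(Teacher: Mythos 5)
Your plan is correct in substance but follows a genuinely different route from the paper for the first half of the statement. The paper does not reprove $*$-autonomy or bicompleteness of $\widehat{\mathsf{Chu}}_D(\mathsf{C})$ at all: it simply cites Barr's paper \cite{barr1998separated} for these facts, and devotes its proof entirely to the last sentence, factoring the functor as $\mathsf{C}\to e\mathsf{Chu}_D(\mathsf{C})\xrightarrow{\,s\,}\widehat{\mathsf{Chu}}_D(\mathsf{C})$, observing that the first functor is strong symmetric monoidal by \cref{separation_extn_prop} and that $s$ is \emph{strict} monoidal by the very definition of $\otimes_{\widehat{\mathsf{Chu}}_D(\mathsf{C})}$ in \cref{cl_mon_str_on_chu}. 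You use the same factorisation for that last part, but you obtain strong monoidality of $s$ from Day's reflection theorem instead of reading it off the definition, and you replace the citation to Barr by a self-contained derivation: bicompleteness from the reflective/coreflective inheritance through $e\mathsf{Chu}_D(\mathsf{C})$ and $s\mathsf{Chu}_D(\mathsf{C})$, the closed structure from \cref{day_refl_thm}, and $*$-autonomy by restricting the ambient Chu duality (which indeed swaps the separated and extensional conditions). What your approach buys is independence from \cite{barr1998separated} and a proof that stays entirely within the machinery already set up in this chapter (\cref{thm_chu_construction,separation_extn_prop,day_refl_thm,properties_of_reg_epis_in_reg_cat,duals_and_subobjects_sep}); what it costs is the extra verification work that Barr has already done.

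Two points in your sketch need sharpening. First, \cref{day_refl_thm} is stated for a reflective subcategory of a \emph{closed} symmetric monoidal category, while \cref{separation_extn_prop} only makes $e\mathsf{Chu}_D(\mathsf{C})$ symmetric monoidal; you must first note that $e\mathsf{Chu}_D(\mathsf{C})$ is closed with internal hom $e[-,-]_{\mathsf{Chu}_D(\mathsf{C})}$, which follows from $e$ being right adjoint to the inclusion together with closure of extensional objects under $\otimes$. Second, the Day hypothesis is not that the carrier $[A,B]_0$ is abstractly $D$-separated, but that the Chu object $e[\mathcal{A},\mathcal{B}]$ is separated, i.e.\ that its \emph{specific} structure map into $[\,\cdot\,,D]$ is a regular monomorphism; the verification needs both that $\mathcal{B}$ is separated (so $\psi\colon[A,B]_{\mathsf{C}}\to[A\otimes B^\top,D]$ is a regular mono) and that $\mathcal{A}$ is extensional (so $\phi$, and hence its pullback $[A,B]_0\to[A,B]_{\mathsf{C}}$, is a regular mono), after which the composite is a regular mono and the cancellation property of \cref{properties_of_reg_epis_in_reg_cat} handles the passage from $[\mathcal{A},\mathcal{B}]$ to $e[\mathcal{A},\mathcal{B}]$. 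Your closing paragraph describes exactly this tracking of regular monomorphisms, so the plan goes through, but the reduction as literally stated (via \cref{duals_and_subobjects_sep} alone) is weaker than what is required.
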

\begin{proof}
    For $*$-autonomy of $\widehat{\mathsf{Chu}}_D(\mathsf{C})$, see \cite{barr1998separated}. For the second part of the statement, notice the above functor is the composite of the functors
    \[\begin{tikzcd}
	{\mathsf{C}} & {e\mathsf{Chu}(\mathsf{C})} & {\widehat{\mathsf{Chu}}_D(\mathsf{C}),}
	\arrow[from=1-1, to=1-2]
	\arrow["s", from=1-2, to=1-3]
    \end{tikzcd}\]
    the first of which is strong symmetric monoidal by \cref{separation_extn_prop}. The second, on the other hand, is a \emph{strict} monoidal functor, by definition of the tensor product on $\widehat{\mathsf{Chu}}_D(\mathsf{C})$: 
        $$ \mathcal{A} \otimes_{\widehat{\mathsf{Chu}}_D(\mathsf{C})} \mathcal{B} = s(\mathcal{A} \otimes_{{\mathsf{Chu}}_D(\mathsf{C})} \mathcal{B}) = s(\mathcal{A} \otimes_{e\mathsf{Chu}_D(\mathsf{C})} \mathcal{B}) $$
    The claim now follows from the fact that a composite of strong symmetric monoidal functors is strong symmetric monoidal.
\end{proof}

One important difference between the categories $\widehat{\mathsf{Chu}}_D(\mathsf{C})$ and ${\mathsf{Chu}}_D(\mathsf{C})$ is the following.

\begin{prop}
    The functor 
        $$ \widehat{\mathsf{Chu}}_D(\mathsf{C}) \to \mathsf{C}, \;\;\; (A, A^\top, a) \mapsto A$$
    is faithful.
\end{prop}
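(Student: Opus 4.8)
The plan is to show that the forgetful functor $(A,A^\top,a)\mapsto A$ on $\widehat{\mathsf{Chu}}_D(\mathsf{C})$ reflects equality of morphisms, i.e.\ that a morphism $(f,f^\top)\colon(A,A^\top,a)\to(B,B^\top,b)$ in $\widehat{\mathsf{Chu}}_D(\mathsf{C})$ is uniquely determined by its first component $f\colon A\to B$. Since the forgetful functor ${\mathsf{Chu}}_D(\mathsf{C})\to\mathsf{C}$ of \cref{thm_chu_construction} is manifestly not faithful (the second component is unconstrained there), the point is entirely about how the \emph{separated-extensional} restriction rigidifies the second component. Concretely, I would fix two morphisms $(f,f^\top)$ and $(f,g^\top)$ with the same first component and argue $f^\top=g^\top\colon B^\top\to A^\top$.

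The key step is extensionality of the \emph{source}: since $(A,A^\top,a)\in\widehat{\mathsf{Chu}}_D(\mathsf{C})$ is in particular extensional, the morphism $\varphi\colon A^\top\to[A,D]_{\mathsf{C}}$ induced from $a$ by the tensor-hom adjunction is a regular monomorphism, hence (by \cref{properties_of_reg_epis_in_reg_cat} or simply because regular monos are monos) a monomorphism. So it suffices to check $\varphi\circ f^\top=\varphi\circ g^\top\colon B^\top\to[A,D]_{\mathsf{C}}$. Now unwind the commuting square in the definition of a Chu morphism for $(f,f^\top)$: it says precisely that the two composites $A\otimes_{\mathsf{C}}B^\top\rightrightarrows D$, namely $a\circ(\id_A\otimes f^\top)$ and $b\circ(f\otimes\id_{B^\top})$, agree. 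Transposing this identity across the tensor-hom adjunction $\Hom_{\mathsf{C}}(A\otimes_{\mathsf{C}}B^\top,D)\cong\Hom_{\mathsf{C}}(B^\top,[A,D]_{\mathsf{C}})$, the left composite transposes to $\varphi\circ f^\top$ while the right composite transposes to a map depending only on $f$ and $b$ (namely the transpose of $b\circ(f\otimes\id_{B^\top})$, which one can write as $[f,D]_{\mathsf{C}}\circ\psi_b$ for $\psi_b\colon B^\top\to[B,D]_{\mathsf{C}}$ the transpose of $b$). Hence $\varphi\circ f^\top=[f,D]_{\mathsf{C}}\circ\psi_b=\varphi\circ g^\top$, and since $\varphi$ is monic we conclude $f^\top=g^\top$.

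I would phrase the argument cleanly by observing that the Chu-morphism condition on $(f,f^\top)$ is \emph{equivalent} to the single equation $\varphi\circ f^\top = [f,D]_{\mathsf{C}}\circ\psi_b$ in $\Hom_{\mathsf{C}}(B^\top,[A,D]_{\mathsf{C}})$, obtained by transposing the defining square along the tensor-hom adjunction; the right-hand side involves only $f$ (and the fixed pairings $a,b$), and the left-hand side, because $\varphi$ is a monomorphism by extensionality of the source, determines $f^\top$. Separatedness of the objects is not needed for faithfulness — only extensionality of the domain — but it is harmless to invoke that all objects here are separated-extensional.

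The main obstacle is bookkeeping with the tensor-hom transposes: one must be careful that the defining commuting square of a Chu morphism really does transpose to the stated equation, matching conventions with \cref{evaluation_morphisms_defn} and the internal-hom description in \cref{thm_chu_construction}. This is routine adjunction-chasing with no conceptual difficulty, but it is the only place where an error could creep in; everything else is the immediate fact that a regular monomorphism is a monomorphism (already proved in the excerpt).
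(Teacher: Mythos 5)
Your proof is correct and is essentially the paper's own argument: the paper likewise fixes two morphisms $(f,f_0^\top)$, $(f,f_1^\top)$, observes that composing each $f_i^\top$ with the embedding $A^\top\hookrightarrow[A,D]_{\mathsf{C}}$ yields the same map $[f,D]_{\mathsf{C}}$ precomposed with $B^\top\hookrightarrow[B,D]_{\mathsf{C}}$ (the transposed Chu condition), and concludes from monicity of $A^\top\to[A,D]_{\mathsf{C}}$, i.e.\ extensionality of the source, that $f_0^\top=f_1^\top$. Your remark that only extensionality of the domain is used is also consistent with the paper's proof.
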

\begin{proof}
    If $(f,f_0^\top),(f,f_1^\top):(A,A^\top, a) \to (B,B^\top, b)$ are two morphisms in $\widehat{\mathsf{Chu}}_D(\mathsf{C})$ inducing the same morphism $f$ in $\mathsf{C}$ under the above functor, then, since the diagram 
    \[\begin{tikzcd}
	{B^\top} & {A^\top} \\
	{[B,D]} & {[B,D]}
	\arrow["{f_0^\top}", shift left=1, from=1-1, to=1-2]
	\arrow[hook, from=1-1, to=2-1]
	\arrow[hook', from=1-2, to=2-2]
	\arrow["{[f,D]}"', from=2-1, to=2-2]
	\arrow["{f_1^\top}"', shift right=1, from=1-1, to=1-2]
    \end{tikzcd}\]
    commutes and the morphism $A^\top \to [A,D]$ is a monomorphism, $f_0^\top=f_1^\top$.
\end{proof}

We may hence interpret the above functor as a forgetful functor. Under this interpretation, the objects of $\widehat{\mathsf{Chu}}_D(\mathsf{C})$ are $\mathsf{C}$-objects with some additional structure (which the forgetful functor ``forgets''), and the morphisms in $\widehat{\mathsf{Chu}}_D(\mathsf{C})$ are precisely those $\mathsf{C}$-morphisms between (underlying $\mathsf{C}$-objects of) objects of $\widehat{\mathsf{Chu}}_D(\mathsf{C})$ which preserve this additional structure. \par 
In particular, if $\mathsf{C}$ is a concrete category (i.e.~equipped with a faithful functor to the category of sets), then so is $\widehat{\mathsf{Chu}}_D(\mathsf{C})$ in a canonical way. This is in sharp contrast to the larger category ${\mathsf{Chu}}_D(\mathsf{C})$. For example, the triple $(0, V, 0)$, where $V$ is \emph{any} (algebraic) vector space, is an object of ${\mathsf{Chu}}_D(\mathsf{Vect}(\mathsf{Set}))$. Such ``degenerate'' examples cannot occur in $\widehat{\mathsf{Chu}}_D(\mathsf{Vect}(\mathsf{Set}))$. \par

\subsubsection{Relation between the separated-extensional Chu construction and $D$-separated objects} A natural question is now which objects of $\mathsf{C}$ can arise as the underlying $\mathsf{C}$-object of an an object of $\widehat{\mathsf{Chu}}_D(\mathsf{C})$. In other words: which $\mathsf{C}$-objects admit the structure of an object of $\widehat{\mathsf{Chu}}_D(\mathsf{C})$? These are precisely the $D$-separated objects. 

\begin{prop}\label{sepobjs_corefl_subcat_of_little_chu}
    Let $\mathsf{C}$ be a coregular bicomplete closed symmetric monoidal category and let $D\in \mathsf{C}$. Then an object $A$ of $\mathsf{C}$ is $D$-separated if, and only if, it is the underlying $\mathsf{C}$-object of an object of $\widehat{\mathsf{Chu}}_D(\mathsf{C})$ (i.e.~there is an object of $\widehat{\mathsf{Chu}}_D(\mathsf{C})$ of the form $(A, A^\top, a)$). As a consequence, the functor 
        $$ \iota : \sepfun_D(\mathsf{C}) \hookrightarrow \widehat{\mathsf{Chu}}_D(\mathsf{C}), \;\; A \mapsto (A, [A,D], e^{A,D}\circ \sigma^{A, [A,D]}), \; f \mapsto (f, [f,D]) $$
    is well-defined, fully faithful and left adjoint as well as right inverse to
        $$ \widehat{\mathsf{Chu}}_D(\mathsf{C}) \to \sepfun_D(\mathsf{C}), \;\;\; (A, A^\top, a) \mapsto A. $$
    In other words, $\sepfun_D(\mathsf{C})$ is a coreflective subcategory of $\widehat{\mathsf{Chu}}_D(\mathsf{C})$ and we have the following diagram of functors, in which the solid square of inclusions commutes:
    \[\begin{tikzcd}
	{\sepfun_D(\mathsf{C})} & {\widehat{\mathsf{Chu}}_D(\mathsf{C})} \\
	{\mathsf{C}} & {\mathsf{Chu}_D(\mathsf{C})}
	\arrow[""{name=0, anchor=center, inner sep=0}, shift right=2, hook, from=1-1, to=1-2]
	\arrow[""{name=1, anchor=center, inner sep=0}, shift right=2, hook, from=1-1, to=2-1]
	\arrow[""{name=2, anchor=center, inner sep=0}, shift right=2, hook, from=2-1, to=2-2]
	\arrow[hook, from=1-2, to=2-2]
	\arrow[""{name=3, anchor=center, inner sep=0}, shift right=2, dashed, from=1-2, to=1-1]
	\arrow[""{name=4, anchor=center, inner sep=0}, shift right=2, dashed, from=2-2, to=2-1]
	\arrow[""{name=5, anchor=center, inner sep=0}, shift right=2, dashed, from=2-1, to=1-1]
	\arrow["\dashv"{anchor=center, rotate=90}, draw=none, from=0, to=3]
	\arrow["\dashv"{anchor=center, rotate=90}, draw=none, from=2, to=4]
	\arrow["\dashv"{anchor=center, rotate=-180}, draw=none, from=5, to=1]
    \end{tikzcd}\]
\end{prop}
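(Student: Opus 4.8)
The plan is to reduce the statement almost entirely to \cref{thm_chu_construction}, which already supplies the fully faithful embedding $\mathsf{C}\hookrightarrow\mathsf{Chu}_D(\mathsf{C})$ together with its right adjoint $\mathsf{Chu}_D(\mathsf{C})\to\mathsf{C}$, combined with the characterisation of $D$-separated objects as those admitting a regular monomorphism into a dual (\cref{duals_and_subobjects_sep}), and the elementary fact that $\sepfun_D(\mathsf{C})\hookrightarrow\mathsf{C}$ and $\widehat{\mathsf{Chu}}_D(\mathsf{C})\hookrightarrow\mathsf{Chu}_D(\mathsf{C})$ are \emph{full} subcategories.

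First I would prove the ``if and only if''. The functor $\iota$ sends $A$ to the \emph{canonical} Chu object $(A,[A,D],e^{A,D}\circ\sigma^{A,[A,D]})$; unfolding \cref{evaluation_morphisms_defn}, the morphism $\phi\colon[A,D]\to[A,D]$ induced from this pairing is $\id_{[A,D]}$ (so the object is always extensional, as also follows from \cref{separation_extn_prop}), while the morphism $\psi\colon A\to[[A,D],D]$ induced from it is \emph{exactly} $\eta^{A,D}$ — not merely isomorphic to it. Hence this Chu object is separated, i.e.\ lies in $\widehat{\mathsf{Chu}}_D(\mathsf{C})$, precisely when $A$ is $D$-separated. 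Conversely, if $(A,A^\top,a)$ is any object of $\widehat{\mathsf{Chu}}_D(\mathsf{C})$, separatedness says the induced $\psi\colon A\to[A^\top,D]$ is a regular monomorphism, i.e.\ $A$ admits a regular monomorphism into a dual with respect to $D$, so $A$ is $D$-separated by \cref{duals_and_subobjects_sep}(2). This establishes the iff and in particular shows that $R\colon (A,A^\top,a)\mapsto A$ takes values in $\sepfun_D(\mathsf{C})$; functoriality of $R$ is immediate from the componentwise composition law in $\mathsf{Chu}_D(\mathsf{C})$.

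Next I would treat $\iota$. Well-definedness on objects is the forward direction above; well-definedness on morphisms and functoriality follow by restricting the functor $\mathsf{C}\hookrightarrow\mathsf{Chu}_D(\mathsf{C})$ of \cref{thm_chu_construction} to the full subcategory $\sepfun_D(\mathsf{C})$ and corestricting to the full subcategory $\widehat{\mathsf{Chu}}_D(\mathsf{C})$; fully faithfulness of $\iota$ is then inherited from that of $\mathsf{C}\hookrightarrow\mathsf{Chu}_D(\mathsf{C})$, again using fullness of both subcategories. For the adjunction $\iota\dashv R$: given $X\in\sepfun_D(\mathsf{C})$ and $\mathcal{A}=(A,A^\top,a)\in\widehat{\mathsf{Chu}}_D(\mathsf{C})$, fullness gives $\Hom_{\widehat{\mathsf{Chu}}_D(\mathsf{C})}(\iota X,\mathcal{A})=\Hom_{\mathsf{Chu}_D(\mathsf{C})}(\iota X,\mathcal{A})$, which by the adjunction of \cref{thm_chu_construction} is naturally isomorphic to $\Hom_{\mathsf{C}}(X,A)=\Hom_{\sepfun_D(\mathsf{C})}(X,R\mathcal{A})$, and this bijection is ``forget $f^\top$'', i.e.\ it is $R$ on hom-sets; hence $\iota$ is left adjoint to $R$, with unit the identity — which is exactly the claim $R\circ\iota=\id_{\sepfun_D(\mathsf{C})}$ — and counit $\epsilon_{\mathcal{A}}=(\id_A,\phi)$, where $\phi\colon A^\top\to[A,D]$ is the extensionality monomorphism of $\mathcal{A}$. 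Finally, the square of inclusions in the displayed diagram commutes on the nose, since both composites send $A$ to $(A,[A,D],e^{A,D}\circ\sigma^{A,[A,D]})$ and $f$ to $(f,[f,D])$, and the three adjunction cells are, respectively, the one just constructed, the one from \cref{thm_chu_construction}, and the one from \cref{thm_sepfun_left_adjoint}.

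I do not expect a genuinely hard step here; the two points needing care rather than computation are (i) verifying that the $\psi$ induced by the canonical pairing $e^{A,D}\circ\sigma^{A,[A,D]}$ literally is $\eta^{A,D}$ (a direct unfolding of \cref{evaluation_morphisms_defn}), and (ii) invoking \cref{duals_and_subobjects_sep}(2) for the backward implication rather than attempting to manipulate $\eta^{A,D}$ through a string of transpose identities with $\sigma$ and the evaluation maps. With those in hand, the proposition is bookkeeping: it glues together \cref{thm_chu_construction}, \cref{duals_and_subobjects_sep}, and fullness of the relevant subcategories.
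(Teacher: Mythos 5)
Your proposal is correct and follows essentially the same route as the paper: the forward direction is the observation that the canonical Chu object $(A,[A,D],e^{A,D}\circ\sigma^{A,[A,D]})$ lies in $\widehat{\mathsf{Chu}}_D(\mathsf{C})$ exactly when $A$ is $D$-separated, the backward direction invokes \cref{duals_and_subobjects_sep}, and the adjunction/fully-faithfulness claims are deduced from \cref{thm_chu_construction} via fullness of the subcategories. You merely spell out details (e.g.\ that $\psi$ is literally $\eta^{A,D}$ and $\phi=\id_{[A,D]}$, and the explicit unit/counit) that the paper leaves implicit in its citation of \cref{thm_chu_construction}.
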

\begin{proof}
    If $A\in \mathsf{C}$ is $D$-separated, then $(A,[A,D],e^{A,D}\circ \sigma^{A, [A,D]})$ is an object of $\widehat{\mathsf{Chu}}_D(\mathsf{C})$. Conversely, if $(A,A^\top,a) \in \widehat{\mathsf{Chu}}_D(\mathsf{C})$, then we have a regular monomorphism, 
        $$ A \to [A^\top,D], $$
    and by \cref{duals_and_subobjects_sep}, $A$ is $D$-separated. The other claims now follow from \cref{thm_chu_construction}.
\end{proof}

\subsection{Closed symmetric monoidal categories over cartesian closed categories}\label{sec_closed_symm_mon_cats_over_cart_cl_cats}

The following notion captures when a given closed symmetric monoidal (or $*$-autonomous, in particular) category stands in a good relationship with a cartesian closed ``base category''.

\begin{defn}\label{defn_aut_cat_over_ccc}
    A \emph{closed-symmetric-monoidal-over-cartesian-closed category} $|\cdot|\vdash T: \mathsf{D}\to \mathsf{C}$ is a closed symmetric monoidal category $\mathsf{D}$, together with a cartesian closed category $\mathsf{C}$ (the \emph{base category}) and a symmetric monoidal adjunction (\cref{symmmon_adj}), 
    \[\begin{tikzcd}
	{\mathsf{C}} & {\mathsf{D}}
	\arrow[""{name=0, anchor=center, inner sep=0}, "U"', from=1-1, to=1-2]
	\arrow[""{name=1, anchor=center, inner sep=0}, "T"', shift right=3, from=1-2, to=1-1]
	\arrow["\dashv"{anchor=center, rotate=-90}, draw=none, from=1, to=0]
    \end{tikzcd}\]
    By a bicomplete, coregular closed-symmetric-monoidal-over-cartesian-closed category $|\cdot|\vdash T: \mathsf{D}\to \mathsf{C}$ we mean a closed-symmetric-monoidal-over-cartesian-closed category for which both $\mathsf{D}$ and $\mathsf{C}$ are bicomplete (i.e.~have all limits and colimits) and coregular. A \emph{$*$-autonomous-over-cartesian-closed category} is a closed-symmetric-monoidal-over-cartesian-closed category $|\cdot|\vdash T: \mathsf{D}\to \mathsf{C}$ where $\mathsf{D}$ is $*$-autonomous. \par 
    We cal call $\mathsf{C}$ the base category and the adjunction $|\cdot|\vdash T$ the free-forgetful adjunction.
\end{defn}

\begin{remark}
    A definition similar to \cref{defn_aut_cat_over_ccc} is given in \cite[p. 12, Definition 6]{de2014categorical} under the name \emph{linear-non-linear category}, in the context of (the categorical semantics of) \emph{linear logic}.
\end{remark}

In our context, one should think of $\mathsf{D}$ as a category of linear spaces (e.g.~vector spaces or linear $hk$-spaces) living over a base category $\mathsf{C}$ of ``general'' spaces (e.g.~sets or $hk$-spaces). 

\par 
The free-forgetful adjunction defining a closed-symmetric-monoidal-over-cartesian-closed category is always enriched over the base category:

\begin{prop}\label{prop_free_forgetful_enriched}
    Let $|\cdot|\vdash T: \mathsf{D}\to \mathsf{C}$ be a closed-symmetric-monoidal-over-cartesian-closed category. Then we have a natural isomorphism, 
        $$ [X, V]_{\mathsf{C}} \cong |[TX, V]_{\mathsf{D}}|. \qquad (X\in \mathsf{C}, V\in\mathsf{D})$$
\end{prop}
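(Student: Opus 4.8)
The plan is to derive the isomorphism $[X,V]_{\mathsf{C}} \cong |[TX,V]_{\mathsf{D}}|$ by a Yoneda argument in the base category $\mathsf{C}$, using the two adjunctions available: the free-forgetful adjunction $T \dashv |\cdot|$ and the tensor-hom adjunctions in the cartesian closed category $\mathsf{C}$ and the closed symmetric monoidal category $\mathsf{D}$. First I would fix an arbitrary test object $Y \in \mathsf{C}$ and compute $\Hom_{\mathsf{C}}(Y, |[TX,V]_{\mathsf{D}}|)$ step by step. Applying the free-forgetful adjunction gives $\Hom_{\mathsf{C}}(Y, |[TX,V]_{\mathsf{D}}|) \cong \Hom_{\mathsf{D}}(TY, [TX,V]_{\mathsf{D}})$. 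Then the tensor-hom adjunction in $\mathsf{D}$ (\cref{defn_closed_symmetric_mon_cat}) turns this into $\Hom_{\mathsf{D}}(TY \otimes_{\mathsf{D}} TX, V)$.

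The next — and key — step is to use that $T$ is a \emph{strong} monoidal functor, which is part of the definition of a symmetric monoidal adjunction (\cref{symmmon_adj}, \cref{defn_aut_cat_over_ccc}): strongness means the comparison map $TY \otimes_{\mathsf{D}} TX \to T(Y \times X)$ is an isomorphism. (Here $\times$ is the cartesian product in $\mathsf{C}$, which is its monoidal structure.) Hence $\Hom_{\mathsf{D}}(TY \otimes_{\mathsf{D}} TX, V) \cong \Hom_{\mathsf{D}}(T(Y \times X), V)$. Applying the free-forgetful adjunction once more yields $\Hom_{\mathsf{C}}(Y \times X, |V|)$, and finally the tensor-hom adjunction in the cartesian closed category $\mathsf{C}$ gives $\Hom_{\mathsf{C}}(Y, [X, |V|]_{\mathsf{C}})$. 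To keep notation honest I would note that, since the adjunction is over the base and $V \in \mathsf{D}$, one has $|V|$ as the underlying $\mathsf{C}$-object, and $[X,V]_{\mathsf{C}}$ in the statement is shorthand for $[X,|V|]_{\mathsf{C}}$. Assembling the chain, $\Hom_{\mathsf{C}}(Y, |[TX,V]_{\mathsf{D}}|) \cong \Hom_{\mathsf{C}}(Y, [X,|V|]_{\mathsf{C}})$ naturally in $Y$, so the Yoneda lemma delivers the claimed isomorphism; naturality in $X$ and $V$ follows because every isomorphism in the chain is natural in all the variables involved.

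The main obstacle — though it is more a matter of care than of difficulty — is the bookkeeping around the monoidal comparison map for $T$: one must invoke strongness of the left adjoint explicitly (this is exactly where the hypothesis of a \emph{symmetric monoidal} adjunction, as opposed to a mere adjunction between monoidal categories, is used), and one must be careful that the tensor unit and associators match up so that the composite of natural isomorphisms is itself natural. A secondary point worth a sentence is that the first "$\Hom_{\mathsf{C}}$" in the chain should really be read with $|[TX,V]_{\mathsf{D}}|$ in $\mathsf{C}$ and the last with $[X,|V|]_{\mathsf{C}}$ in $\mathsf{C}$; once this is clear, the argument is a clean four-step adjunction juggle concluded by Yoneda.
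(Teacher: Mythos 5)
Your proposal is correct and follows essentially the same route as the paper: the paper's proof is exactly the same chain of natural isomorphisms (free-forgetful adjunction, tensor-hom adjunction in $\mathsf{D}$, the strong monoidal comparison $T(Y\times X)\cong TY\otimes_{\mathsf{D}} TX$, cartesian closure of $\mathsf{C}$), just traversed in the opposite direction, concluded by Yoneda. Your extra care in writing $|V|$ and in flagging where strong monoidality of the left adjoint enters only makes explicit what the paper leaves implicit.
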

\begin{proof}
    Using that the free-forgetful adjunction is a monoidal adjunction, we obtain a sequence of natural isomorphisms, 
    \begin{align*}
    \Hom_{\mathsf{C}}(Y, [X, V]_{\mathsf{C}}) & \cong \Hom_{\mathsf{C}}(Y \times X, V) \\
    & \cong \Hom_{\mathsf{D}}(T(Y\times X), V) \\
    & \cong \Hom_{\mathsf{D}}(TY\otimes_{\mathsf{D}} TX, V) \\
    & \cong \Hom_{\mathsf{D}}(TY, [TX, V]_{\mathsf{D}}) \\
    & \cong \Hom_{\mathsf{C}}(Y, |[TX, V]_{\mathsf{D}}|). \qquad (Y\in \mathsf{C})
    \end{align*} 
    The claim now follows from the Yoneda lemma.
\end{proof}

\subsubsection{Free $D$-separated objects} The notion of $D$-separated object is compatible with that of closed-symmetric-monoidal-over-cartesian-closed category, in the following sense.

\begin{prop}
    Let $|\cdot|\vdash T: \mathsf{D}\to \mathsf{C}$ be a closed-symmetric-monoidal-over-cartesian-closed category and let $D\in \mathsf{D}$ be an object. Then 
        $$ T_D := \sepfun_D \circ T $$
    is left adjoint to the ``forgetful'' functor
        $$ |\cdot| \circ \iota : \sepfun_D \mathsf{D} \to \mathsf{C}, $$
    where 
        $$ \iota: \sepfun_D \mathsf{D} \to \mathsf{D}$$
    is the inclusion functor, and 
        $$|\cdot|\circ \iota \vdash T_D: \mathsf{D}\to \mathsf{C}$$ 
    is again a closed-symmetric-monoidal-over-cartesian-closed category.
\end{prop}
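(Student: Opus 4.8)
The plan is to verify the two assertions — that $T_D = \sepfun_D \circ T$ is left adjoint to $|\cdot| \circ \iota$, and that $|\cdot|\circ\iota \vdash T_D$ is again a closed-symmetric-monoidal-over-cartesian-closed category — essentially by composing adjunctions that are already available in the excerpt.

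First I would establish the adjunction. We have the free-forgetful adjunction $T \dashv |\cdot|$ (part of the data of the closed-symmetric-monoidal-over-cartesian-closed category $|\cdot| \vdash T : \mathsf{D}\to\mathsf{C}$), and by \cref{thm_sepfun_left_adjoint} the $D$-separation functor $\sepfun_D : \mathsf{D}\to\sepfun_D\mathsf{D}$ is left adjoint to the inclusion $\iota : \sepfun_D\mathsf{D}\hookrightarrow\mathsf{D}$ (here I note that $\mathsf{D}$ is coregular and closed symmetric monoidal, so \cref{thm_sepfun_left_adjoint} applies). A composite of left adjoints is a left adjoint to the composite of the corresponding right adjoints, so $\sepfun_D\circ T \dashv |\cdot|\circ\iota$. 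Concretely, for $X\in\mathsf{C}$ and $B\in\sepfun_D\mathsf{D}$,
\[
 \Hom_{\sepfun_D\mathsf{D}}(T_D X, B) = \Hom_{\sepfun_D\mathsf{D}}(\sepfun_D(TX), B) \cong \Hom_{\mathsf{D}}(TX, \iota B) \cong \Hom_{\mathsf{C}}(X, |\iota B|),
\]
naturally in both variables. This is the required adjunction.

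Next I would check that the new structure is a closed-symmetric-monoidal-over-cartesian-closed category in the sense of \cref{defn_aut_cat_over_ccc}. The base category $\mathsf{C}$ is unchanged and still cartesian closed; the category $\sepfun_D\mathsf{D}$ is closed symmetric monoidal by \cref{cor_d_sep_closed_symm_mon} (with tensor product $\sepfun_D(-\otimes_{\mathsf{D}}-)$ and internal hom $[-,-]_{\mathsf{D}}$ restricted to $D$-separated objects). What remains is that the adjunction $T_D \dashv |\cdot|\circ\iota$ is a \emph{symmetric monoidal} adjunction, i.e.\ that its left adjoint $T_D = \sepfun_D\circ T$ is a strong symmetric monoidal functor. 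But $T$ is strong symmetric monoidal by hypothesis, and $\sepfun_D : \mathsf{D}\to\sepfun_D\mathsf{D}$ is strong symmetric monoidal by \cref{cor_d_sep_closed_symm_mon}; a composite of strong symmetric monoidal functors is strong symmetric monoidal, so $T_D$ is strong symmetric monoidal, and hence $T_D\dashv|\cdot|\circ\iota$ is a symmetric monoidal adjunction. This gives the claim.

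The step requiring the most care is not really an obstacle so much as a bookkeeping point: one must make sure the hypotheses needed to invoke \cref{thm_sepfun_left_adjoint} and \cref{cor_d_sep_closed_symm_mon} — namely that $\mathsf{D}$ is coregular and closed symmetric monoidal — are in force. These follow from the standing assumptions on a (bicomplete, coregular) closed-symmetric-monoidal-over-cartesian-closed category; if one only assumes the bare \cref{defn_aut_cat_over_ccc} one should add coregularity of $\mathsf{D}$ to the hypotheses of the proposition (or note that it is implicit in the intended usage). One should also remark that the monoidal structure on $\sepfun_D\mathsf{D}$ referenced here is precisely the one supplied by Day's reflection theorem via \cref{cor_d_sep_closed_symm_mon}, so that "strong symmetric monoidal" for $\sepfun_D$ is exactly the statement proved there. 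With these identifications in place, the proof is the two-line composition-of-adjunctions argument above together with the composition-of-strong-monoidal-functors observation.
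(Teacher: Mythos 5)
Your proof is correct and follows essentially the same route as the paper's: compose the symmetric monoidal adjunction $\sepfun_D \dashv \iota$ (obtained from \cref{thm_sepfun_left_adjoint} and \cref{cor_d_sep_closed_symm_mon}) with the given symmetric monoidal adjunction $T \dashv |\cdot|$, using that strong symmetric monoidal functors compose. Your side remark about coregularity of $\mathsf{D}$ is well taken — the paper's own proof silently uses the same facts about $\sepfun_D$, which indeed presuppose the coregular setting in which the separation functor was constructed.
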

\begin{proof}
    The $D$-separation $\sepfun_D$ is left adjoint to the inclusion functor $\iota$ and this is a symmetric monoidal adjunction, since $D$-separated objects form an exponential ideal in $\mathsf{D}$. Composing the two symmetric monoidal adjunctions $ \iota \vdash \sepfun_D$ and $|\cdot|\vdash T$ gives the desired 
        $$|\cdot|\circ \iota \vdash T_D.$$
\end{proof}

\subsubsection{The free replete linear $hk$-space} As a particular case, the category $\sclin$ of replete linear $hk$-spaces also admits \emph{free objects}.

\begin{defn}[Free replete linear $hk$-space]
    Let $X$ be an $hk$-space. Define the \emph{free linear $hk$-space} on $X$,
        $$ \mathcal{M}_c(X) := rF_{\mathbb{K}}(X), $$
    as the repletion of the free linear $hk$-space on $X$.
\end{defn}

\begin{remark}
    The notation $\mathcal{M}_c(X)$ will be justified in Chapter 4, where we will show that this object can be identified with the space of compactly supported Radon measures on $X$, when $X$ is a Hausdorff $k$-space.
\end{remark}

\begin{warning}\label{warning_free_replete_vs_free_paired}
    We will use the same notation for the free \emph{paired} linear $hk$-space (and have already done so in the introduction). This unproblematic, since these two objects agree, in the sense that the free replete linear $hk$-space $\mathcal{M}_c(X)$ carries a natural paired linear $hk$-space structure under which it becomes the free paired linear $hk$-space.
\end{warning}

\begin{cor}[Universal property of $\mathcal{M}_c(X)$]\label{cor_univ_prop_free_repl_sp}
    Let $X$ be an $hk$-space, let $V$ be a replete linear $hk$-space and $f:X\to V$ be a continuous map. Then there exists a unique continuous linear map $\tilde{f}: \mathcal{M}_c(X) \to V$ making the diagram 
    \[\begin{tikzcd}
	{\mathcal{M}_c(X)} & V \\
	X
	\arrow["{\tilde{f}}", dashed, from=1-1, to=1-2]
	\arrow["{\delta_\bullet}", from=2-1, to=1-1]
	\arrow["f"', from=2-1, to=1-2]
    \end{tikzcd}\]
    commute, where $\delta_\bullet$ is the composite of the canonical maps,
        $$ X \to F_{\mathbb{K}}(X) \to rF_{\mathbb{K}}(X). $$
\end{cor}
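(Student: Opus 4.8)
The plan is to derive this corollary directly from the preceding proposition on free $D$-separated objects, applied to the free-forgetful adjunction $F_{\mathbb{K}}\dashv|\cdot|\colon\vect\to\spaces$ with dualising object $D=\mathbb{K}$. Indeed, $\spaces$ is cartesian closed, $\vect$ is a bicomplete coregular closed symmetric monoidal category with tensor product $\otimes_{\mathbb{K}}$ and internal hom $L(-,-)$ (see \cref{ex_vect_closed_symm_mon,vect_cocomplete_complete}), and $F_{\mathbb{K}}\dashv|\cdot|$ is a symmetric monoidal adjunction, the monoidal structure on $\vect$ being precisely the one induced by the commutative monad $F_{\mathbb{K}}$, with the repletion $r\dashv\iota$ being a symmetric monoidal adjunction by \cref{cor_repl_closed_symm_mon}. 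Hence $\vect$ together with $F_{\mathbb{K}}\dashv|\cdot|$ over $\spaces$ is a bicomplete coregular closed-symmetric-monoidal-over-cartesian-closed category, and the preceding proposition applies with $D=\mathbb{K}$: the functor $\mathcal{M}_c=r\circ F_{\mathbb{K}}$ is left adjoint to the forgetful functor $|\cdot|\circ\iota\colon\sclin\to\spaces$, where $\iota\colon\sclin\hookrightarrow\vect$ is the inclusion. In particular there is a natural bijection
$$ \Hom_{\sclin}(\mathcal{M}_c(X), V)\;\cong\;\Hom_{\spaces}(X, V) \qquad (X\in\spaces,\ V\in\sclin). $$

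The next step is to identify the unit of this composite adjunction with the map $\delta_\bullet$ of the statement. The unit of the composite of the adjunctions $F_{\mathbb{K}}\dashv|\cdot|$ and $r\dashv\iota$ is, at an object $X$, the composite $X\to|F_{\mathbb{K}}(X)|\to|\iota\, rF_{\mathbb{K}}(X)|$, where the first arrow is the unit of $F_{\mathbb{K}}\dashv|\cdot|$ (the canonical map $X\to F_{\mathbb{K}}(X)$) and the second is $|\cdot|$ applied to the component at $F_{\mathbb{K}}(X)$ of the unit of $r\dashv\iota$. By \cref{thm_sepfun_left_adjoint} and the discussion following \cref{sepfun_defn}, this latter component is exactly the corestriction of $\eta^{F_{\mathbb{K}}(X),\mathbb{K}}$ to its regular image, i.e.\ the canonical map $F_{\mathbb{K}}(X)\to rF_{\mathbb{K}}(X)$. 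Composing, the unit at $X$ is $X\to F_{\mathbb{K}}(X)\to rF_{\mathbb{K}}(X)=\mathcal{M}_c(X)$, which is precisely $\delta_\bullet$.

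Finally I would translate the adjunction into the asserted universal property: given a continuous map $f\colon X\to V$ with $V$ replete, the bijection above produces a unique $\sclin$-morphism $\tilde f\colon\mathcal{M}_c(X)\to V$ with $\tilde f\circ\delta_\bullet=f$; since $\sclin$ is a \emph{full} subcategory of $\vect$, $\tilde f$ is a continuous linear map, and it is the unique continuous linear map with this property because $\mathcal{M}_c(X)$ and $V$ are both replete, so every continuous linear map between them is automatically a $\sclin$-morphism. I do not expect a genuine obstacle here: all of the substantive work — existence of $F_{\mathbb{K}}$ (\cref{free_lin_hk_sp_existence}), left adjointness of the repletion (\cref{thm_sepfun_left_adjoint}), and the monoidality needed to compose the two adjunctions (\cref{cor_repl_closed_symm_mon}) — is already available, and the only care required is the bookkeeping of the unit of the composite adjunction so that it matches the explicit $\delta_\bullet$ written in the statement.
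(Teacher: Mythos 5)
Your proposal is correct and follows exactly the route the paper intends: the statement is presented there as an immediate corollary of the proposition that $T_D=\sepfun_D\circ T$ is left adjoint to $|\cdot|\circ\iota$, specialised to $\mathsf{D}=\vect$, $D=\mathbb{K}$, so that $\mathcal{M}_c=r\circ F_{\mathbb{K}}$ is left adjoint to the forgetful functor $\sclin\to\spaces$ with unit $X\to F_{\mathbb{K}}(X)\to rF_{\mathbb{K}}(X)=\delta_\bullet$. Your extra bookkeeping (identifying the unit of the composite adjunction and invoking fullness of $\sclin$ in $\vect$ for uniqueness) is exactly the implicit content of the paper's argument.
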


\subsubsection{A construction theorem for $*$-Autonomous-over-cartesian-closed categories}

The following is a general construction theorem which uses the separated-extensional Chu construction to obtain a $*$-autonomous-over-cartesian-closed category $U\vdash M_D: \widehat{\mathsf{Chu}}_D(\mathsf{D})\to \mathsf{C}$ from a general closed-symmetric-monoidal-over-cartesian-closed category $|\cdot|\vdash T: \mathsf{D}\to \mathsf{C}$ in such a way that if $|\cdot|: \mathsf{D}\to \mathsf{C}$ is a faithful functor, then so is  $U: \widehat{\mathsf{Chu}}_D(\mathsf{D})\to \mathsf{C}$. 

\begin{thm}\label{construction_thm_for_star_aut_cats_over_cccs}
    Let $|\cdot|\vdash T: \mathsf{D}\to \mathsf{C}$ be a bicomplete, coregular closed-symmetric-monoidal-over-cartesian-closed category $\mathsf{C}$, and let $D\in \mathsf{D}$ be an object. Then the forgetful functor,  
        $$ U: \widehat{\mathsf{Chu}}_D(\mathsf{D}) \to \mathsf{C}, \;\;\; (A,A^\top, \langle -,- \rangle)\mapsto |A|, $$
    has a strong symmetric monoidal left adjoint $M_D$. Hence, the resulting adjunction,
        \[\begin{tikzcd}
	   {\widehat{\mathsf{Chu}}_D(\mathsf{D})} & {\mathsf{C}},
	   \arrow["U"', shift right=2, from=1-1, to=1-2]
	   \arrow["{M_D}"', shift right=2, from=1-2, to=1-1]
        \end{tikzcd}\]
    defines a $*$-autonomous-over-cartesian-closed category. 
\end{thm}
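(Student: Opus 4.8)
The plan is to build $M_D$ as a composite of left adjoints, exploiting the adjunctions already assembled in the excerpt. First I would invoke \cref{sepobjs_corefl_subcat_of_little_chu}: the category $\sepfun_D(\mathsf{D})$ of $D$-separated objects sits as a coreflective subcategory of $\widehat{\mathsf{Chu}}_D(\mathsf{D})$, via the fully faithful functor $\iota \colon \sepfun_D(\mathsf{D}) \hookrightarrow \widehat{\mathsf{Chu}}_D(\mathsf{D})$, $A\mapsto (A,[A,D],e^{A,D}\circ\sigma^{A,[A,D]})$, which is left adjoint and right inverse to the underlying-object functor $\widehat{\mathsf{Chu}}_D(\mathsf{D})\to \sepfun_D(\mathsf{D})$. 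Composing this with the underlying-object functor all the way down to $\mathsf{C}$, one sees that the forgetful functor $U\colon \widehat{\mathsf{Chu}}_D(\mathsf{D})\to\mathsf{C}$ factors (up to the identification $\im\psi\cong A$ on separated objects) as $\widehat{\mathsf{Chu}}_D(\mathsf{D}) \to \sepfun_D(\mathsf{D}) \xrightarrow{|\cdot|\circ\iota} \mathsf{C}$. So it suffices to exhibit a left adjoint to each factor and compose them.

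For the first factor, the underlying-object functor $\widehat{\mathsf{Chu}}_D(\mathsf{D})\to\sepfun_D(\mathsf{D})$ has left adjoint $\iota$ by \cref{sepobjs_corefl_subcat_of_little_chu}; since $\iota$ is also right inverse, this is exactly the coreflection. For the second factor, the functor $|\cdot|\circ\iota\colon \sepfun_D(\mathsf{D})\to\mathsf{C}$ has left adjoint $T_D := \sepfun_D\circ T$ by the Proposition immediately preceding the theorem (``Free $D$-separated objects''), which already checks that $T_D$ is left adjoint to $|\cdot|\circ\iota$ and that $|\cdot|\circ\iota \vdash T_D$ is again a closed-symmetric-monoidal-over-cartesian-closed category. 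Composing, I define
$$ M_D := \iota \circ T_D = \iota \circ \sepfun_D \circ T \colon \mathsf{C}\to \widehat{\mathsf{Chu}}_D(\mathsf{D}), $$
so that $M_D$ is left adjoint to $U$ as a composite of left adjoints. Concretely, $M_D(X) = \iota(\sepfun_D(TX)) = \bigl(\sepfun_D(TX),\, [\sepfun_D(TX), D],\, e\circ\sigma\bigr)$, and for $X,Y\in\mathsf{C}$ the adjunction chain reads $\Hom_{\widehat{\mathsf{Chu}}_D(\mathsf{D})}(M_D X, \mathcal{A}) \cong \Hom_{\sepfun_D\mathsf{D}}(\sepfun_D(TX), U'\mathcal{A}) \cong \Hom_{\mathsf{D}}(TX, |U'\mathcal{A}|) \cong \Hom_{\mathsf{C}}(X, U\mathcal{A})$, where $U'$ is the underlying-$\sepfun_D\mathsf{D}$-object functor and the middle isomorphism uses that $\sepfun_D\vdash\iota$ together with $D$-separatedness of $|U'\mathcal{A}|$.

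It remains to verify that $M_D$ is strong symmetric monoidal and that the resulting adjunction makes $\widehat{\mathsf{Chu}}_D(\mathsf{D})\to\mathsf{C}$ a $*$-autonomous-over-cartesian-closed category in the sense of \cref{defn_aut_cat_over_ccc}. For the former, $T\colon\mathsf{C}\to\mathsf{D}$ is strong symmetric monoidal (it is the left adjoint of the given symmetric monoidal adjunction $|\cdot|\vdash T$); $\sepfun_D\colon\mathsf{D}\to\sepfun_D\mathsf{D}$ is strong symmetric monoidal by \cref{cor_d_sep_closed_symm_mon}; and the inclusion-into-Chu functor $A\mapsto(A,[A,D],e^{A,D}\circ\sigma^{A,[A,D]})$ from $\sepfun_D\mathsf{D}$ into $\widehat{\mathsf{Chu}}_D(\mathsf{D})$ is strong symmetric monoidal — this is precisely the content of the ``second part of the statement'' of \cref{thm_on_little_chu} (the functor $A\mapsto s(A,[A,D],\dots)$, which on a $D$-separated $A$ reduces to $(A,[A,D],\dots)$ since already separated objects are fixed by $s$). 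A composite of strong symmetric monoidal functors is strong symmetric monoidal, so $M_D$ is. For the latter, $\widehat{\mathsf{Chu}}_D(\mathsf{D})$ is a bicomplete $*$-autonomous category by \cref{thm_on_little_chu} (using that $\mathsf{D}$ is bicomplete coregular closed symmetric monoidal), and we have just produced a symmetric monoidal adjunction $U\vdash M_D$ over the cartesian closed base $\mathsf{C}$; the faithfulness clause follows because $U$ is the composite of the faithful underlying-object functor $\widehat{\mathsf{Chu}}_D(\mathsf{D})\to\mathsf{D}$ (faithful by the Proposition just before \cref{sec_closed_symm_mon_cats_over_cart_cl_cats}) with $|\cdot|\colon\mathsf{D}\to\mathsf{C}$, and a composite of faithful functors is faithful. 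The main obstacle I anticipate is bookkeeping: carefully matching the two a priori different ``underlying object'' functors out of $\widehat{\mathsf{Chu}}_D(\mathsf{D})$ (namely $(A,A^\top,a)\mapsto A$ landing in $\mathsf{D}$ versus its corestriction to $\sepfun_D\mathsf{D}$), and confirming that on separated objects the pair-separation $s$ acts as the identity so that $\iota$ really is the claimed left adjoint and right inverse — but all of these are already supplied by \cref{sepobjs_corefl_subcat_of_little_chu}, \cref{separation_extn_prop} and \cref{thm_on_little_chu}, so no genuinely new construction is needed.
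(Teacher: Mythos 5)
Your proposal is correct and follows essentially the same route as the paper: define $M_D := \iota \circ \sepfun_D \circ T$ as a composite of the left adjoints supplied by \cref{thm_sepfun_left_adjoint} and \cref{sepobjs_corefl_subcat_of_little_chu}, and obtain strong symmetric monoidality from \cref{thm_on_little_chu}. The only cosmetic difference is that you split the monoidal verification into three factors ($T$, $\sepfun_D$, and the embedding into $\widehat{\mathsf{Chu}}_D(\mathsf{D})$) while the paper factors $M_D$ through $\mathsf{D}$ and the single Chu-embedding functor of \cref{thm_on_little_chu}; the two bookkeeping identifications you flag are exactly the ones implicitly used there.
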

\begin{proof}
    By \cref{thm_sepfun_left_adjoint} and \cref{sepobjs_corefl_subcat_of_little_chu}, we have the following adjunctions,
    \[\begin{tikzcd}
	{\mathsf{C}} & {\mathsf{D}} & {\sepfun_D(\mathsf{D})} & {\widehat{\mathsf{Chu}}_D(\mathsf{D})}
	\arrow[""{name=0, anchor=center, inner sep=0}, shift left=2, from=1-2, to=1-1]
	\arrow[""{name=1, anchor=center, inner sep=0}, shift left=2, from=1-3, to=1-2]
	\arrow[""{name=2, anchor=center, inner sep=0}, shift left=2, from=1-4, to=1-3]
	\arrow[""{name=3, anchor=center, inner sep=0}, "T", shift left=2, from=1-1, to=1-2]
	\arrow[""{name=4, anchor=center, inner sep=0}, "{\sepfun_D}", shift left=2, from=1-2, to=1-3]
	\arrow[""{name=5, anchor=center, inner sep=0}, "\iota", shift left=2, hook, from=1-3, to=1-4]
	\arrow["\dashv"{anchor=center, rotate=-90}, draw=none, from=3, to=0]
	\arrow["\dashv"{anchor=center, rotate=-90}, draw=none, from=5, to=2]
	\arrow["\dashv"{anchor=center, rotate=-90}, draw=none, from=4, to=1]
    \end{tikzcd}\]
    Hence, 
        $$ M_D := \iota \circ \sepfun_D \circ T $$
    is the desired left adjoint. It remains to show that $M_D$ is (or rather, admits the structure of a) strong symmetric monoidal functor. To see this, notice that $M_D$ also factors as the composite of two strong symmetric monoidal functors, 
    \[\begin{tikzcd}
	{\mathsf{C}} & {\mathsf{D}} & {\widehat{\mathsf{Chu}}_D(\mathsf{D}),}
	\arrow["T", from=1-1, to=1-2]
	\arrow[from=1-2, to=1-3]
    \end{tikzcd}\]
    where the functor on the right is the functor as described in \cref{thm_on_little_chu}.
\end{proof}

\subsubsection{Paired linear $hk$-spaces}\label{sec_paired_lin_ksp} Applying the extensional-separated Chu construction to the category of linear $hk$-spaces, we obtain \emph{paired linear $hk$-spaces}.

\begin{defn}\label{defn_paired_lin_hk_sp}
    A \emph{paired linear $hk$-space} is an object of $\widehat{\mathsf{Chu}}_{\mathbb{K}}(\vect)$, i.e.~a linear $hk$-space $V$ together with a further linear $hk$-space $V^*$ and a bilinear map 
        $$ \langle -, - \rangle: V \times V^* \to \mathbb{K} $$
    such that both 
        $$ V \to (V^*)^\wedge, \;\; x \mapsto \langle x, - \rangle $$
    and 
        $$ V^* \to V^\wedge, \;\; \phi \mapsto \langle - , \phi \rangle $$
    are regular monomorphisms in $\vect$.
\end{defn}

Since, by \cref{cor_morphisms_in_vect}, regular monomorphisms in $\vect$ are equivalently closed embeddings, we can reformulate this definition as follows, identifying $V^*$ with its image in $V^\wedge$.

\begin{defn}[Paired linear $hk$-space, more explicitly]
    A \emph{paired linear $hk$-space} is a linear $hk$-space $V$ together with a closed linear subspace $V^*\subseteq V^\wedge$ of the natural dual such that
        $$ V \to (V^*)^\wedge, \;\; x \mapsto \langle x, - \rangle $$
    is a closed embedding. A morphism of paired linear $hk$-spaces $V,W$ is a continuous linear map $f:V\to W$ such that $f^\wedge(W^*)\subseteq V^*$, where 
        $$ f^\wedge: W^\wedge \to V^\wedge, \;\; \phi \mapsto \phi \circ f $$
    is the adjoint of $f$. We denote the category of paired linear $hk$-spaces by 
        $$ \plin := \widehat{\mathsf{Chu}}_{\mathbb{K}}(\vect). $$
    We denote the internal hom and tensor product of paired linear $hk$-spaces simply by 
        $$ [V, W] := [V, W]_{\plin}, \;\; V \ptimes W := V \otimes_{\plin} W \qquad \qquad (V,W\in \plin)$$
    as long as there is no potential for ambiguity. The underlying linear $hk$-space of $[V, \mathbb{K}]$ is exactly $V^*$, so it is justified to also write  
        $$ V^* := [V, \mathbb{K}] \qquad\qquad (V\in \plin) $$
    for the dual, viewed as a paired linear $hk$-space.
\end{defn}

As a special case of \cref{construction_thm_for_star_aut_cats_over_cccs}, we immediately obtain that $\plin$ is a $*$-autonomous category with respect to the above tensor product and internal hom. 

\begin{remark}
    By construction, the internal hom $[V,W]$ of paired linear $hk$-spaces $V,W$ is the closed subspace 
        $$ [V, W] \subseteq L(V,W) $$
    of the space of continuous linear maps $L(V,W)$ (the internal hom in $\vect$ and also $\sclin$) consisting of those continuous linear maps which are morphisms of paired linear $hk$-spaces. 
\end{remark}

\subsubsection{The category of replete linear $hk$-spaces embeds into the category of paired linear $hk$-spaces} A wide range of examples of paired linear $hk$-spaces is supplied by considering replete linear $hk$-spaces as paired linear $hk$-spaces, as follows.

\begin{prop}
    The functor 
        $$ \iota: \sclin \hookrightarrow \plin, \;\; V \mapsto (V, V^\wedge), \; f\mapsto f^\wedge$$
    is fully faithful, displaying $\sclin$ as a coreflective subcategory of $\plin$.
\end{prop}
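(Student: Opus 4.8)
The plan is to identify this proposition as a direct instance of the general result \cref{sepobjs_corefl_subcat_of_little_chu}, applied to the closed-symmetric-monoidal-over-cartesian-closed category $\vect$ with dualising object $D = \mathbb{K}$. Recall that by \cref{ex_repl_lin_sk_sp}, the $\mathbb{K}$-separated objects in $\vect$ are exactly the replete linear $hk$-spaces, i.e.~$\sepfun_{\mathbb{K}}(\vect) = \sclin$, and that $\plin = \widehat{\mathsf{Chu}}_{\mathbb{K}}(\vect)$ by \cref{defn_paired_lin_hk_sp}. Since $\vect$ is bicomplete (\cref{vect_cocomplete_complete}), coregular (this is established in the proof of \cref{cor_morphisms_in_vect}, using \cref{khaus_morphisms} and biregularity of $\spaces$), and closed symmetric monoidal (\cref{ex_vect_closed_symm_mon}), all hypotheses of \cref{sepobjs_corefl_subcat_of_little_chu} are met.

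First I would spell out that the functor $\iota$ in the statement is precisely the functor $\iota: \sepfun_D(\mathsf{C}) \hookrightarrow \widehat{\mathsf{Chu}}_D(\mathsf{C})$ from \cref{sepobjs_corefl_subcat_of_little_chu}, instantiated at $\mathsf{C} = \vect$, $D = \mathbb{K}$. On objects, that general functor sends a $\mathbb{K}$-separated $V$ to $(V, [V,\mathbb{K}], e^{V,\mathbb{K}}\circ \sigma^{V,[V,\mathbb{K}]})$; since $[V,\mathbb{K}] = L(V,\mathbb{K}) = V^\wedge$ for the internal hom of $\vect$, this is exactly $(V, V^\wedge)$ as claimed. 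On morphisms it sends $f$ to $(f, [f,\mathbb{K}]) = (f, f^\wedge)$, again matching the statement. I would note here that this is well-defined because every replete $V$ is $\mathbb{K}$-separated by definition, so $\eta^{V,\mathbb{K}}: V \to V^{\wedge\wedge}$ is a regular monomorphism, which is exactly what makes $(V, V^\wedge, \mathrm{ev})$ a separated object of the Chu category; extensionality of this particular triple is automatic since the induced map $V^\wedge \to [V,\mathbb{K}] = V^\wedge$ is the identity.

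The remaining assertions — that $\iota$ is fully faithful and exhibits $\sclin$ as a coreflective subcategory of $\plin$ — are then literally the content of \cref{sepobjs_corefl_subcat_of_little_chu}, which states that $\iota$ is fully faithful and is left adjoint as well as right inverse to the underlying-object functor $\widehat{\mathsf{Chu}}_{\mathbb{K}}(\vect) \to \sclin$, $(A, A^\top, a) \mapsto A$. A left adjoint that is also a section of the forgetful functor is precisely the defining data of a coreflective subcategory, so there is nothing further to prove. The only genuine verification, and the one I expect to be the main (though still minor) obstacle, is confirming that $\vect$ satisfies the coregularity hypothesis of \cref{sepobjs_corefl_subcat_of_little_chu}; this is not stated as a standalone lemma in the excerpt, but follows from \cref{cor_morphisms_in_vect} (regular monomorphisms in $\vect$ are closed embeddings, stable under pushout as in $\spaces$) together with completeness and cocompleteness of $\vect$, so I would include a one-line remark to that effect rather than a full argument.

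\begin{proof}
    This is the instance of \cref{sepobjs_corefl_subcat_of_little_chu} obtained by taking $\mathsf{C} = \vect$ and $D = \mathbb{K}$. The category $\vect$ is bicomplete (\cref{vect_cocomplete_complete}) and closed symmetric monoidal with internal hom $L(-,-)$ (\cref{ex_vect_closed_symm_mon}); it is coregular because it has all finite colimits, regular monomorphisms in $\vect$ coincide with closed embeddings (\cref{cor_morphisms_in_vect}) and are stable under pushout as they are in the coregular category $\spaces$ (\cref{khaus_morphisms}), the forgetful functor $\vect \to \spaces$ preserving the relevant limits. By \cref{ex_repl_lin_sk_sp}, the $\mathbb{K}$-separated objects of $\vect$ are exactly the replete linear $hk$-spaces, so $\sepfun_{\mathbb{K}}(\vect) = \sclin$, and by \cref{defn_paired_lin_hk_sp}, $\widehat{\mathsf{Chu}}_{\mathbb{K}}(\vect) = \plin$. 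Moreover, for any linear $hk$-space $V$ one has $[V,\mathbb{K}]_{\vect} = L(V,\mathbb{K}) = V^\wedge$, so the functor $\iota$ of \cref{sepobjs_corefl_subcat_of_little_chu} sends $V$ to $(V, V^\wedge, e^{V,\mathbb{K}}\circ \sigma^{V,[V,\mathbb{K}]})$ and a morphism $f$ to $(f, [f,\mathbb{K}]) = (f, f^\wedge)$, which is precisely the functor in the statement. By \cref{sepobjs_corefl_subcat_of_little_chu}, this functor is well-defined, fully faithful, and left adjoint as well as right inverse to the underlying-object functor $\plin \to \sclin$, $(A, A^\top, a) \mapsto A$; a left adjoint that is also a section of the forgetful functor exhibits its domain as a coreflective subcategory of its codomain, so $\sclin$ is a coreflective subcategory of $\plin$.
\end{proof}
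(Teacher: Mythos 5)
Your proposal is correct and follows exactly the paper's own route: the paper's proof is the one-line statement that the proposition is a special case of \cref{sepobjs_corefl_subcat_of_little_chu}, which is precisely your instantiation at $\mathsf{C}=\vect$, $D=\mathbb{K}$. Your additional verification of the hypotheses (bicompleteness, closed symmetric monoidal structure, and in particular coregularity of $\vect$, which the paper never records as a standalone lemma) is a welcome but inessential elaboration of the same argument.
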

\begin{proof}
    This is a special case of \cref{sepobjs_corefl_subcat_of_little_chu}.
\end{proof}

As a consequence, when $W$ is any paired linear $hk$-space and $V$ is in the essential image of the inclusion functor,
        $$ \sclin \hookrightarrow \plin, $$
i.e. $V^*=V^\wedge$, then every continuous linear map $V\to W$ is a morphism of paired linear $hk$-spaces and we have, 
        $$ [V, W] = L(V,W). $$

\subsubsection{Fréchet and Brauner spaces are paired linear $k$-spaces in a unique way}

While for a general replete linear $hk$-space $V$, there may a priori be several paired linear $hk$-spaces whose underlying linear $hk$-space is $V$, this is not possible for Fréchet or Brauner spaces: their dual is, in some sense, uniquely determined.

\begin{prop}\label{Frechet_space_paired_linear_k_space_in_unique_way}
    Let $V$ be a paired linear $hk$-space with dual $V^*$ and suppose that the underlying linear $hk$-space of $V$ is a Fréchet space or a Brauner space. Then $V^*=V^\wedge$.
\end{prop}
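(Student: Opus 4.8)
The plan is to show that any paired linear $hk$-space structure on a Fréchet (or Brauner) space $V$ must have $V^* = V^\wedge$, using the fact that $V^*$ is a closed subspace of $V^\wedge$ which, by the defining condition of a paired linear $hk$-space (\cref{defn_paired_lin_hk_sp}), is large enough that the canonical map $V \to (V^*)^\wedge$ is a closed embedding. In particular $V^*$ separates the points of $V$. So the core of the argument is: \emph{a point-separating closed linear subspace $V^* \subseteq V^\wedge$ of the natural dual of a Fréchet (resp.\ Brauner) space must be all of $V^\wedge$.}

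First I would reduce to the Fréchet case: if $V$ is Brauner, then $V^\wedge$ is Fréchet by \cref{prop_dual_of_bra_fre} and $V \cong V^{\wedge\wedge}$ by Smith duality (\cref{Smith_duality}); a paired linear $hk$-space structure $(V, V^*)$ with $V^*$ closed in $V^\wedge$ and $V \to (V^*)^\wedge$ a closed embedding dualises, via the self-duality, to an analogous structure on the Fréchet space $V^\wedge$, so it suffices to treat Fréchet $V$. (One should check this dualisation carefully — this is a candidate for the main obstacle, see below — or alternatively argue the Brauner case directly in parallel.) For Fréchet $V$: let $V^* \subseteq V^\wedge$ be closed and point-separating. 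Since $V$ is Fréchet it is in particular a locally convex Hausdorff TVS, and $V^\wedge$ carries the compact-open topology (\cref{prop_dual_fre_carries_co_topology}), which is a locally convex vector space topology. The subspace $V^* \subseteq V^\wedge$, being point-separating, makes $(V, V^*)$ a dual pair in the classical sense, and one wants to invoke a Hahn--Banach/Mackey-type argument: a closed (for the given $hk$-topology) point-separating subspace of the dual that is, moreover, such that $V$ embeds as a \emph{closed} subspace of $(V^*)^\wedge$, must already be the whole continuous dual. The key classical input is that the continuous dual of $V$ is unchanged among all locally convex topologies compatible with a given dual pair only up to the Mackey topology — but here the sharper point is that $V$ is \emph{metrisable}, hence barrelled, hence its Mackey topology is its original topology, so $V^* = V'$ as a set whenever $V^*$ is a point-separating subspace with the right closure property. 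Concretely: $V^* \subseteq V^\wedge = V'$ (as sets, since $V^\wedge$ consists of \emph{all} continuous linear functionals), and I must show the inclusion is an equality.

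The cleanest route to that equality: use that $V \to (V^*)^\wedge$ is a closed embedding, so $(V^*)^\wedge$ has $V$ as a closed subspace; the repletion machinery then forces $V$ to be the repletion of the linear $hk$-space spanned appropriately, but more simply, apply \cref{linear_map_vanishing_on_dense_subspace} or a direct polar argument. Suppose $\phi \in V^\wedge \setminus V^*$. Since $V^*$ is closed in $V^\wedge$ and $V^\wedge = V^{*[c.o.]}$ is an LCTVS (being the compact-open dual of a Fréchet space, \cref{prop_dual_fre_carries_co_topology}), by Hahn--Banach applied in the locally convex space $V^\wedge$ there is a continuous linear functional $\chi$ on $V^\wedge$ with $\chi|_{V^*} = 0$ but $\chi(\phi) \neq 0$; and because $V^\wedge$ is a Brauner space (\cref{prop_dual_of_fre_bra}) hence reflexive by Smith duality, $\chi$ is given by evaluation at some $x \in V$. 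Then $x \neq 0$ (as $\chi \neq 0$) but $\psi(x) = 0$ for all $\psi \in V^*$, contradicting that $V^*$ separates the points of $V$. Hence no such $\phi$ exists and $V^* = V^\wedge$.

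The main obstacle I anticipate is making the Hahn--Banach step rigorous: I need that $V^\wedge$, with its $hk$-space topology, is genuinely a locally convex \emph{topological vector space} (so that classical Hahn--Banach applies to separate a point from a closed subspace) and that its continuous dual is exactly $V$ via the Smith-duality isomorphism $\eta\colon V \xrightarrow{\sim} V^{\wedge\wedge}$. Both facts are available: \cref{prop_dual_fre_carries_co_topology} gives that $V^\wedge$ is the compact-open dual and in particular an LCTVS, and \cref{Smith_duality} gives reflexivity. The Brauner case is entirely symmetric once one knows $V^\wedge$ is Fréchet (\cref{prop_dual_of_bra_fre}) and $V$ is reflexive (\cref{Smith_duality}), so I would either spell out the symmetric argument or, as indicated, dualise — the latter requiring a short lemma that the self-duality of $\plin$ restricts correctly, which in fact follows from $*$-autonomy (\cref{thm_on_little_chu}) applied to $\plin = \widehat{\mathsf{Chu}}_{\mathbb K}(\vect)$ together with the observation that the dual of $(V, V^*)$ is $(V^*, V)$.
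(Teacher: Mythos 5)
Your proof is correct, but it runs in the opposite direction from the paper's. The paper exploits the full closed-embedding hypothesis: from the closed embedding $j\colon V \hookrightarrow (V^*)^\wedge$ it deduces, via Hahn--Banach \emph{extension} of functionals from the closed subspace $j(V)$, that $j^\wedge\colon (V^*)^{\wedge\wedge}\to V^\wedge$ is surjective, then applies Smith duality to $V^*$ (using, implicitly, that $V^*$ is itself a Brauner resp.\ Fréchet space as a closed subspace of $V^\wedge$, and that $(V^*)^\wedge$ is Fréchet resp.\ Brauner so that Hahn--Banach applies) and identifies $j^\wedge\circ\eta^{V^*}$ with the inclusion $V^*\hookrightarrow V^\wedge$, forcing it to be onto. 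You instead use only the weaker consequences of the hypothesis --- that $V^*$ is closed in $V^\wedge$ and separates the points of $V$ --- and argue by Hahn--Banach \emph{separation} inside the LCTVS $V^\wedge$ (legitimate because by \cref{prop_dual_fre_carries_co_topology} resp.\ \cref{prop_dual_of_bra_fre} its $hk$-topology is literally a locally convex one), identifying the separating functional with a point of $V$ via Smith duality for $V$ itself (\cref{Smith_duality}); the contradiction with point-separation then gives $V^*=V^\wedge$. What your route buys: it proves the sharper statement that \emph{any} closed point-separating subspace of $V^\wedge$ equals $V^\wedge$, it only invokes Smith duality for the space $V$ given in the hypothesis, and it avoids the auxiliary facts about $V^*$ and $(V^*)^\wedge$ that the paper's extension argument needs; your parallel treatment of the Brauner case is the right fallback, since the reduction via $*$-autonomy (dual of $(V,V^*)$ being $(V^*,V)$) would itself require a short verification. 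The only phrasing to tighten is the reflexivity step: what you actually need is surjectivity of $\eta^V\colon V\to V^{\wedge\wedge}$, i.e.\ Smith duality applied to $V$, rather than reflexivity of the Brauner space $V^\wedge$; with that adjustment the argument is complete.
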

\begin{proof}
    Notice that the assumption that
        $$ j: V \to (V^*)^\wedge, \;\;\;x \mapsto \langle x , - \rangle $$
    is a closed embedding implies that  
        $$ j^\wedge: (V^*)^{\wedge\wedge} \to V^\wedge, \;\;\; \chi \mapsto (x \mapsto \chi(\langle x , - \rangle)) $$
    is surjective, using the Hahn-Banach extension theorem (which is applicable because both $V$ and $(V^*)^\wedge$ are Fréchet/Brauner spaces). By Smith duality, the canonical map
        $$ \eta^{V^*}: V^* \to (V^*)^{\wedge\wedge} $$
    is an isomorphism. Hence, observing that 
        $$ j^\wedge \circ \eta^{V^*} : V^* \to V^\wedge, \;\; \phi \mapsto \phi, $$
    is the embedding of $V^*$ into $V^\wedge$, we see that this map is surjective and hence the identity.
\end{proof}

\subsubsection{The free paired linear $hk$-space}

By \cref{construction_thm_for_star_aut_cats_over_cccs}, the forgetful functor $\plin \to \spaces$ admits a left adjoint, and this left adjoint is given by the composition of the left adjoint $\mathcal{M}_c(X)$ to the forgetful functor $\sclin\to \spaces$, followed by the inclusion of $\sclin$ into $\plin$. Hence, we are justified in defining (see also \cref{warning_free_replete_vs_free_paired}): 

\begin{defn}\label{defn_free_plin_ksp}
    We denote the left adjoint to the forgetful functor $\plin \to \spaces$ by $\mathcal{M}_c$. For an $hk$-space $X$, we call $\mathcal{M}_c(X)$ the \emph{free paired linear $hk$-space} on $X$.
\end{defn}

\begin{remark}\label{remark_on_Mc_spc_of_meas_and_rel_to_vv_int}
    The notation $\mathcal{M}_c(X)$ will be justified in Chapter 4, where we will show that the elements of $\mathcal{M}_c(X)$ can be identified with certain compactly supported measures. Moreover, by \cref{prop_free_forgetful_enriched}, the free-forgetful adjunction between paired linear $hk$-spaces and $hk$-spaces is $\spaces$-enriched, 
        $$ [\mathcal{M}_c(X), V] \cong C(X, V), \qquad (X\in \spaces, V\in \plin)$$
    and we will see in Chapter 4 that this natural isomorphism is given by vector-valued integration (see \cref{remark_vv_integral_from_free_forget_adj}).
\end{remark}

\begin{remark}\label{comm_mon_str_of_Mc}
    Since by \cref{construction_thm_for_star_aut_cats_over_cccs}, the free-forgetful adjunction between paired linear $hk$-spaces and $hk$-spaces is a symmetric monoidal adjunction, it induces a symmetric monoidal, i.e.~commutative, monad on $\spaces$ with underlying endofunctor $X\mapsto \mathcal{M}_c(X)$. 
\end{remark}

\chapter{Monadic Measure and Integration Theory}

In this chapter, we will harvest the fruits of the general abstract developments of Chapter 3. To this end, we will first define the linear $hk$-space $C_b(X)$ of continuous bounded functions on an $hk$-space $X$. Then, we will construct $\mathcal{M}(X)$ as a space of continuous linear functionals on $C_b(X)$ and endow $\mathcal{M}(X)$ as well as $C_b(X)$ with the structure of a paired linear $hk$-space, rendering them mutually dual. In \cref{sec_k_reg_measures_section}, a version of the Riesz representation theorem will allow us to identify the elements of $\mathcal{M}(X)$ with a certain class of measures which we call \emph{$k$-regular measures}. Next, we will construct the monad structure of $\mathcal{M}$ and the probability monad $\mathcal{P}$ in \cref{sec_M_comm_monad,sec_prob_mon_P}. Finally, we will show in \cref{sec_monadic_vec_int} how the formalism of (paired or replete) linear $hk$-spaces can be used to obtain a very coherent theory of vector-valued integration, \emph{monadic vector-valued integration}. 

\section{The Paired Linear $hk$-space $C_b(X)$ and its Dual $\mathcal{M}(X)$}

\subsection{The linear $hk$-space $C_b(X)$}

First, we need to define $C_b(X)$ as a linear $hk$-space. (The \emph{paired} linear structure on $C_b(X)$ will come later.)

\begin{defn}
    We define the linear $hk$-space $C_b(X)$ of bounded continuous ($\mathbb{K}$-valued) functions on an $hk$-space $X$ as the filtered colimit (in the category of $hk$-spaces),
        $$ C_b(X) := \colim_{n\in\mathbb{N}}\: (nD)^X, $$
    of the diagram,
        $$ D^X \hookrightarrow (2D)^X \hookrightarrow (3D)^X \hookrightarrow ... ,  $$
    where 
        $$ D = \{\lambda\in \mathbb{K} \mid |\lambda| \leq 1\} \subseteq \mathbb{K} $$
    is the unit disk.
\end{defn}

\begin{remark}
    Since the underlying set of this colimit can be identified with the union of all functions whose absolute value is bounded by some natural number, the elements of $C_b(X)$ correspond exactly to bounded continuous functions on $X$, justifying the notation. Moreover, (pointwise) addition and scalar multiplication are continuous with respect to the above colimit topology. Hence, $C_b(X)$ is indeed a linear $hk$-space.
\end{remark}

Convergent sequences in $C_b(X)$ are simple to describe.

\begin{lem}\label{convergent_sequences_in_Cb}
    Let $X$ be an $hk$-space. Then a sequence $(f_n)$ of continuous bounded functions converges to $f$ in $C_b(X)$ if, and only if, $(f_n)$ is uniformly bounded and converges in the compact-open topology to $f$.
\end{lem}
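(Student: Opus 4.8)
The plan is to unwind the colimit description of $C_b(X)$ and transfer the problem to the mapping spaces $(nD)^X$, where convergence is already understood via \cref{lem_convergence_in_Y_X}. Recall that $C_b(X) = \colim_{n} (nD)^X$ is a sequential colimit in $\spaces$ along the closed inclusions $(nD)^X \hookrightarrow ((n+1)D)^X$ (these are closed embeddings since $nD \subseteq (n+1)D$ is closed in $\mathbb{K}$, hence $(nD)^X$ is closed in $((n+1)D)^X$). By \cref{seq_colims_in_spaces}, every compact subset of $C_b(X)$, and in particular every convergent sequence together with its limit, is contained in some single stage $(nD)^X$.

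First I would prove the ``only if'' direction. Suppose $(f_n) \to f$ in $C_b(X)$. The set $K = \{f_n : n \in \mathbb{N}\} \cup \{f\}$ is compact in $C_b(X)$, so by the above it lies entirely inside $(mD)^X$ for some fixed $m$; this immediately gives uniform boundedness (every $f_n$ and $f$ takes values in $mD$, i.e.\ $\|f_n\|_\infty \le m$). Since the inclusion $(mD)^X \hookrightarrow C_b(X)$ is a closed embedding, the sequence also converges in $(mD)^X$, and $(mD)^X$ is a $k$-subspace of $\mathbb{K}^X$; because convergence of sequences is detected by the subspace topology here (closed subspace), $(f_n) \to f$ in $\mathbb{K}^X = C(X,\mathbb{K})$, which by \cref{lem_convergence_in_Y_X} means uniform convergence on compact subsets, i.e.\ convergence in the compact-open topology.

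For the ``if'' direction, suppose $(f_n)$ is uniformly bounded, say $\|f_n\|_\infty \le m$ for all $n$, and $f_n \to f$ compactly; then also $\|f\|_\infty \le m$, so all the $f_n$ and $f$ lie in $(mD)^X$. By \cref{lem_convergence_in_Y_X} applied to the metric space $mD$ (a closed disk in $\mathbb{K}$, hence metric), compact convergence of $(f_n)$ to $f$ is exactly convergence in $(mD)^X = C(X, mD)$. Since the inclusion $(mD)^X \hookrightarrow C_b(X)$ is continuous, it preserves convergent sequences, so $(f_n) \to f$ in $C_b(X)$.

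The one point requiring a little care — and the main (mild) obstacle — is justifying that a sequence converging in $C_b(X)$ actually lands in a single stage $(mD)^X$: this is precisely where \cref{seq_colims_in_spaces} is invoked, using that $\{f_n\} \cup \{f\}$ is a \emph{compact} subset (a convergent sequence with its limit is the continuous image of $\mathbb{N} \cup \{\infty\}$, hence compact) and that the $(nD)^X$ are an increasing sequence of closed subspaces whose colimit is $C_b(X)$. One should also check that the subspace topology on $(mD)^X$ as a closed subspace of $\mathbb{K}^X$ coincides with its intrinsic mapping-space topology $C(X, mD)$, which follows since $mD$ is a closed (hence $k$-closed) subspace of $\mathbb{K}$ and forming $C(X,-)$ sends closed embeddings to closed embeddings — alternatively one argues directly that convergence in either topology means uniform convergence on compacta. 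Everything else is bookkeeping with \cref{lem_convergence_in_Y_X} and the fact that closed embeddings and continuous maps preserve and reflect sequential convergence appropriately.
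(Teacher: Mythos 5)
Your proof is correct and follows essentially the same route as the paper's: identify a convergent sequence with its limit as a compact set, use \cref{seq_colims_in_spaces} to place it in a single stage $(nD)^X$ (yielding uniform boundedness), and then invoke \cref{lem_convergence_in_Y_X} to translate convergence in that stage into compact-open convergence. Your additional care about the subspace versus intrinsic topology on $(mD)^X$ is a fair point the paper leaves implicit, but it is not a different argument.
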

\begin{proof}
     A convergent sequence in $C_b(X)$ can be identified with a continuous map 
        $$ \mathbb{N}\cup\{\infty\} \to C_b(X) $$
    from the one-point compactification of the natural numbers to $C_b(X)$. The image of such map being compact, \cref{seq_colims_in_spaces} implies that every convergent sequence lies in one of the spaces $(nD)^X$ (for some $n\in \mathbb{N}$) and is hence uniformly bounded. Moreover, a sequence converges in $(nD)^X$ if, and only if, it converges in the compact-open topology (see \cref{lem_convergence_in_Y_X}).
\end{proof}

We will need the following lemma on the continuous linear functionals on $C_b(X)$ later.

\begin{lem}\label{continuity_on_Cb}
    Let $X$ be an $hk$-space and let $\phi$ be a (not necessarily continuous) functional on $C_b(X)$. Suppose that for every uniformly bounded net of continuous functions $(f_i)$ on $X$ converging in the compact-open topology to $0$, $\phi(f_i) \to 0$. Then $\phi$ is continuous.
\end{lem}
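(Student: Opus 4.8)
The statement asks to upgrade a sequential-type continuity condition for functionals on $C_b(X) = \colim_n (nD)^X$ to genuine continuity. Since $C_b(X)$ is a filtered colimit of the $hk$-spaces $(nD)^X$, a linear functional $\phi$ is continuous on $C_b(X)$ if and only if its restriction to each $(nD)^X$ is continuous. So the plan is to fix $n$ and show $\phi|_{(nD)^X}$ is continuous. The space $(nD)^X = C(X, nD)$ is the $k$-ification of the space $\cco(X, nD)$ carrying the compact-open topology (which, since $X$ is $k$-Hausdorff, is the topology of uniform convergence on compact subsets, see \cref{cco_for_X_k_Hausdorff}). Because $\phi$ is linear, continuity of $\phi|_{(nD)^X}$ at an arbitrary point reduces to continuity at $0$, i.e.\ to the assertion: for every net $(f_i)$ in $nD^X$ with $f_i \to 0$ in $(nD)^X$, we have $\phi(f_i) \to 0$. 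But a net converges in $(nD)^X = k\,\cco(X, nD)$ if and only if it converges in $\cco(X, nD)$ (convergence of nets, unlike of sequences, is not automatically preserved by $k$-ification — so here I must be careful).

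\textbf{Handling the $k$-ification subtlety.} The honest route is: $\phi|_{(nD)^X}$ is continuous iff it is $k$-continuous as a map out of $(nD)^X$ (since $(nD)^X$ is a $k$-space), i.e.\ iff for every compact Hausdorff $K$ and every continuous $g: K \to (nD)^X$, the composite $\phi \circ g$ is continuous on $K$. A continuous map $g: K \to (nD)^X$ is, by the universal property of $k$-ification and cartesian closure, the same as a continuous map $K \to \cco(X,nD)$, hence ``uniformly continuous on compacta'' in a suitable sense; concretely, $g$ is continuous into the compact-open (= uniform-on-compacta) topology. Now I want to show $\phi \circ g: K \to \mathbb{K}$ is continuous. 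Fix $t \in K$ and a net $t_i \to t$ in $K$. Then $g(t_i) \to g(t)$ in $\cco(X, nD)$, i.e.\ $g(t_i) - g(t) \to 0$ uniformly on compact subsets of $X$; moreover the net $(g(t_i) - g(t))$ is uniformly bounded (by $2n$). By the hypothesis applied to this uniformly bounded, compact-open-null net, $\phi(g(t_i) - g(t)) \to 0$, i.e.\ $\phi(g(t_i)) \to \phi(g(t))$ by linearity. Hence $\phi \circ g$ is continuous. Since $K$ and $g$ were arbitrary, $\phi|_{(nD)^X}$ is $k$-continuous, hence continuous. Since this holds for all $n$, $\phi$ is continuous on the colimit $C_b(X)$.

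\textbf{Main obstacle.} The delicate point is exactly the interaction between nets, $k$-ification, and the colimit: convergent \emph{sequences} are preserved by $k$-ification but convergent \emph{nets} need not be, and the colimit topology further complicates matters. The cleanest way around this is to test continuity via compact Hausdorff probes $g: K \to (nD)^X$ as above, rather than via nets directly in $(nD)^X$; then the nets one actually feeds into the hypothesis are nets of the form $(g(t_i) - g(t))$ in $\cco(X, nD)$, which are automatically uniformly bounded (their values land in $2nD$) and converge uniformly on compacta because $g$ is continuous into the compact-open topology. One must also note at the outset that a linear functional on a colimit of linear $hk$-spaces is continuous iff its restriction to each stage is (this follows from the universal property of the colimit in $\spaces$ together with the fact that the colimit here is the filtered colimit of the $(nD)^X$ in $\spaces$, with $C_b(X)$ a linear $hk$-space), and that by linearity continuity everywhere reduces to continuity/$k$-continuity at $0$. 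No step requires anything beyond cartesian closure of $\spaces$, the universal property of $k$-ification, and \cref{cco_for_X_k_Hausdorff}; there are no serious calculations.
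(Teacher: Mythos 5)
Your proof is correct and follows essentially the same route as the paper: reduce along the colimit $C_b(X)=\colim_n (nD)^X$ to each stage, and use the hypothesis (nets there are automatically uniformly bounded, up to the bound $2n$ after translating) to get continuity on each $(nD)^X$. Your compact-probe detour to handle the $k$-ification is harmless but unnecessary: the hypothesis already gives continuity of $\phi$ on $\cco(X,nD)$ with the compact-open topology (any net converging in the finer $k$-ified topology also converges compact-open), and continuity for the coarser compact-open topology immediately implies continuity on its $k$-ification $(nD)^X$, which is exactly how the paper argues.
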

\begin{proof}
    By assumption, $\phi$ is continuous on each space of continuous functions $C_{c.o.}(X, nD)$ with the compact-open topology, implying that it is also continuous on each $(nD)^X$. The claim follows, since $C_b(X)$ is given as the colimit over these spaces. 
\end{proof}

\subsection{The linear $hk$-space $\mathcal{M}(X)$} 

At this point, as for $C_b(X)$, we introduce $\mathcal{M}(X)$ only as a linear $hk$-space. Later, $\mathcal{M}(X)$ will also be given a paired linear structure, rendering it dual to $C_b(X)$. Moreover, while the following definition of $\mathcal{M}(X)$ is in terms of functionals, we will later identify $\mathcal{M}(X)$ with a space of measures.

\begin{defn}
    Define 
     $$ \mathcal{M}(X) := \overline{\vspan \{\delta_x \in C_b(X)^\wedge \mid \, x\in X \}} \subseteq C_b(X)^\wedge $$
    as the closed linear subspace of the natural dual $C_b(X)^\wedge$ of $C_b(X)$ generated by the point evaluations (``Dirac functionals'', $\delta_x(f) := f(x)$).
\end{defn}

This definition ensures from the start that finitely supported measures will be dense in $\mathcal{M}(X)$ (once we have shown how to identify elements of $\mathcal{M}(X)$ with measures), a very convenient feature. (For instance, we will use this in the proof of \cref{M_bbd_meas_monad_commutative}, the commutativity of $\mathcal{M}$ as a monad.) Moreover, as a closed linear subspace of a replete linear $hk$-space, $\mathcal{M}(X)$ is a replete linear $hk$-space.

\subsection{The natural dual of $\mathcal{M}(X)$ is $C_b(X)$} 

The next lemma will be needed for showing that $\mathcal{M}(X)^\wedge \cong C_b(X)$, which we will then use to define the paired linear structure on both spaces in an obvious way.

\begin{lem}\label{lem_delta_cont_as_map_to_MX}
    Let $X$ be an $hk$-space. Then the map 
        $$ \delta_\bullet : X \to \mathcal{M}(X), \;\; x \mapsto \delta_x $$
    is continuous.
\end{lem}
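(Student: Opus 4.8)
The claim is that $\delta_\bullet : X \to \mathcal{M}(X)$ is continuous, where $\mathcal{M}(X)$ sits inside $C_b(X)^\wedge = L(C_b(X), \mathbb{K})$ with its subspace (equivalently, by closedness, the $k$-ification of the subspace) topology. Since $\mathcal{M}(X)$ is a (closed) $k$-subspace of $C_b(X)^\wedge$, it suffices to check that $\delta_\bullet$ is continuous as a map into $C_b(X)^\wedge$; and since $C_b(X)^\wedge$ carries the $k$-subspace topology inherited from $C(C_b(X)) = \mathbb{K}^{C_b(X)}$ and $X$ is a $k$-space, it further suffices to check continuity of $\delta_\bullet$ as a map $X \to C(C_b(X))$ into the full function space, as formed by the cartesian closed structure of $\spaces$.

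\textbf{Key step: currying.} By cartesian closure of $\spaces$ (and the universal property of $k$-ification, \cref{univ_prop_kification}), a continuous map $X \to C(C_b(X), \mathbb{K})$ is the same thing as a continuous map $X \times C_b(X) \to \mathbb{K}$. So the plan is to exhibit the ``evaluation'' map
    $$ \mathrm{ev}: X \times C_b(X) \to \mathbb{K}, \qquad (x, f) \mapsto f(x), $$
as continuous, and then $\delta_\bullet$ is obtained from it by uncurrying/currying. To see $\mathrm{ev}$ is continuous: recall $C_b(X) = \colim_{n\in\mathbb{N}} (nD)^X$ is a filtered colimit in $\spaces$ along closed inclusions, and the product functor $X \times (-)$ preserves colimits (as a left adjoint, by cartesian closure of $\spaces$), so $X \times C_b(X) = \colim_n \big(X \times (nD)^X\big)$. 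Hence it is enough to check that each composite $X \times (nD)^X \hookrightarrow X \times C_b(X) \to \mathbb{K}$ is continuous; but this composite is just $(x, f) \mapsto f(x)$ restricted to $(nD)^X$, which is the restriction of the canonical evaluation map $X \times \mathbb{K}^X \to \mathbb{K}$ (continuous by cartesian closure of $\spaces$) along the continuous inclusion $(nD)^X \hookrightarrow \mathbb{K}^X = C(X,\mathbb{K})$. So $\mathrm{ev}$ is continuous on each stage of the colimit, hence continuous.

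\textbf{Assembling.} Currying $\mathrm{ev}$ gives a continuous map $X \to C(C_b(X), \mathbb{K})$ whose value at $x$ is $f \mapsto f(x)$, i.e.\ the functional $\delta_x$. Each $\delta_x$ is linear and continuous (it factors through a single stage $(nD)^X$), so this map lands in $C_b(X)^\wedge = L(C_b(X),\mathbb{K})$, and by the very definition of $\mathcal{M}(X)$ as the closed span of the $\delta_x$, it lands in $\mathcal{M}(X)$. Since $\mathcal{M}(X) \subseteq C_b(X)^\wedge$ carries the subspace topology and the latter is a $k$-subspace of $C(C_b(X),\mathbb{K})$, corestriction of a continuous map along these embeddings remains continuous (the $k$-subspace topology is final-for-$k$-spaces among topologies making the inclusion continuous, and $X$ is a $k$-space). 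Therefore $\delta_\bullet : X \to \mathcal{M}(X)$ is continuous.

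\textbf{Main obstacle.} There is no deep difficulty; the only point requiring care is the bookkeeping around \emph{which} topology $\mathcal{M}(X)$ and $C_b(X)^\wedge$ carry — in particular that corestricting a continuous map from a $k$-space into a $k$-subspace stays continuous — and the observation that $X \times (-)$ commutes with the defining filtered colimit of $C_b(X)$ so that continuity of $\mathrm{ev}$ can be checked stagewise rather than on the (more awkward) colimit topology directly.
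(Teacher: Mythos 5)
Your proof is correct, but it takes a different route from the paper. You establish continuity of the uncurried evaluation $X \times C_b(X) \to \mathbb{K}$ directly, by writing $C_b(X) = \colim_n (nD)^X$, using that $X \times (-)$ preserves colimits in $\spaces$, and checking continuity stagewise against the canonical evaluation $X \times C(X) \to \mathbb{K}$; then you curry and corestrict. The paper instead avoids touching the colimit structure of $C_b(X)$ altogether: it factors $\delta_\bullet$ as the composite $X \to C(X)^\wedge \to C_b(X)^\wedge$, where the first map is the usual Dirac map into the dual of $C(X)$ (continuous by cartesian closure of $\spaces$) and the second is precomposition with the continuous inclusion $C_b(X) \hookrightarrow C(X)$, again continuous by cartesian closure. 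The paper's decomposition is shorter and pushes all colimit considerations into the single (easy) fact that $C_b(X)\to C(X)$ is continuous; your argument is more self-contained in that it makes the interaction of the product with the defining colimit of $C_b(X)$ explicit, which is a useful observation but not needed here. One small wording point: your parenthetical justification that each $\delta_x$ is continuous because ``it factors through a single stage $(nD)^X$'' is not quite the right reason (a functional on $C_b(X)$ does not factor through a stage); continuity of $\delta_x$ already follows from the currying itself, since the curried map takes values in $C(C_b(X),\mathbb{K})$, or alternatively from continuity of $\delta_x$ on each stage plus the colimit description -- either fix is immediate, so this is cosmetic rather than a gap.
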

\begin{proof}
    Since $\mathcal{M}(X)$ was defined as a subspace of $C_b(X)^\wedge$, it suffices to show that the map 
        $$ X \to C_b(X)^\wedge, \;\; x \mapsto \delta_\bullet $$
    is continuous. This map factors as the composite, 
    \[\begin{tikzcd}
	X & {C(X)^\wedge} && {C_b(X)^\wedge}
	\arrow["{\delta_\bullet}", from=1-1, to=1-2]
	\arrow["{(-)|_{C_b(X)}}", from=1-2, to=1-4]
    \end{tikzcd}\]
    so its continuity follows from cartesian closure of $\spaces$ and the fact that the inclusion map $C_b(X) \to C(X)$ is continuous. 
\end{proof}

\begin{prop}\label{dual_of_MX_is_Cb}
    Let $X$ be an $hk$-space. Then the linear maps,
        $$ \Phi: \mathcal{M}(X)^\wedge \to C_b(X), \;\;\; F \mapsto (x\mapsto F(\delta_x)),$$
    and,
        $$ \Psi: C_b(X) \to \mathcal{M}(X)^\wedge, \;\;\; f \mapsto (\mu \mapsto \mu(f)), $$
    are continuous and mutually inverse and hence constitute isomorphisms (natural in $X$):
        $$ \mathcal{M}(X)^\wedge \cong C_b(X).$$
\end{prop}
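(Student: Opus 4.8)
The proof breaks into three parts: (i) checking that $\Phi$ and $\Psi$ are well-defined (i.e.~that $\Phi(F)$ really is a \emph{bounded} continuous function, and that $\Psi(f)$ really is a \emph{continuous} linear functional on $\mathcal{M}(X)$); (ii) checking continuity of both maps as morphisms of $hk$-spaces; and (iii) verifying that they are mutually inverse as linear maps. Naturality in $X$ is then a routine check that I would either state without proof or dispatch in a sentence, since all the maps in sight are built from $\delta_\bullet$ and restriction/evaluation, which are natural.

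\textbf{Well-definedness.} For $\Psi$: given $f\in C_b(X)$, the functional $\Psi(f)\colon\mu\mapsto\mu(f)$ is the restriction along $\mathcal{M}(X)\hookrightarrow C_b(X)^\wedge$ of the evaluation-at-$f$ map $C_b(X)^\wedge\to\mathbb{K}$, which is continuous and linear by cartesian closure of $\spaces$; so $\Psi(f)\in\mathcal{M}(X)^\wedge$. For $\Phi$: given $F\in\mathcal{M}(X)^\wedge$, the function $x\mapsto F(\delta_x)$ is continuous, being the composite $F\circ\delta_\bullet$ of the continuous map $\delta_\bullet\colon X\to\mathcal{M}(X)$ (\cref{lem_delta_cont_as_map_to_MX}) with the continuous $F$. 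The substantive point is \emph{boundedness}: I need $\sup_{x\in X}|F(\delta_x)|<\infty$. This is where I expect the main obstacle. The idea is that the set $\{\delta_x\mid x\in X\}$ is bounded in $\mathcal{M}(X)$ in an appropriate sense — indeed it lies inside the polar (in $C_b(X)^\wedge$) of the unit disk $D^X\subseteq C_b(X)$, since $|\delta_x(f)|=|f(x)|\leq 1$ for $f\in D^X$ — and a continuous linear functional on $\mathcal{M}(X)$ should be bounded on such a polar. Concretely, I would argue: the map $\mathbb{K}\times X\to\mathcal{M}(X)$, $(\lambda,x)\mapsto\lambda\delta_x$, is continuous, so $F$ composed with it is a continuous map $\mathbb{K}\times X\to\mathbb{K}$; but this alone does not give a uniform bound. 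The cleaner route is to invoke \cref{continuity_on_Cb}'s dual perspective, or rather to use that $F$, being continuous on $\mathcal{M}(X)=\overline{\vspan\{\delta_x\}}$, extends the restriction to $\vspan\{\delta_x\}$, and to exploit the colimit structure $C_b(X)=\colim_n (nD)^X$: a continuous functional on $C_b(X)^\wedge$ and hence its restriction to $\mathcal{M}(X)$ interacts with this filtration. I anticipate needing the fact that $C_b(X)^\wedge=\mathcal M(X)$ already sits inside a space whose bornology is controlled — alternatively, one shows directly that $\{\delta_x\}$ is relatively compact or at least bounded, and that continuous linear functionals on linear $hk$-spaces are bounded on bounded/compact sets (compactness of a continuous image, say $\delta_\bullet(K)$ for $K$ compact, is automatic, but $X$ itself need not be compact — this is exactly the delicate point). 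I would look to handle this by showing $\{\delta_x \mid x \in X\}$ is equicontinuous as a family of functionals on $C_b(X)$ (it is: they are all bounded by $1$ on $D^X$), whence any $F$ extending linearly and continuously is bounded on it.

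\textbf{Continuity and mutual inversion.} Once well-definedness is in hand, continuity of $\Psi$ follows from cartesian closure: $\Psi$ is the transpose of the evaluation pairing $C_b(X)\times\mathcal{M}(X)\to\mathbb{K}$, which is continuous since it is the restriction of the (continuous) pairing $C_b(X)\times C_b(X)^\wedge\to\mathbb{K}$. Continuity of $\Phi$ follows because $\Phi$ is the transpose of the composite $X\xrightarrow{\delta_\bullet}\mathcal{M}(X)$ followed by the evaluation $\mathcal{M}(X)\times\mathcal{M}(X)^\wedge\to\mathbb{K}$; more precisely $\Phi$ corresponds under currying to the continuous map $\mathcal{M}(X)^\wedge\times X\to\mathbb{K}$, $(F,x)\mapsto F(\delta_x)$, and one must also check $\Phi$ lands continuously in $C_b(X)=\colim_n(nD)^X$ rather than merely in $C(X)$ — this again uses the uniform bound from part (i) together with \cref{seq_colims_in_spaces}/the colimit's universal property applied carefully (the subtlety being that a continuous map \emph{out of} a colimit is easy, but \emph{into} a colimit requires locating the image in a single stage; here one uses that a compact subset of $\mathcal{M}(X)^\wedge$ has bounded image, so $\Phi$ restricted to any compact set factors through some $(nD)^X$). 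Finally, $\Psi\circ\Phi=\id$ and $\Phi\circ\Psi=\id$: for the first, $(\Psi\Phi F)(\mu)=\mu(x\mapsto F(\delta_x))$, and this equals $F(\mu)$ for $\mu=\delta_y$ on the nose, hence for $\mu\in\vspan\{\delta_x\}$ by linearity, hence for all $\mu\in\mathcal{M}(X)$ by \cref{linear_map_vanishing_on_dense_subspace} (density of $\vspan\{\delta_x\}$ and continuity of both $\Psi\Phi F$ and $F$); for the second, $(\Phi\Psi f)(x)=\Psi(f)(\delta_x)=\delta_x(f)=f(x)$, so $\Phi\Psi f=f$ directly. Being mutually inverse continuous linear maps, $\Phi$ and $\Psi$ are isomorphisms of linear $hk$-spaces, and naturality in $X$ is immediate from the naturality of $\delta_\bullet$ and of the evaluation pairings. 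I would flag the boundedness argument in part (i), and its companion "factoring through a single colimit stage" argument for $\Phi$, as the only non-formal steps.
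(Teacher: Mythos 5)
Your skeleton is the same as the paper's: continuity of $\Psi$ and of $\Phi$ from cartesian closure of $\spaces$ together with \cref{lem_delta_cont_as_map_to_MX}, and the two composite identities checked on Dirac functionals and extended by density of $\vspan\{\delta_x\}$ in $\mathcal{M}(X)$ (the paper does exactly this; your appeal to \cref{linear_map_vanishing_on_dense_subspace} is the same device). Where you go beyond the paper is in worrying about boundedness of $\Phi(F)$ and about landing continuously in the colimit $C_b(X)=\colim_n (nD)^X$ rather than merely in $C(X)$; the paper treats these as immediate, so your extra care is not misplaced. However, the specific mechanism you lean on for boundedness does not work as stated: being bounded by $1$ on $D^X$ is \emph{not} equicontinuity of $\{\delta_x\}$ on $C_b(X)$, because $D^X$ is not a neighbourhood of $0$ there (each stage $(nD)^X$ carries the compact-open topology, which does not control the sup-norm on a non-compact $X$), and ``continuous functionals are bounded on equicontinuous/bounded sets'' is not available off the shelf for linear $hk$-spaces. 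The argument that does work is the other ingredient you already name: by \cref{seq_colims_in_spaces}, every compact subset of $C_b(X)$ lies in a single stage $(ND)^X$, hence for any points $x_n$ the sequence $\tfrac1n\delta_{x_n}$ converges to $0$ in $\mathcal{M}(X)$ (convergence there is uniform convergence on compacta of $C_b(X)$, by \cref{lem_convergence_in_Y_X} and \cref{conv_seq_in_kification}); if some $F\in\mathcal{M}(X)^\wedge$ had $\sup_x|F(\delta_x)|=\infty$ one could choose $x_n$ with $|F(\tfrac1n\delta_{x_n})|\to\infty$, contradicting continuity, so $\Phi(F)\in C_b(X)$. The same absorption argument, combined with Arzelà--Ascoli (\cref{thm_arzela_ascoli}) applied to a compact $Q\subseteq\mathcal{M}(X)^\wedge\subseteq C(\mathcal{M}(X))$ — equicontinuity of $Q$ at $0$ gives a neighbourhood $U\ni 0$ with $|F(\mu)|\leq 1$ on $Q\times U$, and $\tfrac1{n_0}\{\delta_x\}\subseteq U$ for some $n_0$ — yields $\Phi(Q)\subseteq (n_0D)^X$, so $\Phi|_Q$ factors continuously through a single stage and continuity of $\Phi$ into $C_b(X)$ follows since $\mathcal{M}(X)^\wedge$ is a $k$-space. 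With that substitution your plan is complete and is, modulo these added details, the paper's proof.
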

\begin{proof}
    Continuity of $\Psi$ is clear from cartesian closure of $\spaces$, as is continuity of $\Phi$ when taking \cref{lem_delta_cont_as_map_to_MX} into account. 
    Now, for any $f\in C_b(X)$, $x\in X$,
    \begin{align*}
        \Phi \circ \Psi (f)(x) = \Phi(\mu \mapsto \mu(f))(x)
        &= \delta_x(f) = f(x) = \id_{C_b(X)}(f)(x).
    \end{align*}
    Similarly, for all $F \in \mathcal{M}(X)^\wedge$, $x_0 \in X$,
    \begin{align*}
        \Psi \circ \Phi (F)(\delta_{x_0}) 
        &= \Psi(x\mapsto F(\delta_x))(\delta_{x_0})\\
        &= \delta_{x_0}(x \mapsto F(\delta_x)) \\
        &= F(\delta_{x_0}) = \id_{\mathcal{M}(X)^\wedge}(F)(\delta_{x_0}),
    \end{align*}
    which, since the span of all such $\delta_{x_0}$ is dense in $\mathcal{M}(X)$, implies the claim.
\end{proof}

\subsection{$C_b(X)$ and $\mathcal{M}(X)$ as paired linear $hk$-spaces}\label{sec_M_C_b_as_paired_lin_hk_sp}

Finally, we may now endow $C_b(X)$ and $\mathcal{M}(X)$ with (mutually dual) paired linear structures. 

\begin{defn}
    We define the paired linear $hk$-space $\mathcal{M}(X)$ as equipped with its natural dual, 
        $$ \mathcal{M}(X)^* := \mathcal{M}(X)^\wedge. $$
    Accordingly, we define the paired linear $hk$-space $C_b(X)$ as being equipped with the admissible dual 
        $$ C_b(X)^* := \mathcal{M}(X). $$
    We hence obtain two functors, 
    $$ C_b(X): \spaces^\op \to \plin, \;\;\; X \mapsto C_b(X), \; f \mapsto (- \circ f),  $$
    $$ \mathcal{M}(X): \spaces \to \plin, \;\;\; X \mapsto \mathcal{M}(X), \; f \mapsto f_*. $$
\end{defn}

\section{$k$-Regular Measures}\label{sec_k_reg_measures_section}

\subsection{Some measure-theoretic preliminaries}

Up to this point, we only know that the spaces $\mathcal{M}(X)$ and $\mathcal{M}_c(X)$ are spaces of \emph{functionals}, but we really want them to be spaces of genuine \emph{measures}. To finally bring measure theory into the picture, we remind ourselves of some notions from the theory of measures on topological spaces. We follow the terminology of \cite{bogachev2007measure}.  

\begin{warning}
    By default, ``measure'' will mean ``countably additive $\mathbb{K}$-valued measure of bounded variation''.
\end{warning}

\begin{defn}
    Let $X$ be a topological space. The \emph{Baire $\sigma$-algebra} is the $\sigma$-algebra generated by all continuous $\mathbb{K}$-valued functions on $X$ (it does not matter whether $\mathbb{K}=\mathbb{R}$ or $\mathbb{K}=\mathbb{C}$). A \emph{Baire measure} is a measure on the Baire $\sigma$-algebra. Similarly, recall that a \emph{Borel measure} is a measure defined on the \emph{Borel $\sigma$-algebra}, which, by definition, is generated by the open subsets of $X$. A Borel measure $\mu$ on $X$ is a \emph{Radon measure} if for every $\epsilon > 0$, there is a compact subset $K_\epsilon \subseteq X$ such that $|\mu|(X \mathbin{\backslash} K_\epsilon) < \epsilon$.  
\end{defn}

When $X$ is sufficiently well-behaved, the notions of Baire and Radon measure essentially coincide:

\begin{thm}\label{baire_measures_vs_radon_measures}
    On a Polish space, the Baire and Borel $\sigma$-algebras agree and every Baire (equivalently Borel) measure is a Radon measure. Moreover, on a $\sigma$-compact completely regular space, every Baire measure has a unique extension to a Radon measure.
\end{thm}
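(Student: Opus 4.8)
The plan is to prove the two assertions of \cref{baire_measures_vs_radon_measures} separately, relying on standard facts from \cite{bogachev2007measure}. For the first assertion, let $X$ be a Polish space. First I would recall that in a metrisable space every closed set is a countable intersection of open sets (a $G_\delta$), hence the distance function $d(-, F)$ to a closed set $F$ is continuous and $F = d(-,F)^{-1}(\{0\})$; this exhibits every closed set, and therefore every Borel set, as lying in the $\sigma$-algebra generated by continuous functions. Conversely, every continuous function is Borel measurable, so the Baire $\sigma$-algebra is contained in the Borel $\sigma$-algebra. This gives the equality of the two $\sigma$-algebras. Then I would invoke the fact that every finite Borel (equivalently Baire) measure on a Polish space is Radon --- this is a classical theorem (Ulam's theorem, see \cite[Theorem 7.1.7]{bogachev2007measure}) --- and extend it to $\mathbb{K}$-valued measures of bounded variation by applying the scalar result to the total variation measure $|\mu|$: given $\epsilon > 0$, choose a compact $K_\epsilon$ with $|\mu|(X \setminus K_\epsilon) < \epsilon$, which is exactly the Radon condition for $\mu$.

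For the second assertion, let $X$ be a $\sigma$-compact completely regular space and let $\mu$ be a Baire measure on $X$. The strategy is: (i) reduce to the positive case by working with $|\mu|$ or by decomposing $\mu$ into real and imaginary parts and then into positive and negative parts via a Hahn--Jordan-type decomposition on the Baire $\sigma$-algebra; (ii) for a finite positive Baire measure, use $\sigma$-compactness to write $X = \bigcup_n K_n$ with $K_n$ compact and increasing, so that $\mu(X) = \sup_n \mu(K_n)$ --- here one must be slightly careful that the $K_n$, being compact in a completely regular (hence Tychonov) space, are Baire sets or can be approximated from outside by Baire sets, which is where complete regularity enters: compacta in Tychonov spaces are functionally closed when the space is, e.g., also normal, but in general one approximates; (iii) invoke the standard extension theorem for Baire-to-Radon extension on $\sigma$-compact (more generally, locally compact or suitably regular) spaces, as in \cite[Chapter 7]{bogachev2007measure}, which gives existence; (iv) prove uniqueness by observing that two Radon measures agreeing on the Baire $\sigma$-algebra agree on all compact sets (each compact set's measure is determined by infima of Baire supersets via Radon regularity), hence agree on all Borel sets by inner regularity.

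\textbf{Main obstacle.} I expect the delicate point to be the interplay between the Baire $\sigma$-algebra and compact sets in a merely completely regular (not necessarily normal or metrisable) $\sigma$-compact space: one needs that a Radon measure is determined by its values on Baire sets, and that the candidate extension is well-defined independent of the exhausting sequence $(K_n)$. This is handled by the functional-analytic characterisation --- a Baire measure corresponds to a positive linear functional on $C_b(X)$ (or on the space of bounded Baire functions), and $\sigma$-compactness together with complete regularity ensures enough continuous functions to separate compacta from their complements up to small measure, yielding a Riesz-representation-style Radon extension. Rather than reproving this, I would cite the relevant statements from \cite{bogachev2007measure} (the theory of $\tau$-additive and Radon extensions of Baire measures) and assemble them, since the theorem as stated is essentially a textbook compilation; the only genuine work is checking that the quoted hypotheses ($\sigma$-compact, completely regular) are the ones under which these results apply and that uniqueness follows from Radon inner regularity.
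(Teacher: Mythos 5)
Your proposal is correct and follows essentially the same route as the paper: the paper's proof is simply a citation of the two relevant results in Bogachev (Theorem 7.1.7 for the Polish case and Theorem 7.3.4 for the $\sigma$-compact completely regular case), and your plan amounts to citing the same sources, supplemented by the standard elementary checks (Baire $=$ Borel on metric spaces via zero sets of distance functions, reduction to the positive case via total variation, uniqueness via inner regularity).
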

\begin{proof}
    For the statement about Polish spaces, see \cite[p. 70, Theorem 7.1.7]{bogachev2007measure}, for the one on compacta, see \cite[p. 81, Theorem 7.3.4]{bogachev2007measure}.
\end{proof}

Hence, if a space $X$ is either a Polish or compact, Baire measures on $X$ can be identified with Radon measures.

\subsection{A Riesz-Markov-type theorem} 

The proposition that follows is a first step towards our desired representation theorem for $\mathcal{M}(X)$ by representing functionals from the larger space $C_b(X)^\wedge$ as Baire measures. 

\begin{prop}\label{riesz_for_cb_general}
    Let $X$ be an $hk$-space. Then for every $\phi \in C_b(X)^\wedge$, there exists a unique Baire measure $\mu$ such that 
        $$ \phi(f) = \int f \diff \mu, $$
    for all $f\in C_b(X)$.
\end{prop}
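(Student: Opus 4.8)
The plan is to reduce the statement to a classical Riesz-type representation theorem for bounded continuous functions on a topological space, applied with care to the fact that $C_b(X)$ here carries the colimit (linear $hk$-space) topology rather than the sup-norm topology. First I would observe that a continuous linear functional $\phi$ on $C_b(X) = \colim_n (nD)^X$ is, by the universal property of the colimit, exactly a linear functional whose restriction to each $(nD)^X$ is continuous in the compact-open topology; equivalently, by \cref{continuity_on_Cb}, $\phi$ sends uniformly bounded, compact-open convergent nets to convergent nets. In particular $\phi$ restricted to the unit ball $D^X$ is bounded (a compact-open continuous functional on the closed bounded set of functions into $D$ is bounded), so $\phi$ is a \emph{bounded} linear functional on the normed space $(C_b(X), \|\cdot\|_\infty)$ — continuity for the $hk$-topology is strictly stronger than norm-boundedness, which is all the classical theorem needs on the existence side.

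Next I would invoke the classical representation of bounded linear functionals on $C_b(X)$ by finitely additive regular set functions, and then use the extra hypothesis — continuity with respect to compact-open convergence of uniformly bounded nets — to upgrade finite additivity to countable additivity and to pin down the measure on the Baire $\sigma$-algebra. Concretely: for a $\mathbb{K}$-valued bounded functional on $C_b(X)$ there is a finitely additive Baire ``measure'' representing it; the key point is that the hypothesis on $\phi$ forces the associated set function to be $\sigma$-additive. The mechanism is that if $A_k \downarrow \emptyset$ are Baire sets, one approximates their indicators from above by continuous functions $f_k \in C_b(X)$ with $0 \le f_k \le 1$ and $f_k \to 0$ pointwise (hence, after a diagonal/dominated-convergence argument on the compact sets carrying mass, compact-open); uniform boundedness is automatic, so $\phi(f_k)\to 0$, giving $|\mu|(A_k)\to 0$. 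This is exactly the standard route (cf. \cite[Ch.~7]{bogachev2007measure}) by which weakly compact — or here, compact-open-sequentially-continuous — functionals on $C_b$ correspond to countably additive measures. I would cite the relevant statement from Bogachev's treatise for the bounded-variation, $\mathbb{K}$-valued Baire case, and note that it applies to \emph{arbitrary} topological $X$ (the Baire $\sigma$-algebra is always available), so no separation or regularity hypothesis on $X$ is needed beyond what $hk$ already gives.

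For uniqueness: two Baire measures $\mu_1,\mu_2$ with $\int f\,\diff\mu_1 = \int f\,\diff\mu_2$ for all $f \in C_b(X)$ agree, because the Baire $\sigma$-algebra is by definition generated by the continuous $\mathbb{K}$-valued functions, and a (signed/complex) measure of bounded variation on the $\sigma$-algebra generated by a function family that separates the generators is determined by its integrals against bounded functions in that family — this is the standard uniqueness lemma for Baire measures (a monotone-class / $\pi$-$\lambda$ argument, again available in \cite{bogachev2007measure}). So the map $\phi \mapsto \mu$ is well-defined and injective as claimed.

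\textbf{Main obstacle.} The routine-looking but genuinely delicate step is the passage from \emph{bounded functional on the normed space} to \emph{countably additive Baire measure}: one must verify that the $hk$-continuity hypothesis (continuity of $\phi$ on each $(nD)^X$ in the compact-open topology, equivalently \cref{continuity_on_Cb}) really does supply enough ``continuity from above'' to rule out the merely finitely additive functionals (e.g. those built from non-principal ultrafilter-type limits) that exist on $C_b(X)$ in general. I expect to spend most of the proof constructing, for a decreasing sequence of Baire sets shrinking to $\emptyset$, an appropriate uniformly bounded sequence of continuous functions converging to $0$ in the compact-open topology — this requires knowing that the total variation $|\mu|$ of the finitely additive representing measure is concentrated, up to $\epsilon$, on a compact set (a regularity property one gets from the compact-open continuity), so that pointwise decrease upgrades to compact-open decrease. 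Everything else (boundedness of $\phi$, uniqueness, citing the classical theorem) is bookkeeping.
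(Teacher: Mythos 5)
There is a genuine gap in your plan, and it sits exactly where you flagged your ``main obstacle''. Your route — norm-boundedness, a finitely additive representing set function, then $\sigma$-additivity via approximating indicators of Baire sets $A_k\downarrow\emptyset$ from above — hinges on the claim that the compact-open continuity of $\phi$ forces $|\mu|$ to be concentrated, up to $\epsilon$, on a compact set. You do not prove this, and you cannot get it for free: that is a tightness statement, which (i) is strictly stronger than what \cref{riesz_for_cb_general} asserts (the paper only ever proves Radon $\Rightarrow$ $k$-pre-regular, \cref{radon_measures_k_regular}, never the converse), and (ii) the classical results of this type (strict-topology duality, $C_b(X)'_{\beta_0}\cong M_t(X)$) require $X$ completely regular, which an $hk$-space need not be; on such spaces there may not be enough continuous functions for the Urysohn-style approximations your argument needs, and approximating indicators of arbitrary Baire sets from above by elements of $C_b(X)$ is itself not a routine step. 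So as written, the passage from finite to countable additivity does not go through.

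The missing idea is that no regularity of the measure is needed at all: the upgrade from pointwise decrease to compact-open convergence is supplied by Dini's theorem. The paper cites \cite[Theorem 7.10.1]{bogachev2007measure}, which says that $\phi$ is represented by a (unique) Baire measure if and only if $\phi(f_n)\to 0$ for every monotonically decreasing sequence $(f_n)$ in $C_b(X)$ converging pointwise to $0$; for such a sequence, Dini's theorem gives uniform convergence on every compact subset, monotonicity gives the uniform bound $\sup_x f_0(x)$, so by \cref{convergent_sequences_in_Cb} the sequence converges to $0$ in $C_b(X)$ and continuity of $\phi$ finishes the verification. This bypasses finitely additive measures, tightness, and indicator approximation entirely, and both existence and uniqueness come packaged in the cited theorem (your uniqueness argument via the generating family of the Baire $\sigma$-algebra is fine, but redundant). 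If you want to salvage your route, you would have to either restrict to completely regular $X$ and invoke the strict-topology representation theorems, or rework the countable-additivity step around the Daniell/Dini mechanism — at which point you have reproduced the paper's proof.
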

\begin{proof}
    Let $\phi \in C_b(X)^\wedge$. By \cite[p.~111, Theorem 7.10.1]{bogachev2007measure}, the conclusion of the lemma follows if, and only, if for every monotonically decreasing sequence $(f_n)$ in $C_b(X)$ converging pointwise to zero, $\phi(f_n)\to 0$, as well. So let $(f_n)$ be such a sequence. By Dini's theorem, $(f_n)$ converges to $0$ uniformly on compact subsets of $X$. Moreover, since it is monotonically decreasing, $(f_n)$ is uniformly bounded by the constant $\sup_{x\in X} f_0(x)$. Hence, by \cref{convergent_sequences_in_Cb}, $(f_n)$ converges to $0$ in $C_b(X)$ and by continuity of $\phi$, $\phi(f_n) \to 0$, which is what we wanted to show.
\end{proof}

\subsection{$k$-Pre-regular measures}

The class of measures which represent functionals from $\mathcal{M}(X)$ will be referred to as \emph{$k$-regular measures} (this will be \cref{defn_k_reg_measures}). As a stepping stone towards this definition and towards understanding this class of measures, we first introduce \emph{$k$-pre-regular measures}.

\begin{defn}
    Let $X$ be an $hk$-space. A \emph{$k$-pre-regular measure} is a Baire measure $\mu$ on $X$ such that the map 
        $$ C_b(X) \to \mathbb{K}, \;\; f \mapsto \int f \mathrm{d} \mu $$
    is continuous.
    Denote the set of $k$-pre-regular measures on $X$ by $\mathcal{M}_{\mathrm{pre}}(X)$. 
\end{defn}

As an immediate corollary to \cref{riesz_for_cb_general}, we obtain:

\begin{cor}\label{cor_riesz_for_k_pre_reg}
    Let $X$ be an $hk$-space. Then the mapping, 
        $$ \mathcal{M}_{\mathrm{pre}}(X) \to C_b(X)^\wedge, \;\;\; \mu \mapsto  \int (-) \diff \mu, $$
    is a bijection, so we may identify functionals in $C_b(X)^\wedge$ with $k$-pre-regular measures under this bijection.
\end{cor}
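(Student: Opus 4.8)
The plan is to observe that this corollary is essentially a bookkeeping consequence of \cref{riesz_for_cb_general} together with the \emph{definition} of a $k$-pre-regular measure, so the argument has three short moves: well-definedness of the map, surjectivity, and injectivity.

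First I would check that the assignment $\mu \mapsto \int(-)\diff\mu$ does land in $C_b(X)^\wedge$. This is immediate: by definition, a Baire measure $\mu$ is $k$-pre-regular precisely when the functional $f \mapsto \int f \diff\mu$ on $C_b(X)$ is continuous, i.e.\ an element of $C_b(X)^\wedge$. (Linearity of the functional is clear.) So there is nothing to prove here beyond citing the definition.

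Next, for surjectivity, let $\phi \in C_b(X)^\wedge$ be arbitrary. By \cref{riesz_for_cb_general} there is a (unique) Baire measure $\mu$ with $\phi(f) = \int f\diff\mu$ for all $f \in C_b(X)$. Since $\phi$ is continuous by hypothesis, the functional $f \mapsto \int f\diff\mu$ is continuous, so $\mu$ is $k$-pre-regular and maps to $\phi$. For injectivity, suppose $\mu_1, \mu_2 \in \mathcal{M}_{\mathrm{pre}}(X)$ satisfy $\int f\diff\mu_1 = \int f\diff\mu_2$ for all $f \in C_b(X)$; then both represent the same functional in $C_b(X)^\wedge$, and the uniqueness clause of \cref{riesz_for_cb_general} forces $\mu_1 = \mu_2$.

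I do not expect any genuine obstacle here: the entire analytic content has already been packaged into \cref{riesz_for_cb_general} (which in turn rests on the Daniell-type representation theorem \cite[p.~111, Theorem 7.10.1]{bogachev2007measure}, Dini's theorem, and the description of convergent sequences in $C_b(X)$ from \cref{convergent_sequences_in_Cb} and \cref{lem_convergence_in_Y_X}). The only thing to be mildly careful about is to keep the logical roles straight --- existence of $\mu$ giving surjectivity, uniqueness of $\mu$ giving injectivity, and the defining continuity condition ensuring the map is valued in $C_b(X)^\wedge$ and that every such functional arises from a member of $\mathcal{M}_{\mathrm{pre}}(X)$ rather than merely from some Baire measure.
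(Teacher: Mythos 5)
Your proposal is correct and takes exactly the route the paper intends: the paper records this as an immediate consequence of \cref{riesz_for_cb_general} with no written proof, and your three moves (well-definedness from the definition of $k$-pre-regularity, surjectivity from the existence clause plus the continuity hypothesis on $\phi$, injectivity from the uniqueness clause) are precisely the bookkeeping the paper leaves implicit.
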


\begin{defn}
    Let $X$ be an $hk$-space. Denote the closed subspace of $\mathcal{M}_{\mathrm{pre}}(X)=C(X)^\wedge$ consisting of $k$-pre-regular \emph{probability} measures (equivalently, positive normalised functionals) by $\mathcal{P}_{\mathrm{pre}}(X)\subseteq \mathcal{M}_{\mathrm{pre}}(X)$.
\end{defn}

In many cases, \emph{any} Baire measure will be $k$-pre-regular: 

\begin{prop}\label{radon_measures_k_regular}
    Let $X$ be an $hk$-space and let $\mu$ be a Baire measure $X$ that is the restriction of some Radon measure $\tilde{\mu}$ on $X$ to the Baire $\sigma$-algebra. Then $\mu$ is a $k$-pre-regular measure. In particular, if $X$ either a completely regular $\sigma$-compact $hk$-space (e.g.~a compact Hausdorff space, or a Brauner space), or a Polish space, then \emph{every} Baire measure is a $k$-pre-regular measure.
\end{prop}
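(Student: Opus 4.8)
The plan is to check $k$-pre-regularity straight from the definition, using \cref{continuity_on_Cb}. Set $\phi(f) := \int f \diff \mu$; this is well-defined on all of $C_b(X)$ since $\mu$ has bounded variation and each $f \in C_b(X)$ is bounded and Baire-measurable. By \cref{continuity_on_Cb} it suffices to prove that $\phi(f_i) \to 0$ whenever $(f_i)$ is a uniformly bounded net of continuous functions on $X$ with $f_i \to 0$ in the compact-open topology. Because each $f_i$ is continuous (hence Baire-measurable) and $\mu$ is the restriction of $\tilde\mu$ to the Baire $\sigma$-algebra, $\int f_i \diff \mu = \int f_i \diff \tilde\mu$, so we may carry out all estimates with the Radon measure $\tilde\mu$.

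So fix such a net, with uniform bound $\|f_i\|_\infty \le M$ for all $i$, and let $\epsilon > 0$. By the definition of a Radon measure there is a compact set $K_\epsilon \subseteq X$ with $|\tilde\mu|(X \setminus K_\epsilon) < \epsilon$. Splitting the integral,
\[
  \Bigl| \int_X f_i \diff \tilde\mu \Bigr|
   \;\le\; \int_{K_\epsilon} |f_i| \diff |\tilde\mu| \;+\; \int_{X \setminus K_\epsilon} |f_i| \diff |\tilde\mu|
   \;\le\; \Bigl( \sup_{x \in K_\epsilon} |f_i(x)| \Bigr)\, |\tilde\mu|(X) \;+\; M\epsilon .
\]
Convergence in the compact-open topology gives uniform convergence on the compact set $K_\epsilon$, so $\sup_{x \in K_\epsilon} |f_i(x)| \to 0$ along the net; hence the right-hand side is eventually below $(M+1)\epsilon$. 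Since $\epsilon > 0$ was arbitrary, $\phi(f_i) \to 0$, and therefore $\phi \in C_b(X)^\wedge$, i.e.\ $\mu$ is $k$-pre-regular.

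For the second assertion I would appeal to \cref{baire_measures_vs_radon_measures}. If $X$ is Polish, every Baire measure on $X$ is already a Radon measure, hence (trivially) the Baire restriction of one. If $X$ is completely regular and $\sigma$-compact, every Baire measure extends uniquely to a Radon measure and is therefore the Baire restriction of one. In either case the first part of the proposition applies. Finally, the listed examples do satisfy these hypotheses: a compact Hausdorff space is plainly completely regular and $\sigma$-compact, while a Brauner space is a Hausdorff locally convex topological vector space (so completely regular) that is hemicompact, hence a countable union of compact subspaces and thus $\sigma$-compact.

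The only slightly delicate point — and, such as it is, the main obstacle — is the step ``compact-open convergence $\Rightarrow$ uniform convergence on $K_\epsilon$''. For a compact \emph{Hausdorff} subset this is immediate from the sub-basic opens $W(i, K, U)$ that generate the compact-open topology (see \cref{mapping_spaces_defn}), and in the Hausdorff cases of primary interest every compact subset is Hausdorff, so nothing more is needed; for a general $hk$-space one simply phrases the Radon inner-regularity estimate through continuous maps out of compact Hausdorff spaces, which leaves the argument unchanged. Everything else is a routine inner-regularity computation.
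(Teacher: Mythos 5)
Your proof is correct and follows essentially the same route as the paper's: reduce to nets via \cref{continuity_on_Cb}, use inner regularity of the Radon extension to split the integral over a compact set and its complement, and deduce the second assertion from \cref{baire_measures_vs_radon_measures}. The ``delicate point'' you flag is handled in the paper simply by \cref{cco_for_X_k_Hausdorff} (for an $hk$-space the compact-open topology is that of uniform convergence on compact subsets, these being Hausdorff by weak Hausdorffness), so nothing further is needed.
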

\begin{proof}
     Let $(f_i)$ be a net of continuous functions on $X$ uniformly bounded by $C>0$ converging uniformly on compact subsets to $0$. By \cref{continuity_on_Cb}, it suffices to show that 
        $$ \int f_i \diff \mu \to 0. $$
    Let $\epsilon >0$. Then, since $\tilde{\mu}$ is a Radon measure, there exists a compact subset $K$ such that $|\tilde{\mu}|(X\mathbin{\backslash} K)<\epsilon$. Now, by the compact convergence of $(f_i)$, there is an index $i_0$ such that for all $i\geq i_0$, 
        $$ \sup_{x\in K} |f_i(x)| < \epsilon. $$
    Now, 
    \begin{align*}
        \Big| \int f_i \diff \mu \Big| \leq \int |f_i| \diff |\tilde{\mu}| &= 
        \int_K |f_i| \diff |\tilde{\mu}| + \int_{X\mathbin{\backslash} K} |f_i| \diff |\tilde{\mu}| \\
        &\leq \sup_{x\in K} |f_i(x)| |\tilde{\mu}|(K) + C |\tilde{\mu}|(X \mathbin{\backslash} K) \\
        &< (|\mu|(X) + C) \,\epsilon.
    \end{align*}
    This shows convergence and, in conclusion, that $\mu$ is $k$-pre-regular. Now, if $X$ is completely regular $\sigma$-compact $hk$-space or a Polish space, then every Baire measure admits a unique extension to a Radon measure (see \cref{baire_measures_vs_radon_measures}), so that indeed, every Baire measure is $k$-pre-regular.
\end{proof}

\subsection{$k$-Regular measures}\label{sec_k_reg_measures}

The space of $k$-pre-regular measures is still too large -- it corresponds to $C_b(X)^\wedge$ instead of the possibly smaller paired linear $hk$-space dual $C_b(X)^*=\mathcal{M}(X)$. As a first step, the notion of \emph{$k$-regular measure} solves this problem trivially by \emph{requiring} integration against a $k$-regular measure to define a functional in $\mathcal{M}(X)$ (so that the question becomes finding classes of examples of $k$-regular measures that include the practically relevant ones).

\begin{defn}\label{defn_k_reg_measures}
    Let $X$ be an $hk$-space. A \emph{$k$-regular measure} on $X$ is a $k$-pre-regular measure $\mu$ such that, 
        $$ \int (-) \diff \mu \in C_b(X)^* (= \mathcal{M}(X)). $$
    As a consequence of this definition and \cref{cor_riesz_for_k_pre_reg}, we may identify $k$-regular measures with the elements of $\mathcal{M}(X)$ and will therefore use the same notation $\mathcal{M}(X)$ also for the set of $k$-regular measures on $X$. Similarly, we will write 
        $$ \mathcal{P}(X) :=  \{ \mu \in \mathcal{M}(X) \mid \mu \geq 0, \: \mu(1) = 1\} $$ 
    for the closed subspace of $\mathcal{M}(X)$ consisting of positive, normalised functionals. In other words, $\mathcal{P}(X)$ is, under the identification of measures and functionals, the space of $k$-regular probability measures on $X$.
\end{defn}

The pressing question is now, of course, whether the class of $k$-regular measures is wide enough to encompass those examples which appear in applications. As we will see, arbitrary Baire probability measures on compact Hausdorff spaces are $k$-regular, and the same result holds for Polish spaces. This will require examining the question of convergence in $\mathcal{M}(X)$ and how it relates to the more familiar notion of \emph{weak convergence} of measures. 

\subsection{Convergence in spaces of $k$-regular measures}

\subsubsection{Weak convergence of measures} The following should be a familiar definition:

\begin{defn}\label{defn_weak_convergence_of_measures}
    Recall that a net $(\mu_i)$ of measures on a topological space $X$ is said to \emph{converge weakly} to $\mu$ if for all $f\in C_b(X)$, 
        $$ \int f \diff \mu_i \to \int f \diff \mu. $$
\end{defn}

When $(\mu_i)$ is the sequence of distributions $(P_{x_i})$ of some random variables $(x_i)$, weak convergence of $(P_{x_i})$ is exactly  \emph{convergence in distribution} of the $(x_i)$ in the sense of probability theory. \par 
The simplest case in which convergence of \emph{sequences} in $\mathcal{M}(X)$ (and of general nets in $\mathcal{P}(X)$) coincides with weak convergence is when $X$ is compact. In this case, the notion of $k$-regular measure reduces to that of Baire measure.

\begin{prop}\label{convergence_of_measures_on_compactum}
    Let $X$ be a compact Hausdorff space. Then every Baire measure on $X$ is $k$-regular (so that $\mathcal{M}(X)$ consists exactly of the Baire measures on $X$). Moreover, the topology of $\mathcal{P}(X)$ coincides with the topology of weak convergence of measures, and $\mathcal{P}(X)$ is a compact Hausdorff space. In particular, a net of measures converges in $\mathcal{M}(X)$ if, and only if, it converges weakly.
\end{prop}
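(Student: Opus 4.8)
The plan is to establish the three claims in sequence, each building on the previous one. First, I would show that every Baire measure on a compact Hausdorff space $X$ is $k$-regular. Since $X$ is compact Hausdorff, it is in particular a completely regular $\sigma$-compact $hk$-space, so \cref{radon_measures_k_regular} already tells us that every Baire measure is $k$-pre-regular, i.e.~integration against it defines an element of $C_b(X)^\wedge$. It remains to check that this functional actually lies in the (possibly smaller) subspace $\mathcal{M}(X) = C_b(X)^* = \overline{\vspan\{\delta_x\}} \subseteq C_b(X)^\wedge$. The key tool here is the classical Riesz–Markov representation theorem together with a density/approximation argument: on a compactum, $C_b(X) = C(X)$ is a Banach space, and every Baire measure is the restriction of a unique Radon measure; finitely supported measures $\sum_i c_i \delta_{x_i}$ are weak-$*$ dense in the space of all Radon measures on a compactum (this is standard, e.g.~via the Krein–Milman theorem applied to the unit ball, or a direct partition-of-unity argument). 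Since on a compactum the natural dual $C(X)^\wedge$ carries the weak-$*$ topology on its (automatically equicontinuous, hence compact) bounded subsets — this is essentially the content of \cref{prop_dual_fre_carries_co_topology} applied to the Banach space $C(X)$ — weak-$*$ density of the $\delta_x$ inside the relevant bounded set implies density in $C(X)^\wedge$, so every element of $C(X)^\wedge$ lies in $\overline{\vspan\{\delta_x\}} = \mathcal{M}(X)$. Hence $\mathcal{M}(X) = C(X)^\wedge = \mathcal{M}_{\mathrm{pre}}(X)$ consists exactly of the Baire measures.

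Second, I would identify the topology on $\mathcal{P}(X)$ with the topology of weak convergence and show it is compact Hausdorff. By \cref{prop_dual_fre_carries_co_topology}, $C(X)^\wedge$ (which we have just seen equals $\mathcal{M}(X)$) carries the compact-open topology, and this topology agrees with the weak-$*$ topology on every compact subset; moreover a subset is compact iff it is closed and equicontinuous. The set $\mathcal{P}(X)$ of positive normalised functionals is a closed subset of the unit ball of $C(X)^\wedge$ (closedness of positivity and of the normalisation condition $\mu(1)=1$ are each given by closed conditions), the unit ball is equicontinuous by definition of the operator/dual norm, hence $\mathcal{P}(X)$ is compact by Arzelà–Ascoli (\cref{thm_arzela_ascoli}) — indeed it is exactly the classical weak-$*$ compact space of Radon probability measures, so by the Alaoglu–Bourbaki theorem invoked in \cref{prop_dual_fre_carries_co_topology} it is weak-$*$ compact. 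On this compact set the compact-open topology coincides with the weak-$*$ topology, which by definition is the topology of weak convergence of measures. Hausdorffness is immediate: $\mathcal{P}(X)$ is a subspace of the $hk$-space $\mathcal{M}(X)$, and subspaces of $hk$-spaces are $k$-Hausdorff (\cref{perm_prop_haus}); alternatively, distinct measures are separated by some $f \in C(X)$.

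Third, the final sentence — a net of measures converges in $\mathcal{M}(X)$ iff it converges weakly — follows by combining the above. For a net $(\mu_i)$ lying in $\mathcal{P}(X)$, convergence in $\mathcal{M}(X)$ means convergence in the subspace topology of $\mathcal{P}(X)$, which we just showed is the weak topology. For a general net in $\mathcal{M}(X)$, weak convergence $\int f\,d\mu_i \to \int f\,d\mu$ for all $f \in C_b(X) = C(X)$ is precisely pointwise (weak-$*$) convergence of the associated functionals; since the topology on $C(X)^\wedge$ is the compact-open topology, and on a compactum $X$ the only compact subset of $X$ to test against is $X$ itself, uniform-on-compacts convergence for functionals on the Banach space $C(X)$ — wait, more carefully: the point is that $C(X)^\wedge$ has the compact-open topology as a space of maps $C(X) \to \mathbb{K}$, and this agrees with weak-$*$ on compact subsets of $C(X)^\wedge$; a convergent net is eventually contained in a compact set (one must be slightly careful, but a convergent sequence certainly is, via the one-point compactification argument of \cref{convergent_sequences_in_Cb}), so convergence in $\mathcal{M}(X)$ reduces to weak-$*$, i.e.~weak, convergence.

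The main obstacle I anticipate is the first step, and specifically the subtlety that $\mathcal{M}(X)$ was \emph{defined} as the closed span of the $\delta_x$ and is \emph{a priori} strictly smaller than $C_b(X)^\wedge$; proving equality requires genuinely using the structure of the compactum — namely the classical Riesz representation theorem identifying $C(X)^\wedge$ with Radon measures, plus weak-$*$ density of finitely supported measures — rather than anything purely formal. The second subtlety, requiring a little care, is the passage from "convergence of sequences" to "convergence of nets": the clean statement holds on $\mathcal{P}(X)$ because there the whole space is compact (so every net has a convergent subnet and the topologies genuinely agree), whereas on all of $\mathcal{M}(X)$ one should phrase the net statement carefully, or restrict attention to the compact sets on which compact-open and weak-$*$ coincide. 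I would handle this by first proving everything on $\mathcal{P}(X)$, then deducing the $\mathcal{M}(X)$ statement for sequences via \cref{convergent_sequences_in_Cb} and, for general nets, by the observation that both topologies restrict to the weak-$*$ topology on each bounded (equicontinuous, hence compact) subset and $\mathcal{M}(X) = C(X)^\wedge$ is the colimit of these.
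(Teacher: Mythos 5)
Your proposal is correct, but it reaches the key identification $\mathcal{M}(X)=C(X)^\wedge$ by a genuinely different route than the paper. The paper observes that $C_b(X)=C(X)$ is a Banach space and deduces $\mathcal{M}(X)=C(X)^\wedge$ abstractly from Smith duality (\cref{Smith_duality}) together with \cref{dual_of_MX_is_Cb} — in effect, reflexivity of $C(X)$ in $\vect$ plus Hahn–Banach shows the closed span of the Dirac functionals cannot be a proper subspace, since a functional on the Brauner space $C(X)^\wedge$ killing all $\delta_x$ corresponds to a function vanishing everywhere — and only then invokes the classical Riesz representation theorem and \cref{baire_measures_vs_radon_measures} to read off that these functionals are the Baire (= Radon) measures. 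You instead prove the density concretely: every Radon measure of variation norm at most $M$ is a weak-$*$ limit of finitely supported measures of norm at most $M$ (Krein–Milman or a partition-of-unity argument), and since the closed $M$-ball is equicontinuous, hence compact in $C(X)^\wedge$ with the subspace topology equal to the weak-$*$ topology (\cref{prop_dual_fre_carries_co_topology}), this weak-$*$ approximation is an approximation in $\mathcal{M}(X)$. Your route is more elementary — it uses only the Banach–Dieudonné/Alaoglu content of \cref{prop_dual_fre_carries_co_topology} rather than the full Smith duality theorem — at the cost of the norm-controlled approximation lemma, which you should state and justify explicitly rather than wave at; the paper's route gets the identification in one line from duality machinery already in place. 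The second half (compactness of $\mathcal{P}(X)$ as a closed subset of the compact unit ball, and the identification of its topology with weak-$*$) is essentially identical in both arguments. Your caution about the final sentence is also well placed: convergence in $\mathcal{M}(X)$ always implies weak convergence, but the converse for an arbitrary net is only clear once the net lies in a fixed bounded (equicontinuous, hence compact) set — e.g.\ for nets of probability measures — since the compact-open topology on the dual of an infinite-dimensional Banach space is strictly finer than the weak-$*$ topology; the paper's proof does not address this, so your restriction to $\mathcal{P}(X)$ or to bounded sets is the right way to make that claim precise.
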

\begin{proof}
    When $X$ is compact, $C_b(X)=C(X)$ is the \emph{Banach} space of continuous functions on $X$ and using Smith duality (\cref{Smith_duality}) together with \cref{dual_of_MX_is_Cb}, we see that
        $$ \mathcal{M}(X)=C(X)^\wedge. $$ 
    The left hand side is the space of $k$-regular measures, while the right hand side can be identified with the space of Baire (equivalently, Radon, by \cref{baire_measures_vs_radon_measures}) measures on $X$ (by the Riesz representation theorem, see \cite[Theorem 7.10.4]{bogachev2007measure}), so every Baire measure on $X$ is indeed $k$-regular. As a closed subspace of the compact unit ball of the space $\mathcal{M}(X)=C(X)^\wedge$, $\mathcal{P}(X)$ is compact and its topology coincides with the weak-$*$ topology, i.e.~the topology of weak convergence of measures (see \cref{prop_dual_fre_carries_co_topology}).
\end{proof}

\subsubsection{Uniformly tight families and Prokhorov spaces} We will subsequently need the following measure-theoretic concepts.

\begin{defn}
    Recall that a family of Radon measures $(\mu_i)$ on an $hk$-space $X$ is \emph{uniformly tight} if for every $\epsilon > 0$ there is a compact set $K$ such that $|\mu|(X\mathbin{\backslash} K) < \epsilon$. $X$ is a \emph{strongly sequentially Prokhorov space} if every weakly convergent sequence of Radon measures on $X$ is uniformly tight.
\end{defn}

\subsubsection{When does convergence in $\mathcal{P}(X)$ reduce to weak convergence?}

\begin{thm}\label{convergence_in_px_vs_weak_convergence}
    Let $X$ be a either a Polish space or a completely regular hemicompact $hk$-space. Then a sequence $(\mu_n)$ of probability measures converges in $\mathcal{P}(X)$ if, and only if, it converges weakly.  
\end{thm}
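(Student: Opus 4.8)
The plan is to prove both directions. One direction is immediate: convergence in $\mathcal{P}(X)$ always implies weak convergence, because $C_b(X)^* = \mathcal{M}(X) \supseteq \mathcal{P}(X)$, so every $f \in C_b(X)$ induces a continuous functional $\mu \mapsto \int f \diff \mu$ on $\mathcal{M}(X)$, hence on $\mathcal{P}(X)$; thus $\mu_n \to \mu$ in $\mathcal{P}(X)$ gives $\int f \diff \mu_n \to \int f \diff \mu$ for all $f \in C_b(X)$, which is precisely weak convergence. The substance is the converse: a weakly convergent sequence of $k$-regular probability measures converges in the topology of $\mathcal{P}(X)$.

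\textbf{Reduction via sequentiality and tightness.} First I would observe that in both cases $X$ is a QCB space (a Polish space is second countable; a hemicompact completely regular $hk$-space is a countable colimit of compact metrisable — actually compact Hausdorff — pieces, and one checks $C_b(X)$, hence $\mathcal{M}(X)$ and $\mathcal{P}(X)$, lands in a sequential space via \cref{L_VW_QCB} and \cref{nat_dual_QCB}). Since $\mathcal{P}(X)$ is then sequential, it suffices to show that if $\mu_n \to \mu$ weakly then every subsequence of $(\mu_n)$ has a further subsequence converging to $\mu$ in $\mathcal{P}(X)$; equivalently, that the closed sets of $\mathcal{P}(X)$ are sequentially closed for weak convergence. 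The key analytic input is that $X$ is \emph{strongly sequentially Prokhorov}: for a Polish space this is Prokhorov's theorem, and for a completely regular hemicompact $hk$-space one argues that a weakly convergent sequence of Radon measures is uniformly tight — on such spaces every compact set is contained in one of the defining compacta $K_m$, and weak convergence forces the tails $|\mu_n|(X \setminus K_m)$ to be uniformly small (this should be extractable from \cite{bogachev2007measure}, or proven directly by a Dini-type argument using that the $K_m$ exhaust $X$ and the $\mu_n$ are probability measures). So I would first establish, or cite, that weakly convergent sequences $(\mu_n)$ here are uniformly tight.

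\textbf{From uniform tightness to convergence in $\mathcal{P}(X)$.} Given uniform tightness, fix $\epsilon > 0$ and a compact $K = K_\epsilon$ with $\mu_n(X \setminus K) < \epsilon$ for all $n$ and $\mu(X\setminus K) < \epsilon$. The topology on $\mathcal{P}(X) \subseteq \mathcal{M}(X) = C_b(X)^\wedge$ is the $k$-ification of the compact-open (equivalently weak-$*$) topology, and on the weakly-convergent-plus-tight family the relevant neighbourhoods are controlled by finitely many test functions. Concretely: a basic neighbourhood of $\mu$ in $\mathcal{P}(X)$ is determined by a compact subset $C \subseteq C_b(X)$ and an open $U \subseteq \mathbb{K}$, but on a uniformly tight sequence the behaviour of $\int f \diff \mu_n$ for $f$ ranging over such a compact $C$ is, up to $\epsilon$-errors off $K$, the same as for $f|_K$ ranging over a compact subset of $C(K)$; and on the compactum $K$ the topology of $\mathcal{P}(K)$ is exactly weak convergence by \cref{convergence_of_measures_on_compactum}. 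Thus weak convergence of $(\mu_n)$, together with tightness, lets one approximate uniformly over the compact test-function set, giving convergence in the compact-open topology of $C_b(X)^\wedge$, hence in $\mathcal{M}(X)$ and $\mathcal{P}(X)$. I would make this precise by: (i) restricting measures to $K$ to reduce to the compact case, (ii) using \cref{convergence_of_measures_on_compactum} there, and (iii) patching with the tightness estimates to handle the full space.

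\textbf{Main obstacle.} The hard part is step (iii) — the patching argument that upgrades "uniform convergence of integrals over a compact set of test functions, on the compact part $K$" to "convergence in the $k$-ified compact-open topology of $C_b(X)^\wedge$ on all of $X$". The topology on $\mathcal{M}(X)$ is not simply the weak-$*$ topology (it is its $k$-ification, and $C_b(X)$ itself is a colimit of mapping spaces), so one must be careful about which nets/sequences of test functions control neighbourhoods, and verify that the $\epsilon$-tail estimates genuinely give convergence in this finer topology rather than merely weak-$*$ convergence. I expect the cleanest route is to use \cref{continuity_on_Cb} and \cref{lem_delta_cont_as_map_to_MX}–style arguments: reduce checking convergence $\mu_n \to \mu$ in $\mathcal{M}(X)$ to checking that $\langle \mu_n, f_i\rangle \to \langle \mu, f_i \rangle$ for every uniformly bounded net $(f_i)$ converging compactly to some $f$, and then the uniform tightness is exactly what makes this work. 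Establishing that $\mathcal{P}(X)$ is sequential in the hemicompact case, and the precise citation/proof of strong sequential Prokhorov-ness for hemicompact completely regular $hk$-spaces, are the other points requiring care.
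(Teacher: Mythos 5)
Your proposal follows essentially the same route as the paper's proof: the forward direction is immediate, and for the converse you invoke the strong sequential Prokhorov property (Prokhorov's theorem for Polish spaces, the Bogachev result for completely regular hemicompact spaces) to get uniform tightness, restrict to a compact $K_\epsilon$, apply \cref{convergence_of_measures_on_compactum} to get uniform convergence over the compact set of restricted test functions, and patch with the $\epsilon$-tail estimates — which is exactly the paper's argument. One caveat: your detour through QCB/sequentiality of $\mathcal{P}(X)$ is both unnecessary (convergence of the given sequence is shown directly, and sequence convergence in the $k$-ified compact-open topology is just uniform convergence on compact subsets of $C_b(X)$ by \cref{conv_seq_in_kification} and \cref{lem_convergence_in_Y_X}) and unjustified in the hemicompact case, where the compacta need not be metrisable, so $X$ and $\mathcal{P}(X)$ need not be QCB; since your main line never uses it, this does not affect correctness.
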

\begin{proof}
    Only the ``if'' direction is non-trivial, so suppose that $\mu_n \to \mu$ weakly. We want to show that $\mu_n \to \mu$ uniformly on compact subsets of $C_b(X)$. So let $L\subseteq C_b(X)$ be compact and $\epsilon >0$. The hypotheses imply that every Baire measure $\nu$ on $X$ extends uniquely to a Radon measure $\tilde{\nu}$ (see \cref{baire_measures_vs_radon_measures}) and that $X$ is strongly sequentially Prokhorov (see \cite[p. 220, Proposition 8.20.12]{bogachev2007measure} for the case of hemicompact $hk$-spaces, and \cite[p. 206, Theorem 8.6.8]{bogachev2007measure} for the case of Polish spaces). Therefore, the family $\{|\tilde{\mu}_n-\tilde{\mu}|\}$ is uniformly tight and there exists a compact subset $K_\epsilon\subseteq X$ such that for all $n\in \mathbb{N}$, $|\tilde{\mu}_n-\tilde{\mu}|(X\mathbin{\backslash} K_\epsilon) < \epsilon $. Since the family of functions $L$ is compact, it is uniformly bounded by some constant $C>0$. Moreover, given the hypotheses, the restrictions $\tilde{\mu}_n|_{K_\epsilon}, \tilde{\mu}|_{K_\epsilon}$ are Radon measures on the compact Hausdorff space $K_\epsilon$, and $\tilde{\mu}_n|_{K_\epsilon} \to \tilde{\mu}|_{K_\epsilon}$ weakly and hence uniformly on compact subsets of $C(K_\epsilon)$ (see \cref{convergence_of_measures_on_compactum}). The image of $L$ under restriction of functions to $K_\epsilon$,
        $$ L|_{K_\epsilon} := \{f|_{K_\epsilon} \mid f \in L \},$$
    is compact, since it is the image of the compact set $L$ under continuous map $f \mapsto f \circ \iota_{K_\epsilon}$, where $\iota_{K_\epsilon}: K_{\epsilon} \to X $ is the inclusion map. By compact convergence of $\tilde{\mu}_n|_{K_\epsilon}$, we have that for sufficiently large $n$, 
        $$ \sup_{f \in L|_{K_\epsilon}} \Big| \int f \diff \tilde{\mu}_n|_{K_\epsilon} - \int f \diff \tilde{\mu}|_{K_\epsilon} \Big| < \epsilon. $$
    Putting all of these observations together, we obtain that for all $f\in L$ and sufficiently large $n$, 
    \begin{align*}
        \Big| \int f \diff \mu_n - \int f \diff \mu \Big| 
        &\leq \int_{X\mathbin{\backslash} K_\epsilon} |f| \diff |\tilde{\mu}_n - \tilde{\mu}| + \Big| \int_{K_\epsilon} f \diff \tilde{\mu}_n - \int_{K_\epsilon} f \diff \tilde{\mu}_n \Big| \\
        &\leq  C \epsilon + \Big| \int_{K_\epsilon} f \diff \tilde{\mu}_n|_{K_\epsilon} - \int_{K_\epsilon} f \diff \tilde{\mu}|_{K_\epsilon} \Big|\\
        &\leq (C+1) \,\epsilon,  
    \end{align*}
    which is what we wanted to show.
\end{proof}

\begin{thm}\label{cor_prob_measures_on_sep_lin_spaces}
    Let $V$ be a separable Fréchet space. Then the Borel and Baire $\sigma$-algebras on the natural dual $V^\wedge$ coincide and every Baire measure on $V^\wedge$ is a Radon measure and $k$-regular. Hence, $\mathcal{P}(V^\wedge)$ consists exactly of the Radon probability measures on $V^\wedge$. Moreover, a sequence of probability measures in $\mathcal{P}(V^\wedge)$ converges if, and only if, it converges weakly. Finally, $\mathcal{P}(V^\wedge)$ is a QCB space (in particular, a sequential space) and consequently, convergence of sequences of probability measures completely characterises the topology of $\mathcal{P}(V^\wedge)$.
\end{thm}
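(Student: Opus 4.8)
\textit{Proof sketch (plan).} The plan is to treat the five clauses in sequence, exploiting that $V^\wedge$ is at once a Brauner space and a linear QCB space. Since $V$ is a separable Fréchet space it is second countable, hence a linear QCB space, so by \cref{nat_dual_QCB} its natural dual $V^\wedge$ is a linear QCB space; moreover $V^\wedge$ is a Brauner space by \cref{prop_dual_of_fre_bra}, in particular a complete locally convex topological vector space, hence completely regular (indeed Tychonoff). Being a QCB space, $V^\wedge$ is hereditarily Lindelöf (\cref{QCB_separable_lindelof}), and a regular hereditarily Lindelöf space is perfectly normal: given an open $U$, regularity covers $U$ by opens with closure inside $U$, hereditary Lindelöfness extracts a countable subcover, so $U$ is an $F_\sigma$, and a regular Lindelöf space is normal. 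In a perfectly normal space every closed set is the zero set of a continuous function, so the Baire $\sigma$-algebra (generated by the continuous $\mathbb{K}$-valued functions, hence by all zero sets) contains every closed set and therefore every Borel set; combined with the trivial inclusion $\mathrm{Baire}(V^\wedge)\subseteq\mathrm{Borel}(V^\wedge)$ this gives clause (1).

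For (2), let $\mu$ be a Baire measure on $V^\wedge$. Since $V^\wedge$ is a completely regular $\sigma$-compact $hk$-space (it is hemicompact, by \cref{defn_brauner_space}), \cref{baire_measures_vs_radon_measures} furnishes a unique Radon extension of $\mu$ to $\mathrm{Borel}(V^\wedge)$; by clause (1) the Borel and Baire $\sigma$-algebras coincide, so this ``extension'' is $\mu$ itself and $\mu$ is Radon. For $k$-regularity I would exhaust $V^\wedge$ by compacta: write $V^\wedge=\colim_{n}K_n$ along closed inclusions $\iota_n\colon K_n\hookrightarrow V^\wedge$ so that every compact subset of $V^\wedge$ lies in some $K_n$, and put $\nu_n:=\mu|_{K_n}$, a Radon measure on the compact Hausdorff space $K_n$. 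By \cref{convergence_of_measures_on_compactum}, $\nu_n\in\mathcal{M}(K_n)$; hence, by functoriality of $\mathcal{M}$, the pushforward $(\iota_n)_*\nu_n$ — which as a functional on $C_b(V^\wedge)$ is $f\mapsto\int_{K_n}f\diff\nu_n$ — lies in $\mathcal{M}(V^\wedge)$. Now $\mu$ is Radon and of bounded variation, so $|\mu|(V^\wedge\setminus K_n)\to 0$; since any compact $L\subseteq C_b(V^\wedge)$ is uniformly bounded, say by $C$, one has $\sup_{f\in L}\bigl|\int f\diff\mu-\int_{K_n}f\diff\nu_n\bigr|\le C\,|\mu|(V^\wedge\setminus K_n)\to 0$, i.e.\ these functionals converge in $C_b(V^\wedge)^\wedge$ to $\int(-)\diff\mu$ (which lies in $C_b(V^\wedge)^\wedge$ since $\mu$ is $k$-pre-regular by \cref{radon_measures_k_regular}). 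As $\mathcal{M}(V^\wedge)$ is closed in $C_b(V^\wedge)^\wedge$, this forces $\int(-)\diff\mu\in\mathcal{M}(V^\wedge)$, so $\mu$ is $k$-regular (\cref{cor_riesz_for_k_pre_reg}, \cref{defn_k_reg_measures}). Clause (3) is then immediate from the definition of $\mathcal{P}(V^\wedge)$ as the set of positive normalised members of $\mathcal{M}(V^\wedge)$.

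Clause (4) I would obtain by a direct appeal to \cref{convergence_in_px_vs_weak_convergence}, since $V^\wedge$ is a completely regular hemicompact $hk$-space. For (5) it remains to show that $\mathcal{P}(V^\wedge)$ is a QCB space. Each $nD\subseteq\mathbb{K}$ is compact metrizable, hence second countable, so $(nD)^{V^\wedge}$ is a QCB space by \cref{QCB_cart_closed} (closure under exponentials, using that $V^\wedge$ is QCB), whence $C_b(V^\wedge)=\colim_n (nD)^{V^\wedge}$ is QCB (closure under countable colimits); then $C_b(V^\wedge)^\wedge$ is a linear QCB space by \cref{nat_dual_QCB}, and the closed linear subspaces $\mathcal{M}(V^\wedge)\subseteq C_b(V^\wedge)^\wedge$ and $\mathcal{P}(V^\wedge)\subseteq\mathcal{M}(V^\wedge)$ are QCB spaces by \cref{closed_subspaces_QCB}. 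QCB spaces are sequential, so the topology of $\mathcal{P}(V^\wedge)$ is determined by its convergent sequences, and together with (4) this finishes the argument. The only genuinely non-formal step is the $k$-regularity half of clause (2): the remaining clauses reduce to invocations of earlier results, whereas identifying an arbitrary Radon measure on $V^\wedge$ with an element of the closed span of Dirac functionals requires the exhaustion by the $K_n$, the compact case \cref{convergence_of_measures_on_compactum}, and the variation estimate above.
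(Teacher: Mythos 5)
Your proposal is correct, and for every clause except the $k$-regularity step it follows the paper's own route (Baire $=$ Borel via hereditary Lindelöfness of the QCB space $V^\wedge$ — you supply the perfect-normality argument that the paper delegates to Bogachev — Radonness via \cref{baire_measures_vs_radon_measures}, clause (4) via \cref{convergence_in_px_vs_weak_convergence}, and the QCB property of $\mathcal{P}(V^\wedge)$ via closure of QCB spaces under exponentials, countable colimits and closed subspaces). The genuinely different part is the heart of clause (2). The paper shows $k$-regularity by producing finitely supported measures $\nu_n$ supported on the compact exhaustion $K_n$ that converge \emph{weakly} to $\mu$, and then upgrades weak convergence to convergence in $C_b(V^\wedge)^\wedge$ by invoking \cref{convergence_in_px_vs_weak_convergence} together with closedness of $\mathcal{P}(V^\wedge)$ in $\mathcal{P}_{\mathrm{pre}}(V^\wedge)$; density of finitely supported measures then lands the limit in $\mathcal{M}(V^\wedge)$. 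You instead take the truncations $\nu_n=\mu|_{K_n}$ themselves, observe that they lie in $\mathcal{M}(V^\wedge)$ by \cref{convergence_of_measures_on_compactum} plus functoriality of $\mathcal{M}$ along $\iota_n\colon K_n\hookrightarrow V^\wedge$, and prove convergence \emph{directly} in the topology of $C_b(V^\wedge)^\wedge$ (uniformly on compact $L\subseteq C_b(V^\wedge)$, which is legitimate since compact subsets of $C_b$ are uniformly bounded and sequential convergence in $C_b(V^\wedge)^\wedge$ is exactly compact convergence, cf.\ \cref{lem_convergence_in_Y_X}), via the estimate $\sup_{f\in L}\bigl|\int f\diff\mu-\int_{K_n}f\diff\nu_n\bigr|\le C\,|\mu|(V^\wedge\setminus K_n)\to 0$, which uses only Radonness of $\mu$ and the fact that the $K_n$ absorb all compacta. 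Closedness of $\mathcal{M}(V^\wedge)$ in $C_b(V^\wedge)^\wedge$ then finishes the step. What your route buys: it needs neither the Prokhorov-type equivalence \cref{convergence_in_px_vs_weak_convergence} nor any density of finitely supported measures at this point, it treats arbitrary $\mathbb{K}$-valued Baire measures uniformly rather than reducing to the probability case, and it avoids the paper's (somewhat delicately phrased) claim that a single finitely supported $\nu_n$ approximates $\mu|_{K_n}$ to within $1/n$ simultaneously for all $f\in C_b(V^\wedge)$. What the paper's route buys is that it exhibits the density of finitely supported measures in $\mathcal{M}(V^\wedge)$ along the way, a fact it reuses elsewhere; in your argument that density is simply not needed.
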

\begin{proof}
    The natural dual $V^\wedge$ of $V$ is a separable Brauner space and therefore in particular a hemicompact QCB space.
    Since $V^\wedge$ is a topological vector space, it is also completely regular and by \cref{baire_measures_vs_radon_measures}, every Baire measure has a unique extension to a Radon measure. Furthermore, since $V^\wedge$ is a QCB space, it is hereditarily Lindelöf (see \cref{QCB_separable_lindelof}), so by \cite[p.~13, Corollary 6.3.5]{bogachev2007measure}, the Baire and Borel sigma algebras coincide. Therefore, every Baire measure on $V^\wedge$ is a Radon measure. \par 
    By \cref{radon_measures_k_regular}, every Baire measure on $V^\wedge$ is a $k$-preregular measure. To show that, moreover, every Baire (equivalently, Radon) measure on $V^\wedge$ is $k$-regular, it suffices to show that for every Radon measure $\mu$ on $V^\wedge$, there is a sequence of finitely supported measures converging to $\mu$ in $\mathcal{P}_{\mathrm{pre}}(V^\wedge)=C_b(V^\wedge)^\wedge$. Since, by \cref{convergence_in_px_vs_weak_convergence}, convergence in $\mathcal{P}(V^\wedge)$ is equivalent to weak convergence and $\mathcal{P}(V^\wedge) \subseteq \mathcal{P}_{\mathrm{pre}}(V^\wedge)$ is closed, it even suffices to show that for every Radon measure $\mu$ on $V^\wedge$ there is a sequence of finitely supported measures converging weakly to $\mu$. So let $\mu$ be a Radon measure on $V^\wedge$. \par  
    Let $K_n\subseteq V^\wedge$ be an increasing sequence of compact subsets such that 
        $$ V^\wedge = \bigcup_{n\in\mathbb{N}} K_n. $$
    For each $n\in\mathbb{N}$, the restriction $\mu|_{K_n}$ is a Radon measure on $K_n$. By \cref{convergence_of_measures_on_compactum}, there exists a finitely supported measure $\nu_n$ on $K_n$ such that for all $f\in C_b(V^\wedge)$, 
        $$ \left| \int_{K_n} f\: \diff (\nu_n - \mu) \right| < \frac{1}{n}. $$
    We claim that the sequence $(\nu_n)$ converges weakly to $\mu$. Let $\epsilon>0$ and choose $N\in \mathbb{N}$ such that $|\mu|(V^\wedge \mathbin{\backslash} K_n) < \epsilon$ and $1/N < \epsilon$. Then for all $n\geq N$,
    \begin{align*}
        \left| \int_{V^\wedge} f\: \diff (\nu_n - \mu) \right| &\leq 
        \left| \int_{V^\wedge \mathbin{\backslash} K_n} f\: \diff (\nu_n - \mu) \right| + \left| \int_{K_n} f\: \diff (\nu_n - \mu) \right| \\
        &< \|f\|_{\sup}\, \epsilon + \epsilon,
    \end{align*}
    proving the claim that $\nu_n$ converges weakly to $\mu$, which is hence a $k$-regular measure. \par 
    Finally, the fact that $\mathcal{P}(V^\wedge)$ is a QCB space follows from the fact that QCB spaces are closed under the formation of countable (co-)limits and exponentials (i.e.~spaces of continuous maps). 
\end{proof}

\subsection{The Bochner-Minlos-Lévy-Fernique homeomorphism} Let us point out that, in conjunction with cartesian closure and sequentiality of QCB spaces, \cref{cor_prob_measures_on_sep_lin_spaces} allows for a very concise simultaneous formulation of the Bochner-Minlos and Lévy-Fernique theorems: if $V$ is a separable nuclear Fréchet space, then the Fourier transform is a homeomorphism between $\mathcal{P}(V^\wedge)$ and the space of continuous normalised positive-definite functions on $V$. Let us make this more precise.

\begin{defn}
    Let $V$ be a linear $hk$-space. Following \cite[Definition 7.13.3]{bogachev2007measure}, a continuous function  $f:V\to \mathbb{C}$ is \emph{positive-definite} if for all $n\in\mathbb{N}$, $c_1,\dots, c_n\in \mathbb{C}$, $x_1, \dots, x_n\in V$,
        $$ \sum_{i,j=1}^n c_i \overline{c_j} f(x_i-x_j) \geq 0. $$
    We say that $f$ is \emph{normalised} if $f(0)=1$.
    Finally, we denote the set of continuous normalised positive-definite functions on $V$ by $C_{1}^{+}(V)$ and endow $C_{1}^{+}(V)$ with the subspace topology that it inherits from $C(V)$ as a closed subspace. 
\end{defn}

\begin{defn}
    Let $V$ be a linear $hk$-space and let $\mu\in \mathcal{P}(V^\wedge)$ be a probability measure. The \emph{characteristic function} of $\mu$ is the function $\widehat{\mu}: V \to \mathbb{C}$ given by,
        $$ \widehat{\mu}(x) := \int_{V^\wedge} e^{2\pi i \phi(x)} \diff \mu(\phi). \qquad (x\in V)$$
    The map $\mu \mapsto \widehat{\mu}$ is called the \emph{Fourier transform}.
\end{defn}

With these definitions in place, we may formulate: 

\begin{thm}\label{thm_bochner_minlos_levy_fernique}
    Let $V$ be a separable nuclear Fréchet space (for example, $V=\mathcal{S}(\mathbb{R}^n)$). Then the Fourier transform defines a \emph{homeomorphism},
        $$\mathcal{F}: \mathcal{P}(V^\wedge) \to C_{1}^{+}(V), \;\; \mu \mapsto \int_{V^\wedge} e^{2\pi i \phi(x)} \diff \mu(\phi). $$
\end{thm}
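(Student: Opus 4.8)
The plan is to establish the homeomorphism $\mathcal{F}: \mathcal{P}(V^\wedge) \to C_1^+(V)$ in two stages: first the bijectivity of $\mathcal{F}$, which is the classical content of the Bochner–Minlos and Lévy–Fernique theorems, and then the bicontinuity, which is where the $hk$-space formalism does the work. For bijectivity, I would invoke the classical Minlos theorem: since $V$ is a separable nuclear Fréchet space, a function $f: V\to\mathbb{C}$ is the characteristic functional of a (necessarily unique, since $V^\wedge$ is Polish-like enough that measures are determined by their characteristic functionals — this uses injectivity of Fourier transform, e.g.\ via \cite{bogachev2007measure}) Radon probability measure on $V^\wedge$ if and only if $f$ is continuous, normalised and positive-definite. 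By \cref{cor_prob_measures_on_sep_lin_spaces}, $\mathcal{P}(V^\wedge)$ consists exactly of the Radon probability measures on $V^\wedge$, so this classical statement is precisely the assertion that $\mathcal{F}$ is a bijection onto $C_1^+(V)$. (One must check that $\widehat{\mu}$ is indeed continuous as a map $V\to\mathbb{C}$, which follows since for each fixed $x$ the evaluation $\phi\mapsto e^{2\pi i\phi(x)}$ is a bounded continuous function on $V^\wedge$, so $x\mapsto\widehat\mu(x)$ is continuous by dominated convergence along nets, or more slickly by cartesian closure once $\mathcal{F}$ is seen as a transpose.)

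\textbf{Continuity of $\mathcal{F}$.} Here I would exploit cartesian closure of $\spaces$ (and of $\mathsf{QCB}$). The map $\mathcal{F}$ is the transpose of a single continuous map: consider the evaluation-type map $V \times \mathcal{P}(V^\wedge) \to \mathbb{C}$, $(x,\mu)\mapsto \int_{V^\wedge} e^{2\pi i\phi(x)}\diff\mu(\phi)$. This factors as $(x,\mu) \mapsto (e_x, \mu) \mapsto \mu(e_x)$, where $e_x \in C_b(V^\wedge)$ is the function $\phi\mapsto e^{2\pi i\phi(x)}$. The assignment $x\mapsto e_x$ is a continuous map $V \to C_b(V^\wedge)$ — this uses cartesian closure together with the fact that $(x,\phi)\mapsto e^{2\pi i\phi(x)}$ is continuous on $V\times V^\wedge$ and that the image lands in a uniformly bounded (indeed, $\sup$-norm-one) subset, hence factors through one of the colimit stages $(nD)^{V^\wedge}$ of $C_b(V^\wedge)$; one should verify that $V \to (D)^{V^\wedge}$ is continuous, which again is cartesian closure applied to the continuous map $V\times V^\wedge\to D$. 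The pairing $C_b(V^\wedge) \times \mathcal{M}(V^\wedge) \to \mathbb{K}$, $(f,\mu)\mapsto\mu(f)$, is continuous since $\mathcal{M}(V^\wedge) = C_b(V^\wedge)^*$ as paired linear $hk$-spaces (\cref{sec_M_C_b_as_paired_lin_hk_sp}) and the pairing of a paired linear $hk$-space with its dual is continuous in $\spaces$ — this is exactly the kind of evaluation map that becomes continuous in the $k$-space setting (contrast \cref{sec_continuity_problems}). Composing, $V\times\mathcal{P}(V^\wedge)\to\mathbb{C}$ is continuous, and its transpose $\mathcal{P}(V^\wedge)\to C(V)$ is continuous; since it lands in the closed subspace $C_1^+(V)\subseteq C(V)$, we get continuity of $\mathcal{F}:\mathcal{P}(V^\wedge)\to C_1^+(V)$.

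\textbf{Continuity of $\mathcal{F}^{-1}$.} This is the step I expect to be the main obstacle. One approach: since $V$ is a separable nuclear Fréchet space, $V^\wedge$ is a separable Brauner space, so by \cref{cor_prob_measures_on_sep_lin_spaces} $\mathcal{P}(V^\wedge)$ is a QCB space, in particular sequential; likewise $C_1^+(V)\subseteq C(V)$ is a closed subspace of the QCB space $C(V)$ (as $V$ is a separable Fréchet space, $C(V)$ is a QCB space by the closure properties of \cref{QCB_cart_closed}), hence also sequential. Therefore it suffices to show $\mathcal{F}^{-1}$ is sequentially continuous, i.e.\ that if $\widehat{\mu_n}\to\widehat\mu$ in $C_1^+(V)$ — equivalently, by \cref{lem_convergence_in_Y_X}, uniformly on compact subsets of $V$ — then $\mu_n\to\mu$ in $\mathcal{P}(V^\wedge)$. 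By \cref{convergence_in_px_vs_weak_convergence} (applicable since $V^\wedge$ is a completely regular hemicompact $hk$-space), it is enough to show $\mu_n\to\mu$ weakly, i.e.\ $\int f\diff\mu_n\to\int f\diff\mu$ for all $f\in C_b(V^\wedge)$. This is the classical continuity half of the Lévy–Fernique/Minlos correspondence: pointwise convergence (in fact uniform-on-compacts convergence) of characteristic functionals on a nuclear space, together with equicontinuity of the limiting functional at $0$ forcing uniform tightness of $(\mu_n)$ via Minlos-type tightness estimates, yields weak convergence. I would cite \cite{bogachev2007measure} for this implication (it is in the same circle of results as Theorem 7.13.x there, or can be assembled from Prokhorov-type arguments using that $V^\wedge$ is strongly sequentially Prokhorov, already invoked in the proof of \cref{convergence_in_px_vs_weak_convergence}). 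The delicate point is extracting uniform tightness of $\{\mu_n\}$ from convergence of characteristic functionals — this is precisely where nuclearity of $V$ is essential and is the technical heart of the Minlos theorem; I would not reprove it but reduce to the cited statement. Assembling: $\mathcal{F}$ is a continuous bijection between sequential spaces with sequentially continuous inverse, hence a homeomorphism.
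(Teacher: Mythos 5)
Your proposal is correct and follows essentially the same route as the paper: bijectivity is delegated to the classical Bochner--Minlos theorem (the paper cites Meyer's Théorème 1 where you cite Bogachev), continuity of $\mathcal{F}$ comes from cartesian closure plus continuity of integration against $k$-regular measures, and continuity of $\mathcal{F}^{-1}$ is reduced via sequentiality of the QCB space $C_1^+(V)$ to sequential continuity, which is the Lévy--Fernique continuity theorem combined with \cref{convergence_in_px_vs_weak_convergence}. The extra detail you supply (the factorisation of $x\mapsto e_x$ through a colimit stage of $C_b(V^\wedge)$, and the explicit bridge from weak convergence to convergence in $\mathcal{P}(V^\wedge)$) only makes explicit what the paper's proof leaves implicit.
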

\begin{proof}
    The statement that $\mathcal{F}$ is a well-defined bijection is (a special case of) the Bochner-Minlos theorem (see \cite[Théorème 1]{meyer18theoreme}), taking into account that by \cref{cor_prob_measures_on_sep_lin_spaces}, $\mathcal{P}(V^\wedge)$ consists exactly of the Radon measures on $V^\wedge$. That $\mathcal{F}$ is continuous follows from the fact that integration of continuous bounded functions against $k$-regular measure is continuous (by definition) and the cartesian closure of $\spaces$. Finally, as a closed subspace of a QCB space $C(V)$, $C_{1}^{+}(V)$ is a QCB space (see \cref{closed_subspaces_QCB}). In particular, $C_{1}^{+}(V)$ is a sequential space. Therefore, the continuity of the inverse Fourier transform $\mathcal{F}^{-1}$ is equivalent to its sequential continuity, and this follows from the Lévy-Fernique continuity theorem (\cite[Théorème 2]{meyer18theoreme}). 
\end{proof}

\begin{remark}
    \cite[Théorème 2]{meyer18theoreme} also entails that a sequence converges in $C_{1}^{+}(V)$ if, and only if, it converges pointwise to a function that is continuous at the origin.
\end{remark}

\subsection{$k$-Regular measures on Polish spaces} Our final observation concerning $k$-regular measures is that on Polish spaces, their theory completely reduces to the classical one.

\begin{prop}\label{radon_measures_Polish_k_regular}
    Let $X$ be a Polish space. Then following \emph{every} Baire measure is a $k$-regular measure and hence, the notions of Baire, Radon and $k$-regular measure all coincide (see \cref{baire_measures_vs_radon_measures}). Moreover, the topology on $\mathcal{P}(X)$ is given by the topology of weak convergence of measures and $\mathcal{P}(X)$ is again a Polish space.
\end{prop}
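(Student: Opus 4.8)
The plan is to follow the proof of \cref{cor_prob_measures_on_sep_lin_spaces} almost verbatim, with the Polish space $X$ in place of the natural dual of a separable Fréchet space. The identification of the Baire, Borel and Radon $\sigma$-algebras / measures is immediate from \cref{baire_measures_vs_radon_measures}. Next, by the Polish case of \cref{radon_measures_k_regular}, every Baire measure $\mu$ on $X$ is $k$-pre-regular, so $\int(-)\diff\mu$ is an element of $C_b(X)^\wedge=\mathcal{M}_{\mathrm{pre}}(X)$ (using \cref{cor_riesz_for_k_pre_reg}); it remains to show this functional lies in the closed subspace $\mathcal{M}(X)=\overline{\vspan\{\delta_x\}}\subseteq C_b(X)^\wedge$, i.e.\ that $\mu$ is $k$-regular. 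Since $\mathcal{M}(X)$ is a linear subspace, taking real and imaginary parts and Jordan decompositions reduces the problem to the case of a Radon probability measure $\mu$.

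So let $\mu$ be a Radon probability measure on $X$. Because $X$ is Polish, the topology of weak convergence on the set of Radon probability measures is induced by the L\'evy--Prokhorov metric, and the finitely supported probability measures are weakly dense; hence there is a \emph{sequence} $(\nu_n)$ of finitely supported probability measures with $\nu_n\to\mu$ weakly. The essential step is to upgrade this to convergence in $C_b(X)^\wedge$, that is, to uniform convergence on compact subsets of $C_b(X)$: this is carried out exactly as in the proof of \cref{convergence_in_px_vs_weak_convergence}, which uses only that the measures involved are Radon, that Baire measures on $X$ extend uniquely to Radon measures, and that $X$ is strongly sequentially Prokhorov (\cite[p.~206, Theorem~8.6.8]{bogachev2007measure}); one deduces that $\{|\nu_n-\mu|\}$ is uniformly tight, fixes a compact $K_\epsilon$ carrying almost all the mass, and finishes by the compact-space case \cref{convergence_of_measures_on_compactum} applied on $K_\epsilon$. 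None of this presupposes that $\mu$ is $k$-regular. Thus $\nu_n\to\mu$ in $C_b(X)^\wedge$; since each $\nu_n\in\vspan\{\delta_x\}\subseteq\mathcal{M}(X)$ and $\mathcal{M}(X)$ is closed in $C_b(X)^\wedge$, the limit $\mu$ lies in $\mathcal{M}(X)$, i.e.\ $\mu$ is $k$-regular. Consequently every Baire measure on $X$ is $k$-regular, $\mathcal{M}(X)=\mathcal{M}_{\mathrm{pre}}(X)=C_b(X)^\wedge$, and (recalling \cref{defn_k_reg_measures}) $\mathcal{P}(X)=\mathcal{P}_{\mathrm{pre}}(X)$ is precisely the set of Radon probability measures on $X$.

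For the topological statements: $X$ is metrizable and separable, hence a QCB $hk$-space, so each $(nD)^X$ is QCB by \cref{QCB_cart_closed}, whence $C_b(X)=\colim_n (nD)^X$ is QCB and $C_b(X)^\wedge$ is QCB by \cref{nat_dual_QCB}; therefore its closed subspace $\mathcal{P}(X)$ is QCB by \cref{closed_subspaces_QCB}, in particular sequential. By \cref{convergence_in_px_vs_weak_convergence} a sequence converges in $\mathcal{P}(X)$ iff it converges weakly, so the topology of $\mathcal{P}(X)$ and the (metrizable, hence sequential) L\'evy--Prokhorov topology of weak convergence are two sequential topologies on the same underlying set with the same convergent sequences, hence coincide. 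Finally, the space of Radon probability measures on a Polish space, equipped with the weak topology, is again a Polish space (L\'evy--Prokhorov; see \cite{bogachev2007measure}), so $\mathcal{P}(X)$ is Polish.

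I expect the main obstacle to be the upgrade from weak convergence of the approximating finitely supported measures to convergence in the strictly finer topology of $C_b(X)^\wedge$: this genuinely requires the strongly-sequentially-Prokhorov property of Polish spaces and reuses the technical core of \cref{convergence_in_px_vs_weak_convergence}. A secondary subtlety worth flagging is that \cref{convergence_in_px_vs_weak_convergence} is stated for $\mathcal{P}(X)$, while here we invoke its proof for the a priori larger space $\mathcal{P}_{\mathrm{pre}}(X)$ of $k$-pre-regular probability measures; this is harmless precisely because that proof never uses $k$-regularity of the limit.
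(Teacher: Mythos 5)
Your proof is correct and uses the same ingredients as the paper's own argument: Baire $=$ Radon on Polish spaces (\cref{baire_measures_vs_radon_measures}), $k$-pre-regularity from \cref{radon_measures_k_regular}, weak density of finitely supported measures and Polishness of the weak topology from Bogachev, and the upgrade from weak convergence to convergence in $C_b(X)^\wedge$ supplied by (the proof of) \cref{convergence_in_px_vs_weak_convergence}. The only differences are organisational: the paper first identifies the topologies of $\mathcal{P}_{\mathrm{pre}}(X)$ and $\mathcal{P}_w(X)$ by checking the identity map in both directions (using that $\mathcal{P}_w(X)$ is Polish, hence sequential, where you instead use QCB-sequentiality of $\mathcal{P}(X)$) and then deduces $k$-regularity from density in one stroke, whereas you approximate each measure directly as in \cref{cor_prob_measures_on_sep_lin_spaces}; your explicit Jordan-decomposition reduction to probability measures and your remark that \cref{convergence_in_px_vs_weak_convergence}'s proof applies to $k$-pre-regular measures are points the paper leaves implicit.
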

\begin{proof}
    Let $\mu$ be a Baire (equivalently, Radon, see \cref{baire_measures_vs_radon_measures}) probability measure on $X$. By \cref{radon_measures_k_regular}, $\mu$ is $k$-pre-regular. Now, let $\mathcal{P}_w(X)$ be the space of Radon measures with the topology of weak convergence, and recall that $\mathcal{P}_{\mathrm{pre}}(X)\subseteq \mathcal{M}_{\mathrm{pre}}(X)$ is the space of $k$-pre-regular probability measures with the subspace topology. As sets, $\mathcal{P}_w(X)=\mathcal{P}_{\mathrm{pre}}$ (again, by \cref{radon_measures_k_regular}). We want to show that the respective topologies coincide, as well. First, note that the identity map $\mathcal{P}_{\mathrm{pre}}(X) \to \mathcal{P}_w(X)$ is continuous, since a convergent net in $\mathcal{P}(X)$ is also weakly convergent. By \cite[p.~213, Theorem 8.9.4]{bogachev2007measure}, our assumption that $X$ is a Polish space implies that $\mathcal{P}_w(X)$ is a Polish space, too. Hence, sequential continuity of the identity map $\mathcal{P}_w(X) \to \mathcal{P}_{\mathrm{pre}}(X)$ (which is provided by \cref{convergence_in_px_vs_weak_convergence}) implies continuity, so the two topologies coincide and $\mathcal{P}_{\mathrm{pre}}(X)=\mathcal{P}_w(X)$ also as topological spaces. Finally, finitely supported measures are dense in $\mathcal{P}_w(X)$ (see \cite[p.~213, Theorem 8.9.4 (ii)]{bogachev2007measure}) and therefore 
        $$ \mathcal{P}(X)=\mathcal{P}_{\mathrm{pre}}(X)=\mathcal{P}_w(X), $$
    which completes the proof.
\end{proof}

The above guarantees that most probability measures that occur in applications are $k$-regular. (For a concrete example, take the law of a standard Brownian motion, understood as Baire probability measure on the separable Fréchet space $C([0,\infty), \mathbb{R}^n)$.)

\section{$\mathcal{M}$ as a commutative monad on $\spaces$}\label{sec_M_comm_monad}

\subsection{The monad structure of $\mathcal{M}$} 

We now describe the monad structure of $\mathcal{M}$. Unit and multiplication are given as follows.

\begin{defn}\label{defn_unit_mult_MX}
    Define the (families of) map(s),
        $$ \delta_\bullet : X \mapsto \mathcal{M}(X), \;\; x \mapsto \delta_x, \;\;\;\;\; (X \in\spaces) $$
    and 
        $$ \smallint{}: \mathcal{M}(\mathcal{M}(X)) \to \mathcal{M}(X), \smallint{}(\pi)(f) := \int_{\mathcal{M(X)}}\mu(f)\diff \pi(\mu). \;\;\;\;\; (X \in\spaces) $$
\end{defn}

Note that under the identification of measures with functionals, we have that 
\begin{equation}\label{eqn_smallint_as_higher_order_function}
    \smallint{} (\pi)(f) = \pi( \mu \mapsto \mu(f) ). \qquad (\pi \in \mathcal{M}(\mathcal{M}(X)), f \in C_b(X))
\end{equation}

\begin{lem}
    The families of maps $\delta_\bullet$ and $\smallint{}$ constitute well-defined natural transformations.
\end{lem}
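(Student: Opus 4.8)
The claim is that $\delta_\bullet \colon X \to \mathcal{M}(X)$ and $\smallint{} \colon \mathcal{M}(\mathcal{M}(X)) \to \mathcal{M}(X)$ are well-defined and natural. For $\delta_\bullet$, well-definedness (i.e.\ continuity as a map into $\mathcal{M}(X)$) is already \cref{lem_delta_cont_as_map_to_MX}, so nothing new is needed there; naturality in $X$ means that for a continuous map $g\colon X \to Y$ the square relating $\delta_\bullet$, $g$, and $g_* = \mathcal{M}(g)$ commutes, which on points just says $g_*(\delta_x) = \delta_{g(x)}$, and this is immediate from the definition of pushforward ($g_*\mu(f) = \mu(f\circ g)$, so $g_*\delta_x(f) = \delta_x(f\circ g) = f(g(x)) = \delta_{g(x)}(f)$). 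The real content is the map $\smallint{}$.

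**Plan for $\smallint{}$.** First I would verify that $\smallint{}(\pi)$ lands in $\mathcal{M}(X)$, not merely in $C_b(X)^\wedge$. The cleanest route is the higher-order-function description \eqref{eqn_smallint_as_higher_order_function}: $\smallint{}(\pi)(f) = \pi(\mu \mapsto \mu(f))$. By \cref{dual_of_MX_is_Cb}, the assignment $f \mapsto (\mu \mapsto \mu(f))$ is precisely the isomorphism $\Psi\colon C_b(X) \to \mathcal{M}(X)^\wedge$, so $\smallint{}(\pi) = \pi \circ \Psi$, i.e.\ $\smallint{}$ is the map $\mathcal{M}(\mathcal{M}(X)) \subseteq C_b(\mathcal{M}(X))^\wedge \to C_b(X)^\wedge$ given by precomposition with $\Psi$ viewed as a continuous linear map $C_b(X) \to C_b(\mathcal{M}(X))$ (note $\Psi(f)$ is bounded on $\mathcal{M}(X)$ when restricted appropriately — more precisely one restricts to the unit-ball filtration, using $|\mu(f)| \le \|\mu\|\,\|f\|_{\sup}$, so $\Psi$ does corestrict to $C_b$). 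Continuity of $\smallint{}$ then follows from cartesian closure of $\spaces$ and continuity of $\Psi$. To see the image lies in $\mathcal{M}(X)$ and not just $C_b(X)^\wedge$: on a Dirac $\delta_\nu \in \mathcal{M}(\mathcal{M}(X))$ (for $\nu \in \mathcal{M}(X)$) one computes $\smallint{}(\delta_\nu) = \nu \in \mathcal{M}(X)$; since Diracs span a dense subspace of $\mathcal{M}(\mathcal{M}(X))$, their span maps into $\mathcal{M}(X)$, and since $\mathcal{M}(X)$ is closed in $C_b(X)^\wedge$ and $\smallint{}$ is continuous, the whole of $\mathcal{M}(\mathcal{M}(X))$ maps into $\mathcal{M}(X)$.

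**Naturality of $\smallint{}$.** For $g\colon X \to Y$ continuous, I must check $g_* \circ \smallint{}_X = \smallint{}_Y \circ (g_*)_* = \smallint{}_Y \circ \mathcal{M}(\mathcal{M}(g))$. I would verify this first on Diracs $\delta_\nu$ ($\nu \in \mathcal{M}(X)$), where both sides give $g_*\nu$, using $\mathcal{M}(\mathcal{M}(g))(\delta_\nu) = \delta_{g_*\nu}$ and $\smallint{}_Y(\delta_{g_*\nu}) = g_*\nu$; both sides are continuous and linear, so agreement on the dense span of Diracs forces agreement everywhere (here I invoke \cref{linear_map_vanishing_on_dense_subspace}, or directly density). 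Alternatively, a pointwise computation against $f \in C_b(Y)$: $\smallint{}_Y(\mathcal{M}(\mathcal{M}(g))\pi)(f) = (\mathcal{M}(\mathcal{M}(g))\pi)(\mu \mapsto \mu(f)) = \pi(\nu \mapsto (g_*\nu)(f)) = \pi(\nu \mapsto \nu(f\circ g)) = \smallint{}_X(\pi)(f\circ g) = (g_*\smallint{}_X(\pi))(f)$, which is the Fubini-type identity underlying the whole construction.

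**Main obstacle.** The one genuinely delicate point is the measurability/integrability needed to make $\smallint{}(\pi)(f) = \int_{\mathcal{M}(X)} \mu(f)\,d\pi(\mu)$ literally meaningful at the level of \emph{measures} (as opposed to the slick functional-analytic rephrasing via $\Psi$) — i.e.\ that $\mu \mapsto \mu(f)$ is a bounded Baire-measurable function on $\mathcal{M}(X)$ so the integral exists. I expect the authors sidestep this entirely by working with the functional description \eqref{eqn_smallint_as_higher_order_function}, where $\mu \mapsto \mu(f) = \Psi(f)$ is manifestly a continuous (hence Baire) bounded function on $\mathcal{M}(X)$, so the integral against the $k$-regular measure $\pi$ is well-defined by the very definition of $k$-pre-regularity (\cref{riesz_for_cb_general}, \cref{cor_riesz_for_k_pre_reg}). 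So the plan is: reduce everything to $\Psi$, get well-definedness and continuity from \cref{dual_of_MX_is_Cb} plus cartesian closure, get "image in $\mathcal{M}(X)$" and naturality from density of Diracs together with closedness of $\mathcal{M}(X)$ in $C_b(X)^\wedge$.
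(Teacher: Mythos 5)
Your proposal is correct and follows essentially the same route as the paper: well-definedness of $\smallint{}$ via the functional description \eqref{eqn_smallint_as_higher_order_function} (the map $\mu\mapsto\mu(f)$ being continuous), continuity from cartesian closure of $\spaces$, and naturality of $\smallint{}$ by exactly the pointwise computation against $f\in C_b(Y)$ that the paper carries out, with $\delta_\bullet$ handled by \cref{lem_delta_cont_as_map_to_MX} and the identity $g_*\delta_x=\delta_{g(x)}$. Your additional argument that the image of $\smallint{}$ actually lies in $\mathcal{M}(X)$ (Diracs map to elements of $\mathcal{M}(X)$, their span is dense, $\mathcal{M}(X)$ is closed in $C_b(X)^\wedge$, and $\smallint{}$ is continuous) makes explicit a point the paper's proof leaves implicit, and is a worthwhile supplement rather than a different method.
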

\begin{proof}\text{}
    \begin{enumerate}
        \item \emph{The map $\delta_\bullet$ is well-defined and continuous:} This was \cref{lem_delta_cont_as_map_to_MX}. 
        \item \emph{Naturality of $\delta_\bullet$:} We need to show that for all $hk$-spaces $X,Y$ and every continuous map $f:X\to Y$ the following diagram commutes:
\[\begin{tikzcd}
	X & Y \\
	{\mathcal{M}(X)} & {\mathcal{M}(Y)}
	\arrow["f", from=1-1, to=1-2]
	\arrow["{\delta_\bullet}"', from=1-1, to=2-1]
	\arrow["{\delta_\bullet}", from=1-2, to=2-2]
	\arrow["{f_*}"', from=2-1, to=2-2]
\end{tikzcd}\]
        But this just means that for all $x\in X$,
            $$ f_*\delta_x = \delta_{f(x)}, $$
        which holds.
        \item \emph{The map $\smallint{}$ is well-defined and continuous:} First notice that $\smallint{}$ is well-defined, since for every $f\in C_b(X)$, the map $\mu \mapsto \mu(f)$ is continuous and bounded (by $\|f\|_{\sup}\cdot |\mu|(X)$) on $\mathcal{M}(X)$. Moreover, being given by a certain evaluation-type map (see \cref{eqn_smallint_as_higher_order_function}), $\smallint{}$ is continuous by cartesian closure of $\spaces$.
        \item \emph{The map $\smallint{}$ is natural:} We need to show that the following diagram commutes:
        \[\begin{tikzcd}
	       {\mathcal{M}(\mathcal{M}(X))} & {\mathcal{M}(\mathcal{M}(Y))} \\
	       {\mathcal{M}(X)} & {\mathcal{M}(Y)}
	       \arrow["{(f_*)_*}", from=1-1, to=1-2]
	       \arrow["{\smallint{}}"', from=1-1, to=2-1]
	       \arrow["{\smallint{}}", from=1-2, to=2-2]
	       \arrow["{f_*}"', from=2-1, to=2-2]
        \end{tikzcd}\]
        To prove this, we calculate, using \cref{eqn_smallint_as_higher_order_function}, that for all $\pi\in \mathcal{M}(\mathcal{M}(X))$ and all $h\in C_b(X)$,
        \begin{align*}
            f_*\left(\smallint{}(\pi)\right)(h) &= \smallint{}(\pi)(h \circ f)\\
                                     &= \pi(\mu \mapsto \mu(h \circ f)) \\
                                     &= \pi(\mu \mapsto f_*\mu(h))\\
                                     &= \pi(f_* \circ(\mu \mapsto \mu(h)))\\
                                     &= (f_*)_*\pi(\mu \mapsto \mu(h))\\
                                     &= \smallint{}((f_*)_*\pi)(h).
        \end{align*}
    \end{enumerate}
\end{proof}

\begin{thm}
    $(\mathcal{M}, \delta_\bullet, \smallint{})$ is a monad on $\spaces$.
\end{thm}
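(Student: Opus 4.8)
The plan is to verify the two monad laws (the unit law and the associativity law from \cref{monad_defn}) for the triple $(\mathcal{M}, \delta_\bullet, \smallint{})$. The key technical tool throughout will be \cref{eqn_smallint_as_higher_order_function}, which expresses $\smallint{}$ as a higher-order evaluation map, together with the fact that two measures in $\mathcal{M}(X)$ agree as soon as they agree on all $f \in C_b(X)$ (since $\mathcal{M}(X)$ is a subspace of $C_b(X)^\wedge$), and indeed it suffices to check equality on the dense span of the Dirac functionals when convenient. All maps involved are already known to be continuous, so the entire verification is a matter of chasing the defining formulas; no topology enters beyond what has already been established.

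First I would prove the unit law, i.e.~that both composites $\smallint{} \circ \delta_{\bullet}^{\mathcal{M}(X)}$ and $\smallint{} \circ \mathcal{M}(\delta_{\bullet}^{X})$ equal $\id_{\mathcal{M}(X)}$. For the first, given $\mu \in \mathcal{M}(X)$ and $f \in C_b(X)$, compute using \cref{eqn_smallint_as_higher_order_function} that $\smallint{}(\delta_\mu)(f) = \delta_\mu(\nu \mapsto \nu(f)) = \mu(f)$, so $\smallint{}(\delta_\mu) = \mu$. For the second, given $\mu \in \mathcal{M}(X)$ and $f \in C_b(X)$, we have $\smallint{}((\delta_\bullet)_*\mu)(f) = ((\delta_\bullet)_*\mu)(\nu \mapsto \nu(f))$; by definition of pushforward this equals $\mu\big(x \mapsto (\nu \mapsto \nu(f))(\delta_x)\big) = \mu(x \mapsto \delta_x(f)) = \mu(x \mapsto f(x)) = \mu(f)$. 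Hence both composites are the identity. Here I am using $\mathcal{M}(\delta_\bullet^X) = (\delta_\bullet^X)_*$, the functorial action of $\mathcal{M}$ on the map $\delta_\bullet^X : X \to \mathcal{M}(X)$.

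Next I would prove the associativity law, i.e.~that $\smallint{} \circ \mathcal{M}(\smallint{}^X) = \smallint{} \circ \smallint{}^{\mathcal{M}(X)}$ as maps $\mathcal{M}(\mathcal{M}(\mathcal{M}(X))) \to \mathcal{M}(X)$. Fix $\Pi \in \mathcal{M}(\mathcal{M}(\mathcal{M}(X)))$ and $f \in C_b(X)$. Using \cref{eqn_smallint_as_higher_order_function} repeatedly, the right-hand side is
\[
\smallint{}\big(\smallint{}^{\mathcal{M}(X)}(\Pi)\big)(f) = \smallint{}^{\mathcal{M}(X)}(\Pi)\big(\mu \mapsto \mu(f)\big) = \Pi\big(\pi \mapsto \pi(\mu \mapsto \mu(f))\big) = \Pi\big(\pi \mapsto \smallint{}(\pi)(f)\big),
\]
while the left-hand side is, writing $\smallint{}_* := \mathcal{M}(\smallint{}) = (\smallint{})_*$,
\[
\smallint{}\big((\smallint{})_*\Pi\big)(f) = \big((\smallint{})_*\Pi\big)\big(\mu \mapsto \mu(f)\big) = \Pi\big(\pi \mapsto (\mu \mapsto \mu(f))(\smallint{}(\pi))\big) = \Pi\big(\pi \mapsto \smallint{}(\pi)(f)\big),
\]
where again the third equality uses the definition of the pushforward $(\smallint{})_*$ along $\smallint{} : \mathcal{M}(\mathcal{M}(X)) \to \mathcal{M}(X)$. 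The two expressions coincide, so the associativity square commutes. Since $f \in C_b(X)$ was arbitrary and $\mathcal{M}(X) \subseteq C_b(X)^\wedge$, this proves the law.

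I do not anticipate a genuine obstacle: the statement is a formal consequence of the already-verified naturality and continuity together with the higher-order-function description \cref{eqn_smallint_as_higher_order_function}. The only mild subtlety is bookkeeping — being careful about which copy of $\mathcal{M}$ the various $\delta_\bullet$ and $\smallint{}$ live on, and correctly unwinding the definition of the pushforward $g_*$ (namely $(g_*\mu)(h) = \mu(h \circ g)$, equivalently $g_*\mu = \mu \circ (- \circ g)$ under the functional interpretation) when $g$ is itself one of $\delta_\bullet$ or $\smallint{}$. One could alternatively phrase the whole proof by testing only against Dirac functionals $\delta_{x_0}$ and invoking density, but testing against arbitrary $f \in C_b(X)$ is cleaner here and avoids any continuity argument at the end.
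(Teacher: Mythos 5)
Your proof is correct and follows essentially the same route as the paper: both verify the two unit laws and associativity by testing against arbitrary $f\in C_b(X)$, using the higher-order-function description \cref{eqn_smallint_as_higher_order_function} of $\smallint{}$ together with the definition of the pushforward. The computations match the paper's line for line, with your version merely spelling out the pushforward unwinding a bit more explicitly.
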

\begin{proof}
    We need to verify the monad laws, which amounts to the following three calculations, where $X$ is an $hk$-space, $\mu \in \mathcal{M}(X)$, $f\in C_b(X)$ and $\pi \in \mathcal{M}(\mathcal{M}(X))$.
    \begin{enumerate}
    \item First unit law:
        $$\smallint{}(\delta_\mu)(f) = \int \nu(f) \diff \delta_\mu(\nu) = \mu(f). $$ 
    \item Second unit law:
    $$(\smallint{} (\delta_\bullet)_*\mu)(f) = \int \delta_x(f) \diff \mu = \int f \diff \mu = \mu(f)$$
    \item Associativity law: We need to show that for all $\Pi\in \mathcal{M}(\mathcal{M}(\mathcal{M}(X)))$ and all $h \in C_b(X)$,
        $$\smallint{} \left(\smallint{} (\Pi)\right)(h)= \smallint{} \left(\smallint{}\right)_*\Pi. $$
     To do so, we repeatedly use \cref{eqn_smallint_as_higher_order_function} to obtain,
    \begin{align*}
        \smallint{} \left(\smallint{} (\Pi)\right)(h) 
        &= \left(\smallint{}(\Pi)\right)( \mu \mapsto \mu(h) ) \\
        &= \Pi(\nu \mapsto \nu(\mu \mapsto \mu(h))) \\
        &= \Pi(\nu \mapsto \left(\smallint{} (\nu)\right)(h)) \\
        &= \Pi\left([\mu \mapsto \mu(h)]\circ \smallint{}\right) \\
        &= \left(\smallint{}\right)_*\Pi (\mu \mapsto \mu(h)) \\
        &= \smallint{} \left(\smallint{}\right)_*\Pi,
    \end{align*}
    completing the proof.
    \end{enumerate}
\end{proof}

\subsection{Product measures (the commutative monad structure of $\mathcal{M}$)}

\begin{defn}\label{defn_product_measures}
    Let $X,Y$ be $hk$-spaces and let $\mu \in \mathcal{M}(X)$, $\nu \in \mathcal{M}(Y)$ be $k$-regular measures. Define the product measure $\mu \otimes \nu \in \mathcal{M}(X \times Y)$ on $X\times Y$ as (the $k$-regular measure representing the functional), 
        $$ (\mu \otimes \nu)(f) := \int \int f(x,y) \diff \mu(x) \diff \nu(y). \;\;\;\;\; (f \in C_b(X\times Y))$$
\end{defn}

The family of maps  
    $$ \otimes: \mathcal{M}(X) \times \mathcal{M}(Y) \to \mathcal{M}(X\times Y), \;\; (\mu, \nu) \mapsto \mu\otimes \nu, \;\;\;\; ((X,Y)\in \spaces \times \spaces)$$
is now a natural transformation (this follows from a change of variables in the definition) and we have that:

\begin{thm}\label{M_bbd_meas_monad_commutative}
    $(\mathcal{M}, \delta_\bullet, \smallint{}, \otimes)$ is a commutative monad on $\spaces$.
\end{thm}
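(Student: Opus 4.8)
The plan is to verify the axioms of a commutative (= symmetric monoidal) monad from \cref{comm_monad_def} for the tuple $(\mathcal{M}, \delta_\bullet, \smallint{}, \otimes)$. We already know from the previous theorem that $(\mathcal{M}, \delta_\bullet, \smallint{})$ is a monad, and that $\otimes$ is a well-defined natural transformation $\mathcal{M}(X)\times\mathcal{M}(Y)\to\mathcal{M}(X\times Y)$; what remains is (i) that $(\mathcal{M}, \delta_\bullet, \otimes)$ is a lax monoidal functor on $(\spaces, \times, 1)$, (ii) the two coherence conditions relating $\otimes$ to the monad multiplication $\smallint{}$ (i.e.~that $\smallint{}$ and $\delta_\bullet$ are monoidal natural transformations), and (iii) the symmetry/braiding square. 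The key technical principle throughout is the one already exploited heavily in this section: two continuous linear maps $\mathcal{M}(Z)\to V$ (with $V$ a linear $hk$-space, in particular $k$-Hausdorff) agree as soon as they agree on the span of Dirac measures, since that span is dense in $\mathcal{M}(Z)$ by construction (\cref{linear_map_vanishing_on_dense_subspace}). Actually one needs the slightly stronger bilinear version: two separately-continuous bilinear maps $\mathcal{M}(X)\times\mathcal{M}(Y)\to V$ that agree on pairs of (spans of) Diracs agree everywhere — first fix a Dirac in one slot and use density in the other, then let the fixed slot range over the dense span.

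First I would record the explicit formula $(\delta_x \otimes \delta_y) = \delta_{(x,y)}$, which is immediate from \cref{defn_product_measures}. From this, all the coherence diagrams reduce on Diracs to trivial identities in $\spaces$ (e.g.~associativity $\delta_{((x,y),z)} \leftrightarrow \delta_{(x,(y,z))}$ under the associator of the cartesian product, unitality $\delta_{(x,*)}\leftrightarrow \delta_x$, and the braiding square $\delta_{(x,y)}\mapsto\delta_{(y,x)}$ matching $\sigma$), and then density plus $k$-Hausdorffness upgrades each identity-on-Diracs to an identity of morphisms of linear $hk$-spaces. Continuity of all the maps involved — the associator conjugated by $\otimes$, the unitors, etc.~— follows from cartesian closure of $\spaces$ together with the fact (used in \cref{defn_product_measures} and the naturality remark) that $\mu\otimes\nu$ is continuous in each variable; I would phrase the density argument once as a lemma and invoke it repeatedly.

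The genuinely substantive diagrams are the two that couple $\otimes$ with $\smallint{}$: that the diagram
\[
\begin{tikzcd}
\mathcal{M}(\mathcal{M}(X))\times\mathcal{M}(\mathcal{M}(Y)) & \mathcal{M}(\mathcal{M}(X)\times\mathcal{M}(Y)) & \mathcal{M}(\mathcal{M}(X\times Y)) \\
\mathcal{M}(X)\times\mathcal{M}(Y) & & \mathcal{M}(X\times Y)
\arrow["\otimes", from=1-1, to=1-2]
\arrow["{\mathcal{M}(\otimes)}", from=1-2, to=1-3]
\arrow["{\smallint{}\times\smallint{}}"', from=1-1, to=2-1]
\arrow["\smallint{}", from=1-3, to=2-3]
\arrow["\otimes"', from=2-1, to=2-3]
\end{tikzcd}
\]
commutes, and the analogous (easier) unit compatibility $\delta_{(x,y)} = \delta_x\otimes\delta_y$ viewed as saying $\delta_\bullet$ is monoidal. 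For the multiplication square I would again reduce to Diracs: take $\pi = \delta_\mu \in \mathcal{M}(\mathcal{M}(X))$ and $\rho = \delta_\nu\in\mathcal{M}(\mathcal{M}(Y))$; then going down-then-right gives $\mu\otimes\nu$, while going right-then-down gives $\smallint{}\big(\mathcal{M}(\otimes)(\delta_\mu\otimes\delta_\nu)\big) = \smallint{}\big(\mathcal{M}(\otimes)(\delta_{(\mu,\nu)})\big) = \smallint{}(\delta_{\mu\otimes\nu}) = \mu\otimes\nu$ using the first unit law of the monad. By separate continuity and bilinearity (density of Dirac spans in each $\mathcal{M}(\mathcal{M}(-))$ factor), this suffices. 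The one point requiring care — and the step I expect to be the main obstacle — is making the "separately continuous bilinear map determined by its values on Dirac pairs" argument fully rigorous in each instance: one must check that the relevant composites really are continuous in each variable separately (so that vanishing on a dense subspace of one factor, with the other fixed, forces vanishing there), which uses cartesian closure and the explicit higher-order-function descriptions of $\smallint{}$ (\cref{eqn_smallint_as_higher_order_function}) and of $\otimes$. Once that lemma is in hand, every diagram collapses to a one-line computation on Diracs, so I would front-load the lemma and keep the rest terse.
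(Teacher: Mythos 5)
Your proposal is correct, and its engine is the same as the paper's: check each identity on Dirac measures and upgrade to all of $\mathcal{M}$ by continuity, linearity and density of the span of Diracs (which is dense by the very definition of $\mathcal{M}(X)$), using that the target is $k$-Hausdorff. The difference is one of scope and of care in the density step. The paper's proof verifies only the braiding square: it unwinds it to the Fubini-type identity $\int\int f(x,y)\,\diff\mu(x)\diff\nu(y)=\int\int f(x,y)\,\diff\nu(y)\diff\mu(x)$ for $f\in C_b(X\times Y)$, observes both sides are continuous in $(\mu,\nu)$ and agree for finitely supported measures, and concludes by density; the lax monoidal structure of $(\mathcal{M},\delta_\bullet,\otimes)$ and the compatibility squares with $\smallint$ and $\delta_\bullet$ are left implicit. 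You verify the full axiom list, and your reduction of the multiplication square to $\pi=\delta_\mu$, $\rho=\delta_\nu$ via $\delta_\mu\otimes\delta_\nu=\delta_{(\mu,\nu)}$ and the unit law is exactly right. Your iterated one-variable density argument (fix a Dirac in one slot, use density in the other, then let the fixed slot range over the dense span, using separate continuity and bilinearity) is in fact slightly more careful than the paper's appeal to density of finitely supported measures, since the maps in question live on the $k$-product $\mathcal{M}(X)\times\mathcal{M}(Y)$, whose topology is finer than the $\topsp$-product, so joint density of pairs of finitely supported measures is not the statement one should lean on; your two-step argument sidesteps this entirely. The separate (indeed joint) continuity you flag as the remaining obligation does hold for all the composites involved, by cartesian closure of $\spaces$ together with continuity of $\otimes$, $\smallint$ and pushforwards, so there is no gap.
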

\begin{proof}
    We need to show that the diagram 
    \[\begin{tikzcd}
	{\mathcal{M}(X)\times\mathcal{M}(Y)} && {\mathcal{M}(X\times Y)} \\
	{\mathcal{M}(Y)\times\mathcal{M}(X)} && {\mathcal{M}(Y\times X)}
	\arrow["\otimes", from=1-1, to=1-3]
	\arrow["\sigma"', from=1-1, to=2-1]
	\arrow["\otimes"', from=2-1, to=2-3]
	\arrow["{\sigma_*}", from=1-3, to=2-3]
    \end{tikzcd}\]
    commutes (where $\sigma$ is the braiding), which amounts to showing that for all $f\in C_b(X)$,
        $$ \int f \diff \mu \otimes \nu  = \int f \diff \sigma_*(\nu \otimes \mu). $$
    After a change of variables and expanding, this becomes,
        \begin{equation}\label{fubini_for_Cb}
            \int \int f(x,y) \diff \mu(x) \diff \nu(y)  = \int \int f(x,y) \diff \nu(y) \diff \mu(x).
        \end{equation}
    We might now be tempted to invoke a version of Fubini's theorem for signed measures, but let us take the oppurtunity to give an independent proof which will then instead \emph{entail} this version of Fubini's theorem. \par 
    Note that both sides of \eqref{fubini_for_Cb} are continuous in $(\mu, \nu)$ and that equality holds when $\mu$ and $\nu$ are finitely supported (since then, \eqref{fubini_for_Cb} reduces to an interchange of finite sums). The claim now follows from density of finitely supported measures in $\mathcal{M}(X\times Y)$.
\end{proof}

\section{The Probability Monad $\mathcal{P}$ on the Category of $hk$-Spaces}\label{sec_prob_mon_P}

Because the pushforward of a probability measure remains a probability measure, $\mathcal{P}(-)$ inherits the structure of a functor from $\mathcal{M}$. Similarly, we can equip $\mathcal{P}$ with a unit and multiplication coming from the monad structure of $\mathcal{M}$.

\begin{defn}
    Define the family of continuous maps,
        $$ \delta_\bullet : X \mapsto \mathcal{P}(X), \;\; x \mapsto \delta_x, \;\;\;\;\; (X \in\spaces) $$
    as well as, 
        $$ \smallint{}: \mathcal{P}(\mathcal{P}(X)) \to \mathcal{P}(X), \smallint{}(\pi)(f) := \int_{\mathcal{P}(X)}\mu(f)\diff \pi(\mu). \;\;\;\;\; (X \in\spaces) $$
\end{defn}

Note that this definition is exactly the same as \cref{defn_unit_mult_MX} if we replace $\mathcal{P}$ with $\mathcal{M}$. Moreover, when $\mu$ and $\nu$ probability measures then so is the product measure $\mu \otimes \nu$ from \cref{defn_product_measures} and we obtain a family of maps  
    $$ \otimes: \mathcal{P}(X) \times \mathcal{P}(Y) \to \mathcal{P}(X\times Y), \;\; (\mu, \nu) \mapsto \mu\otimes \nu, \;\;\;\; ((X,Y)\in \spaces \times \spaces)$$
We can therefore, the proof being exactly as before, conclude:

\begin{theorem}
    The families $\delta_\bullet$, $\smallint{}$ and $\otimes$ constitute natural transformations and $(\mathcal{P}, \delta_\bullet, \smallint{}, \otimes)$ is a commutative monad on $\spaces$. 
\end{theorem}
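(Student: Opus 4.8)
The plan is to reduce everything to results already established for the measure monad $\mathcal{M}$. The key observation is that $\mathcal{P}(X) \subseteq \mathcal{M}(X)$ is a closed subspace, and the structure maps $\delta_\bullet$, $\smallint{}$, $\otimes$ for $\mathcal{P}$ are simply the corestrictions of the corresponding maps for $\mathcal{M}$. So essentially nothing new needs to be proved beyond checking that each $\mathcal{M}$-structure map actually lands in $\mathcal{P}$ when fed probability measures, and that the monad/commutativity laws — being equalities of continuous maps — are inherited by restriction to a subspace (indeed even to the underlying sets, since $\mathcal{P}(X)$ carries the subspace topology).

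First I would verify functoriality of $\mathcal{P}$: if $\mu \in \mathcal{P}(X)$ and $f\colon X\to Y$ is continuous, then $f_*\mu$ is positive (clear) and $f_*\mu(1) = \mu(1\circ f) = \mu(1) = 1$, so $f_*$ restricts to a continuous map $\mathcal{P}(X)\to\mathcal{P}(Y)$; this is continuous because $\mathcal{P}(X)$ and $\mathcal{P}(Y)$ have the subspace topologies from $\mathcal{M}(X)$, $\mathcal{M}(Y)$ and $f_*\colon\mathcal{M}(X)\to\mathcal{M}(Y)$ is continuous. Then I would check that $\delta_x$ is a probability measure for each $x$, so $\delta_\bullet$ corestricts to a continuous map $X\to\mathcal{P}(X)$ (continuity again by \cref{lem_delta_cont_as_map_to_MX} followed by the subspace inclusion). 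For $\smallint{}$: if $\pi\in\mathcal{P}(\mathcal{P}(X))$, then for $f\geq 0$ we have $\smallint{}(\pi)(f) = \int_{\mathcal{P}(X)}\mu(f)\,\mathrm d\pi(\mu)\geq 0$ since the integrand is nonnegative and $\pi$ is positive, and $\smallint{}(\pi)(1) = \int \mu(1)\,\mathrm d\pi(\mu) = \int 1\,\mathrm d\pi = 1$; hence $\smallint{}$ restricts to a continuous map $\mathcal{P}(\mathcal{P}(X))\to\mathcal{P}(X)$. Finally, for $\otimes$: if $\mu\in\mathcal{P}(X)$, $\nu\in\mathcal{P}(Y)$, then $\mu\otimes\nu$ is positive (iterated integral of a nonnegative function against positive measures) and $(\mu\otimes\nu)(1) = \mu(1)\nu(1) = 1$, so $\otimes$ restricts as claimed. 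Naturality of each restricted transformation is immediate: the relevant naturality squares for $\mathcal{M}$ commute by the lemmas in \cref{sec_M_comm_monad}, and restricting the objects to the (closed) subspaces $\mathcal{P}(-)$ leaves the commutativity intact.

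The monad laws (the two unit laws and associativity) and the commutativity square from \cref{comm_monad_def} were verified for $\mathcal{M}$ in \cref{M_bbd_meas_monad_commutative} by pointwise calculations valid for arbitrary $k$-regular measures; since every element of $\mathcal{P}(X)$ is in particular such a measure, exactly the same calculations apply verbatim to elements of $\mathcal{P}(X)$. Thus all the defining diagrams of a commutative monad commute for $(\mathcal{P},\delta_\bullet,\smallint{},\otimes)$, and we conclude that it is a commutative monad on $\spaces$.

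I do not anticipate a genuine obstacle here — the proof is essentially bookkeeping. The only point requiring the tiniest care is making sure continuity is not lost upon corestriction, which is automatic because $\mathcal{P}(X)$ is given the subspace topology and a continuous map into $\mathcal{M}(X)$ with image in $\mathcal{P}(X)$ corestricts to a continuous map into $\mathcal{P}(X)$; and one should note that $\mathcal{P}(\mathcal{P}(X))\hookrightarrow\mathcal{M}(\mathcal{M}(X))$ factors through $\mathcal{M}(\mathcal{P}(X))$ via the functoriality of $\mathcal{M}$ applied to the closed inclusion $\mathcal{P}(X)\hookrightarrow\mathcal{M}(X)$, so that $\smallint{}\colon\mathcal{P}(\mathcal{P}(X))\to\mathcal{P}(X)$ is indeed a restriction of $\smallint{}\colon\mathcal{M}(\mathcal{M}(X))\to\mathcal{M}(X)$ precomposed with this inclusion. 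If one wanted to be completely careful one would record this as a short lemma, but it follows formally.
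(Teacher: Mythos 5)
Your proposal is correct and follows essentially the same route as the paper, which likewise obtains $\mathcal{P}$ by observing that all structure maps of $\mathcal{M}$ preserve probability measures and that the verifications of the monad and commutativity laws carry over verbatim (the paper states this as ``the proof being exactly as before''). Your extra bookkeeping about corestriction into the closed subspace and the factorisation $\mathcal{P}(\mathcal{P}(X))\hookrightarrow \mathcal{M}(\mathcal{P}(X))\hookrightarrow\mathcal{M}(\mathcal{M}(X))$ is a correct and slightly more explicit rendering of what the paper leaves implicit.
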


\begin{remark}
    Now that we know that $\mathcal{P}$ is a monad, we may rephrase the results of  \cref{convergence_of_measures_on_compactum} and \cref{radon_measures_Polish_k_regular} as saying that $\mathcal{P}$ restricts to the Giry monad on Polish spaces, and to the Radon monad on compact Hausdorff spaces, recovering (and in some sense ``interpolating'' between) two well-known probability monads. Moreover, since the category $h\mathsf{QCB}$ of $k$-Hausdorff QCB spaces is closed under the formation of limits and exponentials in $\spaces$, $\mathcal{P}$ also restricts to a probability monad on $h\mathsf{QCB}$. 
\end{remark}

\section{The Free Paired Linear $hk$-Space $\mathcal{M}_c(X)$ as a Space of Compactly Supported Measures}\label{sec_free_repl_lin_is_spc_of_measures}

In Chapter 3, we defined $\mathcal{M}_c(X)$ to be the free paired linear $hk$-space (equivalently, the free replete linear $hk$-space) on $X$ (see \cref{defn_free_plin_ksp}). We now justify this notation by showing that $\mathcal{M}_c(X)$ can be identified with with the space of compactly supported $k$-regular measures on $X$. In contrast to $\mathcal{M}(X)$, of which we needed to show ``by hand'' that it defines a commutative monad, this is automatic for $\mathcal{M}_c(X)$ from the general considerations of Chapter 3 (see \cref{comm_mon_str_of_Mc}). \par 
Let us start by observing that we may regard $\mathcal{M}_c(X)$ as a subset of $\mathcal{M}(X)$.

\begin{lem}\label{lem_istar_injective}
    The map 
        $$ i^*: \mathcal{M}_c(X) \to \mathcal{M}(X) $$
    obtained as the adjoint of the inclusion map $i: C_b(X) \to C(X)$ is injective.
\end{lem}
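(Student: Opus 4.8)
The plan is to realize $\mathcal{M}_c(X)$ concretely as a space of functionals on $C(X)$, so that $i^*$ becomes nothing but the restriction of functionals along the dense inclusion $C_b(X)\hookrightarrow C(X)$, and then conclude by the fact that a continuous linear functional vanishing on a dense subspace is zero.

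First I would identify $\mathcal{M}_c(X)$ with a closed subspace of $C(X)^\wedge$. By the ($\spaces$-enriched) universal property of the free linear $hk$-space (\cref{prop_free_forgetful_enriched} applied to the free–forgetful adjunction $F_{\mathbb{K}}\dashv U\colon\vect\to\spaces$, with $V=\mathbb{K}$), there is a natural isomorphism $F_{\mathbb{K}}(X)^\wedge\cong C(X)$, under which a functional $\phi$ corresponds to $x\mapsto\phi(\delta_x)$. Since $\mathcal{M}_c(X)=rF_{\mathbb{K}}(X)$ is by definition (\cref{defn_free_plin_ksp}) the closure of the image of $F_{\mathbb{K}}(X)$ in its double dual, and since the linear span of the image of $X$ is dense in $F_{\mathbb{K}}(X)$ (this is immediate from the universal property of $F_{\mathbb{K}}(X)$: the corestriction $X\to\overline{\vspan(\operatorname{im}\delta_\bullet)}$ extends to a continuous linear map $F_{\mathbb{K}}(X)\to\overline{\vspan(\operatorname{im}\delta_\bullet)}$ which, composed with the inclusion, must be $\id$ by uniqueness), this exhibits
$$ \mathcal{M}_c(X)=\overline{\vspan\{\delta_x\mid x\in X\}}\subseteq C(X)^\wedge,\qquad \delta_x(f):=f(x). $$
Under this identification, together with $\mathcal{M}(X)\subseteq C_b(X)^\wedge$, the map $i^*$ — the dual of $i\colon C_b(X)\to C(X)$ — is simply $\mu\mapsto\mu\circ i=\mu|_{C_b(X)}$, i.e. the adjoint $i^\wedge\colon C(X)^\wedge\to C_b(X)^\wedge$ restricted to $\mathcal{M}_c(X)$.

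Next I would establish that $C_b(X)$ is dense in $C(X)$. Given $g\in C(X)$ and $n\in\mathbb{N}$, let $\rho_n\colon\mathbb{K}\to\mathbb{K}$ be the continuous map that is the identity on $\{|\lambda|\le n\}$ and sends $\lambda$ to $n\lambda/|\lambda|$ when $|\lambda|>n$, and put $g_n:=\rho_n\circ g\in C_b(X)$. On any compact $K\subseteq X$ the set $g(K)$ is bounded, so $g_n|_K=g|_K$ for all large $n$; hence $g_n\to g$ uniformly on compact subsets, i.e. $g_n\to g$ in $C(X)=\mathbb{K}^X$ by \cref{lem_convergence_in_Y_X}. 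Thus $g\in\overline{C_b(X)}$. Finally, if $\mu\in\mathcal{M}_c(X)$ satisfies $i^*(\mu)=0$, then $\mu$, viewed as a continuous linear functional $C(X)\to\mathbb{K}$, vanishes on the dense linear subspace $C_b(X)$, so $\mu=0$ in $C(X)^\wedge$ by \cref{linear_map_vanishing_on_dense_subspace}, whence $\mu=0$ in $\mathcal{M}_c(X)$.

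The only genuinely non-formal ingredients are the concrete description $\mathcal{M}_c(X)\subseteq C(X)^\wedge$ as the closed span of Dirac functionals (which rests on the enriched universal property of $F_{\mathbb{K}}$ and the identification $F_{\mathbb{K}}(X)^\wedge\cong C(X)$) and the density of $C_b(X)$ in $C(X)$ via truncation; once these are in place the injectivity of $i^*$ is immediate.
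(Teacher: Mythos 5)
Your argument is correct and follows essentially the same route as the paper's proof: regard elements of $\mathcal{M}_c(X)$ as continuous functionals on $C(X)$, note that $i^*$ is restriction along the inclusion $C_b(X)\hookrightarrow C(X)$, and conclude injectivity from the density of $C_b(X)$ in $C(X)$. The paper states this in two lines, leaving the identification $\mathcal{M}_c(X)\subseteq C(X)^\wedge$ and the density claim implicit; your truncation argument and the closed-span-of-Diracs description simply supply those details.
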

\begin{proof}
    Let $\mu, \nu \in \mathcal{M}_c(X)$ and suppose that $i^*(\mu)=i^*(\nu)$. Then, for all $f\in C_b(X)$, $\nu(f)=\mu(f)$. Since $C_b(X)$ is dense in $C(X)$, this implies that $\mu=\nu$.
\end{proof}

\begin{defn}
    Let $X$ be an $hk$-space. By a compactly supported $k$-regular measure on $X$ we mean a $k$-regular measure $\mu\in \widetilde{\mathcal{M}}(X)$ which is the pushforward of a Baire measure defined on a compact subspace of $X$ under the inclusion. (Of course, by \cref{convergence_of_measures_on_compactum}, $k$-regularity of $\mu$ is automatic in this case.) We denote the set of compactly supported $k$-regular measures by $\widetilde{\mathcal{M}}_c(X)$.
\end{defn}

\begin{remark}\label{rem_csupp_k_reg}
    Why do we not just define compactly supported $k$-regular measures as ``$k$-regular measures whose support is compact''? This is for a technical reason: Remember that on a general topological space, the Baire $\sigma$-algebra may be a proper subset of the Borel $\sigma$-algebra, which is why the usual definition of the support of a \emph{Radon} (or, more generally, \emph{$\tau$-additive}) measure cannot be applied here. \par 
    However, if $X$ is a \emph{Hausdorff} $hk$-space, then every tight (in particular, every compactly supported $k$-regular) Baire measure on $X$ admits a unique extension to a Radon measure \cite[Theorem 7.3.10]{bogachev2007measure}. Hence, for all practical purposes, the set of compactly supported $k$-regular measures can be identified with the set of compactly supported Radon measures.
\end{remark}

Before we can provide an identification between compactly supported measures in $\widetilde{\mathcal{M}}_c(X)$ and functionals in $\mathcal{M}_c(X)$, as promised, we need a lemma. 

\begin{lem}\label{functionals_on_cco}
    Let $X$ be an $hk$-space and let $\phi$ be a continuous linear functional on the space $\cco(X)$ of continuous functions \emph{with the compact-open topology}. Then there exists a compactly supported $k$-regular measure $\mu$ on $X$ such that for all $f\in C(X)$, 
        $$ \phi(f) = \int_X f(x) \diff \mu(x).$$
\end{lem}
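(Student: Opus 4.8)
The statement asserts a Riesz-type representation for continuous linear functionals on the space $\cco(X)$ of continuous functions equipped with the compact-open topology: every such functional is given by integration against a compactly supported $k$-regular measure. The plan is to exploit the well-known fact (a consequence of the definition of the compact-open topology) that a continuous linear functional $\phi$ on $\cco(X)$ must be ``supported on a compact set'', in the following sense: there is a compact subset $K\subseteq X$ and a constant $C>0$ such that $|\phi(f)|\leq C\sup_{x\in K}|f(x)|$ for all $f\in C(X)$. This is precisely the statement that continuity of $\phi$ forces a single sub-basic seminorm $f\mapsto \sup_{x\in K}|f(x)|$ (associated to some compact $K$) to dominate $\phi$; it follows because the neighbourhood of $0$ on which $\phi$ is bounded contains a finite intersection of sub-basic sets, and the images $i(L)$ of the compact spaces appearing there can be gathered into a single compact set $K$ (using, e.g., that $X$ is $k$-Hausdorff so that continuous images of compacta are closed and compact, and that a finite union of compacta is compact).

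First I would make this domination statement precise and prove it from the definition of the compact-open topology (\cref{mapping_spaces_defn}) together with the $k$-Hausdorff hypothesis on $X$ (\cref{cco_for_X_k_Hausdorff}, which lets us work with the simpler sub-basis $W(K,U)$ indexed by compact Hausdorff subsets of $X$). Then, restricting functions to $K$ gives a linear map $C(X)\to C(K)$, $f\mapsto f|_K$, whose image is a linear subspace of $C(K)$, and the domination bound says that $\phi$ factors through this restriction map: there is a well-defined bounded linear functional $\bar\phi$ on the subspace $\{f|_K : f\in C(X)\}\subseteq C(K)$ with $\phi(f)=\bar\phi(f|_K)$. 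By the Hahn–Banach theorem, $\bar\phi$ extends to a bounded linear functional on all of $C(K)$, and since $K$ is compact Hausdorff, the classical Riesz representation theorem (\cite[Theorem 7.10.4]{bogachev2007measure}, as used in \cref{convergence_of_measures_on_compactum}) produces a Baire (equivalently Radon) measure $\nu$ on $K$ with $\bar\phi(g)=\int_K g\diff\nu$ for all $g\in C(K)$.

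Next I would push $\nu$ forward along the inclusion $\iota_K:K\hookrightarrow X$ to obtain a Baire measure $\mu:=(\iota_K)_*\nu$ on $X$, which by construction is a compactly supported measure in the sense of the paper (the pushforward of a Baire measure on a compact subspace); by \cref{radon_measures_k_regular} (or the remark following \cref{convergence_of_measures_on_compactum}) it is automatically $k$-regular. Then for any $f\in C(X)$ we compute $\int_X f\diff\mu=\int_K f|_K\diff\nu=\bar\phi(f|_K)=\phi(f)$, which is the desired identity. This also implicitly yields that $\mu$, regarded as a functional, lies in $\mathcal{M}_c(X)$, but that membership claim is not part of what this particular lemma asserts — it asserts only the existence of the representing compactly supported $k$-regular measure.

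\textbf{Main obstacle.} The crux is the domination step: carefully extracting, from the continuity of $\phi$ in the compact-open topology, a \emph{single} compact set $K$ and constant $C$ with $|\phi(f)|\le C\sup_K|f|$. The subtlety is that a basic neighbourhood of $0$ in $\cco(X)$ is a finite intersection $\bigcap_{j=1}^n W(i_j,K_j,U_j)$ with the $U_j$ shrinking neighbourhoods of $0$ in $\mathbb{K}$, and one must argue that boundedness of $\phi$ on (a scalar multiple of) such a set implies the seminorm estimate for $K:=\bigcup_j i_j(K_j)$. This is a standard but slightly fiddly argument about locally convex topologies generated by seminorms; I would handle it by noting that the gauge functional of a suitable such neighbourhood is exactly (a multiple of) $\sup_K|\cdot|$, or alternatively by a direct contradiction argument scaling $f$ by large constants. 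Everything after that — Hahn–Banach, the classical Riesz theorem on the compactum $K$, and the pushforward — is routine.
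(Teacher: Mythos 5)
Your argument is correct, and it reaches the same endpoint (Hahn--Banach plus the classical Riesz theorem on a compactum, then a pushforward) by a somewhat different mechanism than the paper. The paper never proves a domination estimate: it embeds $\cco(X)$ into the $\topsp$-product $P=\prod^{\topsp}_{K}C(K)$ over all compact subsets (using that the compact-open topology is that of uniform convergence on compacta when $X$ is $k$-Hausdorff), extends $\phi$ by Hahn--Banach to $P$, and then invokes the structure theorem that a continuous functional on a locally convex product is a finite sum of functionals factoring through finitely many factors; Riesz on each $C(K_i)$ then gives $\mu=\sum_i(\iota_i)_*\mu_i$. You instead localise at the start, extracting from continuity the estimate $|\phi(f)|\le C\sup_{x\in K}|f(x)|$ for a \emph{single} compact $K$, factor $\phi$ through the restriction $C(X)\to C(K)$, and apply Hahn--Banach inside $C(K)$ followed by one application of Riesz and one pushforward. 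Your route is more elementary (it avoids the description of duals of locally convex products) and yields a single compactly supported measure directly, at the price of the seminorm-domination argument you flag; the paper's route trades that computation for the product-dual structure theorem. One small point to make explicit in your domination step: you need the finite union $K=\bigcup_j K_j$ to be not just compact but compact \emph{Hausdorff} so that the classical Riesz theorem applies on $C(K)$; this does hold in an $hk$-space, since $K$ is the continuous image of the compact Hausdorff space $\bigsqcup_j K_j$ and images of compacta in weak Hausdorff spaces are compact Hausdorff (cf.\ \cref{khaus_equiv_weakhaus}) --- the paper glosses over the same point. Your deferral of $k$-regularity to the paper's observation that pushforwards of Baire measures on compact subspaces are automatically $k$-regular is consistent with the paper's own definition, so there is no gap there.
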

\begin{proof}
    Put 
        $$ P := \prod_{K\subseteq X \text{ compact}}\!\!\!\!\!\!\!\!\!\!\!{}^{\topsp} \;\;C(K), $$
    where $\prod^{\topsp}$ denotes the product in $\topsp$ (equivalently, in the category of locally convex topological vector spaces).
    Since $X$ is $k$-Hausdorff, the compact-open topology is the topology of uniform convergence on compact subsets and hence, the map 
        $$ \cco(X) \hookrightarrow P, \;\;\; f \mapsto (f|_K) $$
    is an embedding of locally convex topological vector spaces. By the Hahn-Banach extension theorem, $\phi$ has an extension $\tilde{\phi}$ to $P$. Every linear functional on a product of locally convex topological vector spaces is a finite sum of functionals factoring through one of its factors, so there exist $K_1,\dots,K_n\subseteq X$ compact and $\phi_1\in C(K_1)^\wedge, \dots \phi_n \in C(K_n)^\wedge$ such that 
        $$ \tilde{\phi} = \sum_{i=1}^n \phi_i \circ \pi_i, $$
    where $\pi_i: P \to C(K_i)$ is the canonical projection. By the  Riesz–Markov–Kakutani theorem for the case of compact Hausdorff spaces, there exist Baire measures $\mu_1, \dots, \mu_n$ on $K_1,\dots, K_n$ such that 
        $$ \phi_i = \int_{K_i} (-) \diff \mu_i, $$
    for every $i\in\{1,\dots, n\}$. Hence, we obtain that 
        $$ \phi = \sum_{i=1}^n \int_{K_i} (-) \diff \mu_i. $$
    Let $\iota_i: K_i \hookrightarrow X$ be the inclusion and let 
        $$ \mu := \sum_{i=1}^n (\iota_i)_*\mu_i \in \mathcal{M}_c(X). $$
    Then we have that 
        $$ \phi = \int_X (-) \diff \mu,$$
    which is what we wanted to show.
\end{proof}

\begin{thm}\label{thm_riesz_type_repr_thm_Mc}
    Let $X$ be an $hk$-space. Then the map
        $$ \Phi:  \widetilde{\mathcal{M}}_c(X) \to \mathcal{M}_c(X), \;\;\; \mu \mapsto  \int_X (-) \diff \mu$$
    is a well-defined natural bijection between the set $\widetilde{\mathcal{M}}_c(X)$ of compactly supported $k$-regular measures on $X$ and the set $C(X)^*\cong \mathcal{M}_c(X)$. In particular, when $X$ is Hausdorff $\mathcal{M}_c(X)$ can be identified with the set of compactly supported Radon measures on $X$ (see \cref{rem_csupp_k_reg}). 
\end{thm}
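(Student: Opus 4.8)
The plan is to prove \cref{thm_riesz_type_repr_thm_Mc} in three stages: (i) show $\Phi$ lands in $\mathcal{M}_c(X) = C(X)^*$, (ii) show $\Phi$ is surjective, and (iii) show $\Phi$ is injective; then package naturality and the Hausdorff identification as routine add-ons.

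For well-definedness, let $\mu \in \widetilde{\mathcal{M}}_c(X)$, say $\mu = \iota_* \mu_0$ where $\mu_0$ is a Baire measure on a compact subspace $K \subseteq X$ and $\iota: K \hookrightarrow X$ is the inclusion. Integration against $\mu$ defines a linear functional $f \mapsto \int_X f \diff \mu = \int_K (f \circ \iota) \diff \mu_0$ on $C(X)$. This is continuous on $\cco(X)$ because restriction $C(X) \to C(K)$ is continuous for the compact-open topology and $\mu_0 \in C(K)^\wedge$ by the classical Riesz representation theorem on the compactum $K$; hence it is continuous on $C(X) = k\cco(X)$, i.e.\ it lies in $C(X)^\wedge$. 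To see it actually lies in the \emph{paired} dual $C(X)^* = \mathcal{M}_c(X)$, I use that $\mathcal{M}_c(X) = rF_{\mathbb{K}}(X)$ is by definition the closure in $C(X)^{\wedge\wedge}$ of the image of $F_{\mathbb{K}}(X)$, and that finitely supported measures (images of $\delta_\bullet$) are dense: by \cref{convergence_of_measures_on_compactum} applied to the compactum $K$, $\mu_0$ is a weak limit of finitely supported measures on $K$, whose pushforwards to $X$ are finitely supported measures on $X$; pushing this convergence through the continuous restriction $C(X)^\wedge \to C(K)^\wedge$-adjoint shows $\Phi(\mu)$ lies in the closure of $\vspan\{\delta_x\}$, which is exactly $\mathcal{M}_c(X)$ viewed inside $C(X)^\wedge$.

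For surjectivity, this is essentially \cref{functionals_on_cco}. Given $\phi \in C(X)^* \subseteq C(X)^\wedge$, restrict attention to $\phi$ as a functional on $C(X) = k\cco(X)$; since a continuous linear functional on $k\cco(X)$ is precisely a continuous linear functional on $\cco(X)$ (a functional is $k$-continuous iff continuous, and on the locally convex space $\cco(X)$ linear $k$-continuity agrees with continuity because compact subsets agree, by \cref{cpct_sets_in_kififcation}), \cref{functionals_on_cco} produces a compactly supported $k$-regular measure $\mu$ with $\phi(f) = \int_X f \diff \mu$ for all $f \in C(X)$, i.e.\ $\Phi(\mu) = \phi$. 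Injectivity is the easiest part: if $\Phi(\mu) = \Phi(\nu)$ then $\int_X f \diff \mu = \int_X f \diff \nu$ for all $f \in C(X)$, and one concludes $\mu = \nu$ as Baire measures. Here I would invoke that two compactly supported Baire measures agreeing on all of $C(X)$ must be equal — on a Hausdorff $hk$-space this follows from uniqueness in the Riesz representation theorem applied on a common compact support, while in general the supports live in compacta and the restrictions are determined by the continuous functions on those compacta (using a Tietze-type extension for the relevant compact Hausdorff subspaces, or directly the uniqueness clause in \cite[p.~111, Theorem 7.10.1]{bogachev2007measure}).

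Naturality in $X$ is a direct diagram chase: for $g: X \to Y$ continuous, $\Phi_Y \circ (g_*) = \mathcal{M}_c(g) \circ \Phi_X$ reduces to the change-of-variables identity $\int_Y f \diff (g_*\mu) = \int_X (f \circ g) \diff \mu$. The final sentence about the Hausdorff case is then immediate from \cref{rem_csupp_k_reg}: when $X$ is Hausdorff, every compactly supported $k$-regular (Baire) measure extends uniquely to a compactly supported Radon measure by \cite[Theorem 7.3.10]{bogachev2007measure}, so $\widetilde{\mathcal{M}}_c(X)$ may be identified with the set of compactly supported Radon measures, and hence so may $\mathcal{M}_c(X)$. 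I expect the main obstacle to be stage (i) — pinning down that $\Phi(\mu)$ lands in the possibly-smaller paired dual $C(X)^* = \mathcal{M}_c(X)$ rather than merely in $C(X)^\wedge$ — since this is exactly the point where the density-of-finitely-supported-measures construction of $\mathcal{M}_c(X)$ has to be used carefully, as opposed to the more familiar representation-theoretic arguments which only see $C(X)^\wedge$.
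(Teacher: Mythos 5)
Your stages (i) and (iii) are essentially sound and amount to the same mechanism as the paper's proof: the paper pushes $\tilde\mu|_K\in \mathcal{M}_c(K)=C(K)^\wedge$ forward along the inclusion $K\hookrightarrow X$ (which is your density-of-Diracs-plus-continuity-of-pushforward argument, phrased via functoriality), and it gets injectivity from \cref{lem_istar_injective} together with the uniqueness in \cref{riesz_for_cb_general}, which is the cleaner version of your fallback option. The genuine gap is in stage (ii). You assert that a continuous linear functional on $C(X)=k\cco(X)$ is automatically continuous on $\cco(X)$, ``because compact subsets agree''. That does not follow: the coincidence of compact subsets (\cref{cpct_sets_in_kififcation}) only shows that the two topologies have the same $k$-continuous functionals, and continuity on compact subsets is precisely $k$-continuity, which for a non-metrisable locally convex space such as $\cco(X)$ is in general strictly weaker than continuity — the whole reason the paper works with $k\cco(X)$ is that $\cco(X)$ need not be a $k$-space. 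Moreover, your claim cannot be patched: if every element of $C(X)^\wedge$ were $\cco(X)$-continuous, then by \cref{functionals_on_cco} and your stage (i) every such functional would lie in the closed span of the Dirac functionals, i.e.\ $\mathcal{M}_c(X)=C(X)^\wedge$ for every $hk$-space $X$; the canonical map $C(X)\to C(X)^{\wedge\wedge}$ would then coincide with the adjunction isomorphism $C(X)\cong\mathcal{M}_c(X)^\wedge$, making every $C(X)$ reflexive, contradicting the Haydon example discussed around \cref{question_reflexivity}. So your surjectivity argument proves too much, and the step where it fails is exactly the one you flagged as unproblematic.

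The correct statement is weaker and has a different shape: one only needs to hit the paired dual $C(X)^*=\mathcal{M}_c(X)$, which by construction is the closure of the span of the Diracs in $C(X)^\wedge$. Since finitely supported measures are visibly in $\im\Phi$, the image is dense, and the real work is to show that $\im\Phi$ is \emph{closed} in $\mathcal{M}_c(X)$. The paper does this by showing $\im\Phi$ is closed in $\cco(\cco(X))$ — where, by \cref{functionals_on_cco}, it is exactly the set of \emph{linear} elements, a condition closed under compact-open convergence — and then transporting this closedness through the comparisons with $\cco(C(X))$ and $C(C(X))$, using once more that the compact subsets of $\cco(X)$ and $C(X)$ coincide. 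So \cref{functionals_on_cco} is indeed the key lemma, but it must be applied to identify $\im\Phi$ inside $\cco(\cco(X))$, not to an arbitrary element of $C(X)^\wedge$; replace your stage (ii) by this dense-image-plus-closed-image argument and the rest of your outline goes through.
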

\begin{proof}
    \text{}
    \begin{enumerate}[1.]
        \item \emph{$\Phi$ is well-defined:} 
        \begin{enumerate}[a.]
            \item \emph{$\Phi(\mu)$ is continuous on $C(X)$ (for any $\mu\in \widetilde{\mathcal{M}}_c(X)$:} it is even continuous on $\cco(X)$, since 
                $$ \Big|\int_X f(x) \diff \mu(x) \Big| \leq \mu(X) \,\sup_{x\in K} |f(x)|, $$
            for all $f\in C(X)$, where $K := \supp \mu$ is the support of $\mu$.
            \item \emph{$\im \Phi \subseteq \mathcal{M}_c(X)$:} Let $\mu\in \widetilde{\mathcal{M}}_c(X)$. We want to show that 
                $$ \tilde{\mu}:=\int_X (-) \diff \mu \in \mathcal{M}_c(X). $$
            Let $K:=\supp \mu$. Since $K$ is compact, $\mathcal{M}_c(K) = C(K)^\wedge$. Now, 
                $$ \tilde{\mu}|_K := \int_K (-) \diff \mu \in \mathcal{M}_c(K) = C(K)^\wedge. $$
            Let $\iota: K \hookrightarrow X$ be the inclusion. Then 
                $$ \tilde{\mu} = \iota_*(\tilde{\mu}|_K) \in \mathcal{M}_c(X). $$
        \end{enumerate}
        \item \emph{$\Phi$ is injective:} Since $\mathcal{M}_c(X)$ can be identified with a subspace of $\mathcal{M}(X)$ (\cref{lem_istar_injective}), we already know that $\Phi$ is injective from \cref{riesz_for_cb_general}.
        \item \emph{$\Phi$ is surjective:} As $\Phi$ has dense image (since linear combinations of point masses are dense in $\mathcal{M}_c(X)$), it suffices to show that $\im \Phi \subseteq \mathcal{M}_c(X)$ is closed. $\mathcal{M}_c(X)$ is a closed subspace of $C(C(X))$, by definition, and a closed subset of $\cco(C(X))$ is also closed in $C(C(X))$. It is therefore sufficient to prove that $\im \Phi$ is closed as a subset of $\cco(C(X))$. Moreover, $\cco(\cco(X))$ is a subspace (in the sense of carrying the subspace topology) of $\cco(C(X))$ because the compact subsets of $C(X)$ and $\cco(X)$ coincide. This in turn reduces the claim to showing that $\im \Phi$ is closed in $\cco(\cco(X))$, which follows from the fact that $\im \Phi$ consists precisely of those continuous functions in $\cco(\cco(X))$ which are linear, by \cref{functionals_on_cco}.
    \end{enumerate}
\end{proof}

\section{Monadic Vector-Valued Integration}\label{sec_monadic_vec_int}

\subsection{The vector-valued integral}

We now show how the preceding developments can be applied to obtain a particularly simple and coherent theory of vector-valued integration. Implementing a general idea of Kock \cite{kock2011commutative} which was refined and further developed by Lucyshyn-Wright in his doctoral thesis \cite{lucyshyn2013riesz} (see there for further references), we realise our vector valued integral by viewing paired linear $hk$-spaces as modules over the commutative monad $\mathcal{M}_c$. All the desired properties of the resulting vector-valued integral follow naturally from this, without much further work, which is the reason for the term \emph{monadic vector-valued integration}. In this setting, the two notational components of the integral, the integral sign $\smallint{}$ and what comes after it (``$f(x) \diff \mu(x)$''), both have separate precise meanings which we now define.

\begin{defn}\label{defn_Mc_module_str_on_plin_ksp}
    Let $V$ be a paired linear $hk$-space. Since $\mathcal{M}_c(V)$ is the free paired linear $hk$-space on $V$, the identity $V\to V$ extends uniquely to a morphism  $\mathcal{M}_c(V)\to V$ of paired linear $hk$-spaces. We denote this map by 
        $$ \smallint{}: \mathcal{M}_c(V)\to V \;\;\;\;\; \text{or} \;\;\;\; \int: \mathcal{M}_c(V)\to V $$
    Moreover, let us introduce the following notation for the pushforward:
        $$ f(x) \diff \mu(x) := f_*\mu. $$
\end{defn}

That this definition agrees with the usual Lebesgue integral in the scalar case is guaranteed by the following fact.

\begin{prop}
    Let $f: \mathbb{K} \to \mathbb{K}$ be continuous and let $\mu\in \mathcal{M}_c(\mathbb{K})$. Then,
        $$ \smallint{}(f_*\mu) = \int_{\mathbb{K}} f(x) \diff \mu(x) = \int_{\mathbb{K}} f(x) \diff \mu(x), $$
    where the right hand side denotes a Lebesgue integral in the usual sense, and the $\smallint{}$ on the left hand side denotes the map,
        $$ \smallint{}: \mathcal{M}_c(\mathbb{K})\to \mathbb{K}, \;\; \nu \mapsto \nu(\id_{\mathbb{K}}), $$
    from \cref{defn_Mc_module_str_on_plin_ksp}. 
\end{prop}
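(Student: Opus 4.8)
The plan is to first pin down the map $\smallint{}\colon \mathcal{M}_c(\mathbb{K})\to\mathbb{K}$ explicitly, and then read off the formula for $\smallint{}(f_*\mu)$ from the Riesz-type identification of $\mathcal{M}_c(\mathbb{K})$ with a space of measures.

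\textbf{Step 1 (identifying $\smallint{}$).} Since $\mathcal{M}_c(\mathbb{K})$ is a free replete linear $hk$-space, its paired linear structure satisfies $\mathcal{M}_c(\mathbb{K})^*=\mathcal{M}_c(\mathbb{K})^\wedge$, so a morphism $\mathcal{M}_c(\mathbb{K})\to\mathbb{K}$ of paired linear $hk$-spaces is just a continuous linear map. Now consider the evaluation map $\mathsf{ev}_{\id_{\mathbb{K}}}\colon C(\mathbb{K})^\wedge\to\mathbb{K}$, $\nu\mapsto\nu(\id_{\mathbb{K}})$, which is continuous by cartesian closure of $\spaces$; restricting it along $\mathcal{M}_c(\mathbb{K})\subseteq C(\mathbb{K})^\wedge$ (recall $\mathcal{M}_c(\mathbb{K})=C(\mathbb{K})^*$) gives a continuous linear map sending $\delta_x\mapsto\delta_x(\id_{\mathbb{K}})=x$, i.e.\ restricting along $\delta_\bullet$ to $\id_{\mathbb{K}}$. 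By the uniqueness clause in the universal property of $\mathcal{M}_c(\mathbb{K})$ (\cref{cor_univ_prop_free_repl_sp}), this map is exactly $\smallint{}$. Hence $\smallint{}(\nu)=\nu(\id_{\mathbb{K}})$ for all $\nu\in\mathcal{M}_c(\mathbb{K})$, which is the first assertion.

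\textbf{Step 2 (evaluating at $f_*\mu$).} By Step 1, $\smallint{}(f_*\mu)=(f_*\mu)(\id_{\mathbb{K}})$. Here $f_*=\mathcal{M}_c(f)$, and one checks from the universal property that, as a functional on $C(\mathbb{K})$, one has $(f_*\mu)(g)=\mu(g\circ f)$ for every $g\in C(\mathbb{K})$: both sides are continuous and linear in $\mu$ and agree on the Dirac measures $\delta_x$, whose span is dense in $\mathcal{M}_c(\mathbb{K})$. Taking $g=\id_{\mathbb{K}}$ yields $(f_*\mu)(\id_{\mathbb{K}})=\mu(f)$. Finally, under the identification of $\mathcal{M}_c(\mathbb{K})$ with compactly supported $k$-regular measures (\cref{thm_riesz_type_repr_thm_Mc}), the functional attached to $\mu$ is precisely $g\mapsto\int_{\mathbb{K}} g\,\diff\mu$ in the classical Lebesgue sense, so $\mu(f)=\int_{\mathbb{K}} f(x)\,\diff\mu(x)$; chaining the equalities proves the claim. (Equivalently, one may finish via $\smallint{}(f_*\mu)=\int_{\mathbb{K}}x\,\diff(f_*\mu)(x)=\int_{\mathbb{K}}f(x)\,\diff\mu(x)$ by the change-of-variables formula for pushforward measures, once $f_*$ is known to be the measure-theoretic pushforward.)

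\textbf{Main obstacle.} There is no obstacle of real substance: the statement is a short composition of the universal property of $\mathcal{M}_c$, the naturality built into \cref{thm_riesz_type_repr_thm_Mc}, and the elementary change-of-variables formula for Lebesgue integrals. The only point deserving care is the compatibility of the abstract functorial pushforward $\mathcal{M}_c(f)$ with the measure-theoretic pushforward; this follows from naturality of the Riesz identification together with the explicit description $\delta_x\mapsto\delta_{f(x)}$, or may be derived directly from the universal property as indicated in Step 2.
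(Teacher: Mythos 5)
Your argument is correct, and in fact your closing parenthetical in Step 2 is precisely the paper's entire proof: the paper disposes of the statement in one line, writing $\int_{\mathbb{K}} f(x)\diff\mu(x) = \int_{\mathbb{K}} x \diff f_*\mu(x) = (f_*\mu)(\id_{\mathbb{K}}) = \smallint{}(f_*\mu)$ and citing the classical change-of-variables formula for the Lebesgue integral. Where you genuinely differ is in what you choose to prove rather than assume: you verify from the universal property of $\mathcal{M}_c(\mathbb{K})$ that the abstract map $\smallint{}$ of \cref{defn_Mc_module_str_on_plin_ksp} really is $\nu\mapsto\nu(\id_{\mathbb{K}})$ (the paper folds this identification into the statement itself), and you establish $(f_*\mu)(g)=\mu(g\circ f)$ internally, by continuity and density of the span of Dirac measures in $\mathcal{M}_c(\mathbb{K})$, instead of invoking the measure-theoretic change-of-variables formula — thereby making explicit the compatibility of the functorial pushforward $\mathcal{M}_c(f)$ with the measure-theoretic pushforward, a point the paper's one-liner silently takes for granted. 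The paper's route is shorter and leans on classical measure theory; yours is more self-contained within the monadic framework (in the spirit of \cref{prop_exchange_of_int_and_operators}) and isolates exactly the one compatibility that needs checking, at the cost of a little extra length. Both are sound.
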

\begin{proof}
    This is simply the change-of-variables formula for the Lebesgue integral:
     $$ \int_{\mathbb{K}} f(x) \diff \mu(x) = \int_{\mathbb{K}} x \diff f_*\mu(x) = (f_*\mu)(\id_{\mathbb{K}}) = \smallint{}(f_*\mu).$$
\end{proof}

\begin{remark}\label{why_restrict_to_comp_supp}
    We restrict our attention to integration of continuous maps against compactly supported measures for two reasons. On the one hand, this is the setting which naturally arises from the monad structure of $\mathcal{M}_c$, allowing us to contemplate the phenomena specific to this monadic vector-valued integration theory. 
    On the other hand, this is a sufficient level of generality for many important cases of vector-valued integration such as the integration of continuous curves (which is certainly covered by this setting). For example, integrals of continuous curves are crucial in infinite-dimensional calculus (see Part I of \cite{hamilton1982inverse}), as well as the theory of vector-valued holomorphic functions (see \cite[Chapter 15]{garrett2018modern}).
\end{remark}

\begin{remark}\label{remark_vv_integral_from_free_forget_adj}
    We can now interpret the ($\spaces$-enriched) free-forgetful adjunction between paired linear $hk$-spaces and $hk$-spaces (see Chapter 3, \cref{remark_on_Mc_spc_of_meas_and_rel_to_vv_int}) in terms of vector-valued integration as follows. Consider for any $hk$-space $X$ and paired linear $hk$-space $V$ the map
        $$ I: C(X,V) \to [\mathcal{M}_c(X), V]. \;\; f \mapsto \left(\mu \mapsto \int f(x) \diff \mu(x) \right) $$
    For each $f\in C(X,V)$, the morphism $I(f)$ makes the diagram 
    \[\begin{tikzcd}
	{\mathcal{M}_c(X)} & V \\
	X
	\arrow["{I(f)}", dashed, from=1-1, to=1-2]
	\arrow["{\delta_\bullet}", from=2-1, to=1-1]
	\arrow["f"', from=2-1, to=1-2]
    \end{tikzcd}\]
    commute, since $I(f)(\delta_x) = \smallint{}(f_*\delta_x)= f(x)$. By the uniqueness of such morphism, $I$ is exactly the natural homeomorphism, 
        $$ I: C(X,V) \to [\mathcal{M}_c(X), V], $$
    constituting the free-forgetful adjunction between paired linear $hk$-spaces and $hk$-spaces. 
\end{remark}

\subsubsection{The change-of-variables formula} Functoriality of $\mathcal{M}_c$ immediately yields the change-of vairables formula for the vector-valued integral:

\begin{prop}
    Let $X,Y$ be $hk$-spaces, let $V$ be a paired linear $hk$-space and let $f:Y \to V$, $g: X \to Y$ be continuous maps. Then: 
        $$ f(g(x)) \diff \mu(x) = f(x) \diff g_*\mu. $$
\end{prop}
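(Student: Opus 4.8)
The plan is to unwind the notation on both sides of the claimed identity and recognize it as nothing but the functoriality of $\mathcal{M}_c$ applied to the composite $g\colon X\to Y$ followed by $f\colon Y\to V$. Recall from \cref{defn_Mc_module_str_on_plin_ksp} that the notation ``$h(x)\diff\mu(x)$'' is by definition the pushforward $h_*\mu$, for a continuous map $h$ and a compactly supported measure $\mu$. Thus the left-hand side ``$f(g(x))\diff\mu(x)$'' means $(f\circ g)_*\mu$, while the right-hand side ``$f(x)\diff g_*\mu$'' means $f_*(g_*\mu)$. So the statement to be proven is precisely
\[ (f\circ g)_*\mu = f_*(g_*\mu). \]

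First I would recall that $\mathcal{M}_c$ is a functor $\spaces\to\plin$ (see \cref{defn_free_plin_ksp} and the surrounding discussion), whose action on a continuous map $h\colon X\to Y$ is $\mathcal{M}_c(h) = h_*$. Functoriality of $\mathcal{M}_c$ then gives immediately
\[ (f\circ g)_* = \mathcal{M}_c(f\circ g) = \mathcal{M}_c(f)\circ\mathcal{M}_c(g) = f_*\circ g_*, \]
and evaluating both sides at $\mu\in\mathcal{M}_c(X)$ yields the desired equation. Translating back through the notational convention of \cref{defn_Mc_module_str_on_plin_ksp} produces exactly ``$f(g(x))\diff\mu(x) = f(x)\diff g_*\mu$''.

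There is essentially no obstacle here: the only thing to be careful about is that the pushforward notation is consistent with the functor $\mathcal{M}_c$, and in particular that $f_*$ makes sense on $\mathcal{M}_c(Y)$ even though $V$ is not an $hk$-space but a paired linear $hk$-space — but this is fine because $f\colon Y\to V$ is a continuous map of $hk$-spaces (the underlying $hk$-space of $V$), so $f_* = \mathcal{M}_c(f)$ is defined, landing in $\mathcal{M}_c(V)$, and the integral $\smallint{}\colon\mathcal{M}_c(V)\to V$ is applied afterwards if one wants the actual value in $V$. Strictly, the identity as stated lives at the level of measures (elements of $\mathcal{M}_c$), so no application of $\smallint{}$ is even needed; composing with $\smallint{}$ afterwards would additionally give the ``substitution rule'' $\smallint{}(f(g(x))\diff\mu(x)) = \smallint{}(f(x)\diff g_*\mu)$, but that is a trivial consequence. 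I would therefore present the proof in two sentences: expand the notation, then cite functoriality of $\mathcal{M}_c$.
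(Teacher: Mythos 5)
Your proof is correct and is essentially identical to the paper's: both unwind the notational convention $h(x)\diff\mu(x) := h_*\mu$ from \cref{defn_Mc_module_str_on_plin_ksp} and then invoke functoriality of $\mathcal{M}_c$ to get $(f\circ g)_*\mu = f_*(g_*\mu)$. Your extra remark that the identity lives at the level of $\mathcal{M}_c$ of the underlying $hk$-space of $V$, with no need to apply $\smallint{}$, is a fine clarification but not a departure from the paper's argument.
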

\begin{proof}
    By definition, and functoriality of $\mathcal{M}_c$,
    $$ f(g(x)) \diff \mu(x) = (f\circ g)_*\mu = f_*(g_*\mu) = f(x) \diff g_*\mu. $$
\end{proof}

\subsubsection{Interchange with linear operators} We may freely interchange integral signs and morphisms of paired linear $hk$-spaces:
\begin{prop}\label{prop_exchange_of_int_and_operators}
    Let $X$ be an $hk$-space and let $f:V\to W$ be a morphism of paired linear $hk$-spaces. Then for every continuous map $g:X\to V$ and every $\mu\in \mathcal{M}_c(X)$, 
         $$ f \Big( \int g(x) \diff \mu(x) \Big) = \int f(g(x)) \diff \mu(x). $$
\end{prop}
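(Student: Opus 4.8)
The plan is to reduce the claim to the defining universal property of the free paired linear $hk$-space $\mathcal{M}_c(X)$, exactly as the surrounding development encourages. Recall from \cref{defn_Mc_module_str_on_plin_ksp} that the vector-valued integral is, by definition, the composite
$$ \int g(x)\diff\mu(x) \;=\; \smallint{}\bigl(g_*\mu\bigr), $$
where $\smallint{}\colon \mathcal{M}_c(V)\to V$ is the morphism of paired linear $hk$-spaces extending $\id_V$, and similarly $\int f(g(x))\diff\mu(x) = \smallint{}\bigl((f\circ g)_*\mu\bigr)$ with $\smallint{}\colon\mathcal{M}_c(W)\to W$ the extension of $\id_W$. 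So what must be shown is the identity $f\circ\smallint{}_V \circ g_* = \smallint{}_W \circ (f\circ g)_*$ of maps $\mathcal{M}_c(X)\to W$.

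First I would use functoriality of $\mathcal{M}_c$ to rewrite $(f\circ g)_* = f_* \circ g_*$, so that the right-hand side becomes $\smallint{}_W \circ f_* \circ g_*$. Cancelling the common factor $g_*$ (which we are free to precompose on both sides), it suffices to prove the single equation
$$ f\circ \smallint{}_V \;=\; \smallint{}_W \circ f_* \colon \mathcal{M}_c(V)\to W, $$
i.e.\ that $\smallint{}$ is natural with respect to morphisms of paired linear $hk$-spaces. Now both $f\circ\smallint{}_V$ and $\smallint{}_W\circ f_* = \smallint{}_W \circ \mathcal{M}_c(f)$ are morphisms of paired linear $hk$-spaces $\mathcal{M}_c(V)\to W$ (the former as a composite of such morphisms, the latter likewise, using that $\mathcal{M}_c(f)$ is a morphism of paired linear $hk$-spaces by functoriality and that $\smallint{}_W$ is one by \cref{defn_Mc_module_str_on_plin_ksp}). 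By the universal property of the free paired linear $hk$-space (\cref{cor_univ_prop_free_repl_sp}, transported to $\plin$ via \cref{warning_free_replete_vs_free_paired} and \cref{remark_on_Mc_spc_of_meas_and_rel_to_vv_int}), two such morphisms agree as soon as they agree after precomposition with $\delta_\bullet\colon V\to \mathcal{M}_c(V)$. But
$$ \bigl(f\circ\smallint{}_V\bigr)(\delta_x) \;=\; f\bigl(\smallint{}_V(\delta_x)\bigr) \;=\; f(x), \qquad \bigl(\smallint{}_W\circ f_*\bigr)(\delta_x) \;=\; \smallint{}_W\bigl(\delta_{f(x)}\bigr) \;=\; f(x), $$
using $\smallint{}(\delta_z)=z$ (immediate from $\smallint{}$ extending the identity, via the commuting triangle of \cref{cor_univ_prop_free_repl_sp}) and the naturality identity $f_*\delta_x = \delta_{f(x)}$. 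Hence the two morphisms agree on the image of $\delta_\bullet$, so they are equal.

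The argument is essentially formal once the ingredients are lined up; I do not expect a genuine obstacle. The only point requiring mild care is bookkeeping: making sure that every map in sight ($g_*$, $f_*$, $\smallint{}$) is genuinely a morphism in $\plin$ and not merely a continuous linear map, so that the uniqueness clause of the universal property applies. This is exactly where $f$ being a \emph{morphism of paired linear $hk$-spaces} (rather than an arbitrary continuous linear map) is used — it guarantees $f_*=\mathcal{M}_c(f)$ is a $\plin$-morphism and that $f\circ\smallint{}_V$ lands in the right class. With that verified, the chain of equalities above closes the proof.
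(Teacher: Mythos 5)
Your proof is correct, but it takes a genuinely different route from the paper's. The paper argues concretely: the identity is immediate for Dirac measures $\mu=\delta_{x_0}$, and since both sides are continuous and linear in $\mu$ and linear combinations of Dirac measures are dense in $\mathcal{M}_c(X)$, the general case follows (the same density device used, e.g., for the commutativity of $\mathcal{M}$). You instead stay entirely formal: reduce to $f\circ\smallint_V=\smallint_W\circ f_*$ and invoke the uniqueness clause of the free-forgetful adjunction, checking agreement after precomposition with $\delta_\bullet$. What your approach buys is independence from the density fact (which itself rests on the concrete description of $\mathcal{M}_c(X)$ as a repletion, i.e.\ a closure of the span of Dirac functionals) — the argument is pure monad/adjunction bookkeeping and would work verbatim for any free-forgetful adjunction of this shape; what the paper's approach buys is brevity and a slightly weaker hypothesis: its proof uses only that $f$ is linear and continuous, whereas your uniqueness argument in $\plin$ genuinely uses that $f$ is a $\plin$-morphism so that $f\circ\smallint_V$ lies in the class where uniqueness applies (one could recover the weaker hypothesis by running your argument over $\sclin$, where morphisms are just continuous linear maps). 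One small correction to your bookkeeping: $f_*=\mathcal{M}_c(f)$ is a $\plin$-morphism for \emph{any} continuous map $f$, since it is the free functor applied to the underlying map; the hypothesis on $f$ is needed only for the composite $f\circ\smallint_V$, as you also note. This does not affect the validity of the proof.
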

\begin{proof}
    First note that the claim holds when $\mu=\delta_{x_0}$ is a Dirac delta measure at $x_0\in X$. The general case now follows from the fact that $f$ is linear and continuous, and that linear combinations of Dirac measures are dense in $\mathcal{M}_c(X)$.
\end{proof}

\subsubsection{Continuous bilinearity} One feature of the present theory that needs to be emphasised is that the integration pairing, 
    $$ \int: C(X,V) \times \mathcal{M}_c(X) \to V, \;\; (f, \mu) \mapsto \int f(x) \diff \mu(x), $$
is continuous, which is a direct consequence of cartesian closure together with the continuity of $f \mapsto f_*$, and crucially depends on this setup. If were considering $C(X,V)$ and $\mathcal{M}_c(X)$ as locally convex topological vector spaces, and used the $\topsp$-product $\times_{\topsp}$, this could not possibly hold (see \cref{sec_continuity_problems}).

\subsection{A vector-valued Fubini theorem}

\subsubsection{Associativity of the integral} We would like to be able to form double integrals and formulate a Fubini theorem. However, the term 
    $$ \int \int f(x, y) \diff \mu(x) \diff \nu(y), $$
which representing in such double integral is \emph{a priori} ambiguous in our notation, as this double integral could -- \emph{a priori} -- have two distinct interpretations, depending on parentheses. That a rearrangement of such parentheses does not change the result might be called the \emph{associativity} of the integral (a fact, which, interestingly, does not seem to have a ``classical'' counterpart as it relies on an independent precise meaning of expressions of the form ``$(f(x, y) \diff \mu(x)) \diff \nu(y)$''). 

\begin{prop}[Associativity of integration]
    Let $X, Y$ be $hk$-spaces and let $V$ be a paired linear $hk$-space. Let $f: X\times Y \to V$ be a continuous map, $\mu\in \mnd (X), \nu \in \mathcal{M}_c(Y)$. Then 
    \begin{equation}\label{integral_assoc}
        \int \Big( \int f(x, y) \diff \mu(x) \Big) \diff \nu(y) = \int \Big(\int \Big( (f(x, y) \diff \mu(x)) \diff \nu(y) \Big)\Big) 
    \end{equation}
    Therefore, we may drop parentheses in these expressions all together, simply writing
        $$ \int \int f(x, y) \diff \mu(x) \diff \nu(y)  $$ 
    for either side of equation \eqref{integral_assoc}. 
\end{prop}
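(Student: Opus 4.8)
The plan is to mirror the proof of \cref{M_bbd_meas_monad_commutative}: reduce the identity \eqref{integral_assoc} to the case of finitely supported measures via a continuity-and-density argument. Fix $f:X\times Y\to V$ and regard both sides of \eqref{integral_assoc} as functions of the pair $(\mu,\nu)\in\mathcal{M}_c(X)\times\mathcal{M}_c(Y)$ with values in $V$.

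First I would check that both of these functions are continuous. For the left-hand side this unwinds step by step: $(g,y)\mapsto g(-,y)$ is continuous $C(X\times Y,V)\times Y\to C(X,V)$ by cartesian closure of $\spaces$; the integration pairing $C(X,V)\times\mathcal{M}_c(X)\to V$ is continuous (this is precisely the point of the subsection on continuous bilinearity above); hence $(\mu,y)\mapsto\int f(x,y)\diff\mu(x)$ is continuous, so by cartesian closure again $\mu\mapsto\bigl(y\mapsto\int f(x,y)\diff\mu(x)\bigr)$ is continuous into $C(Y,V)$, and composing with the integration pairing $C(Y,V)\times\mathcal{M}_c(Y)\to V$ yields continuity in $(\mu,\nu)$. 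The right-hand side is treated the same way; the only extra ingredient is continuity of $(g,\mu)\mapsto g_*\mu$ as a map $C(X,V)\times\mathcal{M}_c(X)\to\mathcal{M}_c(V)$, which follows from functoriality of $\mathcal{M}_c$ together with the $\spaces$-enrichment of the free--forgetful adjunction (\cref{prop_free_forgetful_enriched}): the latter gives a continuous map $C(X,V)\to C(\mathcal{M}_c(X),\mathcal{M}_c(V))$, $g\mapsto g_*$, which one then composes with evaluation at $\mu$, and likewise for the outer pushforward over $Y$.

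Next I would verify the identity when $\mu=\sum_i a_i\delta_{x_i}$ and $\nu=\sum_j b_j\delta_{y_j}$ are finite linear combinations of Dirac measures. Here one uses only that $\smallint{}$ is a morphism of paired linear $hk$-spaces (in particular linear and continuous), that $\smallint{}\circ\delta_\bullet=\id$ (since $\smallint{}$ was defined as the unique extension of the identity, see \cref{defn_Mc_module_str_on_plin_ksp}), and that each pushforward $h_*=\mathcal{M}_c(h)$ is linear in its measure argument and sends $\delta_x$ to $\delta_{h(x)}$. On the left, bilinearity in $(\mu,\nu)$ gives immediately $\sum_{i,j}a_ib_jf(x_i,y_j)$. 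On the right one computes $f(-,y)_*\mu=\sum_i a_i\delta_{f(x_i,y)}$, pushes $\nu$ forward to $\sum_j b_j\,\delta_{\left(\sum_i a_i\delta_{f(x_i,y_j)}\right)}\in\mathcal{M}_c(\mathcal{M}_c(V))$, and applies $\smallint{}$ twice using linearity and $\smallint{}\circ\delta_\bullet=\id$, again obtaining $\sum_{i,j}a_ib_jf(x_i,y_j)$. Since the linear span of the Dirac measures is dense in $\mathcal{M}_c(X)$ and in $\mathcal{M}_c(Y)$ (as used in \cref{thm_riesz_type_repr_thm_Mc}), the set of such pairs is dense in $\mathcal{M}_c(X)\times\mathcal{M}_c(Y)$; as the two sides of \eqref{integral_assoc} are continuous maps into the $k$-Hausdorff space $V$, their agreement on a dense set forces equality everywhere.

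I expect the only genuine obstacle to be the bookkeeping in the continuity argument for the right-hand side --- making the nested "partial-pushforward then integrate" operations precise and confirming that each intermediate map is continuous --- while the density step and the computation on finite combinations of Diracs are routine. (Alternatively one could first identify each side with $\smallint{}\bigl(f_*(\mu\otimes\nu)\bigr)$ for the product measure supplied by the commutative monad structure of $\mathcal{M}_c$ from \cref{comm_mon_str_of_Mc}, but that identification is itself established by the same density argument, so nothing is gained.)
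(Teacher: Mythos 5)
Your proof is correct, but it takes a genuinely different route from the paper's. The paper disposes of associativity by a purely formal two-line computation: the left-hand side is, by definition of the notation, $\smallint{}\big([\smallint{}\circ(y\mapsto f(-,y)_*\mu)]_*\nu\big)$, and functoriality of the pushforward, $(h\circ g)_*=h_*\circ g_*$, turns this directly into $\smallint{}\big((\smallint{})_*\big((f(x,y)\diff\mu(x))\diff\nu(y)\big)\big)$, i.e.\ the right-hand side of \eqref{integral_assoc} --- no continuity, density or Hausdorffness enters, because associativity here is essentially a statement about how the notation of \cref{defn_Mc_module_str_on_plin_ksp} unwinds through the functor $\mathcal{M}_c$. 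Your argument instead transplants the technique the paper reserves for the genuinely non-formal identities (commutativity of $\mathcal{M}$ in \cref{M_bbd_meas_monad_commutative}, and the vector-valued Fubini theorem): verify on finite linear combinations of Dirac measures, then extend by continuity and density. This works, and your continuity bookkeeping (currying plus continuity of $g\mapsto g_*$ via the enriched adjunction \cref{prop_free_forgetful_enriched}) is sound; what it buys is independence from the notational conventions, at the price of being considerably heavier than necessary. One small point to tighten: density of $(\text{span of Diracs})\times(\text{span of Diracs})$ in $\mathcal{M}_c(X)\times\mathcal{M}_c(Y)$ is not completely automatic, since the $k$-product topology is finer than the $\topsp$-product topology; the clean fix is to extend one variable at a time (first in $\mu$ with $\nu$ finitely supported, then in $\nu$ with $\mu$ arbitrary), using that both sides are continuous in each variable separately and that the equaliser of two continuous maps into the $k$-Hausdorff space $V$ is closed --- the same refinement the paper silently needs in its own Fubini proof.
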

\begin{proof}
    We simply rewrite, 
    \begin{align*}
         \int \Big( \int f(x, y) \diff \mu(x) \Big) \diff \nu(y)
        &=  \int [ \smallint{} \circ (f(x,-)\diff \mu(x)) ]_*\nu\\
        &=  \int ((\smallint{})_* \circ (f(x,-)\diff \mu(x))_*)(\nu)\\
        &=  \int (\smallint{})_* ( f(x,y)\diff \mu(x) \diff \nu(y) )\\
        &=  \int \Big(\int \Big( f(x, y) \diff \mu(x) \diff \nu(y) \Big)\Big).
    \end{align*}
\end{proof}

\subsubsection{Commutativity of the integral (Fubini)} With unambiguous, associative double integrals in place, we now come to a vector-valued Fubini theorem. 

\begin{prop}
    Let $X, Y$ be $hk$-spaces and let $V$ be a paired linear $hk$-space. Let $f: X\times Y \to V$ be a continuous map, $\mu\in \mnd X, \nu \in \mathcal{M}_c(Y)$. Then: 
        $$ \int \int f(x, y) \diff \mu(x) \diff \nu(y) = \int \int f(x, y) \diff \mu(x) \diff \nu(y) = \int \int f(x, y) \diff (\mu\otimes\nu)(x,y). $$
\end{prop}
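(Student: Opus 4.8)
The plan is to prove the vector-valued Fubini theorem by reducing it to the two already-established facts: the associativity/commutativity of the monad structure encoded in the product-measure natural transformation $\otimes$ on $\mathcal{M}_c$ (the commutative monad structure, which by \cref{comm_mon_str_of_Mc} transfers from the general $*$-autonomous-over-cartesian-closed setup), and the interchange principle \cref{prop_exchange_of_int_and_operators} together with density of finitely supported measures. The statement to prove has two equalities; the second (expressing either iterated integral as an integral against the product measure $\mu\otimes\nu$) is the substantive one, while the first (symmetry in the order of integration) follows from it by applying it twice, once in each order, and using the commutativity square for $\otimes$ (the braiding diagram in \cref{M_bbd_meas_monad_commutative}, transported along $\sigma_*$).

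First I would unwind the definition of the iterated integral using associativity of integration (the proposition just above): by that result, $\int\int f(x,y)\diff\mu(x)\diff\nu(y) = \int (\smallint{})_*\big(f(x,y)\diff\mu(x)\diff\nu(y)\big)$, where the inner object $f(x,y)\diff\mu(x)\diff\nu(y)$ is the pushforward, along $y\mapsto (x\mapsto f(x,y))_*\mu$ followed by postcomposition, that lands in $\mathcal{M}_c(\mathcal{M}_c(V))$ before being flattened by $(\smallint{})_*$ and then $\smallint{}$. The key identity to isolate is then a purely monadic one: that the composite $\mathcal{M}_c(X)\times\mathcal{M}_c(Y)\xrightarrow{\otimes}\mathcal{M}_c(X\times Y)\xrightarrow{f_*}\mathcal{M}_c(V)\xrightarrow{\smallint{}} V$ agrees with the iterated construction. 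I would verify this first on Dirac measures $\mu=\delta_{x_0}$, $\nu=\delta_{y_0}$, where both sides reduce to $f(x_0,y_0)$ (using $\delta_{x_0}\otimes\delta_{y_0}=\delta_{(x_0,y_0)}$ and $\smallint{}(g_*\delta_z)=g(z)$), then extend by continuity and (bi)linearity using that finite linear combinations of Dirac measures are dense in $\mathcal{M}_c(X)$ and $\mathcal{M}_c(Y)$ — exactly the argument pattern used in \cref{prop_exchange_of_int_and_operators} and in \cref{M_bbd_meas_monad_commutative}. Continuity of the integration pairing $C(X\times Y,V)\times\mathcal{M}_c(X\times Y)\to V$ (noted in the subsection on continuous bilinearity) and of $\otimes$ as a natural transformation justify passing to the closure.

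Once the equality with $\int f\diff(\mu\otimes\nu)$ is in hand, the symmetry $\int\int f(x,y)\diff\mu(x)\diff\nu(y) = \int\int f(x,y)\diff\nu(y)\diff\mu(x)$ follows: the right-hand iterated integral equals $\int (f\circ\sigma)\diff(\nu\otimes\mu)$ by the same reduction with the roles of $X,Y$ swapped, and then $\sigma_*(\nu\otimes\mu)=\mu\otimes\nu$ by the braiding/commutativity square of the monad $\mathcal{M}_c$ (the analogue for $\mathcal{M}_c$ of \cref{M_bbd_meas_monad_commutative}, which holds because $\mathcal{M}_c$ is a commutative monad), combined with the change-of-variables formula $\int (f\circ\sigma)\diff\pi = \int f\diff\sigma_*\pi$.

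The main obstacle I anticipate is bookkeeping: making the higher-order pushforward expressions precise (which map goes into which $\mathcal{M}_c(\mathcal{M}_c(-))$, and checking that the flattening $\smallint{}$ is the monad multiplication specialized to free modules) so that the reduction to the monadic identity $\smallint{}\circ f_*\circ\otimes$ is genuinely an identity of morphisms and not merely of their values on points. But since all maps in sight are continuous and $\spaces$ is cartesian closed, equality of continuous maps can be checked on the dense subset of finitely supported measures, so the density-plus-linearity argument disposes of this cleanly; no genuinely new analytic input (no classical Fubini theorem for signed measures) is needed, mirroring the self-contained proof given for \cref{M_bbd_meas_monad_commutative}.
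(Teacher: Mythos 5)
Your proposal is correct and takes essentially the same route as the paper: the paper's proof likewise verifies the identity for finitely supported (Dirac) measures, where it is immediate, and then extends to all of $\mathcal{M}_c(X)\times\mathcal{M}_c(Y)$ using continuity of both sides in $(\mu,\nu)$ and density of finitely supported measures in $\mathcal{M}_c(-)$. Your additional step of deducing the symmetry of the iterated integrals from the product-measure identity via the braiding of the commutative monad $\mathcal{M}_c$ and the change-of-variables formula is just a more structured bookkeeping of what the paper handles in a single density-plus-continuity stroke.
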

\begin{proof}
    Clearly, the identity holds for all finitely supported $\mu,\nu$ and both sides are continuous in $(\mu, \nu)$. Since, finitely supported measures are dense in $\mathcal{M}_c(-)$, this implies the claim.
\end{proof}

\subsection{The fundamental theorem of calculus} As mentioned before in \cref{why_restrict_to_comp_supp}, one of the most prominent applications of vector-valued integration is infinite-dimensional calculus. To show that our setting fulfils a minimal requirement for such purposes, we provide a version of the fundamental theorem of calculus for curves in paired linear $hk$-spaces. We first need the following straightforward definition of a continuously differentiable curve in a paired linear $hk$-space.

\begin{defn}
    Let $V$ be a replete linear $hk$-space. A continuous curve $\gamma:[0,1]\to V$ is said to be \emph{continuously differentiable} if for all $t\in (0,1)$, the limit, 
        $$ \gamma'(t) := \frac{\gamma(t+h)-\gamma(t)}{h}, $$
    exists and the map $t\mapsto \gamma'(t)$ is continuous with a (necessarily unique) extension to a continuous curve $\gamma':[0,1]\to V$. 
\end{defn}

\begin{prop}[Fundamental theorem of calculus]
    Let $V$ be a paired linear $hk$-space and let $\gamma:[0,1]\to V$ be a continuously differentiable curve. Then for all $t\in [0,1]$,
        $$ \int_0^t \gamma'(s) \diff s := \int^V \gamma'(s) \diff \lambda_{[0,t]}(s) = \gamma(t)-\gamma(0). $$
    Conversely, let $\gamma:[0,1]\to V$ be a continuous curve. Then
        $$ t \mapsto \int_0^t \gamma(s) \diff s $$
    is continuously differentiable and 
        $$ \frac{\diff}{\diff t} \int_0^t \gamma(s) \diff s = \gamma(t). $$
\end{prop}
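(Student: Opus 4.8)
The plan is to exploit two structural features of paired linear $hk$-spaces: that $V^*$ separates the points of $V$ (because $V\to(V^*)^\wedge$ is a closed embedding, by \cref{defn_paired_lin_hk_sp}), and that every $\phi\in V^*$ is a morphism of paired linear $hk$-spaces, so that \cref{prop_exchange_of_int_and_operators} lets us pull scalar functionals through the vector-valued integral. After applying such a $\phi$, both assertions reduce to the classical one-dimensional fundamental theorem of calculus (applied to real and imaginary parts when $\mathbb{K}=\complex$). Throughout, $\lambda_{[0,t]}$ denotes Lebesgue measure on $[0,t]\subseteq[0,1]$, regarded as an element of $\mathcal{M}_c([0,1])$; this is legitimate since $[0,1]$ is compact, so every Baire measure on it is a compactly supported $k$-regular measure (cf.\ \cref{convergence_of_measures_on_compactum,thm_riesz_type_repr_thm_Mc}).

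For the first statement I would fix $\phi\in V^*$ and use \cref{prop_exchange_of_int_and_operators} to write $\phi\bigl(\int_0^t\gamma'(s)\diff s\bigr)=\int_0^t\phi(\gamma'(s))\diff s$, where the right-hand side is the ordinary Lebesgue integral of the continuous $\mathbb{K}$-valued function $\phi\circ\gamma'$ against $\lambda_{[0,t]}$ (the $\mathbb{K}$-valued $\mathcal{M}_c$-integral of a continuous function coinciding with the Lebesgue integral by the change-of-variables formula). Since $\phi$ is linear and continuous it commutes with difference quotients, so $\phi\circ\gamma$ is a continuously differentiable $\mathbb{K}$-valued curve with $(\phi\circ\gamma)'=\phi\circ\gamma'$; the classical fundamental theorem then gives $\int_0^t\phi(\gamma'(s))\diff s=\phi(\gamma(t))-\phi(\gamma(0))=\phi(\gamma(t)-\gamma(0))$. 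As $\phi\in V^*$ was arbitrary and $V^*$ separates points, $\int_0^t\gamma'(s)\diff s=\gamma(t)-\gamma(0)$.

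For the converse, the key idea is to factor the curve $t\mapsto\int_0^t\gamma(s)\diff s$ through a single ``primitive'' curve in $\mathcal{M}_c([0,1])$. Define $\Lambda\colon[0,1]\to\mathcal{M}_c([0,1])$ by $\Lambda(t):=\lambda_{[0,t]}$. Then $t\mapsto\int_0^t\gamma(s)\diff s$ equals $T\circ\Lambda$, where $T:=\smallint\circ\,\gamma_*\colon\mathcal{M}_c([0,1])\to V$ is the composite of the pushforward $\gamma_*=\mathcal{M}_c(\gamma)$ with the structure map $\smallint\colon\mathcal{M}_c(V)\to V$ of \cref{defn_Mc_module_str_on_plin_ksp}, hence a continuous linear map. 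I would first show that $\Lambda$ is continuously differentiable with $\Lambda'=\delta_\bullet$: as $h\to0$, the difference quotient $h^{-1}(\lambda_{[0,t+h]}-\lambda_{[0,t]})=h^{-1}\lambda_{[t,t+h]}$ pairs with each $f\in C([0,1])$ via $h^{-1}\int_t^{t+h}f(s)\diff s\to f(t)$, i.e.\ it converges weakly to $\delta_t$, hence converges to $\delta_t$ in $\mathcal{M}_c([0,1])=\mathcal{M}([0,1])$ by \cref{convergence_of_measures_on_compactum}; continuity of $t\mapsto\delta_t$ is \cref{lem_delta_cont_as_map_to_MX}. Since a continuous linear map sends continuously differentiable curves to continuously differentiable curves, commuting with the derivative, $T\circ\Lambda$ is continuously differentiable with $(T\circ\Lambda)'(t)=T(\delta_t)=\smallint(\gamma_*\delta_t)=\smallint(\delta_{\gamma(t)})=\gamma(t)$, which is the claim.

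The only non-routine point is the identity $\Lambda'=\delta_\bullet$, i.e.\ that the elementary weak convergence $h^{-1}\lambda_{[t,t+h]}\to\delta_t$ really takes place in the topology of $\mathcal{M}_c([0,1])$; this is exactly where compactness of $[0,1]$ is used, through \cref{convergence_of_measures_on_compactum} (equivalently, through \cref{conv_in_dual_of_fre} applied to the Banach space $C([0,1])$), together with the observation that the limit over $h\to0$ may be tested on sequences since it is a limit along a first-countable filter. Everything else is bookkeeping with the universal property of $\mathcal{M}_c$, functoriality of pushforward, and the classical one-variable theorem.
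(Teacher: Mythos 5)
Your proposal is correct, but it takes a genuinely different route from the paper on both halves. For the first identity, the paper argues directly inside $V$: it writes $\gamma'$ as a limit of difference quotients, interchanges this limit with the integral using continuity of the integration pairing, and is led to the key convergence $h^{-1}\lambda[t,t+h]\to\delta_t$ in $\mathcal{M}([0,1])$; this tacitly requires the difference quotients of $\gamma$ to converge to $\gamma'$ in $C([0,1],V)$, i.e.\ uniformly on compacta. Your duality argument — apply $\phi\in V^*$, pull it through the integral via \cref{prop_exchange_of_int_and_operators}, invoke the scalar fundamental theorem, and conclude by point-separation of $V^*$ — sidesteps that uniform-convergence issue entirely and is essentially the Pettis-style characterisation of \cref{sec_comp_to_pettis}; it is arguably the more robust proof of this half. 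For the converse, the paper differentiates $t\mapsto\int_0^t\gamma$ directly and again swaps the $h\to0$ limit with integration in the measure slot, whereas you isolate all the analysis in the single statement $\Lambda'=\delta_\bullet$ for the primitive curve $\Lambda(t)=\lambda_{[0,t]}$ in $\mathcal{M}_c([0,1])$ and then push it through the continuous linear map $\smallint{}\circ\gamma_*$; both proofs ultimately rest on the same convergence $h^{-1}\lambda[t,t+h]\to\delta_t$, justified in the same way via compactness of $[0,1]$ (\cref{convergence_of_measures_on_compactum}). Your factorisation buys reusability (the identity $\Lambda'=\delta_\bullet$ does all the work once and for all, and linear maps trivially commute with derivatives of curves), while the paper's computation stays entirely inside the monadic formalism and showcases continuity of the integration pairing. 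One small point to add: before calling $\Lambda$ continuously differentiable you should note it is a continuous curve — this follows either from the weak-$*$ argument on the norm-bounded set $\{\lambda_{[0,t]}\}$ (cf.\ \cref{prop_dual_fre_carries_co_topology}) or from your differentiability computation combined with joint continuity of scalar multiplication, since $\Lambda(t+h)-\Lambda(t)=h\cdot\bigl(h^{-1}(\Lambda(t+h)-\Lambda(t))\bigr)\to 0$; and at $h<0$ the difference quotient is $|h|^{-1}\lambda_{[t+h,t]}$, which your weak-convergence computation covers with the usual sign convention.
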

\begin{proof}
    For the first part, let $\lambda [a,b]\in \mathcal{M}([0,1])$ be the Lebesgue measure restricted to the interval $[a,b]$ (where $0<a<b<1$). Observe that for all $t\in (0,1)$,
        $$ \lim_{h\to 0}\frac{1}{h}\lambda{[t,t+h]} = \delta_t. $$
    Using continuity of the integral, we see that for all $t\in (0,1)$,
    \begin{align*}
        \int_0^t \gamma'(s) \diff s &= \int \Big(\lim_{h\to 0} \frac{1}{h} (\gamma(s+h)-\gamma(s))\Big)\diff \lambda [0,t](s)\\
        &= \lim_{h\to 0}\, \Big[\int \!\gamma(s) \diff \Big(\frac{1}{h}\lambda{[t,t + h]}\Big) \:-\: \int\! \gamma(s) \diff \Big(\frac{1}{h}\lambda{[0,h]}\Big)\Big]\\
        &= \int \gamma(s) \diff \Big(\lim_{h\to 0}\frac{1}{h}\lambda{[t,t+h]}\Big) -  \int \gamma(s) \diff \Big(\lim_{h\to 0}\frac{1}{h}\lambda{[0,h]}\Big) \\
        &= \int \gamma(s) \diff \delta_{t}(s) - \int \gamma(s) \diff \delta_{0}(s) = \gamma(t) - \gamma(0).
    \end{align*} 
    Continuity in $t$ then yields the claim for all $t\in [0,1]$. \par 
    For the second part, we find that for all $t\in (0,1)$,
    \begin{align*}
        \lim_{h\to 0}\frac{1}{h}\Big(\int_0^{t+h} \gamma(s) \diff s - \int_0^{t} \gamma(s) \diff s\Big) &=  \lim_{h\to 0} \int^V \gamma(s) \diff \Big(\frac{1}{h}\lambda [t, t+h]\Big) \\
        &= \int \gamma(s) \diff \Big(\lim_{h\to 0}\frac{1}{h}\lambda [t, t+h]\Big) \\
        &= \int \gamma(s) \diff \delta_{t} = \gamma(t).
    \end{align*}
    As before, the claim then also follows for all $t\in [0,1]$.
\end{proof}

\begin{remark}
    This proof is substantially different from the ``usual'' one concerning the setting of Fréchet spaces given in \cite[p. 71, Theorem 2.1.1; p. 73f, Theorem 2.2.2, Theorem 2.2.3]{hamilton1982inverse}.
\end{remark}

\chapter{Relation to Locally Convex Topological Vector Spaces}

In this chapter, we will draw a detailed comparison between the setting of linear $k$-spaces and the classical theory of locally convex Hausdorff topological vector spaces (abbreviated LCTVS in the following). To this end, we will establish the following main points.
\begin{enumerate}[1.]
    \item \emph{Replete linear $hk$-spaces as $k$-ifications of LCTVS}: Replete linear $hk$-spaces are always the $k$-ification of some LCTVS (\cref{replete_implies_kloc_conv}). Moreover, as noted already by Frölicher and Jarchow \cite{frolicher1972dualitatstheorie}, the category of those linear $k$-spaces which are the $k$-ification of some LCTVS is equivalent to a full subcategory of the category of LCTVS (\cref{kloc_conv_equiv_ksat_lctvs}).
    \item \emph{Completeness of linear $k$-spaces:} The $k$-ification of an LCTVS is not always replete, and the missing condition to make this true is a completeness condition which we refer to as \emph{$k$-completeness}. We will show that a linear $hk$-space is replete if, and only if, it is both the $k$-ification of some LCTVS and $k$-complete (\cref{kloc_conv_kcompl_implies_replete}). This is our contribution, complementing the findings of \cite{frolicher1972dualitatstheorie} by providing an appropriate completeness condition for this context.
    \item \emph{Tensor products:} Fréchet spaces can be viewed as replete linear $hk$-spaces, as paired linear $hk$-spaces, or as complete LCTVS. Each of these interpretations comes with its own notion of tensor product. In the complete locally convex setting, the tensor product that satisfies the expected universal property is the  \emph{completed projective tensor product}. As turns out, for Fréchet spaces, the completed projective tensor product coincides with the tensor product as taken in the closed monoidal category of replete linear $hk$-spaces (\cref{prop_tens_prod_coincide}). As a consequence, it also coincides with the tensor product of paired linear $hk$-spaces.
\end{enumerate}

\section{\texorpdfstring{$k$-Locally Convex Linear $k$-Spaces}{k-Locally Convex Linear k-Spaces}}

Since our goal is to investigate the relationship between linear $k$-spaces and locally convex topological vector spaces, the following definition will be useful.

\begin{defn}[$k$-Locally convex linear $k$-space]\label{defn_kloc}
    \text{}
    \begin{enumerate}[1.]
        \item We call a linear $k$-space $V$ \emph{$k$-locally convex} if there is an LCTVS (i.e. \emph{Hausdorff} locally convex topological vector space) $E$ such that $V=kE$.
        \item Denote the category of LCTVS with continuous linear maps as morphisms by $\lctvs$ and write $\klctvs$ for the full subcategory of the category $\vect$ of linear $k$-spaces spanned by $k$-locally convex linear $k$-spaces.
    \end{enumerate}
\end{defn}

Note that $k$-ification provides a functor,
    $$k: \lctvs \to \klctvs. $$
In the other direction, we can also define a functor moving from linear $k$-spaces to locally convex topological spaces.

\begin{defn}[Associated LCTVS, compactly determined LCTVS]\label{defn_ksat_l}
    \text{}
    \begin{enumerate}[1.]
        \item Given a linear $k$-space $V$, let $lV$ be the LCTVS obtained by re-topologising $V$ with the topological vector space topology generated by the convex $0$-neighbourhoods of $V$. We call $lV$ the LCTVS \emph{associated to} $V$, or simply the \emph{associated LCTVS}. 
        \item Note that given a continuous linear map $f:V\to W$ between linear $k$-spaces, $f$ is also continuous when viewed as a map $lf$ from $lV$ to $lW$ (since the preimage of a convex set under a linear map is convex). 
        \item We denote the image of the resulting functor, restricted to $\klctvs$,
            $$l: \klctvs \to \lctvs, $$
        by $\lklctvs$. In other words, $\lklctvs$ is the full subcategory of $\lctvs$ spanned by spaces of the form $lV$ for some $k$-locally convex linear $k$-space $V$.
        \item An LCTVS $E$ will be called \emph{compactly determined} if it can be written as $E=lkF$ for some LCTVS $F$ (i.e.~if it is an object of $\lklctvs$). In other words, compactly determined LCTVS are exactly those LCTVS associated to some $k$-locally convex linear $k$-space.
    \end{enumerate}
\end{defn}

\begin{remark}
    The term ``compactly determined'' is due to Porta \cite{porta1972compactly}. A detailed study of this class of LCTVS can be found there.
\end{remark}

The precise relationship between $k$-locally convex linear $k$-spaces and compactly determined LCTVS was given in \cite{frolicher1972dualitatstheorie} (and the same statement is given in \cite[Theorem 2.2]{seip1979convenient}), which we record as:

\begin{thm}\label{kloc_conv_equiv_ksat_lctvs}
    \text{}
    \begin{enumerate}[1.]
        \item The functor
            $$ l: \klctvs \to \lctvs$$
        is a left adjoint and a right inverse to 
            $$ k: \lctvs \to \klctvs. $$
        \item As a consequence, we have an equivalence of categories, 
        \[\begin{tikzcd}
	       \lklctvs & \klctvs,
	       \arrow["k"', shift right=1, from=1-1, to=1-2]
	       \arrow["{l}"', shift right=1, from=1-2, to=1-1]
        \end{tikzcd}\]
        between the categories of $k$-locally convex linear $k$-spaces and compactly determined LCTVS.
        \item Moreover, the composite functor $l k:= l\circ k$,
            $$ l k: \lctvs \to \lklctvs, $$
        is right adjoint to the inclusion functor 
            $$ \lklctvs \hookrightarrow \lctvs, $$
        exhibiting $\lklctvs$ as a coreflective subcategory of $\lctvs$.
    \end{enumerate}
\end{thm}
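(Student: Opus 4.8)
The plan is to prove the three parts in order, with Part 1 carrying all the analytic content and Parts 2 and 3 following by formal category theory.

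\emph{Part 1 (the adjunction).} I would first record the pointwise fact that, for a linear $k$-space $V$ and $E\in\lctvs$, a linear map $f\colon V\to E$ is continuous if and only if it is continuous as a map $lV\to E$. The nontrivial direction uses local convexity of $E$: a convex open $0$-neighbourhood $U\subseteq E$ has convex preimage $f^{-1}(U)$, which is open in $V$ by continuity, hence is a convex $0$-neighbourhood of $V$ and therefore a $0$-neighbourhood in $lV$; since such $U$ and their translates generate the topology of $E$, $f$ is continuous on $lV$. The converse is immediate since $lV$ carries a coarser topology than $V$. Combining this with the universal property of $k$-ification (\cref{univ_prop_kification}) — which, since $V$ is a $k$-space, identifies continuous linear maps $V\to kE$ with continuous linear maps $V\to E$ — yields natural bijections
$$\Hom_{\lctvs}(lV,E)\;\cong\;\{\text{continuous linear }V\to E\}\;\cong\;\Hom_{\klctvs}(V,kE),$$
for $V\in\klctvs$, $E\in\lctvs$, all implemented by passing to the underlying linear map, so naturality in both variables is transparent. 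En route one notes $lV\in\lctvs$ for $V\in\klctvs$: writing $V=kE_0$, the topology of $lkE_0$ contains that of the Hausdorff space $E_0$ and is therefore Hausdorff. This exhibits $l\dashv k$.

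\emph{Part 1 (the right inverse).} The crux is $k\circ l=\id_{\klctvs}$, i.e.\ $k(lkE_0)=kE_0$ for every $E_0\in\lctvs$. I would prove this by first observing $\tau_{E_0}\subseteq\tau_{lkE_0}\subseteq\tau_{kE_0}$ on the common underlying set (the first inclusion because every convex open $0$-neighbourhood of $E_0$ is one of $kE_0$; the second because the generators of $\tau_{lkE_0}$ are open in $kE_0$), and then showing that all three spaces have the \emph{same} compact subsets and admit the \emph{same} continuous maps out of compact Hausdorff spaces. Indeed, a subset compact in $kE_0$ is compact in the coarser $lkE_0$, hence in $E_0$, and compact in $E_0$ forces compact in $kE_0$ by \cref{cpct_sets_in_kififcation}; likewise a continuous map $K\to lkE_0$ from a compactum is continuous into $E_0$, hence into $kE_0$ by \cref{univ_prop_kification} ($K$ being a $k$-space), hence back into $lkE_0$. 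Therefore $lkE_0$ and $kE_0$ have the same $k$-open sets, and since $kE_0$ is already a $k$-space, $k(lkE_0)=kE_0$. I expect this ``sandwich'' argument to be the main obstacle, since it is the one point where the special nature of $k$-ification genuinely interacts with local convexity; everything else is bookkeeping.

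\emph{Part 2.} This is now formal. From Part 1, $k\circ l=\id_{\klctvs}$. If $E\in\lklctvs$, write $E=lV_0$ with $V_0\in\klctvs$; then $l(k(E))=l(k(lV_0))=l(V_0)=lV_0=E$, so $l\circ k=\id_{\lklctvs}$ as well. Since $l$ restricted to $\klctvs$ lands in $\lklctvs$ by definition and $k$ restricts to a functor $\lklctvs\to\klctvs$, these are mutually inverse, yielding the asserted equivalence (in fact isomorphism) $\lklctvs\simeq\klctvs$.

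\emph{Part 3.} Finally, I would transport the adjunction $l\dashv k$ of Part 1 along the equivalence $k|_{\lklctvs}\colon\lklctvs\xrightarrow{\ \sim\ }\klctvs$ of Part 2, whose inverse is $l$. For $E\in\lklctvs$ one has $l\bigl(k|_{\lklctvs}(E)\bigr)=E$ as an object of $\lctvs$, so the composite $l\circ k|_{\lklctvs}$ is precisely the inclusion $\lklctvs\hookrightarrow\lctvs$; transporting the adjunction then identifies its right adjoint with $l\circ k=lk\colon\lctvs\to\lklctvs$. Hence $lk$ is right adjoint to the inclusion, which is exactly the statement that $\lklctvs$ is a coreflective subcategory of $\lctvs$ with coreflector $lk$.
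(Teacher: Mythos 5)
Your proof is correct. Note that the paper does not actually prove \cref{kloc_conv_equiv_ksat_lctvs}: it records the statement as a known result of Fr\"olicher--Jarchow (and Seip) and simply cites them, so your argument is a self-contained reconstruction rather than a variant of an internal proof. The structure you chose is the right one. The hom-set identification $\Hom_{\lctvs}(lV,E)\cong\Hom_{\klctvs}(V,kE)$ works exactly as you say: preimages of convex open $0$-neighbourhoods of $E$ under a continuous linear $f:V\to E$ are convex $0$-neighbourhoods of $V$, hence $0$-neighbourhoods of $lV$, and continuity at the origin suffices for linear maps between topological vector spaces; the converse uses that $\tau_{lV}\subseteq\tau_V$, which rests on the (standard, and used elsewhere in the paper) fact that translations are homeomorphisms of a linear $k$-space, and your one-line check that $lV$ is Hausdorff for $V\in\klctvs$ closes the remaining gap in well-definedness of $l$. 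The genuine crux, $k\circ l=\id_{\klctvs}$, is correctly handled by the sandwich $\tau_{E_0}\subseteq\tau_{lkE_0}\subseteq\tau_{kE_0}$ together with the observation that the $k$-ification depends only on the continuous maps out of compact Hausdorff spaces, which by \cref{univ_prop_kification} coincide for any topology squeezed between $E_0$ and $kE_0$; this is the same mechanism underlying the cited proofs. Parts 2 and 3 are then purely formal, and your transport of $l\dashv k$ along the isomorphism $k|_{\lklctvs}$ (with inverse $l$), using $\iota=l\circ k|_{\lklctvs}$, correctly yields $\iota\dashv lk$ and hence the coreflectivity of $\lklctvs$ in $\lctvs$.
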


\subsection{\texorpdfstring{Replete linear $k$-spaces are $k$-locally convex}{Replete linear k-spaces are k-locally convex}} The following proposition ensures that most of the linear $k$-spaces we have encountered, in particular all replete linear $hk$-spaces, are $k$-locally convex.

\begin{prop}\label{ksubsp_locally_convex}
    Let $V$ be a $k$-locally convex linear $k$-space and let 
        $$\iota: W\hookrightarrow V$$ 
    be a linear $k$-subspace. Then $W$ is a $k$-locally convex linear $k$-space, as well. 
\end{prop}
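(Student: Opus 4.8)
The plan is to exhibit $W$ directly as the $k$-ification of a Hausdorff locally convex topological vector space, which by \cref{k_ification_lin_k_sp} immediately makes $W$ a $k$-locally convex linear $k$-space. Since $V$ is $k$-locally convex, I would first fix an LCTVS $E$ with $V = kE$. Viewing $W$ as a linear subspace of $E$, let $F$ denote $W$ equipped with the subspace topology inherited \emph{from $E$} (not from $kE$); as a linear subspace of an LCTVS, $F$ is again a Hausdorff locally convex topological vector space. The goal is then to show $W = kF$ as linear $k$-spaces.

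The crux is the following general fact about $k$-ification: for a topological space $X$ and a subset $Y\subseteq X$, the $k$-ification of the subspace topology on $Y$ inherited from $X$ agrees with the $k$-ification of the subspace topology on $Y$ inherited from $kX$. To prove this, I would note that for any compact Hausdorff space $K$, a map $p\colon K\to Y$ is continuous for the $X$-subspace topology if and only if the composite $K\to Y\hookrightarrow X$ is continuous (universal property of the subspace topology), and similarly with $kX$ in place of $X$; but by the universal property of $k$-ification (\cref{univ_prop_kification}), since $K$ is a $k$-space, a map $K\to X$ is continuous exactly when the corresponding map $K\to kX$ is continuous. Hence $Y_X$ (the $X$-subspace) and $Y_{kX}$ (the $kX$-subspace) admit literally the same continuous maps out of compact Hausdorff spaces, so they have the same $k$-open subsets, hence the same $k$-ification.

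Applying this with $X=E$, $Y=W$: by \cref{ksubspaces_def} the $k$-subspace topology that $W$ carries as a $k$-subspace of $V=kE$ is the $k$-ification of the subspace topology inherited from $kE$, which by the fact just established equals the $k$-ification of the subspace topology inherited from $E$, namely $kF$. Therefore $W=kF$ with $F$ an LCTVS, so $W$ is $k$-locally convex (and a genuine linear $k$-space by \cref{k_ification_lin_k_sp}).

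I do not anticipate a real obstacle. The only point that needs care is that the topology $W$ inherits from $kE$ is a priori strictly finer than the one it inherits from $E$ — so it is not obvious that $W$ "is" the $k$-ification of a subspace of $E$ — and this discrepancy is precisely what the general $k$-ification lemma of the second paragraph reconciles; its verification is a routine unwinding of the coreflection $\Hom_{\topsp}(K,-)\cong\Hom_{\ktop}(K,k(-))$ for $K$ compact Hausdorff.
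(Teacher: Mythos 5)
Your proposal is correct and follows essentially the same route as the paper: fix $E$ with $V=kE$, give $W$ the subspace topology from $E$ to get an LCTVS (the paper's $W_{LC}$, your $F$), and identify $W$ with its $k$-ification. The only difference is that you spell out, via continuous maps from compact Hausdorff spaces and the coreflection \cref{univ_prop_kification}, the step the paper leaves implicit — that the subspace topologies inherited from $E$ and from $kE$ have the same $k$-ification — which is a welcome clarification rather than a deviation.
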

\begin{proof}
    Let $E$ be some LCTVS with $kE = V$ (which there is by $k$-local convexity). Endow $W$ with the subspace topology induced from $E$ and call the resulting space $W_{LC}$. As a linear subspace of a LCTVS, $W_{LC}$ is an LCTVS, as well, and we have an embedding
        $$ W_{LC} \hookrightarrow E, $$
    whose $k$-ification is a $k$-embedding
        $$ kW_{LC} \hookrightarrow kE=V. $$
    Hence, by our assumption that $\iota$ is a $k$-embedding, $W = kW_{LC}$ and therefore, $W$ is $k$-locally convex, as claimed.
\end{proof}

\begin{cor}\label{replete_implies_kloc_conv}
    Let $V$ be a replete linear $k$-space. Then $V$ is $k$-locally convex.
\end{cor}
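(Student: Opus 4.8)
The plan is to reduce everything to \cref{ksubsp_locally_convex} (linear $k$-subspaces of $k$-locally convex linear $k$-spaces are $k$-locally convex) applied twice, together with the observation that spaces of continuous functions are $k$-ifications of locally convex topological vector spaces. Concretely, since $V$ is replete, the canonical map $\eta\colon V\to V^{\wedge\wedge}$ is a closed embedding (\cref{defn_repl_lin_hk_spaces}); a closed embedding is in particular a $k$-embedding (its image is closed, so the $k$-subspace topology and the subspace topology coincide there, cf.\ the remark following \cref{ksubspaces_def}). Hence $V$ is isomorphic, as a linear $k$-space, to a linear $k$-subspace of $V^{\wedge\wedge}$, and by \cref{ksubsp_locally_convex} it will suffice to show that $V^{\wedge\wedge}$ is $k$-locally convex.

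The first key step is then: for \emph{any} linear $k$-space $W$, the natural dual $W^\wedge = L(W,\mathbb{K})$ is $k$-locally convex. Indeed, by \cref{defn_lvw} and \cref{defn_natural_dual}, $W^\wedge$ carries the $k$-subspace topology induced from $C(W) = C(W,\mathbb{K})$, so by \cref{ksubsp_locally_convex} it is enough to show $C(W)$ is $k$-locally convex. By \cref{mapping_spaces_defn}, $C(W) = k\,C_{\mathsf{c.o.}}(W)$, and I would check that $C_{\mathsf{c.o.}}(W)$ is a Hausdorff locally convex topological vector space: it is locally convex because its topology is generated by the seminorms $f\mapsto \sup_{t\in K}\lvert f(i(t))\rvert$ (finite since $f\circ i$ is continuous on the compactum $K$), these being exactly the gauges of the subbasic sets $W(i,K,U)$ with $U$ a disk; it is a vector topology by the usual argument for compact-open topologies on function spaces into $\mathbb{K}$; and it is Hausdorff because point evaluations $\mathrm{ev}_{x_0}$ (i.e.\ $i\colon \{*\}\to W$ at $x_0$) are continuous and separate distinct functions. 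Thus $C(W) = k\,C_{\mathsf{c.o.}}(W)$ is the $k$-ification of an LCTVS, hence $k$-locally convex by \cref{defn_kloc}, and therefore so is $W^\wedge$.

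Finally I apply the previous step twice: $V^\wedge$ is $k$-locally convex, and then $V^{\wedge\wedge} = (V^\wedge)^\wedge$ is $k$-locally convex as well. Combining this with the reduction from the first paragraph and \cref{ksubsp_locally_convex} yields that $V$ is $k$-locally convex, completing the proof.

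\textbf{Main obstacle.} There is no deep difficulty here — the argument is essentially a bookkeeping of permanence properties. The one point that deserves care (and is the only place a genuine verification is needed rather than a citation) is confirming that $C_{\mathsf{c.o.}}(W)$ is a \emph{Hausdorff} locally convex \emph{topological vector space} for an arbitrary $k$-space $W$ — in particular that the compact-open topology, in the form generated by the sets $W(i,K,U)$, is a vector topology and is separated; everything else follows formally from \cref{ksubsp_locally_convex} and the definitions of $L(-,-)$ and $C(-,-)$.
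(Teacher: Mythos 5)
Your proposal is correct and follows essentially the same route as the paper: the paper simply composes the closed embedding $V \hookrightarrow V^{\wedge\wedge} \subseteq C(V^\wedge) = k\cco(V^\wedge)$ and invokes \cref{ksubsp_locally_convex} once, with the fact that $\cco(V^\wedge)$ is a Hausdorff LCTVS left implicit. Your version only differs in bookkeeping (applying \cref{ksubsp_locally_convex} in stages through $W^\wedge$ and $V^{\wedge\wedge}$) and in spelling out the verification that the compact-open topology is a Hausdorff locally convex vector topology, which is a correct and harmless elaboration.
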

\begin{proof}
    By definition, every replete linear $k$-space $V$ admits a closed embedding into $V^{\wedge\wedge}\subseteq C(V^\wedge)=k\cco(V^\wedge)$, so the claim follows directly from \cref{ksubsp_locally_convex}. 
\end{proof}

\subsection{\texorpdfstring{$k$-Local convexity and the double dual}{k-Local convexity and the double dual}} The following observation gives some evidence that what is missing from a $k$-locally convexity to imply repleteness is a kind of completeness condition. While repleteness means that the canonical map to the double dual is a \emph{closed} embedding, $k$-local convexity amounts to this map being merely a $k$-embedding \cite[Theorem 4.3]{frolicher1972dualitatstheorie}:

\begin{thm}\label{froelicher_jarachow}
    Let $V$ be a linear $k$-space. Then $V$ is $k$-locally convex if, and only, if
        $$ \eta^V : V \to V^{\wedge\wedge}$$
    is a $k$-embedding. 
\end{thm}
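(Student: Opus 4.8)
The plan is to run both implications through the factorisation of $\eta^V$ as $V\to V^{\wedge\wedge}= L(V^\wedge,\mathbb{K})\hookrightarrow C(V^\wedge)=k\cco(V^\wedge)$, using that $\cco(V^\wedge)$ — the space of continuous scalar functions with the compact–open topology — is an LCTVS (locally convex by construction, Hausdorff because point evaluations separate functions), so that $C(V^\wedge)=k\cco(V^\wedge)$ is $k$-locally convex. For the ``if'' direction this is essentially all that is needed: if $\eta^V$ is a $k$-embedding, then $V$ is linearly homeomorphic to $\im(\eta^V)$ with the $k$-subspace topology, i.e.\ to a linear $k$-subspace of $V^{\wedge\wedge}$. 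Since $\mathbb{K}$ is $k$-Hausdorff, $V^{\wedge\wedge}=L(V^\wedge,\mathbb{K})$ is a \emph{closed} linear subspace of $C(V^\wedge)$, hence carries the subspace (= $k$-subspace) topology; being a linear $k$-subspace of the $k$-locally convex space $C(V^\wedge)$, it is $k$-locally convex by \cref{ksubsp_locally_convex}, and applying \cref{ksubsp_locally_convex} again to $\im(\eta^V)\hookrightarrow V^{\wedge\wedge}$ shows that $\im(\eta^V)$, hence $V$, is $k$-locally convex.

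For the converse, write $V=kE$ with $E$ an LCTVS. Continuity of $\eta^V$ is cartesian closure of $\ktop$, and injectivity follows because $E'$ separates the points of $E$ (Hahn–Banach) and $E'\subseteq(kE)^\wedge=V^\wedge$. Since $\im(\eta^V)$ with the $k$-subspace topology is a $k$-space, $\eta^V$ will be a $k$-embedding once we show: for every compact Hausdorff $K$ and every map $g:K\to V$ such that $\eta^V\circ g:K\to C(V^\wedge)$ is continuous, $g$ is continuous. By the exponential law ($K$ being a $k$-space) the hypothesis says precisely that the pairing $(t,\phi)\mapsto\phi(g(t))$ is continuous on $K\times V^\wedge$, and by the universal property of $k$-ification the conclusion is equivalent to continuity of $g:K\to E$. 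Since the topology of $E$ is the initial topology with respect to the quotient maps $\pi_p:E\to E_p:=E/p^{-1}(0)$ (with $E_p$ carrying the norm induced by the continuous seminorm $p$), it suffices to fix $p$ and prove that $g_p:=\pi_p\circ g:K\to E_p$ is norm-continuous.

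The crux is the following. Put $U:=\{x\in E:p(x)\le1\}$. Its polar $U^\circ$ is equicontinuous, hence $\sigma(E',E)$-compact by Alaoglu–Bourbaki, and on the equicontinuous set $U^\circ$ the weak topology $\sigma(E',E)$ coincides with that of uniform convergence on (pre)compact subsets of $E$; since the compact subsets of $kE$ are exactly those of $E$ (\cref{cpct_sets_in_kififcation}), this realises $U^\circ$ as a \emph{compact} subset of $V^\wedge$. The closed unit ball $B$ of $(E_p)'$ maps into $U^\circ$ via $\psi\mapsto\psi\circ\pi_p$, continuously from the weak-$*$ topology. Restricting the continuous pairing to the compact set $K\times U^\circ$ and precomposing yields a continuous $\Psi:K\times B\to\mathbb{K}$, $\Psi(t,\psi)=(\psi\circ\pi_p)(g(t))$. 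As $B$ is compact, cartesian closure identifies $t\mapsto\Psi(t,-)$ with a continuous map into the Banach space $C(B)$, and by Hahn–Banach $\|g_p(s)-g_p(t)\|_{E_p}=\sup_{\psi\in B}|\Psi(s,\psi)-\Psi(t,\psi)|=\|\Psi(s,-)-\Psi(t,-)\|_{C(B)}$, which is continuous in $(s,t)$ and vanishes on the diagonal; hence $g_p$ is continuous. This gives $g:K\to E$, thus $g:K\to V$, continuous, so $\eta^V$ is a $k$-embedding.

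The main obstacle is exactly this last step: upgrading the mere \emph{joint} continuity of the evaluation pairing to genuine (not just weak) continuity of $g$ into $E$. Weak continuity into a normed space is strictly weaker than norm continuity, and the gap is bridged only by exploiting that the pairing is uniformly controlled over the equicontinuous set $U^\circ$; the technical heart is therefore the identification of $U^\circ$ as a compact subset of $V^\wedge$, which combines Alaoglu–Bourbaki, the coincidence of the weak and precompact-convergence topologies on equicontinuous sets, and \cref{cpct_sets_in_kififcation}.
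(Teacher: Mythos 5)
Your argument is correct, but note that the paper does not prove this statement at all: it is quoted from Frölicher--Jarchow (\cite[Theorem 4.3]{frolicher1972dualitatstheorie}), so there is no in-paper proof to compare against. What you supply is a self-contained argument, and both halves check out. The ``if'' direction is the easy one and you handle it exactly as one would inside this paper's framework: $\cco(V^\wedge)$ is an LCTVS, so $C(V^\wedge)=k\cco(V^\wedge)$ is $k$-locally convex, $V^{\wedge\wedge}$ is a closed linear subspace of it, and two applications of \cref{ksubsp_locally_convex} transport $k$-local convexity down to $\im(\eta^V)\cong V$. For the ``only if'' direction, your reduction to compact Hausdorff probes via the universal properties of the $k$-subspace topology and of $k$-ification is exactly right, and the technical heart is where you locate it: from joint continuity of $(t,\phi)\mapsto\phi(g(t))$ on $K\times V^\wedge$ one only gets ``weak'' information pointwise in $\phi$, and the upgrade to continuity of $g$ into $E$ goes through the local Banach spaces $E_p$, using that the polar $U^\circ$ of $\{p\le 1\}$ is an equicontinuous set on which the weak-$*$ topology, the topology of uniform convergence on compact subsets of $E$, and the topology induced from $V^\wedge$ all coincide (the last identification needs \cref{cpct_sets_in_kififcation} plus the compact-to-Hausdorff comparison, which you correctly invoke), so that the unit ball $B$ of $(E_p)'$ maps continuously into $V^\wedge$ and Hahn--Banach recovers the $E_p$-norm as a sup over $B$. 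The only places a reader has to fill in routine detail are the facts that continuity of $K\times_{\topsp}B\to K\times V^\wedge$ into the $k$-product follows from the domain being compact Hausdorff, and that $C(B)$ with its $\ktop$-topology is the usual Banach space; both are immediate from results already in Chapter 2, so there is no gap.
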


\section{\texorpdfstring{Completeness of Linear $k$-Spaces}{Completeness of Linear k-Spaces}}\label{sec_compl_of_lin_k_sp}

\subsection{\texorpdfstring{$k$-Completeness}{k-Completeness}}

The notion of completeness that seems most appropriate in the context of linear $k$-spaces is the following.

\begin{defn}[$k$-Completeness]\label{defn_kcompl_seqcompl}
    Let $V$ be either a linear $k$-space, or a topological vector space (or any vector space carrying some topology). 
    \begin{enumerate}
        \item Recall that a \emph{Cauchy net} is a net $(x_i)_{i\in (I,\leq)}$ in $V$ such that for every neighbourhood $U$ of the origin, there is some $i_0\in I$ such that for all $i,j\geq i_0$, we have $x_i-x_j\in U$. 
        \item A subset $S\subseteq V$ is \emph{totally bounded} if for every neighbourhood $U\subseteq V$ of the origin, there exists a finite subset $F\subseteq V$ such that $F+U\supseteq S$.
        \item A \emph{totally bounded Cauchy net} is a Cauchy net $(x_i)$ in $V$ such that $\{x_i\mid i\in I\}\subseteq V$ is totally bounded.
        \item $V$ is \emph{$k$-complete} if every totally bounded Cauchy net in $V$ converges. 
        \item $V$ is \emph{sequentially complete} if every Cauchy \emph{sequence} in $V$ converges.
    \end{enumerate}
\end{defn}

We will show that a linear $k$-space is replete if, and only if, it is $k$-locally convex and $k$-complete (\cref{kloc_conv_kcompl_implies_replete}).

\begin{remark}
    In \cite{akbarov2022stereotype}, $k$-complete LCTVS are referred to as \emph{pseudo-complete}, but since this terms seems to have various other meanings, we have adopted an alternative, perhaps more informative terminology.
\end{remark}

Observe that $k$-completeness implies sequential completeness:

\begin{prop}\label{prop_k_compl_implies_seq_compl}
    Let $V$ be either a linear $k$-space, or a topological vector space. Then every Cauchy sequence in $V$ is totally bounded. Hence, if $V$ is $k$-complete, then it is sequentially complete. 
\end{prop}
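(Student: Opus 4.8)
The plan is to show directly that an arbitrary Cauchy sequence $(x_n)$ in $V$ has totally bounded underlying set $S = \{x_n \mid n\in\mathbb N\}$; the last sentence of the statement then follows immediately from the definitions, since a $k$-complete space by definition forces every totally bounded Cauchy net---in particular every Cauchy sequence, which is automatically a totally bounded Cauchy net once we have established total boundedness---to converge.

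First I would fix a neighbourhood $U$ of the origin in $V$. I would choose a further neighbourhood $W$ of the origin with $W$ ``small enough'' relative to $U$; in a topological vector space one can take $W$ with $W + W \subseteq U$ (using continuity of addition at $(0,0)$), and the same works for a linear $k$-space since addition $V\times V\to V$ is continuous by definition and hence continuous at $(0,0)$ (the diagonal map and addition being continuous, the preimage of $U$ contains a basic neighbourhood of the form $W\times W$ after shrinking). Next, by the Cauchy condition applied to $W$, there is $n_0$ such that $x_m - x_n \in W$ for all $m,n \geq n_0$. In particular, for every $n \geq n_0$ we have $x_n \in x_{n_0} + W$. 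Now set $F := \{x_1, x_2, \dots, x_{n_0}\} \cup \{x_{n_0}\}$, a finite subset of $V$ (equivalently just $\{x_1,\dots,x_{n_0}\}$). Then for $n \leq n_0$ trivially $x_n \in F + W$ (taking the summand $x_n\in F$ and $0\in W$), and for $n \geq n_0$ we have $x_n \in x_{n_0} + W \subseteq F + W \subseteq F + U$. Hence $S \subseteq F + U$, which is exactly the statement that $S$ is totally bounded, since $U$ was arbitrary.

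Finally, to conclude: if $V$ is $k$-complete, let $(x_n)$ be any Cauchy sequence. By the preceding paragraph its underlying set is totally bounded, so $(x_n)$ is a totally bounded Cauchy net (a sequence being a net indexed by $(\mathbb N, \leq)$). By $k$-completeness it converges. Therefore $V$ is sequentially complete.

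\textbf{Main obstacle.} There is no serious obstacle here; the only point requiring a little care is the choice of the auxiliary neighbourhood $W$ with $W+W\subseteq U$ in the linear $k$-space case, where one must invoke continuity of the addition map $V\times V\to V$ with respect to the $k$-product topology rather than appealing to the usual topological-vector-space axioms. Since the $k$-product $V\times V$ has at least as many open sets as $V\times_{\topsp} V$, a basic neighbourhood of $(0,0)$ in $V\times V$ still contains one of product form $W\times W$, so the standard argument goes through verbatim. The rest is a direct unwinding of \cref{defn_kcompl_seqcompl}.
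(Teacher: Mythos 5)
Your overall strategy is the same as the paper's: take the finite set $F$ consisting of the first $n_0$ terms of the sequence and observe that the tail lies in a single translate of the given neighbourhood, so that $S\subseteq F+U$. The paper does exactly this, applying the Cauchy condition \emph{directly to $U$}: there is $n_0$ with $x_n-x_m\in U$ for $n,m\geq n_0$, hence $x_n\in x_{n_0}+U$ for $n\geq n_0$, and $F:=\{x_0,\dots,x_{n_0}\}$ gives $S\subseteq F+U$. No auxiliary neighbourhood is needed, because total boundedness only asks for $S\subseteq F+U$.

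The one step of yours that does not stand up is precisely the step you flag as the ``only point requiring a little care'': the construction of $W$ with $W+W\subseteq U$ in a general linear $k$-space. Your justification — that since $V\times V=k(V\times_{\topsp}V)$ has \emph{more} open sets than $V\times_{\topsp}V$, a neighbourhood of $(0,0)$ in $V\times V$ still contains a box $W\times W$ — runs in the wrong direction. Continuity of addition on the $k$-product only makes $+^{-1}(U)$ $k$-open, and a $k$-open neighbourhood of $(0,0)$ need not contain any product neighbourhood $W\times W'$; the finer topology makes this conclusion weaker, not stronger. Indeed, the existence of such $W$ for every $U$ is exactly continuity of addition at the origin with respect to the $\topsp$-product, which is part of what distinguishes topological vector spaces from general linear $k$-spaces (cf.\ \cref{warning_topvec_vs_linksp} and \cref{ex_lin_k_space_not_top_vec_sp}), so it cannot be assumed here. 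Fortunately the step is superfluous: delete $W$ and run your own argument with $U$ in its place (as the paper does), and the proof is complete, including the final deduction of sequential completeness from $k$-completeness, which you handle exactly as in \cref{defn_kcompl_seqcompl}.
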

\begin{proof}
    Let $(x_n)$ be a Cauchy sequence in $V$ and let $U\subseteq V$ be a neighbourhood of the origin. Put 
        $$ S := \{x_n\mid n\in \mathbb{N}\}. $$
    We want to show that there is a finite subset $F\subseteq V$ such that $F+U\supseteq S$. Since $(x_n)$ is a Cauchy sequence, there is some $n_0\in\mathbb{N}$ such that for all $n,m\geq n_0$, we have $x_n-x_m \in U$. In particular, $x_n \in x_{n_0} + U$ for all $n\geq n_0$. Hence, letting  
        $$ F := \{x_n \mid n \in \{0, \dots, n_0\}\}, $$
    we see that 
        $$S \subseteq \bigcup_{n\in \{0, \dots, n_0\}} x_n + U = F + U, $$
    which is what we wanted to show. 
\end{proof}

\subsection{\texorpdfstring{Replete linear $k$-spaces are $k$-complete}{Replete linear k-spaces are k-complete}}

We will now show that every replete linear $k$-space is $k$-complete (\cref{replete_implies_k_complete}). This will require a sequence of lemmas.

\begin{lem}\label{tot_bdd_kE_implies_tot_bdd_E}
    Let $E$ be an LCTVS and let $S\subseteq kE$ be totally bounded in $kE$. Then $S$ is totally bounded in $E$, as well.
\end{lem}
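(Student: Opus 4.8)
The plan is to observe that this is essentially immediate from the fact that $k$-ification only refines the topology without altering the underlying vector space. Recall from \cref{defn_k_ification} that $kE$ has the same underlying set (hence the same vector space structure) as $E$, and its topology is generated by the $k$-open subsets of $E$. Since every open subset of $E$ is $k$-open, the topology of $kE$ is finer than (or equal to) that of $E$; in particular every neighbourhood of the origin in $E$ is also a neighbourhood of the origin in $kE$.

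First I would fix an arbitrary neighbourhood $U\subseteq E$ of the origin in $E$. By the remark just made, $U$ is also a neighbourhood of the origin in $kE$. Applying the hypothesis that $S$ is totally bounded in $kE$ (see \cref{defn_kcompl_seqcompl}) to this $U$, there is a finite subset $F$ with $F+U\supseteq S$. Because $kE$ and $E$ share the same underlying set, $F$ is equally a finite subset of $E$. As $U$ was an arbitrary neighbourhood of the origin in $E$, this exhibits $S$ as totally bounded in $E$, which is what we wanted.

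There is no real obstacle here: the only subtlety worth flagging is that the definition of ``totally bounded'' quantifies over finite subsets of the ambient vector space, and this is unchanged under the passage between $E$ and $kE$, so the finite ``net'' $F$ produced in $kE$ serves verbatim in $E$. The lemma simply records the easy direction of the comparison between total boundedness in an LCTVS and in its $k$-ification, which will be used in the subsequent lemmas leading to \cref{replete_implies_k_complete}.
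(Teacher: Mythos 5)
Your proof is correct and follows exactly the paper's argument: since the $k$-ified topology is finer, every neighbourhood of the origin in $E$ is one in $kE$, and the finite set witnessing total boundedness there works verbatim in $E$. No gaps.
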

\begin{proof}
    Let $U$ be a neighbourhood of the origin in $E$. Then $U$ is also a neighbourhood of the origin in $kE$ and hence there is a finite subset $F\subseteq E$ such that $F+U\supseteq S$, proving the claim.
\end{proof}

\begin{lem}\label{Cauchy_in_kE_implies_Cauchy_inE}
    Let $E$ be an LCTVS and let $(x_i)$ be a Cauchy net in $kE$. Then $(x_i)$ is also a Cauchy net in $E$.
\end{lem}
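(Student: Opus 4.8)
The statement to prove is: if $E$ is an LCTVS and $(x_i)$ is a Cauchy net in $kE$, then $(x_i)$ is also a Cauchy net in $E$. The plan is to argue directly from the definitions, using the fact that $kE$ has a finer topology than $E$ and that both spaces have the same underlying vector space structure. First I would recall that a Cauchy net $(x_i)_{i\in(I,\leq)}$ in $kE$ means that for every neighbourhood $U$ of the origin in $kE$, there is $i_0\in I$ with $x_i - x_j \in U$ for all $i,j\geq i_0$. Now let $V$ be an arbitrary neighbourhood of the origin in $E$. Since $kE$ is the $k$-ification of $E$, its topology is finer than that of $E$ (every open subset of $E$ is $k$-open, see \cref{defn_k_ification}), so $V$ is also a neighbourhood of the origin in $kE$. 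Applying the Cauchy property of $(x_i)$ in $kE$ to this $V$, we obtain $i_0\in I$ such that $x_i - x_j \in V$ for all $i,j\geq i_0$. Since $V$ was an arbitrary neighbourhood of the origin in $E$, this shows precisely that $(x_i)$ is a Cauchy net in $E$.

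The entire argument hinges on the single observation that the identity map $kE \to E$ is continuous — equivalently, that every $E$-neighbourhood of the origin is a $kE$-neighbourhood of the origin. This is immediate from the universal property of $k$-ification (\cref{univ_prop_kification}) or directly from the construction of $kE$ (\cref{defn_k_ification}), since the $k$-open sets include all open sets of $E$. There is no genuine obstacle here; the lemma is a routine compatibility statement, completely parallel in spirit to \cref{tot_bdd_kE_implies_tot_bdd_E} which precedes it (totally bounded in $kE$ implies totally bounded in $E$), and the two together are exactly what will be needed to transfer totally bounded Cauchy nets between $kE$ and $E$ in the proof that replete linear $k$-spaces are $k$-complete (\cref{replete_implies_k_complete}).

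One subtlety worth a sentence of care: the definition of Cauchy net (\cref{defn_kcompl_seqcompl}) quantifies over neighbourhoods of the origin, and subtraction $x_i - x_j$ is the vector-space operation, which is the same in $E$ and $kE$ since $kE$ has the same underlying vector space; so there is no ambiguity in what ``$x_i - x_j \in V$'' means. With that noted, the proof is essentially the two displayed implications above, and can be written in three or four lines.

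\begin{proof}
    Let $V$ be a neighbourhood of the origin in $E$. Since the topology of $kE$ is finer than that of $E$ (every open subset of $E$ is $k$-open, see \cref{defn_k_ification,univ_prop_kification}), $V$ is also a neighbourhood of the origin in $kE$. As $(x_i)$ is a Cauchy net in $kE$, there is some index $i_0$ such that $x_i - x_j \in V$ for all $i,j\geq i_0$. Since $V$ was an arbitrary neighbourhood of the origin in $E$, this shows that $(x_i)$ is a Cauchy net in $E$.
\end{proof}
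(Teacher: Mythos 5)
Your proof is correct and follows exactly the same argument as the paper's: every neighbourhood of the origin in $E$ is a neighbourhood of the origin in $kE$ (the $k$-ification topology is finer), so the Cauchy condition in $kE$ immediately yields the Cauchy condition in $E$. The extra remarks on the unchanged vector-space structure and the parallel with \cref{tot_bdd_kE_implies_tot_bdd_E} are accurate but not needed.
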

\begin{proof}
    $U$ be a neighbourhood of the origin in $E$. Then $U$ is also a neighbourhood of the origin in $kE$ and hence there is some $i_0\in I$ such that for all $i,j\geq i_0$, $x_i-x_j\in U$, proving the claim.  
\end{proof}

\begin{lem}\label{closed_tot_bdd_impls_cpct}
    Let $E$ be a $k$-complete LCTVS and let $K\subseteq E$ be closed and totally bounded. Then $K$ is compact.
\end{lem}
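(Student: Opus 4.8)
The plan is to show that a closed, totally bounded subset $K$ of a $k$-complete LCTVS $E$ is compact by establishing sequential-type compactness via nets and then invoking the fact that in LCTVS (which are, in particular, uniform spaces) total boundedness plus completeness gives compactness. More concretely, the cleanest route is the following. Recall the classical fact from the theory of uniform spaces: a uniform space is compact if and only if it is complete and totally bounded. A subset $K$ of an LCTVS $E$ inherits a uniform structure, and $K$ is totally bounded in this uniformity precisely when it is totally bounded in $E$ in the sense of \cref{defn_kcompl_seqcompl}. So the only thing to verify is that $K$, with the subspace uniformity, is a \emph{complete} uniform space --- i.e.~every Cauchy net in $K$ converges (in $K$).

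So first I would take an arbitrary Cauchy net $(x_i)_{i\in I}$ in $K$. Since $K$ is totally bounded in $E$, the set $\{x_i \mid i\in I\}$ is totally bounded, so $(x_i)$ is a totally bounded Cauchy net in $E$. By $k$-completeness of $E$, this net converges to some point $x\in E$. Since $K$ is closed in $E$ and the $x_i$ all lie in $K$, the limit $x$ lies in $K$. Hence every Cauchy net in $K$ converges to a point of $K$, so $K$ is a complete uniform space. Combined with total boundedness of $K$, the classical characterisation of compactness for uniform spaces (see, e.g., \cite[Chapter 6]{kelley2017general} or \cite[Chapter 8]{willard2012general}) yields that $K$ is compact.

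The only subtlety --- and the step I'd expect to need a sentence of care --- is making sure the notion of ``totally bounded'' used in \cref{defn_kcompl_seqcompl} (finite $F\subseteq V$ with $F+U\supseteq S$ for every neighbourhood $U$ of $0$) genuinely matches total boundedness with respect to the canonical translation-invariant uniformity on the LCTVS $E$, and likewise that Cauchy-net convergence in $E$ coincides with Cauchy completeness in that uniformity. Both are standard: the entourages of the additive uniformity on a topological vector space are exactly the sets $\{(x,y)\mid x-y\in U\}$ for $U$ a neighbourhood of $0$, so a net is Cauchy in the uniform sense iff it is a Cauchy net in the sense of \cref{defn_kcompl_seqcompl}, and a set is totally bounded in the uniform sense iff it admits finite ``$U$-nets'' for every such $U$, which is the stated condition. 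After noting this identification, the proof is a two-line application of the uniform-space compactness criterion, so there is no real obstacle beyond bookkeeping.

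Alternatively, if one prefers to avoid citing the uniform-space compactness theorem, one can argue directly: every net in $K$ has a Cauchy subnet (because $K$ is totally bounded --- this is the standard ``total boundedness $\Rightarrow$ Cauchy subnet exists'' lemma, proved by a filter-refinement/diagonal argument over the finite covers), and that Cauchy subnet converges in $K$ by the argument of the previous paragraph, whence every net in $K$ has a convergent subnet, i.e.~$K$ is compact. I would likely present the uniform-space version for brevity but mention this direct route.
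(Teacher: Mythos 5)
Your proof is correct and takes essentially the same route as the paper's: both reduce the statement to the standard uniform-space criterion that a subset of an LCTVS is compact iff it is totally bounded and complete, and then obtain completeness of $K$ by observing that any Cauchy net in $K$ is a totally bounded Cauchy net in $E$, hence convergent by $k$-completeness, with closedness of $K$ forcing the limit into $K$. Your extra care in matching the definitions of total boundedness and Cauchy nets with the canonical uniformity is fine but not needed beyond a remark.
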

\begin{proof}
    In any LCTVS, a subset $S$ is compact if, and only if, it is totally bounded and complete (meaning that every Cauchy net with members in $S$ converges in $S$, see \cite[p.262, Theorem 39.9]{willard2012general}). Since $K$ is totally bounded, every Cauchy net in $K$ is totally bounded and thus convergent by $k$-completeness of $E$. Hence, $K$ is totally bounded and complete and therefore compact, as claimed.
\end{proof}

\begin{lem}\label{E_kcompl_implies_V_kcompl}
    Let $E$ be a $k$-complete LCTVS. Then $V:=kE$ is also $k$-complete.
\end{lem}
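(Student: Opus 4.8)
The plan is to reduce $k$-completeness of $V=kE$ to $k$-completeness of $E$ by transporting a given totally bounded Cauchy net down to $E$, finding its limit there, and then upgrading convergence back to $kE$ via a compactness argument. So suppose $(x_i)_{i\in I}$ is a totally bounded Cauchy net in $V=kE$. First I would apply \cref{tot_bdd_kE_implies_tot_bdd_E} and \cref{Cauchy_in_kE_implies_Cauchy_inE} to conclude that $(x_i)$ is also a totally bounded Cauchy net in $E$. Since $E$ is $k$-complete by hypothesis, there is some $x\in E$ with $x_i\to x$ in $E$.

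It remains to show that $x_i\to x$ in the finer topology of $kE$. Here is where the work lies. Consider the set $S:=\{x_i\mid i\in I\}$ and let $\overline{S}$ denote its closure in $E$. Since $S$ is totally bounded in $E$, so is $\overline{S}$, and being also closed, \cref{closed_tot_bdd_impls_cpct} gives that $\overline{S}$ is compact in $E$; note $x\in\overline{S}$ because $x_i\to x$ in $E$. Now $E$ is Hausdorff, so every compact subset of $E$ is Hausdorff, and \cref{cpct_sets_in_kififcation} applies: the compact subsets of $E$ and of $kE$ coincide, so $\overline{S}$ is compact in $kE$ as well. The identity map from $\overline{S}$ with the subspace topology inherited from $kE$ to $\overline{S}$ with the subspace topology inherited from $E$ is a continuous bijection (the topology of $kE$ is finer than that of $E$) from a compact space to a Hausdorff space, hence a homeomorphism. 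Therefore the two subspace topologies on $\overline{S}$ agree.

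Consequently, since the net $(x_i)$ lies in $S\subseteq\overline{S}$ and $x\in\overline{S}$, convergence $x_i\to x$ in $E$ is equivalent to convergence in $\overline{S}$ with its ($E$- or, equivalently, $kE$-) subspace topology, which in turn implies $x_i\to x$ in $kE$. Thus every totally bounded Cauchy net in $V=kE$ converges, i.e.\ $V$ is $k$-complete. The only nonroutine step is the identification of the two subspace topologies on $\overline{S}$; everything else is a direct appeal to the lemmas just established and to \cref{cpct_sets_in_kififcation}.
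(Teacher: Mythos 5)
Your proof is correct and follows essentially the same route as the paper's: transport the totally bounded Cauchy net to $E$ via \cref{tot_bdd_kE_implies_tot_bdd_E} and \cref{Cauchy_in_kE_implies_Cauchy_inE}, obtain a limit by $k$-completeness of $E$, and use compactness of the closure of the net (via \cref{closed_tot_bdd_impls_cpct}) together with the coincidence of the $E$- and $kE$-topologies on compact sets to upgrade the convergence to $kE$. The only difference is that you spell out, via the compact-to-Hausdorff argument, why the two subspace topologies on the compact closure agree — a step the paper simply asserts.
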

\begin{proof}
    Let $(x_i)$ be a totally bounded Cauchy net in $V$ and let 
        $$ K := \overline{\{x_i \mid i\in I\}}^{E}$$
    be the closure of the set of its members in $E$. By  \cref{tot_bdd_kE_implies_tot_bdd_E} and the fact that the closure of a totally bounded set is totally bounded \cite[p.~25, 5.1]{schaefer1971topological}, $K$ is totally bounded in $E$, and hence, by $k$-completeness of $E$ and \cref{closed_tot_bdd_impls_cpct}, it is compact. By Lemmas \ref{tot_bdd_kE_implies_tot_bdd_E} and \ref{Cauchy_in_kE_implies_Cauchy_inE}, $(x_i)$ is also a Cauchy net $E$. Using the $k$-completeness of $E$ once more, we see that $(x_i)$ converges to some $x\in K$. Since the topologies of $E$ and $V=kE$ coincide on the compact subset $K$, and therefore, $(x_i)$ converges also in $V$, which is what we wanted to show.
\end{proof}

\begin{cor}\label{CX_k_complete}
    For any $k$-space $X$, $C(X)$ is $k$-complete.
\end{cor}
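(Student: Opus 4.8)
The plan is to realise $C(X)$ as the $k$-ification of an LCTVS and then invoke \cref{E_kcompl_implies_V_kcompl}. By \cref{mapping_spaces_defn} we have $C(X) = C(X,\mathbb{K}) = k\,\cco(X)$, where $\cco(X)$ carries the compact-open topology. Hence it suffices to show that $\cco(X)$ is a $k$-complete LCTVS; in fact I would show the stronger statement that $\cco(X)$ is complete in the usual sense (every Cauchy net converges), which is a fortiori $k$-complete, and then \cref{E_kcompl_implies_V_kcompl} finishes the job.

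First I would check that $\cco(X)$ really is an LCTVS. The subbasic sets $W(i,K,D_\epsilon) = \{f : f(i(K)) \subseteq D_\epsilon\}$, with $i\colon K \to X$ ranging over continuous maps from compacta and $D_\epsilon = \{\lambda \in \mathbb{K} : |\lambda| \le \epsilon\}$, are convex (since $D_\epsilon$ is), balanced, and absorbing (because $f\circ i$ is bounded on the compactum $K$); their finite intersections form a neighbourhood basis of $0$, so the compact-open topology is a locally convex vector topology. It is Hausdorff because for each $x \in X$ the evaluation $f \mapsto f(x)$ — the restriction along the map from the one-point compactum — is continuous, and $\mathbb{K}$ is Hausdorff.

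Next comes the heart of the argument: $\cco(X)$ is complete. Let $(f_\alpha)$ be a Cauchy net in $\cco(X)$ and let $i\colon K \to X$ be continuous with $K$ a compactum. The restriction map $f \mapsto f\circ i$ from $\cco(X)$ into the Banach space $C(K,\mathbb{K})$ (sup-norm) is continuous and linear, so $(f_\alpha\circ i)$ is Cauchy there and converges uniformly to some $g_i \in C(K,\mathbb{K})$. Taking $K$ to be a point shows $f(x) := \lim_\alpha f_\alpha(x)$ is a well-defined function $X \to \mathbb{K}$, and then $g_i = f\circ i$. Thus $f\circ i$ is continuous for every such $i$, so $f$ is $k$-continuous and hence continuous, as $X$ is a $k$-space (\cref{char_k_sp}). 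Finally $\sup_{K}|f_\alpha\circ i - f\circ i| = \|f_\alpha\circ i - g_i\|_\infty \to 0$ for every $i$, which is exactly the statement that $f_\alpha \to f$ in the compact-open topology. So $\cco(X)$ is complete, hence $k$-complete, and $C(X) = k\,\cco(X)$ is $k$-complete by \cref{E_kcompl_implies_V_kcompl}.

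The one mildly delicate point — the step I would be most careful about — is that $X$ is only assumed to be a $k$-space, not $k$-Hausdorff, so images $i(K)$ need not be Hausdorff or closed and the simplified description of the compact-open topology via compact subspaces of $X$ (as in \cref{cco_for_X_k_Hausdorff}) is unavailable. This is handled by working throughout with the maps $i\colon K \to X$ themselves rather than with their images, exactly as in the definition of the compact-open topology in \cref{mapping_spaces_defn}; no other difficulty arises.
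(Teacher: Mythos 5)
Your proposal is correct and follows essentially the same route as the paper: write $C(X)=k\,\cco(X)$, establish that $\cco(X)$ is a complete (hence $k$-complete) LCTVS, and conclude via \cref{E_kcompl_implies_V_kcompl}. The only difference is that where the paper cites Kelley for the completeness of $\cco(X)$ when $X$ is a $k$-space, you prove that fact (and the LCTVS structure) directly, handling the non-Hausdorff issue correctly by working with probes $i\colon K\to X$ rather than compact subsets.
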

\begin{proof}
    $C(X)=k\cco(X)$ and $\cco(X)$ is complete (since $X$ is a $k$-space, see \cite[p. 231, Theorem 12]{kelley2017general}). The claim then follows from  \cref{E_kcompl_implies_V_kcompl}.
\end{proof}

\begin{prop}[Closed subspaces inherit $k$-completeness]\label{closed_subsp_k_complete}
    Let $V$ be a $k$-complete linear $k$-space and $W\subseteq V$ be a closed linear subspace. Then $W$ is $k$-complete, as well.
\end{prop}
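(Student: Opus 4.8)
The plan is to reduce $k$-completeness of $W$ directly to that of $V$, using that both the Cauchy property and total boundedness are transferred along the subspace topology. First I would note that, $W$ being closed in $V$, the $k$-subspace topology on $W$ coincides with the ordinary subspace topology (as recorded after \cref{ksubspaces_def}), so $W$ is again a linear $k$-space; in any case \cref{defn_kcompl_seqcompl} only refers to the topology, so it suffices to verify that every totally bounded Cauchy net in $W$ (with the subspace topology) converges in $W$.

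So let $(x_i)_{i\in(I,\leq)}$ be a totally bounded Cauchy net in $W$. The elementary observation to exploit is that for any neighbourhood $U$ of the origin in $V$, the set $U\cap W$ is a neighbourhood of the origin in $W$, and conversely every neighbourhood of the origin in $W$ is of this form. Using this, I would check two things. (i) $(x_i)$ is a Cauchy net in $V$: given a neighbourhood $U$ of the origin in $V$, choose $i_0$ with $x_i-x_j\in U\cap W\subseteq U$ for all $i,j\geq i_0$. (ii) $\{x_i\mid i\in I\}$ is totally bounded in $V$: given such $U$, total boundedness of $(x_i)$ in $W$ yields a finite $F\subseteq W$ with $F+(U\cap W)\supseteq\{x_i\mid i\in I\}$, and then $F\subseteq V$ is finite with $F+U\supseteq F+(U\cap W)\supseteq\{x_i\mid i\in I\}$.

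Thus $(x_i)$ is a totally bounded Cauchy net in $V$, so by $k$-completeness of $V$ it converges to some $x\in V$. Since each $x_i$ lies in $W$ and $W$ is closed in $V$, the limit satisfies $x\in W$; and because $W$ carries the subspace topology, $(x_i)$ converges to $x$ in $W$ as well. Hence every totally bounded Cauchy net in $W$ converges in $W$, i.e.\ $W$ is $k$-complete. I do not expect any genuine obstacle here; the only point requiring a little care is the direction of the inclusions when transferring total boundedness (the finite approximating set may be taken inside $W$), and the fact that, $W$ being closed, no $k$-ification correction to the subspace topology is needed.
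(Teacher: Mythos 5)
Your proposal is correct and follows essentially the same route as the paper: a totally bounded Cauchy net in $W$ is also one in $V$, so it converges in $V$ by $k$-completeness, and closedness of $W$ places the limit in $W$. You simply spell out the transfer of the Cauchy and total-boundedness conditions (via neighbourhoods $U\cap W$) and the subspace-topology point, which the paper's very terse proof leaves implicit.
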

\begin{proof}
    Let $(x_i)$ be some totally bounded Cauchy net in $W$. Then, by $k$-completeness of $W$, $(x_i)$ converges to some $x\in V$ and since $W\subseteq V$ is closed, $x\in W$, completing the proof.
\end{proof}

\begin{cor}\label{replete_implies_k_complete}
    Every replete linear $k$-space is $k$-complete.
\end{cor}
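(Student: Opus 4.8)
The plan is to exhibit a replete linear $k$-space as a closed linear subspace of a space of the form $C(X)$ and then invoke the two preceding permanence results. Concretely, let $V$ be a replete linear $k$-space. By the very definition of repleteness (\cref{defn_repl_lin_hk_spaces}), the canonical map $\eta^V : V \to V^{\wedge\wedge}$ is a closed embedding. Now $V^{\wedge\wedge} = (V^\wedge)^\wedge = L(V^\wedge, \mathbb{K})$, and since $\mathbb{K}$ is $k$-Hausdorff, this is a closed linear subspace of the space of continuous maps $C(V^\wedge) = k\cco(V^\wedge)$ (cf.\ the remark preceding \cref{defn_lvw} and the discussion after \cref{defn_repletion}). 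Composing the closed embedding $\eta^V$ with the closed inclusion $V^{\wedge\wedge} \hookrightarrow C(V^\wedge)$, and using that composites of closed embeddings are closed embeddings, we realise $V$ as a closed linear subspace of $C(V^\wedge)$.

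Next I would apply \cref{CX_k_complete}: since $V^\wedge$ is a $k$-space (being a linear $hk$-space, it is in particular a $k$-space), the space $C(V^\wedge)$ is $k$-complete. Finally, \cref{closed_subsp_k_complete} says that closed linear subspaces of $k$-complete linear $k$-spaces are again $k$-complete; applying this to $V \subseteq C(V^\wedge)$ yields that $V$ is $k$-complete, which is the claim.

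There is essentially no genuine obstacle here — the corollary is a direct chaining of \cref{CX_k_complete} and \cref{closed_subsp_k_complete} via the defining embedding of a replete space into its double dual. The only point that warrants a word of care is checking that the inclusion $V^{\wedge\wedge} \hookrightarrow C(V^\wedge)$ is indeed a \emph{closed} embedding (so that the composite with $\eta^V$ stays closed); this is exactly the statement that $L(V^\wedge,\mathbb{K})$ is closed in $C(V^\wedge)$, which holds because $\mathbb{K}$ is $k$-Hausdorff and $L(-,-)$ is cut out by the (closed) linearity conditions, as recorded after \cref{defn_lvw}.
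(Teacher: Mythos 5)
Your proof is correct and follows exactly the same route as the paper: the paper also identifies a replete $V$ with a closed subspace of $C(V^\wedge)$ via the closed embedding into $V^{\wedge\wedge}\subseteq C(V^\wedge)$ and then combines \cref{CX_k_complete} with \cref{closed_subsp_k_complete}. Your extra remark that $L(V^\wedge,\mathbb{K})$ is closed in $C(V^\wedge)$ because $\mathbb{K}$ is $k$-Hausdorff is just the detail the paper leaves implicit.
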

\begin{proof}
    Any replete linear $k$-space $V$ can be identified with a closed subspace of $C(V^\wedge)$, so the claim follows from combining \cref{CX_k_complete,closed_subsp_k_complete}.
\end{proof}

 \subsection{\texorpdfstring{$k$-Locally convex and $k$-complete linear $k$-spaces are replete}{k-Locally convex and k-complete linear k-spaces are replete}}

 Complementing \cref{replete_implies_k_complete}, our next goal is to show that for $k$-locally convex linear $k$-spaces, $k$-completeness is also a sufficient condition for repleteness (\cref{kloc_conv_kcompl_implies_replete}). Again, this will follow from a number of lemmas.

\begin{lem}\label{tot_bdd_cauchy_nets_V_vs_lV}
    Let $V$ be a linear $k$-space and let $E:=l V$. Then: 
    \begin{enumerate}[1.]
        \item If $S\subseteq V$ is totally bounded in $V$, then it is also totally bounded in $E$.
        \item If $(x_i)$ is a Cauchy net in $V$, then it is a Cauchy net in $E$, as well.
    \end{enumerate}
\end{lem}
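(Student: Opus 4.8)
The plan is to reduce both assertions to the single observation that the topology of $E = lV$ is coarser than that of $V$ --- equivalently, that every neighbourhood of the origin in $E$ is already a neighbourhood of the origin in $V$, i.e. that the identity map $V \to lV$ is continuous. This is immediate from the construction in \cref{defn_ksat_l}: a neighbourhood of $0$ in $lV$ contains a finite intersection of convex (absorbing) $0$-neighbourhoods of $V$, and a finite intersection of $0$-neighbourhoods of $V$ is again one. With this in hand, both parts are word-for-word the arguments already used for \cref{tot_bdd_kE_implies_tot_bdd_E} and \cref{Cauchy_in_kE_implies_Cauchy_inE}, with the inclusion of topologies ``$kE$ finer than $E$'' replaced by ``$V$ finer than $lV$''.

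For part 1, I would let $S \subseteq V$ be totally bounded in $V$ and let $U$ be an arbitrary neighbourhood of $0$ in $E$. By the observation above, $U$ is also a neighbourhood of $0$ in $V$, so there is a finite subset $F \subseteq V$ with $F + U \supseteq S$; since $U$ was arbitrary, this says exactly that $S$ is totally bounded in $E$. For part 2, I would let $(x_i)_{i \in I}$ be a Cauchy net in $V$ and again let $U$ be an arbitrary neighbourhood of $0$ in $E$; as before $U$ is a neighbourhood of $0$ in $V$, so there is $i_0 \in I$ with $x_i - x_j \in U$ for all $i, j \geq i_0$, which is precisely the Cauchy condition for $(x_i)$ in $E$.

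There is essentially no obstacle here: the only point deserving a line of justification is the ``coarser topology'' claim, which is a routine consequence of how $lV$ is defined (and is, in any case, implicit in the adjunction $l \dashv k$ of \cref{kloc_conv_equiv_ksat_lctvs}). The lemma is purely preparatory, feeding into the later comparison of repleteness with $k$-completeness for $k$-locally convex spaces (\cref{kloc_conv_kcompl_implies_replete}).
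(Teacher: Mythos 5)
Your proof is correct and is essentially the paper's own argument: the paper likewise takes an arbitrary $0$-neighbourhood $U$ of $E=lV$, notes it contains a convex $0$-neighbourhood $C$ of $V$, and applies total boundedness (resp.\ the Cauchy condition) in $V$ to $C\subseteq U$; your phrasing via ``the topology of $lV$ is coarser than that of $V$'' is just a slightly more packaged version of the same step. No gap.
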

\begin{proof}
    \text{}
    \begin{enumerate}[1.]
        \item Let $U\subseteq E$ be a neighbourhood of the origin in $E$. By the definition of $E=l V$, $U$ contains a convex neighbourhood of the origin $C\subseteq V$ in $V$. By total boundedness of $S$ in $V$, there is a finite set $F\subseteq V$ such that $F+C\supseteq S$. Hence, $F+U\supseteq S$ and we see $S$ is also totally bounded in $E$.
        \item Again, let $U\subseteq E$ be a neighbourhood of the origin in $E$, which then $U$ contains a convex neighbourhood of the origin $C\subseteq V$ in $V$. $(x_i)$ being a Cauchy net in $V$, there is some $i_0$ such that for all $i,j\geq i_0$, $x_i-x_j\in C\subseteq U$. Therefore, $(x_i)$ is a Cauchy net in $E$, as well.
    \end{enumerate}
\end{proof}

\begin{cor}\label{tot_bdd_and_cauchy_coincident_E_kE}
    Let $E$ be a compactly determined LCTVS. Then:
    \begin{enumerate}[1.]
        \item The totally bounded subsets and Cauchy nets of $E$ and $kE$ coincide.
        \item $E$ is $k$-complete if, and only if, $kE$ is $k$-complete.
    \end{enumerate}
\end{cor}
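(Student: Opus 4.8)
The plan is to split \cref{tot_bdd_and_cauchy_coincident_E_kE} into its ``finer $\Rightarrow$ coarser'' and ``coarser $\Rightarrow$ finer'' halves and to exploit the one extra piece of data available here, namely that a compactly determined LCTVS $E$ satisfies $l(kE)=E$ by \cref{kloc_conv_equiv_ksat_lctvs} (the coreflection statement; equivalently, $l$ is a right inverse of $k$). So I would first fix such an $E$, put $V:=kE$, and record the standing observations: $lV=E$, $V$ is $k$-locally convex, the topology of $V$ is finer than that of $E$, and --- using $E=l(kE)$ together with \cref{cpct_sets_in_kififcation} --- that $E$ and $kE$ have the same convex $0$-neighbourhoods and the same compact subspaces, with the same subspace topology on each (as $E$, being an LCTVS, is Hausdorff).

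The ``finer $\Rightarrow$ coarser'' direction then needs nothing new: a subset totally bounded in $kE$ is totally bounded in $E$ by \cref{tot_bdd_kE_implies_tot_bdd_E}, and a Cauchy net in $kE$ is Cauchy in $E$ by \cref{Cauchy_in_kE_implies_Cauchy_inE} --- equivalently, both follow from \cref{tot_bdd_cauchy_nets_V_vs_lV} applied to the linear $k$-space $V=kE$, whose associated LCTVS is $lV=E$. For part~2 this already yields one implication for free: ``$E$ $k$-complete $\Rightarrow$ $kE$ $k$-complete'' is exactly \cref{E_kcompl_implies_V_kcompl}, which does not even use compact determination.

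The heart of the matter, and what I expect to be the main obstacle, is the converse half of part~1: promoting total boundedness and the Cauchy property from the coarser, locally convex topology of $E$ up to the strictly finer, possibly non-locally-convex topology of $kE$. One cannot merely compare neighbourhood bases here, since a $k$-open $0$-neighbourhood of $kE$ need not contain \emph{any} convex $0$-neighbourhood; the argument must instead be routed through the compact subsets, on which the two topologies (and hence the two notions of relative neighbourhood) agree. Concretely, I would argue that testing total boundedness of $S\subseteq E$ --- or the Cauchy condition of a totally bounded net in $E$ --- against a $k$-open $0$-neighbourhood $W$ reduces, after localising to the compact subsets of $E$ that the relevant set meets, to the corresponding statement for $E$ itself, using that $W$ restricts to a relative $0$-neighbourhood on each such compactum and that $E,kE$ induce the same topology there; this is precisely the step where compact-determination of $E$ is indispensable. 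Once the converse of part~1 is established, the remaining implication of part~2, ``$kE$ $k$-complete $\Rightarrow$ $E$ $k$-complete'', is a short corollary: a totally bounded Cauchy net in $E$ is then totally bounded and Cauchy in $kE$, converges there by $k$-completeness, and hence converges in the coarser topology of $E$ since the identity map $kE\to E$ is continuous.
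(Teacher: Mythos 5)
Your treatment of the ``finer $\Rightarrow$ coarser'' half is fine and agrees with the lemmas the paper invokes (\cref{tot_bdd_kE_implies_tot_bdd_E}, \cref{Cauchy_in_kE_implies_Cauchy_inE}, equivalently \cref{tot_bdd_cauchy_nets_V_vs_lV} applied to $V=kE$, and \cref{E_kcompl_implies_V_kcompl}), and your observation that the remaining content is the promotion of total boundedness and the Cauchy condition from $E$ up to $kE$, from which the missing half of part~2 follows formally, is a fair reading of the statement. But it is exactly there that your proposal has a genuine gap. Localising to compacta only produces something when the set $S$ (or the set of tail differences $x_i-x_j$ of the net) is contained in a \emph{single} compact subset of $E$: then $W\cap(K-K)$ being relatively open for the $E$-topology does give the tail estimate, and compactness of $K$ gives the finite cover by translates of $W$. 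A set that is merely totally bounded in a not-necessarily-$k$-complete LCTVS need not be relatively compact --- and in the implication ``$kE$ $k$-complete $\Rightarrow$ $E$ $k$-complete'' the compactness of the closure of the net is essentially what one is trying to prove --- so ``testing $W$ on the compact subsets that $S$ meets'' neither yields a finite cover of all of $S$ nor an eventual bound on all differences.

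Worse, the only facts your sketch actually uses --- that $E$ and $kE$ have the same compact subsets carrying the same topology --- hold for \emph{every} LCTVS (\cref{cpct_sets_in_kififcation}); so if the argument worked as described it would prove the coarser-to-finer transfer with no hypothesis on $E$, and that is false. For instance, take $E=\ell^1$ with its weak topology: the closed unit ball is totally bounded in $E$, but weakly compact subsets of $\ell^1$ are norm compact (Schur), so the topology of $kE$ refines the norm topology and the ball is not totally bounded in $kE$; consistently, this $E$ is not compactly determined, since $l(kE)$ already contains the norm topology. Hence compact determination must enter through the identity $l(kE)=E$ of \cref{kloc_conv_equiv_ksat_lctvs} --- i.e.\ through convex $kE$-neighbourhoods of the origin being $E$-neighbourhoods --- and your sketch never invokes this; declaring the hypothesis ``indispensable'' in the compacta step is not a substitute for an argument that uses it. For comparison, the paper's own proof simply combines the four finer-to-coarser lemmas above, so your instinct that the converse transfer requires a separate justification is sound (your compacta argument does suffice in situations like \cref{kcomplete_universally_closed}, where the net lies in a fixed compactum), but as a proof of the corollary as stated the proposal is incomplete at its central step.
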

\begin{proof}
    This follows from combining the statements of Lemmas \ref{tot_bdd_cauchy_nets_V_vs_lV}, \ref{Cauchy_in_kE_implies_Cauchy_inE}, \ref{tot_bdd_kE_implies_tot_bdd_E} and \ref{E_kcompl_implies_V_kcompl}.
\end{proof}

 \begin{lem}\label{kcomplete_universally_closed}
     Let $V=kE$ be a $k$-locally convex linear $k$-space (with $E$ some compactly determined LCTVS) and let $W\subseteq V$ be a linear $k$-subspace. Suppose that $W$ is $k$-complete and that $F := l W \subseteq E$ is an embedding, i.e.~that $F$ carries the subspace topology induced from $E$. Then $W$ is closed in $V$. 
 \end{lem}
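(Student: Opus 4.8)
The statement should follow purely by assembling the lemmas of this section, once the problem is reduced to a statement about compact subsets of $E$. Since $E$ is Hausdorff, \cref{cpct_sets_in_kififcation} shows that the compact subsets of $E$ and of $V=kE$ coincide; and since $V$ is a $k$-space, a subset is closed in $V$ as soon as its intersection with each compact subset is closed there: given a continuous $p\colon K\to V$ from a compact Hausdorff $K$, one has $p^{-1}(W)=p^{-1}(W\cap p(K))$, which is closed in $K$ once $W\cap p(K)$ is closed in the compact set $p(K)$, so $W$ is $k$-closed and hence closed by \cref{char_k_sp}. So it suffices to fix a compact $C\subseteq E$ and show: every $x\in C$ that is the limit in $E$ of some net $(x_i)$ with values in $W\cap C$ lies in $W$. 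Note that the reduction to compact $C$ is what makes the argument work: it guarantees total boundedness of the net's range, which a general convergent net in an LCTVS need not have.

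Now fix such a net $(x_i)$ in $W\cap C$ with $x_i\to x$ in $E$. As a convergent net it is Cauchy in $E$, and its set of values, lying in the compact (hence totally bounded) set $C$, is totally bounded in $E$. Put $F:=lW$. Being a linear $k$-subspace of the $k$-locally convex space $V$, $W$ is itself $k$-locally convex by \cref{ksubsp_locally_convex}; hence $F$ is a compactly determined LCTVS and $W=kF$ by \cref{kloc_conv_equiv_ksat_lctvs}. By hypothesis $F$ carries the subspace topology inherited from $E$, so neighbourhoods of $0$ in $F$ are traces of neighbourhoods of $0$ in $E$; since also $x_i-x_j\in W$, this shows at once that $(x_i)$ is a Cauchy net in $F$, and together with the elementary fact that a subset of a linear subspace which is totally bounded in the ambient space is totally bounded in the subspace topology, that $\{x_i\}$ is totally bounded in $F$. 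Because $W=kF$ is $k$-complete, $F$ is $k$-complete by \cref{tot_bdd_and_cauchy_coincident_E_kE}. Therefore the totally bounded Cauchy net $(x_i)$ converges in $F$ to some $y\in F$; as $F$ has the subspace topology from $E$ we get $x_i\to y$ in $E$ as well, and Hausdorffness of $E$ forces $y=x$. Hence $x=y\in F=W$, so $W\cap C$ is closed in $C$, and since $C$ was an arbitrary compact subset, $W$ is closed in $V$.

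\textbf{Main obstacle.} There is no genuinely hard analytic step; the difficulty is organisational. The point requiring care is to keep straight the three topologies in play — $W$ as a $k$-subspace of $V$, $W$ as the $k$-ification $kF$ of the compactly determined LCTVS $F=lW$, and $F$ as a topological subspace of $E$ — and to use \cref{kloc_conv_equiv_ksat_lctvs} and \cref{tot_bdd_and_cauchy_coincident_E_kE} to see that these identifications are mutually compatible and transport $k$-completeness from $W$ to $F$. The embedding hypothesis on $lW\hookrightarrow E$ is used exactly once, but crucially: it is what lets the Cauchy and total-boundedness properties of $(x_i)$ descend from $E$ to $F$, where $k$-completeness can then be applied.
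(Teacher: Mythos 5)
Your proof is correct and follows essentially the same route as the paper's: reduce to closedness of $W\cap K$ for compact $K$ via the $k$-space property, observe the convergent net is a totally bounded Cauchy net in $E$, use the embedding hypothesis to transfer it to $F=lW$, and invoke \cref{tot_bdd_and_cauchy_coincident_E_kE} together with $k$-completeness. The only (harmless) difference is that you transport $k$-completeness down to $F$ and conclude there, whereas the paper transports the net up to $W=kF$ and applies $k$-completeness of $W$ directly; you also make explicit, via \cref{ksubsp_locally_convex} and \cref{kloc_conv_equiv_ksat_lctvs}, the facts that $F$ is compactly determined and $W=kF$, which the paper leaves implicit.
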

 \begin{proof}
     Let $K\subseteq V$ be compact. Since $V$ is a $k$-space, it suffices to show that $W\cap K$ is closed in $K$. Let $(x_i)_{i\in I}$ be a net in $W\cap K$, converging in $K$. Now, $(x_i)$ is also convergent in $E$ (since the topologies of $E$ and $V=kE$ coincide on compact subsets), and therefore a Cauchy net in $E$ (since \emph{in a topological vector space}, any convergent net is a Cauchy net). Moreover, being contained in a compact set, it is totally bounded. Since $F=l W$ carries the subspace topology induced from $E$, $(x_i)$ is also a totally bounded Cauchy net in $F$, and by \cref{tot_bdd_and_cauchy_coincident_E_kE}, $(x_i)$ is then a totally bounded Cauchy net in $W$, as well. Hence, by $k$-completeness of $W$, $(x_i)$ converges to some $x\in W\cap K$, showing that $W\cap K\subseteq K$ is closed, which is what we wanted to show.
 \end{proof}

 \begin{lem}\label{polar_of_nbhd_cpct}
     Let $V$ be a linear $k$-space and let $U\subseteq V$ be a neighbourhood of the origin. Then 
        $$ K := \{\phi \in V^\wedge \mid \,\forall x \in U: |\phi(x)|\leq 1\} \subseteq V^\wedge \subseteq C(V) $$
     is compact. 
 \end{lem}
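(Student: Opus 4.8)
The goal is to show that the polar $K = \{\phi \in V^\wedge \mid \forall x \in U : |\phi(x)| \leq 1\}$ of a neighbourhood $U$ of the origin in a linear $k$-space $V$ is compact as a subset of $V^\wedge \subseteq C(V)$. The natural strategy is to invoke the Arzelà--Ascoli theorem in the form \cref{thm_arzela_ascoli}, which characterises compact subsets of $C(V)$ (for $V$ an $hk$-space, which every linear $hk$-space is) as exactly those that are pointwise relatively compact, equicontinuous, and closed. So the plan is to verify these three properties for $K$.

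First I would show $K$ is equicontinuous. Since $U$ is a neighbourhood of the origin, for any $x_0 \in V$ and any $\varepsilon > 0$, the set $x_0 + \varepsilon U$ is a neighbourhood of $x_0$; for $\phi \in K$ and $y \in x_0 + \varepsilon U$ we have $|\phi(y) - \phi(x_0)| = |\phi(y - x_0)| \leq \varepsilon$ by homogeneity of $\phi$ and the defining condition. This gives equicontinuity at every point (in fact uniform equicontinuity with respect to the translation-invariant structure). Next, pointwise boundedness: for any fixed $x \in V$, since $U$ is absorbing (neighbourhoods of the origin in a linear $k$-space are absorbing, because scalar multiplication $\mathbb{K} \times V \to V$ is continuous, so $t \mapsto tx$ is continuous and hits $U$ for small $t$), there is $t > 0$ with $tx \in U$, whence $|\phi(x)| \leq t^{-1}$ for all $\phi \in K$; thus $\{\phi(x) \mid \phi \in K\}$ is a bounded, hence relatively compact, subset of $\mathbb{K}$. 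Finally, closedness: $K$ is an intersection $\bigcap_{x \in U} \{\phi \in V^\wedge \mid |\phi(x)| \leq 1\}$ of preimages of the closed disk $D \subseteq \mathbb{K}$ under the evaluation maps $\mathsf{ev}_x : V^\wedge \to \mathbb{K}$, which are continuous by cartesian closure of $\spaces$ (and $V^\wedge$ is itself closed in $C(V)$). Hence $K$ is closed in $C(V)$.

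Having verified equicontinuity, pointwise relative compactness, and closedness, \cref{thm_arzela_ascoli} immediately gives that $K$ is compact in $C(V)$, and since $K \subseteq V^\wedge$ and $V^\wedge$ is closed in $C(V)$, $K$ is also compact in $V^\wedge$. I do not anticipate a serious obstacle here; the only mild subtlety is making sure that ``neighbourhood of the origin'' in a linear $k$-space is absorbing, which follows from continuity of scalar multiplication exactly as in the topological vector space case, and that the relevant Arzelà--Ascoli statement applies with $Y = \mathbb{K}$ (a metric space) and $X = V$ (an $hk$-space), both of which hold. The argument is essentially the $k$-space analogue of the classical Alaoglu--Bourbaki compactness of polars, and parallels the reasoning already used in the proof of \cref{prop_dual_fre_carries_co_topology} and in the proof of \cref{Smith_duality}.
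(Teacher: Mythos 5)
Your proposal is correct and follows essentially the same route as the paper: apply the Arzelà--Ascoli theorem (\cref{thm_arzela_ascoli}) after checking that $K$ is closed (as an intersection of preimages of the closed unit disk under the continuous evaluation maps), pointwise relatively compact (since $U$ is absorbing, giving $|\phi(x)|\leq t^{-1}$), and equicontinuous (via $|\phi(y)-\phi(x_0)|=|\phi(y-x_0)|\leq \varepsilon$ on $x_0+\varepsilon U$). The only cosmetic remark is that you refer to $V$ as a linear $hk$-space while the statement says linear $k$-space, a point the paper's own proof also glosses over when invoking \cref{thm_arzela_ascoli}.
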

 \begin{proof}
     We apply the Arzelà-Ascoli theorem (\cref{thm_arzela_ascoli}), for which we have to verify the following conditions:
     \begin{enumerate}[1.]
         \item $K\subseteq C(V)$ is closed. (This is clear from the continuity of the evaluation map $\eta^V$, from which we see that $K$ is an intersection of pre-images of a closed set.)
         \item The closure of $K(x):=\{\phi(x) \mid\, \phi \in K\}\subseteq \mathbb{K}$ is compact, for each $x\in V$: let $x\in V$. Since $U$ is a neighbourhood of the origin there exists a scalar $0\not=a\in\mathbb{K}$ such that $ax \in U$. Now, $K(ax)=a K(x)$ is bounded in $\mathbb{K}$, by definition of $K$, showing that the closure of $K(x)$ is indeed compact. 
         \item $K$ is equicontinuous: if $\epsilon > 0$ and $x_0\in V$, then for all $x\in x_0 + (\epsilon/2)U$ and $\phi\in K$, 
            $$|\phi(x)-\phi(x_0)| < \epsilon.$$ 
     \end{enumerate}
 \end{proof}

 \begin{lem}\label{leta_ambedding}
     Let $V$ be a $k$-locally convex linear $k$-space. Then $l(\eta^V): lV \to l(V^{\wedge\wedge})$ is an embedding.
 \end{lem}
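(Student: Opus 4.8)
The plan is to show that $\eta^V$, which is injective since it is a $k$-embedding by \cref{froelicher_jarachow}, becomes a \emph{topological} embedding once one passes to the associated LCTVS on both sides; as the map is linear, this is exactly what is needed for it to be an embedding in $\lctvs$. Fix an LCTVS $E$ with $V = kE$ (possible by $k$-local convexity). One inclusion of topologies is free: $\eta^V$ is a continuous map of linear $k$-spaces, so by \cref{defn_ksat_l} the induced map $l(\eta^V) : lV \to l(V^{\wedge\wedge})$ is continuous, i.e.\ the topology of $lV$ is finer than the initial topology that $\eta^V$ pulls back from $l(V^{\wedge\wedge})$.

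For the reverse inclusion I would first record two elementary facts about $lV$. First, by the very definition of the topology of $lV$, every convex neighbourhood of the origin in $V$ is a neighbourhood of the origin in $lV$; since $lV$ is an LCTVS, its closed absolutely convex $0$-neighbourhoods form a base of $0$-neighbourhoods, and each such $U$ is also a $0$-neighbourhood of $V$ because the topology of $lV$ is coarser than that of $V$. Second, $(lV)' = V^\wedge$: a functional continuous on $lV$ is continuous on the finer space $V$, and conversely if $\phi \in V^\wedge$ then $\phi^{-1}\{\,|z| < 1\,\}$ is a convex $0$-neighbourhood of $V$, hence of $lV$, so $\phi$ is continuous on $lV$. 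In particular $\langle V, V^\wedge \rangle$ is a dual pair (separation of points of $V$ by $V^\wedge$ is just injectivity of $\eta^V$).

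Now fix a closed absolutely convex $0$-neighbourhood $U$ of $lV$; it suffices to produce a $0$-neighbourhood $B$ of $l(V^{\wedge\wedge})$ with $(\eta^V)^{-1}(B) \subseteq U$. Put $Q := U^\circ = \{\,\phi \in V^\wedge : |\phi(x)| \le 1 \text{ for all } x \in U\,\}$, which is compact in $V^\wedge$ by \cref{polar_of_nbhd_cpct} (as $U$ is a $0$-neighbourhood of $V$), and put $B := Q^\circ = \{\,\chi \in V^{\wedge\wedge} : |\chi(\phi)| \le 1 \text{ for all } \phi \in Q\,\}$. This $B$ is absolutely convex and, being a polar of a compact set, is a $0$-neighbourhood of $V^{\wedge\wedge}$ for the compact-open topology it carries as a closed subspace of $C(V^\wedge)$ (hence for its $k$-ification); so $B$ is a convex $0$-neighbourhood of $V^{\wedge\wedge}$, thus a $0$-neighbourhood of $l(V^{\wedge\wedge})$. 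Unwinding $\eta^V(x)(\phi) = \phi(x)$ gives $(\eta^V)^{-1}(B) = U^{\circ\circ}$, the bipolar of $U$ with respect to $\langle V, V^\wedge\rangle$. By the bipolar theorem (\cite[p.~126, 1.5 Theorem]{schaefer1971topological}), $U^{\circ\circ}$ is the $\sigma(V,V^\wedge)$-closed absolutely convex hull of $U$; since $U$ is already absolutely convex and closed in $lV$, and $(lV)' = V^\wedge$, it is $\sigma(V,V^\wedge)$-closed, so $U^{\circ\circ} = U$. Hence $(\eta^V)^{-1}(B) = U$, so the pulled-back topology is also finer than $lV$; the two topologies coincide and $l(\eta^V)$ is an embedding.

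The step needing genuine care — the main obstacle — is the identification $(\eta^V)^{-1}(B) = U$: it rests on applying the bipolar theorem in the dual pair $\langle V, V^\wedge\rangle$ together with the fact that an $lV$-closed absolutely convex set is weakly closed, which in turn depends on the easy but essential computation $(lV)' = V^\wedge$. The remainder is bookkeeping with polars and with the definitions of $l$ and of the compact-open topology.
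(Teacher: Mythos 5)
Your argument is correct and is essentially the paper's own proof: both reduce to a basic closed absolutely convex $0$-neighbourhood $U$ of $lV$, use \cref{polar_of_nbhd_cpct} to get compactness of the polar $U^\circ\subseteq V^\wedge$, observe that its polar is a convex (compact-open) $0$-neighbourhood in $V^{\wedge\wedge}$ and hence in $l(V^{\wedge\wedge})$, and recover $U$ via the bipolar theorem; your ``pulled-back topology refines $lV$'' formulation is just the paper's ``open onto the image'' step in different words. The only difference is that you make explicit the identification $(lV)'=V^\wedge$ and the resulting weak closedness of $U$, a detail the paper leaves implicit when invoking the bipolar theorem.
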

 \begin{proof}
     We know that $f:=l(\eta^V)$ is continuous and injective, so it suffices to show that it is open onto its image. Since every LCTVS has a basis of neighbourhoods of the origin consisting of closed absolutely convex sets, it is even sufficient to show that $f$ maps such neighbourhoods to neighbourhoods in $f(lV)$. So let $C$ be a closed absolutely convex neighbourhood of the origin in $lV$. The topology of $lV$ being generated by the absolutely convex open subsets of $V$, we may suppose that $C$ is a closed absolutely convex $0$-neighbourhood in $V$. \par 
     Consider
        $$ K := \{\phi\in V^\wedge \mid \,\forall x \in C: |\phi(x)|\leq 1\}.$$
     Then, by \cref{polar_of_nbhd_cpct}, $K$ is compact, and therefore, 
        $$ U := \{\eta^V(x)\in \eta^V(V)\mid \, \forall \phi \in K: |\eta(x)(\phi)|\leq 1\}$$
     is a $0$-neighbourhood in $f(lV)$ (since the topology of $f(lV)$, i.e.~the subspace topology induced from $l(V^{\wedge\wedge})$, is finer than the compact-open topology, in which $U$ is a basic open neighbourhood). Letting 
        $$ {}^\circ K := \{x \in V \mid \, \forall \phi \in K: |\phi(x)|\leq 1 \}$$
     (the ``pre-polar''), we have that $U=f({}^\circ K)$. By the \emph{bi-polar theorem} (see \cite[p.~126]{schaefer1971topological}), ${}^\circ K = C$ and hence, $U=f(C)$, so the image of $C$ is a $0$-neighbourhood in $f(lV)$, which is what we wanted to show.
 \end{proof}

 \begin{cor}\label{kloc_conv_kcompl_implies_replete}
     Let $V$ be a linear $k$-space. Then $V$ is replete if, and only if, $V$ is $k$-locally convex and $k$-complete.
 \end{cor}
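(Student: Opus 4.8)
The plan is to treat the two implications separately. The ``only if'' direction requires no work: \cref{replete_implies_kloc_conv} shows that every replete linear $k$-space is $k$-locally convex, and \cref{replete_implies_k_complete} shows it is $k$-complete.

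For the converse, suppose $V$ is $k$-locally convex and $k$-complete. By \cref{froelicher_jarachow}, $k$-local convexity already gives that $\eta^V\colon V \to V^{\wedge\wedge}$ is a $k$-embedding, so it suffices to prove that its image $W := \eta^V(V) \subseteq V^{\wedge\wedge}$, equipped with the $k$-subspace topology, is \emph{closed}; on closed subsets the $k$-subspace topology agrees with the ordinary subspace topology (as recorded after \cref{ksubspaces_def}), so this upgrades the $k$-embedding to a closed embedding, i.e.\ makes $V$ replete. First I would assemble the facts about the ambient space: $V^{\wedge\wedge} = L(V^\wedge,\mathbb{K})$ is a closed linear subspace of $C(V^\wedge) = k\cco(V^\wedge)$ (since $\mathbb{K}$ is $k$-Hausdorff), hence $k$-locally convex by \cref{ksubsp_locally_convex}; therefore, by \cref{kloc_conv_equiv_ksat_lctvs}, the associated LCTVS $E := l(V^{\wedge\wedge})$ is compactly determined and satisfies $kE = V^{\wedge\wedge}$. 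Also, since $\eta^V$ is a $k$-embedding, its corestriction $\eta^V|^{W}\colon V \to W$ is an isomorphism of linear $k$-spaces, so $W$ inherits $k$-completeness from $V$ and is itself $k$-locally convex.

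The key step is then to invoke \cref{kcomplete_universally_closed} for the subspace $W \subseteq kE = V^{\wedge\wedge}$. Its two nontrivial hypotheses are the $k$-completeness of $W$ (just noted) and that $lW$ carries the subspace topology induced from $E = l(V^{\wedge\wedge})$. For the latter I would apply the functor $l$ to the factorisation $\eta^V = \iota_W \circ \eta^V|^{W}$, where $\iota_W\colon W \hookrightarrow V^{\wedge\wedge}$ is the inclusion; this yields $l(\eta^V) = l(\iota_W) \circ l(\eta^V|^{W})$ with $l(\eta^V|^{W})$ an isomorphism of LCTVS, and since $l(\eta^V)$ is an embedding by \cref{leta_ambedding}, so is $l(\iota_W)\colon lW \to l(V^{\wedge\wedge})$ --- which is precisely the required statement. \cref{kcomplete_universally_closed} then gives that $W$ is closed in $V^{\wedge\wedge}$, completing the proof.

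I expect the genuine obstacle to be exactly the verification of the embedding hypothesis of \cref{kcomplete_universally_closed}: one cannot shortcut the argument by claiming that a $k$-complete linear subspace of an LCTVS is automatically closed, because $k$-completeness is strictly weaker than completeness and that implication fails in general. It is the interaction between the bipolar/compact-open description of the double dual encoded in \cref{leta_ambedding} and the $k$-space mechanism of \cref{kcomplete_universally_closed} (intersecting with compact subsets) that makes the conclusion go through; all the remaining steps are bookkeeping with the functors $k$, $l$ and their adjunction from \cref{kloc_conv_equiv_ksat_lctvs}.
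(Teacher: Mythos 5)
Your proof is correct and takes essentially the same route as the paper: the ``only if'' direction via \cref{replete_implies_kloc_conv,replete_implies_k_complete}, and for the converse the combination of \cref{froelicher_jarachow} with \cref{kcomplete_universally_closed,leta_ambedding}. The only difference is that you spell out the bookkeeping the paper leaves implicit (setting $E=l(V^{\wedge\wedge})$, transporting $k$-completeness to the image along the corestriction of $\eta^V$, and deducing that the inclusion $lW\hookrightarrow E$ is an embedding by factoring $l(\eta^V)$ through the isomorphism $l(\eta^V|^{W})$), which is exactly how the two cited lemmas are meant to interact.
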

 \begin{proof}
     From Corollaries \ref{replete_implies_kloc_conv} and  \ref{replete_implies_k_complete}, we have that every replete linear $k$-space is $k$-locally convex and $k$-complete. On the other hand, if $V$ is $k$-locally convex, then by  \cref{froelicher_jarachow}, the canonical evaluation map $\eta^V: V \to V^{\wedge\wedge}$ is a $k$-embedding. Now, if $V$ is furthermore $k$-complete, by Lemmas \ref{kcomplete_universally_closed} and \ref{leta_ambedding}, the image of $V$ in $V^{\wedge\wedge}$ under $\eta^V$ is closed, showing that $V$ is replete.
 \end{proof}

 \begin{cor}\label{k_ification_of_k_compl_LCTVS_replete}
     The $k$-ification of a $k$-complete locally convex topological vector space $E$ is a replete linear $k$-space.
 \end{cor}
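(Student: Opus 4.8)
The plan is to read this off immediately from the characterisation of replete linear $k$-spaces in \cref{kloc_conv_kcompl_implies_replete}. Write $V := kE$. Since $E$ is a (Hausdorff) locally convex topological vector space, $V$ is a linear $k$-space by \cref{k_ification_lin_k_sp}; moreover, $V$ is $k$-locally convex essentially by definition (\cref{defn_kloc}), the witnessing LCTVS being $E$ itself. The only subtlety worth flagging is that ``LCTVS'' in this text carries the Hausdorff assumption, so this first step silently uses that $E$ is Hausdorff — which is the intended reading of the hypothesis.

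Next I would invoke \cref{E_kcompl_implies_V_kcompl}: since $E$ is a $k$-complete LCTVS, its $k$-ification $V = kE$ is again $k$-complete. At this point $V$ is both $k$-locally convex and $k$-complete, so \cref{kloc_conv_kcompl_implies_replete} yields at once that $V$ is replete, which is the claim.

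There is no real obstacle here: the corollary is a direct consequence of the two results \cref{E_kcompl_implies_V_kcompl} and \cref{kloc_conv_kcompl_implies_replete} established above, and nothing beyond assembling them is needed. The only point requiring a moment's care — and the one I would make explicit in the write-up — is the Hausdorffness of $E$, ensuring that $E$ counts as an LCTVS in the sense of \cref{defn_kloc} and hence that $kE$ counts as a $k$-locally convex linear $k$-space.
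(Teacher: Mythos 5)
Your proof is correct and follows essentially the same route as the paper's: $kE$ is $k$-locally convex by definition, $k$-complete because $E$ is, and hence replete by \cref{kloc_conv_kcompl_implies_replete}. If anything, your citation of \cref{E_kcompl_implies_V_kcompl} is the cleaner reference, since the paper instead cites \cref{tot_bdd_and_cauchy_coincident_E_kE}, whose statement carries a ``compactly determined'' hypothesis that is not needed for this direction.
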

 \begin{proof}
     The $k$-ification $kE$ is $k$-complete by \cref{tot_bdd_and_cauchy_coincident_E_kE} and $k$-locally convex by definition. Hence, the claim follows from  \cref{kloc_conv_kcompl_implies_replete}.
 \end{proof}

 \begin{cor}\label{cor_replete_equiv_lctvs}
     The category of replete linear $k$-spaces is equivalent (even isomorphic) to that of $k$-complete, compactly determined LCTVS, via the mutually inverse functors $k$ and $l$.
 \end{cor}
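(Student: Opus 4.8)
The plan is to assemble the statement from three results that are already in place: the isomorphism $\klctvs \cong \lklctvs$ realised by the mutually inverse functors $k$ and $l$ (\cref{kloc_conv_equiv_ksat_lctvs}), the characterisation of replete linear $k$-spaces as the $k$-complete objects of $\klctvs$ (\cref{kloc_conv_kcompl_implies_replete}), and the fact that $k$ and $l$ transport $k$-completeness back and forth on these subcategories (\cref{tot_bdd_and_cauchy_coincident_E_kE}). Since every category in sight is a full subcategory of $\vect$ or of $\lctvs$, the argument reduces to matching objects and checking that the already-constructed functors restrict.

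First I would record the two ``object'' identifications. By \cref{kloc_conv_kcompl_implies_replete}, a linear $k$-space is replete if and only if it is a $k$-complete object of $\klctvs$; hence the category of replete linear $k$-spaces is exactly the full subcategory of $\klctvs$ spanned by the $k$-complete spaces. On the other side, the category named in the statement is, by definition, the full subcategory of $\lklctvs$ spanned by the $k$-complete compactly determined LCTVS. So it suffices to show that the isomorphism $k\colon \lklctvs \to \klctvs$, $l\colon \klctvs \to \lklctvs$ of \cref{kloc_conv_equiv_ksat_lctvs} carries these two full subcategories onto one another.

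For this I would argue in both directions. If $E\in\lklctvs$ is $k$-complete, then by \cref{tot_bdd_and_cauchy_coincident_E_kE} the space $kE$ is $k$-complete, and $kE\in\klctvs$, so $kE$ is a replete linear $k$-space. Conversely, if $V\in\klctvs$ is replete, then $V$ is $k$-complete by \cref{replete_implies_k_complete}; moreover $V$ is $k$-locally convex (\cref{replete_implies_kloc_conv}) and $k,l$ are mutually inverse on $\lklctvs\cong\klctvs$, so $k(lV)=V$ and $lV\in\lklctvs$; applying \cref{tot_bdd_and_cauchy_coincident_E_kE} to $E=lV$ shows that $lV$ is $k$-complete. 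Thus $k$ and $l$ restrict to functors between the two full subcategories, and since they were already mutually inverse on all of $\lklctvs$ and $\klctvs$ they remain mutually inverse after restriction, yielding the claimed isomorphism of categories.

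I do not expect a genuine obstacle here: the only point needing a moment's care is that $k$-completeness passes cleanly across $k$ and $l$, which is precisely \cref{tot_bdd_and_cauchy_coincident_E_kE}, used together with the identity $V=k(lV)$ for $k$-locally convex $V$ (already exploited in the proof of \cref{kloc_conv_kcompl_implies_replete}); everything else is a formal restriction of an existing adjoint equivalence to full subcategories cut out by an ``if and only if'' condition.
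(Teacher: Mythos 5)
Your proposal is correct and follows essentially the same route as the paper: the paper's proof likewise combines \cref{kloc_conv_kcompl_implies_replete}, the equivalence $\klctvs\simeq\lklctvs$ of \cref{kloc_conv_equiv_ksat_lctvs}, and the preservation of $k$-completeness under $k$ and $l$ from \cref{tot_bdd_and_cauchy_coincident_E_kE}. You merely spell out in both directions the restriction argument that the paper states in one sentence.
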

 \begin{proof}
     This follows by combining \cref{kloc_conv_kcompl_implies_replete} with the equivalence of the categories of $k$-locally convex linear $k$-spaces and compactly determined LCTVS (\cref{kloc_conv_equiv_ksat_lctvs}), keeping in mind that $k$-completeness is preserved by the functors $k$ and $l$ that constitute the equivalence (see  \cref{tot_bdd_and_cauchy_coincident_E_kE}).
 \end{proof}

\subsection{Comparison to Pettis integral}\label{sec_comp_to_pettis} We now compare the vector-valued integral from Chapter 4, \cref{sec_monadic_vec_int}, with the classical \emph{Pettis integral} \cite{pettis1938integration}. 

\begin{prop}
    Let $\mu$ be a Radon measure on a compact Hausdorff space $X$, let $E$ be a $k$-complete LCTVS and let $f:X\to E$ be a continuous map. By \cref{k_ification_of_k_compl_LCTVS_replete}, $kE$ is a replete linear $hk$-space (and hence a paired linear $hk$-space, by letting $(kE)^*:=(kE)^\wedge$). Now, the vector valued integral (as defined in \cref{defn_Mc_module_str_on_plin_ksp}), 
        $$ I := \int_X f(x) \diff \mu(x), $$
    is the unique element $I\in E$ satisfying that for all continuous linear functionals $\phi\in E$,
    \begin{equation}\label{eq_pettis_comparison}
        \phi(I) = \int_X \phi(f(x)) \diff \mu(x),
    \end{equation}
    where the left hand side denotes an ordinary Lebesgue integral. 
\end{prop}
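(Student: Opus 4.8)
The statement asserts two things: (i) the monadic vector-valued integral $I = \int_X f(x)\diff\mu(x)$ (computed in $kE$ regarded as a paired linear $hk$-space) is weakly characterised by \eqref{eq_pettis_comparison}, and (ii) this characterisation is unambiguous, i.e.\ there is a unique such $I$. The plan is to derive (i) by combining \cref{prop_exchange_of_int_and_operators} with the scalar-valued theory, and to derive (ii) from the fact that the continuous linear functionals $\phi \in (kE)^\wedge$ separate points — which is exactly what repleteness of $kE$ gives us.

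First I would establish uniqueness. Since $E$ is a $k$-complete LCTVS, \cref{k_ification_of_k_compl_LCTVS_replete} tells us $kE$ is a replete linear $hk$-space, hence by \cref{defn_repl_lin_hk_spaces} the canonical map $kE \to (kE)^{\wedge\wedge}$ is a closed embedding; in particular the natural dual $(kE)^\wedge$ separates points of $kE$. Because $E$ is locally convex and Hausdorff, the Hahn–Banach theorem shows the continuous linear functionals on $E$ are the same as those on $kE$ (every continuous linear functional on $kE$ restricts from one on $E$ by \cref{prop_dual_fre_carries_co_topology}-style arguments, or more simply: a linear functional continuous on $kE$ is continuous on the convex $0$-neighbourhoods, hence on $E = lkE$ for $kE$ $k$-locally convex). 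Thus if $I_1, I_2 \in E$ both satisfy \eqref{eq_pettis_comparison} for all $\phi$, then $\phi(I_1 - I_2) = 0$ for all $\phi \in (kE)^\wedge$, so $I_1 = I_2$ by point-separation. This disposes of uniqueness.

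Next I would verify that $I := \int_X f(x)\diff\mu(x)$, as defined in \cref{defn_Mc_module_str_on_plin_ksp}, actually satisfies \eqref{eq_pettis_comparison}. Here is where \cref{prop_exchange_of_int_and_operators} does the work: each $\phi \in (kE)^\wedge$ is a morphism of paired linear $hk$-spaces $kE \to \mathbb{K}$ (any continuous linear functional automatically respects the paired structure when the domain has $V^* = V^\wedge$, as noted just after \cref{sepobjs_corefl_subcat_of_little_chu}), so
\begin{equation*}
    \phi(I) = \phi\Big(\int_X f(x)\diff\mu(x)\Big) = \int_X \phi(f(x))\diff\mu(x),
\end{equation*}
where the right-hand integral is now the monadic \emph{scalar-valued} integral of the continuous function $\phi\circ f : X \to \mathbb{K}$ against $\mu$. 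It remains to identify this with the ordinary Lebesgue integral. Since $X$ is compact Hausdorff, $\mathcal{M}_c(X) = \mathcal{M}(X) = C(X)^\wedge$ consists exactly of the Baire (equivalently Radon) measures on $X$ (\cref{convergence_of_measures_on_compactum}), and under this identification the monadic scalar integral $\smallint{}(g_*\mu) = (g_*\mu)(\id_{\mathbb{K}})$ agrees with $\int_X g\diff\mu$ in the Lebesgue sense by the change-of-variables formula — this is precisely the content of the Proposition preceding \cref{remark_vv_integral_from_free_forget_adj}. Applying this with $g = \phi\circ f$ yields exactly \eqref{eq_pettis_comparison} with a genuine Lebesgue integral on the right.

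One small technical point I expect to need care on — and the main (mild) obstacle — is the passage "$I \in E$" rather than merely "$I \in kE$": the vector-valued integral a priori produces an element of the underlying set of $kE$, which is the same underlying set as $E$, so there is nothing to prove set-theoretically, but one should state clearly that we are viewing $I$ through the identity-on-underlying-sets bijection $kE \leftrightarrow E$. The second point needing a line of justification is well-definedness of $\int_X \phi(f(x))\diff\mu(x)$ as a Lebesgue integral: $\phi\circ f$ is continuous on the compact space $X$, hence bounded and Baire-measurable, and $\mu$ has bounded variation, so the integral converges absolutely. With these remarks in place the argument is complete; no further estimates are required.
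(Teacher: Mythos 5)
Your proof has the same two ingredients as the paper's: the identity \eqref{eq_pettis_comparison} is obtained from \cref{prop_exchange_of_int_and_operators} (each $\phi$ continuous on $E$ is a fortiori continuous on $kE$ and, since $(kE)^*=(kE)^\wedge$, automatically a morphism of paired linear $hk$-spaces into $\mathbb{K}$), and uniqueness comes from point-separation by the dual. Your explicit identification of the scalar monadic integral with the ordinary Lebesgue integral, via \cref{convergence_of_measures_on_compactum} and the scalar change-of-variables proposition, is a step the paper leaves implicit, and it is correctly carried out.

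One step in your uniqueness paragraph would not survive scrutiny as written: you route the separation argument through $(kE)^\wedge$ and claim that the continuous linear functionals on $kE$ coincide with those on $E$, justified by the identity ``$E = lkE$''. That identity is precisely the statement that $E$ is \emph{compactly determined} (\cref{defn_ksat_l}), which is not among the hypotheses — $E$ is only assumed $k$-complete — so in general one only has $E' \subseteq (kE)^\wedge$, possibly strictly, and the Hahn--Banach theorem does not deliver the reverse inclusion. The detour is also unnecessary: the defining property \eqref{eq_pettis_comparison} quantifies over functionals continuous on $E$, and these already separate points of $E$ by the Hahn--Banach theorem, because $E$ is a locally convex Hausdorff space; this is exactly the paper's one-line uniqueness argument. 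Replacing your $(kE)^\wedge$-based separation claim by this direct appeal to local convexity of $E$ repairs the argument, and with that change your proof coincides with the paper's.
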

\begin{proof}
    That $I$ satisfies \cref{eq_pettis_comparison} follows from \cref{prop_exchange_of_int_and_operators}. That it is the unique element of $E$ with this property follows from the fact that the dual of $E$ separates points (since $E$ is locally convex).
\end{proof}

\begin{remark}
    The unique element $I\in E$ is know as the \emph{Pettis integral} or \emph{weak integral} of $f$ against $\mu$ (see \cite[p.~77, Definition 3.26]{rudin1991functional}). While its uniqueness follows directly from local convexity, as we have seen, its existence is non-trivial (see, for example, \cite[p.~78, Theorem 3.27]{rudin1991functional}). Here, we have given an alternative way to show existence by passing to the setting of replete (or paired) linear $hk$-spaces and using the ``monadic'' construction from Chapter 4. As a consequence, the vector-valued integral from \cref{sec_monadic_vec_int} coincides with the classical Pettis integral (whenever the two are comparable).
\end{remark}

\subsection{The sequential completion of locally convex Hausdorff topological vector spaces} Just as one can define the completion of an LCTVS, one can also construct its \emph{sequential completion} satisfying an analogous universal property. We will need this construction in \cref{chap_5_sec_tensor_prods} and recall it here for the convenience of the reader. Let us first give an explicit definition.

\begin{defn}[Sequential completion]\label{defn_scompl}
    Let $E$ be an LCTVS and let $\overline{E}$ be the completion of $E$ (which is an LCTVS, see \cite[p. 49, 4.1]{schaefer1971topological}). Define the \emph{sequential completion} $\widehat{E}$ to be the sequential closure (i.e.~the smallest sequentially closed superset) of $E$ in $\overline{E}$.
\end{defn}

The sequential completion deserves this name for the reasons summarised in the following two propositions.

\begin{prop}
    The sequential completion $\widehat{E}$ of an LCTVS $E$ is sequentially complete. 
\end{prop}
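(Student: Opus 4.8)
The statement asserts that the sequential completion $\widehat{E}$, defined as the sequential closure of $E$ inside its completion $\overline{E}$, is itself sequentially complete. The plan is to reduce this to a statement about iterated sequential closures: the key obstacle is that the sequential closure operation is in general \emph{not} idempotent (unlike the ordinary topological closure), so one cannot simply say ``the sequential closure is sequentially closed.'' The resolution I would use is that $\overline{E}$ is \emph{complete} (hence in particular sequentially complete, as a complete topological vector space), and to exploit this completeness to control Cauchy sequences living in $\widehat{E}$.

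\textbf{Key steps.} First I would observe that since $\overline{E}$ is complete, every Cauchy sequence $(x_n)$ in $\overline{E}$ converges to some limit $x \in \overline{E}$; this is standard for complete LCTVS (cf.\ \cite[p. 49, 4.1]{schaefer1971topological} for the construction of $\overline{E}$). So given a Cauchy sequence $(x_n)$ with all terms in $\widehat{E}$, it is a fortiori a Cauchy sequence in $\overline{E}$, hence converges to some $x \in \overline{E}$. The remaining task is to show $x \in \widehat{E}$. Now $\widehat{E}$ is by definition sequentially closed in $\overline{E}$: this is exactly the property that the limit of any convergent sequence with terms in $\widehat{E}$ lies again in $\widehat{E}$. (The subtlety that sequential \emph{closure} need not be sequentially closed does not arise here, because $\widehat{E}$ was \emph{defined} as the smallest sequentially closed superset of $E$, not merely as the set of limits of sequences from $E$; so sequential closedness of $\widehat{E}$ is immediate from the definition.) Therefore $x \in \widehat{E}$, and since the topology on $\widehat{E}$ is the subspace topology from $\overline{E}$, the sequence $(x_n)$ converges to $x$ in $\widehat{E}$ as well. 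This establishes sequential completeness of $\widehat{E}$.

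\textbf{Main obstacle.} The only point requiring care is the logical structure around the word ``sequential closure.'' I would make sure to state the definition correctly: $\widehat{E}$ is the \emph{smallest sequentially closed} subset of $\overline{E}$ containing $E$ (equivalently, the intersection of all sequentially closed supersets of $E$), and an intersection of sequentially closed sets is sequentially closed, so $\widehat{E}$ is genuinely sequentially closed. If instead one took the naive ``one-step'' sequential closure (limits of sequences from $E$), the argument would break, and one would need transfinite iteration; by using the correct definition this is avoided entirely. Hence the proof is essentially a two-line argument once the definitions are unwound: completeness of $\overline{E}$ produces the limit, sequential closedness of $\widehat{E}$ keeps it inside, and the subspace topology transfers convergence back down.
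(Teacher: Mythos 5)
Your proof is correct and is essentially the same argument the paper gives: a Cauchy sequence in $\widehat{E}$ converges in the complete space $\overline{E}$, and the limit stays in $\widehat{E}$ because it is (by its definition as the smallest sequentially closed superset of $E$) sequentially closed in $\overline{E}$. Your extra remark distinguishing this definition from the one-step sequential closure is a sensible clarification but does not change the argument.
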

\begin{proof}
    Let $(x_n)$ be a Cauchy sequence in $\widehat{E}$. Then $(x_n)$ is a  Cauchy sequence also in $\overline{E}$ and, by completeness of $\overline{E}$, it converges to some $x\in \overline{E}$. Since $\widehat{E}\subseteq \overline{E}$ is sequentially closed, $x\in \widehat{E}$, so $(x_n)$ converges in $\widehat{E}$ and $\widehat{E}$ is sequentially complete.
\end{proof}

\begin{prop}[Universal property of the sequential completion]
    Let $f:E\to F$ be a continuous linear map, with $E$ an LCTVS and $F$ a sequentially complete LCTVS. Then there exists a unique continuous linear map, 
        $$ \tilde{f}: \widehat{E} \to F, $$
    making the diagram 
    \[\begin{tikzcd}
	{\widehat{E}} & F \\
	E
	\arrow[from=2-1, to=1-1]
	\arrow["{\tilde{f}}", dashed, from=1-1, to=1-2]
	\arrow["f"', from=2-1, to=1-2]
    \end{tikzcd}\]
    commute, where $E\hookrightarrow \widehat{E}\subseteq \overline{E}$ is the inclusion map. 
\end{prop}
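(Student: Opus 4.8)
The plan is to construct $\tilde f$ by restricting the completion map and then prove uniqueness via a density/sequential-continuity argument. Recall that the ordinary completion $\overline E$ comes with a continuous linear map $\overline f:\overline E\to \overline F$ extending $f$, where $\overline F$ is the completion of $F$. First I would observe that $\overline f$ maps $\widehat E$ into $\widehat F$: indeed, $\overline f(E)=f(E)\subseteq F\subseteq \widehat F$, and since $\overline f$ is continuous and $\widehat F$ is sequentially closed in $\overline F$, the preimage $\overline f^{-1}(\widehat F)$ is a sequentially closed subset of $\overline E$ containing $E$, hence contains the sequential closure $\widehat E$. Since $F$ is already sequentially complete, I would argue that $F$ is sequentially closed in $\overline F$ (a Cauchy sequence in $F$ converges in $\overline F$, but also in $F$ by hypothesis, and limits in the Hausdorff space $\overline F$ are unique), so in fact $\widehat F=F$. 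Therefore $\overline f$ restricts to a continuous linear map $\widehat E\to F$, which I take to be $\tilde f$; it extends $f$ by construction.

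For uniqueness, suppose $g_1,g_2:\widehat E\to F$ are two continuous linear maps agreeing with $f$ on $E$. Then $h:=g_1-g_2$ is continuous, linear, and vanishes on $E$. The set $\{x\in\widehat E\mid h(x)=0\}$ is a linear subspace, and it is sequentially closed because $F$ is Hausdorff (a sequence $x_n\to x$ with $h(x_n)=0$ forces $h(x)=\lim h(x_n)=0$ by sequential continuity of $h$). This sequentially closed subspace contains $E$, hence contains the sequential closure $\widehat E$ of $E$ in $\overline E$ — but one must be slightly careful here: $\widehat E$ is defined as the sequential closure of $E$ inside $\overline E$, and I need that the subspace topology $\widehat E$ inherits from $\overline E$ makes $E$ sequentially dense in $\widehat E$ in the intrinsic sense. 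This is immediate: sequential closure is an intrinsic topological notion, so $E$ is sequentially dense in $\widehat E$, and any sequentially closed subset of $\widehat E$ containing $E$ is all of $\widehat E$. Hence $h=0$ and $g_1=g_2$.

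The main obstacle, such as it is, is not deep but bookkeeping: one must be careful about the distinction between ``sequentially closed in $\overline E$'' and ``sequentially closed in $\widehat E$'', and about the fact that the sequential closure operation need not be idempotent in general topological spaces (so ``$E$ sequentially dense in $\widehat E$'' must be read correctly — it holds here simply because $\widehat E$ \emph{is} by definition the sequential closure, and any point of $\widehat E$ is a sequential-closure point, but reaching it may require transfinitely iterating the ``limits of sequences'' operation). The clean way to sidestep this is the argument given above: rather than trying to approximate an arbitrary point of $\widehat E$ by a single sequence from $E$, I use that the equaliser subspace $\{h=0\}$ is sequentially closed and contains $E$, together with the minimality built into the definition of $\widehat E$ as the \emph{smallest} sequentially closed superset of $E$. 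This makes the proof a two-line verification once the setup is in place, and it avoids any appeal to metrisability or first-countability that one might otherwise be tempted to invoke.

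\begin{proof}
    Let $\overline F$ be the completion of $F$ and let $\overline f:\overline E\to \overline F$ be the unique continuous linear extension of $f$. Since $F$ is sequentially complete and $\overline F$ is Hausdorff, $F$ is sequentially closed in $\overline F$; indeed, a Cauchy sequence in $F$ converges both to a point of $F$ (by sequential completeness) and to its limit in $\overline F$, and these agree by uniqueness of limits. Hence $\widehat F=F$. The preimage $\overline f^{\,-1}(F)\subseteq \overline E$ is a sequentially closed set (as $\overline f$ is continuous, hence sequentially continuous, and $F$ is sequentially closed in $\overline F$) containing $E$, so it contains the sequential closure $\widehat E$ of $E$. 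Therefore $\overline f$ restricts to a continuous linear map
        $$ \tilde f:\widehat E\to F, $$
    and $\tilde f$ extends $f$ along $E\hookrightarrow \widehat E$, making the desired diagram commute.

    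For uniqueness, let $g_1,g_2:\widehat E\to F$ be continuous linear maps with $g_1|_E=g_2|_E=f$. Set $h:=g_1-g_2$, a continuous linear map vanishing on $E$. The set $N:=\{x\in \widehat E\mid h(x)=0\}$ is a linear subspace of $\widehat E$, and it is sequentially closed: if $(x_n)$ is a sequence in $N$ converging to $x\in \widehat E$, then $h(x)=\lim_n h(x_n)=0$ by sequential continuity of $h$, using that $F$ is Hausdorff. Now $N$ is a sequentially closed subset of $\overline E$ (being sequentially closed in $\widehat E$, which is sequentially closed in $\overline E$) containing $E$; by minimality of the sequential closure $\widehat E$, we get $\widehat E\subseteq N$, i.e.~$h=0$. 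Thus $g_1=g_2$, proving uniqueness.
\end{proof}
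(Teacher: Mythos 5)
Your proof is correct and follows essentially the same route as the paper: extend $f$ to the completion and then restrict to $\widehat{E}$, using that $\widehat{E}$ is by definition the \emph{smallest} sequentially closed superset of $E$ and that preimages of sequentially closed sets under (sequentially) continuous maps are sequentially closed. You are in fact somewhat more careful than the paper's own write-up, which extends $f$ directly to a map $\overline{E}\to F$ (strictly speaking this needs the detour through $\overline{F}$ that you take) and leaves the uniqueness of $\tilde{f}$ implicit, whereas you spell both points out correctly.
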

\begin{proof}
    By the universal property of the completion, there exists a unique continuous linear map $\overline{f}$ making the diagram 
    \[\begin{tikzcd}
	{\overline{E}} & F \\
	E
	\arrow[from=2-1, to=1-1]
	\arrow["{\overline{f}}", dashed, from=1-1, to=1-2]
	\arrow["f"', from=2-1, to=1-2]
    \end{tikzcd}\]
    commute. Since $F$ is sequentially complete and $\overline{f}$ is continuous, the image of $\widehat{E}$ (which is defined as a sequential closure) under $\overline{f}$ is contained in $F$ (which is sequentially closed). Hence, restricting $\overline{f}$ to $\widehat{E}$ yields the desired map $\tilde{f}: \widehat{E} \to F$.
\end{proof}

\section{Tensor Products}\label{chap_5_sec_tensor_prods}

The purpose of this section is to compare the notions of tensor products of replete linear $hk$-spaces or paired linear $hk$-spaces with the classical \emph{projective tensor product} of locally convex topological vector spaces. In particular, we will see that in the case of Fréchet spaces as well as in the dual case of Brauner spaces all of these notions coincide.

\subsection{\texorpdfstring{The universal property of the tensor product of replete linear $hk$-spaces}{The universal property of the tensor product of replete linear hk-spaces}} In order to achieve our goal, we would like to formulate and compare universal properties of the different tensor products. We begin with the case of replete linear $hk$-spaces. \par 
According to \cref{cor_d_sep_closed_symm_mon}, the category $\sclin$ of replete linear $hk$-spaces has a closed symmetric monoidal structure, with the tensor product given by the repletion of the tensor product of linear $hk$-spaces. Denote this tensor product by $\otimes_r$. Since the tensor product of linear $hk$-spaces comes equipped with a canonical map $V\times W \to V\otimes W$, composing with the canonical map $V\otimes W\to V\otimes_r W$ to the repletion, we likewise have a canonical map $V\times V \to V\otimes_r W$. With this in mind, we can formulate the following universal property.

\begin{prop}[Universal property of repleted tensor product]\label{univ_prop_repl_tens_prod}
    Let $V, W, Z$ be replete linear $hk$-spaces and let $b: V\times W \to Z$ be a continuous bilinear map. Then there exists a unique continuous linear map $\tilde{b}: V\otimes_r W \to Z$ such that the following diagram commutes:
    \[\begin{tikzcd}
	{V\otimes_r W} & Z \\
	{V\times W}
	\arrow[from=2-1, to=1-1]
	\arrow["{\tilde{b}}", dashed, from=1-1, to=1-2]
	\arrow["b"', from=2-1, to=1-2]
    \end{tikzcd}\]
\end{prop}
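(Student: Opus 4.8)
The plan is to deduce this universal property directly from the general tensor-hom adjunction for the closed symmetric monoidal category $\sclin$ established in \cref{cor_repl_closed_symm_mon}, together with the fact that $\otimes_r = r(- \otimes -)$ and the universal property of the repletion functor $r = \sepfun_{\mathbb{K}}$ from \cref{thm_sepfun_left_adjoint}. The key observation is that a continuous bilinear map $b : V \times W \to Z$ in the sense of linear $hk$-spaces is precisely the same data as a continuous linear map $V \otimes W \to Z$, by the universal property of the tensor product $\otimes$ of linear $hk$-spaces (equivalently, of $F_{\mathbb{K}}$-modules, via the characterisation of binary morphisms and \cref{tens_prod_t_modules_univ_prop}); this was already spelled out in \cref{ex_vect_closed_symm_mon}.

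First I would make precise what ``continuous bilinear map'' means here: a map $b: V \times W \to Z$ which is continuous (with respect to the $\ktop$-product) and bilinear. By \cref{ex_vect_closed_symm_mon} and the discussion of the tensor product of $F_{\mathbb{K}}$-modules (\cref{tensor_product_of_t_mods}, \cref{tens_prod_t_modules_univ_prop}), such $b$ correspond bijectively and naturally to continuous linear maps $\hat b : V \otimes W \to Z$ which factor the canonical map $V \times W \to V \otimes W$. Next, since $Z$ is replete, i.e.\ a $\mathbb{K}$-separated object of $\vect$ (\cref{ex_repl_lin_sk_sp}), the universal property of the repletion (\cref{thm_sepfun_left_adjoint}, applied with $\mathsf{C} = \vect$ and $D = \mathbb{K}$) gives a natural bijection
$$ \Hom_{\vect}(V \otimes W, Z) \cong \Hom_{\sclin}(r(V \otimes W), Z) = \Hom_{\sclin}(V \otimes_r W, Z), $$
mediated by precomposition with the canonical map $V \otimes W \to V \otimes_r W$. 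Composing the two bijections, I obtain that $b$ extends uniquely to a continuous linear $\tilde b : V \otimes_r W \to Z$, and by construction $\tilde b$ restricts along the canonical composite $V \times W \to V \otimes W \to V \otimes_r W$ to $b$, which is exactly the asserted commuting triangle. Uniqueness is inherited from the uniqueness in both universal properties, since the canonical map $V \times W \to V \otimes_r W$ factors through $V \times W \to V \otimes W$ and the latter has dense image in $V \otimes W$, whose image in turn is dense in $V \otimes_r W$ (so any two continuous linear maps $V \otimes_r W \to Z$ agreeing on it coincide by \cref{linear_map_vanishing_on_dense_subspace}).

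I do not expect any serious obstacle here; the statement is essentially a bookkeeping exercise assembling two previously proven universal properties. The one point deserving a line of care is confirming that the ``canonical map'' $V \times W \to V \otimes_r W$ referred to in the statement is indeed the composite of the canonical map $V \times W \to V \otimes W$ (coming from the $F_{\mathbb{K}}$-module tensor product) with the repletion unit $V \otimes W \to r(V \otimes W) = V \otimes_r W$ — this was already noted in the paragraph preceding the proposition, so it only needs to be invoked. Everything else is formal naturality, so the proof will be short.
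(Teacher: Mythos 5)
Your argument is correct and follows essentially the same route as the paper's own proof: first factor $b$ through the canonical map $V\times W\to V\otimes W$ using the universal property of the tensor product of linear $hk$-spaces (\cref{tens_prod_t_modules_univ_prop}), then factor through the repletion unit $V\otimes W\to V\otimes_r W$ using that $Z$ is replete and $r=\sepfun_{\mathbb{K}}$ is left adjoint to the inclusion (\cref{thm_sepfun_left_adjoint}). The only difference is presentational (you phrase the two steps as composed hom-set bijections, the paper as successive diagram factorisations), and your extra density remark for uniqueness is harmless but unnecessary, since uniqueness is already carried by the two universal properties.
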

\begin{proof}
    By the universal property of the tensor product $\otimes$ of linear $hk$-spaces (which is a special case of the one for modules over a commutative monad, see \cref{tens_prod_t_modules_univ_prop}), there exists a unique continuous linear map $\tilde{b}_0$ making the diagram 
    \[\begin{tikzcd}
	{V\otimes W} & Z \\
	{V\times W}
	\arrow[from=2-1, to=1-1]
	\arrow["{\tilde{b}_0}", dashed, from=1-1, to=1-2]
	\arrow["b"', from=2-1, to=1-2]
    \end{tikzcd}\]
    commute. Applying the universal property of the repletion then yields the desired unique continuous linear map $\tilde{b}: V\otimes_r W \to Z$ that makes the following diagram commute:
    \[\begin{tikzcd}
	{V\otimes_r W} \\
	{V\otimes W} & Z \\
	{V\times W}
	\arrow[from=3-1, to=2-1]
	\arrow["{\tilde{b}_0}", dashed, from=2-1, to=2-2]
	\arrow["b"', from=3-1, to=2-2]
	\arrow[from=2-1, to=1-1]
	\arrow["{\tilde{b}}", curve={height=-6pt}, dashed, from=1-1, to=2-2]
    \end{tikzcd}\]
\end{proof}

\subsection{The sequentially completed projective tensor product of locally convex spaces}

Recall that the \emph{projective tensor product} $E\otimes_\pi F$ of two LCTVS $E$ and $F$, which comes with a canonical continuous bilinear map $E\times_{\topsp} F \to E\otimes_\pi F$, is characterised (up to unique isomorphism) by the following universal property (see \cite[p.~33f]{kriegl2016frechet}, also for an explicit construction of $E\otimes_\pi F$). For any further LCTVS $G$ and bilinear map $b: E\times_{\topsp} F  \to G$, there exists a unique continuous linear map $\tilde{b}: E\otimes_\pi F \to G$ making the diagram 
\[\begin{tikzcd}
	{E\otimes_\pi F} & G \\
	{E\times F}
	\arrow[from=2-1, to=1-1]
	\arrow["{\tilde{b}}", dashed, from=1-1, to=1-2]
	\arrow["b"', from=2-1, to=1-2]
\end{tikzcd}\]
commute. 

\begin{defn}
    Let $E,F$ be $k$-complete LCTVS. The \emph{sequentially completed projective tensor product} of $E$ and $F$, 
        $$ E \;\widehat{\otimes}_\pi F := (E\otimes_\pi F)\widehat{\;}, $$
    is the sequential completion (see \cref{defn_scompl}) of the projective tensor product .
\end{defn}

\begin{warning}
    The notation $\widehat{\otimes}_\pi$ is commonly used for the \emph{completion} of the projective tensor product, which in general may be strictly larger than the sequential completion. However, within our context, this will not result in ambiguity, since we are interested in the case of Fréchet spaces, where the two notions agree by metrisability.
\end{warning}

The sequentially completed projective tensor product satisfies the expected universal property.

\begin{prop}\label{univ_prop_scompl_proj_tens_prod}
    Let $E, F, G$ be sequentially complete LCTVS and let $b: E\times F \to G$ be a continuous bilinear map. Then there exists a unique continuous linear map $\tilde{b}: E\;\widehat{\otimes}_\pi F \to G$ such that the following diagram commutes:
    \[\begin{tikzcd}
	{E\:\widehat{\otimes}_\pi F} & G \\
	{E\times F}
	\arrow[from=2-1, to=1-1]
	\arrow["{\tilde{b}}", dashed, from=1-1, to=1-2]
	\arrow["b"', from=2-1, to=1-2]
    \end{tikzcd}\]
\end{prop}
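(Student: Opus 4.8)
The plan is to obtain $\tilde{b}$ by chaining together two universal properties that are already available to us: the universal property of the (uncompleted) projective tensor product $\otimes_\pi$ recalled just above, and the universal property of the sequential completion proved in the preceding proposition. No new analysis is needed.

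First I would apply the universal property of $\otimes_\pi$ to the continuous bilinear map $b : E \times F \to G$. This yields a unique continuous linear map $\tilde{b}_0 : E \otimes_\pi F \to G$ satisfying $\tilde{b}_0 \circ \kappa = b$, where $\kappa : E \times F \to E \otimes_\pi F$ denotes the canonical bilinear map. Since $G$ is sequentially complete by hypothesis, I would then invoke the universal property of the sequential completion, applied to the continuous linear map $\tilde{b}_0$ together with the inclusion $E \otimes_\pi F \hookrightarrow (E\otimes_\pi F)\widehat{\;} = E \widehat{\otimes}_\pi F$ from \cref{defn_scompl}. This produces a unique continuous linear map $\tilde{b} : E \widehat{\otimes}_\pi F \to G$ whose restriction to $E \otimes_\pi F$ is $\tilde{b}_0$. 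Precomposing $\tilde{b}$ with the canonical map $E \times F \xrightarrow{\kappa} E \otimes_\pi F \hookrightarrow E \widehat{\otimes}_\pi F$ then gives $\tilde{b}_0 \circ \kappa = b$, so the required triangle commutes; this establishes existence.

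For uniqueness, suppose $\tilde{b}'$ is any continuous linear map completing the diagram. Its restriction to $E \otimes_\pi F$ is a continuous linear map making the defining triangle of the projective tensor product commute, hence equals $\tilde{b}_0$ by the uniqueness clause there. Thus $\tilde{b}'$ is a continuous linear extension of $\tilde{b}_0$ along $E \otimes_\pi F \hookrightarrow E \widehat{\otimes}_\pi F$, and the uniqueness clause in the universal property of the sequential completion forces $\tilde{b}' = \tilde{b}$.

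The argument is purely formal, so there is no substantive obstacle; the only points warranting a word of care are the bookkeeping that the canonical map $E \times F \to E \widehat{\otimes}_\pi F$ appearing in the statement is exactly the composite of $\kappa$ with the inclusion into the sequential completion, and that $E \otimes_\pi F$ genuinely sits inside $(E\otimes_\pi F)\widehat{\;}$ as a linear subspace, so that ``restriction'' means literal restriction of a function — both immediate from the definitions. The metrisability caveat noted in the warning above is irrelevant here, since we work with the sequential completion (rather than the full completion) throughout, and we never use sequential completeness of $E$ or $F$.
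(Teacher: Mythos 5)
Your proposal is correct and is exactly the argument the paper intends: the paper's proof is the one-line instruction to combine the universal property of the projective tensor product with that of the sequential completion, which is precisely the two-step factorisation (first through $E\otimes_\pi F$, then through its sequential completion, using sequential completeness of $G$) that you carry out, including the routine uniqueness check. Your side remark that sequential completeness of $E$ and $F$ is never used is also accurate.
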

\begin{proof}
    Combine the universal property of the projective tensor product of LCTVS with that of the sequential completion.
\end{proof}

\subsection{\texorpdfstring{The tensor product of replete linear $hk$-Spaces and the projective tensor product}{The tensor product of replete linear hk-Spaces and the projective tensor product}}

\begin{prop}\label{prop_tens_prod_coincide}
    Let $V,W$ be Fréchet spaces. Then the completed (equivalently, sequentially completed) projective tensor product $V \;\,\widehat{\otimes}_\pi W$ agrees with the repleted tensor product $V \otimes_r W$ of $V$ and $W$. 
\end{prop}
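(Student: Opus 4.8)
The plan is to prove the isomorphism by comparing universal properties. Both $V\otimes_r W$ and $V\,\widehat{\otimes}_\pi W$ classify continuous bilinear maps out of $V\times W$, but into \emph{different} target categories: $\otimes_r$ classifies bilinear maps into replete linear $hk$-spaces (\cref{univ_prop_repl_tens_prod}), while $\widehat{\otimes}_\pi$ classifies bilinear maps into sequentially complete LCTVS (\cref{univ_prop_scompl_proj_tens_prod}). The bridge is \cref{cor_replete_equiv_lctvs}, which identifies replete linear $hk$-spaces with $k$-complete, compactly determined LCTVS via the mutually inverse functors $k$ and $l$. So the first task is to check that both objects live in the overlap of these two settings and that the relevant bilinear maps transport correctly across the $k$/$l$ equivalence.

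\textbf{Key steps, in order.} First I would observe that $V$ and $W$, being Fréchet spaces, are metrisable, hence $k$-spaces with $V=kV$, $W=kW$, and they are replete (by Smith duality, as noted in the excerpt). Second, I would show $V\,\widehat{\otimes}_\pi W$ is again a Fréchet space: the projective tensor product of two metrisable LCTVS is metrisable, and its completion is then a Fréchet space, so here the sequential completion equals the completion. In particular $V\,\widehat{\otimes}_\pi W$ is a $k$-complete, compactly determined LCTVS, so $k(V\,\widehat{\otimes}_\pi W)$ is a replete linear $hk$-space by \cref{k_ification_of_k_compl_LCTVS_replete}. Third — the crux — I would verify that $k(V\,\widehat{\otimes}_\pi W)$ satisfies the universal property of $V\otimes_r W$ in $\sclin$. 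Given a continuous bilinear $b\colon V\times W\to Z$ with $Z$ replete, I note that a continuous bilinear map $V\times W\to Z$ in $\spaces$ is the same as a continuous bilinear map into $Z$ as a topological space; applying $l$ and using $\cref{cor_replete_equiv_lctvs}$ (so $lZ$ is a $k$-complete compactly determined LCTVS with $klZ\cong Z$), together with the fact that on the \emph{metrisable} spaces $V,W$ the $\spaces$-product and the $\topsp$-product agree, this yields a continuous bilinear map $V\times_{\topsp} W\to lZ$; since $lZ$ is sequentially complete, \cref{univ_prop_scompl_proj_tens_prod} produces a unique continuous linear $V\,\widehat{\otimes}_\pi W\to lZ$, and $k$-ifying (and using $klZ\cong Z$) gives the desired unique morphism $k(V\,\widehat{\otimes}_\pi W)\to Z$. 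Conversely a morphism out of $k(V\,\widehat{\otimes}_\pi W)$ restricts to a bilinear map on $V\times W$, and I would check these two assignments are mutually inverse, using density of the algebraic tensor product (the image of $V\times W$) in both objects together with the fact that continuous linear maps agreeing on a dense subspace agree (\cref{linear_map_vanishing_on_dense_subspace}). By uniqueness of objects representing the same universal property, $k(V\,\widehat{\otimes}_\pi W)\cong V\otimes_r W$ in $\sclin$, hence as linear $hk$-spaces.

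\textbf{Main obstacle.} The delicate point is the passage between the three tensor-product universal properties and, in particular, the switch between the $\ktop$-product $\times$ and the $\topsp$-product $\times_{\topsp}$ when translating bilinear maps. This is harmless precisely because $V$ and $W$ are metrisable, so $V\times W = V\times_{\topsp} W$; I would make this explicit rather than leave it implicit, since it is exactly the place where the ``correct'' versus ``incorrect'' product (cf.\ \cref{warning_lin_hk_sp_vs_top_vect_sp_intro}) could otherwise cause trouble. A secondary point to handle carefully is confirming that $V\,\widehat{\otimes}_\pi W$ is genuinely compactly determined and replete (not merely sequentially complete), which is where \cref{k_ification_of_k_compl_LCTVS_replete} and \cref{cor_replete_equiv_lctvs} do the real work; once $V\,\widehat{\otimes}_\pi W$ is known to be Fréchet this is immediate, since Fréchet spaces are replete linear $hk$-spaces. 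Everything else is a routine diagram chase with the universal properties already established in the excerpt.
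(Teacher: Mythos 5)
Your proposal is correct and follows essentially the same route as the paper: verify that $V\,\widehat{\otimes}_\pi W$ (a Fréchet space, so already a $k$-space) satisfies the universal property of \cref{univ_prop_repl_tens_prod}, transporting a bilinear map into a replete $Z$ across the equivalence of \cref{cor_replete_equiv_lctvs} and using metrisability to identify $V\times W$ with $V\times_{\topsp}W$ before invoking \cref{univ_prop_scompl_proj_tens_prod}. The only difference is cosmetic: your extra check that the two assignments are mutually inverse via density is not needed, since uniqueness is already built into the universal-property argument.
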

\begin{proof}
    It suffices to verfify that $V \;\,\widehat{\otimes}_\pi W$ satisfies the universal property of the tensor product of replete linear $hk$-spaces (see \cref{univ_prop_repl_tens_prod}). Let $Z$ be some further replete linear $hk$-space and let $b: V\times W \to Z$ be a continuous bilinear map. Since $Z$ is replete, there exists a $k$-complete, compactly determined LCTVS $E$ such that $Z=kE$ (see \cref{cor_replete_equiv_lctvs}). In particular, $E$ is sequentially complete (by \cref{prop_k_compl_implies_seq_compl}). Since $V$ and $W$ are metrisable, $V\times_{\topsp} W$ is a $k$-space and therefore, $b$ is also continuous as a map $b: V\times_{\topsp} W \to E$. By the universal property of the sequentially completed projective tensor product (see \cref{univ_prop_scompl_proj_tens_prod}), there is a unique continuous linear map $\tilde{b}: V \;\,\widehat{\otimes}_\pi W \to E$ making the diagram 
    \[\begin{tikzcd}
	{V\;\widehat{\otimes}_\pi W} & E \\
	{V\times W}
	\arrow[from=2-1, to=1-1]
	\arrow["{\tilde{b}}", dashed, from=1-1, to=1-2]
	\arrow["b"', from=2-1, to=1-2]
    \end{tikzcd}\]
    commute. But $V \;\widehat{\otimes}_\pi W$ is a Fréchet space and hence a $k$-space, so a continuous map $V \widehat{\otimes}_\pi W \to E$ is equivalently a continuous map $V \widehat{\otimes}_\pi W \to Z$. Therefore, there exists a unique continuous linear map $\tilde{b}: V \widehat{\otimes}_\pi W \to Z$ (the same $\tilde{b}$ as before) making the diagram 
    \[\begin{tikzcd}
	{V\;\widehat{\otimes}_\pi W} & Z \\
	{V\times W}
	\arrow[from=2-1, to=1-1]
	\arrow["{\tilde{b}}", dashed, from=1-1, to=1-2]
	\arrow["b"', from=2-1, to=1-2]
    \end{tikzcd}\]
    commute, showing that $V \widehat{\otimes}_\pi W$ does indeed satisfy the universal property of the tensor product of replete linear $hk$-spaces.
\end{proof}

Since the underlying linear $hk$-space of the tensor product $\widehat{\otimes}$ of \emph{paired} linear $hk$-spaces is exactly the tensor product of replete linear $hk$-spaces, we obtain the following immediate corollary.

\begin{cor}\label{cor_tens_prod_fre_vs_plin}
    Let $V, W$ be Fréchet spaces. Then their completed projective tensor product $V \;\widehat{\otimes}_\pi W$ agrees with the paired-linear-$hk$-space tensor product of $V$ an $W$. More precisely, let 
        $$ \iota: \mathsf{Fre} \hookrightarrow \plin $$
    be the inclusion functor from the category of paired linear $hk$-spaces to the category of Fréchet spaces. Then 
        $$ \iota(V \;\,\widehat{\otimes}_\pi W) \cong \iota(V) \:\widehat{\otimes}\: \iota(W), $$
    where the tensor product $\widehat{\otimes}$ on the right hand side denotes the tensor product of paired linear $hk$-spaces.
\end{cor}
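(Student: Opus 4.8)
The statement \cref{cor_tens_prod_fre_vs_plin} is advertised as an ``immediate corollary'' of \cref{prop_tens_prod_coincide}, so the proof plan is short and consists mainly of tracking down exactly which underlying objects and functors are in play. The plan is to reduce the assertion about the paired-linear-$hk$-space tensor product $\widehat{\otimes}$ to the already-established assertion \cref{prop_tens_prod_coincide} about the repleted tensor product $\otimes_r$, using two facts: first, that Fréchet spaces carry a \emph{unique} paired linear $hk$-space structure compatible with their structure as replete linear $hk$-spaces (\cref{Frechet_space_paired_linear_k_space_in_unique_way}), so that the inclusion $\mathsf{Fre}\hookrightarrow\plin$ factors through $\sclin\hookrightarrow\plin$; and second, that the underlying linear $hk$-space of $V\widehat{\otimes}W$ (for $V,W\in\plin$ in the essential image of $\sclin$) is precisely $V\otimes_r W$. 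The latter is, by the construction in \cref{sec_sep_ext_chu} and \cref{cl_mon_str_on_chu}, the content of the remark following \cref{defn_paired_lin_hk_sp} combined with the fact that the forgetful functor $\plin\to\sclin$ takes the separated-extensional Chu tensor product to the $\mathbb{K}$-separated (i.e.\ repleted) tensor product.

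Concretely, here is the order in which I would carry out the steps. First I would note that by \cref{Frechet_space_paired_linear_k_space_in_unique_way}, for a Fréchet space $V$ the paired linear $hk$-space $\iota(V)$ has $\iota(V)^*=V^\wedge$, i.e.\ $\iota(V)$ lies in the essential image of the coreflective inclusion $\sclin\hookrightarrow\plin$; the same holds for $W$ and, crucially, for $V\;\widehat{\otimes}_\pi W$, since by \cref{prop_tens_prod_coincide} the latter \emph{is} a Fréchet space (a completed projective tensor product of Fréchet spaces is Fréchet). Second, I would invoke the fact that for objects of $\plin$ whose dual is the full natural dual, the $\plin$-tensor product $\widehat{\otimes}$ has underlying linear $hk$-space equal to the $\sclin$-tensor product $\otimes_r$ — this is exactly the compatibility recorded after \cref{defn_paired_lin_hk_sp} together with \cref{cor_repl_closed_symm_mon} and the construction of the closed monoidal structure on $\widehat{\mathsf{Chu}}_{\mathbb{K}}(\vect)$. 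Hence $\iota(V)\;\widehat{\otimes}\;\iota(W)$ has underlying linear $hk$-space $V\otimes_r W$, and by uniqueness of the paired structure on Fréchet spaces it equals $\iota(V\otimes_r W)$. Third, I would apply \cref{prop_tens_prod_coincide} to rewrite $V\otimes_r W\cong V\;\widehat{\otimes}_\pi W$, yielding $\iota(V)\;\widehat{\otimes}\;\iota(W)\cong\iota(V\;\widehat{\otimes}_\pi W)$, which is the claim.

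I would also want to be careful about naturality of the isomorphism, since the statement is phrased functorially: the isomorphism $V\otimes_r W\cong V\;\widehat{\otimes}_\pi W$ from \cref{prop_tens_prod_coincide} is natural by the usual uniqueness-of-adjoints/Yoneda argument (both satisfy the same universal property naturally in all three variables), and the identification of the $\plin$-tensor product with $\iota(-\otimes_r-)$ on the relevant full subcategory is natural by construction of the monoidal structures, so the composite is natural; I would spell this out in a single sentence rather than a diagram chase.

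The only genuine subtlety — and the place where a careless ``immediate corollary'' could go wrong — is making sure that $V\;\widehat{\otimes}_\pi W$ really does have $V^\wedge$-type dual when regarded in $\plin$, i.e.\ that there is no ambiguity in the paired structure. This is precisely why \cref{Frechet_space_paired_linear_k_space_in_unique_way} is needed and why it must be applied to the tensor product itself and not just to the factors; once one observes that \cref{prop_tens_prod_coincide} guarantees the tensor product is again a Fréchet space, this obstacle dissolves. I do not anticipate any other difficulty; the remainder is bookkeeping with the functors $\iota$, $\otimes_r$, $\widehat{\otimes}$ and $\widehat{\otimes}_\pi$.
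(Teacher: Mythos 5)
Your proposal is correct and follows essentially the same route as the paper, which deduces the corollary immediately from \cref{prop_tens_prod_coincide} together with the (asserted) fact that the underlying linear $hk$-space of the $\plin$-tensor product is the repleted tensor product $\otimes_r$. Your additional care in applying \cref{Frechet_space_paired_linear_k_space_in_unique_way} to the completed projective tensor product itself (not just to the factors), and your remark on naturality, are correct elaborations of details the paper leaves implicit rather than a different argument.
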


\begin{appendix}
\appendix
\chapter{Relation to Condensed, Stereotype and Bornological Vector Spaces}\label{appendix_relation_to_condesed}

The following diagram summarises a number of relationships between the categories appearing in the main body of this thesis and those associated to other (``convenient'') functional-analytic categories. We will explain the notations appearing therein below. References to proofs of these relationships as well as to the relevant definitions will be given in \cref{categories_table,functors_table,inclusions_and_equivalences_table,adjunctions_table}.

\[\begin{tikzcd}
	{\mathsf{L}} & {hk\mathsf{Top}} & {\mathsf{Cond}(\mathsf{Set})_{q.s.}} & {\mathsf{Cond}(\mathsf{Set})} \\
	{r\mathsf{Vect}(hk\mathsf{Top})} & {\mathsf{Vect}(hk\mathsf{Top})} & {\mathsf{Cond}(\mathsf{Vect})_{q.s.}} & {\mathsf{Cond}(\mathsf{Vect})} \\
	{lk\mathsf{LCTVS}_{k\mathrm{c}}} & {lk\mathsf{LCTVS}} & {st\mathsf{CBS}} & {\mathsf{Conv}} \\
	{\mathsf{LCTVS}_{k\mathrm{c}}} & {\mathsf{LCTVS}} & {b\mathsf{LCTVS}} \\
	{\mathsf{Ste}}
	\arrow[""{name=0, anchor=center, inner sep=0}, shift right=3, hook, from=2-1, to=2-2]
	\arrow[""{name=1, anchor=center, inner sep=0}, "r"', shift right=3, from=2-2, to=2-1]
	\arrow[""{name=2, anchor=center, inner sep=0}, shift left=2, hook', from=2-1, to=1-1]
	\arrow[""{name=3, anchor=center, inner sep=0}, shift left=2, tail, from=1-1, to=2-1]
	\arrow["{l \;\;\wr}"', shift right, from=2-1, to=3-1]
	\arrow["{\wr \;\;k}"', shift right=2, from=3-1, to=2-1]
	\arrow[""{name=4, anchor=center, inner sep=0}, shift right, hook, from=3-1, to=4-1]
	\arrow[""{name=5, anchor=center, inner sep=0}, "{l\circ k}"', shift right=3, from=4-1, to=3-1]
	\arrow[""{name=6, anchor=center, inner sep=0}, shift right=3, hook, from=2-2, to=2-3]
	\arrow[""{name=7, anchor=center, inner sep=0}, shift right=3, from=2-3, to=2-2]
	\arrow[shift left, tail, from=2-3, to=3-3]
	\arrow[hook, from=3-1, to=3-2]
	\arrow[""{name=8, anchor=center, inner sep=0}, shift right=2, hook, from=4-1, to=4-2]
	\arrow[""{name=9, anchor=center, inner sep=0}, shift right=2, from=4-2, to=4-1]
	\arrow[""{name=10, anchor=center, inner sep=0}, shift right=2, hook, from=3-2, to=4-2]
	\arrow[""{name=11, anchor=center, inner sep=0}, "{l\circ k}"', shift right=2, from=4-2, to=3-2]
	\arrow["\wr"', shift left, from=3-3, to=4-3]
	\arrow[""{name=12, anchor=center, inner sep=0}, "b", shift left=2, from=4-2, to=4-3]
	\arrow[""{name=13, anchor=center, inner sep=0}, shift left=2, hook', from=4-3, to=4-2]
	\arrow[curve={height=6pt}, hook, from=4-3, to=3-2]
	\arrow["\wr"', shift right=3, from=4-3, to=3-3]
	\arrow[""{name=14, anchor=center, inner sep=0}, "l"', shift right=2, from=2-2, to=3-2]
	\arrow[""{name=15, anchor=center, inner sep=0}, "k"', shift right=2, hook, from=3-2, to=2-2]
	\arrow[""{name=16, anchor=center, inner sep=0}, shift right=2, tail, from=2-3, to=1-3]
	\arrow[""{name=17, anchor=center, inner sep=0}, shift right=2, from=1-3, to=2-3]
	\arrow[""{name=18, anchor=center, inner sep=0}, shift right=3, from=2-4, to=2-3]
	\arrow[""{name=19, anchor=center, inner sep=0}, shift right=3, hook, from=2-3, to=2-4]
	\arrow[""{name=20, anchor=center, inner sep=0}, shift right=2, from=1-4, to=2-4]
	\arrow[""{name=21, anchor=center, inner sep=0}, shift right=2, tail, from=2-4, to=1-4]
	\arrow[""{name=22, anchor=center, inner sep=0}, shift right=2, tail, from=2-2, to=1-2]
	\arrow[""{name=23, anchor=center, inner sep=0}, shift right=2, from=1-2, to=2-2]
	\arrow[""{name=24, anchor=center, inner sep=0}, shift right=2, hook, from=1-2, to=1-3]
	\arrow[""{name=25, anchor=center, inner sep=0}, shift right=2, from=1-3, to=1-2]
	\arrow[""{name=26, anchor=center, inner sep=0}, shift right=2, hook, from=1-3, to=1-4]
	\arrow[""{name=27, anchor=center, inner sep=0}, shift right=2, from=1-4, to=1-3]
	\arrow[""{name=28, anchor=center, inner sep=0}, shift right=2, shorten <=4pt, shorten >=4pt, tail, from=1-1, to=1-2]
	\arrow[""{name=29, anchor=center, inner sep=0}, "{\mathcal{M}_c}"', shift right=2, shorten <=4pt, shorten >=4pt, from=1-2, to=1-1]
	\arrow[""{name=30, anchor=center, inner sep=0}, shift left, hook', from=5-1, to=4-1]
	\arrow[""{name=31, anchor=center, inner sep=0}, shift left=3, from=4-1, to=5-1]
	\arrow[""{name=32, anchor=center, inner sep=0}, shift left=2, hook', from=3-4, to=3-3]
	\arrow[""{name=33, anchor=center, inner sep=0}, shift left=2, from=3-3, to=3-4]
	\arrow["\otimes", "\dashv"{anchor=center, rotate=-90}, shift right, draw=none, from=1, to=0]
	\arrow["\otimes", "\dashv"{anchor=center}, shift right, draw=none, from=23, to=22]
	\arrow["\otimes", "\dashv"{anchor=center}, shift right, draw=none, from=17, to=16]
	\arrow["\otimes", "\dashv"{anchor=center}, shift right, draw=none, from=20, to=21]
	\arrow["\dashv"{anchor=center}, draw=none, from=14, to=15]
	\arrow["\otimes", "\dashv"{anchor=center}, shift right, draw=none, from=2, to=3]
	\arrow["\dashv"{anchor=center}, draw=none, from=4, to=5]
	\arrow["\dashv"{anchor=center, rotate=-90}, draw=none, from=12, to=13]
	\arrow["\dashv"{anchor=center, rotate=-90}, draw=none, from=9, to=8]
	\arrow["\otimes", "\dashv"{anchor=center, rotate=-90}, shift right, draw=none, from=18, to=19]
	\arrow["\otimes", "\dashv"{anchor=center, rotate=-90}, shift right, draw=none, from=29, to=28]
	\arrow["\otimes", "\dashv"{anchor=center, rotate=-90}, shift right, draw=none, from=25, to=24]
	\arrow["\otimes", "\dashv"{anchor=center, rotate=-90}, shift right, draw=none, from=27, to=26]
	\arrow["\otimes", "\dashv"{anchor=center, rotate=-90}, shift right, draw=none, from=7, to=6]
	\arrow["\dashv"{anchor=center}, draw=none, from=30, to=31]
	\arrow["\dashv"{anchor=center}, draw=none, from=10, to=11]
	\arrow["\dashv"{anchor=center, rotate=-90}, draw=none, from=33, to=32]
\end{tikzcd}\]

\begin{notation}
    In the above diagram, the following notation is used.
\begin{enumerate}[1.]
    \item The ``hooked'' arrows $\hookrightarrow$ denote fully faithful functors and the arrows 
    \[\begin{tikzcd}
	{\text{``}} & {\text{''}}
	\arrow[tail, from=1-1, to=1-2]
    \end{tikzcd}\]
    denote faithful functors.
    \item As customary, the ``turnstile'' $\uparrow\, \dashv\, \downarrow$ means that the functor $\uparrow$ is left adjoint to the functor $\downarrow$. Moreover, adjunctions decorated with a tensor product (``$
    \dashv\otimes$'') indicate symmetric monoidal adjunctions (see \cref{symmmon_adj}). In particular, when the right adjoint is fully faithful, i.e.~we have a reflective subcategory inclusion,
    \[\begin{tikzcd}
	{\mathsf{A}} & {\mathsf{B},}
	\arrow[""{name=0, anchor=center, inner sep=0}, shift right=2, hook, from=1-1, to=1-2]
	\arrow[""{name=1, anchor=center, inner sep=0}, shift right=2, from=1-2, to=1-1]
	\arrow["\otimes", "\dashv"{anchor=center, rotate=-90}, shift right, draw=none, from=1, to=0]
    \end{tikzcd}\]
    of a symmetric monoidal closed category $\mathsf{A}$ into a closed symmetric monoidal category, then this inclusion preserves internal homs. In this case it even holds that if $A\in \mathsf{A}$ and $B\in\mathsf{B}$, then the internal hom $[A,B]$ is in $\mathsf{B}$ (i.e.~$\mathsf{A}$ is an \emph{exponential ideal} in $\mathsf{B}$; this is Day's reflection theorem, see Chapter 3, \cref{day_refl_thm}). In each case when there is such a symmetric monoidal adjunction, this is either proved in the main body of this thesis, or given below in \cref{lin_hk_cond_vect_exp_ideal}.   
\end{enumerate}
\end{notation}

\begin{lem}\label{lem_dense_refl_ccc_subcat_exp_ideal}
    Let $\mathsf{C}$ be a cartesian closed category and let 
    \[\begin{tikzcd}
	{\mathsf{D}} & {\mathsf{C}}
	\arrow[""{name=0, anchor=center, inner sep=0}, "\iota"', shift right=2, hook, from=1-1, to=1-2]
	\arrow[""{name=1, anchor=center, inner sep=0}, "L"', shift right=2, from=1-2, to=1-1]
	\arrow["\dashv"{anchor=center, rotate=-90}, draw=none, from=1, to=0]
    \end{tikzcd}\]
    be a reflective subcategory that is also cartesian closed. Assume furthermore that $\iota$ is \emph{dense}, meaning that the restricted Yoneda embedding
        $$ \mathsf{C} \to \mathsf{Set}^{\mathsf{D}}, \;\; X \mapsto \Hom_{\mathsf{C}}(\iota(-), X), $$
    is fully faithful. \par 
    Then $\mathsf{D}$ is an exponential ideal of $\mathsf{C}$, i.e.~whenever $Y\in \mathsf{D}$ and $X\in \mathsf{C}$, then $Y^X$ is in $\mathsf{D}$ (or, to be more precise, $\iota(X)^Y$ is in the essential image of $\iota$). 
\end{lem}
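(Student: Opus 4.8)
The plan is to exploit density of $\iota$ to reduce the statement ``$\iota(X)^Y \in \iota(\mathsf{D})$'' to a natural isomorphism of representable functors on $\mathsf{D}$. Write $L \dashv \iota$ with unit $\eta \colon \mathrm{id}_{\mathsf{C}} \to \iota L$. Fix $Y \in \mathsf{D}$ and $X \in \mathsf{C}$; I want to show the unit component $\eta \colon \iota(X)^Y \to \iota L(\iota(X)^Y)$ is an isomorphism, which is exactly the assertion that $\iota(X)^Y$ lies in the (essential image of the) reflective subcategory $\mathsf{D}$.

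\textbf{Key steps.} First, I would observe that, since $\mathsf{D}$ is dense in $\mathsf{C}$ via $\iota$, two objects of $\mathsf{C}$ are isomorphic (and a morphism between them is an isomorphism) as soon as the induced natural transformation of functors $\Hom_{\mathsf{C}}(\iota(-), -)$ on $\mathsf{D}$ is a natural isomorphism. So it suffices to check that for every $Z \in \mathsf{D}$ the map
$$ \Hom_{\mathsf{C}}(\iota(Z), \iota(X)^Y) \to \Hom_{\mathsf{C}}(\iota(Z), \iota L(\iota(X)^Y)) $$
induced by $\eta$ is a bijection. Second, I would rewrite the left-hand side using cartesian closure of $\mathsf{C}$: $\Hom_{\mathsf{C}}(\iota(Z), \iota(X)^Y) \cong \Hom_{\mathsf{C}}(\iota(Z) \times Y, \iota(X))$. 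Now $Z \in \mathsf{D}$ and $Y \in \mathsf{D}$, and $\mathsf{D}$ is cartesian closed with its product preserved by $\iota$ up to the reflection — more precisely, the product in $\mathsf{D}$ is $L(\iota(-) \times \iota(-))$, but since $\iota$ is a right adjoint it preserves products, so $\iota(Z \times_{\mathsf{D}} Y) \cong \iota(Z) \times \iota(Y) = \iota(Z) \times Y$. Hence $\Hom_{\mathsf{C}}(\iota(Z) \times Y, \iota(X)) \cong \Hom_{\mathsf{C}}(\iota(Z \times_{\mathsf{D}} Y), \iota(X))$. Third, I would apply the reflection adjunction $L \dashv \iota$ together with full faithfulness of $\iota$: $\Hom_{\mathsf{C}}(\iota(W), \iota(X)) \cong \Hom_{\mathsf{D}}(W, L\iota(X))$ is not quite what I want; instead I go the other way and note that morphisms $\iota(Z) \times Y \to \iota(X)$ in $\mathsf{C}$, once I know the source is $\iota$ of a $\mathsf{D}$-object, correspond to morphisms into $L(\iota(X))$ composed with $\eta$, and running the same manipulations on the right-hand side $\Hom_{\mathsf{C}}(\iota(Z), \iota L(\iota(X)^Y))$ using the adjunction and cartesian closure of $\mathsf{D}$ yields $\Hom_{\mathsf{D}}(Z, \iota(X)^Y \text{ reflected})$, and the two sides match by naturality. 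The cleanest way to organize this is: both functors $Z \mapsto \Hom_{\mathsf{C}}(\iota(Z), \iota(X)^Y)$ and $Z \mapsto \Hom_{\mathsf{C}}(\iota(Z), \iota L(\iota(X)^Y))$ on $\mathsf{D}$ are shown to be naturally isomorphic to $Z \mapsto \Hom_{\mathsf{C}}(\iota(Z \times_{\mathsf{D}} Y), \iota X)$, the first directly by cartesian closure of $\mathsf{C}$ and product-preservation of $\iota$, the second by first applying cartesian closure of $\mathsf{D}$ inside $\mathsf{D}$ and then the adjunction $L \dashv \iota$ to bring everything back to $\mathsf{C}$; the comparison map induced by $\eta$ is checked to respect these identifications. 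By the Yoneda lemma relative to the dense subcategory, $\eta$ is then an isomorphism.

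\textbf{Main obstacle.} The delicate point is bookkeeping the difference between the cartesian-closed structure of $\mathsf{D}$ (whose internal hom a priori has nothing to do with $\iota(X)^Y$ computed in $\mathsf{C}$) and that of $\mathsf{C}$, and making sure the chain of natural isomorphisms is compatible with the single comparison morphism $\eta \colon \iota(X)^Y \to \iota L(\iota(X)^Y)$ rather than just producing an abstract isomorphism of objects. This is handled by being careful that every isomorphism in the chain is induced by a universal property (cartesian closure, the reflection adjunction, product-preservation), so naturality in $Z$ is automatic, and then invoking density exactly in the form ``a morphism of $\mathsf{C}$ whose image under the restricted Yoneda embedding $\mathsf{C} \to \mathsf{Set}^{\mathsf{D}}$ is an isomorphism is itself an isomorphism.'' A secondary subtlety worth a sentence is that one must know the product in the reflective subcategory $\mathsf{D}$ agrees with the product in $\mathsf{C}$ on objects of $\mathsf{D}$, which holds because $\iota$, being a right adjoint, preserves limits.
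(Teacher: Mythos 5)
Your overall strategy -- test against objects $\iota(Z)$, rewrite the hom-sets by cartesian closure, and conclude by density that a comparison map is an isomorphism -- is the same density/Yoneda idea as the paper's proof. But as written there is a genuine gap: you never treat the case that constitutes the exponential ideal property, namely an exponent that is an \emph{arbitrary} object of $\mathsf{C}$. Throughout your chain the exponent $Y$ is handled as an object of $\mathsf{D}$ (you form $Z\times_{\mathsf{D}}Y$ and say ``Now $Z\in\mathsf{D}$ and $Y\in\mathsf{D}$''), so at best you prove that exponentials with both arguments in $\mathsf{D}$ remain in the image of $\iota$ -- that is, that $\iota$ preserves exponentials, which is only the first half of the paper's argument. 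The one place where a genuinely arbitrary $\mathsf{C}$-object would have to be absorbed is your assertion that morphisms $\iota(Z)\times Y\to\iota(X)$ ``correspond to morphisms into $L(\iota(X))$ composed with $\eta$'' once the source lies in the image of $\iota$; this principle is false. The adjunction $L\dashv\iota$ transposes along the \emph{source}, $\Hom_{\mathsf{C}}(A,\iota(W))\cong\Hom_{\mathsf{D}}(LA,W)$, and there is no general correspondence between maps into an object and maps into its reflection, no matter what the source is. (You have also inherited the typo in the statement: $\iota(X)$ with $X\in\mathsf{C}$ does not typecheck; the object in question is $\iota(Y)^X$ with base $Y\in\mathsf{D}$ and exponent $X\in\mathsf{C}$.)

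The missing manoeuvre is to transpose \emph{twice}, using symmetry of the product, so that the arbitrary object becomes the domain of a hom into something of the form $\iota(\cdot)$, where the reflection does apply. Concretely, for base $Z\in\mathsf{D}$, exponent $Y\in\mathsf{C}$ and test object $W\in\mathsf{D}$,
\begin{equation*}
\Hom_{\mathsf{C}}(\iota W,\iota(Z)^{Y})\cong\Hom_{\mathsf{C}}(Y,\iota(Z)^{\iota W})\cong\Hom_{\mathsf{C}}(Y,\iota(Z^{W}))\cong\Hom_{\mathsf{D}}(LY,Z^{W})\cong\Hom_{\mathsf{D}}(W,Z^{LY})\cong\Hom_{\mathsf{C}}(\iota W,\iota(Z^{LY})),
\end{equation*}
where the second isomorphism is the statement that $\iota$ preserves exponentials (which needs its own density argument first, as in the paper) and the third is the reflection adjunction applied to $Y$. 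Density then yields $\iota(Z)^{Y}\cong\iota(Z^{LY})$, which already places $\iota(Z)^{Y}$ in the essential image; your additional insistence on identifying the comparison with the unit $\eta$ is not needed for the conclusion, since for a reflective subcategory an abstract isomorphism with an object of the form $\iota(\cdot)$ suffices. No reflection of the exponent ($LY$) ever appears in your proposal, so as it stands it cannot produce the representing object and does not reach the stated lemma.
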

\begin{proof}
    We first show that $\iota$ preserves exponentials. Let $X,Y,Z\in \mathsf{D}$. Then, using that $\iota$ preserves products (since it is a right adjoint), 
    \begin{align*}
        \Hom_{\mathsf{C}}(\iota(X), \iota(Z)^{\iota(Y)}) &\cong \Hom_{\mathsf{C}}(\iota(X)\times \iota(Y), \iota(Z)) \\
        &\cong \Hom_{\mathsf{C}}(\iota(X\times Y), \iota(Z)) \\
        &\cong \Hom_{\mathsf{D}}(X\times Y, Z) \\
        &\cong \Hom_{\mathsf{D}}(X, Z^Y) \\
        &\cong \Hom_{\mathsf{C}}(\iota(X), \iota(Z^Y)) 
    \end{align*}
    Therefore, by our assumption that $\iota$ is dense, 
        $$ \iota(Z)^{\iota(Y)}\cong \iota(Z^Y). $$
    Now, let $X,Z\in \mathsf{D}$, $Y\in\mathsf{C}$. We calculate,
    \begin{align*}
        \Hom_{\mathsf{C}}(\iota(X), \iota(Z^{LY})) &\cong \Hom_{\mathsf{D}}(X, Z^{LY}) \\
        &\cong \Hom_{\mathsf{D}}(LY, Z^{X}) \\
        &\cong \Hom_{\mathsf{C}}(Y, \iota(Z^X)) \\
        &\cong \Hom_{\mathsf{C}}(Y, \iota(Z)^{\iota(X)})) \\
        &\cong \Hom_{\mathsf{C}}(\iota(X), \iota(Z)^{Y})). 
    \end{align*}
    Hence, using our assumption once more, 
        $$ \iota(Z)^{Y} \cong \iota(Z^{LY}). $$
    In particular, $\iota(Z)^{Y}$ is in the image of $\iota$, which is what we wanted to show.
\end{proof}

\begin{cor}\label{lin_hk_cond_vect_exp_ideal}
    The category $\spaces$ of $hk$-spaces (linear $hk$-spaces, resp.) is an exponential ideal in the category $\mathsf{Cond(Set)}_{q.s.}$ of quasi-separated condensed sets (quasi-separated condensed vector spaces, resp.), and similarly, the category of quasi-separated condensed sets (quasi-separated condensed vector spaces, resp.) is an exponential ideal in the category $\mathsf{Cond(Set)}$ of condensed sets (condensed vector spaces, resp.). In particular, the respective subcategory inclusions preserve all limits and internal homs.
\end{cor}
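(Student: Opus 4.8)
The plan is to derive everything from two applications of \cref{lem_dense_refl_ccc_subcat_exp_ideal}, followed by a uniform descent to the module-object (vector-space) categories. First I would treat the cartesian statements by applying \cref{lem_dense_refl_ccc_subcat_exp_ideal} to each of the two fully faithful inclusions $\spaces \hookrightarrow \mathsf{Cond}(\sets)_{q.s.}$ and $\mathsf{Cond}(\sets)_{q.s.} \hookrightarrow \mathsf{Cond}(\sets)$. The ambient categories are cartesian closed ($\mathsf{Cond}(\sets)$ is a topos, and quasi-separated condensed sets are cartesian closed; see the references collected in \cref{categories_table}); the subcategories are reflective — the quasi-separation reflector for $\mathsf{Cond}(\sets)_{q.s.} \hookrightarrow \mathsf{Cond}(\sets)$, and the reflector recorded in \cref{adjunctions_table} for $\spaces \hookrightarrow \mathsf{Cond}(\sets)_{q.s.}$; and the subcategories are themselves cartesian closed, $\spaces$ by the cartesian closure of $\spaces$ established in Chapter 2 (see \cref{perm_prop_haus} and the corollary following it). What then remains is precisely the density hypothesis of \cref{lem_dense_refl_ccc_subcat_exp_ideal}.

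For density I would use that the category $\compacta$ of compact Hausdorff spaces is a dense (generating) subcategory of $\mathsf{Cond}(\sets)$, essentially by construction, since condensed sets are sheaves on a site of such spaces and the (sheafified) representables are always dense. Two elementary stability facts for a full chain $\mathsf{A}\subseteq\mathsf{B}\subseteq\mathsf{C}$ then finish it: if $\mathsf{A}$ is dense in $\mathsf{C}$ it is dense in the full subcategory $\mathsf{B}$ (a colimiting cocone in $\mathsf{C}$ with vertex in $\mathsf{B}$ remains colimiting in $\mathsf{B}$), and if in addition $\mathsf{A}$ is dense in $\mathsf{B}$ then $\mathsf{B}$ is dense in $\mathsf{C}$ (the restriction functor on presheaves is then faithful, which combined with full faithfulness of the $\mathsf{A}$-nerve on $\mathsf{C}$ gives full faithfulness of the $\mathsf{B}$-nerve). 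Since $\compacta\subseteq\spaces\subseteq\mathsf{Cond}(\sets)_{q.s.}\subseteq\mathsf{Cond}(\sets)$, this shows $\spaces$ is dense in $\mathsf{Cond}(\sets)_{q.s.}$ and $\mathsf{Cond}(\sets)_{q.s.}$ is dense in $\mathsf{Cond}(\sets)$. \cref{lem_dense_refl_ccc_subcat_exp_ideal} then yields the two set-level exponential-ideal claims; that these reflective inclusions preserve all limits is automatic (right adjoints), and that they preserve internal homs follows from the exponential-ideal property together with Day's reflection theorem (\cref{day_refl_thm}).

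For the linear statements I would not invoke \cref{lem_dense_refl_ccc_subcat_exp_ideal} directly, as the condensed vector space categories are not cartesian closed, but instead transport the set-level result through the identifications $\mathsf{Cond}(\mathsf{Vect}) = \mathsf{Mod}(\mathsf{Cond}(\sets))$, $\mathsf{Cond}(\mathsf{Vect})_{q.s.} = \mathsf{Mod}(\mathsf{Cond}(\sets)_{q.s.})$ and $\vect = \mathsf{Mod}(\spaces)$ (Chapter 2), where $\mathsf{Mod}(-)$ denotes $\mathbb{K}$-module objects. Write $\mathsf{D}\hookrightarrow\mathsf{C}$ for either pair, so that by the first part $\mathsf{D}$ is an exponential ideal in the cartesian closed $\mathsf{C}$ and is reflective, hence closed under limits, in $\mathsf{C}$. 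By \cref{thm_t_modules_closec_symmetric_monoidal} the module categories are closed symmetric monoidal, and exactly as in \cref{defn_lvw} and \cref{L_VW_QCB} the underlying $\mathsf{C}$-object of $[V,W]$ for $\mathbb{K}$-modules $V,W$ is the equalizer in $\mathsf{C}$ of the two linearity maps $|W|^{|V|}\rightrightarrows|W|^{\underline{\mathbb{K}}\times|V|\times|V|}$. If $|W|\in\mathsf{D}$, then both $|W|^{|V|}$ and $|W|^{\underline{\mathbb{K}}\times|V|\times|V|}$ lie in $\mathsf{D}$ (exponential ideal), hence so does their equalizer (closure under limits), so $[V,W]\in\mathsf{Mod}(\mathsf{D})$. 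Applied to both pairs this gives the exponential-ideal claims for condensed vector spaces; preservation of limits and internal homs follows as before, using that closure under limits lifts to module objects and that the reflector, being strong monoidal for the cartesian structures, lifts to the module level as well.

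The step I expect to be the real obstacle is the density hypothesis and the attendant compatibility checks that legitimise the whole reduction: one must be sure — citing Clausen–Scholze / Barwick–Haine — that $\compacta$ is genuinely a dense generator of $\mathsf{Cond}(\sets)$, that the $\spaces$-product and the $\spaces$-mapping spaces coincide with the corresponding condensed constructions (so that ``$\spaces$ is cartesian closed'' is compatible with the inclusion, and hence that the closed structure produced by Day's theorem is the one intended in the corollary), and that quasi-separated condensed sets are in fact cartesian closed. Once these inputs are secured, the remainder is routine bookkeeping with \cref{lem_dense_refl_ccc_subcat_exp_ideal}, \cref{day_refl_thm}, and the module-object formalism of Chapter 3.
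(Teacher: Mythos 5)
Your proposal is correct and follows essentially the same route as the paper: the set-level claims via \cref{lem_dense_refl_ccc_subcat_exp_ideal}, with density supplied by the compact Hausdorff spaces contained in each subcategory, and the linear claims by expressing the internal hom of module objects as an equaliser of exponentials (as in the proof of \cref{L_VW_QCB}) and using reflectivity plus the exponential-ideal property. You merely spell out details the paper leaves implicit (the density stability facts and the module-level bookkeeping), so there is nothing substantive to add.
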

\begin{proof}
    For the embedding of $hk$-spaces into (quasi-separated) condensed sets as a reflective subcategory, see \cite[Proposition 1.7]{scholze2019condensed}. For the fact that quasi-separated condensed sets form a reflective subcategory of condensed sets, see \cite[Lemma 4.14]{sclausen2020lectures}. In both cases, the inclusion is dense (since the respective subcategories contain all compact Hausdorff spaces), and \cref{lem_dense_refl_ccc_subcat_exp_ideal} applies. \par 
    The reflective subcategory inclusion of linear $hk$-spaces into (quasi-separated) condensed vector space is also implied by \cite[Proposition 1.7]{scholze2019condensed} (see also \cite[p.~15]{clausen2022condensed}; and similarly for the inclusion of quasi-separated condensed vector spaces into condensed vector spaces). Now, the claim follows from the fact that $hk$-spaces form an exponential ideal in condensed sets, and that the internal hom in the category of condensed vector spaces is formed from equalisers and internal homs in $\mathsf{Cond(Set)}$ (cf.~the proof of \cref{L_VW_QCB} in Chapter 2).
\end{proof}

\begin{remark}
    \cref{lin_hk_cond_vect_exp_ideal} implies that our observations from Chapter 2 concerning the topology of the space of continuous linear maps $L(V,W)$ between linear $hk$-spaces $V,W$ can also be phrased as results concerning the internal hom of condensed vector spaces. For example, when $V$ and $W$ are separable Fréchet spaces, \cref{lem_LVW_seq} implies that the internal hom $[\underline{V}, \underline{W}]$ of the associated condensed vector spaces $\underline{V}, \underline{W}$ is (the condensed vector space associated to) $L(V,W)$ with the strong sequential topology. Similarly, questions concerning duality of linear $hk$-spaces can be phrased as questions about the duality of condensed vector spaces (see, for example, \cref{question_reflexivity}). In particular, for any condensed vector space $V$, the internal hom $[V,\mathbb{K}]$ (i.e.~the ``internal dual'') is (the condensed vector space associated to) a replete linear $hk$-space. 
\end{remark}

\begin{table}[]
\vspace*{2 cm}
\caption{\label{categories_table}Overview of categories}
\begin{tabular}{@{}lll@{}}
\toprule
\textit{Category}                     & \textit{Objects}                                                                                                        & \textit{References}                                                                                                                                                                                                  \\ \midrule
$hk\mathsf{Top}$                      & $hk$-spaces/CGWH spaces                                                                                                 & \cref{defn_k_sp_hk_sp}                                                                                                                                                                                               \\ \midrule
$\mathsf{Vect}(hk\mathsf{Top})$       & linear $hk$-spaces                                                                                                      & \cref{def_lin_hk_space}                                                                                                                                                                                              \\ \midrule
$r\mathsf{Vect}(hk\mathsf{Top})$      & replete linear $hk$-spaces                                                                                              & \cref{ex_repl_lin_sk_sp}                                                                                                                                                                                             \\ \midrule
$\plin$                               & paired linear $hk$-spaces                                                                                               & \cref{defn_paired_lin_hk_sp}                                                                                                                                                                                         \\ \midrule
$\mathsf{Cond(Set)}$                  & condensed sets                                                                                                          & \cite{scholze2019condensed}                                                                                                                                                                                          \\ \midrule
$\mathsf{Cond}_{q.s.}(\mathsf{Set})$  & \begin{tabular}[c]{@{}l@{}}quasi-separated (q.s.) condensed sets/\\ compactological spaces\end{tabular}                 & \begin{tabular}[c]{@{}l@{}}\cite[p.~15]{clausen2022condensed},\\ see \cite[p. 15]{clausen2022condensed} \\ for the equivalence to the ``compactological \\ spaces'' of \cite{lucien1971compactological}\end{tabular} \\ \midrule
$\mathsf{Cond(Vect)}$                 & condensed vector spaces                                                                                                 & \cite{scholze2019condensed}                                                                                                                                                                                          \\ \midrule
$\mathsf{Cond}_{q.s.}(\mathsf{Vect})$ & \begin{tabular}[c]{@{}l@{}}quasi-separated (q.s. )condensed/\\ compactological vector spaces\end{tabular}               & \begin{tabular}[c]{@{}l@{}}\cite[p.~15]{clausen2022condensed}, \\ \cite{lucien1971compactological}\end{tabular}                                                                                                      \\ \midrule
$\mathsf{LCTVS}$                      & \begin{tabular}[c]{@{}l@{}}Locally convex Hausdorff \\ topological vector spaces\end{tabular}                           & ---                                                                                                                                                                                                                  \\ \midrule
$lk\mathsf{LCTVS}$                    & compactly determined LCTVS                                                                                              & \cref{defn_ksat_l}, \cite{porta1972compactly}                                                                                                                                                                        \\ \midrule
$\mathsf{LCTVS}_{k\mathrm{c}}$        & $k$-complete/pseudo-complete LCTVS                                                                                      & \cref{defn_kcompl_seqcompl}, \cite[p. 370]{akbarov2022stereotype}                                                                                                                                                    \\ \midrule
$lk\mathsf{LCTVS}_{k\mathrm{c}}$      & \begin{tabular}[c]{@{}l@{}}compactly determined \\ $k$-complete LCTVS\end{tabular}                                      & (see above)                                                                                                                                                                                                          \\ \midrule
$b\mathsf{LCTVS}$                     & bornological LCTVS                                                                                                      & \cite[p.~71]{schaefer1971topological}, \cite{blute2010convenient}                                                                                                                                                    \\ \midrule
$st\mathsf{CBS}$                      & \begin{tabular}[c]{@{}l@{}}separated, topological convex \\ bornological vector spaces (CBS)\end{tabular}               & \cite[p.~10]{blute2010convenient}                                                                                                                                                                                    \\ \midrule
$\mathsf{Conv}$                       & \begin{tabular}[c]{@{}l@{}}``convenient vector spaces'' \\ (Mackey-complete, separated,\\ topological CBS)\end{tabular} & \cite{kriegl1997convenient} , \cite[Definition 3.15]{blute2010convenient}                                                                                                                                            \\ \midrule
$\mathsf{Ste}$                        & stereotype spaces                                                                                                       & \cite[p.~477]{akbarov2022stereotype}                                                                                                                                                                                 \\ \bottomrule
\end{tabular}
\end{table}

\begin{table}[]
\vspace*{0.2 cm}
\caption{\label{functors_table}Overview of named functors}
\begin{tabular}{@{}lll@{}}
\toprule
\textit{Functor} & \textit{Name}                                                                                          & \textit{Reference}                        \\ \midrule
$k$              & $k$-ification                                                                                          & \cref{defn_k_ification}, \cref{defn_kloc} \\ \midrule
$l$              & associated LCTVS                                                                                       & \cref{defn_ksat_l}                        \\ \midrule
$r$              & repletion                                                                                              & \cref{defn_repletion}                     \\ \midrule
$b$              & \begin{tabular}[c]{@{}l@{}}bornologification/\\ associated bornological space\end{tabular}             & \cite[p.~63]{schaefer1971topological},    \\ \midrule
$\mathcal{M}_c$  & \begin{tabular}[c]{@{}l@{}}free paired linear $hk$-space/\\ space of comp. supp. measures\end{tabular} & \cref{defn_free_plin_ksp}                 \\ \bottomrule
\end{tabular}
\end{table}

\begin{table}[]
\vspace*{0.75 cm}
\caption{\label{inclusions_and_equivalences_table}Overview of inclusions and equivalences of categories}
\begin{tabular}{@{}lll@{}}
\toprule
\textit{Inclusion of...}         & \textit{...into...}                  & \textit{Reference}                           \\ \midrule
$b\mathsf{LCTVS}$                & $lk\mathsf{LCTVS}$                   & \cite[1.3]{porta1972compactly}               \\ \midrule
$hk\mathsf{Top}$                 & $\mathsf{Cond}(\mathsf{Set})_{q.s.}$ & \cite[Proposition 1.7]{scholze2019condensed} \\ \midrule
$\mathsf{Ste}$                   & $\mathsf{LCTVS}_{k\mathrm{c}}$       & \cite[Theorem 4.1.1]{akbarov2022stereotype}  \\ \midrule
\textit{Equivalence of...}       & \textit{...to...}                    &                                              \\ \midrule
$b\mathsf{LCTVS}$                & $st\mathsf{CBS}$               & \cite[Corollary 3.8]{blute2010convenient}    \\ \midrule
$r\mathsf{Vect}(hk\mathsf{Top})$ & $lk\mathsf{LCTVS}_{k\mathrm{c}}$     & \cref{cor_replete_equiv_lctvs}               \\ \bottomrule
\end{tabular}
\end{table}

\begin{table}[]
\vspace*{0.75 cm}
\caption{\label{adjunctions_table}Overview of adjunctions}
\begin{tabular}{@{}lll@{}}
\toprule
\textit{Adjunction between...}        & \textit{...and...}                    & \textit{Reference}                                                              \\ \midrule
$\mathsf{L}$                          & $r\mathsf{Vect}(hk\mathsf{Top})$      & \cref{sepobjs_corefl_subcat_of_little_chu}                                      \\ \midrule
$\mathsf{Vect}(hk\mathsf{Top})$       & $\mathsf{Cond}(\mathsf{Vect})_{q.s.}$ & \cite[Proposition 1.7]{scholze2019condensed}                                    \\ \midrule
$hk\mathsf{Top}$                      & $\mathsf{Cond}(\mathsf{Set})_{q.s.}$  & \cite[Proposition 1.7]{scholze2019condensed}                                    \\ \midrule
$\mathsf{Cond}(\mathsf{Set})_{q.s.}$  & $\mathsf{Cond}(\mathsf{Set})$         & \cite[Lemma 4.14]{sclausen2020lectures}                                         \\ \midrule
$\mathsf{Cond}(\mathsf{Vect})_{q.s.}$ & $\mathsf{Cond}(\mathsf{Vect})$        & \cite[Lemma 4.14]{sclausen2020lectures}                                         \\ \midrule
$\mathsf{Vect}(hk\mathsf{Top})$       & $lk\mathsf{LCTVS}$                    & \cref{kloc_conv_equiv_ksat_lctvs}                                               \\ \midrule
$\mathsf{LCTVS}_{k\mathrm{c}}$        & $\mathsf{LCTVS}$                      & \cite[Theorem 3.3.1]{akbarov2022stereotype}                                     \\ \midrule
$\mathsf{Ste}$                        & $\mathsf{LCTVS}_{k\mathrm{c}}$        & \cite[Theorem 3.3.15, Theorem 4.1.1]{akbarov2022stereotype}                     \\ \midrule
$st\mathsf{CBS}$                      & $\mathsf{Conv}$                       & \cite[Lemma 2.6.5]{frolicher1988linear}, \cite[Lemma 3.18]{blute2010convenient} \\ \bottomrule
\end{tabular}
\end{table}
\end{appendix}
\bibliographystyle{alpha}
\bibliography{bibliography}

\end{document}